\newtheorem*{theorem*}{Theorem}
\newtheorem*{mainthm}{Main Theorem}
\newtheorem*{assertion*}{Assertion}
\newtheorem{lemma}{Lemma}
\newtheorem{definition}[lemma]{Definition}
\newtheorem{theorem}[lemma]{Theorem}
\newtheorem{corollary}[lemma]{Corollary}
\newtheorem{proposition}[lemma]{Proposition}
\newtheorem{assertion}[lemma]{Assertion}
\newtheorem{complem}[lemma]{Complement}
\def\remark{\vskip.2cm \noindent{\bf Remark. }}
\def\endremark{\par\medskip}
\renewcommand{\epsilon}{\varepsilon}
\renewcommand{\emptyset}{\varnothing}
\renewcommand{\Re}{{\on{Re}\,}}
\renewcommand{\Im}{{\on{Im}\,}}
\newcommand{\im}{\Im}
\newcommand{\ov}{\overline}
\newcommand{\on}{\operatorname}
\newcommand{\ds}{\displaystyle}
\newcommand{\tend}{\longrightarrow}
\newcommand{\wh}{\widehat}
\newcommand{\entete}[1]{\medskip\textsf{#1}}
\newcommand{\keyw}[1]{\emph{#1}\xspace}
\newcommand{\cal}[1]{\mathcal{#1}}
\newcommand{\wt}[1]{\widetilde{#1}}
\newcommand{\setof}[2]{\big\{{#1}\,\big|\,{#2}\big\}}
\newcounter{rememberItem}
\def\bEA{\begin{eqnarray*}}
\def\eEA{\end{eqnarray*}}
\def\bEAn{\begin{eqnarray}}
\def\eEAn{\end{eqnarray}}
\def\C{{\mathbb C}}
\def\D{{\mathbb D}}
\def\H{{\mathbb H}}
\def\N{{\mathbb N}}
\def\R{{\mathbb R}}
\def\Z{{\mathbb Z}}
\newcommand{\st}{\mathrel{\ooalign{\hss$\supset$\hss\cr$\longrightarrow$}}}
\newcommand{\ag}[1]{\langle{#1}\rangle}
\newcommand{\hl}[2]{\operatorname{hlen}_{#1}(#2)}
\newcommand{\len}[2]{\operatorname{len}_{#1}(#2)}
\newcommand{\extent}[2]{\operatorname{extent}_{#1}(#2)}
\newcommand{\sub}[2]{{#1}\circledcirc{#2}}
\newcommand{\esub}[2]{{#1}\Vdash{#2}}
\newcommand{\at}{{\on{attr}}}
\newcommand{\rep}{{\on{rep}}}
\newcommand{\sclass}{{\cal S}}
\newcommand{\sch}{{\cal{SL}}}
\newcommand{\dom}{\on{Dom}}
\begin{document}

\nomenclature[Wz1]{$\Phi_{\at}$}{attracting Fatou coordinates; normalized and extended except at the beginning of \Cref{subsec:parabopt}; normalized by the expansion at infinity in \Cref{sec:pf}}
\nomenclature[Wz2]{$\Psi_{\rep}$}{repelling inverse Fatou coordinates; same remarks as for $\Phi_\at$ apply}
\nomenclature[aa]{$\cdots[f]$}{used to emphasize the dependence on $f$ of a given object}
\nomenclature[Dom]{$\dom(f)$}{domain of definition of the map $f$}

\title[Near parabolic renormalization for unicritical holomorphic maps]{Near parabolic renormalization for unicritical holomorphic maps}
\author{Arnaud Chéritat}
\address{Centre National de la Recherche Scientifique\\
Institut de Mathématiques de Toulouse, UMR 5219\\
Université de Toulouse}
\email{arnaud.cheritat@math.univ-toulouse.fr}
\thanks{This research was partially funded by the Agence Nationale de la Recherche, Grant ABC (At the Boundary of Chaos) ARN--08--JCJC--0002--01}

\abstract Inou and Shishikura provided a class of maps that is invariant by near-parabolic renormalization, and that has proved extremely useful in the study of the dynamics of quadratic polynomials. We provide here another construction, using more general arguments. This will allow to extend the range of applications to unicritical polynomials of all degrees. 
\endabstract

\maketitle

\tableofcontents

Notations: $\D$\nomenclature[D]{$\D$}{the open unit disk in $\C$} refers to the unit disk in the complex plane: $\D=\setof{z\in\C}{|z|<1}$ and $\H$\nomenclature[H]{$\H$}{the upper half plane in $\C$} to the upper half plane: $\H=\setof{z\in\C}{\Im z>0}$. The translation by $1$ in $\C$ is denoted by $T_1: z\mapsto z+1$.\nomenclature[T1]{$T_1$}{$z\mapsto z+1$}
By convention, $\N$ includes $0$ and we will denote $\N^*$ the set of positive integers.
Beyond its usual meaning as Archimedes' constant, the symbol $\pi$ will often refer to the canonical projection $\pi : \C \to \C/\Z$. We will often make use of the map $E(z)=e^{2i\pi z}$.\nomenclature[E]{$E$}{$E(z)=e^{2i\pi z}$}
We adopt the following convention for open and semi-open intervals: $]a,b[\,$, $[a,b[\,$, $]a,b]$. An upper half plane means a half plane (usually open) bounded by a horizontal line and sitting above it.
The restriction of a map $f$ to the set $A$ is denoted $f\big|_A$. The floor of $x\in\R$, i.e.\ the greatest relative integer $n\in\Z$ such that $n\leq x$, is denoted $\lfloor x\rfloor$.
The notation $\sch$ refers to the class of Schlicht maps, i.e.\ holomorphic injective maps $\phi:\D\to\C$ with $\phi(0)=0$ and $\phi'(0)=1$.\nomenclature[SL]{$\sch$}{the class of Schlicht maps\nomrefpage}
There are a lot of more specific notations in this article, and a (partial) summary of symbols has been added near the end.

Conventions: The hyperbolic metric on $\D$ is chosen to be $\frac{|dz|}{1-|z|^2}$, and the hyperbolic metric on open strict subsets $U$ of $\C$ is normalized according to this convention, i.e.\ it is the image of the metric of the disk by its identification with the universal cover of $U$. With that convention, the hyperbolic metric on $\H$ takes the form $|dz|/2\Im z$.
(Some authors prefer using $\frac{2|dz|}{1-|z|^2}$ on $\D$ so that one gets $|dz|/\Im z$ on $\H$.)

\section{Introduction}

This article has a long introduction and the main theorem appears only on page~\pageref{subsec:main}.

\subsection{Structural equivalence}\label{subsec:structeq}

In the breakthrough by Inou and Shishikura \cite{IS}, they make use of a class of maps defined as follows (notations and details may differ):
$\cal F_\textrm{IS}$ is the set of maps of the form $f=P\circ\phi^{-1}$ where $\phi$ varies among the univalent maps on $V$ such that $\phi(z)=z+\cal O(z^2)$ at the origin. Here $P(z)=z(1+z)^2$ and $V$ is a specific open subset of $\C$ containing $0$ defined in their article. The set $\cal F_\textrm{IS}$ is better thought of as the set of maps that cover the plane in a specific way, and with $f(z)=z+\cal O(z^2)$. They are not covers because they have ramification points. And they are not even ramified covers, because the cardinality of the preimage of a point is not constant, even when counted with multiplicity. So they are a sort of partial ramified covers.
This class $\cal F_\textrm{IS}$ comes in fact from another class of maps, invariant by parabolic renormalization (defined later in this section), with a much richer ramified cover structure, but which was too rigid for their purposes, which was to have a class invariant by \emph{near} parabolic renormalization.
They extracted a carefully chosen subset of this structure to define their class $\cal F_\textrm{IS}$.

We are going to use the same idea, but we will keep more of the original ramified cover structure. Let us formalize the notion of structure:

\begin{definition}
Let $X_1$, $X_2$, $Y$ be dimension one analytic manifolds\footnote{They are usually called \keyw{Riemann surfaces} when they are connected.}. Consider an index set $I$, and two collections of marked points $a_i\in X_1$ and $b_i \in X_2$ indexed by $i\in I$. Consider also two analytic maps which are nowhere locally constant $f_1:X_1\to Y$ and $f_2:X_2\to Y$. We will say that the pairs $(a,f_1)$ and $(b,f_2)$ are structurally equivalent if there exists an analytic isomorphism $\phi:X_1\to X_2$ such that $f_1=f_2\circ \phi$ and $b=\phi\circ a$ i.e.\ such that the following diagram commutes
\[\xymatrix@R=20pt@C=5pt{& I \ar[dl]_{a} \ar[dr]^{b} & \\X_1 \ar@{.>}[rr]^\phi \ar[dr]_{f_1} && X_2 \ar[dl]^{f_2} \\ & Y & }\]
i.e.\ such that $\phi$ sends the marked point $a_i$ to $b_i$ and such that it sends the fiber $f_1^{-1}(y)$ in the fiber $f_2^{-1}(y)$ for all $y\in Y$. Note that this requires that $f_2\circ b = f_1 \circ a$. Structural equivalence is an equivalence relation, which depends on $I$ and $Y$. To specify them, we will sometimes use the terminology $(I,Y)$-structurally equivalent or structurally equivalent over $Y$ with marker $I$.
The equivalence classes will be called \keyw{structures} (or $(I,Y)$-structures).
\end{definition}

We could also call this a \keyw{marked analytic partial ramified cover equivalence class} but it would be a long name for a simply defined notion.

The restriction of partial covers (without losing marked points) induces a preorder on structures as follows:
\begin{definition} With the same definition as above, but assuming $\phi$ analytic injective instead of analytic isomorphism (thus dropping the surjectivity assumption), we will say that the structure of $(a,f_1)$ is a \keyw{sub-structure} of that of $(b,f_2)$: this is indeed independent of the choice of representatives in their equivalence classes. 
We will also say that $(b,f_2)$ has \emph{at least} the structure of $(a,f_1)$.
It is equivalent to the following: $(a,f_1)$ is structurally equivalent to $(b,g_2)$ where $g_2$ is a restriction of $f_2$ to a set containing the image of $b$.
In other words sub-structures of $(b,f_2)$ are equivalence classes of restrictions of $f_2$ to open sets containing the marked points.
\end{definition}

This preorder is not always an order: for instance if $I=\emptyset$ , and the sets $X_1 \subset \C$ defined by $\Re(z)>0$ and $X_2$ defined by $\Re(z)>1/2$ are both mapped to $\C/\Z$ using the canonical projection from $\C$ to the quotient, then each has at least the structure of the other (take $\phi_1(z)=z+1$ and $\phi_2(z)=z$), while they are not equivalent.

However, on the subclass of structures with connected $X$ and at least one marked point, this preorder is an order:

\begin{proof} Assume each of $(a,f_1)$ and $(b,f_2)$ has at least the structure of the other and assume that both $X_i$ are connected and $I\neq\emptyset$. Call $\phi_1:X_1\to X_2$ and $\phi_2: X_2\to X_1$ the two analytic injections. We have to prove that $(a,f_1)$ is structurally equivalent to $(b,f_2)$. It is sufficient to prove that $\phi_2$ is surjective (the inverse of an analytic bijection is analytic). Call $\zeta=\phi_2\circ\phi_1$. It is injective, satisfies $f_1 \circ \zeta = f_1$ and fixes the marked points of $f_1$. The map $f_1$ being not locally constant at the marked points, each marked point has a neighborhood on which some iterate $\zeta^m$ of the map $\zeta$ is the identity, where $m$ is the local degree of $f_1$ at the marked point. Since there is at least one marked point and since $X_1$ is connected, $\zeta^m =\on{id}$ holds everywhere by analytic continuation. Hence $\phi_2$ is surjective. The proof is analogous for $\phi_1$.
\end{proof}

\subsection{Parabolic points}\label{subsec:parabopt}

The present section is given mainly to fix notations. The reader that does not already know the theory of parabolic fixed points of one dimensional holomorphic dynamical systems will have hard times understanding the article, we recommend learning it in any of the classic books introducing holomorphic dynamics, or in \cite{D,Z}. The article \cite{BE} is also instructive and very well illustrated. There is no claim that any of the statements given in this section is due to the author.

What is understood under the terminology \keyw{parabolic point} has  variations, according to whether or not linearizable maps are allowed, and according to whether the allowed values of multiplier should be $1$ or any root of unity. So here we will try and avoid solely mentioning parabolic points and use instead the following terminology:
\begin{itemize}
\item \keyw{Tangent to identity}: fixed point whose multiplier is equal to $1$.
\item \keyw{Rationally indifferent}: fixed (or periodic) whose multiplier is a root of unity.
\item \keyw{non-linearizable parabolic point}: irrationally indifferent fixed (or periodic) point which is not linearizable.
\end{itemize}
Non-linearizability is the condition to have petals. A parabolic point \keyw{with petals} will thus be a synonym for a non-linearizable parabolic point. 

Consider a holomorphic dynamical system, $f : \dom(f)\subset X\to X$.
Assume it contains a non-linearizable parabolic point of period one, rotation number $0$ and with one attracting petal, i.e.\ in some chart $f$ has expression $f(z)=z+a_2 z^2 +\cal O(z^3)$ with $a_2\neq 0$.
To this are associated attracting Fatou coordinates $\Phi_\at$ and repelling Fatou coordinates $\Phi_\rep$ and the local conjugacy invariant called horn maps. Let us quickly recall what these are.

\newcommand{\Pa}{\cal P_\at}
\newcommand{\Pe}{\cal P_\rep}

\begin{figure}
\begin{tikzpicture}[>=latex]
\node at (0,0) {\includegraphics[width=6cm]{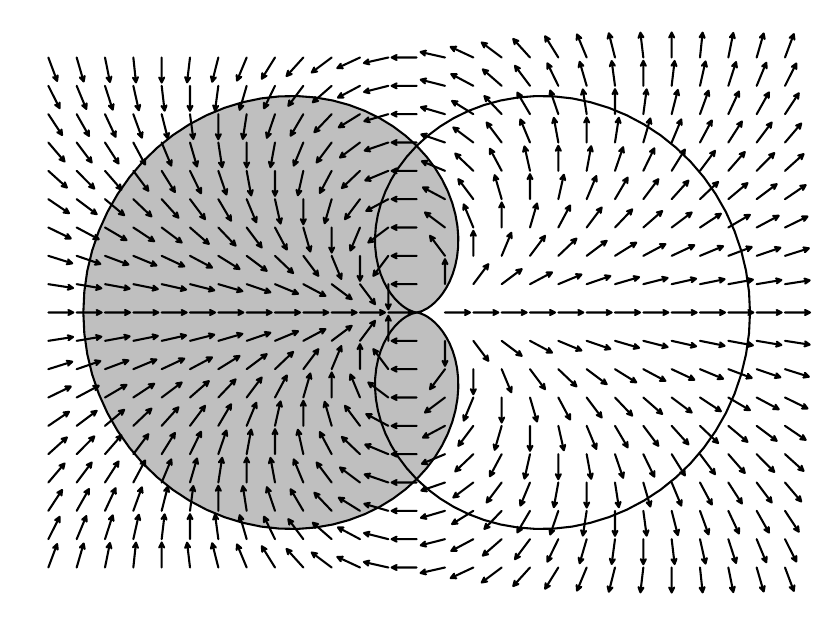}};
\node at (6,0) {\includegraphics[width=6cm]{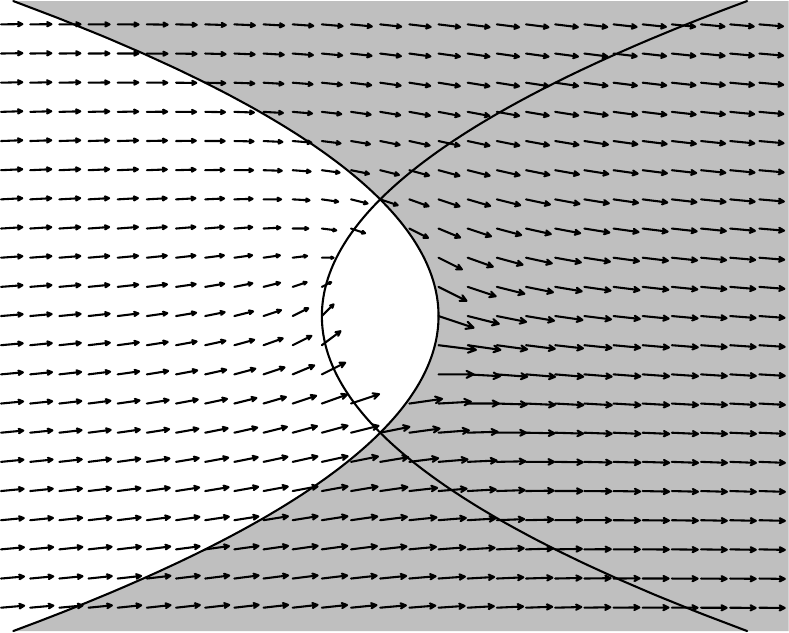}};
\node at (0,-5) {\includegraphics[width=6cm]{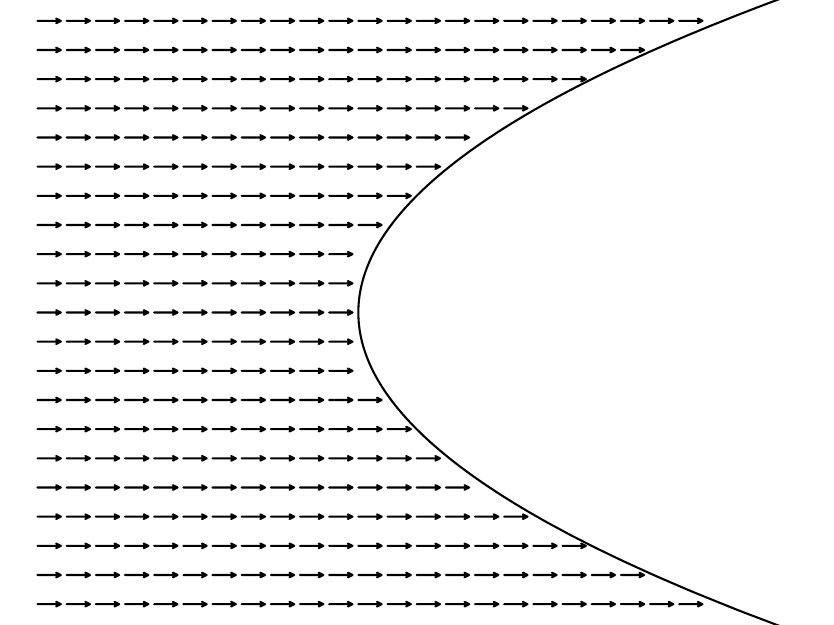}};
\node at (6,-5) {\includegraphics[width=6cm]{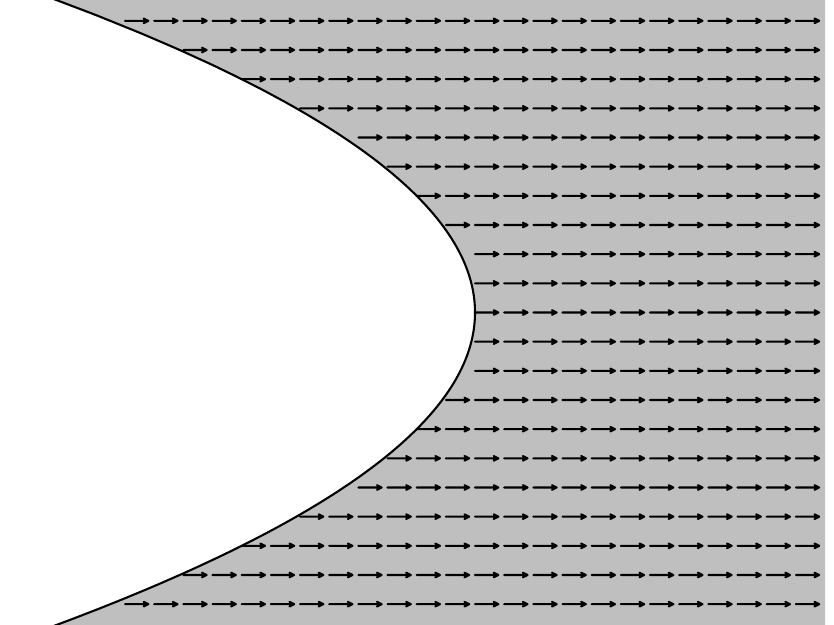}};
\end{tikzpicture}
\caption{Illustration of petals and Fatou coordinates (schematic). Parabolic petals may look like the upper left picture. An attracting petal has been colored in gray. The arrows indicate the direction but not the length (for more readability) of the vector $f(z)-z$: the latter gets quite small near the fixed point. Upper right: The map $z\mapsto -1/a_2z$ will send the petals to the two regions bounded by parabolic-like curves (and exterior to these curves). Lower left and right: the repelling and attracting Fatou coordinates conjugate $f$ to a translation and map both petals to two other regions that look very much like the upper right picture, but cannot be drawn on a same complex plane in a compatible way: this is precisely the  horn maps that tell how they glue.}
\label{fig:petals}
\end{figure}

\entete{Petals and Fatou coordinates:} There are various domains on which the Fatou coordinates are usually defined by different authors, but the following is certainly true: there exists a domain $\Pa$ that we will call \keyw{(wide) attracting petal} and a function $\Phi_\at:\Pa\to\C$ called \keyw{attracting Fatou coordinate} such that 
\begin{itemize}
\item $\Pa$ is open, non-empty, connected and simply connected,
\item $f(\Pa)\subset \Pa$,
\item $\forall z\in \Pa$, $f^n(z)\tend 0$ as $n\tend+\infty$,
\item conversely all orbits tending to $0$ eventually fall in $\Pa$ or on $0$, 
\item $\Phi_\at$ is injective on $\Pa$,
\item (wide) its image $\Phi_\at(\Pa)$ contains big sectors as follows: $\forall \epsilon$, $\exists R>0$,\\ $\setof{z\in\C}{|z|>R\text{ and }|\arg(z)|<\pi-\epsilon}\subset\Phi_\at(\Pa)$,
\item the following form of conjugacy holds:\\ $\forall z\in\Pa$, $\Phi_\at\circ f(z) = T_1\circ\Phi_\at(z)$ with $T_1(z)=z+1$, 
\item $\Phi_\at(z) \sim -1/a_2z$ as $z\tend 0$.
\end{itemize}
The repelling version of the Fatou coordinate is similar to the attracting version for a branch of $f^{-1}$ fixing the origin, with the difference that we ask $\Phi_\rep$ to be the composition of an attracting
Fatou coordinate for $f^{-1}$ followed by $z\mapsto -z$, so that it still conjugates $f$ to the translation by $1$.
Also $\Phi_\rep(z) \sim -1/a_2z$ as $z\tend 0$, exactly as $\Phi_\at$. The inverses $\Phi_\at^{-1}(z)$ and $\Phi_\rep^{-1}(z)$ also satisfy this equivalent, but as $z\tend \infty$. See \Cref{fig:petals} for an illustration.
The petals are not canonically defined: many variants exist satisfying the above conditions, many others exists satisfying other conditions, and it is not clear which definition should be preferred.

\entete{Normalization:} For all $c,c'\in\C$ the maps $\Phi_\at+c$ and $\Phi_\rep+c'$ satisfy the same properties.
Conversely there is a form of uniqueness:
Assume $U_1$ and $U_2$ are open sets, contained in $\dom(f)$,  $f(U_i)\subset U_i$, all points in $U_i$ have their orbit tending to $0$ and every orbit in $X$ tending to $0$ is eventually either equal to $0$ or contained in $U_i$. Assume that there are holomorphic functions (not assumed injective) $\Phi_i : U_i\to \C$ such that $\Phi_i(f(z)) = \Phi_i(z)+1$ holds on $U_i$. Then $U_1\cap U_2$ satisfies the same assumptions, in particular it is non-empty, and there exists a constant $c\in\C$ such that $\Phi_2(z) = \Phi_1(z) +c$ holds on $U_1\cap U_2$. In particular, if one takes $\Phi_2=\Phi_\at$ and $U_2=\Pa$, we see that $\Phi_1$ must be equal to $c+\Phi_\at$ on the non-empty set $U_1 \cap \Pa$.
A similar statement holds for the repelling Fatou coordinate. So in some sense the Fatou coordinates are \emph{unique up to addition of a constant}. 
So Fatou coordinates come in classes parameterized by a complex number. The choice of an element in a class is called a \keyw{normalization}.

\entete{Extension:} There exists a unique extension of the attracting Fatou coordinate $\Phi_\at$ on the basin of the parabolic point, such that
\[\Phi_\at\circ f = T_1\circ \Phi_\at.\]
Here, we mean in particular that the two compositions have the same domain of definition, which is possible iff the domain of $\Phi_\at$ is the whole basin.
The extension can be defined as follows: let $\cal P_\at$ be an attracting petal on which a Fatou coordinate $\Phi$ is defined. For all $z$ such that there exists $n\in\N$ with $f^n(z)\in \cal P_\at$, the quantity $\Phi(f^n(z))-n$ is independent of $n$ and we define $\Phi_\at(z)$ to be this complex number.
The extended attracting Fatou coordinate plays the role of a greatest element in the set of attracting coordinates\footnote{For this to be correct we in fact set up an order relation on classes of Fatou coordinates, where equivalence is up to addition of a constant, define the order as inclusion of the domain of definition, and define Fatou coordinates as maps satisfying the weak assumptions given in the Normalization paragraph: i.e.\ we have at least to drop the injectivity assumption, as the greatest element will usually not satisfy it, and may also drop the big sectors assumption, though it is one that the greatest element does satisfy.}. It is not necessarily injective. In the cases we will look at, it will not be. If so, the relation above is not a conjugacy but a semi-conjugacy.

There is no similar maximal element for the repelling Fatou coordinates. Instead, there exists a unique extension of the reciprocal $\Psi_\rep=\Phi_\rep^{-1}$ such that
\[\Psi_\rep\circ  T_1 = f \circ \Psi_\rep.\]
Again, we want the domains of both compositions to be equal.
The definition is similar: let $\cal P_\rep$ be a repelling petal. For all $z\in\C$, there exists $n\in\N$ such that $z-n\in \Phi_\rep(\cal P_\rep)$. The existence and the value of the quantity $f^n(\Phi_\rep^{-1}(z-n))$ is independent of $n\geq 0$, and this defines $\Psi_\rep(z)$.
It is again holomorphic and not necessarily injective.

If $f$ is a global map (a map whose orbits are all defined for all times, like a polynomial, an entire map, a rational map, \ldots) then $\Psi_\rep$ is defined on the whole complex plane $\C$.

\entete{extended horn maps and parabolic renormalization:}\nomenclature[R]{$\cal R[f]$}{the upper parabolic renormalization of $f$\nomrefpage}\nomenclature[Hf]{$\cal H[f]$}{the horn map of $f$, semi-conjugatedby $E$\nomrefpage} The \keyw{extended horn map} is the composition
\[h = \Phi_\at\circ\Psi_\rep\]
of these extensions. Changing the normalizations of the Fatou coordinates replaces $h$ with its pre composition and post composition with two unrelated translations.

To define a renormalization, we proceed as follows.
This definition does not pretend to be the best one, it is well suited to our purposes. 
The map $h$ commutes with $T_1$ and its domain of definition is $T_1$-invariant and contains an upper and a lower half plane. There is thus a quotient map $\dom(h)/\Z \to \C/\Z$. Conjugate it by $E:z\mapsto e^{2i\pi z}$ to a map defined on an open subset of $\C^*$ containing a neighborhood of $0$ and $\infty$. With the properties of Fatou coordinates one proves that it can be continuously (and thus holomorphically) extended at these points, and that the extension fixes $0$ and $\infty$. We will denote $\cal H$ this extension, or $\cal H[f]$ to emphasize its dependence on $f$.
For the \keyw{upper parabolic renormalization} of $f$, denoted $\cal R[f]$, consider the restriction of this extension to the connected component of its domain of definition that contains $0$, and possibly pre and post compose it with two linear maps ($z\mapsto az$ and $z\mapsto bz$) to be chosen according to conventions.
For the lower parabolic renormalization of $f$, conjugate first the extension by $z\mapsto 1/z$, then restrict it to the connected component of the domain of definition containing $0$ and finally compose with linear maps. 
The reason why we allow for these linear maps is that we will find it convenient later to use a different normalization for parabolic renormalization than for Fatou coordinates and the associated horn map.

\entete{Another point of view on extended horn maps, and parabolic renormalization:}
Since $\Phi_\at$ and $\Psi_\rep$ are defined beyond the petal $\cal P_\at$ and beyond $\Phi_\rep(\cal P_\rep)$ by using iteration of $f$, the definition of $h[f]$ can be reformulated as follows:
\begin{itemize}
\item for $\zeta\in\dom(h[f])$, there exists $n\in\N$ such that $\zeta-n \in \Phi_\rep(\cal P_\rep)$,
\item $\zeta-n=\Phi_\rep(z)$ for a unique $z\in\cal P_\rep$, 
\item there exists $m\in\N$ such that $f^m(z)\in\cal P_\at$,
\item $h(\zeta)=\Phi_\at(f^m(z))-m+n$.
\end{itemize}
We have illustrated a possible orbit on \Cref{fig:decomp}.

\begin{figure}
\begin{tikzpicture}
\node at (0,-0.12) {\includegraphics[width=10cm]{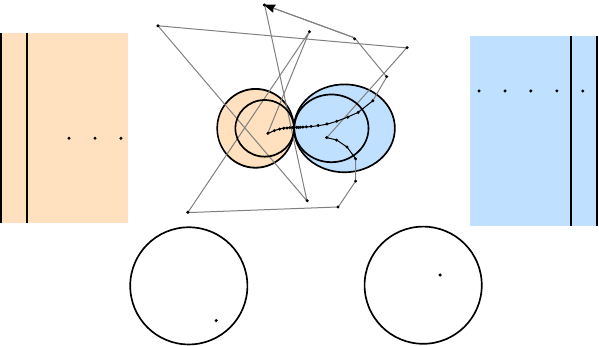}};
\draw[->] (0.55,-2.05) -- node[above] {$\cal H[f]$} (-0.35,-2.05);
\draw[->] (4.1,-1.2) to[out=-90,in=0] node[below,right] {$E$} (3.3,-2);
\draw[->] (-3.9,-1.2) to[out=-90,in=180] node[below,left] {$E$} (-3.1,-2);
\draw[->] (2.6,0.5) -- node[above] {$\Psi_\rep$} (1.9,0.5);
\draw[->] (-1.7,0.5) -- node[above] {$\Phi_\at$} (-2.4,0.5);
\node at (0.6,2.7) {$f$};
\node at (1.3,0.9) {$z$};
\filldraw (5.61,1.25) circle [radius=0.02];
\filldraw (5.18,1.25) circle [radius=0.02];
\node at (5.61,0.9) {$\zeta$};
\draw[-] (4.0,0.5) node[below] {$\zeta-n$}  -- (4.7,1.2);
\node at (1.8,-0.2) {$\cal P_\rep$};
\node at (-1,-0.2) {$\cal P_\at$};
\draw[-] (-3.2,1.3) node [above] {$h[f](\zeta)+m-n$} -- (-3.78,0.54);
\end{tikzpicture}
\caption{Decomposing $h[f]$ and $\cal H[f]$. For convenience, we have chosen petals $\cal P_\at$ and $\cal P_\rep$ whose image in Fatou coordinates are right and left half planes. Note that the orbit may visit the repelling petal more than one time, and does not necessarily enter the attracting petal by its leftmost part (the crescent shaped fundamental domain).}
\label{fig:decomp}
\end{figure}

\entete{The iterative residue:} Let
\[ f(z)  =  z +a_2z^2+a_3 z^3 +\ldots \]
be the power series expansion of $f$. The iterative residue of $f$ is the quantity $\gamma = 1- \frac{a_3}{a_2^2}$. It is related to the residue at $0$ of the meromorphic form $\frac{dz}{f(z)-z}$ by the following formula: $\frac{1}{2\pi i}\oint \frac{dz}{f(z)-z} = \gamma-1$. In fact the (multivalued near the origin) primitive $\int \frac{dz}{f(z)-z}+\frac{dz}{z}$ turns out to be an interesting approximation of the Fatou coordinates, as their expansions share the same first two terms: as $z$ tends to $0$ within a closed sector avoiding the repelling axis for $\Phi=\Phi_\at$ or the attracting axis for $\Phi=\Phi_\rep$:
\[ \Phi(z) = \frac{-1}{a_2 z} - \gamma \log z + \on{constant} + o(1).
\]
Another characterization is in terms of the horn map: there are expansions
\[\begin{array}{rcll}
h(z)&=&z+a_{\on{up}}+o(1)&\text{as }\im(z)\to+\infty\\
h(z)&=&z+a_{\on{down}}+o(1)&\text{as }\im(z)\to-\infty
\end{array}\]
The constants $a_{\on{up}}$ and $a_{\on{down}}$ depend on the normalization of Fatou coordinates, but not the quantity $a_{\on{up}}-a_{\on{down}}$. It turns out that \[ a_{\on{up}}-a_{\on{down}} = -2\pi i \gamma
. \]
Interestingly, if we consider the horn map with the normalization number~\ref{item:nor:2} presented below, then $a_{\on{up}}=-\pi i\gamma$ and $a_{\on{down}}=\pi i\gamma$.

\entete{Some normalizations:} 
We will give here three examples of normalizations for the upper parabolic renormalization $\cal R[f]$ of $f$. The first two work well for germs,\footnote{We use the word \keyw{germ} in the following meaning: an equivalence class of holomorphic maps defined near the origin, with $f\sim g$ if they coincide in some neighborhood of $0$. This is equivalent to $f$ and $g$ having the same power series expansion at the origin.} the third makes strong structural assumptions on $f$. Recall $\cal H[f]$ denotes the semi-conjugate of the horn map by the map $E:z\mapsto e^{2\pi i z}$, extended at $0$ and $\infty$ by fixing them, and that
\[\cal R[f] = A\circ \cal H^r[f] \circ B (z)\]
for some linear maps
\bEA
A: z & \mapsto & az \text{ and}
\\ B: & z\mapsto & bz,
\eEA
where $\cal H^r[f]$ denotes the restriction of\footnote{For lower renormalization instead of upper, replace $\cal H[f]$ with $s\circ \cal H[f]\circ s$ where $s(z)=1/z$.} $\cal H[f]$ to the component containing $0$ of its domain.
Let
\bEA  f(z) & = & z +a_2z^2+a_3 z^3 +\ldots
\\ \cal H[f](z) & = & b_1 z +b_2z^2+\ldots
\\ \cal R[f](z) & = & b'_1 z +b'_2z^2+\ldots
\eEA
be their power series expansions. We have $b_1\in \C^*$, and $b_2\in \C$. The constants $b_1'$ and $b_2'$ can are expressed from $a,b,b_1$ and $b_2$ as follows $b_1'=abb_1$ and $b_2'=ab^2b_2$.
Here are our examples of normalizations:
\begin{enumerate}
\item By imposing $b'_1=1$ and $b'_2=1$: this first approach is easier but assumes that $b_2\neq 0$. Then there is a unique pair of linear maps $A,B$ such that $\cal R[f](z) = z + z^2 + \cal O(z^3)$.
\item\label{item:nor:2} By normalizing the expansion of the Fatou coordinates and taking $\cal R = \cal H$: Fatou coordinates are unique up to addition of a constant. Moreover, the following limited expansion is valid (even though there is not a \emph{convergent} power series expansion in general): on all closed sectors avoiding respectively the repelling and the attracting axis, we have, as $z\to 0$:
\bEA
\Phi_\at(z) &=& \frac{-1}{a_2 z} - \gamma \log_p \frac{-1}{a_2 z} + \on{constant} + o(1)
\\
\Phi_\rep(z) &=& \frac{-1}{a_2 z} - \gamma \log_p \frac{1}{a_2 z} + \on{constant} + o(1)
\eEA
where $\log_p$ denotes the principal branch of the logarithm. The normalization just consists in adding constants to both Fatou coordinates so as to cancel the two constants in the above expansions. This normalizes $h=\Phi_\at\circ \Psi_\rep$ and we then choose  $\cal R [f] = \cal H[f]$.
Note that with this normalization, 
\bEA
& & h(z) = z - i\pi \gamma + o(1)\text{ as } \im z\to+\infty\text{ and} 
\\ & & h(z) = z + i\pi \gamma + o(1)\text{ as }\im z\to-\infty.
\eEA
where $\gamma$ is the iterative residue.
\item\label{item:nor:3} By the singular value: we will meet later in this article a class of maps whose renormalizations have a unique critical value.\footnote{or a unique non-zero singular value}
Fix a preferred complex number $v\in\C^*$. We then choose the linear map $A$ so as to place the critical value of $\cal R[f] = A\circ \cal H^r[f] \circ B$ at $v$ and then $B$ so that $A\circ \cal R[f] \circ B$ has derivative $1$ at the origin.
\end{enumerate}
Each of these conventions has its own advantages. Conventions number~1 and~3 have the property that $\cal R [g\circ f\circ g^{-1}] = \cal R[f]$ in a neighborhood of $0$ for all holomorphic maps $g$ fixing the origin with $g'(0)\neq 0$. They also give back a germ $\cal R[f]$ tangent to the identity. Number~2 does not necessarily, but it is defined for all $f$. We will work with a class of maps satisfying number~3. Our choice in most of the article will be to normalize Fatou coordinates, the horn map and $\cal H[f]$ according to number~2, and the parabolic renormalization $\cal R[f]$ according to number~3.

\subsection{What are horn maps good for?}

Horn maps occur in at least two ways: 
\begin{itemize}
\item First as local conjugacy invariants. A complete local conjugacy invariant of a non-linearizable parabolic germ with one attracting petal is more or less given by the data of the pair of germs of its horn maps at both ends of the cylinder (see \cite{V} for precise statements; \cite{MR} gives an interesting equivalent point of view).
\item Second as limits of cylinder renormalization. If a sequence of maps $f_n$ tends to $f$ and fix the origin with multiplier $\lambda_n$ and if $2\pi i/(\lambda_n-1) = N_n + a +o(1)$ with $N_n\in\Z$, $N_n\tend \pm\infty$ and $a\in\C$, under some mild supplementary assumptions, the fixed point of $f$ at the origin is the limit of a pair of fixed points of $f_n$, the origin and another one, and is possible to draw crescent shaped domains with tips at these two fixed points delimited by a curve $C_n$ and its image $f_n(C_n)$. The quotient of this domain by identifying $z\in C_n$ with $f_n(z)$ is isomorphic as a Riemann surface to the cylinder $\C/\Z$. The first return map from the cylinder to itself then tends, as $n\tend +\infty$, to the horn map (up to pre and post composition with translations). See \cite{L,D,S2,S,IS}.
\end{itemize}

The second point justifies why it makes sense to iterate horn maps.

A very important application comes from Lavaurs' theorem: let $\sigma\in\C$ and let the Lavaurs map $g_\sigma$ be defined as
\[g_{\sigma} = \Psi_\rep\circ T_\sigma \circ \Phi_\at\]
where $T_\sigma(z)=z+\sigma$.
Then under the same assumptions as above, $f_n^{N_n}\tend g_{\sigma}$ for some value of $\sigma$ that depends on $a$ (and on the chosen normalizations of the Fatou coordinates). This is why the Lavaurs maps are also called \keyw{geometric limits} by analogy with the field of Kleinian groups.
Application of Lavaurs's theorem include parabolic enrichments (understanding the Hausdorff limits of Julia sets of a sequence of polynomials tending to one with a non-linearizable parabolic point), non local connectedness of some bifurcation loci, and several discontinuity theorems.

Now horn maps are closely related to Lavaurs maps because each are semi-conjugate of the other. More precisely, consider the following non-commuting diagram:
\begin{center}\begin{tikzpicture}[>=latex]
\node (u) at (0,1.1) {$\C$};
\node (l) at (-.7,0) {$\C$};
\node (r) at (.7,0) {$\C$};
\draw[->] (r) -- node[below] {$T_\sigma$} (l);
\draw[->] (l) -- node[left] {$\Psi_\rep$} (u);
\draw[->] (u) -- node[right] {$\Phi_\at$} (r);
\end{tikzpicture}\end{center}
The map $g_\sigma$ is the composition obtained by starting from the top node, and following the arrows in a loop back to the starting node. The map $h_\sigma := T_\sigma\circ h$ is the same but starting from the lower left corner.

\begin{center}\begin{tikzpicture}[>=latex]
\node (u) at (0,1.8) {$\C$};
\node (l) at (-1.1,0) {$\C$};
\node (r) at (1.1,0) {$\C$};
\draw[->,style=dashed,rounded corners=20pt] (0.07,1.4) -- (0.8,0.15) --  (-0.8,0.15) -- (-0.07,1.4);
\draw[->,style=dashed,rounded corners=30pt] (-1.2,0.4) -- (0,2.6) -- (1.6,-0.3) -- (-0.8,-0.3);
\node at (0.05,0.5) {$g_\sigma$};
\node at (0.8,1.9) {$h_\sigma$};
\draw[->] (r) -- node[below] {$T_\sigma$} (l);
\draw[->] (l) -- node[left] {$\Psi_\rep$} (u);
\draw[->] (u) -- node[right] {$\Phi_\at$} (r);
\end{tikzpicture}
\end{center}

Following one resp.\ two arrows from one corner to another gives a semi-conjugacy from $h_\sigma$ to $g_\sigma$ resp.\ from $g_\sigma$ to $h_\sigma$. 
The first advantage of horn maps over Lavaurs maps is that they are easier to understand and have better covering properties in many applications (the best is to project the extended horn maps, they commute with $T_1$, down to a dynamical system on $\C/\Z$). From this stems a second advantage:  the invariance under parabolic renormalization of some classes of maps, as explained in the following sections.

\subsection{A reminder about singular values of maps}

Let $f : X \to Y$ be a holomorphic map where $X$ and $Y$ are Riemann surfaces. 
Let us recall that a \keyw{singular value} of $f$, as a map from $X$ to $Y$, is an element $z\in Y$ which has no open neighborhood over which $f$ is a cover\footnote{I.e.\ there is no open subset $V$ of $Y$ containing $z$ s.t.\ $f$ is a cover from $f^{-1}(V)$ to $V$. The definition is equivalent if we consider only neighborhoods $V$ of $z$ homeomorphic to disks.}.
Every critical value is singular, as is every asymptotic value\footnote{A point $z\in Y$ is an asymptotic value whenever there exists a continuous path  $\gamma : [0,t[ \to X$ that leaves every compact of $X$ and whose image by $f$ tends to $z$.}, and it is a simple yet very useful theorem that the set of singular values is the closure of the set of all critical and asymptotic values (see for instance\footnote{The language is slightly different in \cite{Er} since he calls singular values the critical or asymptotical ones. But his Proposition~1 amounts our claim.} \cite{Er} or Corollary~2.7 in \cite{RS}).

It shall be noted that restricting the domain of a map will likely introduce a lot of singular values: if $U\subset X$, every point in $f(\partial U)$ will be a singular value of $f$ as a map from $U$ to $Y$. In fact:
\begin{lemma}[folk.]\label{lem:singr}
Let $F:X\to Y$ be holomorphic and denote $S$ its set of singular values. Assume $A\subset X$ and $B\subset Y$ are open $X$ and that $f(A)\subset B$. Then the set of singular values of the restriction $\tilde f : A\to B$ of $f$ contains $B\cap f(\partial A)$ and is contained in $f(\partial A)\cup S$.
\end{lemma}
\begin{proof}
First inclusion: The set of points in $\partial A$ that are accessible from $A$ is dense in $\partial A$. If $b=f(a)\in B$ with $a\in\partial A$ then $a$ is the limit of $a_n\in\partial A$ which is accessible and $f(a_n)$ is an asymptotic value of $\tilde f$ and tends to $b$, hence $b$ is a singular value.

Second inclusion: it is enough to prove it for critical values and critical points, since the set of singular values of $\tilde f$ is the closure of their union.
All critical values of $\tilde f$ are of course critical values of $f$, hence in $S$. Consider an asymptotic value $b$ of $\tilde f$ and let $\gamma:{[0,1[}\to A$ with $f\circ\gamma(t)\underset{t\to 1}\tend b$ and $\gamma(t)$ leaves every compact of $A$. If the set of accumulation points of $\gamma$ in $X$ contains more than one point, then $f$ must be constant on the connected component of $\dom f$ containing $\gamma$, and then $b$ is a singular value. Otherwise either $\gamma$ leaves every compact of $X$, and then $b$ is an asymptotic value of $f$ hence in $S$, or $\gamma$ converges to a point $a\in \partial A$, whence $b=f(a)\in f(\partial A)$.
\end{proof}

Similarly, enlarging the range $Y$ of $f:X\to Y$ will introduce singular values at boundary points.

As a corollary of \Cref{lem:singr}, if we restrict $f$ to a parabolic immediate basin, we do not introduce new singular values:

\begin{lemma}[folk.]\label{lem:singra}
If $f:U\subset X\to X$ is a holomorphic map with a non-linearizable parabolic fixed point $p$, and if $A$ denotes the union of a cycle of immediate basins of $p$, then the set of singular values of $f|_A : A\to A$ is contained in the intersection of $A$ with the set of singular values of $f$.
\end{lemma}
\begin{proof}
Indeed $f(\partial A)\cap A = \emptyset$ (here $\partial$ is relative to $U$). \end{proof}

%
%

\subsection{Universality and maps with all ``the'' structure}\label{subsec:all}

For $d \geq 2$ an integer,  let
\[B_d(z) = \left(\frac{z+a}{1+a z}\right)^d\text{ with }a = a_d = \frac{d-1}{d+1}.\]
Let\nomenclature[Bd]{$B_d$}{a unicritical Blaschke product with a parabolic point at $z=1$\nomrefpage}
\[B_\infty(z) =\exp\left(2\frac{z-1}{z+1}\right).\]
They induce unisingular self maps of $\D$ with a unique singular value $z=0$ in $\D$ and they have a non-linearizable parabolic fixed point on the boundary at $z=1$ with two attracting petals. 
Interestingly:
\[B_d\underset{d\to+\infty}\tend B_\infty\]
uniformly on compact subsets of $\D$.

The unit disk is the (immediate) basin of one of the two petals. The inverse of the unit disk is the basin of the other.
We let $\Phi_\at[B_d]: \D\to\C$ be the extended attracting Fatou coordinate for the first petal. The map has also two repelling petals, with vertical axes. We choose the one on the top and let $\Psi_\rep[B_d]$ denote the corresponding extended repelling Fatou coordinate. We let $h[B_d]=\Phi_\at\circ\Psi_\rep$. It is defined on an upper half plane.

The following theorem is an interpretation of a classical theorem of Fatou.

\begin{theorem}[Fatou$+$folk]\label{thm:fat}
Let $f : U\subset \wh{\C} \to\wh {\C}$ a holomorphic map with a non-linearizable parabolic fixed point. Let $A$ be a cycle of immediate  parabolic basins associated to this fixed point. Then 
\begin{itemize}
\item either $U=\wh{\C}$ and $f$ is a homography
\item or there is a singular value in $A$ of the restriction of $f$ to $A$.
\end{itemize}
\end{theorem}
\begin{proof}(Sketch)
Assume that the second point does not hold.
Since the set of singular values is the closure of the union of the set of critical values of and asymptotic values, it follows that $A$ contains no asymptotic nor critical values of $f|_A$. In particular there is no critical point of $f$ in $A$ so $\Phi_\at$ has no critical point either.

From the absence of singular value one get the following path-lifting property:
Given a close-ended path $\gamma:[0,1]\to A$ and a point $a\in A$ such that $f(a) =\gamma(0)$, there exists a lift $\tilde \gamma$ by $f$ that starts from $a$ and such that $\tilde\gamma([0,1])\subset A$.

Let $\Phi_\at: A\to\C$ be the attracting Fatou coordinate extended to $A$. One can then prove that $\Phi_\at$ also has a path lifting property: Given a path $\gamma:[0,1]\to \C$ and a point $a\in A$ such that $\Phi_\at(a) =\gamma(0)$, there exists a lift $\tilde \gamma$ by $\Phi_\at$ that starts from $a$ and such that $\tilde\gamma([0,1])\subset A$.
The proof consists in finding $n\geq 0$ such that $f^n(a)$ is in an attracting  petal $\cal P$ and $n+\gamma$ is contained in $\Phi_\at(\cal P)$, then applying the first path lifting proprery $n$ times.

One deduces that $\Phi_\at$ has no asymptotic values. Since $\Phi_\at$ cannot have critical values either, it has no singular values and hence it is a covering from $A$ to $\C$. Since $\C$ is simply connected, each connected component of $A$ is conformally isomorphic to $\C$. So each component is $\wh{\C}$ minus one point, so there is only one such component and the parabolic point $p$ is the missing point, so $U=\wh{\C}$. The isomorphism $\Phi_\at$ is a homography, and $f$ restricted to $\C\setminus\{p\}$ is the conjugate of a translation by this homography, hence $f$ is a homography of $\wh{\C}$.
\end{proof}

\begin{complem}
In the second case of \Cref{thm:fat}, at least one of the two statements below is true
\begin{itemize}
\item there is a critical point of $f$ in $A$,
\item or there is an open ended path in $\gamma:[0,1[\to A$ which leaves every compact of $U$ and whose image $f\circ \gamma$ tends to a point of $A$.
\end{itemize}
\end{complem}
\begin{proof}
Indeed the set of singular values of $f|_U$ is the closure of the set of its critical values and of its asymptotic values. In the case of an asymptotic value $v$, we will repeat here the analysis done in the proof of \Cref{lem:singr}: there is a path $\gamma:{[0,1[}\to A$ that leaves every compact of $A$ and with $f\circ \gamma(t)\tend v$ as $t\to 1$. Such a path must also leave every compact of $U$ for otherwise:

-- either $\gamma(t)$ converges in $U$ to a point $a$ that must then be in $\partial A$ but also must satisfy $f(a)\in A$, hence a whole neighborhood of $a$ in $U$ is contained in the basin, but it also contains points of $A$ so $a\in A$, leading to a contradiction.

-- or $\gamma(t)$ has an accumulation set that is bigger than one point. But since $f$ is holomorphic and nowhere constant, this would contradict that $f\circ \gamma(t)$ converges.
\end{proof}

The following theorem treats the case when there is only one such singular value.

\begin{theorem}[folk]\label{thm:s1}
Let $f$ be as in \Cref{thm:fat} and $A$ a cycle of immediate basins of its parabolic fixed point. Denote by $p$ the period of $A$.
Assume that one and only one singular value of the restriction of $f$ to $A$ lies in $A$. Then the restriction of $f^p$ to any connected component of $A$ is analytically conjugated to the restriction of $B_d$ to $\D$ for some $d\in \{2, 3, \ldots\}\cup\{\infty\}$.
\end{theorem}

See for instance \cite{DH}, exposé IX for a similar statement. We will be mainly interested by the case $p=1$. This has the following consequences, discovered by several authors, including Shishikura (see \cite{S2}), and Lanford and Yampolsky (see \cite{LY}). See also Part~3 of \cite{Che1}.

\begin{corollary}[S., L.-Y.]\label{cor:univ}
With the same notations, call $\zeta: A\to \D$ the conjugacy from $f$ to $B_d$. Then there exists a constant $\tau\in\C$ (which depends on the normalizations of the Fatou coordinates) such that $\Phi_\at[B_d] \circ \zeta = \tau+\Phi_\at[f]$, where the right hand side is restricted to $A$.
\end{corollary}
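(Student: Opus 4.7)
The strategy is to recognize $\Phi_\at[B_d]\circ \zeta$ as an attracting Fatou coordinate for $f\big|_A$, and then to invoke the uniqueness of Fatou coordinates up to an additive constant as stated in the Normalization paragraph of Section~\ref{subsec:parabopt}.

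First, since $\zeta : A\to \D$ is a conformal conjugacy and since the extended Fatou coordinate $\Phi_\at[B_d]$ is defined on the whole immediate basin of the parabolic point $z=1$ of $B_d$, which is exactly $\D$, the composition $\Phi_\at[B_d]\circ \zeta$ is a well-defined holomorphic function on all of $A$. Using the conjugacy $\zeta\circ f = B_d\circ \zeta$ on $A$ and the functional equation $\Phi_\at[B_d]\circ B_d = T_1\circ \Phi_\at[B_d]$ on $\D$, one gets
\[\Phi_\at[B_d]\circ \zeta \circ f \;=\; \Phi_\at[B_d]\circ B_d \circ \zeta \;=\; T_1\circ \Phi_\at[B_d]\circ \zeta\]
on $A$. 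Hence $\Phi_\at[B_d]\circ \zeta$ is a holomorphic semi-conjugacy from $f\big|_A$ to $T_1$, i.e.\ a Fatou coordinate in the weak sense used in the Normalization paragraph.

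Second, check that $A$ fits into the uniqueness statement. By hypothesis $A$ is the immediate basin associated to the given attracting petal of $f$, so $A$ is open, $f(A)\subset A$, every orbit starting in $A$ converges to the parabolic fixed point, and conversely every orbit of $f$ that converges to the parabolic fixed point through this particular attracting axis eventually enters the petal contained in $A$, hence eventually lies in $A$. The extended attracting Fatou coordinate $\Phi_\at[f]$ is defined on the whole basin of that petal, which contains $A$, and satisfies the same functional equation.

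Third, apply the uniqueness statement to $U_1 = U_2 = A$ (or equivalently to $U_1=A$ and $U_2$ the full basin and then restrict) with $\Phi_1 = \Phi_\at[B_d]\circ \zeta$ and $\Phi_2 = \Phi_\at[f]\big|_A$. This yields a constant $\tau\in\C$ such that
\[\Phi_\at[B_d]\circ \zeta \;=\; \tau + \Phi_\at[f]\]
throughout $A$, which is the claimed identity. The only genuinely non-formal point is the verification that $A$ is a valid domain for the uniqueness statement (in particular the property that every orbit tending to the parabolic fixed point through the prescribed petal eventually falls in $A$); this follows at once from the definition of the immediate basin and the structure of parabolic dynamics. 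Everything else is a direct unwinding of the functional equations.
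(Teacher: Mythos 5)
The paper does not actually prove this corollary: it is stated with attribution to Shishikura and Lanford--Yampolsky (``[S., L.-Y.]''), with a pointer to \cite{DH}, \cite{S2}, \cite{LY} for this circle of results. So there is no internal proof to compare against. That said, your argument is the standard one and it is correct. Recognizing $\Phi_\at[B_d]\circ\zeta$ as a weak Fatou coordinate on $A$ via the chain of functional equations, and then invoking uniqueness-up-to-an-additive-constant, is exactly the canonical route.

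One small point worth being explicit about. The uniqueness statement in the Normalization paragraph of Section~\ref{subsec:parabopt} is phrased under the standing assumption that the parabolic point has a single attracting petal, and the hypothesis ``every orbit tending to $0$ is eventually either equal to $0$ or contained in $U_i$'' is stated for $0$-bound orbits in general. In the setting of Theorem~\ref{thm:s1} (hence of the corollary) the parabolic point may carry several attracting petals, and orbits approaching $0$ along a different axis will never enter $A$, so the condition as literally stated fails. You handle this correctly by restricting to orbits ``that converge through this particular attracting axis,'' and indeed the actual mechanism only needs $A$ to contain a petal $\Pa$ of the relevant axis: both $\Phi_\at[B_d]\circ\zeta$ and $\Phi_\at[f]$ restrict to genuine Fatou coordinates on $\Pa$, they differ there by a constant by uniqueness on the petal, and the equality then propagates to all of $A$ by the functional equation (or simply by analytic continuation, since both sides are holomorphic on the connected set $A$). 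Your proof would be slightly tighter if you applied the uniqueness with $U_1=U_2=\Pa$ directly rather than $U_1=U_2=A$, but the end result is the same and the argument is sound.
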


Thus in particular, using the terminology introduced here, $\tau+\Phi_\at[f]$ restricted to $A$ is structurally equivalent to $\Phi_\at[B_d]$ over $\C$.
This is illustrated on \Cref{fig:univ_rabbit,fig:univ_blaf,fig:univ_exp_1,fig:univ_exp_2}, using a widespread visualization technique explained in \Cref{sec:viz}.

\medskip

Below is a theorem specifying some structure of renormalizations of maps of \Cref{thm:s1}. For simplicity we restrict to the case with only one attracting petal. For this statement and the subsequent one, we will denote $\sclass_d$ the set of maps $f$ as in \Cref{thm:s1} that satisfy its conclusion with this value of $d$ and that have only one attracting petal.\nomenclature[Sd]{$\sclass$}{parabolic renormalization invariant class with the full structure\nomrefpage}
In other words:

\begin{definition}\label{def:sclass}
Let $d\in \{2, 3, \ldots\}\cup\{\infty\}$. We denote $\sclass_d$ the set of holomorphic maps $f : U\subset \wh{\C} \to\wh {\C}$ with a non-linearizable parabolic fixed point with only one attracting petal, such that if we denote $A$ its immediate parabolic basin, then there is one and only one singular value in $A$ of the restriction of $f$ to $A$, and such that:
\begin{itemize}
\item (if $d=\infty$) this singular value is an asymptotic value of $f|_A$ or
\item (if $d<\infty$) there is a critical point of $f|_A$ of degree $d$.
\end{itemize}
\end{definition}

\noindent According to \Cref{thm:s1} these cases are mutually exclusive.

There is no specification in the statement below of which normalization is chosen for the parabolic renomalization $\cal R[f]$ of $f$.

\begin{theorem}[S., L.-Y.]\label{thm:shi2a}
Consider a map $f\in \sclass_d$.
Then the upper or lower renormalization $\cal R[f] : V \to \wh{\C}$ is defined on a simply connected subset of $\C$ and has exactly $3$ singular values: the asymptotic values $z=0$, $z=\infty$ and a critical value $z=v\in\C^*$ if $d$ is finite or an asymptotic value $z=v\in\C^*$ otherwise. We have $\cal R[f]^{-1}(\{0\})=\{0\}$, $\cal R[f]^{-1}(\{\infty\})$ is empty. 
\begin{itemize}
\item If $d<\infty$, the set $\cal R[f]^{-1}(\{v\})$ consists in regular points and critical points of degree $d$, and $v$ is not an asymptotic value.
\item If $d=\infty$, the set $\cal R[f]^{-1}(\{v\})$ consists only in regular points, and $v$ is an asymptotic value.
\end{itemize}
\end{theorem}

The following lemma is not optimal but it will be convenient for future reference.

\begin{lemma}\label{lem:sufsd}
If $v\in\C^*$ and $f:U\to\hat \C$ is such that:
\begin{itemize}
\item The set of singular values of $f$ is equal to $\{0,v,\infty\}$,
\item $f(0)=0$ and $f'(0)=1$,
\item $U\subset \C$.
\end{itemize} 
Then $f\in\cal S_d$ for some $d\in \{2, 3, \ldots\}\cup\{\infty\}$.
\end{lemma}
\begin{proof}
We first prove by contradiction that $f$ cannot be the identity near $0$.
Otherwise it would be the identity on the component of $U$ containing $0$ by analytic continuation.
Then $\partial U$ would be contained in the set of singular values of $f$.
Hence $\partial U\subset \{v,\infty\}$. But then $0$ cannot be a singular value, leading to a contradiction.
Hence $f$ it has a non-linearizable parabolic point at $0$: it has petals.

Since $f'(0)=1$ the petals have period one. We then prove that $f$ can have only one attracting petal. Let $A$ be the immediate basin of an attracting petal.
By \Cref{lem:singra}, the restriction of $\tilde f:A\to A$ has a set of singular values contained in that of $f$. But an immediate basin of $f$ cannot contain $0$, which is fixed, nor $\infty$, which is not in the domain of $f$, so only $v$ is available as a singular value. Moreover different immediate basins being disjoint, the singular values that each contains must be distinct. Hence there can only be one immediate basin, and it contains $v$ but not $0$ nor $\infty$.
\end{proof}

This applies in particular to the map $\cal R[f]$ of \Cref{thm:shi2b}. Hence we get for each value of $d$ a class that is stable by parabolic renormalization: 

\begin{corollary}[S., L.-Y.]\label{cor:F}
Consider a parabolic renormalization $\cal R[f]$ of a map $f\in \sclass_d$. If $\cal R[f]$ is normalized so that $\cal R[f](0)=1$ then $\cal R[f]\in \sclass_d$ too.
\end{corollary}

The following statement complements this.

\begin{theorem}[S., L.-Y.]\label{thm:shi2b}
Choose and fix any $v\in\C^*$.
All the maps $f\in\sclass_d$ have structurally equivalent upper renormalizations $\cal R[f]$, when the latter is normalized so that $v$ is a singular value ($0$ and $\infty$ necessarily are). More precisely they are $(I,Y)$-structurally equivalent with $Y=\C$, $I$ being a singleton and the marked point being the origin.
The same holds for the lower renormalization, and the upper one is structurally equivalent to the conjugate of the lower one by the reflection $z\mapsto 1/\ov{z}$.
\end{theorem}

This is illustrated in \Cref{fig:th2:1,fig:th2:2}.

By \Cref{cor:F} the two structural equivalence classes mentioned in \Cref{thm:shi2b} are stable by upper, resp.\ lower, parabolic renormalization. 
For what we are concerned with in this article, this is the base of everything. 

\medskip

For later reference, let us mention the following universality statement for the repelling extended Fatou coordinates: 
\begin{complem}[S., L.--Y.]
Recall $h=\Phi_\at\circ\Psi_\rep$.
Let $f$ be a holomorphic map as in \Cref{thm:shi2b}. Let $U[f]$ denote the component of the domain of $h[f]$ that contains an upper (resp.\ a lower) half plane. (Up to a complex rescaling, resp.\ an inversion and a complex rescaling, the image of $U[f]$ by $E:z\mapsto e^{2\pi iz}$ is the domain of the renormalization of $f$.) Then there is a conformal isomorphism $\phi_0 : U[f] \to U[B_d]$ that commutes with $T_1$ and such that $\Psi_\rep[B_d] \circ \phi_0 = \zeta \circ \Psi_\rep[f]$, where $\zeta : A[f] \to A[B_d]$ is the conjugacy of the immediate parabolic basins of the respective fixed attracting petals, mentioned in \Cref{thm:s1}.
\end{complem}

\subsection{Inou and Shishikura: giving up part of the structure to gain flexibility}

Here is the central gear in the work of Inou and Shishikura:

\begin{theorem}[Inou Shishikura]\label{thm:IS0}
There exists an explicit pair of open subsets $A,B$ of $\C$ and an explicit holomorphic map $f_0: B\to \C$
with the following properties:
\begin{enumerate}
\item $0\in A$, $A$ is compactly contained in $B$,
\item $A$ and $B$ are simply connected,
\item $f_0$ fixes the origin and has derivative $1$ there,
\item $f_0$ has exactly one critical point in $B$; this critical point has local degree two, belongs to $A$, and is mapped to $-4/27$ by $f_0$,
\item for any upper renormalization $\cal R[f]$ of a map $f\in\sclass_d$, there exists a subset $U$ of $\dom \cal R[f]$ and a holomorphic bijection $\phi : B \to U$ with $\phi(0)=0$ and $\cal R[f]\big|_U = f_0 \circ \phi^{-1}$,
\item for any univalent map $\phi : A \to \C$ with $\phi(0)=0$ and $\phi'(0)=1$, there exists a univalent map $\psi : B \to \C$ with $\psi(0)=0$ and $\psi'(0)=1$, such that the the map $f_0\circ \phi^{-1}$, which fixes the origin with multiplier one, has an upper renormalization which has a restriction of the form $f_0 \circ \psi^{-1}$.
\end{enumerate}
\end{theorem}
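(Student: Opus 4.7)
The plan is to take $f_0$ to be a specific upper parabolic renormalization drawn from the family of Theorem~\ref{thm:shi2}. The cleanest choice is $f_0 := \cal R[B_2]$, the upper parabolic renormalization of the degree-$2$ Blaschke product $B_2$ of Section~\ref{subsec:all}, normalized via convention~\ref{item:nor:3} and then linearly rescaled so that the unique critical value sits at $-4/27$ and $f_0'(0) = 1$. Setting $B := \dom f_0$, Theorem~\ref{thm:shi2} gives that $B$ is simply connected and that $f_0$ has exactly three singular values (the asymptotic values $0, \infty$ and one critical value of local degree $2$), which yields items (2) and (3). For item (4), any other upper renormalization $g$ of a map satisfying the hypotheses of Theorem~\ref{thm:shi2} is, by that theorem, $(\{*\},\C)$-structurally equivalent to $f_0$ with the origin as marker; unpacking the definition gives an analytic isomorphism $\phi : B \to \dom g$ with $\phi(0)=0$ and $g\circ\phi = f_0$, i.e.\ $g = f_0 \circ \phi^{-1}$, and one takes $U := \dom g$.

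The heart of the statement is item (5), which drives the construction of $A$. I would choose $A$ as a carefully tuned simply connected open set compactly contained in $B$, containing: the critical point of $f_0$; a wide attracting petal $\cal P_\at[f_0]$ and a wide repelling petal $\cal P_\rep[f_0]$; and enough iterated $f_0$-dynamics to realize an $f_0$-preimage of a fundamental domain of the horn map $h[f_0]$. Given any univalent $\phi : A \to \C$ with $\phi(0)=0$ and $\phi'(0)=1$, consider $F_\phi := f_0 \circ \phi^{-1}$ on $\phi(A)$. Then $F_\phi$ fixes $0$ with derivative $1$, has a unique critical point $\phi(c)$ of local degree $2$ (where $c$ is the critical point of $f_0$) sent to $-4/27$, and its only other singular behaviour is along the boundary of $\phi(A)$, since univalent pre-composition preserves the critical and asymptotic structure inside the domain. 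By the Koebe distortion theorem the family of normalized univalent maps on $A$ is compact on compact subsets, so the dynamics of $F_\phi$ stays uniformly close to that of $f_0$, and the petals, critical orbit and iterated preimages of a horn-map fundamental domain transported by $\phi$ stay inside $\phi(B)$; this allows one to define extended Fatou coordinates $\Phi_\at[F_\phi]$ and $\Psi_\rep[F_\phi]$ and hence an upper parabolic renormalization $g_\phi$ of $F_\phi$.

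To conclude, I apply Theorem~\ref{thm:s1} with $d=2$ to the immediate parabolic basin of $F_\phi$, whose unique singular value is the critical value $-4/27$: this yields an analytic conjugacy $\zeta_\phi$ between that basin and $\D$, conjugating the return dynamics of $F_\phi$ to $B_2$. Corollary~\ref{cor:univ} then identifies $\Phi_\at[F_\phi]$ with $\Phi_\at[B_2]\circ \zeta_\phi^{-1}$ up to an additive constant, and the Complement following Theorem~\ref{thm:shi2} propagates this compatibility to the horn-map domain $U[F_\phi]$, providing a conformal isomorphism $U[B_2]\to U[F_\phi]$ commuting with $T_1$. Descending through $E:z\mapsto e^{2\pi iz}$ and absorbing a linear rescaling into the normalization of $g_\phi$ gives an injective holomorphic $\psi : B \to \dom g_\phi$ with $\psi(0)=0$, $\psi'(0)=1$, and $g_\phi\circ\psi = f_0$; equivalently $g_\phi|_{\psi(B)} = f_0\circ\psi^{-1}$, which is (5).

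The main obstacle is the construction of $A$ itself. One must choose $A$ quantitatively small enough inside $B$ that, for \emph{every} normalized univalent $\phi$ on $A$, the Koebe-controlled distortion of $\phi$ still lets the extended Fatou coordinates of $F_\phi$ and the horn-map domain $U[F_\phi]$ stay at least as large as those of $f_0$ --- so that $\psi$ can be defined on all of $B$ and not merely on a proper sub-structure. This requires combining Koebe distortion bounds with explicit control of the petals and of a horn-map fundamental domain of $f_0$, and is precisely the step for which Inou and Shishikura had to make a delicate, non-canonical choice of their set $V$; the present paper replaces it by a more structural argument exploiting the partial-ramified-cover point of view set up in Section~\ref{subsec:structeq}.
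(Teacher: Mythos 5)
The paper does not prove Theorem~\ref{thm:IS0}; it is stated as an external result and cited from Inou--Shishikura \cite{IS}. So the right answer to the exercise is essentially: ``this is the theorem we import from [IS]'', together with a statement of the key objects. Your proposal instead attempts a proof, and several of its steps are incorrect.

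The most concrete problem is your choice of $f_0$ and $B$. The theorem (and the sentence immediately following it in the paper) asserts that $f_0$ is \emph{explicit}, namely $f_0(z)=z(1+z)^2$, a cubic polynomial with a single critical point, and that $B$ is an explicit bounded Jordan domain on which $f_0$ has exactly one critical point. Taking $f_0:=\cal R[B_2]$ and $B:=\dom f_0$ violates item~(4): the renormalization $\cal R[B_2]$ has one critical \emph{value} but infinitely many critical \emph{points} (this is exactly the distinction the remark after the Main Theorem insists on when contrasting the IS class with the class built in this paper). Moreover, with $B$ equal to the full domain of a universal renormalization, item~(5) would become trivially true (take $U=\dom g$), but item~(6) would become hopeless: after passing to $f_0\circ\phi^{-1}$ for a univalent $\phi$ on a compactly contained $A$, the renormalized map only inherits a proper sub-structure, so one can never recover the full structure of $\cal R[B_2]$; one must give up structure, which is precisely why Inou and Shishikura shrink to a carefully designed $B$. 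Your paragraph acknowledging this as ``the main obstacle'' is not a side remark: it \emph{is} the theorem, and no argument is offered for it. The Koebe compactness observation is not enough; what Inou and Shishikura actually do is a lengthy explicit estimate tied to the particular shape of $V$ and the polynomial $z(1+z)^2$, and no soft argument is known to replace it (indeed, the whole point of the present paper is to replace it with a different, non-explicit construction valid for all $d$, proved in Section~\ref{sec:pf}). A smaller issue: to invoke Theorem~\ref{thm:s1} for $F_\phi=f_0\circ\phi^{-1}$ you need to know that exactly one singular value lies in the relevant immediate basin, which does not follow automatically once the domain has been cut down to $\phi(A)$; it must be checked, and it is part of what constrains the choice of $A$ and $B$.
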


The map $f_0$ has a particularly simple expression: $f_0(z)=z(1+z)^2$.
It turns out that $f_0$ commutes with $z\mapsto \ov z$, thus the theorem holds with the same $f_0$ for lower renormalization.

The statement below is a reformulation of their theorem using the language introduced in the present article.
Given a structure $\cal B$ and a sub-structure $\cal A$, we will say that the second is a \keyw{relatively compact sub-structure}\footnote{The reader is welcome to introduce a better terminology.} of the first whenever maps in $\cal A$ are structurally equivalent to restrictions of maps in $\cal B$ to relatively compact open subsets of their domains (not just subsets).\footnote{If it holds for some representatives then it holds for all representatives in the equivalence class.}

\begin{theorem}[Inou Shishikura, reformulated]\label{thm:IS}
Let $I$ be a singleton and $Y=\C$.
There exists an explicit pair of $(I,Y)$-structures $\cal A$ and $\cal B$ with the following properties:
\begin{enumerate}
\item\label{item:IS:2} $\cal A$ is a relatively compact sub-structure of $\cal B$ and $\cal B$ is a sub-structure of the universal structure of \Cref{thm:shi2b},
\item $\forall (a,f)\in\cal A$, the map $f$ is defined on a connected and simply connected Riemann surface and has exactly one critical point, of local degree two; the same holds for $\cal B$.
\item For any map in $\cal A$ whose domain of definition is a subset of $\C$ and that fixes the marked point with multiplier one, its (suitably normalized) upper parabolic renormalization has at least structure $\cal B$.
\end{enumerate}
\end{theorem}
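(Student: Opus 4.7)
The plan is to use Theorem~\ref{thm:IS0} directly; the new statement is essentially a translation into the language of structures, so most of the work is already encoded in the original Inou--Shishikura theorem. With $I = \{*\}$ a singleton, $Y=\C$, and $A$, $B$, $f_0$ as in Theorem~\ref{thm:IS0}, I would define $\cal B$ to be the $(I,Y)$-structure of the pair $(a, f_0\big|_B)$ with marker $a_* = 0 \in B$, and $\cal A$ the structure of $(a, f_0\big|_A)$.

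The second item of the statement and the relative compactness half of item~(\ref{item:IS:2}) are then immediate: $A \subset\joinrel\subset B$ is item~(1) of Theorem~\ref{thm:IS0}, simple connectivity is item~(2), and the critical point count comes from item~(4), which simultaneously guarantees a unique critical point of local degree $2$ in both $A$ and $B$. The sub-structure half of item~(\ref{item:IS:2}) is where item~(5) of Theorem~\ref{thm:IS0} enters: given an upper renormalization $g$ of a map in the class of Theorem~\ref{thm:shi2} (which represents the universal structure), the bijection $\phi : B \to U \subset \dom g$ with $\phi(0)=0$ and $g\big|_U = f_0\circ \phi^{-1}$ exhibits $(a, f_0\big|_B)$ as structurally equivalent to a restriction of $g$ containing its marker, so $\cal B$ is a sub-structure of the universal structure.

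The renormalization invariance, item~(3), is obtained by invoking item~(6) of Theorem~\ref{thm:IS0}. Starting from a representative $(b, g)$ of $\cal A$ with $\dom g \subset \C$ fixing its marker with multiplier $1$, unfolding the definition of structural equivalence gives a univalent $\phi : A \to \C$ with $g = f_0 \circ \phi^{-1}$; a short chain-rule computation using $f_0'(0) = 1$ translates the two hypotheses ``marker fixed'' and ``multiplier $1$'' precisely into $\phi(0)=0$ and $\phi'(0)=1$ (up to an inessential translation putting the marker at the origin). Item~(6) then provides a univalent $\psi : B \to \C$ with $\psi(0)=0$ and $\psi'(0)=1$ such that the normalized upper renormalization of $g$ admits a restriction equal to $f_0\circ \psi^{-1}$; the map $\psi$ itself is the required isomorphism witnessing that this restriction lies in $\cal B$, so the renormalization has at least structure $\cal B$.

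The only genuinely delicate point is matching conventions on the parabolic renormalization: the phrase \emph{suitably normalized} in item~(3) must refer to a normalization under which $f_0$ serves as a canonical representative on both sides of item~(6) of Theorem~\ref{thm:IS0}, and this is exactly normalization~\ref{item:nor:3} of Section~\ref{subsec:parabopt} (placing the unique critical value at the same position as for $f_0$, then fixing the derivative at $0$ to be $1$). Once this convention is declared, no further arithmetic is needed: the translation from the original statement to the structural one is mechanical, and the structure $\cal B$ constructed above is invariant under the normalized renormalization in the sense required.
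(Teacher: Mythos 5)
Your proposal is correct and matches the paper's intended reasoning: the paper itself offers no proof of Theorem~\ref{thm:IS}, presenting it as a mechanical translation of Theorem~\ref{thm:IS0} into the structure language, and your unfolding of items (1), (2), (4), (5), (6) of Theorem~\ref{thm:IS0} carries out exactly that translation, including the correct observation that ``suitably normalized'' must mean normalization~\ref{item:nor:3}. One small over-caution: no translation of the marker is actually needed, since $g(\phi(0)) = f_0(0) = 0$ combined with $g$ fixing $\phi(0)$ forces $\phi(0)=0$ outright.
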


It is also the case for the \emph{lower} parabolic renormalization, with the same structures $\cal A$, $\cal B$.

\begin{definition}[High type numbers] For $N\in\N^*$, let $\on{HT}_N$ be the set of irrationals whose modified continued fraction satisfies $|a_n|\geq N$, $\forall n\in\N$. In settings where $N$ has been fixed, the set $\on{HT}_N$ is often called the set of \keyw{high type numbers}. We will call it here the set of numbers of \keyw{type $\geq N$}.
\end{definition}

To keep it short, the following corollary, also by Inou and Shishikura, is stated here with some imprecision concerning the renormalization:

\begin{corollary}[Perturbation]\label{cor:cor} There exists $N>0$ such that the class of maps defined in an open subset of $\C$, with structure $\cal A$ and fixing the marked point with a rotation number $\theta$ of type $\geq N$, is invariant under a cylinder renormalization operator (called the \keyw{near-parabolic renormalization}). 
\end{corollary}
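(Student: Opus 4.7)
The plan is to bootstrap the parabolic invariance of Theorem~\ref{thm:IS} to a neighborhood of the parabolic locus in rotation number, exploiting the relative compactness stated in item~\ref{item:IS:2}. First I would assemble the compactness properties of the class $\cal A$: because $\cal A$ is a relatively compact sub-structure of $\cal B$, the set of holomorphic maps $f:\dom(f)\subset\C\to\C$ carrying structure $\cal A$ and fixing $0$ with $f'(0)=e^{2\pi i\theta}$, $\theta\in\R$, is sequentially compact for Carathéodory convergence on pointed domains. Indeed any such $f$ is, up to structural equivalence, a restriction of a map with structure $\cal B$, and the domains in $\cal A$ embed relatively compactly in the domains in $\cal B$, so uniform bounds and a diagonal extraction produce a limit again in $\cal A$ (or in $\cal B$ after a small loss that will be compensated below).

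Next I would define the cylinder (near-parabolic) renormalization for such an $f$ with $\theta$ irrational of small modulus $|\theta|\le 1/N$. The fixed point at the origin persists, and a second fixed point $\sigma\neq 0$ with multiplier approximately $e^{-2\pi i\theta}$ lies close to $0$ by the implicit function theorem. Following the classical construction (as sketched in Section~\ref{subsec:structeq} and in \cite{IS,S,D}), one chooses a ``sepal''-shaped fundamental crescent bounded by a curve $C$ joining $0$ to $\sigma$ and its image $f(C)$, quotients by identification via $f$ to obtain a Riemann surface isomorphic to $\C/\Z$, and takes the first return map. Uniformizing by $E(z)=e^{2\pi iz}$ and using the normalization by the critical value (normalization~\ref{item:nor:3}) yields $\cal R[f]$. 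The delicate point is to make this construction depend continuously on $f$ and $\theta$, which requires building the curve $C$ out of (almost) Fatou coordinates defined on a petal whose geometry is controlled uniformly in $\theta$ when $|\theta|$ is small; this is done by transporting the parabolic petal along a one-parameter family and appealing to the perturbed Fatou coordinates of Shishikura.

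I would then prove the convergence $\cal R[f_n]\to\cal R[f_\infty]$ in the Carathéodory topology whenever $f_n\to f_\infty$ with $\theta_n\to 0$, where $\cal R$ on the right means the (upper) parabolic renormalization of $f_\infty$. Combined with the compactness of the class $\cal A$ (with fixed normalization at $0$), this upgrades Theorem~\ref{thm:IS} into a statement that is open in $\theta$: there exists $N>0$ such that for every $f$ with structure $\cal A$ and $\theta=\arg f'(0)/2\pi$ of type $\ge N$, the near-parabolic renormalization $\cal R[f]$ has structure containing $\cal B$. Because $\cal A\Subset\cal B$, the natural restriction of $\cal R[f]$ to the (compactly contained) domain modelled on $\cal A$ lies in $\cal A$ again; note also that the renormalized rotation number is the Gauss shift of $\theta$, which stays of type $\ge N$ by definition of $\on{HT}_N$, so the new map is again in the admissible class. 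Hence the operator is well defined and invariant.

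The main obstacle will be the continuity/convergence step: constructing the crescent and proving that it varies continuously as $\theta\to 0$ requires either Shishikura's perturbed Fatou coordinates or an equivalent soft-limit argument, and one must control the covering structure (not merely the germ) across the limit so that the Carathéodory limit of the renormalized maps truly lies in the parabolic structure $\cal B$. Once this geometric continuity is established, the rest is a compactness and open-condition argument relying on relative compactness of $\cal A$ inside $\cal B$.
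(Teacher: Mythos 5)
The paper does not actually prove this corollary: it records it as a theorem of Inou and Shishikura, with the single remark that the argument draws a form of contraction from the compact inclusion $\cal A\Subset\cal B$. So there is no in-paper proof to compare your proposal against; one can only measure it against the architecture in \cite{IS}.

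Your outline reproduces that architecture in the right order: compactness, construction of the cylinder renormalization via fundamental crescents, continuity as $\theta\to 0$, an openness argument leaning on $\cal A\Subset\cal B$, and shift-invariance of the high-type condition under the modified continued fraction. Two corrections are needed, though. First, the compactness of the class of maps carrying structure $\cal A$ is \emph{not} a consequence of $\cal A\Subset\cal B$; it comes from the fact that $\cal A$-maps are parameterized by normalized univalent maps on a fixed disk, and the Schlicht class is compact. What $\cal A\Subset\cal B$ provides is the \emph{room} that gets used up when you pass from the parabolic to the near-parabolic regime. Second, your claim that the near-parabolic $\cal R[f]$ ``has structure containing $\cal B$'' overshoots: at $\theta=0$ the renormalization has structure $\geq\cal B$, and for $\theta$ small but nonzero the deformation eats into the margin between $\cal B$ and $\cal A$, so what you get (and what you need for invariance) is structure $\geq\cal A$. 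Asserting $\geq\cal B$ for $\theta\ne 0$ would leave nothing to give up under perturbation; it is also simply false in general.

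The genuine gap --- which you flag yourself, so I am largely recording it --- is the continuity step: proving that the near-parabolic cylinder renormalization exists, depends continuously on $(f,\theta)$, and converges to the parabolic renormalization \emph{as a ramified covering structure} (not merely as a germ) when $\theta\to 0$. This is where essentially all of the analytic work of \cite{IS} and of Shishikura's theory of perturbed Fatou coordinates resides: uniform construction of the crescent, ``almost'' Fatou coordinates with estimates that are uniform over the compact class and over $\theta$, and identification of the Carath\'eodory limit of the renormalized domains. There is no soft-limit shortcut here; without these estimates your proposal is a correct skeleton but not a proof.
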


They prove more: thanks to the compact inclusion of structure $\cal A$ in $\cal B$, there is a form of contraction.
Cylinder renormalization was introduced by Yampolsky in the study of analytic circle homeomorphisms with a critical point.

Consequences of this corollary are numerous and are still being harvested. Its main quality is that it allows a fine control on the post-critical set of quadratic polynomials with high type rotation numbers.
For instance, Shishikura and Yang Fei proved that in this case the boundary of the Siegel disk is a Jordan curve \cite{SY}. It allows to study the hedgehogs and the size of Siegel disks. In \cite{CC} we proved the Marmi Moussa Yoccoz conjecture restricted to high type numbers.
Cheraghi has given many other applications \cite{Chera2,Chera4,AC,Chera3} and \cite{CS} which is a progresses on the MLC conjecture.
It was also used in~\cite{mespos} to prove the existence of quadratic polynomials with a Julia set of positive Lebesgue measure.
We believe that it can also give a new approach to the results of McMullen \cite{McMu} on the self similarity of Siegel disks whose rotation number has an eventually periodic continued fraction expansion\footnote{these rotation numbers are the quadratic irrationals} at the critical point. McMullen used Ghys' quasiconformal surgery procedure as a first step in his proofs, to transfer some properties that are easier to prove for circle maps. It would be nice to have a more direct proof, that would adapt to situation, like the exponential maps $z\mapsto e^z+c$, where a quasiconformal surgery does not exist but where self similarity still seems to occur.

\subsection{Main Theorem}\label{subsec:main}

In this article, we prove the following extension of Inou and Shishikura's Theorem.

\begin{mainthm}\label{thm:main}
Let $I$ be a singleton and $Y=\C$.
For all $d\in\N$ with $d\geq 2$, there exists $(I,Y)$-structures $\cal A$ and $\cal B$ with the following properties:
\begin{enumerate}
\item $\cal A$ is a relatively compact sub-structure of $\cal B$ and $\cal B$ is a sub-structure of the universal structure of \Cref{thm:shi2b},
\item every map in $\cal A$ or in $\cal B$ is defined on a connected and simply connected Riemann surface,
\item every map in $\cal A$ or in $\cal B$ has exactly one critical value, and all critical points have local degree $d$,
\item for any map in $\cal A$ whose domain of definition is a subset of $\C$ and that fixes the marked point with multiplier one, the upper parabolic renormalization has a at least structure $\cal B$, and the lower parabolic renormalization has at least structure the conjugate of $\cal B$ by $z\mapsto \bar z$, for appropriate normalizations of the renormalizations.
\end{enumerate}
\end{mainthm}

The structures $\cal A$ and $\cal B$ are obtained by retaining most of the universal structure (call it $\cal U$) of \Cref{thm:shi2b}. More precisely we choose for $\cal B$ the restriction of a map in $\cal U$ to a subset of its domain $U$ defined as points having $U$-hyperbolic distance $\leq L$ to the point marked by $I$, and we prove in \Cref{sec:pf} that for $L$ big enough, there is a relatively compact sub-structure $\cal A$ of $\cal B$ such that the main theorem holds.

\begin{remark}It should be noted that for $d=2$, our theorem can be considered as weaker than Inou and Shishikura's. For one thing, maps in our class have much more structure, so our class is smaller. Second they have many critical points (though only one critical value), whereas there is only one in Inou and Shishikura's. This should not prevent our class, though, to be applied to $z^d+c$ as we explain now. Note that a similar situation occurs for the IS class: a polynomial $z^2+c$ with and indifferent fixed point of multiplier close to $1$ never has a restriction that belongs to the IS class, but its first cylinder renormalization has some as soon as the multiplier is close to $0$. Here it is the same: a map of the form $z^d+c$ never has structure $\cal A$ or more, but its first cylinder renormalization does if the rotation number is close enough to $0$. \end{remark}

It should be easy to check that the analog of \Cref{cor:cor} also holds.
We believe that many of its consequences for quadratic maps therefore carry over to unicritical polynomials.

\bigskip\noindent\textsl{About unisingular maps:}
\medskip

We wonder if one can extend the above work to the case $d=+\infty$. There are some subtleties occurring here.

We do not believe that one can take for $\cal B$ a substructure of $g\in\cal R[\sclass_d]$ defined by a restriction on a compact subset of the domain of $g$, like we did in the case $d<+\infty$, that would yield a invariant class. The natural idea is to keep instead a whole connected preimage of a neighborhood of the singular value, which adds a subset of $\dom{g}$ that is at least as tangent to its boundary as a horocycle. Unfortunately, we realized that this does not provide an invariant class either.

It shall be noted that some consequences of Inou and Shishikura's invariant class for $d<+\infty$ won't hold anymore for $d=+\infty$: for instance there are unisingular maps for which the boundary of the Siegel disk is not a Jordan curve.
This includes the exponential $z\mapsto \lambda(\exp(z)-1)$ (or equivalently $z\mapsto e^z+\kappa$) when it has an indifferent periodic point of rotation number in Herman's class\footnote{By \cite{H} the Siegel disk is unbounded and then by \cite{BW} it is not locally connected. Thanks to Lasse Rempe for pointing this out to me.}.
Interestingly, there are some other maps with only one active singular value, with $d=+\infty$, and for which the Siegel disk seems to be more often locally connected (always): for instance the semi-conjugate of $z\mapsto e^{i\theta/2}\tan z$ by $z\mapsto z^2$, i.e.\ $z\mapsto e^{i\theta}(\tan \sqrt{z})^2$.

It is to be noted that thought the two (essentially) unisingular families $\lambda(e^z-1)$ and $\lambda (\tan t{z})^2$ have very different Siegel disks for $\theta=$ the golden mean, computer experiments weakly hint at a possible identical asymptotic limit when zooming at the singular value: there might exist a cylinder renormalization operator with a fixed point capturing both maps. 

\begin{figure}
\begin{tikzpicture}
\node at (0,0) {\includegraphics[height=12cm,angle=90]{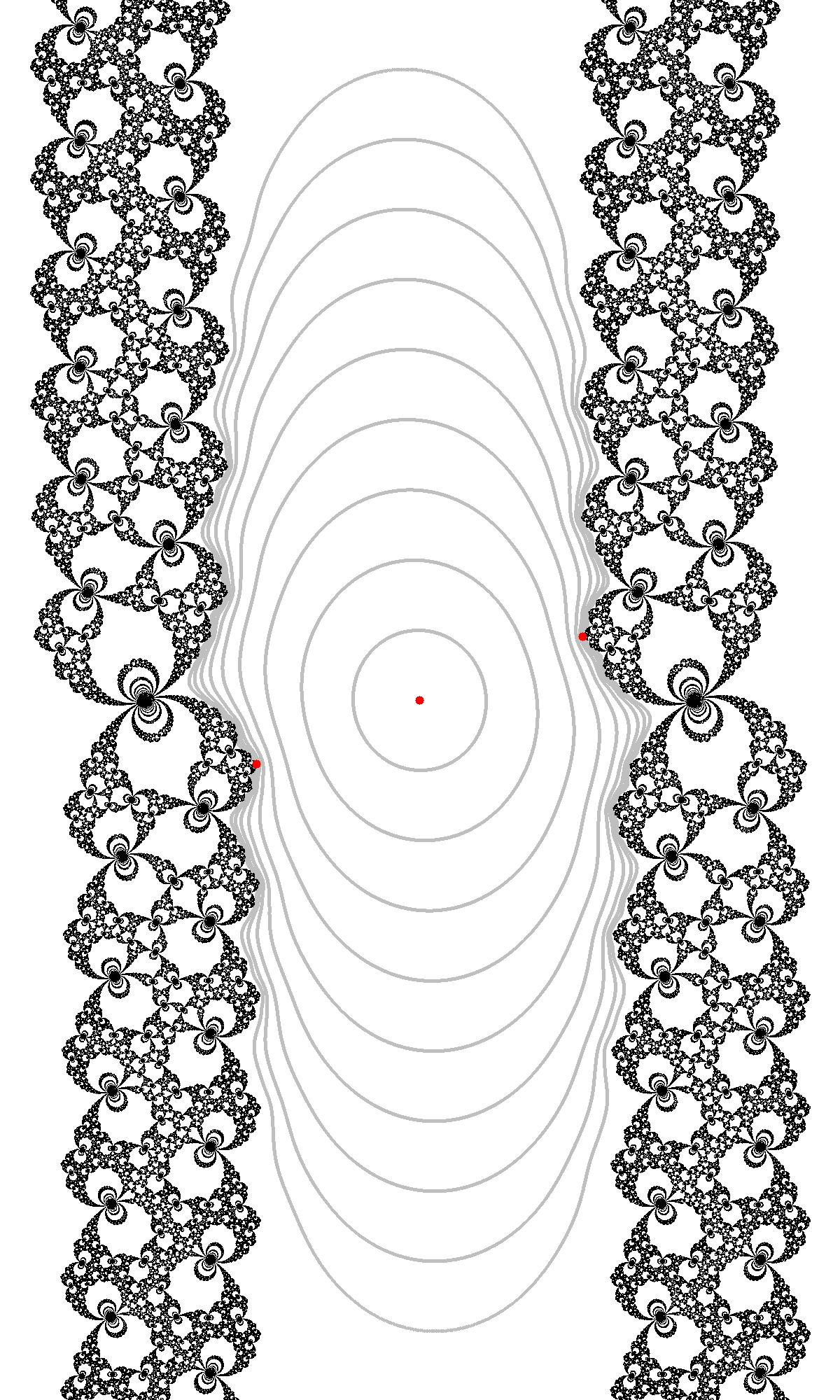}};
\end{tikzpicture}
\caption{Rotated by 90\textdegree, the Julia set of the map $z\mapsto\lambda \tan z$ with $\lambda$ so that the origin is indifferent with rotation number $\theta/2$ and $\theta=(\sqrt{5}-1)/2$ is the golden mean. The Julia set is periodic of period $\pi$, we drew only two periods. There also seems to be an asymptotic similarity by some imaginary translation. There are red points at the origin and at the two (symmetric) asymptotic values. A few orbits inside the Siegel disk have been drawn. The Siegel disk seems to be bounded by a Jordan curve (but not a quasicircle: there must be a dense set of cusps). The rotation number is $\theta/2$ but the picture has a symmetry of order $2$ and quotienting out, i.e.\ semi-conjugating by $z\mapsto z^2$, gives a transcendental meromorphic map $z\mapsto \lambda^2 (\tan \sqrt{z})^2$ with rotation number $\theta$ at $0$, with infinitely many critical points but that all map to $0$, and with only one asymptotic value $-\lambda^2$.}
\label{fig:tanz}
\end{figure}

\begin{figure}
\includegraphics[width=9.5cm]{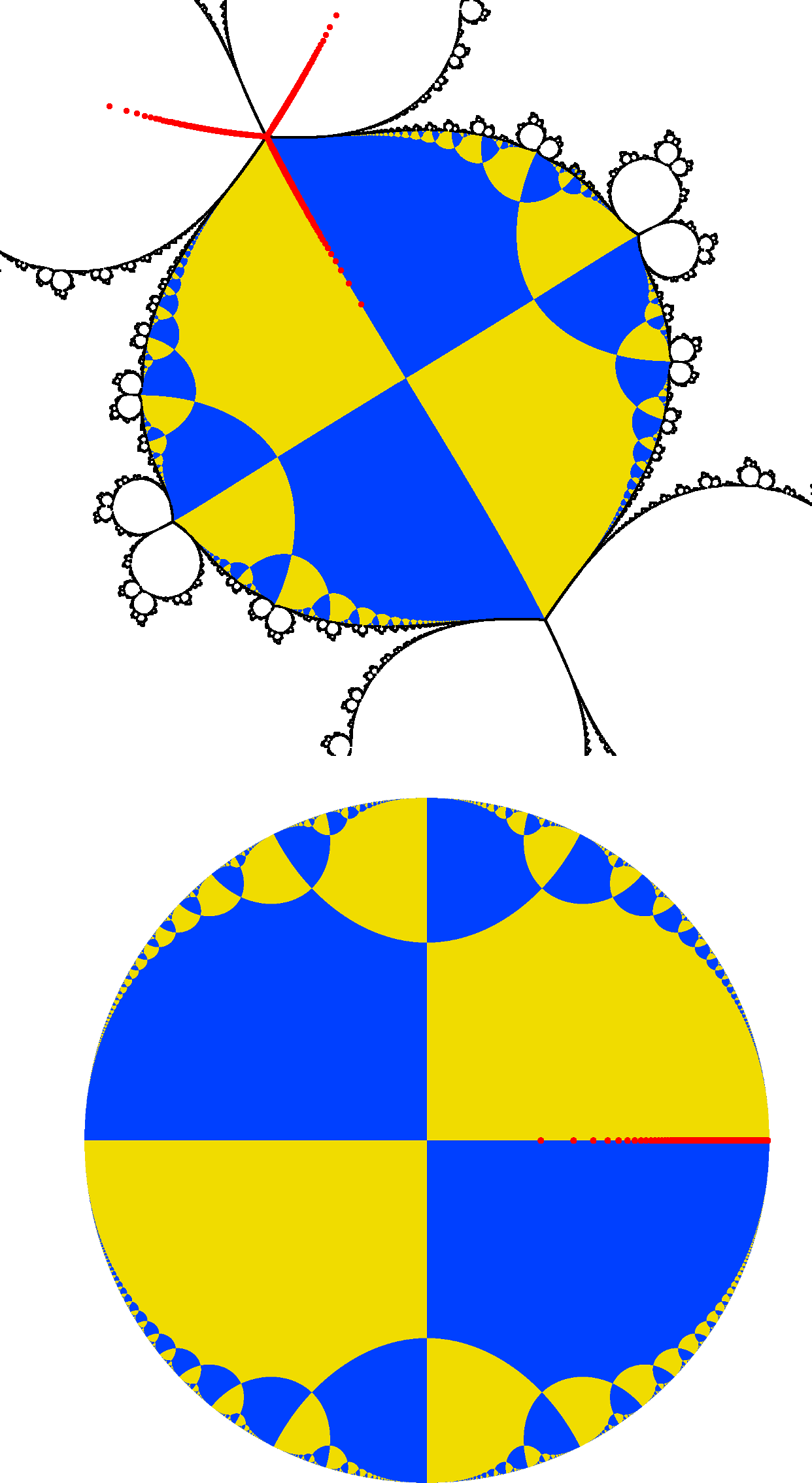}
\caption{Illustration of \Cref{thm:s1}. Above: zoom on the fat Douady rabbit, the Julia set of the quadratic map $P=e^{2i\pi/3}z+z^2$, which has a parabolic fixed point with three attracting petals, and acts transitively on them. The Fatou component $U$ that contains the finite critical point has been colored with the parabolic chessboard, whose definition is recalled later in the present article. In red, the orbit of the critical value. On $U$, $P^3$ satisfies the hypotheses of the theorem. According to the conclusion, $P^3$ is conjugated on $U$ to the Blaschke product $B_2(z)=\big(\frac{z+1/3}{1+z/3}\big)^2$ 
Below: the chessboard of $\mu_2\circ B_2\circ \mu_2^{-1}$, with $\mu(z)=(z+1/3)/(1+z/3)$, which is conjugated to $B_2$ and hence to $P^3$ on $U$. The conjugacy has to transport the chessboard and the critical orbit.}\label{fig:univ_rabbit}
\end{figure}

\begin{figure}
\begin{tikzpicture}\node at (0,0) {\includegraphics[width=10cm]{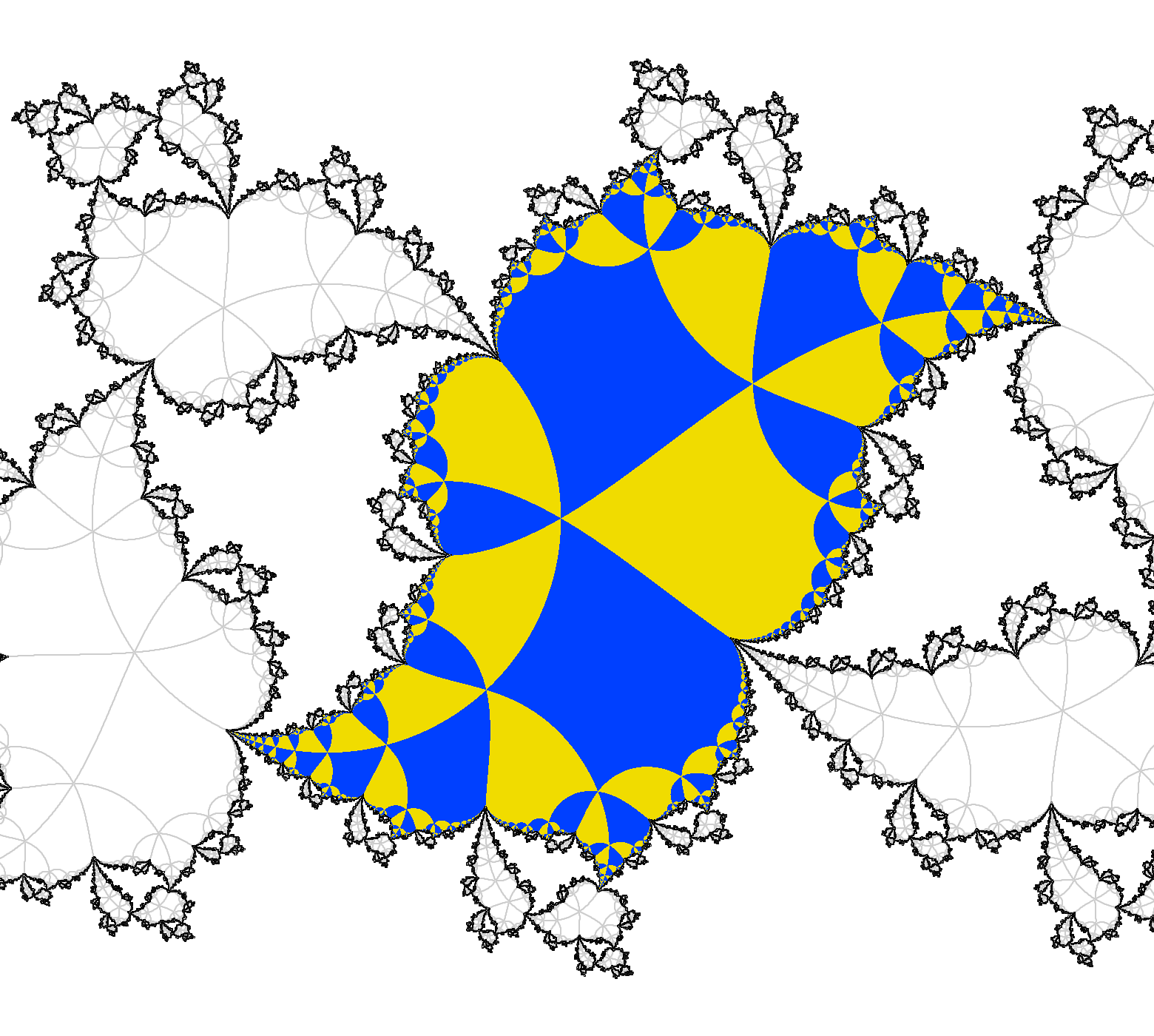}};\fill[color=red] (-0.66,1.35) circle (.7mm);\end{tikzpicture}
\\
\includegraphics[width=7cm]{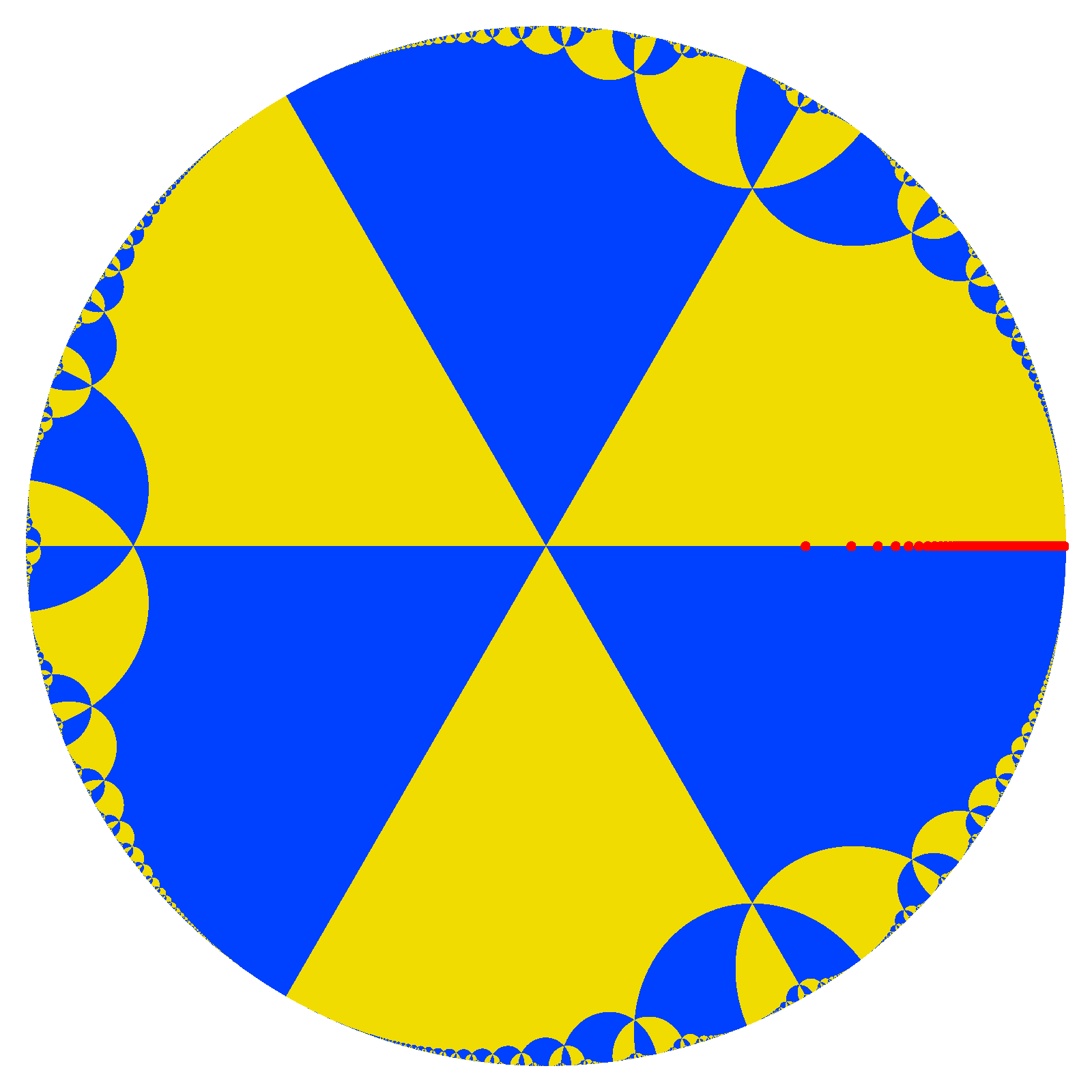}
\caption{Another illustration of \Cref{thm:s1}. This time, $d=3$. The parabolic point on the first picture is indicated by a red dot. The orbit of the critical value is indicated in red on the second picture.}\label{fig:univ_blaf}
\end{figure}

\begin{figure}
\scalebox{1}[-1]{\includegraphics[width=12.5cm]{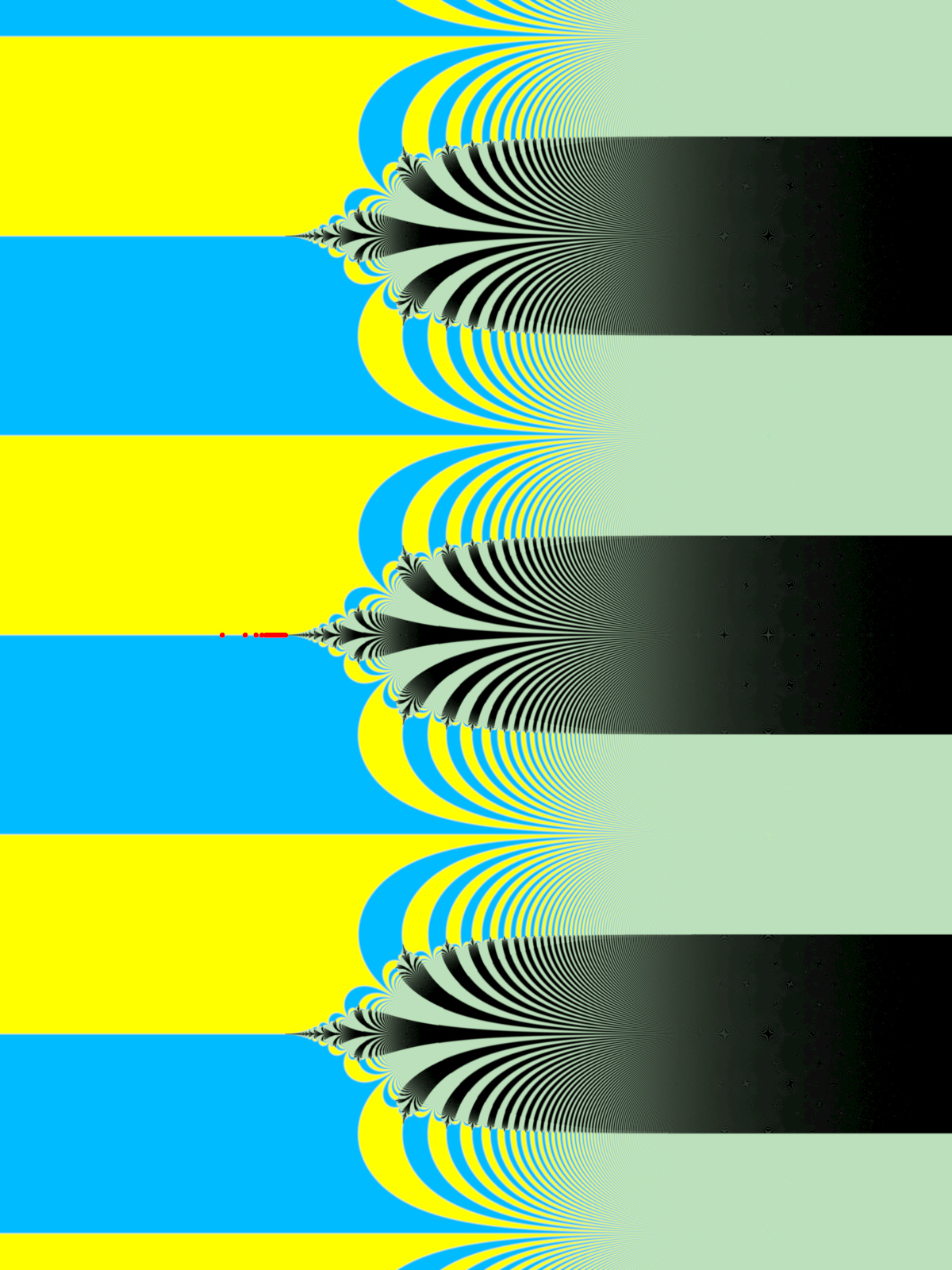}}
\caption{Illustration of \Cref{thm:s1} with, $d=\infty$. Continued on \Cref{fig:univ_exp_2}.}\label{fig:univ_exp_1}
\end{figure}

\begin{figure}
\includegraphics[width=9cm]{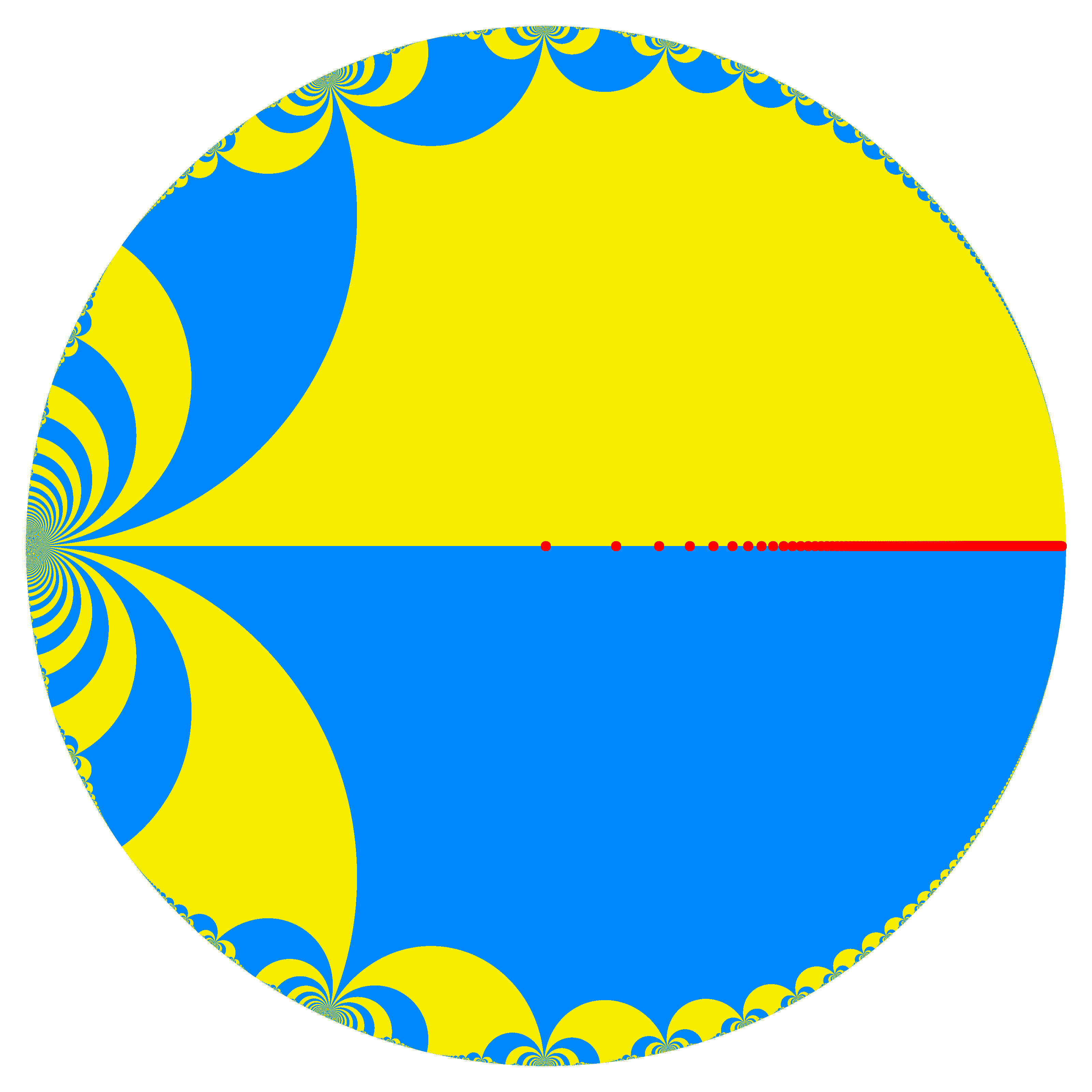}
\caption{Continuation of \Cref{fig:univ_exp_1}, for which the map is $z\mapsto e^{z}-1$, which has a non-linearizable parabolic fixed point at the origin, with one attracting petal. Its basin is connected. The Julia set is a Cantor bouquet indicated in black (see \cite{Bob2}). The orbit of the asymptotic value is drawn with red dots. The parabolic basin is painted with blue and yellow according to the chessboard partition. The green hues correspond to parts where the yellow and blue are mixed below a pixel's width. For the Julia set, the darker shades correspond to places where the Julia set, thickened by an amount comparable to a fraction of a pixel's size, gets denser. On the present figure, we drew the parabolic chessboard of $B_\infty$ and the orbit of its asymptotic value $0$. The conjugacy maps the chessboards and singular orbits of each map to that of the other. Looking at the pictures, the correspondence is not so obvious at first sight.}\label{fig:univ_exp_2}
\end{figure}

\FloatBarrier 

\section{Visualizing structures}\label{sec:viz}

An often used and very useful technique of visualization of ramified covers (and partial cover structures that are not too messy) consists in cutting the range in domains, often simply connected, along lines joining singular values, and taking the pre-image of these pieces, which gives a new set of pieces. The way they connect together and the way they map to the range gives information about the structure. 

\subsection{Changes of variables}\label{subsec:chvar}

The map $B_d$ is the composition of the automorphism $\mu : z\mapsto \frac{z+a}{1+a z}$ of the disk, with $a=(d-1)/(d+1)$,  followed by $\on{pow}: z\mapsto z^d$: $B_d=\on{pow}\circ \mu$. If we conjugate $B_d$ by $\mu$ we get the map $\mu\circ \on{pow}$:\nomenclature[Bd2]{$\wt B_d$}{another normalization of $B_d$\nomrefpage}
\[\wt B_d= \mu\circ B_d\circ \mu^{-1}: z\mapsto \frac{z^d+a}{1+a z^d}\]
which is a Blaschke product too and has its critical point at the origin, the parabolic point still being at $z=1$. 
As $d\to+\infty$, $\wt B_d$ tends to the constant $1$ uniformly on compact subsets of $\D$.

The map $B_\infty$ is the composition of $\mu: z\mapsto i\frac{1-z}{1+z}$ (mapping the disk to the upper half plane) followed by $z\mapsto \exp(2iz)$. Interestingly, if we conjugate $B_\infty$ with $\mu$ we get the trigonometric map:
\[\mu\circ B_\infty\circ \mu^{-1} :z \mapsto \tan z\]
whose non-linearizable parabolic fixed point is at the origin and which maps the upper half plane to itself, the asymptotic value in the upper half plane being $i$.

\subsection{Preferred representative}\label{subsec:preferred}

\Cref{thm:shi2b} says that all maps satisfying some assumption have structurally equivalent upper parabolic renormalizations (appropriately normalized). Their equivalence class, that depends only on $d$, has something universal. We will here choose a preferred representative in this equivalence class, and for this use the maps $B_d$. A defect of the maps $B_d$ and $\wt B_d$, seen as maps from the Riemann sphere to itself, is that their parabolic point has two attracting petals instead of one. We prefer to work with a semi-conjugate $C_d$ of $B_d$ that we introduce now. The map $B_d$ commutes with $z\mapsto 1/\ov z$ and with $z\mapsto \ov z$ hence with $z\mapsto 1/z$. It is therefore a well defined map on pairs $\{z,1/z\}$.
A first change of variables $u = (1-z)/(1+z)$ maps the unit disk to the right half plane ``$\Re(z)>0$'' sending the parabolic point to $0$ and conjugates $B_d$ to a map which can be formulated as follows:
\[u\mapsto \frac{\on{odd}\left(\left(1+\frac{u}{d}\right)^d\right)}{\on{even}\left(\left(1+\frac{u}{d}\right)^d\right)}.\]
where odd and even refer to the sum of monomials of odd and even power in $u$ in the polynomial expansion of $(1+u/d)^d$.
For $d=\infty$ we get the ratio of the odd and even parts of the exponential, a.k.a.\
\[u\mapsto \tanh(u).\]
Setting $v=-u^2 = -((1-z)/(1+z))^2$ identifies pairs $\{z,1/z\}$ with single values of $v$. There exists a map $C_d$, rational of degree $d$ if $d<\infty$, entire transcendental if $d=\infty$, such that the following diagram commutes\nomenclature[Cd]{$C_d$}{a semiconjugate of $B_d$, so that the parabolic point has only one attracting petal\nomrefpage}
\[\xymatrix{\ar[r]^{B_d} \ar[d]_{S} & \ar[d]^{S} \\ \ar[r]_{C_d} &}\]
where $S(z)=v=-u^2 =(i(1-z)/(1+z))^2$.
If $d=\infty$ we get $C_\infty(v) = (\tan\sqrt{v})^2$.
If $d<\infty$ the formula is more complicated.
The map $S$ is a bijection from the unit disk to the complement $A$ of $[0,+\infty]$ in the Riemann sphere, and sends $1$ to $0$, $-1$ to $\infty$ and $0$ to $-1$.
The map $C_d$ has a parabolic fixed point at the origin with one attracting petal, whose immediate basin is $A$. By construction $C_d$ is conjugate on $A$ to the restriction of $B_d$ to $\D$. The extended horn map of $C_d$ is defined on the complement of a horizontal line. Thus the upper and lower parabolic renormalizations of $C_d$ are defined on round disks centered on the origin.
A lengthy computation shows that
\[\gamma[C_d] = \frac{3}{20}\cdot\frac{d^2+1}{d^2-1}.\]
We have not defined in this article a general theory of Fatou coordinates, horn maps and their normalizations, for parabolic points with more that one attracting petal. However, in the particular case of $B_d$, which has two attracting petals, 
we defined in \Cref{subsec:parabopt} the objects $\Phi_\at[B_d]$, $\Psi_\rep[B_d]$ and $h[B_d]$. Let us recall this here.

The unit disk is the (immediate) basin of one of the two attracting petals of $B_d$. 
We let $\Phi_\at[B_d]: \D\to\C$ be the extended attracting Fatou coordinate for this petal. The map has also two repelling petals, with vertical axes. We choose the one on the top and let $\Psi_\rep[B_d]$ denote the corresponding extended repelling Fatou coordinate. We then let $h[B_d]=\Phi_\at\circ\Psi_\rep$. It is defined on an upper half plane.

The object for $B_d$ are related to those of $C_d$ as follows:
\bEA
T \circ \Phi_\at[B_d] & = & \Phi_\at[C_d]\circ S \big|_\D
\\
S\circ \Psi_\rep[B_d] & = & \Psi_\rep[C_d]\circ T' 
\\
T\circ h[B_d] & = & h[C_d]\big|_{H} \circ T'
\eEA
where $H$ is the upper half plane on which $h[B_d]$ is defined, $T$ and $T'$ are translations that depends on normalizations, and
$S$ is the 2:1 rational map defined a few lines above, that semi conjugates $B_d$ to $C_d$.
If we choose a normalization for the objects associated to $C_d$ this induces a normalization for the objects associated to $B_d$ by declaring that $T$ and $T'$ must be the identity:\footnote{There is a way to extend convention~\ref{item:nor:2} on page~\pageref{item:nor:2} for the normalizations, using asymptotic expansions of Fatou coordinates and the general definition of the iterative residue, see for example \cite{Che2}, Chapter~1. A remarkable fact is then that these normalizations of $B_d$ and $C_d$ are compatible: $T$ and $T'$ are automatically the identity.}
\bEA
\Phi_\at[B_d] & = & \Phi_\at[C_d]\circ S \big|_\D
\\
S\circ \Psi_\rep[B_d] & = & \Psi_\rep[C_d]
\\
h[B_d] & = & h[C_d]\big|_{H}.
\eEA

Let $\cal H[B_d]$ denote the semi-conjugate of $h$ by $E$, that we complete by $\cal H[B_d](0)=0$. Then $\cal H[B_d]$ is defined and holomorphic on the unit disk. We now define $\cal R[B_d]$ and a preferred normalization for it:
\[\cal R[B_d] = A\circ\cal H[B_d]\circ B\]
with $A$ and $B$ linear such that:
$B = \on{id}$, and $\cal R[B_d]'(0)=1$.

\subsection{Visualizations}\label{subsec:viz}

In the case of parabolic renormalizations of maps $f$ satisfying the hypotheses of \Cref{thm:shi2b}, our preferred visualization works on the cylinder coordinates $\C/\Z$ just before the conjugacy by $E:\C/\Z\to \C$, $z\mapsto e^{2\pi i z}$ and completion at $0$ and $\infty$. In fact, we will first look at a visualization before the projection from $\C$ to $\C/\Z$, i.e.\ we will look at a visualization of the horn map $h=\Phi\circ\Psi$, where $\Phi$ is a shorthand for $\Phi_\at$ and $\Psi$ for $\Psi_\rep$ (extended Fatou coordinate and parameterizations).

Let $v_f$ denote the unique singular value of $f|_A$ in the immediate parabolic basin $A$.
The set of singular values of $h$ over $\C$ is of the form $v'+\Z$ for $v'=\Phi(v_f)$. Let us cut the range along the horizontal line $v'+\R$ passing through them. To understand the shape of the preimages of this line and of the upper and lower half planes it bounds, it is useful to work first with the map $B_d$. Recall: $h$ is the horn map associated to a dynamical system $f$ with an immediate parabolic basin $A$, on which there is a conjugacy $\zeta : A \to \D$ to the map $B_d$, and $\Phi[B_d] \circ \zeta= \tau +\Phi[f]$. 
Thus the preimage $\Phi[f]^{-1}(v'+\R)$ is mapped by the isomorphism $\zeta$ to a universal shape, that depends only on $d$.
The set $\Phi[f]^{-1}(v'+\R)$ is called the \keyw{parabolic chessboard graph} of $f$ on $A$. The connected components of its complement in $A$ are called the \keyw{chessboard boxes} (in an actual chessboard they are called squares but here they have infinitely many corners and not just four).
The chessboard is the name of this decomposition of $A$ into a graph and boxes.
Since the chessboard is universal, it can be well understood by looking only at the maps $B_d$. Note that these maps have a singular orbit contained in $[0,1[$ and that they send reals to reals, thus the chessboard graph is also the union of the preimages of $[0,1[$.

\smallskip

\textbf{Case 1: $d=\infty$.} We obtain \Cref{fig:univ_exp_2}.

\smallskip

\textbf{Case 2: $d$ is finite.} Instead of showing the graph for $B_d$, whose critical value is at the origin we prefer to show it for the conjugate map $\wt B_d$ introduced earlier, for which the critical point is at the origin. The result is given for $d=2$ and $d=3$ on \Cref{fig:univ_rabbit,fig:univ_blaf}.

\smallskip

To the convergence of $B_d$ to $B_\infty$ as $d\to+\infty$, seems to echo a convergence of the chessboard decomposition: see \Cref{fig:chess_conv}.

\begin{figure}
\includegraphics[width=12.5cm]{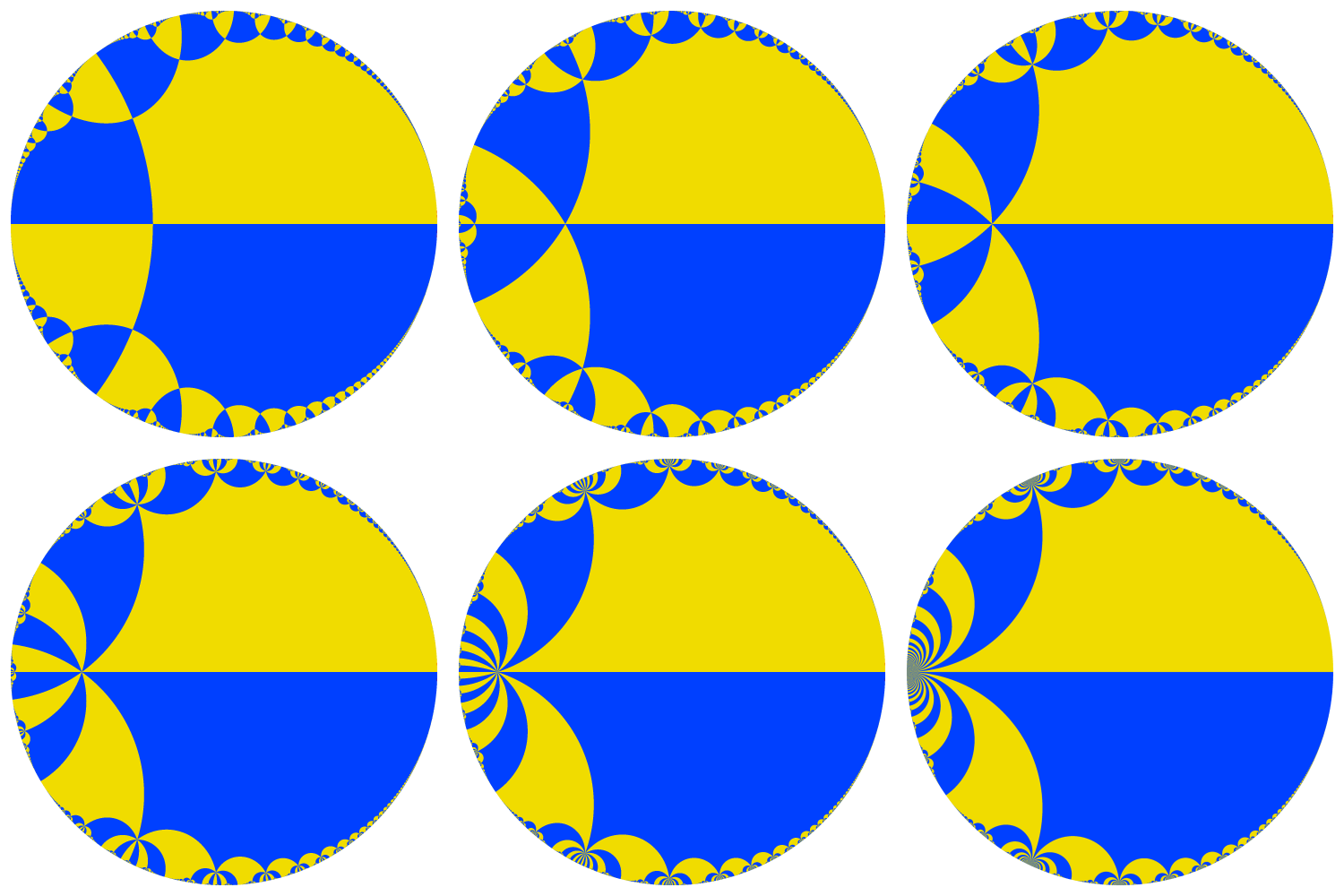}
\caption{Convergence of the chessboard as $d\to+\infty$. In reading order, the chessboard of $B_d$ for $d=2$, 3, 4, 5, 10 and $\infty$.}\label{fig:chess_conv}
\end{figure}

Each chessboard box of $f$ is mapped by $\Phi=\Phi[f]$ to the upper or the lower half plane delimited by $v'+\R$, and we can color them accordingly (we chose yellow and blue in many of the illustrations of the present article). The set of singular values of $\Phi$ is precisely $\{v'-1,v'-2,v'-3,\ldots\}$. These singular values however have also regular preimages, so these universal structures we are considering are not so simple as ramified covers.
Under the dynamics of $f$, each box is mapped to a box of the same color, and there is exactly one box of each color that is fixed by $f$: these are the ones that have the singular value in their boundaries. The Fatou coordinate $\Phi$ conjugates the dynamics of $f$ on these two fixed boxes to the dynamics of the translation by $1$ on the upper and lower half planes. The chessboard also tells us about the structure of $\Phi$ as defined in \Cref{subsec:structeq}.
In view of this, the chessboard in the immediate basin $A$ is both a dynamical object w.r.t.\ $f$ and a structural object w.r.t.\ $\Phi$.

The figures can be enhanced a little bit: let us use two shades of yellow and two shades of blue in the range of $\Phi$. Use the light shade if the floor integer part $\lfloor \Re(z-v')\rfloor$ is even, and the dark shade otherwise. Color points in $A$ according to $\Phi(z)$. Then we get \Cref{fig:shades} for $f=\wt B_2$, $\wt B_3$ and $B_\infty$. This color scheme is useful to visualize the pull-back by $\Phi[f]$ of the vertical direction. Under $f$, a light stripe is mapped to a dark stripe and vice-versa.

\begin{figure}
\includegraphics[width=4cm]{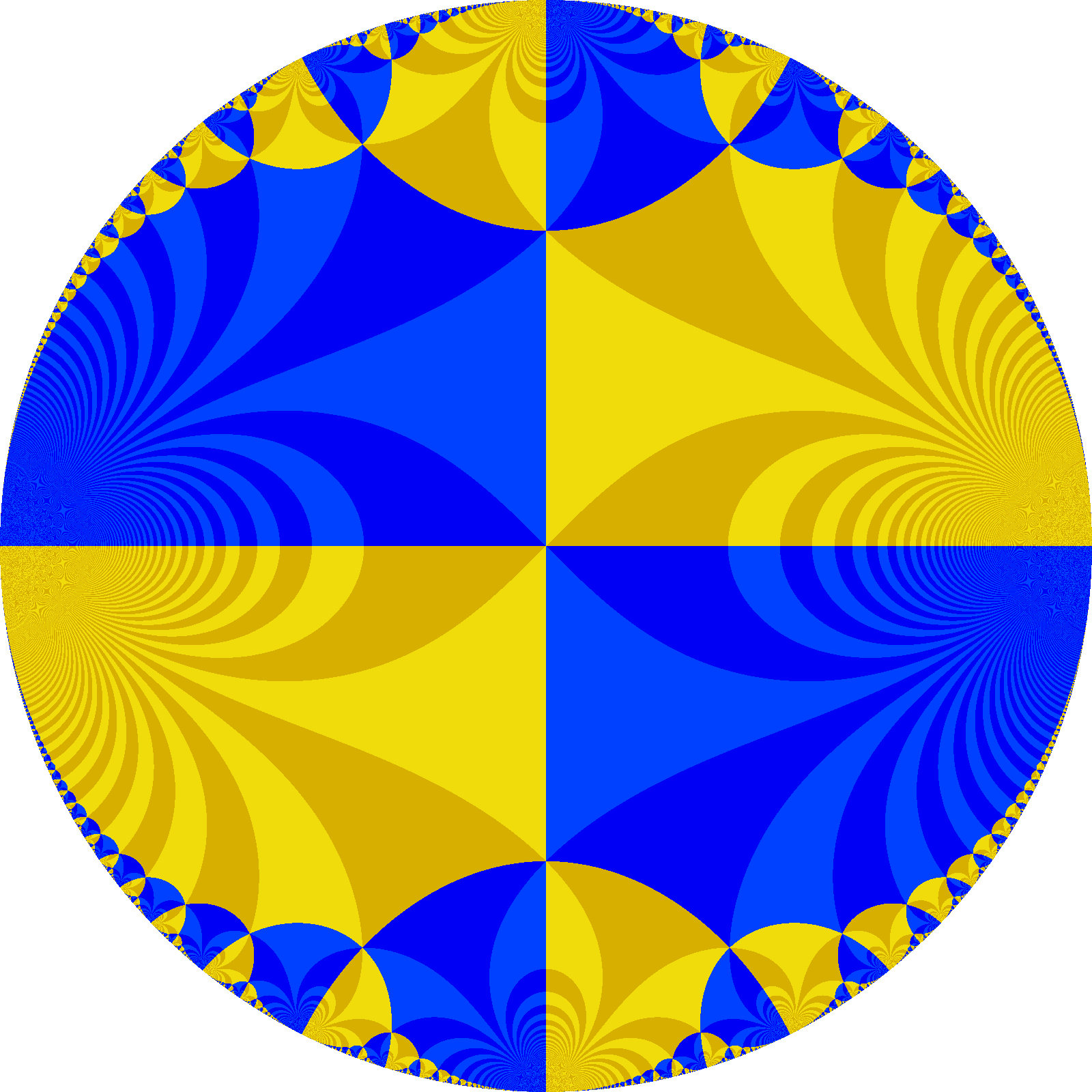}
\includegraphics[width=4cm]{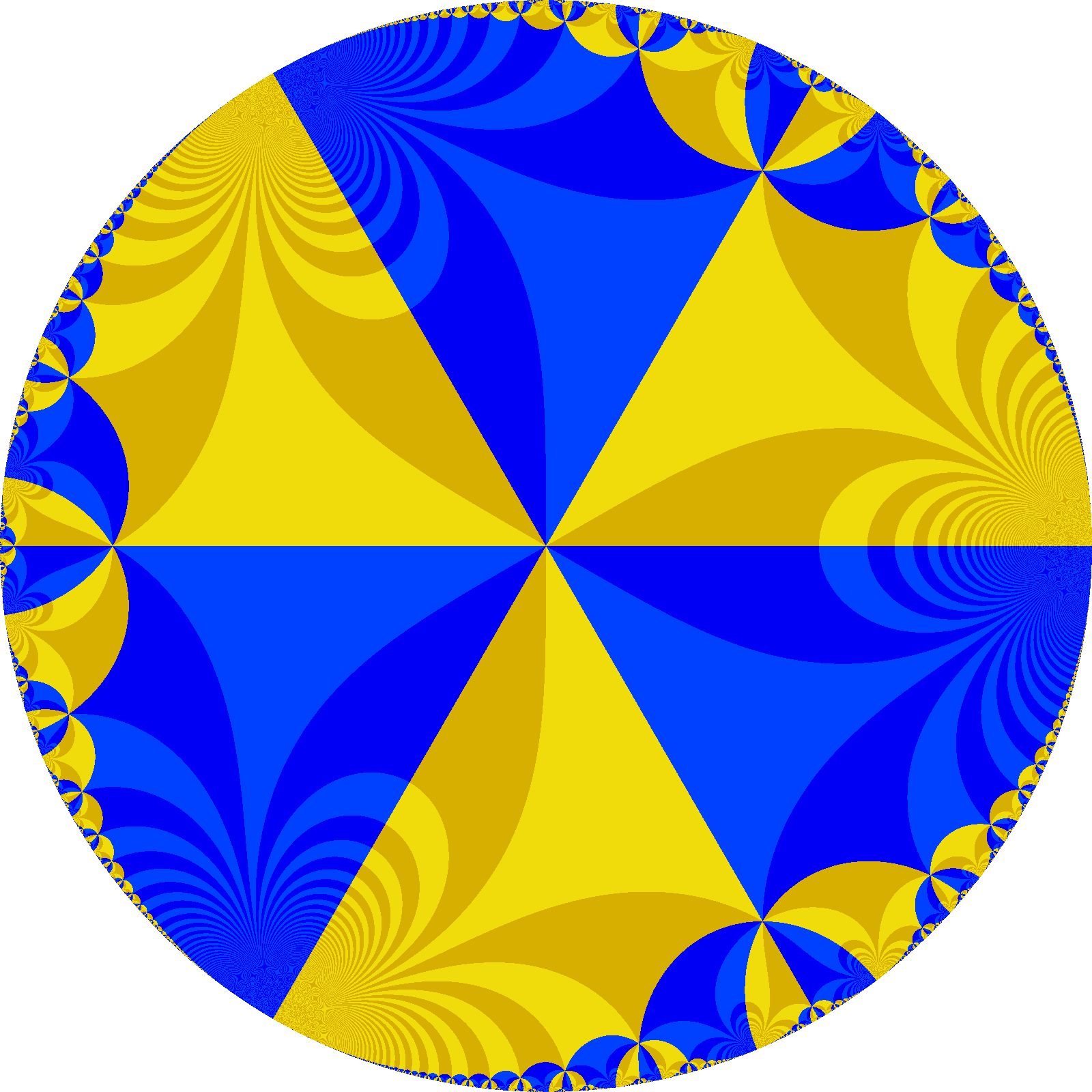}
\includegraphics[width=4cm]{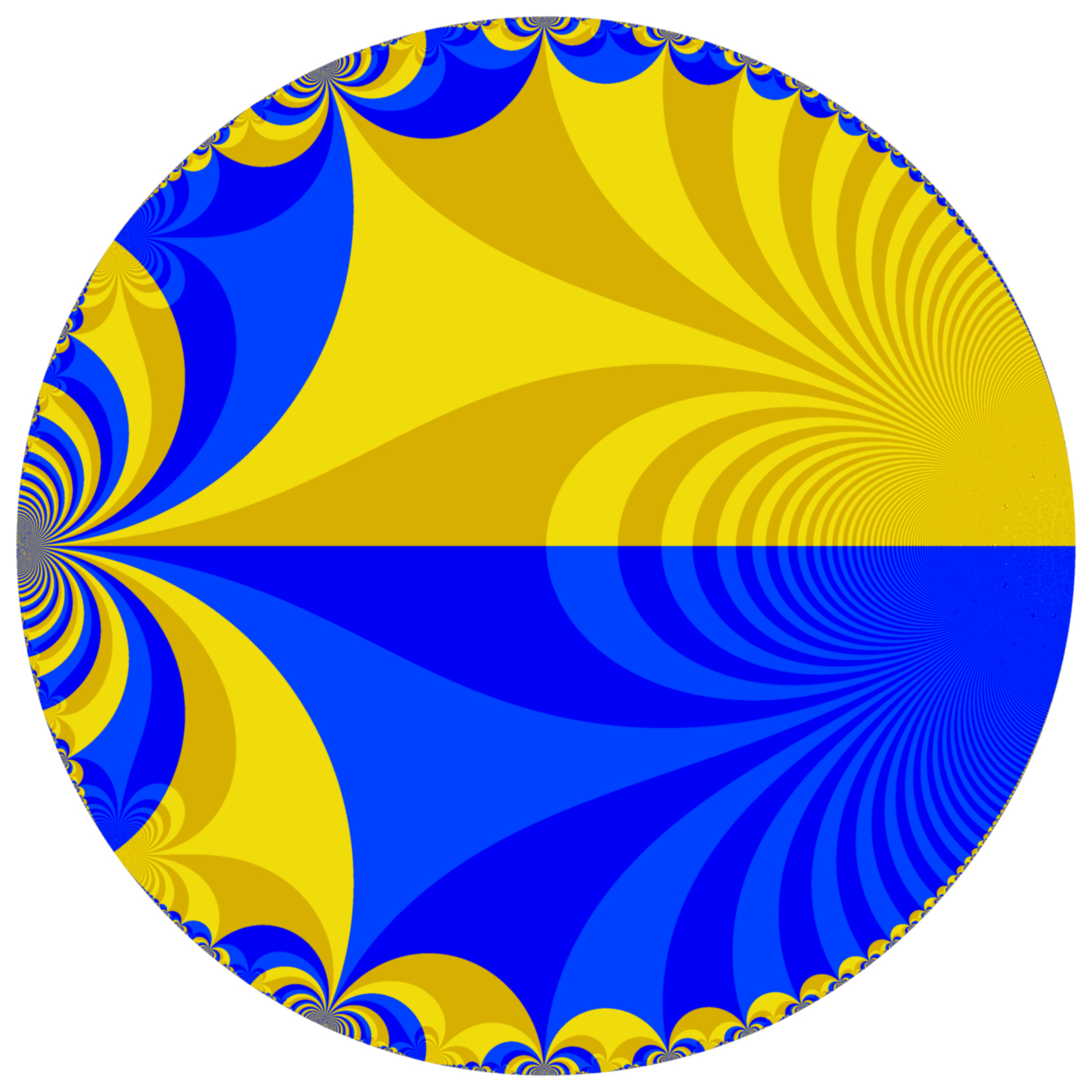}
\caption{Light and dark stripes, preimages of vertical strips of width $1$ under the extended attracting Fatou coordinate $\Phi$, for $\wt B_2$, $\wt B_3$ and $B_\infty$.}\label{fig:shades}
\end{figure}

\begin{figure}
\begin{tikzpicture}[>=triangle 45]
\node at (0,0) {\includegraphics[width=5cm]{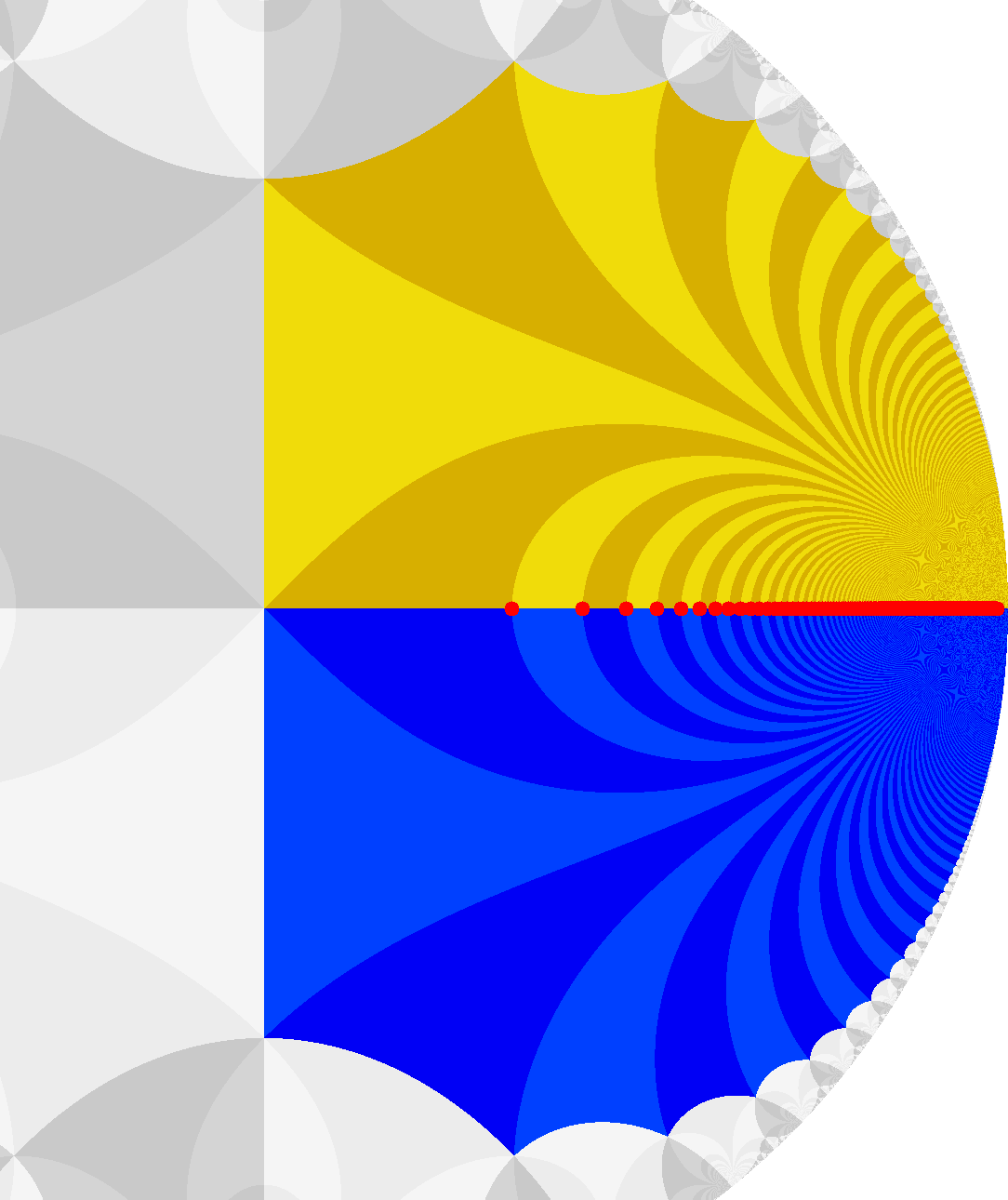}};
\node at (6,0) {\includegraphics[width=6cm]{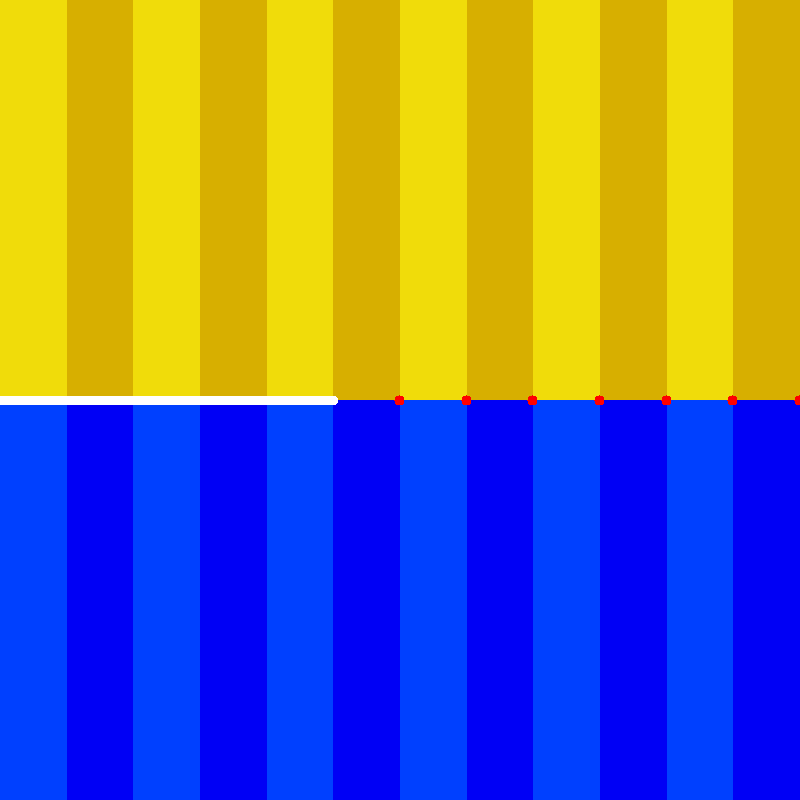}};
\draw[->] (2.2,0.3) .. controls +(up:2cm) and +(left:2cm) .. (2.15,0.25);
\draw[->] (2.2,-0.3) .. controls +(down:2cm) and +(left:2cm) .. (2.15,-0.25);
\draw[->] (4.5,1.5) -- (7.5,1.5);
\draw[->] (4.5,-1.5) -- (7.5,-1.5);
\end{tikzpicture}
\caption{The extended attracting Fatou coordinates of $\wt B_2$ conjugate the restriction of $\wt B_2$ to the two principal chessboard boxes and the segment $]0,1[$, indicated on the left, to the translation $z\mapsto z+1$ on a slit plane, as on the right.}
\label{fig:shades_img}
\end{figure}

\begin{figure}
\begin{tikzpicture}
\node at (0,0) {\includegraphics[width=10cm]{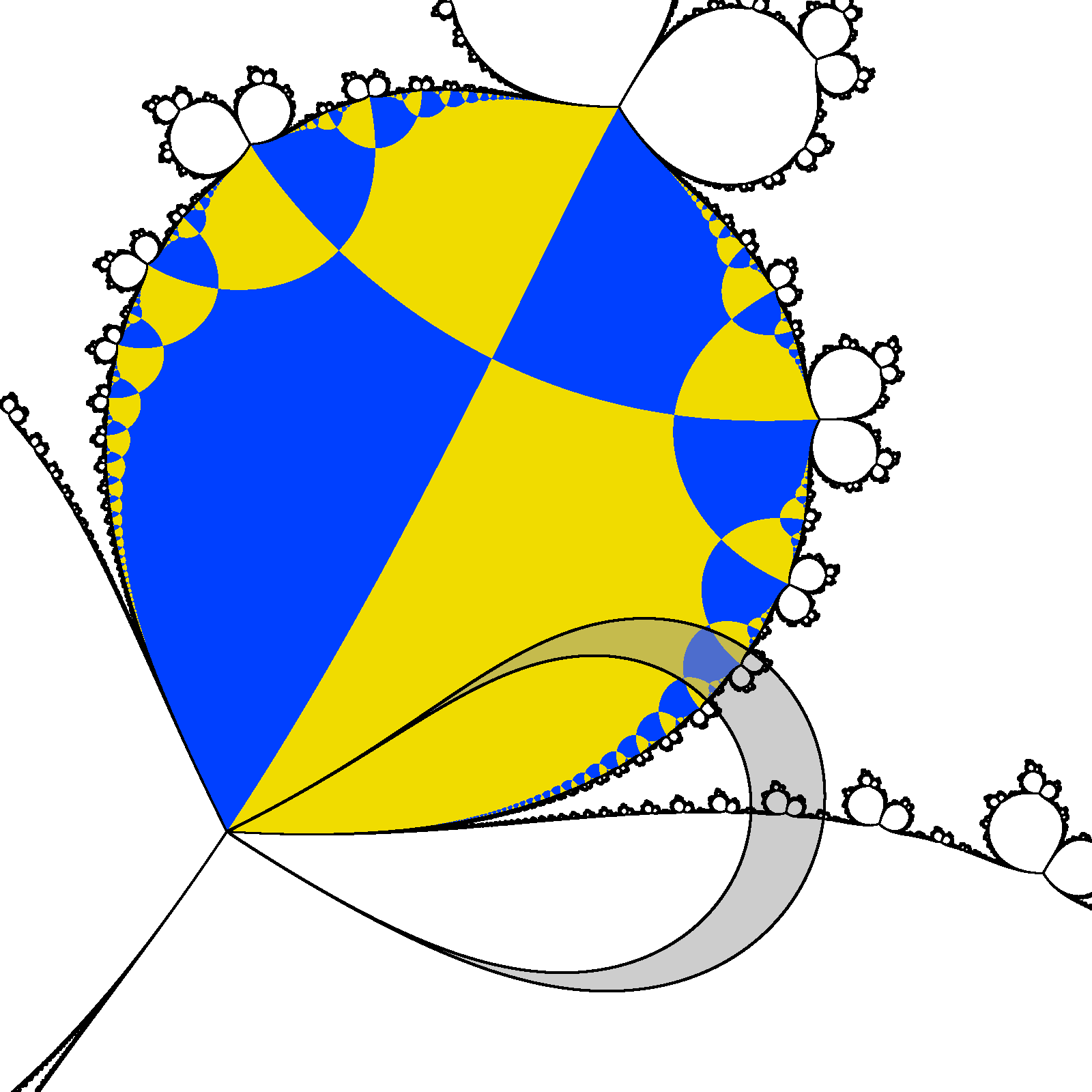}};
\end{tikzpicture}
\caption{This figure shows a grayed out fundamental domain (accurately computed) for the repelling Fatou coordinates, more precisely an open set mapped conformally by $\Phi_\rep[f]$ to a vertical strip of the form $a<\Re(z)<a+1$ for some $a\in\R$. The map $f$ is the third iterate of the fat Douady rabbit polynomial and we chose $U$ to be one of the three immediate basins of the parabolic point. This parabolic point is located at the the end of the tips of the gray set. This kind of drawings help to understand the structure of the extended horn map. See the text page~\pageref{here:rabbit}. See also \Cref{fig:r2}.}
\label{fig:fdtal}
\end{figure}

The chessboard graph has no endpoint, and it is closed in $A$ but not compact. Since we considered the chessboard graph as a subset of $\C$ endowed with its topology, not as a combinatorial object, there is an ambiguity outside branching points concerning which points are vertices of valence $2$ and which points belong to edges: the singular value is one such point w. So let us define an abstract graph with vertices at all preimages of the singular value by $\pi \circ \Phi[f]: A \to \C/\Z$, and edges as preimages of the horizontal circle through it.

\remark We will not make use of it, but it would make sense to consider some supplementary topological information on the abstract graph, like the cyclic order induced by the embedding in the plane on edges at every vertex.
\endremark

The abstract graph and the way it is embedded in $A$ tells us how are glued together pieces obtained by cutting $A$ along the preimages of the horizontal circle through the critical value of $\pi\circ\Phi[f]$. It also tells us how are glued together pieces obtained by cutting $A$ along the preimages of the vertical line through this critical value: each of these piece is a union of two light stripes or two dark stripes, glued along a segment of the graph.

\Cref{fig:shades_img} explains how the union of the edges that touch points in the orbit of the singular value form an infinite line in the graph, and how the union of this line and of the two chessboard boxes whose closure contain the line, make a domain where the dynamics is conjugated to the translation by $1$ restricted to $\C\setminus ]-\infty, 0[$. The bright and dark stripes help to figure out how things are mapped and what the dynamics is within this domain. This would work for any $d\geq 2$, including $\infty$.

Let us now introduce the chessboard associated to the horn map $h$. It is defined using the pre-image of the horizontal line through the set of singular values of $h$, and of the upper and lower half plane cut by this line. From the definitions, it follows that it is also equal to the pre-image by $\Psi$ of the chessboard of $f$ in the full parabolic basin (the union of all iterated preimages of $A$ by $f$).
This time, it is not a dynamically invariant object, but it gives information on the \keyw{structure} of $h$ as defined in \Cref{subsec:structeq}. 

\phantomsection
\Cref{fig:fdtal,fig:r2}\label{here:rabbit} explains how one can proceed to guess the shape of the domain of the horn map and its parabolic chessboard, using a crescent shaped fundamental domain for the repelling Fatou coordinates. In the picture we only looked at the immediate basin. Note that we only defined the horn map for maps with one attracting petal attached to the parabolic point, whereas \Cref{fig:fdtal} shows an example with three. Here, there are several inequivalent definitions for the horn map. Let us give one such that the domain of the horn map is the smallest still giving a parabolic renormalization with the full structure: $h=\Phi_\at\circ\Psi_\rep$ where $\Phi_\at$ is the extended attracting Fatou coordinate restricted to the immediate basin $A$ of the petal, and $\Psi_\rep$ is the repelling Fatou coordinate associated to one of the two repelling petals adjacent to $A$. For another definition giving $h$ a bigger domain of definition, see Section~9 of \cite{Che3}.

\begin{figure}
\begin{tikzpicture}
\node at (0,0) {\includegraphics[height=18.5cm]{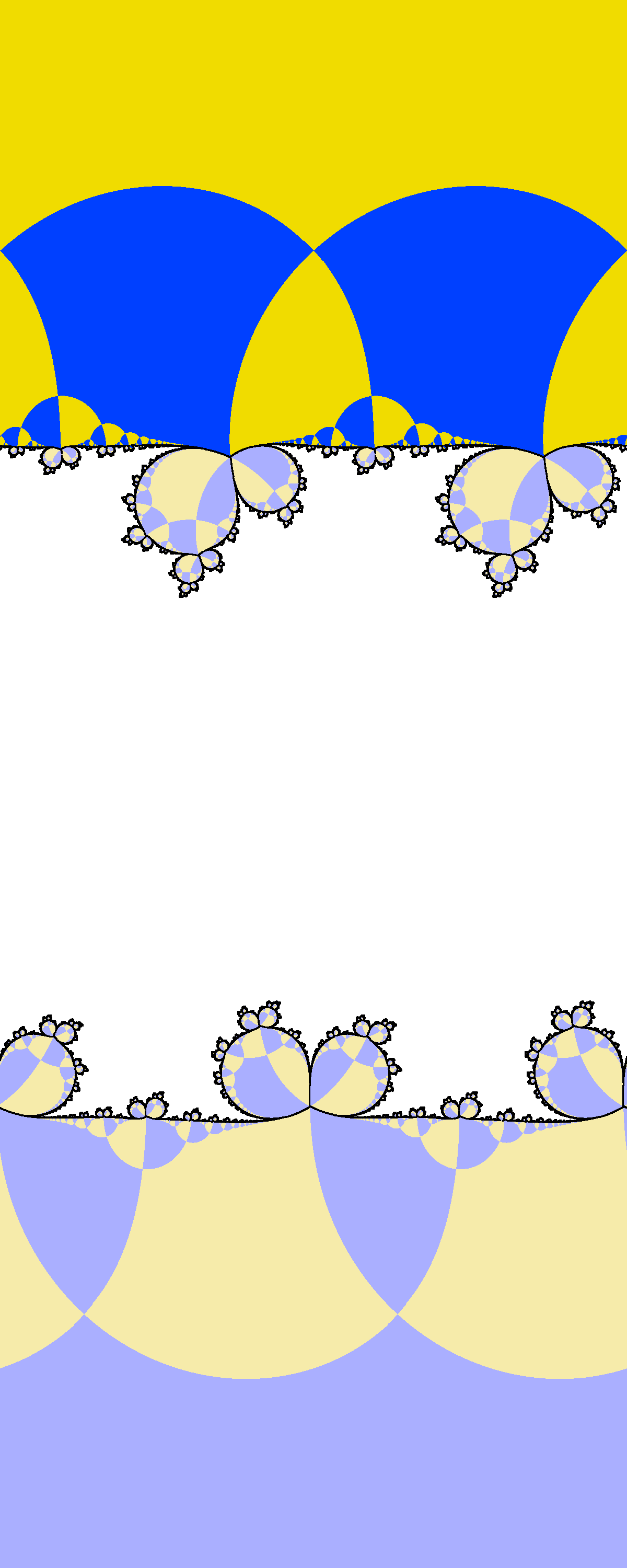}};
\end{tikzpicture}
\caption{Chessboard of the extended map $h$ for the fat Douady rabbit. We highlighted the component that contains an upper half plane. We give in the text on page~\pageref{here:rabbit} a definition of $h$ in the case with more than one attracting petal. The picture in fact illustrate the chessboard for a version of this definition giving $h$ a bigger domain, see Section~9 of \cite{Che3} for a precise definition in the particular case of a parabolic point with one cycle of attracting petals, of which the fat Douady rabbit is an instance.}
\label{fig:r2}
\end{figure}

\begin{figure}
\begin{tikzpicture}
\node at (0,5.1) {\includegraphics[width=10cm]{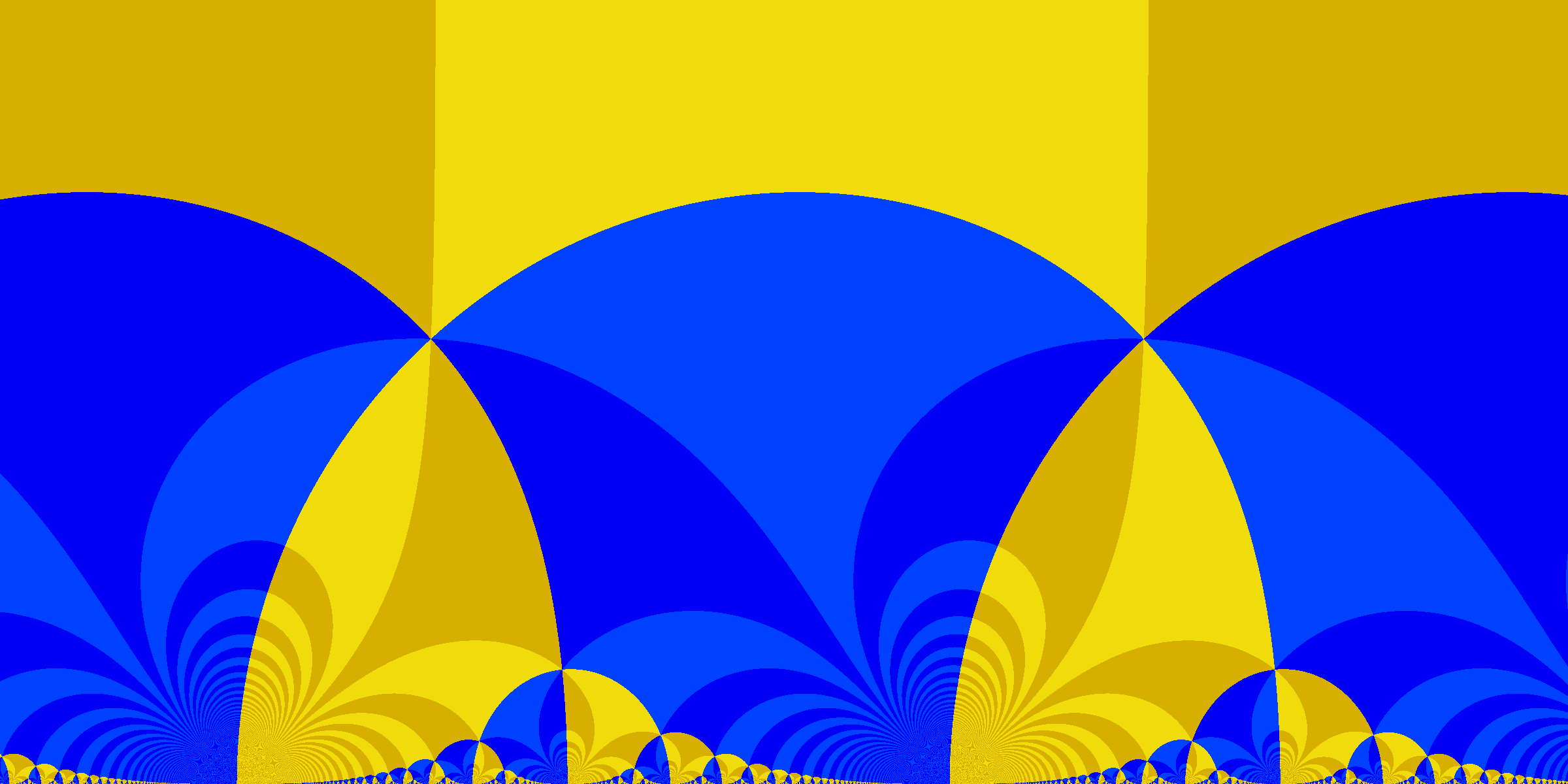}};
\node at (0,0) {\includegraphics[width=10cm]{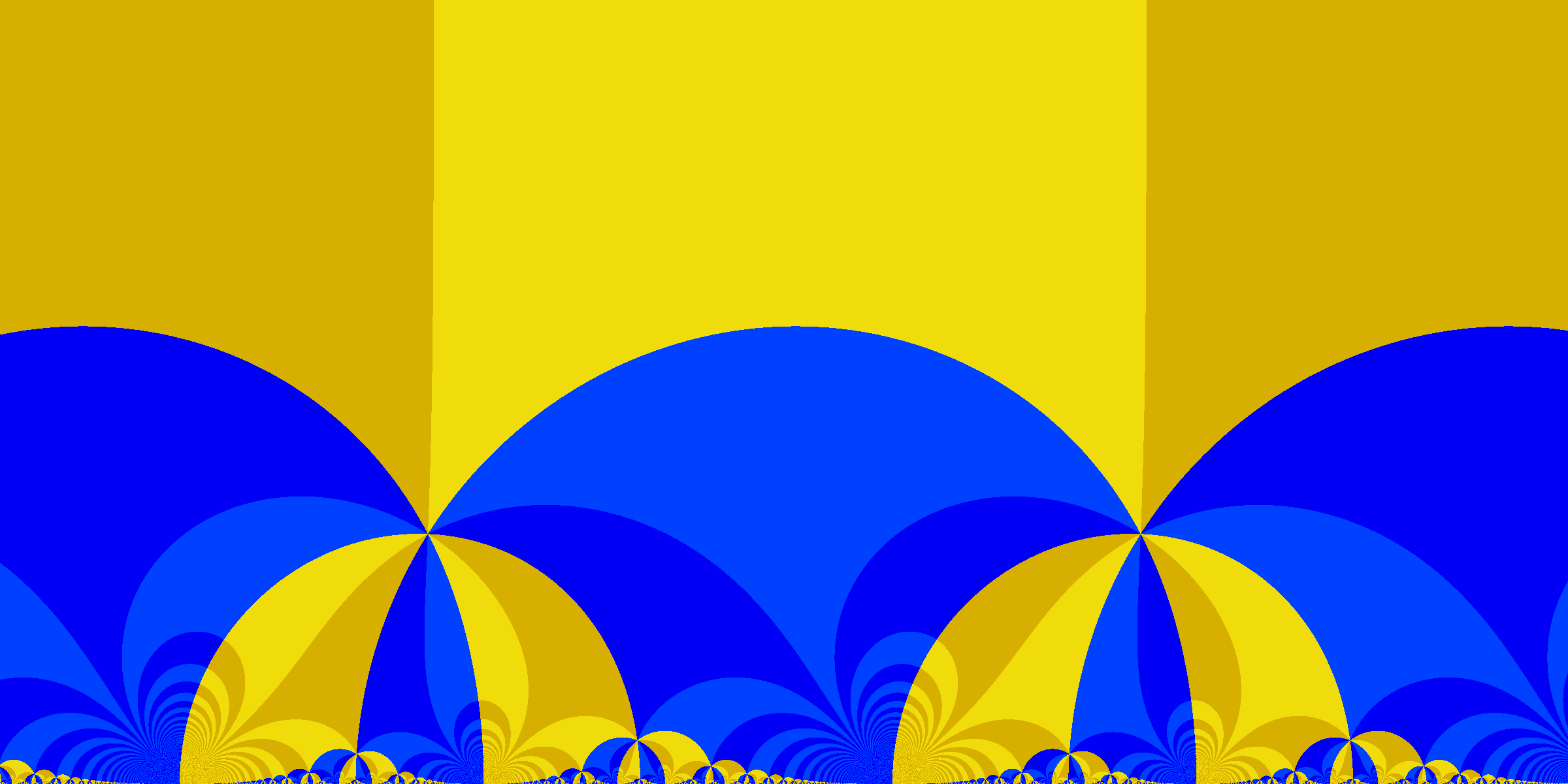}};
\node at (0,-5.1) {\includegraphics[width=10cm]{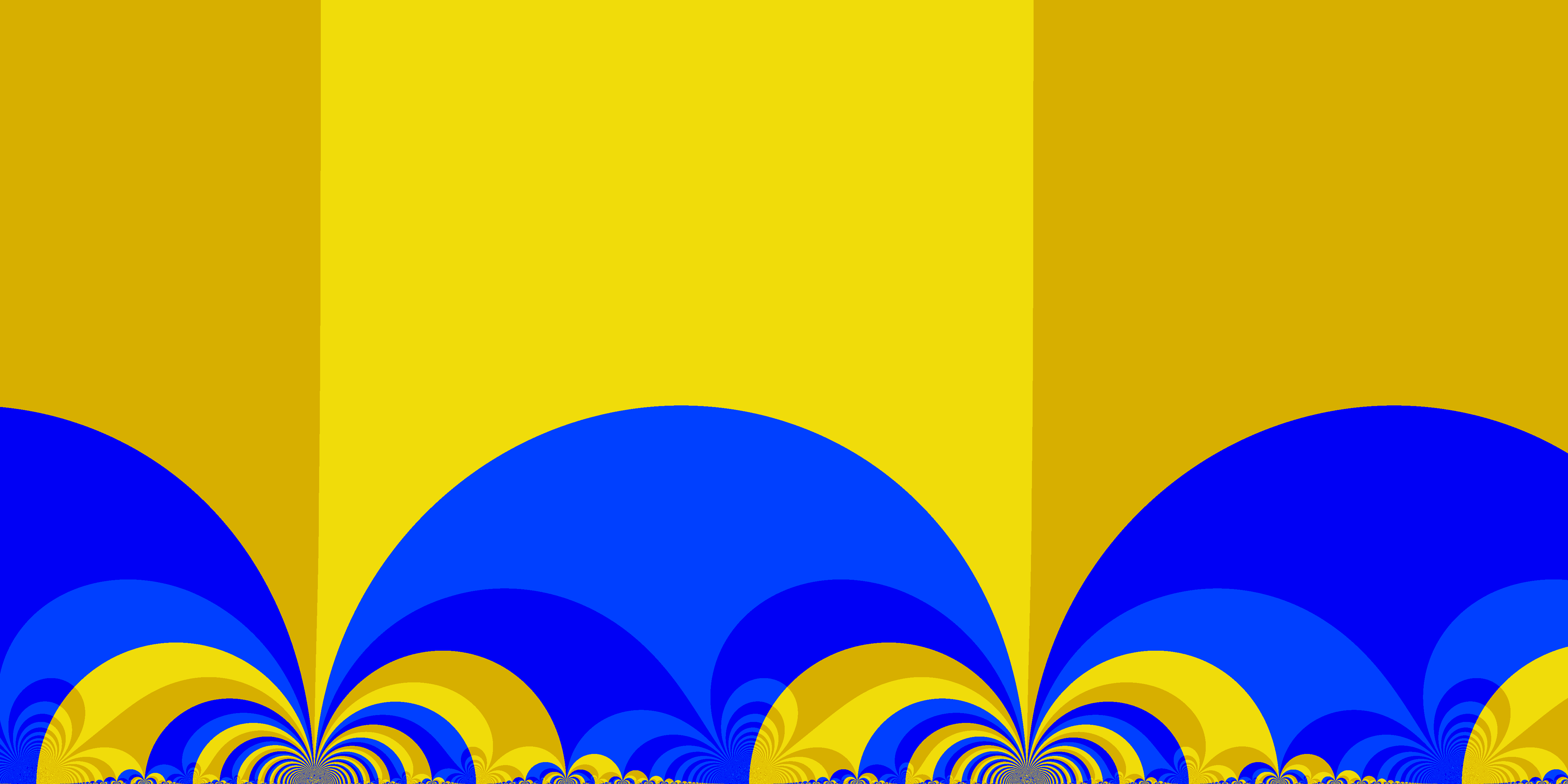}};
\end{tikzpicture}
\caption{These three pictures show the structures of the extended horn maps $h$ of respectively $B_2$, $B_3$ and $B_\infty$. They are all defined on the complement of a horizontal line; in each case, we only drew the picture above this line; the full picture is obtained by reflection through this line, permuting blue$\leftrightarrow$yellow. The same coloring conventions apply as in the previous figures: yellow boxes map by $h$ to the upper half plane delimited by the horizontal line through the singular value, blue boxes to the lower half plane. The boundaries between dark and light shades of a given color are mapped by $\pi \circ h$ to the vertical line through the critical value.}
\label{fig:h-struct-of-B}
\end{figure}

The next set of pictures, in \Cref{fig:h-struct-of-B}, shows the structure of the horn maps of $B_d$. The image of these three pictures by the exponential map $z\mapsto \exp(2\pi i z)$ is shown on \Cref{fig:struct-RB2,fig:struct-RB3,fig:struct-RBinf}, and gives us information about the structure of the upper renormalization $\cal R[B_d]$ of $B_d$.
One sees that it is also defined on a disk centered on the origin.
For the beauty of the thing, we replaced the dark and light strips by a lighting scheme that gives the illusion of a texture made of cylinders.\footnote{The trick to produce such a computer picture is called \keyw{normal mapping}, it is the same trick used to give a realistic look in 3D imaging to texture-mapped polygons subjected to a light source. Some specular reflection reinforces the feeling of relief.} A more shameful reason for this change is that the light and dark stripe scheme does not pass to the quotient. Let us explain a bit more these pictures.

Recall that $\cal H[B_d]$ denotes the semi-conjugate of $h[B_d]$ by $E: z\mapsto e^{2\pi iz}$, i.e.\ $\cal H \circ E = E\circ h$, completed by $\cal H[B_d](0)=0$, and that $\cal R[B_d] = A\circ \cal H[B_d] $ for some linear map $A$.
Recall also that there are three singular values of $\cal R[B_d]$: $0$, $\infty$ and some third point $v$. The only preimage of $0$ is $0$.

The chessboard decomposition for $\cal R[B_d]$ has two equivalent definitions. First as the preimage by $\cal R[B_d]$ of the decomposition of $\C$ into the circle of center $0$ going through $v$ and the two connected components of its complement, yellow corresponding to the inside and blue to the outside.
But is is also the image by $E$  of the chessboard decomposition of $h[B_d]$.

With our coloring convention, the box surrounding $0$ is yellow, every blue box is mapped by $\cal R[B_d]$ to the set $|z|>|v|$ as a universal cover, the yellow box containing the origin is mapped $1:1$ to $|z|<|v|$ and every other yellow box is mapped as a universal cover to $|z|<|v|$ minus the origin.

This can be generalized to the renormalization $\cal R[f]$ for any $f$ satisfying the hypotheses of \Cref{thm:shi2b}, i.e.\ $f\in \sclass_d$. It is both: on one hand the preimage of the decomposition of $\C$ into the circle of center $0$ going through the singular value $v$ of $\cal R[f]$ and the two components of its complement; on the other hand the image by $E$ of the chessboard of $h$, subject to the same restriction, rescaling and possibly inversion if we are performing lower parabolic renormalization instead of upper, as were done to pass from $h$ to $\cal R[f]$, and completed by adding $0$.
The partition of the domain of $\cal R[f]$ into two colors and the graph separating them is called the \keyw{structural chessboard of $\cal R[f]$}. It is different from the \keyw{dynamical chessboard} of $\cal R[f]$, which is defined only in the basin of its parabolic point $z=0$ and only if the normalization of $\cal R[f]$ is such that $0$ is a parabolic point of $\cal R[f]$. In particular, unlike the dynamical chessboard, the structural chessboard is \emph{not} dynamically invariant.

Recall that we chose to define $\cal R[B_d]$ with a normalization such that, in particular:
\[\dom\cal R[B_d] = \D \text{ and } \cal R[B_d]'(0)=1.\]
Then it turns out that 
$|v|$ is notably smaller than the radius $r=1$ of the unit disk, on which $\cal R[B_d]$ is defined, as was already remarked by several people before:
\[ |v| = e^{-2\pi^2 \Re(\gamma)}
\]
where $\gamma$ denotes the iterative residue of $C_d$ (see \Cref{subsec:parabopt,subsec:preferred}).
Let us quickly justify this: the horn map $h=\Phi_\at\circ\Psi_\rep$ has expansion $h(z)=z+a_{\on{up}}+o(1)$ when $\im(z)\to+\infty$ and if $\Phi_\at$ and $\Psi_\rep$ are normalized by the expansion (convention number~\ref{item:nor:2} on page~\pageref{item:nor:2}) then $a_{\on{up}}=-\pi i \gamma$.
The normalized horn map is defined on the unit disk and its singular values are real.
For the parabolic renormalization, we need first to semi-conjugate by $E$, which gives a map fixing $0$ with multiplier $e^{2\pi^2\gamma}$.
Then we pre and post compose by two linear maps $B$ and $A$ so that the derivative at the origin is $1$. The claim follows.
In fact we chose $B=\on{id}$ so we get the more precise identity
\[ v = e^{-2\pi^2 \gamma}.\]

\phantomsection
We mentioned earlier that $\gamma[B_d] = \frac{3}{20}\cdot\frac{d^2+1}{d^2-1}$. The ratio $|v|/r$ thus ranges between $\approx 1/140$ (for $d=2$) and $\approx 1/20$ (for $d=\infty$).\label{here:gamma}
In the case $d<\infty$, notice that there is a tiny loop bounding a small yellow box containing the origin and that looks like a droplet. When $d$ increases the angle at the tip of the loop decreases and the tip gets closer to the boundary of the domain of definition of the map.
In the case $d=\infty$, the droplet touches the boundary.

\begin{figure}
\includegraphics[width=10cm]{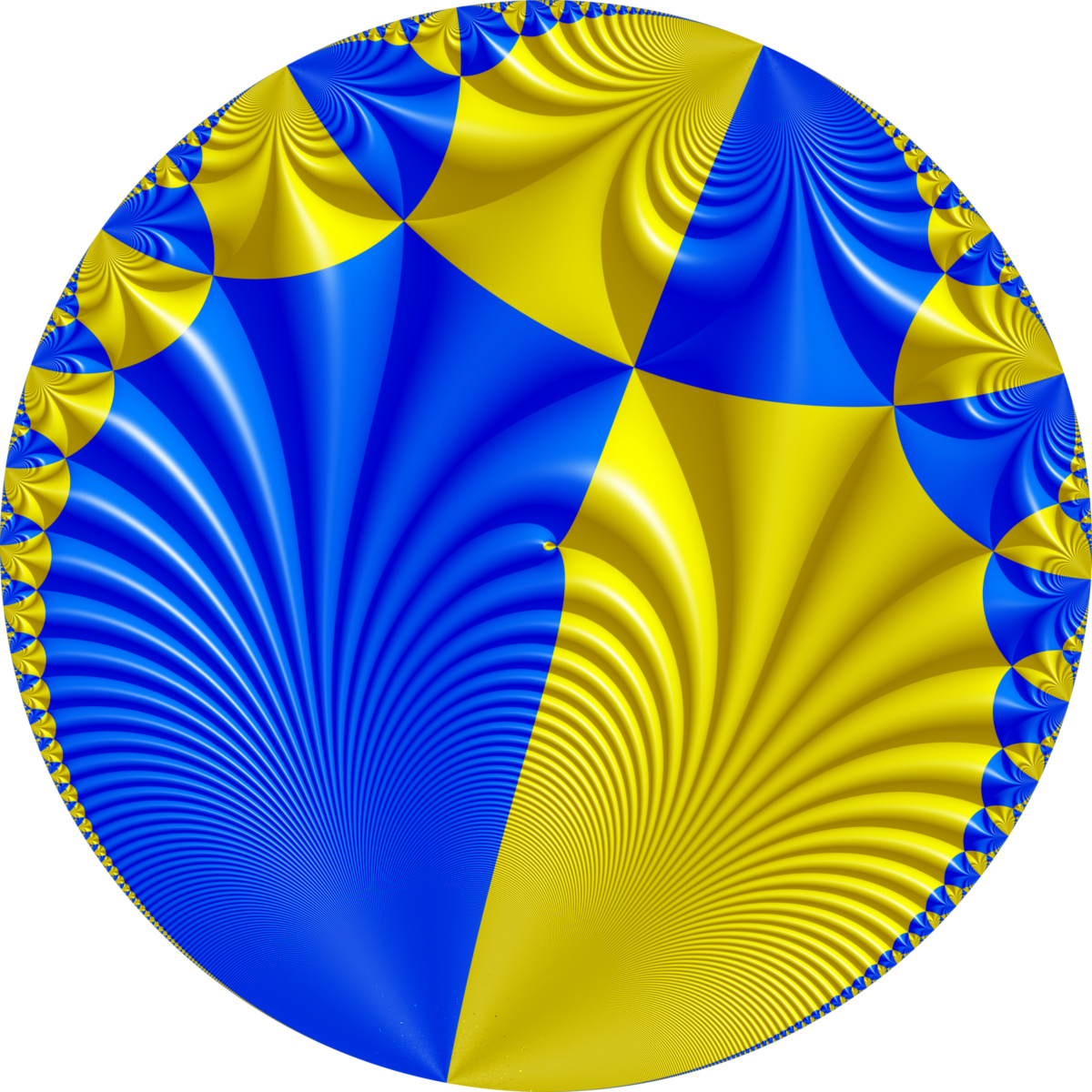}
\caption{Structure of $\cal R[B_2]$.}
\label{fig:struct-RB2}
\end{figure}

\begin{figure}
\includegraphics[width=10cm]{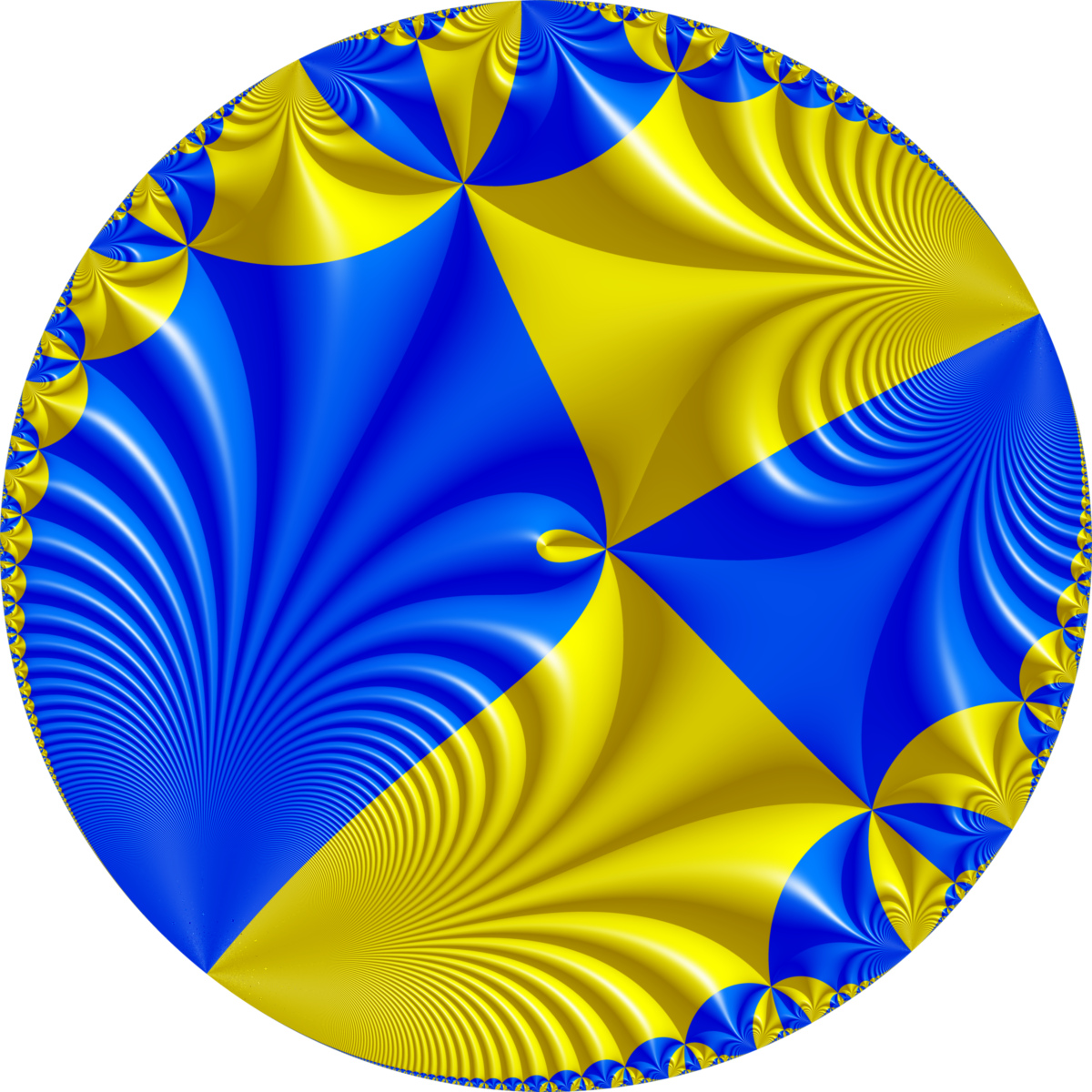}
\caption{Structure of $\cal R[B_3]$.}
\label{fig:struct-RB3}
\end{figure}

\begin{figure}
\includegraphics[width=10cm]{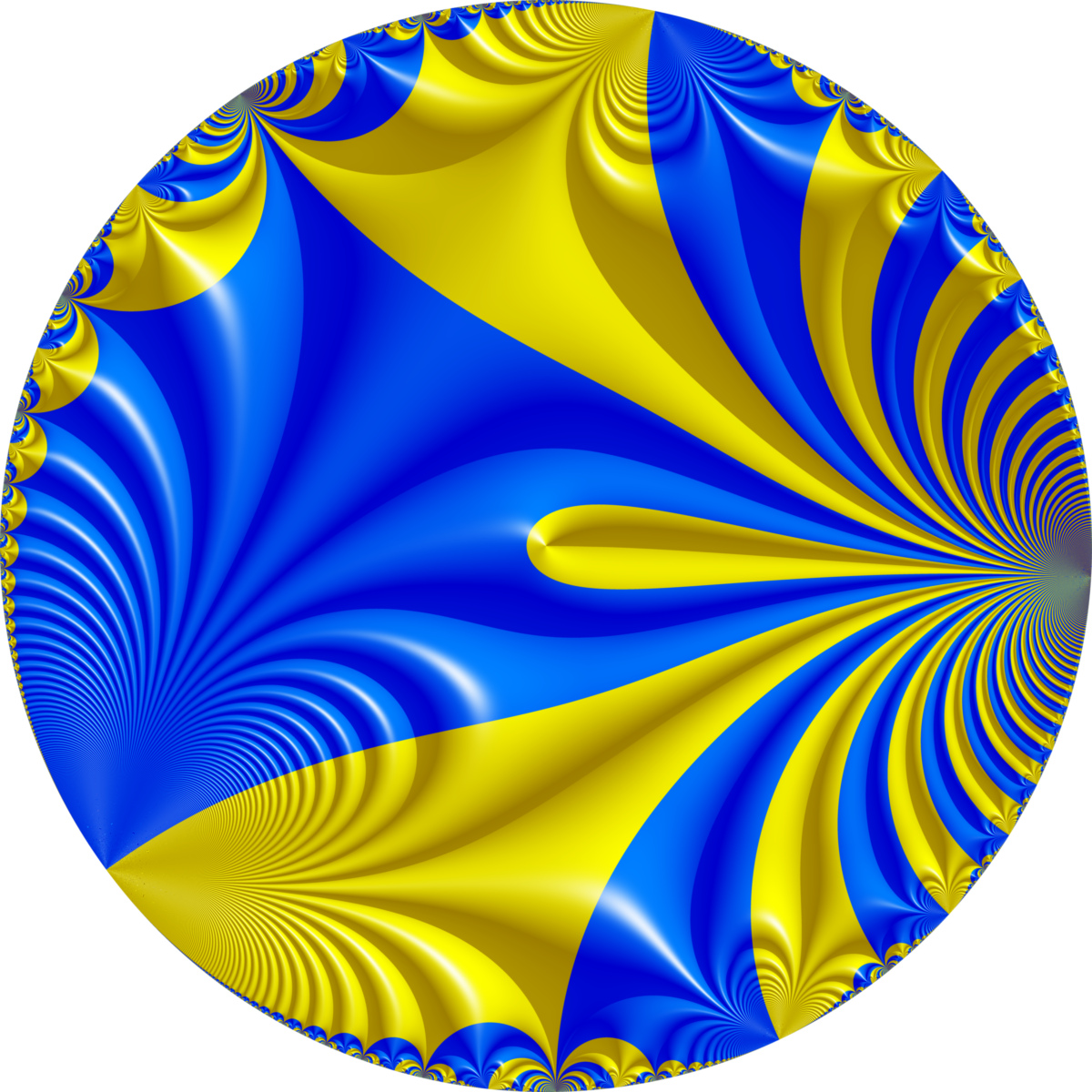}
\caption{Structure of $\cal R[B_\infty]$.}
\label{fig:struct-RBinf}
\end{figure}

\phantomsection
The next pictures illustrate \Cref{thm:shi2a,thm:shi2b}. \Cref{fig:th2:1}\label{here:atop} shows the famous case dubbed the Cauliflower: this is the Julia set of $z\mapsto z^2+1/4$.
We removed the colors and drew the boundaries between boxes and the boundaries of the definition domains.
The six images are ordered in a $2\times 3$ rectangle whose first column figures the dynamical chessboard of $f$ atop and of $B_2$ below.
The next column represents views of their chessboards in repelling Fatou coordinates or, more precisely, two periods of their preimage by $\Psi_\rep$.
The last column is the projection to $\C^*$ of the middle column by the map $b[f]^{-1}E: z\mapsto b[f]^{-1}\exp(2\pi i z)$ where $b[f]$ is some constant used in normalizing $\cal R$.
The vertical arrows are isomorphisms between the three pairs of domains, mapping graph to graph, respecting box colors (not figured here) and even better: they are structure isomorphisms for the following respective maps (properly normalized): the attracting Fatou coordinate for the first column, the horn map for the second column and the parabolic renormalization for the last one.
This diagram is also commutative if one adds the following self maps of the six sets: column~1: $f$, $B_2$, column~2: $T_1$, $T_1$, column~3: $\on{Id}$, $\on{Id}$.
From all this we can build a big commutative diagram, but we do not think that it would be much readable.
Note that the tiny loops in the last column are the images of the big unbounded square that lie above in both middle images.
The image of this square by $\Psi_\rep$ is one of the two $f$-invariant (resp.\ $B_2$-invariant) squares (they touch the fixed point), but the latter has many other preimages by $\Psi_\rep$. 

\Cref{fig:th2:2} shows the analog, but for the exponential map $z\mapsto e^z-1$. The caption of \Cref{fig:univ_exp_2} gives more explanation about the color scheme of the top row.
 One thing worth noticing in the middle top image: \emph{all} the yellow and blue components, not only the topmost, are unbounded (each has countably many arms that extend to the right, in some of the channels between hairs of the black set.) 
The image in the upper right corner, looking like a yin yang symbol, is very interesting, but we need to zoom near the center to see the details: this is done on \Cref{fig:yinyangzoom}.


\begin{figure}
\rotatebox{90}{\begin{tikzpicture}
\node at (0,0) {\includegraphics[width=4.5cm]{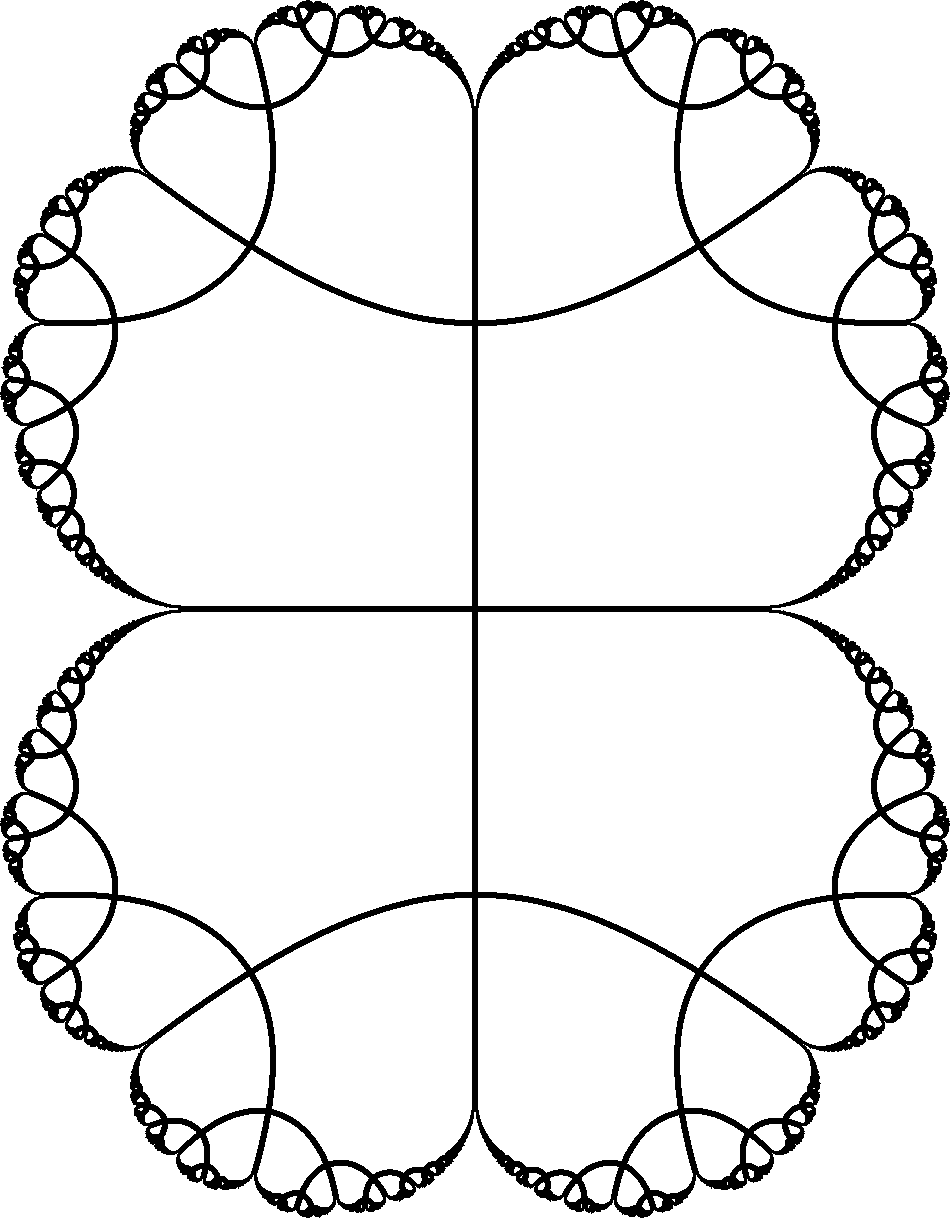}};
\node at (0,-7) {\includegraphics[width=5cm]{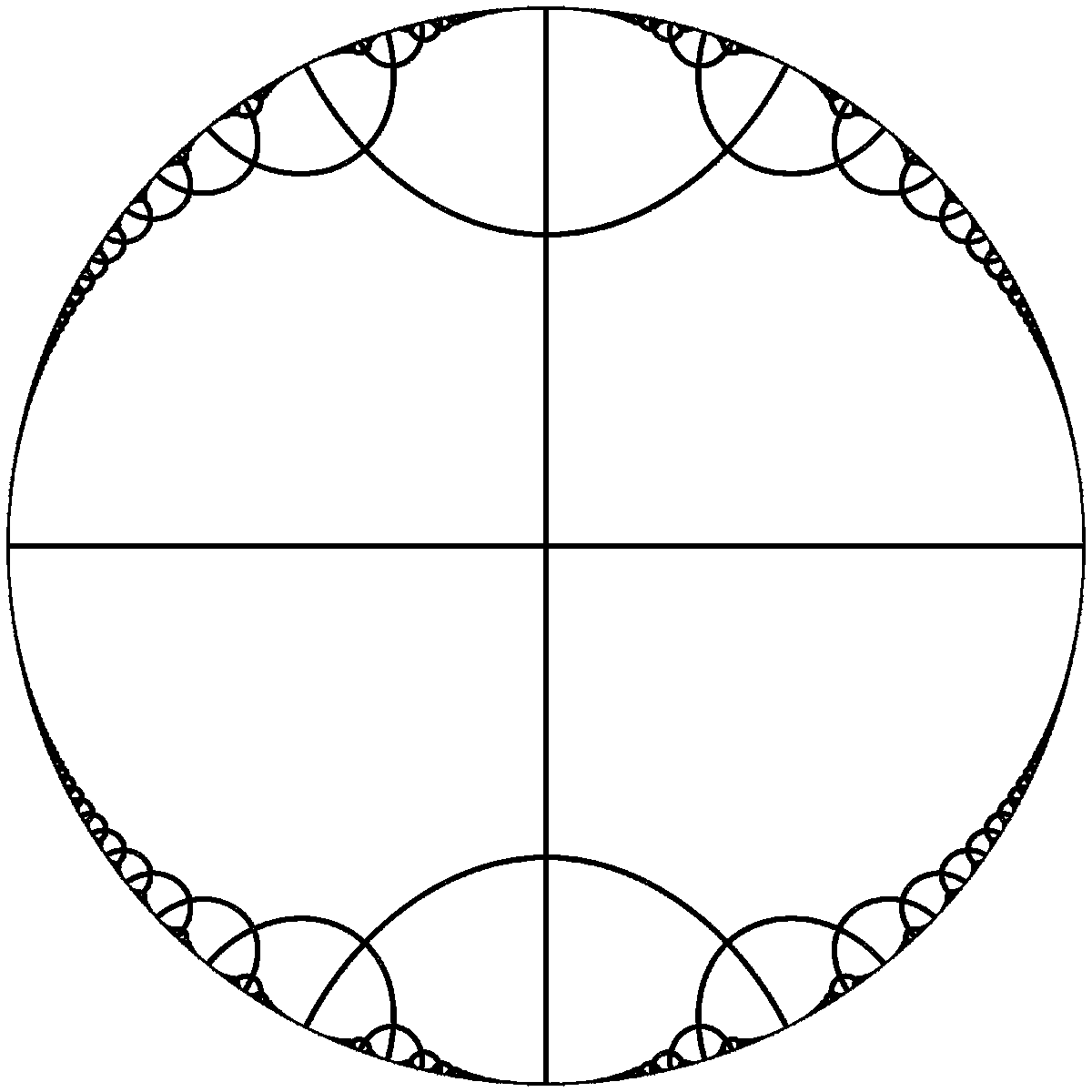}};
\node at (7,0) {\includegraphics[width=7cm]{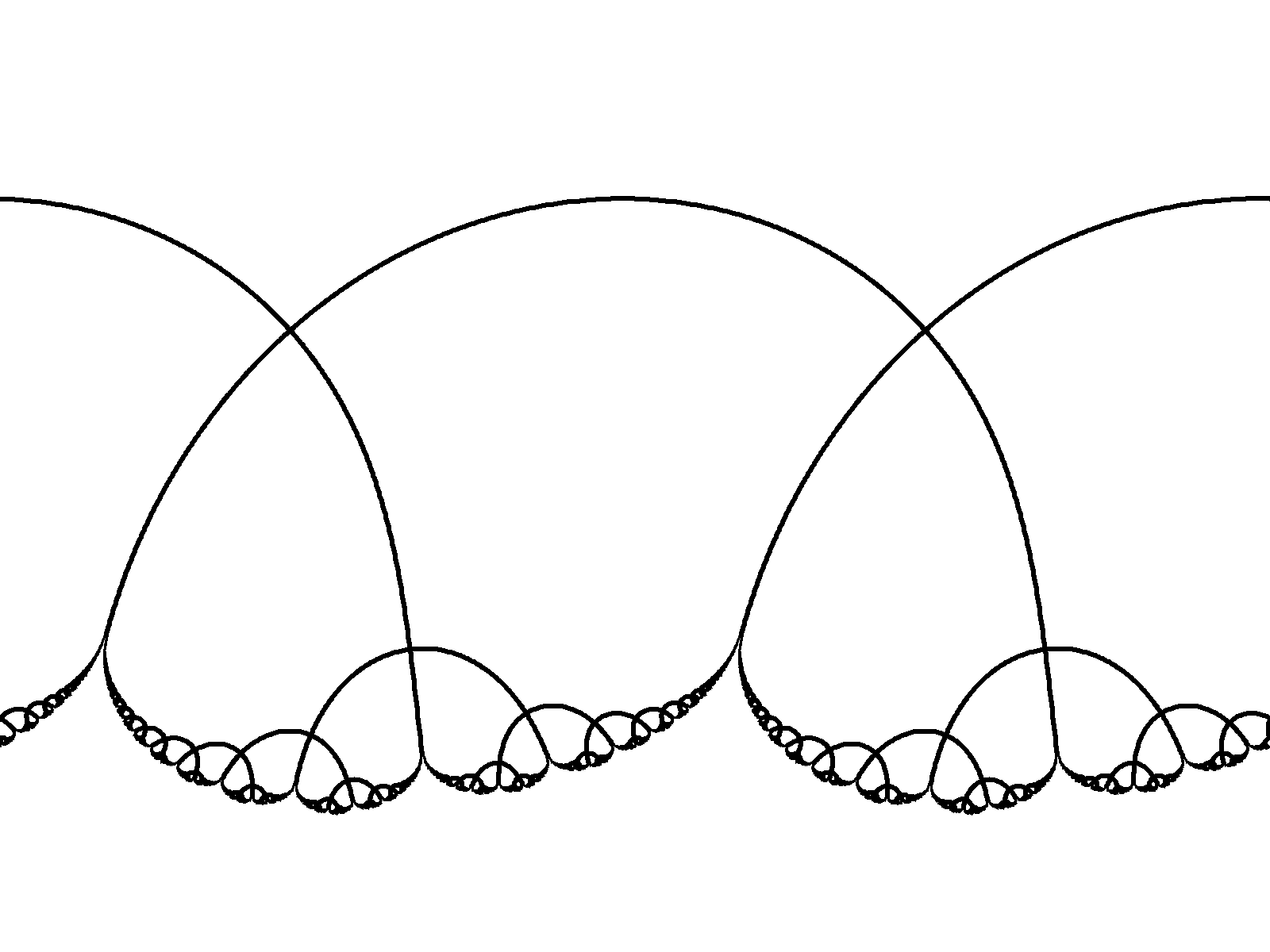}};
\node at (7,-6.5) {\includegraphics[width=7cm]{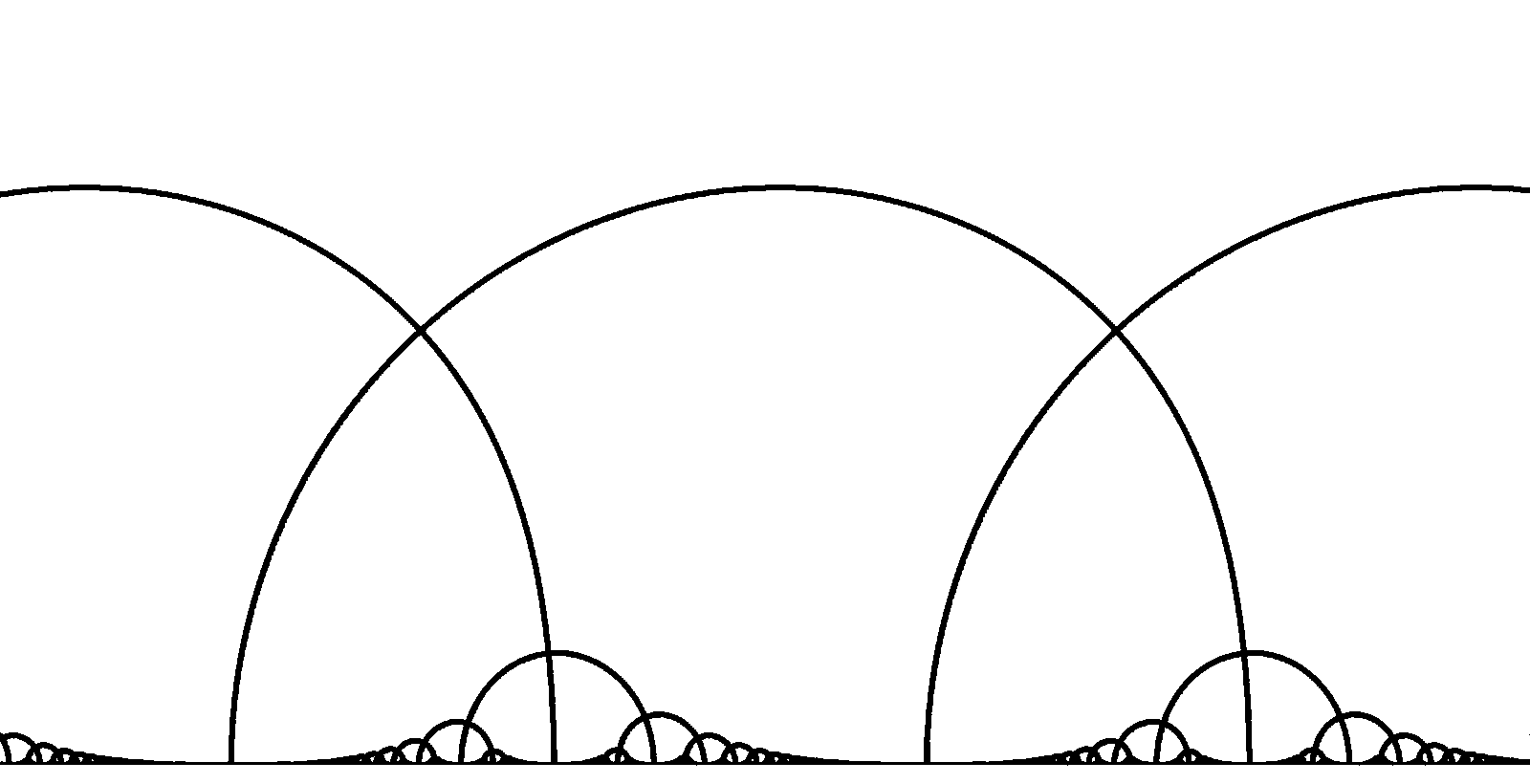}};
\node at (14,0) {\includegraphics[width=4.8cm]{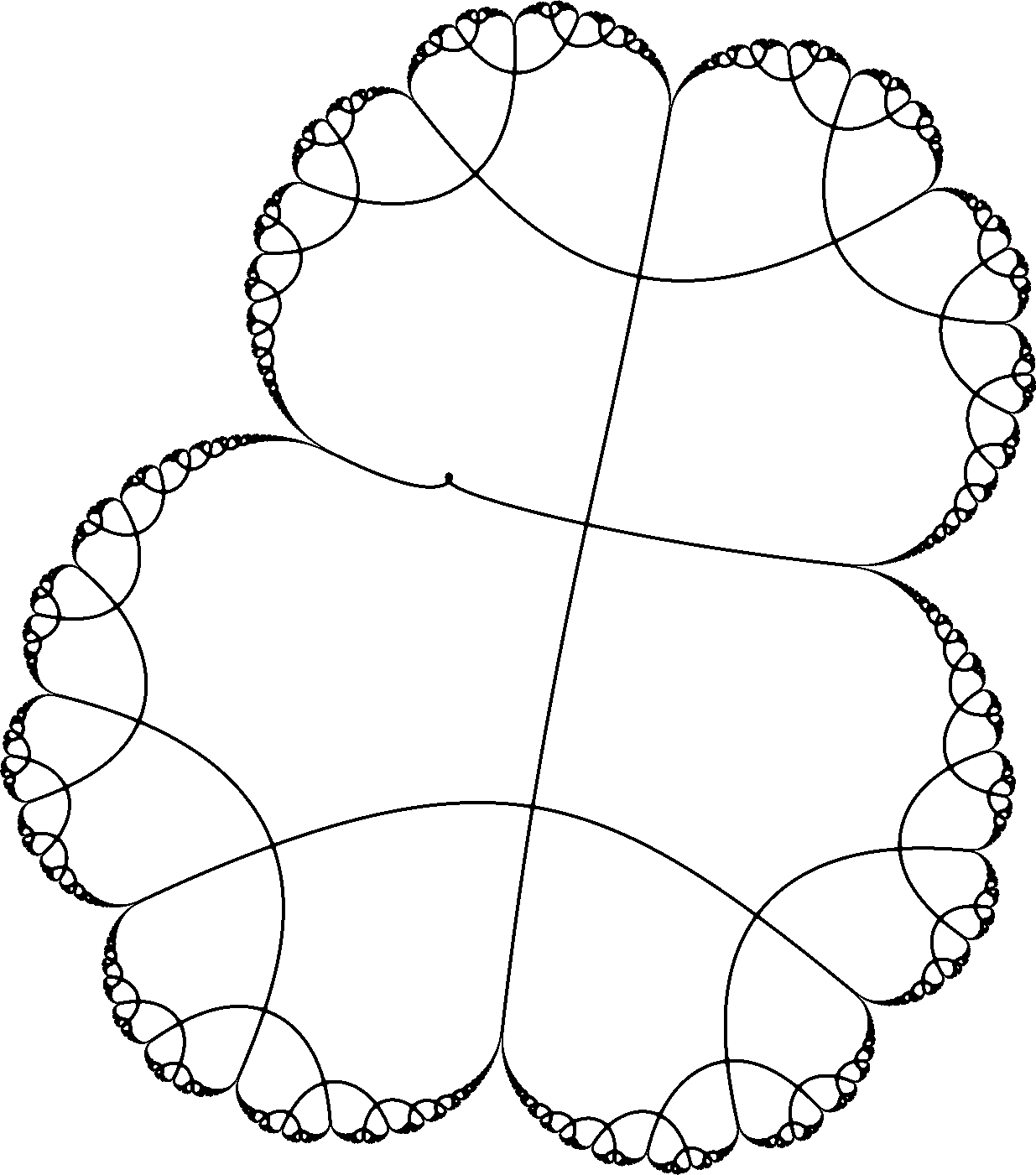}};
\node at (14,-7) {\includegraphics[width=5cm]{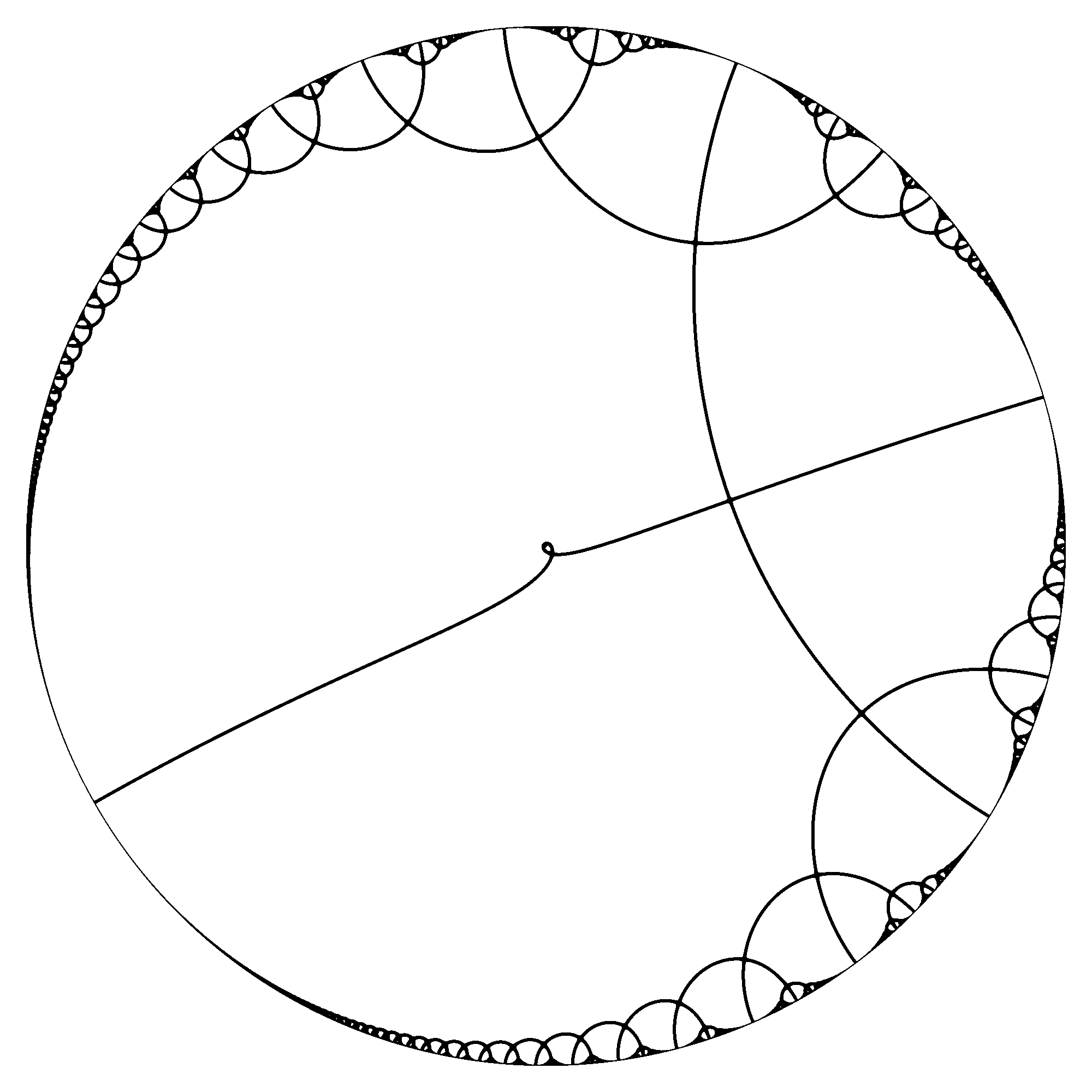}};
\draw[->] (0,-3.5) -- node[left] {$\zeta$} (0,-4.2);
\draw[->] (7,-3) -- (7,-4);
\draw[->] (14,-3.1) -- (14,-4);
\draw[->] (3.7,0) -- node[above] {$\Psi_\rep[f]$} (2.6,0);
\draw[->] (3.7,-7) -- node[above] {$\Psi_\rep[B_2]$} (2.6,-7);
\draw[<-] (11.3,0) -- node[above] {$\frac{1}{b}E$} (10.2,0);
\draw[<-] (11.3,-7) -- node[above] {$E$} (10.2,-7);
\end{tikzpicture}}
\caption{(rotated 90$^\circ$) Illustration of \Cref{thm:shi2a,thm:shi2b} for $f(z)=z^2+1/4$. See the text page~\pageref{here:atop} for a description.}
\label{fig:th2:1}
\end{figure}

\begin{figure}
\rotatebox{90}{\begin{tikzpicture}
\node at (14.2,0) {\includegraphics[width=5cm]{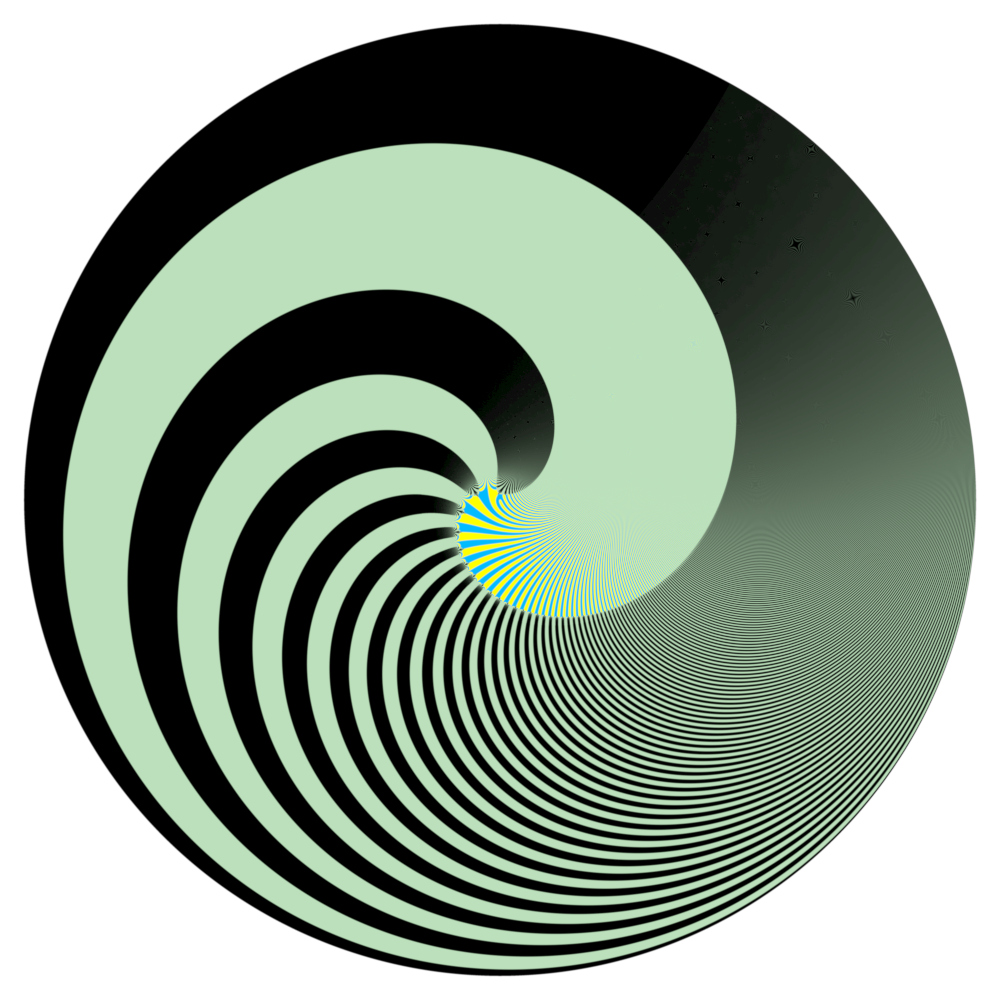}};
\node at (7,0) {\includegraphics[width=7cm]{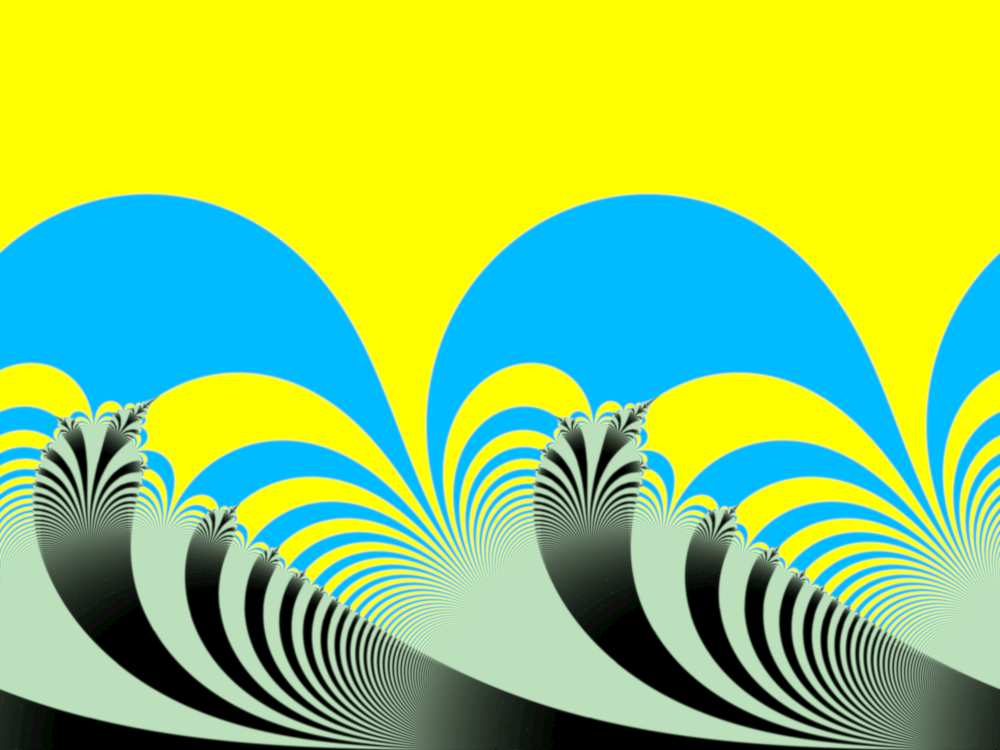}};
\node at (-0.5,0) {\includegraphics[height=4cm]{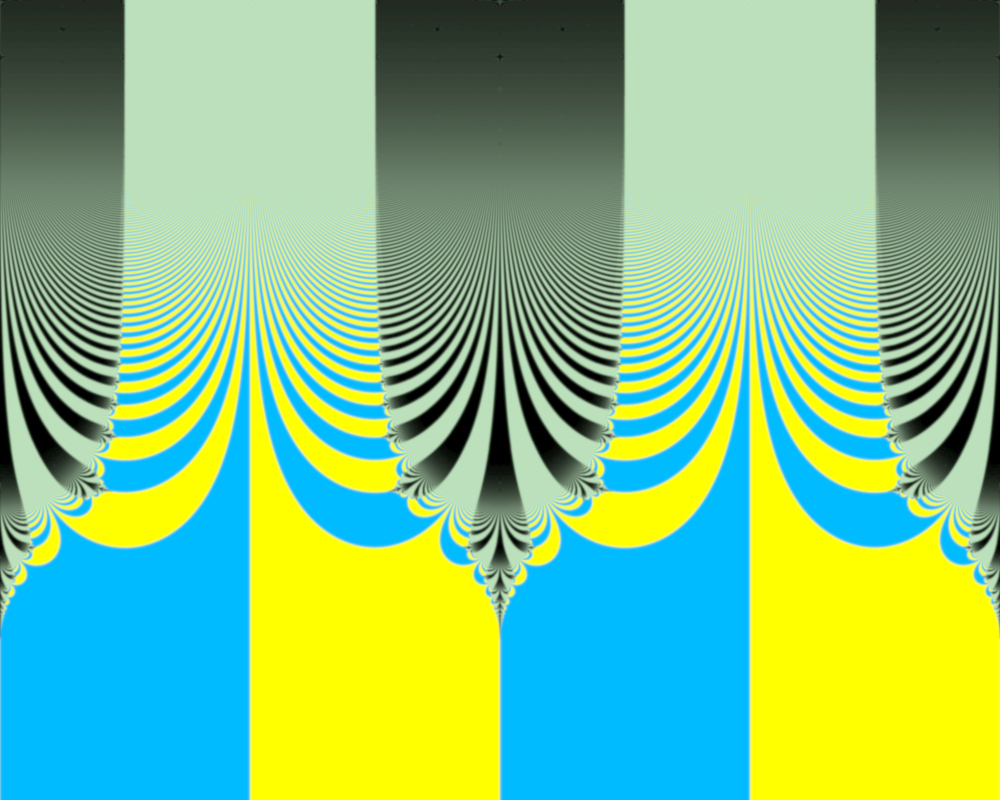}};
\node at (14.2,-7) {\includegraphics[width=5cm]{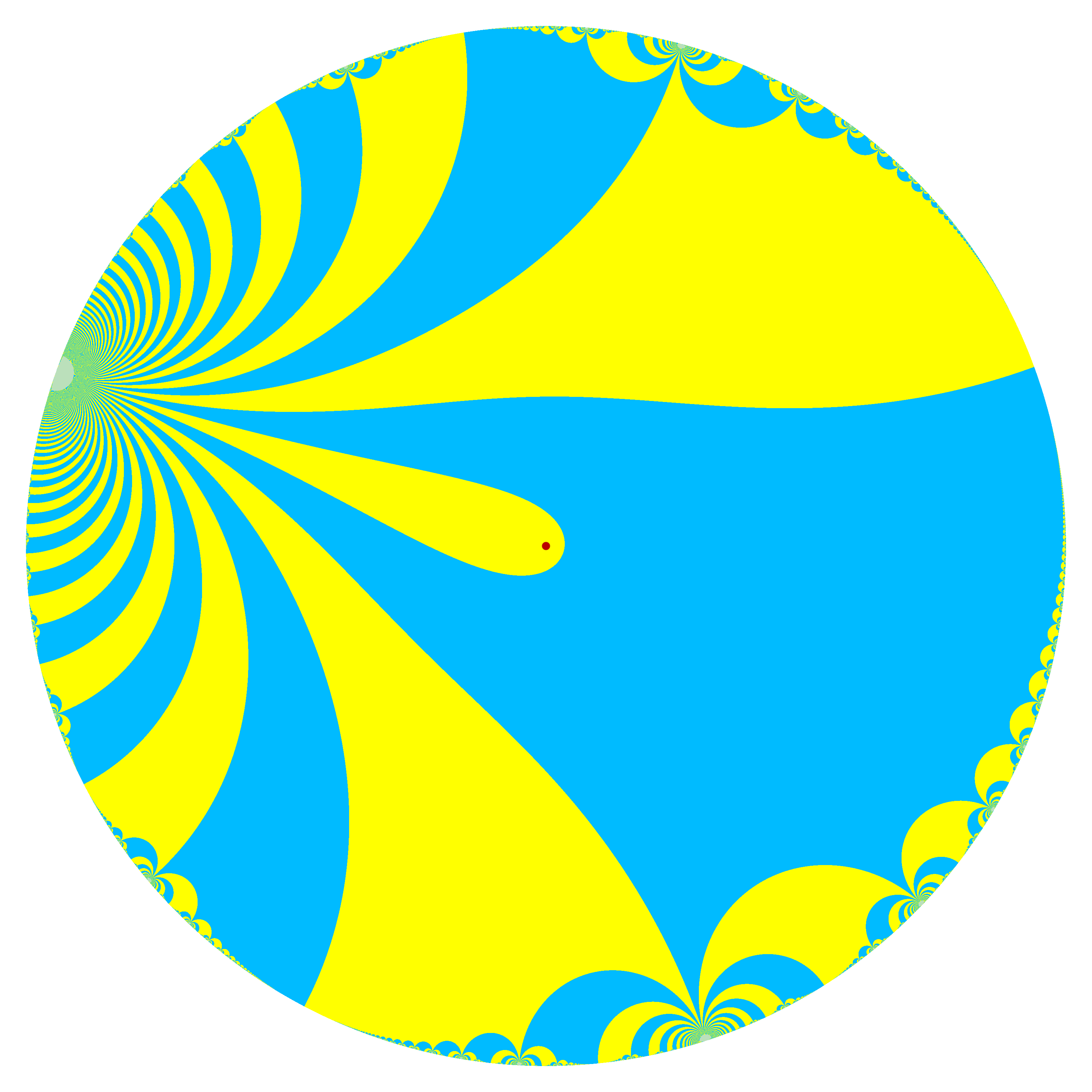}};
\node at (7,-7) {\includegraphics[width=7cm]{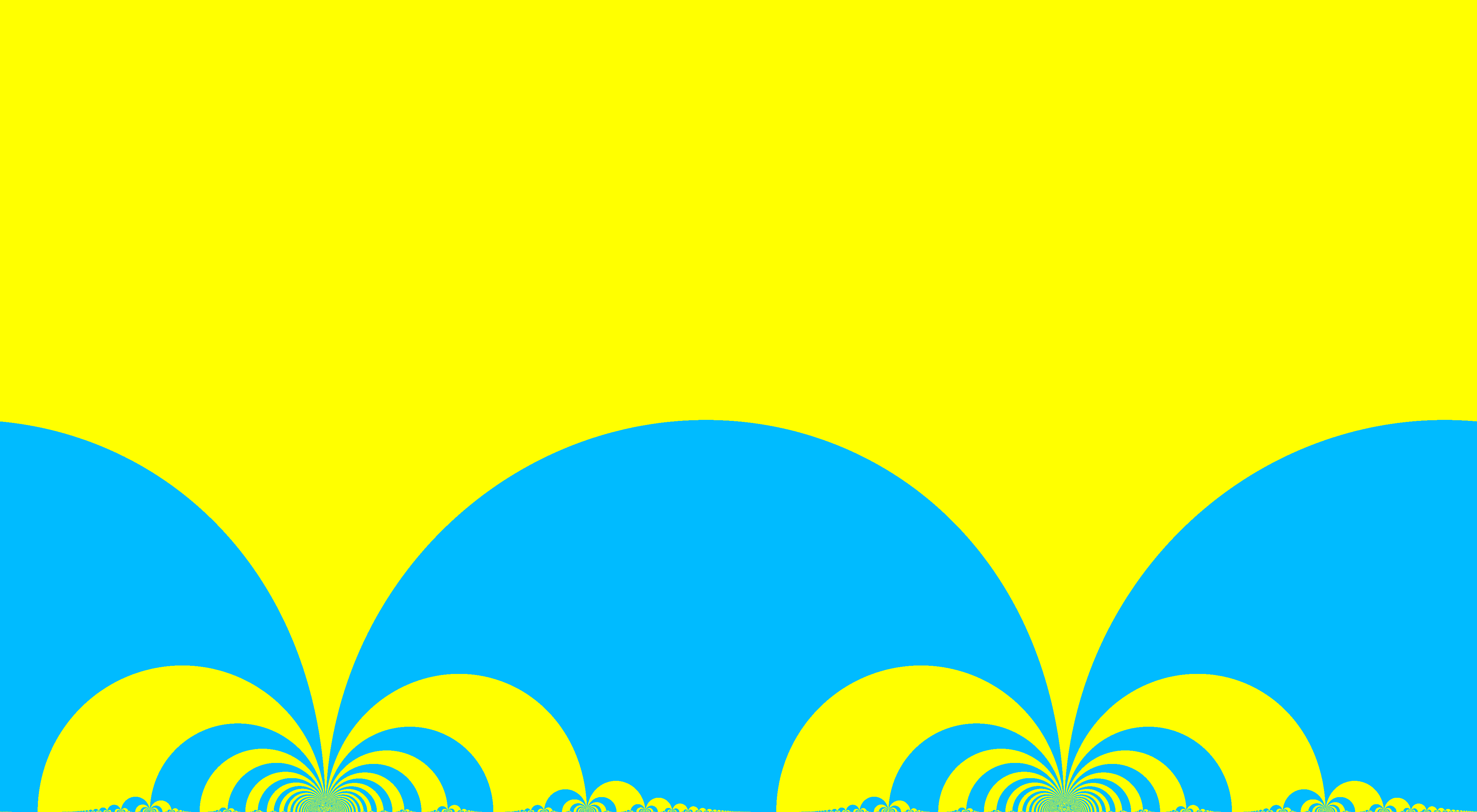}};
\node at (-0.5,-7) {\includegraphics[width=5cm]{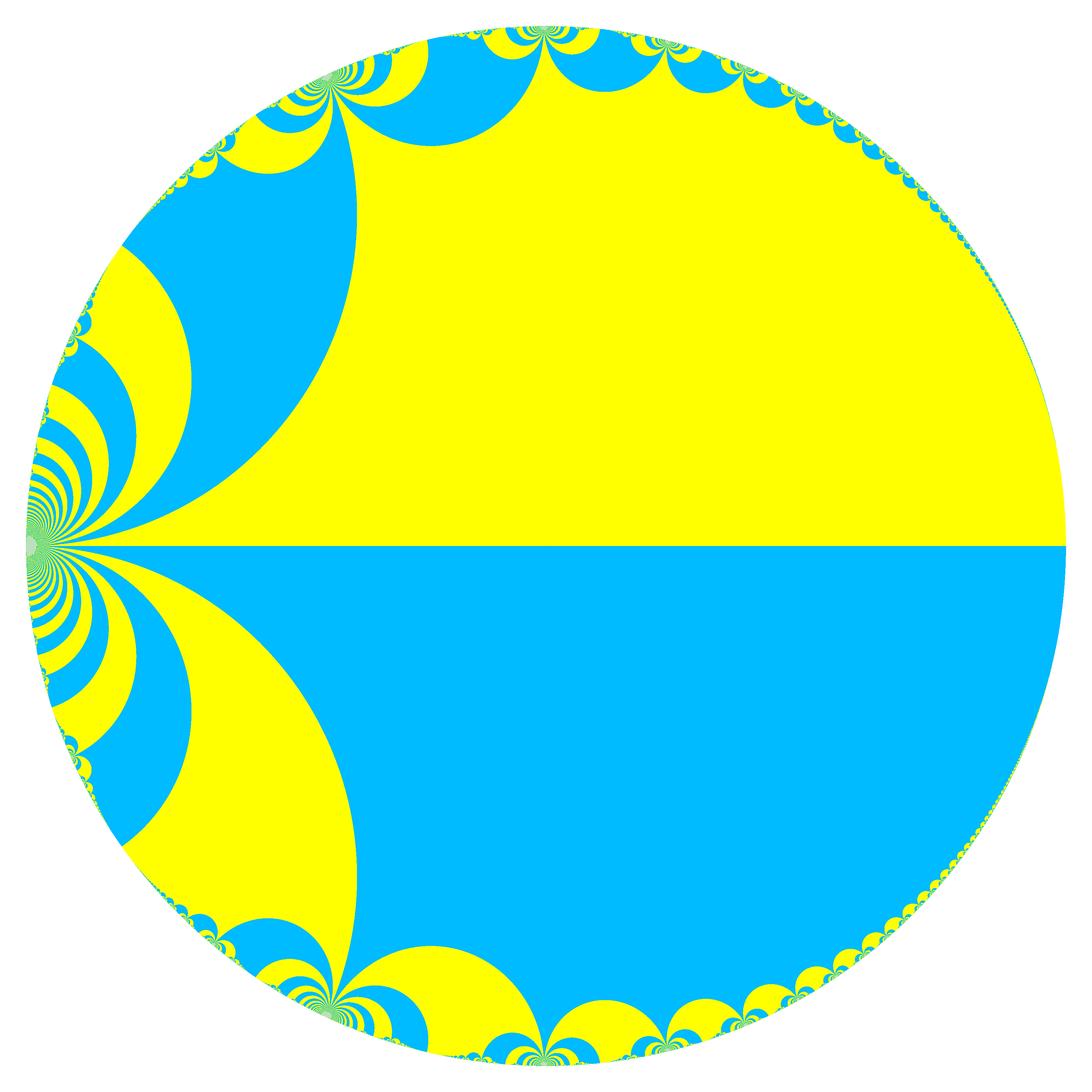}};
\draw[->] (-0.5,-3.3) -- node[left] {$\zeta$} (-0.5,-4.3);
\draw[->] (7,-3.3) -- (7,-4.4);
\draw[->] (14.2,-3.0) -- (14.2,-4.1);
\draw[->] (3.2,0) -- node[above] {$\Psi_\rep[f]$} (2.2,0);
\draw[->] (3.2,-7) -- node[above] {$\Psi_\rep[B_\infty]$} (2.2,-7);
\draw[<-] (11.6,0) -- node[above] {$\frac{1}{b}E$} (10.7,0);
\draw[<-] (11.6,-7) -- node[above] {$E$} (10.7,-7);
\end{tikzpicture}}
\caption{(rotated 90$^\circ$) Analog of \Cref{fig:th2:1} for $f(z)=e^z-1$ and $B_\infty$. See the text for a description. There are enlargements of the top right image on \Cref{fig:yinyangzoom}.}
\label{fig:th2:2}
\end{figure}

\begin{figure}
\includegraphics[width=10cm]{yinyang}
\\
\includegraphics[width=6cm]{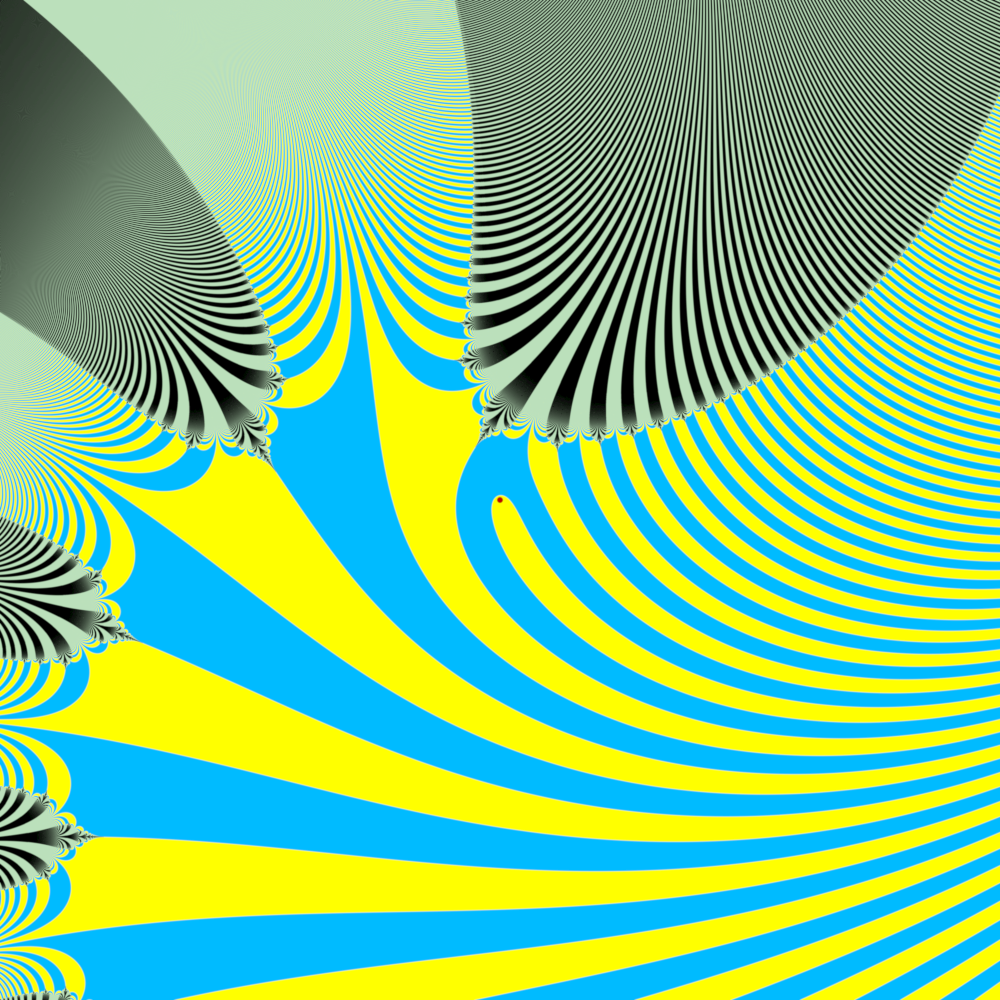}
\,
\includegraphics[width=6cm]{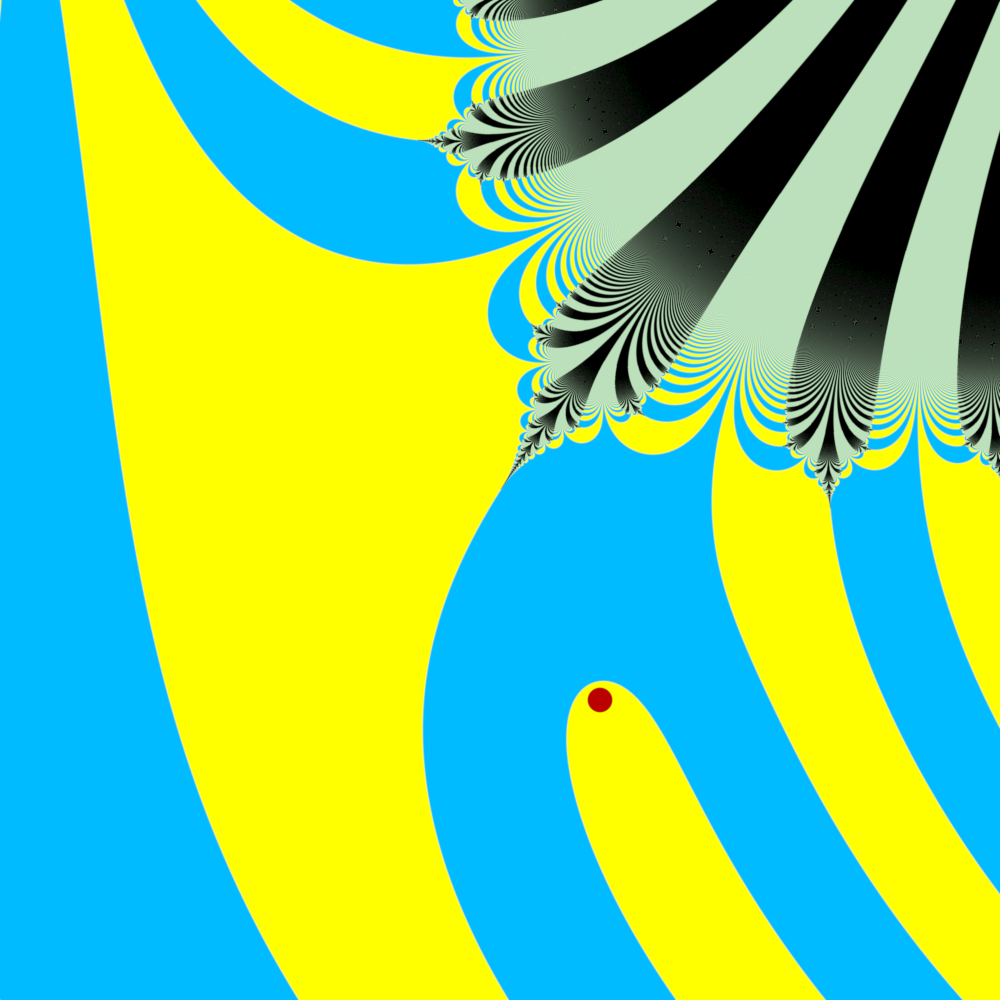}
\caption{Zoom on the origin, for the picture of the structure of the upper parabolic renormalization of the parabolic exponential map $f(z)=e^z-1$. The domain of definition of $\cal R[f]$ is the complement of the black set. The closure of this domain turns out to be a euclidean disk. The complement of this domain is sort of a Cantor bouquet that winds infinitely many times as one approaches the boundary of the disk. On the top picture, the structure is too fine to be properly seen. Below, we added two closer looks near the origin, pinpointed by a purple dot.}
\label{fig:yinyangzoom}
\end{figure}



\subsection{Inou and Shishikura's sub-structure}

To finish this visualization chapter, we present here the structure $\cal B$ of \Cref{thm:IS} (Inou-Shishikura's theorem), and how $\cal B$ and its sub-structure $\cal A$ fit as sub-structures of $\cal R[B_2]$.

\phantomsection
The first set\label{here:ISstruct} of drawings shows one of the ways Inou and Shishikura used to present it. They defined a Riemann surface with a natural projection over $\C/\Z$ as follows: cut the cylinder $\C/\Z$ so as to retain only the part where $\Im(z)>-\eta$ with $\eta=2$. (\footnote{There is some flexibility in the value of this lower bound, in \cite{IS}, they proved that their theorem holds for any real $\eta$ between $13$ and $2$ included. Here, we drew the domain only for their original value $\eta=2$.})
Slit this cylinder along the vertical segment from $0$ to $-\eta i$. To this, glue the rectangle $\Re(z)\in\,]-1,1[$ and $\Im(z) \in\,]-\eta,\eta[$, cut along the same segment. As usual with Riemann surfaces, we glue each side of the segment in one piece to the opposite side on the other piece. This is represented on the upper left part of \Cref{fig:IS1}. This method is reminiscent of the way Perez-Marco uses to build structures in his work.
Below it in the same figure, is a tentative to picture the way it projects to the cylinder $\C/\Z$, while on its right there is a planar open set isomorphic to it (conformal moduli are not respected in the figure). In the lower right corner, there is the image of the lower left by $z\mapsto \exp(2\pi i z)$ (rotated by 90 degrees). The right column is a map $f$ with structure $\cal B$ (the marked point is $z=0$).
The left part of \Cref{fig:IS2} accurately shows how $\cal B$ sits as a substructure of the structure of $\cal R[B_2]$. The right part identifies the pieces.

\newpage

\begin{figure}
\begin{tikzpicture}
\node at (0,0) {\includegraphics[height=6cm]{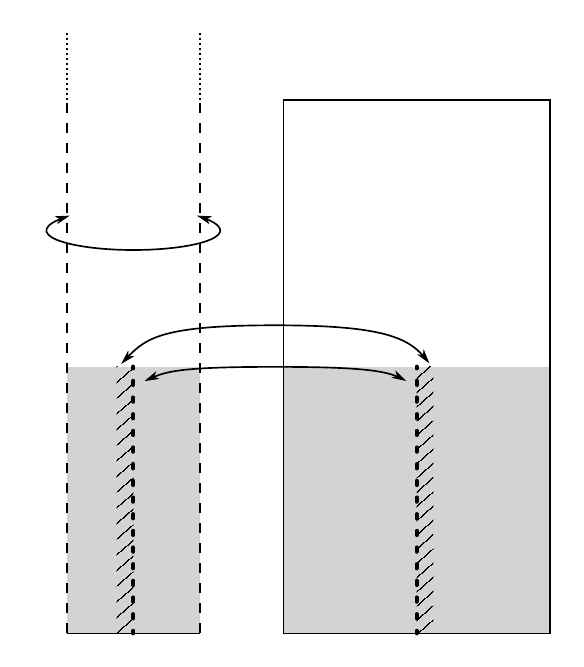}};
\node at (-0.1,-7) {\includegraphics[height=6cm]{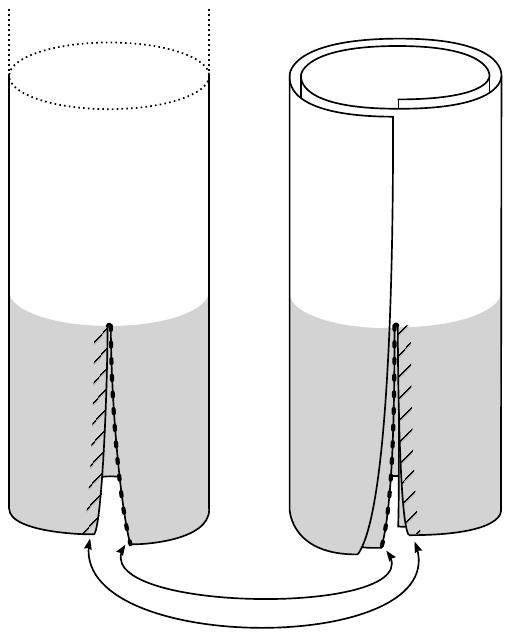}};
\draw[<->] (-1.4,-3) -- node[right]{$\simeq$} (-1.4,-4);
\draw[<->] (1.2,-3) -- node[right]{$\simeq$} (1.2,-4);

\node at (-1.4,1) {a};
\node at (-1.1,-1) {b};
\node at (-1.8,-1) {b};
\node at (1.1,1) {c};
\node at (0.5,-1) {d};
\node at (1.8,-1) {e};

\node at (5.5,-6.5) {\includegraphics[height=5cm]{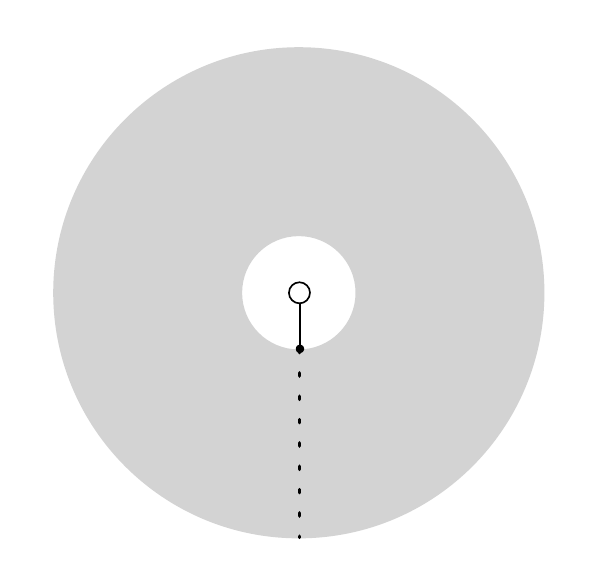}};
\node at (5.5,-0.3) {\includegraphics[height=5cm]{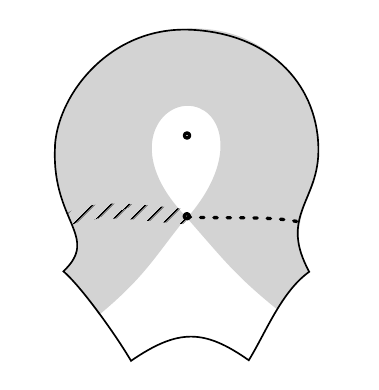}};
\draw[<->] (2.5,0) -- node[above]{$\simeq$} (3.5,0);
\draw[->] (2.4,-6.5) -- node[above]{$e^{2\pi iz}$} (3.2,-6.5);
\draw[->] (5.5,-3) -- node[right]{$f$} (5.5,-4);

\node at (5.5,0) {a};
\node at (5.5,1.3) {b};
\node at (5.5,-1.5) {c};
\node at (6.35,-1) {d};
\node at (4.7,-1) {e};

\end{tikzpicture}
\caption{Only the upper left section of this figure is conformally correct. Explanations in the text on page~\pageref{here:ISstruct}.}
\label{fig:IS1}
\end{figure}

\begin{figure}
\begin{tikzpicture}
\node at (0,0) {\includegraphics[width=6cm]{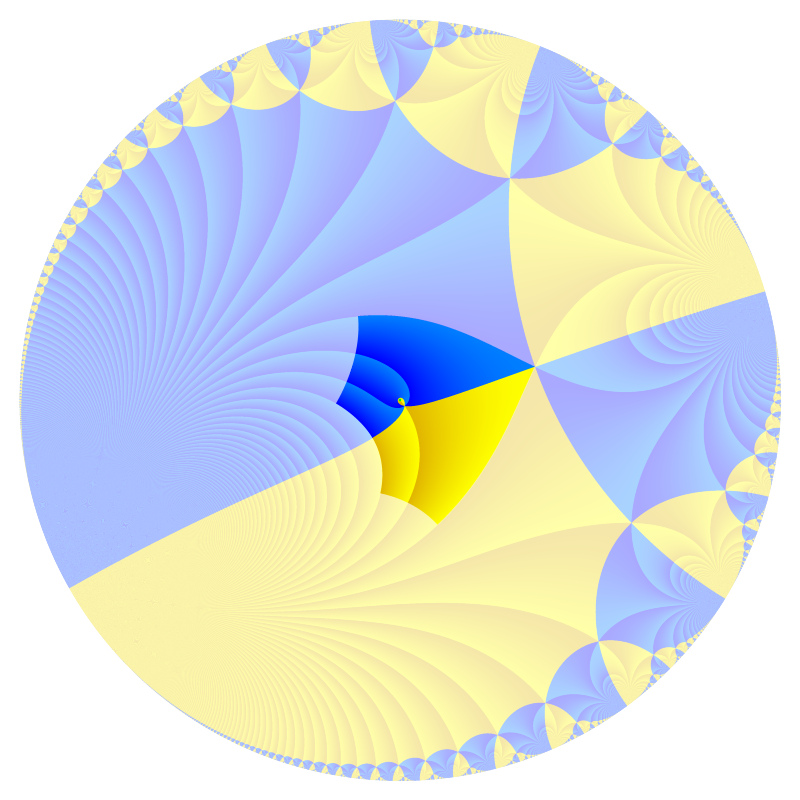}};
\node at (6,0) {\includegraphics[width=5cm]{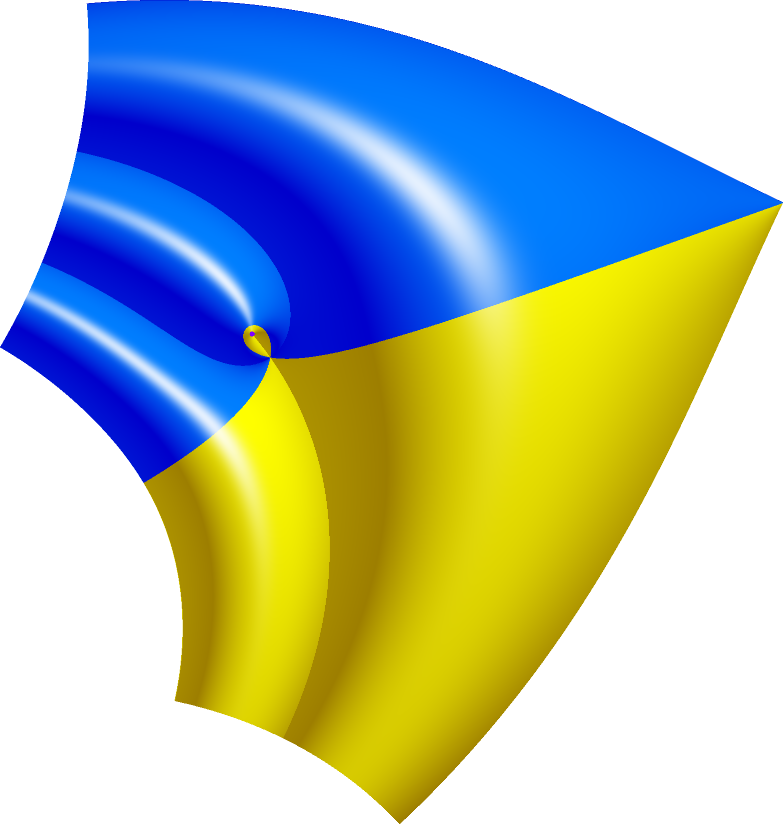}};
\node (a) at (3.8,-1.0) {a};
\draw[-] (a) -- (5.07,0.41);
\node at (3.6,1.5) {b};
\node at (5.05,-2.7) {c};
\path (4.6,-2.0) edge[-,bend right=20] (5.9,-2.7);
\node at (3.8,2.2) {d};
\node at (3.4,0.8) {e};
\end{tikzpicture}
\caption{Caption in the text. Note that compared to the upper right part of \Cref{fig:IS1}, there is a supplementary corner. The picture is accurate.}
\label{fig:IS2}
\end{figure}

\FloatBarrier

\newpage

Structure $\cal A$ is a substructure of $\cal B$, obtained by mapping conformally the domain of $f$ minus the origin to the complement of the closed unit disk and removing the interior of some specific and explicitly defined ellipse (see \cite{IS}). The result, mapped to the set of \Cref{fig:IS2}, is shown on \Cref{fig:IS3}.

Recall that $\cal R[B_d]$ is the unit disk $\D$. Let $U\Subset V\Subset \D$ be the sub-domains corresponding to respectively $\cal A$ and $\cal B$. Inou and Shishikura worked with the particular sets we just described. It is more natural, though not easy, to take for $U$ and $V$ a pair of disks centered on the origin. The objective of the present article is to prove that this works. The downside is that we lose unicriticality of maps in the class we construct. Yet, it still applies to unicritical polynomials, after taking one renormalization (they become multicritical, with only one critical value); recall that even Inou and Shishikura need to take first one iteration of renormalization of to get a map in their class from a quadratic polynomial anyway. The upside is that our approach will work for critical points of any degree.

\begin{figure}
\includegraphics[width=8cm]{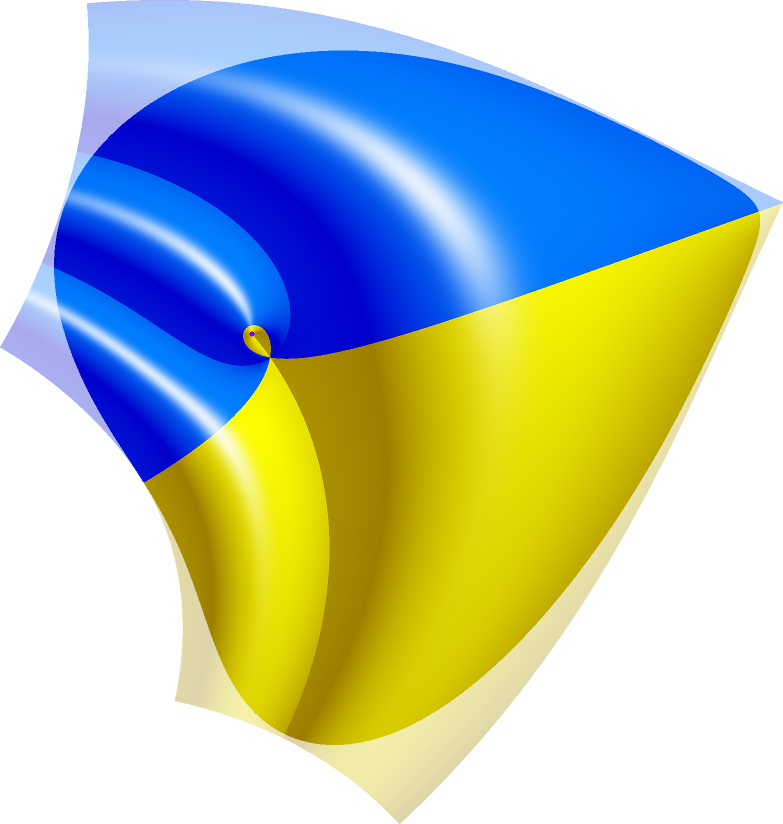}
\caption{Comparison of $\cal A$ and $\cal B$. The picture is accurate. Though it is hard to see, the boundary of the light-toned domain and the boundary of the color-saturated domain are disjoint.}
\label{fig:IS3}
\end{figure}

\FloatBarrier

\clearpage

\section{Proof}\label{sec:pf}

The element of hyperbolic metric of a connected open subset $U$ of $\C$ will be denoted by $\rho_U(z)|dz|$ and the corresponding hyperbolic distance by $d_U(z,z')$. \nomenclature[dU]{$d_U$}{hyperbolic distance w.r.t.\ $U$\nomrefpage}\nomenclature[rU]{$\rho_U$}{element of hyperbolic metric w.r.t.\ $U$\nomrefpage}

\subsection{A convenient notation}\label{subsec:morenot}

Given $r\in\,]0,1[$ and a subset $U$ of $\C$ conformally equivalent to $\D$ and containing $0$, we will denote
\[ \sub{U}{r}=\setof{z\in U}{d_U(0,z)<d_\D(0,r)}
.\]
Note that $\sub{U}{r}=\phi(B(0,r))$ where $\phi: \D \to U$ is a conformal isomorphism mapping $0$ to $0$.\nomenclature[Oo]{$\sub{}{}$}{$\sub{U}{r}$ is the set of points $z\in U$ with $d_U(0,z)<d_\D(0,r)$\nomrefpage}

Recall that we denoted $E(z)=e^{2\pi iz}$, which is a universal cover from $\C$ to $\C^*$. Given a set of the form $V=E^{-1}(U)$ where $U$ is as above, we will denote\nomenclature[Oo2]{$\esub{}{}$}{$\esub{V}{r}$ is the set of points $z\in V$ with $E(z) \in \sub{E(V)}{r}$\nomrefpage}%
\[\esub{V}{r} = E^{-1}(\sub{U}{r})
.\]

\subsection{Outline}\label{subsec:outline}

Our main theorem will be proved in two steps. Let us fix in this section some $d\in\N$ with $d\geq 2$. From now on, parabolic renormalization refers to upper parabolic renormalization.
In \Cref{subsec:preferred} we defined the objects $\Phi_\at[B_d]$, $\Psi_\rep[B_d]$, $h[B_d]$, $\cal H[B_d]$ and $\cal R[B_d]$ and adopted specific normalizations for them (except for $\cal R[B_d]$). 
In particular, we chose to define $h[B_d]$ on the upper half plane $\H$ only.
The map $\cal H[B_d]$ is defined on $\D$, maps $0$ to $0$ and satisfies $\cal H[B_d] \circ E = E \circ h[B_d]$. The map $\cal R[B_d]$ is defined as
\[\cal R[B_d] = b \cal H[B_d]\]
where $b\in\C^*$ is chosen so that $\cal R[B_d]'(0)=1$.
In \Cref{subsec:preferred} we introduced a semi-conjugate $C_d$ of $B_d$ by a 2:1 rational map, such that $C_d$ has only one attracting petal, and we gave relations between the objects for $B_d$ and the objects for $C_d$. Note that $\cal R[B_d]$ coincides with $\cal R[C_d]$ and
\[\dom \cal R[B_d] =\D.\]

Let\nomenclature[Fa]{$\cal F$}{Shishikura's invariant class\nomrefpage}
\[\cal F = \setof{\cal R[B_d] \circ \phi^{-1}}{\phi:\D\to \C \text{ is univalent and } \phi(z)=z+\cal O(z^2)}
\]
and\nomenclature[Fe]{$\cal F_\epsilon$}{a class of maps with slightly less structure\nomrefpage}
\[\cal F_\epsilon = \setof{\cal R[B_d] \circ \phi^{-1}}{\phi:B(0,1-\epsilon)\to \C \text{ is univalent and } \phi(z)=z+\cal O(z^2)}.
\]
In other words, $\cal F$ is %
the invariant class if Shishikura, Lanford, Yampolsky consisting of maps $f$ with a fixed point at the origin tangent to the identity and such that $f$ is structurally equivalent to the renormalization of the Blaschke product $B_d$, and $\cal F_\epsilon$ is a class of maps having only a subset of this structure. The smaller $\epsilon$, the more structure. Note that $\cal F=\cal F_0$.
To be more precise and to stick to the language introduced in \Cref{subsec:structeq}, let $I$ be a singleton. If we mark the origin by the unique map $I\to\{0\}$, maps in $\cal F_\epsilon$ are all $(I,\wh \C)$-structurally equivalent.

The maps $f\in \cal F$ have the same set of singular values as $\cal R[B_d]$ and they have the same nature: $\{0,v,\infty\}$ for some $v\in \C^*$ that depends only on $d$, with $0$ and $\infty$ two asymptotic values, $\cal R[f]^{-1}(\{0\})=\{0\}$, $\cal R[f]^{-1}(\{\infty\})=\emptyset$ and $v$ is a critical value that is not an asymptotic value, and $\cal R[f]^{-1}(\{v\})$ consists in regular points and critical points of degree $d$.

All maps $f\in\cal F$ are tangent to the identity at the origin. For the following statement, recall that $\sclass_d$ is the class of \Cref{def:sclass}.
\begin{proposition}\label{prop:fsd}
\[\cal F\subset \sclass_d\]
\end{proposition}
\begin{proof}
This follows from \Cref{lem:sufsd}: $f\in \cal F$ has exactly the same singular values as $\cal R[B_d]$, hence $f\in \cal S_{d'}$ for some $d'$.
We also have to check that $d=d'$.
The singular value of $f$ in the immediate basin $A$ is $v$ which is not an asymptotic value hence $d'\neq\infty$ (see \Cref{def:sclass}).
Then by \Cref{def:sclass} again, $f$ has a critical point of degree $d'$ in $A$, hence $d'=d$. 
\end{proof}

We will prove the following more precise version of the main theorem:

\begin{theorem*}
The main theorem page~\pageref{thm:main} holds with $\cal B=$ the structure of maps $f\in\cal F_{\epsilon_1}$ with marked point $0$ and $\cal A=$ the substructure $\cal F_{\epsilon_0}$, for some pair $\epsilon_0>\epsilon_1$.
\end{theorem*}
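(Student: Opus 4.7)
The strategy is to derive the structural conditions (1)--(3) from the definition of $\cal F_\epsilon$ and to reduce the invariance property (4) to verifying the hypotheses of Theorem~\ref{thm:shi2} for $f \in \cal F_{\epsilon_0}$. For (1)--(3): taking $\epsilon_0 > \epsilon_1 > 0$ gives $\overline{B(0,1-\epsilon_0)} \Subset B(0,1-\epsilon_1)$, so $\cal A = \cal F_{\epsilon_0}$ is a relatively compact sub-structure of $\cal B = \cal F_{\epsilon_1}$, itself a sub-structure of $\cal F = \cal F_0$; the domains are simply connected (inherited from the disk) and each map has a single critical value with critical points of local degree $d$ (inherited from $\cal R[B_d]$).

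For (4), suppose $f = \cal R[B_d]\circ\phi^{-1} \in \cal F_{\epsilon_0}$, with $\phi\colon B(0,1-\epsilon_0)\to\C$ univalent and $\phi(z) = z + \cal O(z^2)$, is parabolic at $0$ with multiplier one. The plan is to show, for $\epsilon_0$ small enough, that $f$ satisfies the hypotheses of Theorem~\ref{thm:shi2}. Once this is granted, the theorem forces $\cal R[f]$ to carry the universal structure, so there exists a unique analytic isomorphism $\psi\colon \D \to \dom\cal R[f]$ with $\psi(0)=0$ realizing the structural equivalence (uniqueness coming from the triviality of the automorphism group of the universal structure, itself a consequence of $a_2[\cal R[B_d]]\neq 0$), and matching parabolic multipliers at $0$ imposes $\psi'(0)=1$. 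Consequently $\cal R[f] = \cal R[B_d]\circ\psi^{-1} \in \cal F$, and restricting $\psi$ to $B(0,1-\epsilon_1)$ exhibits $\cal F_{\epsilon_1}$ as a sub-structure of $\cal R[f]$. Any pair $0 < \epsilon_1 < \epsilon_0$ then works.

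The needed hypotheses are: one attracting petal (automatic once the coefficient $a_2[f]$ is non-zero, which follows from $a_2[\cal R[B_d]]\neq 0$ and Koebe control of $\phi$); and exactly one singular value of $f$ in its immediate basin $A[f]$. The critical value $v_0$ of $f$ coincides with that of $\cal R[B_d]$ and lies strictly inside $\D$ for our normalization; the remaining singular values are the asymptotic values on the curve $\cal R[B_d](\partial B(0,1-\epsilon_0))$. Two checks are required: (a) $v_0 \in A[f]$, i.e.\ the $f$-orbit of $v_0$ stays in $\dom f$ and converges to $0$; and (b) no asymptotic value lies in $A[f]$.

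I expect the main obstacle to be point (a), which I would address by a shadowing argument. Rescaling $\phi$ to the Schlicht map $\tilde\phi(z)=\phi((1-\epsilon_0)z)/(1-\epsilon_0)$ on $\D$, Koebe distortion yields explicit estimates on $\phi^{-1}-\on{id}$ on each hyperbolic sub-ball $\sub{B(0,1-\epsilon_0)}{r}$, tightening as $\epsilon_0 \to 0$. The $\cal R[B_d]$-orbit of $v_0$ sits in a fixed compact $K \subset A[\cal R[B_d]]$ tending to $0$; since $f(z) = \cal R[B_d](\phi^{-1}(z))$ differs from $\cal R[B_d]$ only through the Koebe-controlled deviation of $\phi^{-1}$, and $\cal R[B_d]$ is non-expanding for the hyperbolic metric of its immediate basin, an induction on iterates should show that for $\epsilon_0$ small enough --- uniformly in $\phi$ by compactness of the Schlicht class --- the $f$-orbit of $v_0$ shadows that of $\cal R[B_d]$, remains in $\dom f$, and tends to $0$. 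Point (b) follows from the same estimates: for small $\epsilon_0$, $\cal R[B_d](\partial B(0,1-\epsilon_0))$ is pushed outside any region containing $A[f]$. The delicate aspect is rendering the shadowing uniformly quantitative over all admissible $\phi$, which is what governs the choice of $\epsilon_0$. The statement for the lower parabolic renormalization follows by the same argument or by the $\bar z$-symmetry.
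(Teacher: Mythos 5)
Your proof has a fundamental gap: you try to verify the hypotheses of Theorem~\ref{thm:shi2} directly for $f\in\cal F_{\epsilon_0}$, and then invoke that theorem to conclude $\cal R[f]\in\cal F$. But the whole point of the paper's two-step argument is that the hypotheses of Theorem~\ref{thm:shi2} \emph{fail} for $\epsilon_0>0$. The restriction to $B(0,1-\epsilon_0)$ introduces the asymptotic value set $\cal R[B_d](\partial B(0,1-\epsilon_0))$, and this set does intersect the immediate parabolic basin $A[f]$. Your claim (b), that for small $\epsilon_0$ this curve is ``pushed outside any region containing $A[f]$'', is the opposite of the truth: $A[f]$ has fingers reaching the boundary of $\dom f$ (Figure~\ref{fig:choux}), and as $\epsilon_0\to 0$ the asymptotic value curve winds into arbitrarily small neighborhoods of the singular values $\{0,\nu,\infty\}$ of $\cal R[B_d]$ --- in particular it accumulates on $0$, which is on $\partial A[f]$, so it hits $A[f]$ itself. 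If Theorem~\ref{thm:shi2} really applied, you would conclude $\cal R[f]\in\cal F$, i.e.\ the renormalization carries the \emph{full} structure; the paper only proves the weaker statement $\cal R[f]\supset$-structure $\cal F_{\epsilon_1}$, precisely because this stronger conclusion is false. There is also the structural obstruction: the extended horn map $h[f]=\Phi_\at[f]\circ\Psi_\rep[f]$ for $f\in\cal F_{\epsilon_0}$ is defined on a strictly smaller set than for a map in $\cal F$ (orbits in the definition escape $\dom f$), so $\dom\cal R[f]$ is strictly smaller than $\D$ and the structural equivalence of Theorem~\ref{thm:shi2} cannot hold.

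The paper's route is entirely different. Step 1 (Proposition~\ref{prop:part:1}) is a \emph{contraction} estimate: defining $\cal R[f_0]$ on the hyperbolic ball $\sub{\dom\cal R[f_0]}{(1-\epsilon)}$ uses iteration of $f_0$ only on $\sub{\dom f_0}{(1-\epsilon')}$ with $\epsilon'\gg\epsilon$, in the quantitative form $\log(1/\epsilon')\leq c'+c\log(1+\log(1/\epsilon))$. Step 2 (Propositions~\ref{prop:fibers} and~\ref{prop:surv}) is a \emph{perturbation} argument: it interpolates $f_0\in\cal F$ to $f_t\in\cal F_t$ via $\phi_t(z)=(1-t)\phi((z/(1-t))$ --- emphatically \emph{not} a restriction of $\phi$ --- follows fibers of the normalized Fatou coordinate $\Phi_t$ and of the renormalization $R_t$ as $t$ increases, and shows, by controlling homotopic length in the hyperbolic metric of $\C\setminus\ov{PC}(f_0)$, that orbits contained in $\sub{\dom f_0}{(1-\epsilon')}$ survive up to time $t=\epsilon'/K$. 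The conclusion $\epsilon_1\leq\epsilon_0$ (even $\epsilon_1\ll\epsilon_0$) then comes from comparing the room created in Step~1 with the loss incurred in Step~2. Your shadowing sketch does touch the right idea for part of Step~2, but nothing in your proposal plays the role of the Step~1 contraction, and the central invocation of Theorem~\ref{thm:shi2} does not hold.
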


The class of Schlicht maps is denoted $\sch$, thus $\cal F=\setof{\cal R[B_d]\circ \phi^{-1}}{\phi\in\sch}$.
The two steps are the following:

\begin{enumerate}
\item\label{item:crt1} Contraction: for $f\in\cal F$ denote $f=\cal R[B_d]\circ \phi_1^{-1}$, $\phi_1\in\sch$. Then by \Cref{thm:shi2b}, with an appropriate normalization, $\cal R[f]$ is of the form $\cal R[B_d] \circ \phi_2^{-1}$, $\phi_2\in\sch$. We will prove that ``the definition of $\cal R[f]$ on $\sub{\dom (\cal R[f])}{(1-\epsilon)}$ uses only iteration of $f$ on $\sub{\dom (f)}{(1-\epsilon')}$ where $\epsilon'\gg\epsilon$\ ''.
\item Perturbation: for a map $f\in \cal F$, we will define a continuous deformation $f_t \in \cal F_t$. Every map in $\cal F_t$ will be a deformation of a map in $\cal F$. We will prove that $\cal R[f_t]$ has structure at least $\cal F_{\epsilon}$, provided $t\leq \epsilon'/K$ for some $K>1$, where $\epsilon'$ is given by the first step.
\end{enumerate}

Let us give a slightly more detailed formulation of these two steps; we leave here some imprecisions; they will be fully stated and proven in details in \Cref{subsec:part1} to~\ref{subsec:part2:3}. 

Step 1: Let $E(z)=e^{2\pi i z}$, $\Phi_\at$ the extended attracting Fatou coordinate of $f$, $\Psi_\rep$ the extended repelling inverse Fatou coordinate of $f$,
and recall that $\cal R[f](z)$ can be defined (up to pre and post composition by two linear maps) as
\[ E(\Phi_\at(f^m(\Psi_\rep(u))))
,\]
where $u\in E^{-1}(z)$ is chosen so that it belongs to the image of the repelling petal by the repelling Fatou coordinates and $m\in\N$ is chosen so that $f^m(\Psi_\rep(u))$ belongs to the attracting petal. So we are following the orbit of $w=\Psi_\rep(u)$ under iteration of $f$ from the repelling petal to the attracting petal. The claim is that this orbit stays in $\sub{\dom (f)}{1-\epsilon'}$.
Now recall that by the properties of the extended repelling Fatou coordinates, we have $f^k(w) = \Psi_\rep(u+k)$ and that the domain of definition of $\Psi_\rep$ is invariant by the translation $T_1$.
Therefore, using that $E^{-1}(\sub{\dom \cal R[f]}{1-\epsilon})$ is equal to the translate by an appropriate complex constant of the domain of the horn map $h$, point \eqref{item:crt1} above can be stated as follows:
\[ \Psi_\rep(\esub{\dom (h)}{1-\epsilon})\subset \sub{\dom (f)}{1-\epsilon'}
.\]
The relation $\epsilon' \gg\epsilon$ will take the form:
\[\log \frac{1}{\epsilon'} \leq c' + c \log\left(1+\log\frac{1}{\epsilon}\right)\]
for some positive constants $c$, $c'$ (\Cref{prop:part:1}).

Step 2: In the perturbation part, given $r=1-t_0$ and $f\in \cal F_{r}$, we define an element $f_0\in \cal F$ together with a smooth interpolation $f_t$, $t\in[0,t_0]$, between $f_0$ and $f=f_{t_0}$. It has the following form:
\[ f_t = \cal R[B_d] \circ \phi_t^{-1}.\]
The map $\phi_t$ is a univalent map, defined on $B(0,1-t)$ with $\phi_t(0)=0$ and $\phi'_t(0)=1$ and is defined as follows: let $r_t=1-t$, decompose $f(z)=\cal R[B_d]\circ \wt\phi^{-1}$, let $\phi(z) = r_{t_0}^{-1}\wt\phi(r_{t_0} z)$, whence $\phi \in \sch$, and define
\[ \phi_t(z) = r_t \phi (r_t^{-1} z)
.\]
The map $\phi_t$ is an isomorphism from $B(0,1-t)$ to $r_t\cdot\dom(f)$.
In particular $\phi_t$ is \emph{not} the restriction\footnote{It would be too much to ask for an interpolation $f_t = \cal R[B_d] \circ \phi_t^{-1}$ for which $\phi_t$ is the restriction of some $\phi$ to $B(0,1-t)$.
Look for instance at $t=r$: this would mean that the initial univalent map $\wt\phi$ is the restriction to $B(0,r)$ of the univalent map $\phi$. But there are plenty of univalent maps $\wt \phi$ on $B(0,r)$ that are not the restriction of a univalent map defined on $B(0,1)$.} of $\phi$ to $B(0,1-t)$, and
\[\dom f_t = r_t\cdot \dom f,\]
and is thus usually \emph{not} equal to $\sub{\dom(f)}{r_t}$.

Now since $f_0 = \cal R[B_d] \circ \phi^{-1}$ belongs to $\cal F$, its renormalization $\cal R[f_0]$ decomposes as $\cal R[B_d]\circ \phi_2^{-1}$ for some Schlicht map $\phi_2$.
By the first step, given $\epsilon>0$ and a point $z\in \dom \sub{\cal R[f_0]}{(1-\epsilon)} =\phi_2(B(0,1-\epsilon))$, we know that the value of $\cal R[f_0]$ is obtained through iteration under $f_0$ of a point $w$ in the repelling petal of $f_0$, point whose orbit remains in $\sub{\dom f_0}{1-\epsilon'}=\phi(B(0,1-\epsilon'))$ with $\epsilon' \gg \epsilon$.
We will then vary $t$ from $0$ to $t_0$ and follow by continuity the points in the orbit of $w$, not by fixing the initial value, but instead by imposing that their attracting Fatou coordinate stays the same, where we normalize the attracting Fatou coordinates (it varies with $t$ since $f_t$ does) by putting its critical values at the nonnegative integers. In particular, $w$ moves with $t$.
A local study shows that the tail of the orbit will not move much. The motion of the other points will be bounded from above inductively by iterating backwards along the orbit, until we reach $w$. We will measure the motion in terms of the hyperbolic metric on the complement in $\C$ of the post-critical orbit of $f_0$. 
The study will show (\Cref{prop:surv}) that there is some $K>0$ independent of $f$ (necessarily $K>1$) such that, provided $\epsilon'$ is small enough, an orbit that is initially completely contained in $\sub{\dom (f_0)}{1-\epsilon'}$ survives all the way as $t$ varies from $0$ to $\epsilon'/K$.
Thus $\cal R[f_t]$ has at least structure $\cal F_\epsilon$ provided $t\leq \epsilon'/K$.
The main theorem thus holds for $\cal A=\ov{(0,f\in\cal F_{\epsilon_0})}$ and $\cal B=\ov{(0,f\in\cal F_{\epsilon_1})}$ with
$\epsilon_0=\epsilon'/K$ and $\epsilon_1=\epsilon$ with $\epsilon$ small enough, as $\epsilon' \gg \epsilon$ will imply $\epsilon_0 >\epsilon_1$.

\subsection{Normalizations}\label{subsec:nor}

In the rest of \Cref{sec:pf}, i.e.\ in the proof of the main theorem,
\begin{itemize}
\item normalized Fatou coordinates refer to the normalization by the the asymptotic expansion at infinity, convention numbered~\ref{item:nor:2} on page~\pageref{item:nor:2},
\item $\Phi_\at$ will refer to extended attracting Fatou coordinates, normalized according to the same convention,
\item $\Psi_\rep$ will refer to extended inverse of the repelling Fatou coordinates that are normalized according to the same convention,
\item $h[f] = \Phi_\at \circ \Psi_\rep$,\nomenclature[hf]{$h[f]$}{normalized extended horn maps, $h[f] = \Phi_\at \circ \Psi_\rep$\nomrefpage}
\item $\cal R[f]$ is the parabolic renormalization, normalized by the critical value 
(convention numbered~\ref{item:nor:3} on page~\pageref{item:nor:3}); see details below,
\item in the second step, we will use the notation $\Psi_t$ and $\Phi_t$ to denote the extended repelling inverse Fatou coordinate and the extended attracting Fatou coordinate of $f_t$, normalized \emph{not} by their asymptotic expansion but according to a convention analog to number~\ref{item:nor:3}.
\end{itemize}

Let $f$ satisfy the hypotheses of \Cref{thm:shi2b}.
Let us call (only in this paragraph) $U$ the connected component of $\dom(h[f])$ that contains an upper half plane and $\Xi$ the map such that
\[E\circ h[f]\big|_U = \Xi \circ E.\]
Then $\cal R[f] = M_a \circ \Xi \circ M_b^{-1}$ for a pair of linear maps $M_a : z\mapsto a z$ and $M_b: z\mapsto b z$ that depend on $f$, hence
\begin{equation*}
M_a\circ E\circ h[f]\big|_U = \cal R[f] \circ M_b \circ E
.
\end{equation*}
The constants $a$ and $b$ depend on $f$.

By \Cref{thm:shi2b} there exists a choice of $a$ and $b$ in~the equation above, such that
\[ \cal R[f]=\cal R[B_d]\circ\phi^{-1}
\]
i.e.\ such that $\cal R[f]\in\cal F$. This is the normalization that we choose for $\cal R[f]$. We have $\cal R[f]'(0)=1$ and
$\cal R[f]$ and $\cal R[B_d]$ have the same (unique) critical value, and these two conditions characterize this choice of normalization. It coincides with convention numbered~\ref{item:nor:3} on page~\pageref{item:nor:3}. The class $\cal F$ is stable by renormalization with this convention:
\[ \cal R : \cal F\to \cal F
.\]

%
%
%

For reference, let us state here the following version of universality
\begin{lemma}\label{lem:phiatu}
For $f\in\cal F$, let $v_f$ denote the critical value of $f$ and $v'_f=\Phi_\at(v_f)$.
There is a conformal map $\phi$ from the upper component $U[B_d]$ of $\dom(h[B_d])$ to the upper component $U[f]$ of $\dom(h[f])$ that commutes with $T_1$ and such that
\[T_{\tau}\circ h[f]\big|_{U[f]} = h[B_d]\circ \phi^{-1}\]
with $\tau=v'_{B_d}-v'_f$ and $T_\tau(z)=z+\tau$.
\end{lemma}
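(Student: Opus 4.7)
My plan is to combine the Complement to Theorem~\ref{thm:shi2} with Corollary~\ref{cor:univ}, then pin down the translation constant by evaluating at the critical value. Since $f\in\cal F$, by the invariance $\cal R:\cal F\to\cal F$ the map $f$ itself satisfies the hypotheses of Theorem~\ref{thm:shi2}, with a unique singular value in the immediate basin $A[f]$ of its parabolic fixed point, namely the critical value $v_f$. So both the Complement and Corollary~\ref{cor:univ} apply to $f$ (and to $B_d$).

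I would first construct $\phi$ by invoking the Complement: this produces a conformal isomorphism $\phi_0:U[f]\to U[B_d]$ commuting with $T_1$ and satisfying
\[
\Psi_\rep[B_d]\circ\phi_0 \;=\; \zeta\circ\Psi_\rep[f],
\]
where $\zeta:A[f]\to A[B_d]$ is the dynamical conjugacy of immediate parabolic basins from Theorem~\ref{thm:s1}. Set $\phi=\phi_0^{-1}:U[B_d]\to U[f]$; commutation with $T_1$ is preserved under inversion. Next I would invoke Corollary~\ref{cor:univ}, which, because $\Phi_\at[B_d]$ and $\Phi_\at[f]$ are both normalized by the asymptotic expansion at the parabolic point (convention~\ref{item:nor:2}), yields a constant $\tau_0\in\C$ with
\[
\Phi_\at[B_d]\circ\zeta \;=\; \tau_0 + \Phi_\at[f]\qquad\text{on }A[f].
\]

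Composing these two identities gives, for every $z\in U[B_d]$,
\[
h_\nor[B_d](z)=\Phi_\at[B_d]\bigl(\Psi_\rep[B_d](z)\bigr)=\Phi_\at[B_d]\bigl(\zeta(\Psi_\rep[f](\phi(z)))\bigr)=\tau_0+h_\nor[f]\bigl(\phi(z)\bigr),
\]
i.e.\ $h_\nor[B_d]=T_{\tau_0}\circ h_\nor[f]\circ\phi$, which is the required identity once we identify $\tau_0=\tau$. To identify the constant, I evaluate the relation $\Phi_\at[B_d]\circ\zeta=\tau_0+\Phi_\at[f]$ at $v_f\in A[f]$: since $\zeta$ conjugates $f|_{A[f]}$ to $B_d|_{\D}$, it sends critical points to critical points and hence the unique critical value $v_f$ in $A[f]$ to the unique critical value $v_{B_d}$ in $\D$. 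Thus $v'_{B_d}=\tau_0+v'_f$, i.e.\ $\tau_0=v'_{B_d}-v'_f=\tau$.

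The main thing to be careful about is the bookkeeping of normalizations: Corollary~\ref{cor:univ} and the Complement are stated in a manner that leaves room for a free additive constant on the Fatou coordinates, whereas in the lemma both $h_\nor[f]$ and $h_\nor[B_d]$ are rigidly normalized by the asymptotic expansion (convention~\ref{item:nor:2}). Once this normalization is fixed on both sides, the constants $\tau_0$ appearing in both the $\Phi_\at$ relation and the $\Psi_\rep$ relation are simultaneously determined, and the identity $\zeta(v_f)=v_{B_d}$ (which is canonical because it only uses the characterization of the singular value inside the immediate basin given by Theorem~\ref{thm:s1}) forces $\tau_0=v'_{B_d}-v'_f$ as claimed. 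No further computation is needed.
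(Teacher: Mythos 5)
Your proof is correct and follows essentially the same route as the paper: invoke the Complement to Theorem~\ref{thm:shi2} for the $\Psi_\rep$ relation and the map $\phi$, invoke Corollary~\ref{cor:univ} for the $\Phi_\at$ relation, compose, and evaluate at the critical value $v_f$ (using $\zeta(v_f)=v_{B_d}$) to pin down $\tau=v'_{B_d}-v'_f$. Your remark about keeping the normalizations coherent on both sides is the right thing to watch, and your inversion $\phi=\phi_0^{-1}$ matches the orientation of $\phi$ used in the statement.
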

\begin{proof}
By \Cref{cor:univ}, $\Phi_\at[B_d] \circ \zeta = \tau+\Phi_\at[f]\big|_A$ for some $\tau\in\C$ and $\zeta: A\to \D$ the conjugacy from $f$ on its immediate parabolic basin to $B_d$. By applying to the unique critical value of $f\big| A$ we get $\tau = v'_{B_d}-v'_f$.
By the complement after \Cref{thm:shi2b}, $\Psi_\rep[B_d] \circ \phi^{-1} = \zeta \circ \Psi_\rep[f]$ for some conformal isomorphism $\phi: U[B_d] \to U[f]$ that commutes with $T_1$. We conclude using $h=\Phi_\at\circ\Psi_\rep$.
\end{proof}

\subsection{Chessboards}\label{subsec:sdc}

Just before we begin the proofs, let us recall that maps $f\in \cal F$ have a \keyw{structural chessboard} and a \keyw{dynamical chessboard}.
The first is a partition of $\dom f$ that is a pre-image by $f$ of the partition of $\C^*$ cut by the circle of center $0$ and passing through the critical value of $f$. The second is a partition of the basin (or of the immediate basin) of the parabolic point $z=0$ of $f$, and is $f$-invariant. The second is also a structural object w.r.t.\ $\Phi_\at[f]$.
See \Cref{subsec:viz} for more details.

We defined a chessboard for the horn maps $h$ associated to parabolic points of maps $f\in\cal F$ (more generally to maps $f$ in the class $\sclass_d$ of \Cref{def:sclass}). It is the preimage in repelling Fatou coordinates of the dynamical chessboard of $f$ and it is also the preimage by $h$ of the partition of its range cut by a horizontal line. There is a box that contains an upper half plane, we call it the \keyw{main upper box} of $h$. Similarly the box that contains a lower half plane is called the \keyw{main lower box} of $h$. 

The map $\phi$ introduced in \Cref{lem:phiatu} maps the chessboard decomposition of $h[B_d]$ to the chessboard decomposition of $h[f]$.

\subsection{Toolkit}\label{subsec:toolkit}

In this section we redo classical computations on Fatou coordinates and first terms of their expansion. We add dependence on a map staying in a compact class and put the emphasis on uniformity of the bounds obtained. The section mainly serves as a reference for the rest of the text. The trusting reader may skip it.

\subsubsection{Compact classes of parabolic maps with one attracting petal}\label{subsub:tg}

\begin{proposition}\label{prop:cf}
Assume $\cal G$ is a set of holomorphic maps $g:\D\to \C$ with $g(z)=z+c_g z^2+\ldots$, that $\cal G$ is compact for the topology of local uniform convergence and that $c_g$ is never $0$, i.e.\ that $g$ has one attracting petal. Denote $\gamma_g$ the iterative residue of $g$. Let $\log_p$ be the principal branch of the complex logarithm. Then there exists $r_0$ such that $\forall g\in\cal G$
\begin{itemize}
\item the disk $D_\at$ of diameter $[0,r_0e^{i\alpha}]$ where $\alpha$ is the direction of the attracting axis of $g$, is contained in the parabolic basin of $g$; $g(D_\at) \subset D_\at$ and every orbit in the parabolic basin eventually enters $D_\at$;
\item the extended attracting Fatou coordinate of $g$ is injective on $D_\at$ and maps $D_\at$ to a set of the form $\setof{z\in\C}{\Re(z)>\zeta(\im(z))}$ with $\zeta:\R\to\R$ an analytic function (that depends on $g$) satisfying $\zeta(x)/x\tend 0$ when $x\tend\pm\infty$;
\item on $D_\at$, the normalized attracting Fatou coordinates $\Phi$ of $g$ and the map $\wt\Phi:z\mapsto \frac{-1}{c_g z} - \gamma_g \log_p \frac{-1}{c_gz}$ have a difference uniformly bounded by a quantity that is independent of $g$.
\end{itemize}
The above points also hold if $r_0$ is replaced by any smaller positive real.
\end{proposition}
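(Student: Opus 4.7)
The plan is to perform the standard change of coordinates $w = -1/(c_g z)$, which conjugates $g$ to a map $F_g$ defined near $w = \infty$, then prove everything in the $w$ picture and transport back. A direct computation gives
\[ F_g(w) = w + 1 + \frac{\gamma_g}{w} + R_g(w), \]
where $R_g(w) = O(1/|w|^2)$. The first observation I would establish is that this error is uniformly controlled: since $\cal G$ is compact for local uniform convergence, Cauchy estimates bound the Taylor coefficients of $g$ uniformly, and since $c_g$ varies in a compact subset of $\C^*$, there exist $C,R_0>0$ (with $r_0 = 1/(\inf_g |c_g| \cdot R_0)$) such that $|R_g(w)| \leq C/|w|^2$ on $\{\Re w > R_0\}$ for all $g\in\cal G$. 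A short direct computation shows that the disk $D_\at$ corresponds, under $w = -1/(c_g z)$, exactly to the half-plane $H_{R_0} = \{\Re w > R_0\}$.

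Next I would verify the dynamical properties on $H_{R_0}$ in one shot: for $R_0$ large enough, $\Re(F_g(w)-w) > 1/2$ and $|F_g'(w) - 1| < 1/2$ uniformly in $g$, so $F_g(H_{R_0})\subset H_{R_0}$, $F_g$ is injective on $H_{R_0}$, and $F_g^n(w)$ escapes to infinity with $|F_g^n(w)| \geq c(n + \Re w)$. The classical construction of Fatou coordinates on a repelling/attracting half-plane then produces an injective $\Phi_g$ on $H_{R_0}$ satisfying $\Phi_g\circ F_g = \Phi_g + 1$, unique up to additive constant. Since $D_\at$ intersects every orbit in the parabolic basin after finitely many iterations (the usual Leau flower argument combined with compactness of $\cal G$), this $\Phi_g$ extends to the full extended attracting Fatou coordinate after rechoice of constant. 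The image $\Phi_g(H_{R_0})$ is then described by the shape statement because $\Phi_g$ is close to $\wt\Phi_g(w) = w - \gamma_g \log_p w$ (next paragraph), and the image of a vertical line $\Re w = R_0$ under $\wt\Phi_g$ is a curve $\Re = R_0 - \gamma_g \Re \log_p(R_0 + iy)$, which is a $\Re = \zeta(\Im)$ curve with $\zeta(y) = O(\log|y|) = o(y)$.

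The quantitative comparison with $\wt\Phi_g$, which is the core of the third bullet, comes from plugging $F_g$ into $\wt\Phi_g$:
\[ \wt\Phi_g(F_g(w)) - \wt\Phi_g(w) - 1 = \frac{\gamma_g}{w} + R_g(w) - \gamma_g\log_p\!\left(1 + \tfrac{1}{w} + O(\tfrac{1}{w^2})\right) = O(1/|w|^2), \]
with an $O$-constant uniform over $\cal G$. Writing the corrected Fatou coordinate as $\Phi_g = \wt\Phi_g + E_g$, the cocycle equation $E_g(F_g(w)) - E_g(w) = -\epsilon_g(w)$ with $|\epsilon_g| \leq C'/|w|^2$ is solved by the telescoping series $E_g(w) = \sum_{n\geq 0} \epsilon_g(F_g^n(w))$; the bound $|F_g^n(w)| \geq c(n + \Re w)$ makes this sum converge uniformly in $g$ and $w\in H_{R_0}$. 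Choosing the additive constant in $\Phi_g$ so that $E_g(w) \to 0$ as $\Re w \to +\infty$ is exactly the normalization convention~\ref{item:nor:2}; thus the resulting $\Phi_g$ is the normalized extended attracting Fatou coordinate and $|\Phi_g - \wt\Phi_g|$ is uniformly bounded on $D_\at$.

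The one place where care is needed is the uniformity: the compactness hypothesis is used three times --- to bound coefficients of $g$, to bound $\gamma_g$ and $1/|c_g|$, and to get a uniform $R_0$ so that the forward-invariance and the escape-rate estimate hold simultaneously for every $g\in\cal G$. This is the only non-routine point; everything else is the classical Leau--Fatou construction applied with parameters. The last sentence of the proposition (that one may replace $r_0$ by any smaller positive real) is immediate since shrinking $r_0$ corresponds to enlarging $R_0$, which only makes every estimate easier.
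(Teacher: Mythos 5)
Your proof follows the same route as the paper: the change of variables $u=-1/(c_gz)$, the uniform expansion $F_g(u)=u+1+\gamma_g/u+O(u^{-2})$ with constants controlled by compactness of $\cal G$, and the telescoping series for the correction $\Phi_g-\wt\Phi_g$ solved by summing $\epsilon_g(F_g^n(u))$ along the orbit — the paper's definition $\Phi=\mu+\lim(w_n-n)$ via the intermediate coordinate $w=u-\gamma_g\log_p u$ is exactly that series in disguise. The one place you pass over quickly is the claim that $\Phi_g(D_\at)$ is a region $\{\Re z>\zeta(\Im z)\}$: $C^0$-closeness to $\wt\Phi_g$ alone does not guarantee the image boundary is a $y$-graph, and you need the Cauchy-estimate upgrade to a derivative bound $|\Theta'(u)-(1-\gamma_g/u)|\ll 1$ (as the paper does) to conclude, though this is a routine step rather than a missing idea.
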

\begin{proof}
The techniques in this proof are standard (see \cite{L}, \cite{DH}, \cite{S}, \cite{Che2}). We will insist here on providing uniformity of the bounds as $g$ varies in $\cal G$.

By compactness, uniformly on $\cal G$:
\begin{itemize}
\item $c_g$ is bounded away from $0$: $\exists \epsilon>0$ such that $\forall g\in \cal G$, $|c_g| \geq \epsilon$;
\item $g$
 is bounded on $B(0,1/2)$: $\exists K>0$ such that $\forall g\in\cal G$, $|g|\leq K$ on $B(0,1/2)$.
\end{itemize}
Also, by Cauchy's inequality,
\[|c_g|\leq 4K.\]
Since $|g(z)-z|\leq K+1/2$ on $B(0,1/2)$, we get $|g(z)-z|\leq K'|z^2|$ with $K'=4K+2$, and in particular $g$ does not vanish on $B(0,1/K')$ except at the origin.

We will make a series of change of variables $z\mapsto u\mapsto w\mapsto \xi$ with
\[u=\frac{-1}{c_g z},\quad w=u-\gamma_g \log_p u,\quad Z =\Phi(z)\]
Where $\log_p$ denotes the principal branch of the logarithm.
We will denote $z'=f(z)$ and use the notation $u\mapsto u'$, \ldots, $Z\mapsto \Z'$ for the dynamical systems $z\mapsto z'$ will be conjugated to.

The first change of variable is injective on $\C^*$. It maps $D_\at$ to the half plane
\[H_\at: \Re(u)>U_0(g)=1/r_0|c_g|.\]
We have the following asymptotic expansion
\[u'\underset{\infty}=u+1+\frac{\gamma_g}{u} + \cal O(u^{-2}).\]
The condition $z\in B(0,1/K')$ is equivalent to $|u|>K'/|c_g|$. Under this condition the map $u\mapsto u'$ is holomorphic, and depends continuously on $g$. From compactness of $\cal G$, it follows that these restrictions form a compact family too. In particular, if we further restrict to $|u|>1+K'/|c_g|$, we get by a simple application of the maximum principle that
\bEA
 | u'-(u+1) | &\leq& M_1/u
\\
| u'-(u+1+\frac{\gamma_g}{u}) | &\leq& M_2/u^2
\eEA
for some constants $M_1,M_2$ independent of $g\in\cal G$.
Thus for $r_0\leq 1/(|c_g|\max(1+K'/|c_g|,4/M_1))$, we have 
\[\frac{M_1}{|u|}\leq \frac{1}{4}\]
thus
\[| u'-(u+1) | \leq 1/4\] thus the set $H_\at$ is invariant under the dynamics of $u\mapsto u'$, so $D_\at$ is invariant under $z\mapsto z'$. It is also easy to see that in the $u$-coordinate, an orbit tending to $\infty$ must eventually get into $H_\at$.
The right hand side of the condition $r_0\leq 1/(|c_g|\max(1+K'/|c_g|,4/M_1))$ depends continuously on $g$ and reaches thus a positive minimum: it is satisfied as soon as $r_0\leq r_1$ where $r_1$ is independent of $g$.

The constant $\gamma_g$ is finite and depends continuously\footnote{because it is equal to $1-a_3/c_g^2$ if we denote $g(z)\underset{0}=z+c_g z^2+a_3z^3 +\ldots$} on $g$. Thus it is bounded over $\cal G$, say by $\Gamma$: 
\[|\gamma_g|\leq\Gamma.\]
The change of variable $w=u-\gamma_g\log_p u$ has derivative $1-\gamma_g/u$. It is thus injective on the convex set $\Re(u)>2|\gamma_g|$.
Thus when $r_0\leq r_2$ where $r_2 = \min_{g\in\cal G} (1/2|\gamma_g c_g|) >0$, then $\forall g\in\cal G$, the map $u\mapsto w$ is injective on $H_\at$. We will require in fact a bit more: $r_0\leq r'_0=r_2/2$, so that $\left|\frac{\partial w}{\partial u} -1 \right|\leq \frac14$.
This implies that the image of $H_\at$ by $u\mapsto w$ is a set that is of the form $\Re(w)>\zeta(\Im(w))$ for some analytic function $\zeta:\R\to\R$ that depends on $g$ and satisfies $|\zeta'(y)|<1/\sqrt{15}$. Moreover, $\zeta(y)/y\tend 0$ when $y\tend\pm\infty$ because $w\sim u$ when $|u|\to\infty$. In this new coordinates, we get
\[w'-w=\int_{[u,u']} \left(1-\frac{\gamma_g}{a}\right)da\]
whence
\[w'-w = 1+\frac{\gamma_g}{u} + \frac{\leq M_2}{u^2} -\gamma_g\log\left(1+\frac{1}{u} + \frac{\leq M_1}{u^2}\right)\]
where $\leq M_2$ means a complex number that depends on $u$ but whose module is at most $M_2$; we require $r_0\leq r_3$ where $r_3$ is chosen independent of $g$ and so that the quantity $\frac{1}{u} + \frac{\leq M_1}{u^2}$ has necessarily modulus $<1/2$: recall that $1/u = -c_g z$ and that $|c_g|\leq 4K$.
We can then apply the following estimate: $|a|<1/2 \implies |\log_p(1+a)-a|\leq L_0 |a|^2$ for some $L_0>0$.
Hence (thanks to a cancellation of the term $\gamma_g/u$)
\[w'-w = 1+\frac{\leq M_2}{u^2} + \gamma_g \frac{\leq M_1}{u^2} + \gamma_g\frac{\leq (1+1/4)^2L_0}{u^2}\]
(recall that $M_1/|u|<1/4$). 
Thus for some constant $M_3$ independent of $g$:
\[|w' - (w + 1)| \leq \frac{M_3}{u^2}.\]

The Fatou coordinates can be defined by
\[\Phi(z) = \mu+\lim (w_n-n) \]
where $\mu$ is a constant (that depends on the normalization) and 
$w_n$ is the $n$-th iterate of $w$ under the dynamics.
Since $\Re(u_n)>\Re(u_0)+\frac{3}{4}n$ and $\Re(u_0)\geq \frac{1}{r_0|c_g|}$,
using $r_0\leq r_4=\min(r_1,r'_2, r_3)$ we thus get
\[\lim |w_n-(w_0+n)| \leq \sum \frac{M_3}{|u_n|^2} \leq \sum \frac{M_3}{\left(\frac{1}{4K r_4}+\frac{3/4}{n}\right)^2} = M_4.\]
Thus $|\Phi(z)-(\mu+w)|\leq M_4$ holds on $D_\at$ for all $g$. The normalizing constant $\mu$ is so that $\Phi(z)=w+o(1)$ as $z\to0$ (iff.\ $w\to \infty$) and therefore $|\mu|\leq M_4$ whence: $\forall g\in \cal G$, $\forall z\in D_\at$, 
\[|\Phi(z)-w|\leq 2M_4.\] 

Recall that $H_\at$ is the image of $D_\at$ in the $u$-coordinate and that it is equal to the half plane $\Re(u) > U_0(g)=1/r_0|c_g|$. Let $U_4(g)=1/r_4|c_g|$ and $H_4$ be defined by $\Re(u) > U_4(g)$. Let $\Theta : H_4 \to \C$, $u\mapsto \Phi(z)$.
Then $|\Theta(u)-(u-\gamma_g\log_p(u))| \leq 2M_4$ and by Cauchy's inequality, $|\Theta'(u)-(1-\gamma_g/u)| \leq 2M_4/(\Re(u)-u_4)$.
In particular, the image of $H_\at$ by $u\mapsto Z=\Phi(z)$ is of the form $\Re(Z)>\zeta(\Im(Z))$ for some function $\zeta:\R\to\R$ provided $r_0\leq r_5=r_4/(1+8M_4)$ so that $2M_4/(\Re(u)-u_4) \leq 1/4$ and provided $r_0 \leq r_6 = 1/16K\Gamma$ so that $|\gamma_g/u| \leq 1/4$.
The fact that $\zeta(y)/y \tend 0$ as $y\tend\pm\infty$ follows again from $|\Theta(u)-(u-\gamma_g\log_p(u))| \leq 2M_4$.

We can now fix the value of $r_0$ to $\on{min}(r_5,r_6)$ (or any smaller value) and this gives us a set $D_\at$ that satisfies all points stated in the proposition.
\end{proof}

For reference, let us extract the following point (far from being optimal) from the proof:
\begin{lemma}\label{lem:upt1}
Under the assumptions of \Cref{prop:cf}, the change of variable $u=-1/c_g z$ conjugates $z\mapsto z'=g(z)$ to $u\mapsto u'$ satisfying:
\[\forall z\in B(0,r_0),\ \forall g\in\cal G,\ |u'-(u+1)|\leq \frac{1}{4}
.\]
\end{lemma}

Similar arguments provide:

\begin{proposition}\label{prop:cfr}
 Under the same assumptions as in the \Cref{prop:cf}, let
\[D_\rep=-D_\at.\]
Then for $r_0$ small enough the following holds: $\forall g\in \cal G$,
\begin{itemize}
\item there is a branch $\ell$ of $g^{-1}$ defined on a neighborhood of $0$ containing $D_\rep$ such that $\ell(D_\rep)\subset D_\rep$, $D_\rep$ is contained in the parabolic basin of $\ell$, every orbit in the parabolic basin of $\ell$ eventually enters $D_\rep$;
\item a normalized repelling Fatou coordinate $\Phi_\rep$ for $g$ is defined on $D_\rep$; it is injective on this set and maps it to a domain of the form $\Re(z)<\zeta(\Im(z))$ for some analytic function $\zeta$;
\item $\Phi_\rep-\wt{\Phi}_\rep$ is uniformly bounded on $D_\rep$ by a constant $M_\rep$ independent of $g$, where $\wt{\Phi}_\rep=\frac{-1}{c_g z} - \gamma_g \log_p\frac{1}{c_g z}$ (notice the change of sign inside the log compared to attracting Fatou coordinates);
\end{itemize}
\end{proposition}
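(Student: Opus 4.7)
The strategy is to reduce to Proposition~\ref{prop:cf} applied to the family of inverse branches. For each $g\in\cal G$, since $g'(0)=1$ and $g$ is bounded on $B(0,1/2)$ uniformly (as in the opening of the proof of Proposition~\ref{prop:cf}), a quantitative version of the implicit function theorem produces a common radius $r_*>0$ such that a holomorphic branch $\ell_g$ of $g^{-1}$ fixing $0$ is defined on $B(0,r_*)$. Inverting power series, $\ell_g(z)=z-c_g z^2 + (2c_g^2-a_3(g))z^3+\cal O(z^4)$, so $c_{\ell_g}=-c_g$ remains bounded away from $0$, and a direct computation gives $\gamma_{\ell_g}=-\gamma_g$.

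Rescale by setting $\wt\ell_g(z):=r_*^{-1}\ell_g(r_* z)$, so each $\wt\ell_g$ has domain $\D$; the family $\wt{\cal G}=\{\wt\ell_g\}$ is compact for local uniform convergence and satisfies the hypotheses of Proposition~\ref{prop:cf}. Applying that proposition to $\wt{\cal G}$ and scaling back by $r_*$ yields, for all sufficiently small $r_0$, an attracting disk $D_\at[\ell_g]$ with $\ell_g(D_\at[\ell_g])\subset D_\at[\ell_g]$, attraction of every orbit of $\ell_g$ in its parabolic basin into this disk, an injective normalized attracting Fatou coordinate $\Phi_\at[\ell_g]$ with image of the form $\{\Re w>\zeta_\ell(\Im w)\}$ (with $\zeta_\ell$ analytic and $\zeta_\ell(y)/y\to 0$), and a uniform bound $|\Phi_\at[\ell_g]-\wt\Phi_\at[\ell_g]|\leq M$ for $\wt\Phi_\at[\ell_g](z)=-1/(c_{\ell_g}z)-\gamma_{\ell_g}\log_p(-1/(c_{\ell_g}z))$. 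Since $c_{\ell_g}=-c_g$, the attracting axis of $\ell_g$ is opposite to that of $g$, so (possibly shrinking $r_0$ so that the two applications of Proposition~\ref{prop:cf} use a common radius) $D_\at[\ell_g]=-D_\at=D_\rep$.

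Now translate everything through the convention $\Phi_\rep[g]=-\Phi_\at[\ell_g]$ introduced in Section~\ref{subsec:parabopt}. The conjugacy $\Phi_\at[\ell_g]\circ\ell_g=\Phi_\at[\ell_g]+1$ becomes $\Phi_\rep[g]\circ g=\Phi_\rep[g]+1$ on $D_\rep$, injectivity is preserved, and the image takes the form $\{\Re w<\zeta(\Im w)\}$ with $\zeta(y)=-\zeta_\ell(-y)$. Substituting $c_{\ell_g}=-c_g$ and $\gamma_{\ell_g}=-\gamma_g$ gives
\[-\wt\Phi_\at[\ell_g](z)=-\frac{1}{c_g z}-\gamma_g\log_p\frac{1}{c_g z},\]
which is exactly $\wt\Phi_\rep(z)$, so the bound $|\Phi_\rep-\wt\Phi_\rep|\leq M_\rep$ holds with $M_\rep=M$ independent of $g$.

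The only delicate step is producing the common radius $r_*$ on which all $\ell_g$ exist and verifying that $\wt{\cal G}$ remains compact; these are handled by the same Cauchy-inequality and compactness arguments that open the proof of Proposition~\ref{prop:cf}, after which the rest of the statement follows by transcription through the relation $\ell_g=g^{-1}$.
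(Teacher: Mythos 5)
Your reduction to Proposition~\ref{prop:cf} via the inverse branch $\ell_g$, together with the paper's own convention $\Phi_\rep[g]=-\Phi_\at[\ell_g]$, is exactly what the paper's ``Similar arguments provide'' is pointing at, and your bookkeeping ($c_{\ell_g}=-c_g$, $\gamma_{\ell_g}=-\gamma_g$, the sign of the attracting axis, the identity $-\wt\Phi_\at[\ell_g]=\wt\Phi_\rep$) is all correct. This is essentially the paper's approach.
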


We will also need a control on the inverse Fatou coordinates, that we easily deduce from the control on the Fatou coordinates:

\begin{proposition}\label{prop:cf2}
Using the notations of \Cref{prop:cf}, provided $r_0$ was chosen small enough, then for all $g\in \cal G$:
\begin{itemize}
\item Let $\Psi=\Phi^{-1}$. Then the difference between $-1/c_g \Psi(Z)$ and $Z +\gamma_g \log Z$ is bounded by a quantity independent of $g$ and of $Z\in\Phi(D_\at)$.
\item The domain of definition of $\Phi^{-1}$, i.e.\ $\Phi(D_\at)$, contains the set ``$\Re Z>\xi(\Im Z)$'' where $\xi$ is a function \emph{independent} of $g$ and satisfying $\xi(y) = \cal O( \log |y|)$ as $y\tend \pm\infty$.
\end{itemize}
\end{proposition}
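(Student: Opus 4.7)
The plan is to invert the asymptotic expansion provided by Proposition~\ref{prop:cf}. Write $u = -1/(c_g \Psi(Z))$, so that $\Psi$ is obtained by inverting the composition $z \mapsto u \mapsto w = u - \gamma_g \log_p u \mapsto \Phi(z)$. From Proposition~\ref{prop:cf}, the estimate $|Z - (u - \gamma_g \log_p u)| \leq M$ holds with $M$ independent of $g \in \cal G$ and of $Z \in \Phi(D_\at)$, where $u$ ranges over the half-plane $H_\at = \{\Re u > U_0(g)\}$ and $|\gamma_g| \leq \Gamma$, $|c_g| \in [\epsilon, 4K]$ uniformly.

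For the first point, I would run a standard bootstrap. The inequality $|u - Z| \leq \Gamma(\log|u| + \pi) + M$ combined with $\Re u > U_0(g) > 0$ implies, for $|u|$ above a threshold independent of $g$, that $|u|/2 - O(1) \leq |Z| \leq 2|u| + O(1)$; for $|Z|$ (equivalently $|u|$) bounded, one falls back on compactness of $\cal G$ and of the corresponding range of $(u,Z)$. Once $|u|$ and $|Z|$ are comparable, $\log|u| = \log|Z| + O(1)$. Moreover both $u$ and $Z$ lie eventually in a sector around the positive real axis of bounded aperture, so $\arg u = \arg Z + O(1/|Z|)$, hence $\log_p u = \log_p Z + O(1)$. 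Substituting back into $u = Z + \gamma_g \log_p u + O(1)$ yields $u = Z + \gamma_g \log_p Z + O(1)$, uniformly in $g$, which is the first point.

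For the second point, I would examine the image of the vertical line $\Re u = U_0(g)$ under $u \mapsto Z$. Parametrizing by $u = U_0(g) + iy$ and using $U_0(g) \leq 1/(r_0 \epsilon)$, $|\gamma_g| \leq \Gamma$, $|\arg u| \leq \pi/2$, one gets
\[|\Re Z - U_0(g)| \leq \Gamma \log(U_0(g) + |y|) + \Gamma \pi/2 + M,\]
\[|\Im Z - y| \leq \Gamma \log(U_0(g) + |y|) + \Gamma \pi/2 + M.\]
The second shows that on this curve $|y|$ and $|\Im Z|$ differ by an $O(\log(1 + |\Im Z|))$ amount, and inserting this into the first bounds $\Re Z$ by $C_1 + C_2 \log(1 + |\Im Z|)$ with $C_1, C_2$ independent of $g$. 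Since Proposition~\ref{prop:cf} already gives $\Phi(D_\at) = \{\Re Z > \zeta_g(\Im Z)\}$, the function $\xi(y) := C_1 + C_2 \log(1 + |y|)$ satisfies $\xi \geq \zeta_g$ pointwise, and therefore $\{\Re Z > \xi(\Im Z)\} \subset \Phi(D_\at)$ as required.

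The main obstacle is not any new analytic idea but the bookkeeping required to propagate uniformity in $g \in \cal G$ through the inversion of $w = u - \gamma_g \log_p u$ and through the boundary parametrization. This amounts to systematically using the uniform bounds $|c_g| \in [\epsilon, 4K]$, $|\gamma_g| \leq \Gamma$, $U_0(g) \leq 1/(r_0\epsilon)$, and the fact that the error term $M$ supplied by Proposition~\ref{prop:cf} is already $g$-independent; no step introduces a constant whose uniformity has to be argued separately.
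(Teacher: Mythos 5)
Your proof uses essentially the same strategy as the paper's: invert the two-term expansion from Proposition~\ref{prop:cf}, bootstrap comparability of $u = -1/(c_g\Psi(Z))$ and $Z$ to replace $\log_p u$ by $\log_p Z + O(1)$, and for the domain statement parametrize the boundary curve $b\mapsto\Theta(a_g+ib)$, bound $|\Im Z - b|$ by a logarithm, and feed this back into the real-part estimate. Your treatment of the second bullet is correct and matches the paper.

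In the first bullet, the fallback ``for $|Z|$ (equivalently $|u|$) bounded, one falls back on compactness of $\cal G$ and of the corresponding range of $(u,Z)$'' has a gap as written: for this to yield a uniform bound on $\gamma_g\log_p Z$ you need $|Z|$ bounded away from $0$ over that range, and this is not automatic. On $\Re u>U_0(g)=1/(r_0|c_g|)$ with a fixed $r_0$ one only has $|Z|\geq |u|-\Gamma(\log|u|+\pi)-M$, which can be nonpositive if $r_0$ is not already small enough. The paper handles exactly this point by invoking the ``provided $r_0$ is small enough'' freedom: choose $C$ so that $\Gamma|\log_p z|+M<|z|/4$ whenever $|z|>C$, then take $r_0<1/(C\sup|c_g|)$, so that $H_\at\subset\{|u|>C\}$ and hence $|Z/u-1|<1/4$ holds on all of $\Phi(D_\at)$. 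This eliminates the bounded case entirely and simultaneously gives the clean estimate $|\log_p(u/Z)|\leq\log(4/3)$. You should make that move explicit rather than appeal to a compactness argument over a range of $(u,Z)$ you have not pinned down; once you do, your argument coincides with the paper's.
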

\begin{proof}
We will use the notations of the proof of \Cref{prop:cf}. There was a change of variables $u=s(z) =-1/c_g z$ and a bound
\[|Z-(u-\gamma_g\log_p u)|\leq M\]
for some constant $M$ independent of $g$, where $Z$.
\[ |Z-(u-\gamma_g\log_p u)| \leq M.\]
There exists $C>0$ such that for $|z|>C$ then $\Gamma |\log_p z| + M<|z|/4$ (recall $\Gamma=\ds\sup_{g\in\cal G} |\gamma_g|$), whence if $r_0<1/C\sup |c_g|$  then $H_\at$ is contained in $|u|>C$ and thus: $|\Theta(u)-u| < |u|/4$ i.e.\ $|\Theta(u)/u -1| <1/4$, i.e.\ 
\[\forall Z \in \Phi(D_\at),\ |Z/u -1| <1/4.\]
 Now
\bEA
|u-(Z+\gamma_g\log_p Z)| & \leq & |Z-(u-\gamma_g\log_p u)| + |\gamma_g||\log_p u-\log _pZ|
\\
& \leq & M + \sup |\gamma_g| \left|\log_p\frac{u}{Z}\right|
\\
& \leq & M + \sup |\gamma_g| \log\frac{3}{2}.
\eEA
The proof of the second point is similar. 
Recall that $H_\at$ depends on $g$, and is defined by $\Re z>a_g$ where
\[a_g=1/r_0|c_g|.\]
The image $\Phi(D_\at$ is of the form $\setof{z\in\C}{\Re z>\zeta(|\Im z|)}$ where $\zeta:\R\to\R$ is an analytic function that depends on $g$. 
Let $\Theta(u)=Z=\Phi(s^{-1}(u))$. Then $\Phi(D_\at) = \Theta(H_\at)$.
The map $\Theta$ extends to a neighborhood of the closure of $H_\at$ and still satisfies $|\Theta(u)-(u-\gamma_g \log_p u)| \leq M$ on this closure. The curve $\zeta(\R)$ is the image of $\partial H_\at$ under this extension of $\Theta$. Let $b\in\R$ parameterize a point $u=a_g+ib$ varying on $\partial H_\at$ and and denote $x+iy = \Theta(a_g+ib)$.
Then $\log_p u = \log|u|+i\arg_p(u)$
and $\arg_p(u)<\pi/2$ thus the bound $|\Theta(u)-(u-\gamma_g\log_p u)|<M$ yields for the real and imaginary parts:
\bEA
 |x - (a_g -\Re(\gamma_g) \log|a_g+ib|)| &\leq& M' := M+\Gamma \pi/2,
\\
|y - (b -\Im(\gamma_g) \log|a_g+ib|)| &\leq& M'.
\eEA
There exists $C'>0$, independent of $g$, such that for all $b\in\R$, $|\Im(\gamma) \log_p|a_g+ib| | \leq |b|/2 + C'$. The second line thus yields $|b| \leq |b|/2 + C'+ |y|+M'$ i.e.\ $|b|\leq 2|y|+M''$ for some $M''$. Whence $x \leq \xi(y):=\sup(a_g) +M'+\Gamma \log |\sup(a_g)+i(2|y| +M'')|$, which is independent of $g$ and has the right order of growth w.r.t.\ $y$.
\end{proof}

\begin{proposition}\label{prop:butterfly}
Under the same assumptions, there exists $h>0$ such that for all $g\in\cal G$, the normalized extended repelling inverse Fatou coordinate $\Psi_\rep$ and the normalized extended horn map $h[g]$ are defined on a set containing the half planes $\Im(z)>h$ and $\Im(z)<-h$, and injective on the union of those half planes. Moreover, for all $r>0$, there exists $h>0$ such that for all $g\in\cal G$, $\Psi_\rep$ maps these half planes inside the disk $B(0,r)$.
\end{proposition}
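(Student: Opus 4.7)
The plan is to reduce everything to the asymptotic expansions of Propositions~\ref{prop:cfr} and~\ref{prop:cf2} (the latter in its repelling version, established by the same argument applied to a branch of $g^{-1}$), and then to verify that on $\{|\Im Z|>h\}$ with $h$ large (uniformly in $g\in\cal G$), $\Psi_\rep$ is a small perturbation of $Z\mapsto -1/(c_gZ)$ and $h_\nor$ is close to a translation.

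First, I would write down the repelling analog of Proposition~\ref{prop:cf2}: $\Phi_\rep(D_\rep)$ contains a set of the form $\{\Re Z<-\xi(|\Im Z|)\}$ with $\xi(y)=\cal O(\log|y|)$ uniformly in $g$, and $|\Psi_\rep(Z)+1/(c_g Z)|=\cal O(\log|Z|/|Z|^2)$ on this set. Since $|c_g|$ is bounded below on $\cal G$ by compactness, this gives $|\Psi_\rep(Z)|\leq C/(|c_g|\Im Z)\leq C'/h$ on $\Phi_\rep(D_\rep)\cap\{\Im Z\geq h\}$, smaller than any prescribed $r$ for $h$ large, which already establishes the last sentence of the proposition on this portion of the half plane.

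Next I would extend $\Psi_\rep$ to the full upper half plane using $\Psi_\rep(Z+1)=f(\Psi_\rep(Z))$. For $Z\in\{\Im Z>h\}\setminus \Phi_\rep(D_\rep)$, take $n=\lceil\Re Z+\xi(\Im Z)+1\rceil$ so that $Z-n\in\Phi_\rep(D_\rep)$, set $w_0=\Phi_\rep^{-1}(Z-n)$, and iterate $w_{k+1}=f(w_k)$. In the $w$-coordinate of the proof of Proposition~\ref{prop:cf} (with $w=u-\gamma_g\log_p u$, $u=-1/(c_g z)$) the dynamics becomes the exact translation $w\mapsto w+1$, so $w_k=w_0+k$ and $\Im(w_k)=\Im(w_0)=\Im Z+\cal O(1)\geq h/2$ for $h$ large. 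This forces $|u_k|\geq|\Im u_k|\geq h/4$ for all $k$, hence $|f^k(w_0)|\leq 4/(|c_g|h)<r<1$ uniformly in $k$, and the whole orbit stays in $B(0,r)\subset\D$. Thus $\Psi_\rep(Z):=f^n(w_0)$ is well defined on $\{\Im Z>h\}$, independent of the choice of $n$ by injectivity of $\Phi_\rep$ on $D_\rep$, with the uniform bound $|\Psi_\rep(Z)|<r$.

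For injectivity of $\Psi_\rep$: setting $U(Z):=-1/(c_g\Psi_\rep(Z))$, the above control gives $|U(Z)-Z|=\cal O(\log|Z|)$ uniformly on the full half plane, and Cauchy's inequality on disks of radius $\Im Z/2$ yields $|U'(Z)-1|\to 0$ as $\Im Z\to+\infty$; for $h$ large we have $|U'-1|<1$ on $\{\Im Z>h\}$, from which the standard argument (if $U(Z_1)=U(Z_2)$ then $|Z_1-Z_2|\leq|Z_1-Z_2|\cdot\max|U'-1|$) gives injectivity of $U$ and hence of $\Psi_\rep$. For the horn map $h_\nor=\Phi_\at\circ\Psi_\rep$: every point of $\Psi_\rep(\{\Im Z>h\})\subset B(0,r)$ lies in the basin of $0$, because its forward orbit under $f$ is $\{\Psi_\rep(Z+k)\}_{k\geq 0}$ with $|\Psi_\rep(Z+k)|\leq C/(|c_g||Z+k|)\to 0$ as $k\to+\infty$; hence $\Phi_\at$ is defined there. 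The expansion $h_\nor(Z)=Z-i\pi\gamma_g+o(1)$ as $\Im Z\to+\infty$ (recalled in Section~\ref{subsec:parabopt} and uniform on $\cal G$) then makes $h_\nor$ a small perturbation of a translation, hence injective. The lower half plane is symmetric, and since $\Psi_\rep$ sends the two half planes into disjoint halves of $B(0,r)$, injectivity on their union follows.

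The main obstacle is controlling the orbit $\{w_k\}_{k=0}^n$ uniformly when $n$ may be arbitrarily large as $\Re Z\to+\infty$. This is resolved by working throughout in the $w$-coordinate, where the dynamics is the exact translation $w\mapsto w+1$, so that $\Im(w_k)$ stays constant and $|u_k|$ remains bounded below by $h/4$ for all $k$, yielding the uniform bound on $|w_k|$ independently of $n$ and $g\in\cal G$.
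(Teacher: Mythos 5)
There is a critical error: you claim that the change of variable $w=u-\gamma_g\log_p u$ conjugates the dynamics to the \emph{exact} translation $w\mapsto w+1$, so that $w_k=w_0+k$. This is false. In the proof of Proposition~\ref{prop:cf}, the chain of coordinates is $z\mapsto u\mapsto w\mapsto\xi$, and only the last one, the Fatou coordinate $\xi=\Phi(z)$, linearizes the dynamics exactly; the $w$-coordinate satisfies only the approximate estimate $|w'-(w+1)|\leq M_3/|u|^2$. (Indeed the Fatou coordinate is then constructed as $\Phi=\mu+\lim(w_n-n)$, which would be a trivial construction if $w_n-n$ were already constant.) This error matters because your entire orbit bound $|u_k|\geq h/4$ rests on $\Im(w_k)=\Im(w_0)$, and you explicitly present the exact linearization as the resolution to what you identify as ``the main obstacle'' of controlling arbitrarily long orbits uniformly.

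To salvage your approach one would need a bootstrap: assuming $|u_j|\geq h/4$ for $j<k$, bound the accumulated drift $\sum_{j<k}|w_{j+1}-w_j-1|\leq\sum_j M_3/|u_j|^2=\cal O(1/h)$ (using that $|u_j|\geq h/4$ at the bottom and that $\Re(u_j)$ moves roughly linearly through the strip), then close the induction on $\Im(w_k)$ --- but this reintroduces exactly the kind of iteration estimate you were hoping to bypass. The paper's proof avoids it geometrically: it exhibits a slanted half plane $H''$ in the $u$-coordinate which is forward invariant under $u\mapsto u'$ (this is where $|u'-(u+1)|<1/4$ and the slope $1/\sqrt{15}$ enter), shows that the preimage by $\Phi^u_\rep$ of any $\xi$ with $\Im\xi>h$ within a fundamental strip $S$ lies in $H''\cap S$, and concludes the whole forward orbit stays in $H''$, hence in $B(0,r)$, with no summation of errors at all. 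Your injectivity argument via $|U'(Z)-1|<1$ plus convexity of the half plane is a genuinely different (and elegant) alternative to the paper's gluing of the injective maps $g^n\circ\Phi_\rep^{-1}\circ T_{-n}$, but it still requires the asymptotics of $U$ on the \emph{full} half plane, and those asymptotics rest on the broken step.
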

\begin{proof}
Let us continue with the notations of the proof of \Cref{prop:cf}.
Note that, decreasing the value of $r_0$, we can assume that the maps $g\in \cal G$ are all injective on $B(0,r_0)$.
Without loss of generality we assume $r<r_0$.
Let us again work in the coordinates
\[u=s(z)=-1/c_g z.\]
Let $D'(r)$ be the disk of diameter $[0,re^{i\alpha}]$ where $\alpha$ is the repelling direction of $f$. (In particular $D_\rep=D'(r_0)$.)
The set $D'(r)$ is transformed by $s$ into the half plane $H':\Re(z)<-1/r |c_g|$. Let us also denote
\[H'_0:\Re(z)<-1/r_0 |c_g|.\]
Recall that if $|u|>1/r_0|c_g|$ then $|u'-(u+1)|<1/4$.
To shorten formulas, we will work with $\Phi_\rep^u(u) = \Phi_\rep \circ s^{-1}(u)$, $\wt\Phi_\rep^u(u) = \wt\Phi_\rep \circ s^{-1}(u)	=u-\gamma_g\log_p(-u)$ and $\Psi_\rep^u(Z) = s\circ \Psi_\rep(Z)$.
Consider the line of slope $-1/\sqrt{15}$ that is tangent to the disk $B(0,1/r|c_g|)$. Consider open half plane $H''$ above this line: it does not contain this disk.
In particular $|u'-(u+1)|<1/4$ holds on $U$ and thus $H''$ is stable: $u\in H'' \implies u' \in H''$.
Consider now the vertical bi-infinite strip $S$ of width $5/4$ whose rightmost bounding line is the boundary of $H'_0$.
Its image in repelling Fatou coordinates contains a fundamental domain for the translation $z\mapsto z+1$.
The intersection of $S$ with $H''$ contains all points $u\in S$ with $\Im(u)>h_1$ for some $h_1$ that depends on $r$ and $r_0$ and the lower bound $\epsilon$ on $|c_g|$ mentioned at the beginning of the proof of \Cref{prop:cf}.
Using $|\Phi^u_\rep-\wt{\Phi}^u_\rep|<M_\rep$ and the upper bound $|\gamma_g|\leq\Gamma$, we deduce that $\Phi^u_\rep (S\cap H'')$ contains every point of $\Phi^u_\rep(S)$ with imaginary part $\geq h$, where $h$ depends on $r$ and on the other constants but not on $g$.
\parpic{\begin{tikzpicture}
\node at (0,0.2) {\includegraphics{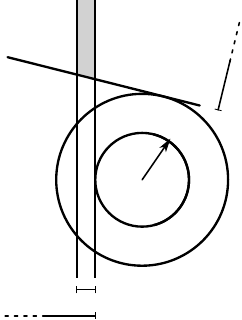}};
\node at (-0.57,-2.25) {\scriptsize $5/4$};
\node at (-1.4,-2.2) {$H'_0$};
\node at (0.35,-0.45) {$\frac{1}{r_0|c_g|}$};
\node at (1.2,-1.8) {$\frac{1}{r|c_g|}$};
\node at (1.5,1.7) {$H''$};
\end{tikzpicture}}
Recall that $\Phi^u_\rep$ maps the vertical line bounding $H'_0$ to a $y$-graph, i.e.\ a curve which crosses each horizontal line exactly once. The translate by $-1$ of this curve is the image by $\Phi^u_\rep$ of a curve $C$, preimage in $H'_0$ of $\partial H'_0$ by $u\mapsto u'$. Because of the inequality $|u'-(u+1)|<1/4$, we get $C\subset S$. Thus $\Phi_\rep^u(S)$ contains a domain bounded by a $y$-graph and and its translate by $-1$, i.e.\ a fundamental domain for the translation by $-1$.

Let us prove that the domain of the extended normalized inverse repelling Fatou coordinate $\Psi_\rep$ contains all points at height $>h$. Recall $\Psi_\rep$ is defined by extending $\Phi_\rep^{-1}$, which is defined only on $\Phi^u_\rep(H')$, by setting $\Psi_\rep(Z) = g^n(\Phi_\rep^{-1}(Z-n))$ for all $n\geq 0$ and all $Z\in\C$ such that the right hand side is defined.
Consider now $Z\in\C$. By the fundamental domain property proved above, there exists $m\in\Z$ such that $Z-m\in\Phi^u_\rep(S)$. If $m\leq 0$ then $Z\in\Phi^u_\rep(H')=\dom(\Phi_\rep^{-1})$ hence $Z\in\dom\Psi_\rep$.
If $m\geq0$ and $\im(Z)>h$ then $\im(Z-m)=\im Z>h$ and thus we have seen that $u_{-m}:=(\Phi^u_\rep)^{-1}(Z-m)$ belongs to $H''\cap S$. Since $H''$ is stable, the whole forward orbit of $u_{-m}$ belongs to $H''$. In particular $g^m(\Phi_\rep^{-1}(Z-m))$ is defined, hence $Z\in\dom\Psi_\rep$.
We have proven that the half plane ``$\Im(Z)>h$'' is contained in $\dom \Psi_\rep$.

Let now $Z\in\C$ with $\im(z)>h$ and let us prove that $\Psi_\rep(Z)\in B(0,r)$ and to the parabolic basin. Again consider $m\in\Z$ such that $Z-m\in\Phi^u_\rep(S)$.
Then in the case $m\geq0$ we just saw that the whole orbit of $u_{-m}$ is in $H''$, in particular the $m$-th iterate, which is equal to $\Psi^u_\rep(Z)$. Thus the point $\Psi_\rep(Z) = s^{-1}(\Psi^u_\rep(Z))$ belongs to $B(0,r)$.
Also, the orbit of $u$ tends to $\infty$ hence $\Psi_\rep(Z)$ belongs to the basin of the parabolic point of $g$.
In the case $m\leq 0$, then $Z\in\Phi_\rep(D'(r))$ and thus $\Psi_\rep(Z)=\Phi_\rep^{-1}(Z)\in D'(r) \subset B(0,r)$.
Since moreover $Z-m$ satisfies the first case and thus the point $\Psi_\rep(Z-m)$ belongs to the parabolic basin, we get that $\Psi_\rep(Z)$ also belongs to the basin, as it is mapped to the former point by the $|m|$-th iterate of $g$.

The proofs for the lower half plane ``$\Im z<-h$'' are similar.
Let us prove injectivity of $\Psi_\rep$ on the union $V$ of ``$\im z<-h$'' and ``$\im z>h$''. First, it is injective on $U = \Phi_\rep(D'(r))$, because it is equal to $\Phi_\rep^{-1}$ there. The map $g$ is injective on $\Psi_\rep(V)$ because the latter is contained in $B(0,r_0)$. The set $\Psi_\rep(V)$ is also stable by $g$, thus $g^n$ is also injective on it. Then, for each $n$, the map $g^n\circ \Phi_\rep^{-1} \circ T_{-n}$ is a composition of injective maps on $T_n(U) \cap V$, and coincides there with $\Psi_\rep$. Since the union over $n$ of $T^n(U)$ is the whole complex plane, the claim follows.

Injectivity of $h[g]$ on $V$ is similar, since $h[g]$ is the union over $n\geq 0$ of the maps $T^{-n}\circ \Phi_{\at}\big|_{D_\at} \circ g^n \circ \Psi_\rep$, which are injective when restricted to $V$.
\end{proof}

\medskip

Let us introduce a weak notion of convergence of analytic maps: let $X$, $Y$ be connected Riemann surfaces and let $f_n : U_n  \to Y$ and $f: U \to Y$ be analytic with $U$ and $U_n$ open subsets of $X$.
Endow $Y$ with any metric compatible with its topology.
Let us say that $f_n$ tends to $f$ if for all compact subset $K$ of $U$, $K$ is eventually contained in $U_n$ and $f_n$ tends to $f$ uniformly on $K$.
This does not depend on the choice of the metric.\footnote{This definition has the following equivalent topological formulation. Let $X'=\{0,1,1/2,1/3,1/4,\ldots\}\times X \subset \R\times X$ and embed $X'$ with the topology induced by $\R\times X$. Let $W\subset X'$ be defined by $(0,z)\in W \iff z\in U$ and $(1/n,z)\in W \iff z\in U_n$. Let $F : W \to Y$ defined by $F(0,z) = f(z)$ and $F(1/n,z) = f_n(z)$. Then $f_n \st f$ $\iff$ [$W$ is open relative to $X$ and $F$ is continuous].} Note that this does not prevent $U_n$ to have a bigger limit than $U$. In particular, limits are not unique.
We will use the following notation:
\[ f_n\st f
,\]
which is chosen so to express the fact that $f$ can be contained in limits with a bigger domain. 
We do not define an associated topology but we will use the notion of sequential continuity with respect to that notion of convergence, as illustrated by the following two properties, whose proofs are left to the reader:
\begin{enumerate}
\item The composition $f\circ g$ depends continuously on the pair $f,g$: if $f_k \st f$ and $g_k \st g$ then $f_k\circ g_k \st f\circ g$.
\item For a fixed $n$, $f^n$ depends continuously on $f$: if $f_k \st f$ then $f_k^n \st f^n$.
\end{enumerate}

For the next statement, recall that $\Phi_\at$ and $\psi_\rep$ denote the \emph{extended} Fatou functions. 

\begin{proposition}[continuous dependence]\label{prop:conti}
Assume $g_n : U_n \to\C$ is a sequence of holomorphic maps defined on an open subset $U_n$ of $\C$ containing the origin, with expansion $g_n(z)=z+c_n z^2 + \ldots$ at $0$, and with $c_{n}\neq 0$. Assume $g$ is also of this form with $c_g\neq 0$ and that $g_n \st g$. Then $\Phi_\at[g_n] \st \Phi_\at[g]$, $\Psi_\rep[g_n]\st\Psi_\rep[g]$ and $h[g_n] \st h[g]$.
\end{proposition}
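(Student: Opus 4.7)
The plan is to reduce to the uniform estimates of Propositions~\ref{prop:cf}, \ref{prop:cfr} and~\ref{prop:butterfly}, and then upgrade the uniform bounds they provide into actual convergence. First I would fix a small disk $B(0,r_0) \subset U$ such that $c_g \neq 0$; since $g_n \st g$, for $n \geq N_0$ we have $B(0,r_0) \subset U_n$ and $g_n \to g$ uniformly on $B(0,r_0)$. By Cauchy's inequality the coefficients $c_n \to c_g \neq 0$, and similarly the iterative residues satisfy $\gamma_n \to \gamma_g$. After rescaling so that the common source disk becomes $\D$, the family $\cal G := \{g\} \cup \{g_n : n \geq N_0\}$ is compact for local uniform convergence, with every $c$-coefficient nonzero. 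The toolkit then furnishes a common attracting disk $D_\at$ (and a repelling $D_\rep$) and a uniform constant $M$ such that all $\Phi_\at[g_n]$ are defined and injective on $D_\at$ with $|\Phi_\at[g_n] - \wt\Phi_n| \leq M$, where $\wt\Phi_n(z) = -\frac{1}{c_n z} - \gamma_n \log_p\frac{-1}{c_n z}$.

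Next I would establish $\Phi_\at[g_n] \to \Phi_\at[g]$ locally uniformly on $D_\at$ by a diagonal tail-estimate argument. The proof of Proposition~\ref{prop:cf} actually produces, in the $u$-coordinate, an explicit summable bound $|w_{k+1} - (w_k + 1)| \leq M_3/|u_k|^2$ together with $|u_k| \geq U_0 + 3k/4$, uniformly in $g \in \cal G$; telescoping gives, uniformly in $n$ and in $z$ ranging over any compact subset of $D_\at$, an inequality of the form
\[
\bigl| \wt\Phi_n(g_n^k(z)) - k - \Phi_\at[g_n](z) \bigr| \leq M'/k.
\]
For each fixed $k$, $g_n^k \st g^k$ by continuity of iteration, and $\wt\Phi_n \to \wt\Phi_g$ locally uniformly on $\C^*$ (because $c_n \to c_g$ and $\gamma_n \to \gamma_g$); hence $\wt\Phi_n(g_n^k(z)) - k \to \wt\Phi_g(g^k(z)) - k$ locally uniformly. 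A standard $\epsilon/3$ argument combining these three estimates yields pointwise, and then by Vitali locally uniform, convergence $\Phi_\at[g_n] \to \Phi_\at[g]$ on $D_\at$.

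To extend to the whole basin, let $K \Subset \dom \Phi_\at[g]$. There exists $N$ with $g^N(K) \Subset D_\at$; by $\st$-continuity of iteration, for $n$ large enough $g_n^N$ is defined on a neighborhood of $K$ with $g_n^N(K) \subset D_\at$, and the functional equation $\Phi_\at[g_n](z) = \Phi_\at[g_n](g_n^N(z)) - N$ together with the previous step gives uniform convergence on $K$ to $\Phi_\at[g](z)$. Thus $\Phi_\at[g_n] \st \Phi_\at[g]$. The same scheme, applied to $\Phi_\rep$ using Proposition~\ref{prop:cfr} on $D_\rep$, followed by taking local inverses and then extending via $\Psi_\rep \circ T_1 = g \circ \Psi_\rep$ (Proposition~\ref{prop:butterfly} providing injectivity on the half planes that will ultimately carry $h_\nor$), yields $\Psi_\rep[g_n] \st \Psi_\rep[g]$. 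Finally $h_\nor[g_n] = \Phi_\at[g_n] \circ \Psi_\rep[g_n] \st \Phi_\at[g] \circ \Psi_\rep[g] = h_\nor[g]$ by continuity of composition under $\st$.

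The delicate point, which I see as the only real obstacle, is the alignment of normalization constants in the middle step: each $\Phi_\at[g_n] - \wt\Phi_n$ is known a priori only to be bounded, and a limit of such bounded holomorphic functions could in principle differ from $\Phi_\at[g] - \wt\Phi_g$ by an arbitrary additive constant. What rules this out is precisely the fact that the tail bound $M'/k$ in the telescoping estimate is uniform in $n$; this forces any accumulation point of the constants to coincide with the one selected by the normalization of $g$. Once this is in hand, everything else is bookkeeping: iteration and composition commute with $\st$-limits, and the asymptotic normalization then propagates automatically from $D_\at$ (resp.\ $D_\rep$) to the full extended domain.
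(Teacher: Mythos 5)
Your proof is correct and follows the same skeleton as the paper's: reduce $h_\nor$ to $\Phi_\at$ and $\Psi_\rep$, get convergence on the fixed petal $D_\at$, and propagate to the whole basin via the functional equation. The one place where you genuinely diverge from the paper is the key step on $D_\at$: the paper appeals to normality (from the uniform bound $|\Phi_\at-\wt\Phi|\le M$ of Proposition~\ref{prop:cf}) plus the functional equation passing to the limit plus \emph{uniqueness of the normalized Fatou coordinate}, whereas you instead re-open the proof of Proposition~\ref{prop:cf} to extract the quantitative telescoping bound $|\wt\Phi_n(g_n^k(z))-k-\Phi_\at[g_n](z)|\le M'/k$ uniform in $n$, and run an $\epsilon/3$ argument. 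These are two ways of controlling the same thing: your uniform tail estimate is precisely what guarantees that any normal-family limit of $\Phi_\at[g_n]$ is again \emph{normalized} (not merely a Fatou coordinate for $g$ up to an additive constant), which is the content the paper packs into ``uniqueness of the normalized Fatou coordinates implies uniqueness of the extracted limit.'' Your version makes this point explicit at the cost of relying on the internals of the proof of Proposition~\ref{prop:cf} rather than only its statement; the paper's version is more modular but slightly telegraphic. One small remark: your invocation of Proposition~\ref{prop:butterfly} in the $\Psi_\rep$ step is not really needed for the convergence argument itself --- the same $D_\rep$-then-iterate scheme suffices --- though it does no harm.
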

\begin{proof}The claim on $h=\Phi_\at\circ\Psi_\rep$ follows from the claims on $\Phi_\at$ and $\Psi_\rep$.

Recall that $D_\at[g]$ is the disk of diameter $[0,r_0e^{i\alpha[g]}]$ where $\alpha[g]$ is the direction of the attracting axis of $g$, and that $r_0$ is independent of $g$. Hence $D_\at[g]$ depends continuously on $g$. A compact set $K$ contained in the parabolic basin of $g$ is mapped in $D_\at[g]$ by an iterate $g^k$. The latter depends continuously on $g$ when $k$ is fixed. Since the center and radius of $D_\at[g]$ depend continuously on $g$, $g_n^k(K)\subset D_\at[g_n]$ for all $n$ big enough. Continuity, as a function of $g$, of the restriction of $\Phi_\at$ to $D_\at$, follows for instance from the third point of \Cref{prop:cf} combined with uniqueness of Fatou coordinates: the sequence $\Phi_\at[g_n]$ forms a normal family, and any extracted limit is a Fatou coordinate for $g$ because the functional equation $\Phi_\at[g_n] \circ g_n = T_1 \circ \Phi_\at[g_n]$ passes to the limit, and uniqueness of the normalized Fatou coordinates implies uniqueness of the extracted limit. From the convergence of $\Phi_\at[g_n]$ to $\Phi_\at[g]$ on $D_\at[g]$ we deduce the convergence of $\Phi_\at = \Phi_\at[g_n] \circ g_n^k - k$ to $\Phi_\at[g] \circ g^k - k = \Phi_\at[g]$ on $g^{-k}(D_\at[g])$, and hence on the whole parabolic basin of $g$.

The proof for $\Psi_\rep$ is similar.
\end{proof}

\subsubsection{Transferring to $\cal F$}\label{subsub:transff}

Fix some $d\in\{2,3,\ldots,\infty\}$ and recall the definition $\cal F = \setof{\cal R[B_d] \circ \phi^{-1}}{\phi\in\sch}$. 
The conclusions of the previous propositions hold for $\cal F$.
Indeed, the set of restrictions to $\D$ of maps $A \circ f \circ A^{-1}$ with $A(z)=4z$ satisfies the assumptions of the propositions.
First, the set of Schlicht maps $\sch$ is compact, and by Koebe's one quarter theorem, the domain of their reciprocal contains $B(0,1/4)$.
The restriction of these reciprocals on $B(0,1/4)$ forms a compact family.
We saw in \Cref{prop:fsd} that $\cal F\subset\cal S_d$.
In particular, maps in $\cal F$ have only one attracting petal.
This is therefore also the case for the conjugate map $A\circ f\circ A^{-1}$.
This proves the claim.

Call $\wt f$ the restriction of $A\circ f\circ A^{-1}$ to $\D$.
The conclusions of the previous propositions are easily transposed from $\wt f$ back to $f$ because they were all local (except for \Cref{prop:conti}, which directly applies): for instance, normalized Fatou coordinates satisfy $\Phi_\at[\wt f](z)=\Phi_\at (A(z))$ for all $z$ in the domain of the left hand side (it is contained in the domain of the right hand side but not necessarily equal to it, because $\wt f$ is a restriction).
\Cref{prop:conti} did not assume that the maps are defined on the unit disk, it applies directly, so continuous dependence of $\Phi_\at[f]$ and $\Psi_\rep[f]$ holds without restricting the domain. 

Recall $h[f]$ has the following expansion:
\[h[f](z) = z + a_{\on{up}/\on{down}} + o(1)\]
as $\Im(z)\tend\pm\infty$, where $a_{\on{up}}$ and $a_{\on{down}}$ are two complex constants.
For any map in the class $\cal F$, denote $\{0,v_f,\infty\}$ its  singular values.\footnote{It turns out that $v_f$ is independent of $f$ for a fixed $d$, but we will not use that fact.}
The corresponding map $h[f]$ has a set of singular values of the form $v_h+\Z$ where
\[ v_h=v_h[f]=\Phi_\at[f](v_f)
. \]
By \Cref{prop:butterfly} there is a uniform $h>0$ such that for all $f\in\cal F$, the domain of $h[f]$ contains the half planes $\Im z>h$ and $\Im z<-h$.

For any $d\in\{2,3,\ldots,\infty\}$, the set $\cal F$ is sequentially compact, for the notion of convergence defined above.
By this we mean that every sequence $f_n\in\cal F$ has a subsequence $f_k$ such that $f_k \st f$.
(\footnote{Note that if we restrict our notion of convergence to $\cal F$, we recover uniqueness of the limit.})
A sequentially continuous real valued function over a sequentially compact set is bounded. 
This implies the following proposition.

\begin{proposition}\label{prop:cor}
For any $d\in\{2,3,\ldots,\infty\}$ over the class $\cal F$, the following holds:
\begin{enumerate}
\item (bound in the normalized attracting Fatou coordinates)\\ $\exists M$ such that $\forall f\in\cal F$, $|\Im(v_h)|\leq M$.
\item (bound on the horn map at the ends of the cylinder)\\ $\exists M$ such that $\forall f\in\cal F$, $|a_{\on{up}}[f]|\leq M$ and  $|a_{\on{down}}[f]|\leq M$.
\item (bound in the normalized repelling Fatou coordinates)\\ $\exists M$ such that $\forall f\in\cal F$, the main\footnote{terminology introduced in \Cref{subsec:sdc}} upper and lower chessboard boxes of $h[f]$ respectively contain the half planes $\Im(z)>M$ and $\Im(z)<-M$. 
\end{enumerate}
\end{proposition}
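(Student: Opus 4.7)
Each of the three bounds is obtained by applying the principle stated just above the proposition — a sequentially continuous real-valued function on a sequentially compact set is bounded — to the sequentially compact class $\cal F$. The only point requiring verification in each case is sequential continuity with respect to $\st$, for which Proposition~\ref{prop:conti} is the main tool.

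For bound (2), I would recall that, with the normalization number~\ref{item:nor:2} used throughout Section~\ref{sec:pf},
\[a_{\on{up}}[f]=-i\pi\gamma[f]\quad\text{and}\quad a_{\on{down}}[f]=i\pi\gamma[f],\]
where $\gamma[f]=1-a_3/a_2^2$ is the iterative residue computed from the coefficients of $f(z)=z+a_2z^2+a_3z^3+\cdots$. Cauchy's formulas on a fixed small disk contained in the common domain of the $f_n$ show that the coefficients $a_2,a_3$ depend continuously on $f$ in the $\st$ sense, and $a_2\neq 0$ for all $f\in\cal F$ since $f$ has a single attracting petal. Hence $\gamma[f]$ is sequentially continuous on $\cal F$, and sequential compactness yields the uniform bound.

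For bound (1), $v_h[f]$ is obtained by composing a (normalized, extended) Fatou coordinate of $f$ with the finite non-zero singular value $v_f$ of $f$. Both ingredients depend continuously on $f$ in the appropriate sense: the Fatou coordinates by Proposition~\ref{prop:conti}, and $v_f$ because it is the unique finite non-zero singular value of $f$ and thus varies continuously in the compact class $\cal F$. The relevant point $v_f$ sits in the open immediate basin and therefore lies eventually in the common domain of the Fatou coordinates of the $f_n$, so sequential continuity of $\Im v_h[f]$ on $\cal F$ is immediate; the bound then follows.

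For bound (3), Proposition~\ref{prop:butterfly} already provides a uniform $h>0$ such that $h_\nor[f]$ is defined and injective on $\{\Im z>h\}\cup\{\Im z<-h\}$ for every $f\in\cal F$. It remains to show that, for $M$ sufficiently large and uniformly in $f\in\cal F$, one has $\Im h_\nor[f](z)>\Im v_h[f]$ on $\{\Im z>M\}$ (and the analogous strict inequality with $<$ below $\{\Im z<-M\}$), which places these half-planes inside the main upper, respectively lower, chessboard boxes. For a fixed $f$ this is the content of the expansion $h_\nor[f](z)=z+a_{\on{up}}[f]+o(1)$ as $\Im z\to+\infty$. The main point — and the only mildly delicate step of the proposition — is the uniformity of this $o(1)$ over $\cal F$. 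It follows from the observation that $z\mapsto h_\nor[f](z)-z-a_{\on{up}}[f]$ is $T_1$-periodic, hence factors through $E(z)=e^{2i\pi z}$ as a bounded holomorphic function on a punctured neighborhood of the origin, which extends continuously to $0$ at the origin. By Proposition~\ref{prop:conti} together with sequential compactness of $\cal F$, the family so obtained is normal and all its members vanish at $0$; hence convergence to $0$ is uniform over $\cal F$. Combined with the bounds (1) and (2) already established, this yields the desired $M$.
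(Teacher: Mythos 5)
Your proof is correct, but it follows a genuinely different route on points (2) and (3); point (1) matches the paper's argument essentially verbatim. For (2), the paper bounds $|h_\nor(z)-z|$ on $\{\Im z>h+1\}$ by its supremum over a period segment (periodicity plus the maximum principle) and then invokes Proposition~\ref{prop:conti} together with compactness of $\cal F$; you instead use the identity $a_{\on{up}}=-i\pi\gamma$, $a_{\on{down}}=i\pi\gamma$ valid under normalization~\ref{item:nor:2}, and bound the iterative residue $\gamma[f]=1-a_3/a_2^2$ directly via Cauchy estimates on a uniform small disk inside $\dom(f)$ and compactness (using $a_2[f]\neq 0$ on $\cal F$), which is a clean and arguably tidier shortcut. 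For (3), the paper observes that $h_\nor$ restricts to a $T_1$-equivariant conformal bijection from the main upper box onto $\{\Im z>\Im v_h\}$; semi-conjugating by $E$ produces a univalent map fixing $0$ with derivative $e^{2\pi i a_{\on{up}}}$, and Koebe's $1/4$-theorem gives the explicit inclusion of $\{\Im z>\frac{\log 4}{2\pi}+\Im v_h-\Im a_{\on{up}}\}$ in the main upper box, which with (1) and (2) yields the uniform $M$. Your route, via Proposition~\ref{prop:butterfly} plus a uniform control on the $T_1$-periodic error $h_\nor(z)-z-a_{\on{up}}$ (factored through $E$ near $0$ and handled by normality plus compactness), also works, but is a little heavier: the paper's Koebe argument needs neither the uniform half-plane of definition from Proposition~\ref{prop:butterfly} nor the full uniform-$o(1)$ (a uniform bound would already do). You should also spell out the last inference: once $h_\nor$ maps $\{\Im z>M\}$ into $\{\Im z>\Im v_h\}$, that half plane lies in the main upper box because it is connected, contained in $\dom h_\nor$ for $M>h$, and meets the box (which contains a sufficiently high upper half plane), so it lies in that single component of $h_\nor^{-1}(\{\Im z>\Im v_h\})$.
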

\begin{proof}
The map $f\in\cal F\mapsto v_h\in \C$ is sequentially continuous by \Cref{prop:conti}. 
The set $\cal F$ being sequentially compact, its image by $f\mapsto v_h$ is sequentially compact in $\C$ (i.e.\ compact) thus bounded. The first point follows.

For the second, by periodicity and the maximum principle and according to the expansion, the distance $|h[f](z)-z|$ is bounded over $\Im(z)>h+1$ by its supremum over a segment of length $1$ inside the line $\Im(z)=h+1$, for instance the segment $[i(h+1),1+i(h+1)]$. Continuous dependence implies the distance is uniformly bounded as $f$ varies in $\cal F$. Since $a_{\on{up}}$ is the limit of this difference as $\Im(z)\tend +\infty$, this implies the bound on $a_{\on{up}}$. (Alternatively one can use the fact that $a_{up} = \int_{ih+i}^{ih+i+1} (h(z)-z)dz$.) The proof is similar for $a_{\on{down}}$.

For the third, we will use the following trick: first $h[f]$ is an analytic isomorphism commuting with $T_1(z)=z+1$ from the upper and the lower structural boxes to one of the half plane delimited by $v_h+\R$. By Koebe's one quarter theorem, the upper box must contain $\Im(z)>\frac{\log 4}{2\pi} + \Im(v_h) -\Im(a_{\on{up}})$. The previous bounds allows to conclude. The proof is similar for the other half plane.
\end{proof}

Let us now prove an independent proposition. Let $f$ be a map in $\cal S_d$.
Then we can apply \Cref{lem:phiatu} about universality and we know that $\Phi_\at : A \to \C$ is structurally equivalent to $\Phi_\at[B_d]$ for some $d\in\{2,3,\ldots,\infty\}$. The singular values of $\Phi_\at$ are $\infty$ and the points of the form $\Phi_\at(v)-n$ with $n>0$ (see for instance Proposition~2 in \cite{BE}, where a notion of \keyw{ramified cover} is used: their proposition implies that $\Phi_\at$ is a cover outside $\infty$ and the critical values).
\begin{proposition}\label{prop:ppalcurv}
Under these conditions, the preimage $\Gamma$ by $\Phi_\at$ of the horizontal half line $\Phi_\at(v)+[0,+\infty[$ has a connected component $\cal C$ that is a curve starting from the singular value of $f$ in $A$ and ending at the parabolic point. It is stable: $f(\cal C)\subset \cal C$,\nomenclature[C1]{$\cal C$}{a curve through the orbit of the critical value,  \Cref{prop:ppalcurv}\nomrefpage} and contained in the common boundary of the two principal dynamical chessboard boxes of $f$.
\end{proposition}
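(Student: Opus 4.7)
The plan is to reduce to the model map $B_d$ using the universality given by Theorem~\ref{thm:s1} and Corollary~\ref{cor:univ}, and then to exhibit the real segment $[0,1[\,\subset\D$ as the desired curve in the model case. By Theorem~\ref{thm:s1}, there exist $d\in\{2,3,\ldots\}\cup\{\infty\}$ and an analytic isomorphism $\zeta\colon A\to\D$ conjugating $f|_A$ to $B_d|_\D$, and sending the singular value $v\in A$ to the unique singular value $0\in\D$ of $B_d$; the identity $\zeta^{-1}(B_d^n(0))=f^n(v)$ shows that $\zeta$ matches, in the boundary limit, the point $1\in\partial\D$ with the parabolic fixed point of $f$. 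By Corollary~\ref{cor:univ}, $\Phi_\at[B_d]\circ\zeta=\tau+\Phi_\at[f]$ on $A$ for some $\tau\in\C$. Hence $\zeta$ maps $\Gamma$ bijectively onto $\Phi_\at[B_d]^{-1}\bigl(\Phi_\at[B_d](0)+[0,+\infty[\bigr)$ and intertwines the two dynamics, so it suffices to prove the proposition with $f$ replaced by $B_d$ and $v$ by $0$.

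For $B_d$ the map commutes with $z\mapsto\bar z$ and satisfies $B_d([0,1[\,)=[a_d^d,1[\,\subset[0,1[$. By the symmetry, $\Phi_\at[B_d]$ is real valued on $[0,1[$. Differentiating the functional equation $\Phi_\at[B_d]\circ B_d=T_1\circ\Phi_\at[B_d]$ shows that any critical point $z_0$ of $\Phi_\at[B_d]$ in $\D$ satisfies $B_d^k(z_0)=c_{B_d}=-a_d$ for some $k\geq 0$. But if $z_0\in[0,1[$, all forward iterates stay in $[0,1[$, which cannot contain the negative number $c_{B_d}$. Hence $\Phi_\at[B_d]$ is locally biholomorphic on $[0,1[$ with real, nowhere-vanishing derivative, and is therefore strictly monotone there. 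Combined with $\Phi_\at[B_d](B_d^n(0))=\Phi_\at[B_d](0)+n$ and $B_d^n(0)\to 1$, this shows that $\Phi_\at[B_d]$ restricts to a homeomorphism from $[0,1[$ onto $[\Phi_\at[B_d](0),+\infty[$.

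The half line $\Phi_\at[B_d](0)+[0,+\infty[$ avoids all finite singular values of $\Phi_\at[B_d]$, namely $\Phi_\at[B_d](0)-n$ for $n\geq 1$, so $\Phi_\at[B_d]$ is an unramified cover above it and connected components of its preimage correspond to lifts. The homeomorphism above exhibits $[0,1[$ as the lift through the regular point $0$, so $[0,1[$ is precisely the connected component containing $0$. It is a curve from $0$ to the parabolic fixed point $1$, and the stability $B_d([0,1[\,)\subset[0,1[$ is immediate. Transferring back via $\zeta$, the set $\cal C:=\zeta^{-1}([0,1[\,)$ is the required component of $\Gamma$: a curve from $v$ to the parabolic point of $f$ with $f(\cal C)\subset\cal C$.

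The main subtlety is the monotonicity step on $[0,1[$, which reduces to the forward-orbit argument showing that this segment contains no critical point of the extended Fatou coordinate; once that is in place, everything else is routine lifting and transfer through $\zeta$.
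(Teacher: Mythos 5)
Your argument is correct, and it is precisely the \emph{first} route that the paper mentions but declines to write out in full: ``One way is to prove the proposition for $B_d$\ldots\ and then it transfers immediately to $f$ by universality.'' What you have done is carefully fill in that sketch. The paper then says ``Let us give here another proof'' and proceeds differently: with $v'=\Phi_\at(v)$, it takes the slit plane $O=\C\setminus\,]{-}\infty,v'-1]$, observes that $O$ is simply connected and avoids the singular values $v'-\N^*$ of $\Phi_\at$, so that $\Phi_\at$ restricts to a biholomorphism from the component $O_\at\supset\cal P_\at$ of $\Phi_\at^{-1}(O)$ onto $O$, and then defines $\cal C$ as the $\Phi_\at\big|_{O_\at}$-preimage of $v'+[0,+\infty[$.

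The comparison is instructive. Your proof trades on the explicit real symmetry of $B_d$ and the universality statement (Theorem~\ref{thm:s1} and Corollary~\ref{cor:univ}); the key dynamical input is the forward-orbit argument that $[0,1[$ meets no critical point of the extended Fatou coordinate, after which monotonicity and the covering picture do the rest. The paper's argument avoids reducing to the model map entirely: picking the slit so that it contains all finite singular values of $\Phi_\at$ at once gives a global holomorphic \emph{bijection} onto $O$ on one stroke, which immediately makes $\cal C$ a curve, puts $v$ on it (since $v\in O_\at$), sends it to the parabolic point (since the petal sits in $O_\at$), and gives the invariance (since $T_1(v'+[0,+\infty[)\subset v'+[0,+\infty[$ pulls back to $f(\cal C)\subset\cal C$ via the functional equation). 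That version is shorter and does not need $B_d$ at all, but your version is more concrete and makes the role of the singular orbit visible. Both are valid; the paper simply chose the more economical one as its written proof. One small point worth making explicit in your write-up: the conclusion that $\cal C$ tends to the parabolic fixed point of $f$ (not merely that $\zeta^{-1}(B_d^n(0))=f^n(v)$ does) follows because the intervals $[B_d^n(0),B_d^{n+1}(0)]$ tile the tail of $[0,1[$ and their $\zeta^{-1}$-images are $f^n$ of a fixed compact subset of the basin, which converges to the parabolic point uniformly.
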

This curve will be called the \keyw{principal curve}. It contains in particular the orbit of the singular value of $f$. Note that all connected components of $\Gamma$ are curves since the horizontal half line considered contains no singular value of $\Phi_\at$.

\begin{proof}
It is enough to prove the proposition for $B_d$, which is easy because the latter map is real preserving and its singular value is on the real line. Then it transfers to $f$ by universality: all claims are immediate except the statement that $\cal C$ tends to the parabolic point. The latter follows for instance from $\cal C$ being the concatenation of the sucessive images by $f$ of its part from $v_f$ to $f(v_f)$.
\end{proof}

Let us go back to maps $f\in \cal F$. As we remarked before, convergence of maps $f_n\st f$ where $f_n$ and $f$ belong to $\cal F$ is well behaved: limits are unique and in fact it is equivalent to the classical notion of convergence of a sequence with respect to a (metrizable) topology making $\cal F$ compact: Indeed,
let $f_n,f\in\cal F$.
Write $f_n=\cal R[B_d]\circ \phi_n^{-1}$ and $f=\cal R[B_d]\circ \phi^{-1}$ with $\phi_n$ and $\phi\in\sch$ (uniquely determined).
Then the following are equivalent:
\begin{enumerate}
\item $f_n\st f$,
\item for some $\epsilon>0$, the map $f_n$ tends to $f$ uniformly on $B(0,\epsilon)$,
\item for some $\epsilon>0$, the map $\phi_n$ tends to $\phi$ uniformly on $B(0,\epsilon)$,
\item $\phi_n$ tends to $\phi$ uniformly on every compact subsets of $\D$.
\end{enumerate}
A proof of (3)$\implies$(4) is for instance given by compactness of $\sch$ together with analytic continuation of equalities.
The last three notions of convergence are easily metrized and all endow $\cal F$ with the \emph{same} topology. It is Hausdorff and compact for this topology. The map $\sch\to \cal F$, $\phi\mapsto \cal R[B_d]\circ \phi^{-1}$ is hence a homeomorphism.

Recall that we denote $0$, $v_f$, $\infty$ the singular values of $f$ over $\wh{\C}$. It turns out that the class $\cal F$ has been defined so that $v_f$ does not depend on $f$, but let us temporarily ignore that.

\begin{lemma}[uniform bound on the trapping time]\label{lem:traptime}
For any $r>0$, denote $D_r[f]$ the disk of diameter $[0,re^{i\alpha}]$ where $\alpha$ is the direction of the attracting axis of $f$. There exists $n_0\in\N$ such that $\forall f\in\cal F$, $f^{n_0}(v_f)\in D_r[f]$.
\end{lemma}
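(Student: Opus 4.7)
The strategy is to prove upper semicontinuity of the entry time as a function on $\cal F$ and then use the compactness of $\cal F$ (established just before the lemma in Subsection~\ref{subsub:transff}) to conclude.

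\medskip

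For $f\in\cal F$, let $n(f)$ denote the smallest $n\in\N$ such that $f^n(v_f)\in D_r[f]$; this is well-defined and finite, because $v_f$ lies in the immediate parabolic basin and, by Proposition~\ref{prop:cf}, every orbit in the basin eventually enters $D_r[f]$. Moreover the disk $D_r[f]$ is forward invariant under $f$, so $f^k(v_f)\in D_r[f]$ for every $k\geq n(f)$. Note also that all three data depend continuously on $f$ in the relevant senses: the direction $\alpha=\alpha(f)$ of the attracting axis is determined by the coefficient $c_f$ of the $z^2$ term, which varies continuously with $f\in\cal F$, so the disk $D_r[f]$ moves continuously in the Hausdorff sense; the iterate $f\mapsto f^k(v_f)$ is sequentially continuous for every fixed $k$ (as a consequence of Proposition~\ref{prop:conti} applied to the compositions, together with continuity of $f\mapsto v_f$).

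\medskip

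The plan is then to show that $n:\cal F\to\N$ is locally bounded. Fix $f_0\in\cal F$ and set $N=n(f_0)$. The point $w=f_0^N(v_{f_0})$ belongs to $D_r[f_0]$ and is nonzero (since $v_{f_0}\neq 0$ and $0$ is the only preimage of $0$ under $f_0$ in the basin), so $w$ lies at some positive Euclidean distance $\delta>0$ from $\partial D_r[f_0]$. By the continuous dependence above, for every $g\in\cal F$ sufficiently close to $f_0$ the point $g^N(v_g)$ is within $\delta/2$ of $w$ and $D_r[g]$ is $\delta/2$-close to $D_r[f_0]$ in the Hausdorff sense, so $g^N(v_g)\in D_r[g]$. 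By forward invariance of $D_r[g]$, we then have $g^k(v_g)\in D_r[g]$ for every $k\geq N$. Thus $n$ is bounded by $N$ on an open neighbourhood of $f_0$ in $\cal F$.

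\medskip

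This produces an open cover of $\cal F$ indexed by such local bounds. Since $\cal F$ is compact (as established before the lemma, via $\cal S$), a finite subcover exists; taking $n_0$ to be the maximum of the finitely many local bounds gives the uniform bound $f^{n_0}(v_f)\in D_r[f]$ for every $f\in\cal F$. The main subtlety — really the only delicate point — is to check that the disks $D_r[f]$ move continuously and that the relevant iterate lands in the \emph{interior}, not merely the closure, so that a genuine open-cover argument can be run; both follow easily from $f^{n(f)}(v_f)\neq 0$ and the explicit description of $D_r[f]$.
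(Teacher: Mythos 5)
Your approach — upper semicontinuity of the entry time plus compactness of $\cal F$ — is the same as the paper's. But there is one gap: you assert that $D_r[f]$ is forward invariant under $f$, and that step is needed twice (to go from ``some $n\leq N$ works'' to ``$N$ itself works'', both within a single $f$ and across the finite subcover). Forward invariance is only guaranteed by Proposition~\ref{prop:cf} for $r\leq r_0$; for larger $r$ the disk need not be an attracting petal, may not even lie in $\dom(f)$, and forward invariance can fail. Since the lemma is stated for arbitrary $r>0$, your proof does not cover all cases as written.

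The fix is the one the paper uses: set $r'=\min(r,r_0)$ and run your entire argument with $D_{r'}[f]$ in place of $D_r[f]$. Now $D_{r'}[f]$ genuinely is a forward-invariant petal, so the local-boundedness and subcover steps go through and give $n_0$ with $f^{n_0}(v_f)\in D_{r'}[f]$ for all $f\in\cal F$; since $D_{r'}[f]\subset D_r[f]$, the conclusion for $D_r[f]$ follows. With this one-line modification your proof is correct and matches the paper's.
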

\begin{proof}
Consider $r'=\min(r,r_0)$ where $r_0$ is provided by \Cref{prop:cf}. The set $D_{r'}[f]$ is an attracting petal for $f$ and is contained in $D_r[f]$.
For each $f\in \cal F$ it takes a finite number of iterates for $v_f$ to be trapped by $D_{r'}[f]$. The same number of iterates is enough for nearby\footnote{We may use the topology on $\cal F$, in which case it means that the same iterate is enough for all maps in a neighborhood. Or we may use the notion $\st$, in which case it means that for all sequence $f_n\st f$, this iterate is eventually enough.} maps in $\cal F$. By compactness\footnote{cover argument or sequence argument} of $\cal F$, it follows that there is $n_0\in\N$ such that $\forall f\in\cal F$, $\exists n\leq n_0$ such that $f_n(v_f)\in D_{r'}[f]$. Since $D_{r'}[f]$ is a trap this implies $f^{n_0}(z)\in D_{r'}[f]$ and thus $\in D_r[f]$.
\end{proof}

We will also use a slightly stronger statement:

\begin{lemma}\label{lem:traptime:2}
There exists $n_0\in\N$ and $\eta_0>0$ such that $\forall f\in\cal F$, $f^{n_0}(B(v_f,\eta_0))\in D_r[f]$.
\end{lemma}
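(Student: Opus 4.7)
The plan is to upgrade Lemma~\ref{lem:traptime} from a pointwise statement at $v_f$ to a uniform statement on a small ball around $v_f$, by combining it with the compactness of $\cal F$ and the joint continuity of the ingredients.

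First, apply Lemma~\ref{lem:traptime} to fix an integer $n_0$ such that $f^{n_0}(v_f)\in D_r[f]$ for every $f\in\cal F$; we keep this $n_0$ throughout. The following data depend continuously on $f\in\cal F$, with respect to the compact topology described just before Lemma~\ref{lem:traptime} (equivalent to uniform convergence on compacts of the associated Schlicht map $\phi$): the coefficient $c_f$ in the parabolic expansion, and hence the attracting direction $\alpha(f)$ and the open disk $D_r[f]$ in Hausdorff distance; the critical value $v_f$ (a continuous function of $\phi$); and, by composition, the holomorphic germ of $f^{n_0}$ at $v_f$ (note that $f^{n_0}$ is well defined on a neighborhood of $v_f$ since the orbit of $v_f$ stays inside $\dom f$ all the way to $D_r[f]$).

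Fix $f_0\in\cal F$. Since $f_0^{n_0}(v_{f_0})\in D_r[f_0]$ and the disk is open, joint continuity of $(f,z)\mapsto f^{n_0}(z)$ in a neighborhood of $(f_0,v_{f_0})$ together with Hausdorff continuity of $f\mapsto D_r[f]$ yields an open neighborhood $U(f_0)$ of $f_0$ in $\cal F$ and some $\eta(f_0)>0$ such that
\[ f^{n_0}\bigl(B(v_f,\eta(f_0))\bigr)\subset D_r[f]\qquad\text{for all }f\in U(f_0). \]
By compactness of $\cal F$, finitely many such neighborhoods $U(f_1),\ldots,U(f_k)$ cover $\cal F$; setting $\eta_0=\min_i\eta(f_i)>0$ yields the desired uniform statement. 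The argument is entirely a routine compactness/continuity exercise, so there is no substantial obstacle; an essentially equivalent sequential formulation would extract a convergent subsequence from a hypothetical counterexample $(f_k,z_k)$ with $z_k\in B(v_{f_k},1/k)$ and $f_k^{n_0}(z_k)\notin D_r[f_k]$, and pass to the limit to contradict Lemma~\ref{lem:traptime}.
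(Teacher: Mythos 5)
Your proof is correct and follows essentially the same compactness-plus-continuity strategy as the paper's one-line argument. The only organizational difference is that you fix the uniform $n_0$ from Lemma~\ref{lem:traptime} up front and then run compactness only on $\eta$, whereas the paper states its local statement for any $n$ that traps $v_f$ and implicitly combines the finitely many $n_i$ by using that $D_r[f]$ is forward-invariant; both work equally well.
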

\begin{proof}
Done by compactness as above, using the following modification of the local statement, which is immediate by continuity for $\st$ of $f\mapsto f^n$ for a fixed $n$: for each $f\in \cal F$ and each $n$ such that $f^n(v_f)\in D_r[f]$, there is $\eta>0$ such that for all maps $g\in \cal F$ close enough to $f$ the $n$-th iterate of $g$ sends $B(v_g,\eta)$ in $D_r[g]$.
\end{proof}

We have not checked if all compactness arguments in the rest of the article can be reformulated using $\st$ only. This is not the main point, however. Moreover, since there is on $\cal F$ a topology for which convergence of sequences is equivalent to $\st$, in the sequel we will use compactness of $\cal F$ for this topology and convergence of sequences in $\cal F$ w.r.t.\ this topology. Recall it is a metrizable topology for which $\cal F$ is compact.

Below, $d_\C$ refers to the Euclidean distance on $\C$ and if $U$ is a open subset of $\C$ whose complement has at least two points, $d_U$ denotes the hyperbolic distance on $U$. Let $\cal C=\cal C[f]$ be the curve introduced in \Cref{prop:ppalcurv}.

\begin{lemma}\label{lem:PSdist}
For $f\in\cal F$, let 
$PS(f)$ the orbit of the singular value $v_f$ of $f$. The following holds:
\begin{enumerate}
\item\label{item:psd:1} The sets $\cal C[f]$ and $\ov{PS}(f)$ depend continuously on $f$ for the Hausdorff topology on compact subsets of $\C$.
\item\label{item:psd:2} $\sup\setof{|z|}{z\in PS(f),\ f\in\cal F}<+\infty$
\item\label{item:psd:3} $\sup\setof{d_{\dom(f)}(0,z)}{z\in PS(f),\ f\in\cal F}<+\infty$
\item\label{item:psd:4} $\inf\setof{d_\C(z,\partial \dom(f))}{z\in PS(f),\ f\in\cal F}>0$
\end{enumerate}
\end{lemma}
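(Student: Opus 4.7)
The strategy is to combine the compactness of $\cal F$ with the continuous dependence (Proposition~\ref{prop:conti}) of Fatou coordinates and iterates on $f$, and to localize using the trapping-time lemmas (Lemmas~\ref{lem:traptime} and~\ref{lem:traptime:2}). I would prove the four items in the order (2), (3), (1), (4), each leaning on the previous. For item~(2), fix a small $r$ (smaller than the $r_0$ of Proposition~\ref{prop:cf} and than $1/8$). Lemma~\ref{lem:traptime} provides a uniform $n_0$ with $f^{n_0}(v_f)\in D_r[f]\subset B(0,r)$ for every $f\in\cal F$; forward invariance of $D_r[f]$ handles the tail $n\geq n_0$, while for each $n\leq n_0$ the continuous function $f\mapsto f^n(v_f)$ is bounded on the compact $\cal F$. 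Taking the maximum over these $n_0+1$ bounds yields~(2).

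For item~(3), note $\dom(f)=\phi(\D)$ with $\phi\in\cal S$, so Koebe's one-quarter theorem gives $\dom(f)\supset B(0,1/4)$. For $n\geq n_0$, $f^n(v_f)\in B(0,r)\subset B(0,1/4)$, hence $d_{\dom(f)}(0,f^n(v_f))\leq d_{B(0,1/4)}(0,f^n(v_f))$ is uniformly bounded. For $n\leq n_0$, the homeomorphism $\cal S\cong\cal F$ from Section~\ref{subsub:transff} ensures that $f\mapsto\phi^{-1}(f^n(v_f))\in\D$ is continuous on $\cal F$, so $d_\D(0,\phi^{-1}(f^n(v_f)))=d_{\dom(f)}(0,f^n(v_f))$ is bounded by compactness. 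Maximizing over the finitely many $n\leq n_0$ gives~(3).

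For item~(1), observe $\overline{PS}(f)=\{f^n(v_f):n\in\N\}\cup\{0\}$ since the orbit accumulates only at the parabolic fixed point. Hausdorff continuity then follows from continuous dependence of each iterate on $f$ together with uniform smallness of the tail: by choosing $r$ arbitrarily small in Lemma~\ref{lem:traptime:2}, given $\varepsilon>0$ one finds $N$ such that $f^n(v_f)\in B(0,\varepsilon)$ for all $n\geq N$ and all $f\in\cal F$. For $\cal C[f]$, parametrize it by $\sigma_f:[0,+\infty]\to\cal C[f]$, $t\mapsto (\Phi_\at[f]|_{O_\at[f]})^{-1}(v_h[f]+t)$, with $\sigma_f(+\infty)=0$. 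By Proposition~\ref{prop:conti}, continuous motion of the component $O_\at[f]$ (stable because it is characterized by containing the attracting petal $D_\at[f]$, which moves continuously by Proposition~\ref{prop:cf}), and continuity of $v_h[f]=\Phi_\at[f](v_f)$ in $f$, the map $(f,t)\mapsto\sigma_f(t)$ is continuous on the compact product $\cal F\times[0,+\infty]$; hence $f\mapsto\sigma_f([0,+\infty])=\cal C[f]$ is Hausdorff-continuous.

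Item~(4) is then a standard Koebe-distortion consequence of~(3). Writing $f=\cal R[B_d]\circ\phi^{-1}$ with $\phi\in\cal S$, by~(3) we get $d_\D(0,\phi^{-1}(z))\leq M'$ for $z\in PS(f)$, so $|\phi^{-1}(z)|\leq\rho:=\tanh(M')<1$. Koebe's distortion theorem for $\phi$ gives $|\phi'(\phi^{-1}(z))|\geq(1-\rho)/(1+\rho)^3$, and the classical estimate $d(\phi(w),\partial\phi(\D))\geq\tfrac{1}{4}(1-|w|^2)|\phi'(w)|$ then produces a positive lower bound on $d(z,\partial\dom(f))$ independent of $f$ and $z$. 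The main technical subtlety is in item~(1) for $\cal C$: one must check that $(\Phi_\at[f]|_{O_\at[f]})^{-1}$ varies continuously with $f$ up to and including $t=+\infty$. This uses the uniform asymptotics at the parabolic fixed point from Proposition~\ref{prop:cf}, which force $\sigma_f(t)\to 0$ uniformly in $f\in\cal F$ as $t\to+\infty$, so that the parametrization extends continuously to the compactified parameter space.
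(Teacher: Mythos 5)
Your proof is correct and follows the same overall strategy as the paper (compactness of $\cal F$ combined with the uniform trapping time from Lemma~\ref{lem:traptime} and the estimates of Proposition~\ref{prop:cf}), but differs in two places and is more complete in a third. For point~\eqref{item:psd:3}, the paper avoids compactness for the head of the orbit: it uses that $\Phi_\at$ is a biholomorphism from the component $O_\at$ onto the slit plane $O=\C\setminus\,]-\infty,v'-1]$, so the $\dom(f)$-hyperbolic distance from $X=\setof{f^m(v_f)}{m<n_0}$ to $B(0,1/8)$ is bounded by the hyperbolic distance in $O$ from $v'$ to $v'+n_0$, which is $<n_0$; you instead argue by continuity of $f\mapsto\phi^{-1}(f^n(v_f))\in\D$ and compactness of $\cal F$, which is also correct but requires a (standard, unstated) Carathéodory-type argument for the continuity of $\phi\mapsto\phi^{-1}$ evaluated at a moving point. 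For point~\eqref{item:psd:4}, the paper gives a direct compactness argument for $m<n_0$ and uses Koebe's $1/4$ theorem only for the tail $m\geq n_0$; your derivation of~\eqref{item:psd:4} from~\eqref{item:psd:3} via Koebe distortion and the boundary-distance estimate is a valid alternative and arguably more uniform in structure. On point~\eqref{item:psd:1}, the paper's printed proof only explicitly treats $\ov{PS}(f)$ and is silent on the Hausdorff continuity of $\cal C[f]$; your parametrization of $\cal C[f]$ over the compactified half-line $[0,+\infty]$, with uniform convergence $\sigma_f(t)\to 0$ as $t\to+\infty$ coming from Propositions~\ref{prop:cf} and~\ref{prop:cf2}, supplies the argument the paper omits. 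One small notational clash: you write $v_h[f]=\Phi_\at[f](v_f)$, but the paper reserves $v_h[f]$ for $\Psi_\rep[f](v_f)$; the quantity you want is the paper's $v'=\Phi_\at(v_f)$.
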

\begin{proof} 
Let us use the same notations as in \Cref{lem:traptime}.
For any $r\leq r_0$, denote $D_r=D_r[f]$: it is an attracting petal for $f$.
Let $n_0(r)=n_0$ be provided by \Cref{lem:traptime}.
For a fixed $m<n_0(r)$, $f^m(v_f)$ depends continuously on $f$. The rest of the orbit of $v_f$ is contained in $D_r$. Continuity of $\ov{PS}(f)=PS(f)\cup\{0\}$ follows, as well as the point~\ref{item:psd:2}.
For point~\ref{item:psd:4}, note that $B(0,1/4)\subset \dom(f)$ (this follows from Koebe's $1/4$ theorem). Choose now $r=\min(r_0,1/8)$.For each fixed $m< n_0=n_0(r)$, the distance from $f^m(v_f)$ to $\partial \dom(f)$ reaches a positive minimum as $f$ varies in $\cal F$, again by continuity and compactness. For $m\geq n_0$, this distance is $\geq 1/8$.
For point~\ref{item:psd:3} first note that, on one hand for $m \geq n_0$, $f^m(v)\in B(0,1/8)$ and thus $d_{\dom(f)}(0,f^m(v))\leq 1$ (better constants can be computed but that is not the point here). Let us now use the sets $O$ and $O_\at$ introduced in \Cref{prop:ppalcurv}.
The map $\Phi_\at$ is a holomorphic bijection from $O_\at$ to $O=\C\setminus\,]-\infty,v'-1]$ and the set $X=\setof{f^m(v)}{0\leq m<n_0}$ is the preimage by this map of $v' + \{0,1,\ldots,n_0-1\})$. Therefore the $\dom(f)$ hyperbolic distance from $X$ to $B(0,1/8)$ is $\leq$ the hyperbolic distance in $O$ from $v'$ to $v'+n_0$, which is itself $<n_0$.
\end{proof}

\begin{figure}
\begin{tikzpicture}
\node at (0,0) {\includegraphics[width=8cm]{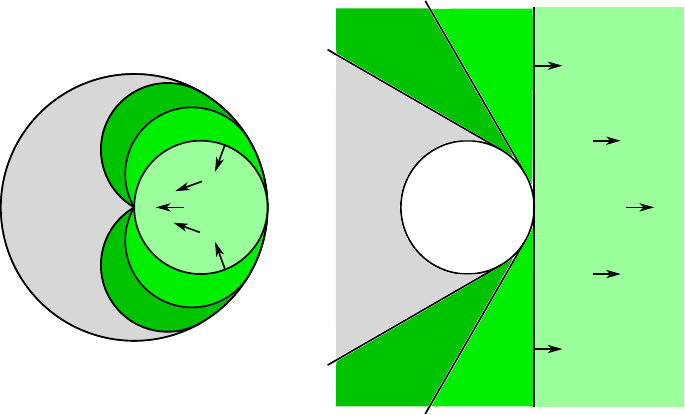}};
\node at (-2.5,-3) {$z$-coordinate, $D_\theta(r_0)$};
\node at (2.2,-3) {$u$-coordinate, $\Omega_\theta(r_0)$};
\end{tikzpicture}
\caption{Bigger domains for Fatou coordinates. On the left : $z$-coordinate and different domains $D_\theta(r_0)$ (in this example the attracting axis is the positive reals), $\theta-90\degree=0,30\degree,60\degree$, difference between these regions are highlighted in different colors. On the right, the $u$-coordinate, with $u=-1/c_f z$, and the corresponding domains $\Omega_\theta(R_0)$. In light green are $D_\at$ and $H_\at$.}
\label{fig:bigger_doms}
\end{figure}

\subsubsection{Lemmas for the second step}\label{lem:2ndstep}

Consider again a class of maps $\cal G$ as in \Cref{prop:cf}, i.e.\ $\cal G$ is a set of holomorphic maps $g:\D\to \C$ with $g(z)=z+c_g z^2+\ldots$, that is compact for the topology of local uniform convergence and such that $c_g$ is never $0$, i.e.\ $g$ has one attracting petal. In the second step of the proof of the main theorem, we will need some control on the variation of Fatou coordinates in terms of the variation of the map. For this we first need to extend Fatou coordinates to bigger petals, as in~\cite{S}.

Let $\theta\in[\pi/2,\pi]$ and $\Omega_\theta(r)$ denote the following domain: it contains a right half plane and is bounded by the arc of circle of center $0$, radius $r$ and argument ranging from $-(\theta-\pi/2)$ to $\theta-\pi/2$, and by the two half lines continuing this arc tangentially to the circle (see \Cref{fig:bigger_doms}). For $\theta=\pi/2$ and $r=R_0[g]=1/|c_g|r_0$, this domain is exactly the half plane, image of $D_\at$ in the $u$-coordinates of $g$.\nomenclature[Ztr]{$\Omega_\theta(R)$}{some domain in the coordinates $u=-1/c_f z$, extending the half plane on which we control the Fatou coordinates\nomrefpage}

\begin{lemma}\label{lem:wW}
There is $r_0$ such that for all $g\in \cal G$, the change of variable $w=u-\gamma_g \log_p u$ is injective on the non-convex set $\Omega_\pi(R_0[g])$.
\end{lemma}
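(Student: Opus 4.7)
The plan is to prove injectivity by a direct estimate: if $w(u_1)=w(u_2)$ with $u_1,u_2\in W_\pi(R_0[g])$, then $u_1-u_2=\gamma_g(\log_p u_1-\log_p u_2)$, and I will bound the right-hand side tightly enough that equality forces $u_1=u_2$, provided $r_0$ is small enough that $R_0=R_0[g]=1/(|c_g|r_0)$ exceeds uniform constants for all $g\in\cal G$.

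First I would record two elementary facts about $W_\pi(R_0)$. Inspecting the definition shows that any $u\in W_\pi(R_0)$ with $\Re u>0$ satisfies $|u|>R_0$, while any $u$ with $\Re u\le 0$ satisfies $|\Im u|>R_0$; in particular $W_\pi(R_0)\subset\{|u|>R_0\}$ and $W_\pi(R_0)\cap\,]{-\infty},0]=\emptyset$, so $\log_p$ is holomorphic on $W_\pi(R_0)$. For any two $u_1,u_2\in W_\pi(R_0)$, integrating $dr/r$ between $|u_2|$ and $|u_1|$ gives the modulus bound $\bigl|\log(|u_1|/|u_2|)\bigr|\le|u_1-u_2|/R_0$.

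Then I would split according to the size of $|\arg_p u_1-\arg_p u_2|$. Let $\alpha\in[0,\pi]$ denote the undirected angle at the origin between the rays through $u_1$ and $u_2$. The estimate $\bigl|u_1/|u_1|-u_2/|u_2|\bigr|\le 2|u_1-u_2|/R_0$ combined with the chord--angle inequality $|e^{i\alpha_1}-e^{i\alpha_2}|\ge(2/\pi)|\alpha_1-\alpha_2|$ (valid for $|\alpha_1-\alpha_2|\le\pi$) yields $\alpha\le\pi|u_1-u_2|/R_0$. In the \emph{short-arc case} $|\arg_p u_1-\arg_p u_2|\le\pi$ one has $|\arg_p u_1-\arg_p u_2|=\alpha$, hence $|\log_p u_1-\log_p u_2|\le(1+\pi)|u_1-u_2|/R_0$, and the relation $u_1-u_2=\gamma_g(\log_p u_1-\log_p u_2)$ forces $u_1=u_2$ as soon as $R_0>(1+\pi)|\gamma_g|$.

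The remaining \emph{long-arc case} $|\arg_p u_1-\arg_p u_2|>\pi$ is the main obstacle and is handled by combining the crude bound $|\arg_p u_1-\arg_p u_2|<2\pi$ with a geometric dichotomy. Analytically, the same relation yields $|u_1-u_2|(1-|\gamma_g|/R_0)\le 2\pi|\gamma_g|$, hence $|u_1-u_2|\le 4\pi\Gamma$ for $R_0>2\Gamma$, where $\Gamma=\sup_{g\in\cal G}|\gamma_g|$. Geometrically, $|\arg_p u_1-\arg_p u_2|>\pi$ forces $u_1$ and $u_2$ to lie in opposite open half planes; they cannot both sit in the right-half-plane piece $\{\Re u>0,\,|u|>R_0\}$ of $W_\pi(R_0)$ (since there $|\arg_p u_1-\arg_p u_2|<\pi$), so at least one of them lies in a horizontal strip $\{\Re u\le 0,\,|\Im u|>R_0\}$, forcing $|\Im u_1-\Im u_2|>R_0$ and hence $|u_1-u_2|>R_0$. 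Choosing $r_0$ small enough that $R_0\ge 1/(4Kr_0)>4\pi\Gamma$ uniformly in $g\in\cal G$ (using $|c_g|\le 4K$ from the proof of Proposition~\ref{prop:cf}) contradicts $|u_1-u_2|\le 4\pi\Gamma$, ruling out this case and concluding the proof.
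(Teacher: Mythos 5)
Your proof is correct and follows essentially the same route as the paper's: the same dichotomy on whether the argument difference is at most $\pi$ or exceeds $\pi$, the same linear Lipschitz bound on $\log_p$ in the short-arc case, and in the long-arc case the same crude $2\pi$ bound forcing $|u_1-u_2|\le 4\pi\Gamma$, contradicted by the geometric observation that one of the two points must lie in the $\{\Re u\le 0,\ |\Im u|>R_0\}$ region and the other in the opposite half plane, so $|u_1-u_2|>R_0$. The only differences are cosmetic (slightly different intermediate constants, and you spell out the final geometric step that the paper leaves partly implicit).
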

\begin{proof}Let us give a computational but elementary proof of this fact.
Write $u=re^{i\alpha}$ and $u'= r'e^{i\alpha'}$ with $\alpha,\alpha'\in\,]-\pi,\pi[$ and note that $r,r'\geq R_0[g]\geq 1/r_0\inf_{g\in \cal G}|c_g|$. Then
$|r-r'|\leq |u-u'|$ and $|e^{i\alpha}-e^{i\alpha'}| \leq |u-u'|/\min(r,r')$. If $|\alpha-\alpha'|\leq \pi$ (case 1) then $|\alpha-\alpha'|\leq (\pi/2)|e^{i\alpha}-e^{i\alpha'}|$. Otherwise (case 2), let us just use that $|\alpha-\alpha'|\leq 2\pi$.
Now $w=w'$ means $u-u'= \gamma_g(\log r' -\log r) + \gamma_g i(\alpha'-\alpha)$ whence (case 1) $|u-u'| \leq \frac{|\gamma_g|(1+\pi/2)}{\min(r,r')} |u-u'|$ therefore $u-u'= 0$ provided $r_0$ was chosen big enough (independently of $g$).
Or (case 2) $|u-u'| \leq \frac{|\gamma_g|}{\min(r,r')} |u-u'| +  2\pi|\gamma_g|$. In the second case, choose $r_0$ small enough (independently of $g$) so that $\frac{|\gamma_g|}{\min(r,r')} \leq 1/2$.
Then $|u-u'|\leq 4\pi|\gamma_g|$. Since $\alpha-\alpha'>\pi$ the points $u$ and $u'$ must have opposite imaginary part and one of them at least has negative real part. Since they belong to $\Omega_\pi(R_0[g])$, which does not contain the half strip of equation ``$\Re z\leq 0$ and $-R_0[g]\leq \Im z \leq R_0[g]$'', we get in particular that $|u-u'|>R_0[g]$. So if we choose $r_0$ small enough so that, $\forall g\in\cal G$, $R_0[g]>4\pi|\gamma_g|$, this is impossible.
\end{proof}

\begin{proposition}\label{prop:bigger} Let $\theta\in[\pi/2,\pi[$.
  \Cref{prop:cf} still holds if we replace $D_\at$ with the domain $D_\theta(r_0)[g]$ whose image in the $u$-coordinate is $\Omega_{\theta}(R_0[g])$ where $R_0[g]=1/|c_g|r_0$, and if we replace the condition on $\zeta$ by $\zeta(x)=-|x\tan(\theta-\pi/2)|+ o(x)$. Similar statements hold for repelling Fatou coordinates.
\end{proposition}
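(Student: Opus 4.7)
The plan is to re-run the proof of Proposition~\ref{prop:cf}, replacing the half-plane $H_\at=\{\Re u>R_0[g]\}$ by the larger teardrop-shaped domain $W_\theta(R_0[g])$ in $u$-coordinates. Only three points require genuine attention, the rest of the argument being unchanged. First, I will verify that $|u|\geq R_0$ throughout $W_\theta(R_0)$: on the arc portion of the boundary, $|u|=R_0$ by definition; on each tangent half-line, it holds because the tangent to a circle at a given point is perpendicular to the radius, so the arc endpoint is the closest point on that half-line to the origin. This lower bound is precisely what every estimate in the proof of Proposition~\ref{prop:cf} uses. Second, the change of variable $u\mapsto w=u-\gamma_g\log_p u$ must be injective on $W_\theta(R_0)$; this is already known from Lemma~\ref{lem:wW}, since $W_\theta(R_0)\subset W_\pi(R_0)$.

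The delicate point is to show that $W_\theta(R_0)$ is invariant under $u\mapsto u'$ and that every orbit eventually enters $H_\at$. Write $\beta=\theta-\pi/2\in[0,\pi/2)$. Recall $|u'-(u+1)|\leq M_1/|u|\leq M_1/R_0$, which is $\leq 1/4$ provided $r_0$ is small. The interior normal to each tangent half-line of $W_\theta(R_0)$ is the radial direction $e^{\pm i\beta}$, so its horizontal component is $\cos\beta>0$. Hence adding any complex number with real part $\geq 3/4$ and modulus $\leq 5/4$ strictly pushes points into $W_\theta(R_0)$ across the tangent lines, provided $r_0$ is taken small enough (depending on $\theta$) so that $(3/4)\cos\beta$ exceeds the error term. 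Across the arc portion, where $|u|=R_0$ and $\Re u\geq 0$, such a translation increases both $\Re u$ and $|u|$, keeping us outside the circle. So $W_\theta(R_0)$ is stable; moreover $\Re u_n$ grows at least linearly, so every orbit reaches $H_\at$ in finitely many steps.

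From there, the rest of the proof carries over verbatim: the estimate $|w'-(w+1)|\leq M_3/|u|^2$ uses only $|u|\geq R_0$, summing over an orbit gives $|\Phi-(\mu+w)|\leq 2M_4$ on all of $W_\theta(R_0)$ with $M_4$ independent of $g\in\cal G$, and one extends $\Phi$ backwards via $\Phi(z)=\Phi(g^n(z))-n$ for $n$ large. Injectivity follows from injectivity on $H_\at$, the functional equation $\Phi\circ g=T_1\circ\Phi$, and injectivity of $g$ on $D_\theta$ (which holds because $D_\theta\subset B(0,r_0)$ and $g$ is injective on small enough disks around $0$, uniformly in $g\in\cal G$). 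For the asymptotic form of $\zeta$: the two tangent half-lines of $W_\theta(R_0)$ go to infinity with directions $e^{\pm i(\beta+\pi/2)}$, of slope $-\tan\beta$ with respect to the imaginary axis; since $w-u=O(\log|u|)=o(|u|)$ and $\Phi-w=O(1)$, the images of these half-lines in the $\Phi$-plane have the same asymptotic slope, giving $\zeta(y)=-|y\tan(\theta-\pi/2)|+o(y)$ as $|y|\to\infty$. The repelling statement is treated analogously using a branch of $g^{-1}$ at $0$ and the mirror domain $-D_\theta(r_0)[g]$.

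The main obstacle is the invariance check, since $\cos\beta\to 0$ as $\theta\to\pi$: the radius $r_0$ needed to absorb the $O(1/R_0)$ error must shrink with $\theta$, which is precisely why $\theta=\pi$ is excluded from the statement. No single $r_0$ works uniformly in $\theta\in[\pi/2,\pi)$, but for each fixed $\theta$ in this range we can choose $r_0=r_0(\theta)>0$ that works for every $g\in\cal G$.
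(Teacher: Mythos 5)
Your proof matches the paper's plan on the three points you flag as delicate: the uniform lower bound $|u|\geq R_0$ on $W_\theta(R_0)$ (correctly explained via the tangent-perpendicular-to-radius observation), the injectivity of $w=u-\gamma_g\log_p u$ via Lemma~\ref{lem:wW}, and the invariance of $W_\theta(R_0)$ under $u\mapsto u'$ with the margin $\cos\beta=\sin\theta$ controlling how small $r_0$ must be (this is exactly what the paper means by ``the constant $1/4$ has to be replaced by a smaller constant (by $\sin\theta$)''). The remarks on injectivity of $\Phi$, the asymptotic slope $-\tan\beta$ of the image boundary, and the $\theta$-dependence of $r_0$ are also correct.

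However, there is a genuine gap where you assert that ``the rest of the proof carries over verbatim'' and in particular that ``summing over an orbit gives $|\Phi-(\mu+w)|\leq 2M_4$ on all of $W_\theta(R_0)$.'' The original bound on $\sum M_3/|u_n|^2$ in Proposition~\ref{prop:cf} used $\Re(u_n)\geq \Re(u_0)+\frac34 n$ together with $\Re(u_0)\geq 1/(r_0|c_g|)>0$, which fails on the new domain: deep along the tangent half-lines of $W_\theta(R_0)$ one has $\Re(u_0)$ large and negative. Replacing the bound by ``each term is $\leq M_3/R_0^2$'' does not suffice either, since the number of iterates with $\Re(u_n)$ in a bounded interval is not uniformly bounded over the whole domain (it depends on how deep in the tail $u_0$ lies). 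A genuinely new argument is needed here. The paper splits the orbit into three parts according to whether $\Re(u_n)$ lies in $\,]-\infty,-R_0[\,$, $[-R_0,R_0]$, or $\,]R_0,+\infty[\,$: the central part has a uniformly bounded number of terms (since $\Re u_n$ increases by a definite amount each step), while on the two unbounded parts one can use that $|u_n|$ grows linearly in $|\Re(u_n)|$ along the tails of $W_\theta$, giving a convergent sum exactly as before. You should supply this (or an equivalent) argument rather than appealing to the original computation, which breaks down here.
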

\begin{proof}
The proof carries over with little modification. The constant $1/4$ has to be replaced by a smaller constant (by $\sin \theta$) when $\theta$ is too close to $\pi$. Injectivity of the change of variable $w=u-\gamma_g \log_p u$ on the non-convex set $\Omega_\theta(R_0)$ follows from the previous lemma.
For the uniform bound on $\sum M_3/|u_n|^2$: divide the orbit of $u_n$ into three parts, according to $\Re(u_n)$ being in $]-\infty,-R_0[$, in $[-R_0,R_0]$, or in $]R_0,+\infty[$. In the central part, there is a uniformly bounded number of $u_n$. The two other parts are bounded exactly like before.
\end{proof}

In particular $\Phi_\at[g]$ is defined on a set containing the image of $\Omega_\theta(R_0[g])$ by $u\mapsto -1/c_g z$. Recall that $R_0[g]$ is bounded, hence the sets $\Omega_\theta(R_0[g])$ all contain $\Omega_\theta(R_0)$ for some $R_0$ independent from $g$.
 Choose any $\theta$ with $\pi/2<\theta<\pi$. Let
\[\Xi[g](w) = \Phi(z),\]
where $u=-1/c_g z$, $w=u-\gamma_g\log_p(u)$, and $\Phi$ is the attracting Fatou coordinate: we take $\Xi$ defined on the image of $\Omega_\theta(R_0[g])$ by $u\mapsto w$.
Note that this change of coordinates depends on $g$, but if one chooses any other $\theta'<\theta$, there exists $R'_2>0$ such that for all $g\in\cal G$, it contains $\Omega_{\theta'}(R'_2)$.
By \Cref{prop:bigger}, $\Xi[g](w)-w$ is bounded by a constant independent of $g$. Hence there exists $R_2>0$ such that for all $g\in\cal G$, the domain \emph{and} the range of $\Xi[g]$ contains $\Omega_{\theta'}(R_2)$.
Recall that maps in $\cal G$ are assumed to be defined on $\D$.

\medskip

The next three lemmas express a form of Lipschitz dependence with respect to $g$ for $\Xi[g]$, $\Xi[g]^{-1}$ and $\Phi[g]$.

\begin{proposition}\label{prop:dep}
Let $R_2$ as above. Let $r'\in ]0,1[$.
There exists $M>0$, $R_1>R_2$ and $\epsilon_0>0$ such that for all $f,g\in \cal G$ with $\sup_{B(0,r')}|f-g|\leq \epsilon_0$ then $\forall w\in\C$ with $w\in \Omega_{\theta'}(R_1)$,
$|\Xi[f](w)-\Xi[g](w)| \leq M \sup_{B(0,r')}|f-g|.$ (\footnote{A better bound holds, that decays when $w$ tends to infinity, but it will not be used here.})
\end{proposition}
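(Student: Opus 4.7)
The plan is to reduce the inequality to an application of Cauchy's inequality by embedding $f$ and $g$ in a one-parameter holomorphic family of parabolic germs.

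Write $h := g-f$ and $\epsilon := \sup_{B(0,r')}|f-g|$; we may assume $\epsilon > 0$. For $s\in\C$ set
\[ f_s := f + s\,h/\epsilon, \]
so that $f_0=f$ and $f_\epsilon=g$. Since $h(0)=h'(0)=0$, each $f_s$ fixes the origin with multiplier $1$, and $c_{f_s} = c_f + s(c_g-c_f)/\epsilon$. Cauchy's inequality applied to $h$ on $B(0,r')$ yields $|c_g-c_f| \leq \epsilon/r'^2$, and by compactness of $\cal G$ the quantity $|c_f|$ admits a uniform positive lower bound. There therefore exists $\rho>0$, depending only on $\cal G$ and $r'$ (and not on $f$, $g$, or $\epsilon$), such that $|c_{f_s}-c_f|\leq |c_f|/2$ for all $|s|\leq\rho$. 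I set $\epsilon_0:=\rho/2$.

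The first step is to apply Propositions~\ref{prop:cf}, \ref{prop:cfr}, and \ref{prop:bigger} uniformly to the family $\{f_s|_{B(0,r')} : f,g\in\cal G,\ |s|\leq\rho\}$. This family is relatively compact for local uniform convergence (via the uniform bound $\|f_s\|_{B(0,r')}\leq \sup_{\cal G}\|\cdot\|_{B(0,r')}+\rho$), each $f_s$ has a one-petal parabolic fixed point at $0$, and $|c_{f_s}|$ is uniformly bounded below. The proofs of the three propositions are local at the parabolic fixed point and carry over verbatim (with different absolute constants) to maps defined on any fixed disk around $0$ in place of $\D$. Applied to this enlarged family, they furnish $R_1\geq R_2$ and $M'>0$ such that, for every $|s|\leq\rho$, the extended Fatou coordinate $\Xi[f_s]$ is defined on $W_{\theta'}(R_1)$ and satisfies $|\Xi[f_s](w)-w|\leq M'$ there.

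For the second step, fix $w\in W_{\theta'}(R_1)$ and set $F(s) := \Xi[f_s](w) - w$. The coefficients of $f_s$ are affine in $s$, hence the change of coordinates $w_{f_s}(z) = -1/(c_{f_s}z) - \gamma_{f_s}\log_p(-1/(c_{f_s}z))$ and the induced $w$-dynamics depend holomorphically on $s$; for each $n$, the $n$-th iterate in this $w$-coordinate applied to $w$, minus $n$, is therefore holomorphic in $s$. Step one gives uniform convergence of these iterates on compact subsets of $|s|<\rho$ to $F(s)+w$, so $F$ is holomorphic on that disk and bounded by $M'$ there. Cauchy's inequality gives $|F'(s)|\leq 2M'/\rho$ for $|s|\leq\rho/2$, and integrating along the segment from $0$ to $\epsilon$ (with $\epsilon\leq\epsilon_0$) yields
\[ |\Xi[g](w)-\Xi[f](w)| = |F(\epsilon)-F(0)| \leq (2M'/\rho)\,\epsilon, \]
which is the proposition with $M := 2M'/\rho$.

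The principal obstacle is step one: verifying that the proofs of Propositions~\ref{prop:cf}--\ref{prop:bigger} furnish uniform constants for a compact family of parabolic germs defined only on a fixed small disk, and keeping track of the interdependence of the parameters ($r'$, the uniform lower bound on $|c|$, the choice of $\rho$, the resulting $R_1$ and $M'$) so that the final constant $M$ depends only on $\cal G$ and $r'$, not on the particular maps $f$, $g$ or on $\epsilon$. Everything else is routine once step one is in place.
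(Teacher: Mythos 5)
Your proof is correct and takes essentially the same route as the paper: embed $f$ and $g$ in a holomorphic one-parameter family, use compactness together with Propositions~\ref{prop:cf}, \ref{prop:bigger} (and the remark following them) to get a uniform bound $|\Xi[f_s](w)-w|\leq M'$ on $W_{\theta'}(R_1)$ for $|s|\leq\rho$, observe that $s\mapsto\Xi[f_s](w)$ is holomorphic (the iterates $w_n-n$ depend holomorphically on $s$ and converge uniformly), and conclude by an elementary estimate. The paper applies Schwarz's lemma directly to $t\mapsto\Xi[h_t](w)-\Xi[f](w)$, which vanishes at $t=0$ and is bounded on the disk, while you apply Cauchy's inequality to $F'$ and integrate along $[0,\epsilon]$; these two finishes are interchangeable and give the same kind of linear bound, so the arguments are essentially identical apart from the parameterization of the family and cosmetic details.
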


\begin{proof}
A trick to shorten the proof is to use holomorphic dependence of Fatou coordinates w.r.t.\ the map.
Let $\|f-g\| = \sup_{B(0,r')} |f-g|$.
Let $c_0=\inf|c_g|$ over all $g\in\cal G$.
Let first $\epsilon_0$ be such that the sum $h$ of a map in $\cal G$ with a holomorphic map defined on $B(0,r')$ and with a double root at the origin and sup norm $\leq 2\epsilon_0$, satisfies $|c_h|>c_0/2$.
Let $\cal G'$ be the union of $\cal G$ and of all the maps of the form $h_t=f+ t \frac{2\epsilon_0}{\|f-g\|} (g-f)$ where $t\in\D$, $f,g\in \cal G$ and $\|f-g\| \leq \epsilon_0$.
Then $\cal G'$ is compact (for the topology associated to uniform convergence on compact subsets of $B(0,r')$) and, conjugating its members by $z\mapsto z/r'$ and restricting to $\D$, gives a family satisfying the hypotheses of \Cref{prop:cf,prop:bigger}.
Using the latter and the same analysis as in the paragraph that follows it 
we see that maps $h\in\cal G'$ all have a function $\Xi[h]$ that is defined on a set containing $\Omega_{\theta'}(R_1)$ for some $R_1$ independent of $h$.
Moreover this function depends holomorphically on $t\in\D$ (recall the definition of $\Phi$ as a limit of $w_n-n$ and realize that $w_n$ depends holomorphically on $w_n$) and its difference with $w\mapsto w$ is uniformly bounded, hence $\Xi[h_t](w)-\Xi[f](w)$ is also bounded.
The proposition follows by Schwarz's inequality\footnote{We mean: if $f:\D\to \C$ is holomorphic and satisfies $f(0)=0$ and $\sup |f|<+\infty$ then $|f(z)|\leq |z|\sup |f|$.} applied to $t\mapsto \Xi[h_t](w)-\Xi[f](w)$, specialized to $t=\|f-g\|/2\epsilon_0$.
\end{proof}

Similarly, \Cref{prop:dep} holds word for word with $\Xi$ replaced by $\Xi^{-1}$, i.e.:

\begin{proposition}\label{prop:dep2}
There exists $M>0$, $R_1>R_2$ and $\epsilon_0>0$ such that for all $f,g\in \cal G$ with $\sup_{B(0,r')}|f-g|\leq \epsilon_0$ then $\forall Z\in\C$ with $Z\in \Omega_{\theta'}(R_1)$,
$|\Xi^{-1}[f](Z)-\Xi^{-1}[g](Z)| \leq M \sup_{B(0,r')}|f-g|.$
\end{proposition}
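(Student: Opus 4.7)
The plan is to mirror exactly the proof of Proposition~\ref{prop:dep}, substituting $\Xi^{-1}$ for $\Xi$ throughout, with a holomorphic dependence / Schwarz's lemma argument. As before, for $f,g\in\cal G$ with $\|f-g\|:=\sup_{B(0,r')}|f-g|\leq \epsilon_0$, I would form the one-parameter family $h_t=f+t\frac{\epsilon_0}{\|f-g\|}(g-f)$ for $t\in\D$, so $h_0=f$ and $h_{\|f-g\|/\epsilon_0}=g$. The family $\cal H$ consisting of $\cal G$ together with all such $h_t$ is again compact and, after the affine conjugacy by $z\mapsto z/r'$, satisfies the hypotheses of Propositions~\ref{prop:cf} and~\ref{prop:bigger} (possibly after shrinking $\epsilon_0$ so that $|c_h|>c_0/2$ for all $h\in\cal H$).

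Next I would choose $R_1\geq R_2$ large enough, independently of $h\in\cal H$, so that the image $\Xi[h](W_{\theta}(R_0[h]))$ contains $W_{\theta'}(R_1)$ for every $h\in\cal H$: this is exactly the ``domain \emph{and} range'' statement made immediately before Proposition~\ref{prop:dep}, applied to the compact family $\cal H$. Thus for every $\zeta\in W_{\theta'}(R_1)$ and every $h\in\cal H$, the preimage $\Xi^{-1}[h](\zeta)$ is well-defined.

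Then I would check holomorphic dependence in $t$: since $\Xi[h_t](w)$ depends holomorphically on $t$ (used already in Proposition~\ref{prop:dep}) and $\Xi[h]$ is injective on its domain with derivative close to $1$ (by the estimates of the proof of Proposition~\ref{prop:cf}, in particular $|\Xi'-1|\leq 1/4$ on the relevant region), the implicit function theorem yields that $t\mapsto \Xi^{-1}[h_t](\zeta)$ is holomorphic on $\D$ for each fixed $\zeta\in W_{\theta'}(R_1)$. A uniform bound comes from the same estimate $|\Xi[h](w)-w|\leq M_4$ used before: if $w=\Xi^{-1}[h](\zeta)$, then $|\Xi^{-1}[h](\zeta)-\zeta|=|w-\Xi[h](w)|\leq M_4$, whence $|\Xi^{-1}[h_t](\zeta)-\Xi^{-1}[f](\zeta)|\leq 2M_4$ for all $t\in\D$.

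Finally, Schwarz's lemma applied to the holomorphic map
\[\psi : t\longmapsto \Xi^{-1}[h_t](\zeta)-\Xi^{-1}[f](\zeta)\]
on $\D$, which vanishes at $t=0$ and is bounded in modulus by $2M_4$, gives $|\psi(t)|\leq 2M_4\,|t|$; evaluating at $t=\|f-g\|/\epsilon_0$ yields the claim with $M=2M_4/\epsilon_0$. The only subtle point compared to Proposition~\ref{prop:dep} is ensuring that $\zeta$ lies in the image of $\Xi[h_t]$ for \emph{all} $t\in\D$ simultaneously; this is handled by enlarging $R_1$ and invoking the extended-domain statement of Proposition~\ref{prop:bigger}, applied uniformly over the compact family $\cal H$, so it presents no real obstacle.
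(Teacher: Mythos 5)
Your proof is correct, but it takes a genuinely different route from the paper's. The paper deduces Proposition~\ref{prop:dep2} \emph{from} Proposition~\ref{prop:dep} by an elementary argument: it applies Proposition~\ref{prop:dep} on an intermediate domain $W_{\theta''}$ (with $\theta'<\theta''<\theta$) to bound $|\Xi[f](w)-\Xi[g](w)|$ at the point $w=\Xi^{-1}[f](\zeta)$, and then converts this into a bound on $|\Xi^{-1}[f](\zeta)-\Xi^{-1}[g](\zeta)|$ using that the derivative $\Xi'_g$ is uniformly bounded (and bounded away from zero, since $|\Xi'-1|\le 1/4$ on the relevant region, which follows from Cauchy's estimate and the uniform bound on $\Xi-\mathrm{id}$); in other words, it ``inverts'' the estimate of Proposition~\ref{prop:dep} through a uniform bi-Lipschitz control on $\Xi_g$. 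You instead re-run the entire holomorphic-interpolation plus Schwarz's-lemma machinery from scratch for $\Xi^{-1}$, using the implicit function theorem to get holomorphy of $t\mapsto\Xi^{-1}[h_t](\zeta)$ and the uniform bound $|\Xi^{-1}[h](\zeta)-\zeta|\le M_4$ to control the modulus. Your version is more self-contained and avoids invoking Proposition~\ref{prop:dep} at all, at the cost of redoing work; the paper's version is much shorter because it treats the inverse estimate as a formal consequence of the forward one plus a derivative bound. You do correctly identify and handle the one real subtlety of your approach, namely ensuring that $\zeta$ stays in the range of $\Xi[h_t]$ for every $t\in\D$ (by enlarging $R_1$ uniformly over the compact family $\cal H$ via Proposition~\ref{prop:bigger}).
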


\begin{proof}This follows from the above proposition applied to some $\theta''$ between $\theta$ and $\theta'$, and from the fact that, the derivative of $\Xi_g$ is uniformly bounded\footnote{increase $R_2$ by $1$ and use Cauchy's formula and the uniform bound on $\Xi-\on{id}$} over $g\in\cal G$. Computations are left to the reader.
\end{proof}

\begin{remark}
Note that since the maps $\Xi_g$ and $\Xi_g^{-1}$ all differ from identity by a bounded amount that is independent of $g\in\cal G$, it follows that in both propositions, by increasing the value of $M$, we can remove the assumption $\sup_{B(0,r')}|f-g|\leq \epsilon_0$.
\end{remark}

From \Cref{prop:dep}, we deduce the following control, which is somewhat weaker:
\begin{proposition}[variation of Fatou coordinates]\label{prop:mvtphirep}
Let $r'\in\,]0,1[$. Let $R_1$ be given by \Cref{prop:dep}.
Let $\theta''<\theta'$.
Then there exists $M>0$, $R_3>R_1$ and $\epsilon_0>0$ such that for all $f,g\in \cal G$ with $\sup_{B(0,r')}|f-g|\leq \epsilon_0$ and $\forall z\in\C$ with $-1/c_f z \in \Omega_{\theta''}(R_3)$, then $-1/c_g z\in \Omega_{\theta'}(R_1)$ and
\[\big|\Phi_\at[f](z)-\Phi_\at[g](z)\big| \leq \frac{M}{|z|}\sup_{B(0,r')}|f-g|.\]
The same holds for repelling Fatou coordinates.
\end{proposition}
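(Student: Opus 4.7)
\emph{Plan.} I will reduce the statement to the already established Propositions~\ref{prop:dep} and~\ref{prop:bigger} together with Cauchy estimates on the coefficients. Recall that for a map $g\in\cal G$ the normalized attracting Fatou coordinate factors as
\[
\Phi_\at[g](z)=\Xi[g]\big(w_g(z)\big),\qquad u_g(z)=-\tfrac{1}{c_g z},\quad w_g(z)=u_g(z)-\gamma_g\log_p u_g(z),
\]
where $\Xi[g]$ is the object introduced just before Proposition~\ref{prop:dep}. I will decompose
\[
\Phi_\at[f](z)-\Phi_\at[g](z)=\underbrace{\Xi[f](w_f)-\Xi[f](w_g)}_{(\mathrm{I})}+\underbrace{\Xi[f](w_g)-\Xi[g](w_g)}_{(\mathrm{II})}.
\]

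For $(\mathrm{II})$ I apply Proposition~\ref{prop:dep} directly: once $R_3$ is chosen large enough and $\epsilon_0$ small enough, the hypothesis $u_f\in W_{\theta''}(R_3)$ forces $u_g\in W_{\theta'}(R_3')$ (since $u_g/u_f=c_f/c_g$ is close to $1$) and in turn $w_g\in W_{\theta'}(R_1)$ (because $w-u=O(\log|u|)$ and $\theta''<\theta'$ leaves angular room). Proposition~\ref{prop:dep} yields $|(\mathrm{II})|\leq M\|f-g\|$ with $\|f-g\|:=\sup_{B(0,r')}|f-g|$; since $|u_f|\geq R_3$ implies $|z|\leq 1/(|c_f| R_3)$ is bounded above uniformly over $\cal G$, this is absorbed into a bound of the form $M'\|f-g\|/|z|$.

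For $(\mathrm{I})$ I combine a uniform bound on $|\Xi[f]'|$ on $W_{\theta'}(R_1)$ (obtained by Cauchy's inequality from the uniform bound $|\Xi-\mathrm{id}|\leq\mathrm{cst}$ in Propositions~\ref{prop:cf}--\ref{prop:bigger}) with a direct estimate of $|w_f-w_g|$. Writing
\[
w_f-w_g=\frac{c_g-c_f}{c_fc_g\,z}+(\gamma_g-\gamma_f)\log_p u_g+\gamma_f\bigl(\log_p u_g-\log_p u_f\bigr),
\]
Cauchy's inequalities applied on $B(0,r')$ give $|c_f-c_g|,\,|\gamma_f-\gamma_g|\leq C\|f-g\|$, and $\log_p u_g-\log_p u_f=\log_p(c_f/c_g)$ has modulus $\leq C\|f-g\|$ for $\epsilon_0$ small. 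The three summands are therefore bounded respectively by $C\|f-g\|/|z|$, $C\|f-g\|\log|u_g|$, and $C\|f-g\|$. Since $|u_g|\sim 1/|z|$ is large (because $u_f\in W_{\theta''}(R_3)$ and $c_g/c_f$ close to $1$), $\log|u_g|\leq |u_g|\leq C/|z|$ and the bounded term is $\leq C/|z|$ as well, yielding
\[
|w_f-w_g|\leq M\,\|f-g\|/|z|,
\]
hence $|(\mathrm{I})|\leq \sup|\Xi[f]'|\cdot|w_f-w_g|\leq M'\|f-g\|/|z|$. Adding the two bounds gives the claim. The repelling statement is obtained by applying the same argument to $g^{-1}$ (equivalently, to the analogue of $\Xi$ built from $\wt\Phi_\rep$).

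\emph{Main obstacle.} The only slightly delicate point is controlling the change-of-variable term $w_f-w_g$: both the coefficient $c_g$ and the correction $\gamma_g\log_p u_g$ depend on $g$, and the logarithm injects a growth factor $\log|u|$ which must be checked to be dominated by $|u|\sim 1/|z|$ uniformly in $g$. Choosing $R_3$ large enough (depending on the uniform bound $\Gamma=\sup_{\cal G}|\gamma_g|$) makes this automatic, and also ensures that $w_f,w_g$ both remain in the domain $W_{\theta'}(R_1)$ where Proposition~\ref{prop:dep} and the bound on $|\Xi[f]'|$ apply.
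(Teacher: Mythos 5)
Your proof is correct and is essentially the paper's own argument: you factor $\Phi_\at = \Xi\circ(u\mapsto w)$, insert the cross term $\Xi[f](w_g)$ (the paper inserts $\Xi[g](w_1)$ instead, a cosmetic variant of the same triangle inequality), control the $\Xi$-difference by Proposition~\ref{prop:dep}, the $w$-difference by the uniform bound on $\Xi'$, and bound $|w_f-w_g|$ via Lipschitz dependence of $c_g,\gamma_g$ on $g$ (Cauchy estimates). The only slip is a sign in $u_f-u_g=(c_f-c_g)/(c_fc_gz)$, which is immaterial after taking absolute values.
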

\begin{proof}
Let $d=\sup_{B(0,r')}|f-g|$.
The claim $-1/c_g z\in \Omega_{\theta'}(R_1)$ follows from continuity of $g\mapsto c_g$ and its non-vanishing:
given any $R_3>1$ and $\theta''<\theta'$, a small enough $d$ will ensure that the quotient $c_g/c_f$ is close enough to $1$ so that an element of $\Omega_{\theta''}(R_3)$ multiplied by $c_f/c_g$ is still in $\Omega_{\theta'}(R_1)$.
Now $\Phi_\at[f](z)=\Xi[f](w_1)$ and $\Phi_\at[g](z)=\Xi[g](w_2)$ with $w_1 = u_1 -\gamma[f] \log_p(u_1)$ and $w_2 = u_2 -\gamma[g] \log_p(u_2)$ with $u_1 = -1/c_f z$ and $u_2 = -1/c_g z$. The constants $c$, $1/c$ and $\gamma$ are Lipschitz functions of $f\in \cal G$ w.r.t.\ the distance $d$.
Now, under the assumption $d\leq \epsilon_0$, we successively get $|u_1-u_2|\leq M_1 d/|z|$, $|w_1-w_2|\leq M_2 d/|z|$ (use that $|\log_p u| \leq M_2'/|z|$ for some constant and that $u_2/u_1 = c_f/c_g$ is close enough to $1$), then we decompose $|\Xi[f](w_1)-\Xi[g](w_2)| \leq |\Xi[f](w_1)-\Xi[g](w_1)| + |\Xi[g](w_1)-\Xi[g](w_2)|$
 The first term is dealt with using \Cref{prop:dep} and the second term using the fact that there is a uniform bound on $\Xi'$.
\end{proof}

\begin{remark}
\begin{itemize}
\item Here, the condition $\sup_{B(0,r')}|f-g|\leq \epsilon_0$
cannot be removed.
\item Also, in the conclusion $|\Phi_\at[f](z)-\Phi_\at[g](z)\big|\leq \frac{M}{|z|}\sup_{B(0,r')}|f-g|$, the factor $1/|z|$ cannot be removed because $\Phi_\at[f](z)\sim -1/c_f z$ and $c_f $ varies with $f$.
\end{itemize}
\end{remark}

Let us stress again that, though maps in $\cal F$ are not defined on the unit disk, they are all defined in $B(0,1/4)$ and the results above easily transfer to $\cal F$ by a homothety. (See \Cref{subsub:transff}.)

\subsection{Step 1: contraction argument (i.e.\ there is a lot of room).}\label{subsec:part1}

Fix $d\in\N$ with $2\leq d<\infty$: we now exclude $d=+\infty$. In this section we will define constants $c_1$, $c_2$, \ldots\ 
They all depend on $d$ but not on $f\in \cal F$.

Recall that $\cal R[B_d]$ is defined on the unit disk and has derivative one at the origin. 
Recall the definition of the set $\sch$ of Schlicht maps: univalent holomorphic maps $\phi:\D\to \C$ such that $\phi(z)=z+\cal O(z^2)$.
Recall that $\cal F= \setof{\cal R[B_d]\circ\phi^{-1}}{\phi\in\sch}$, and that for all $f\in \cal F$, the map $\cal R[f]$ is again in $\cal F$, for an appropriate normalization. Since all maps in $\cal F$ have the same unique critical value, this normalization coincides with the one numbered~\ref{item:nor:3} on page~\pageref{item:nor:3}, which we called ``by the critical value''.

\begin{figure}
\begin{tikzpicture}
\node at (0,10) {\includegraphics[width=5cm]{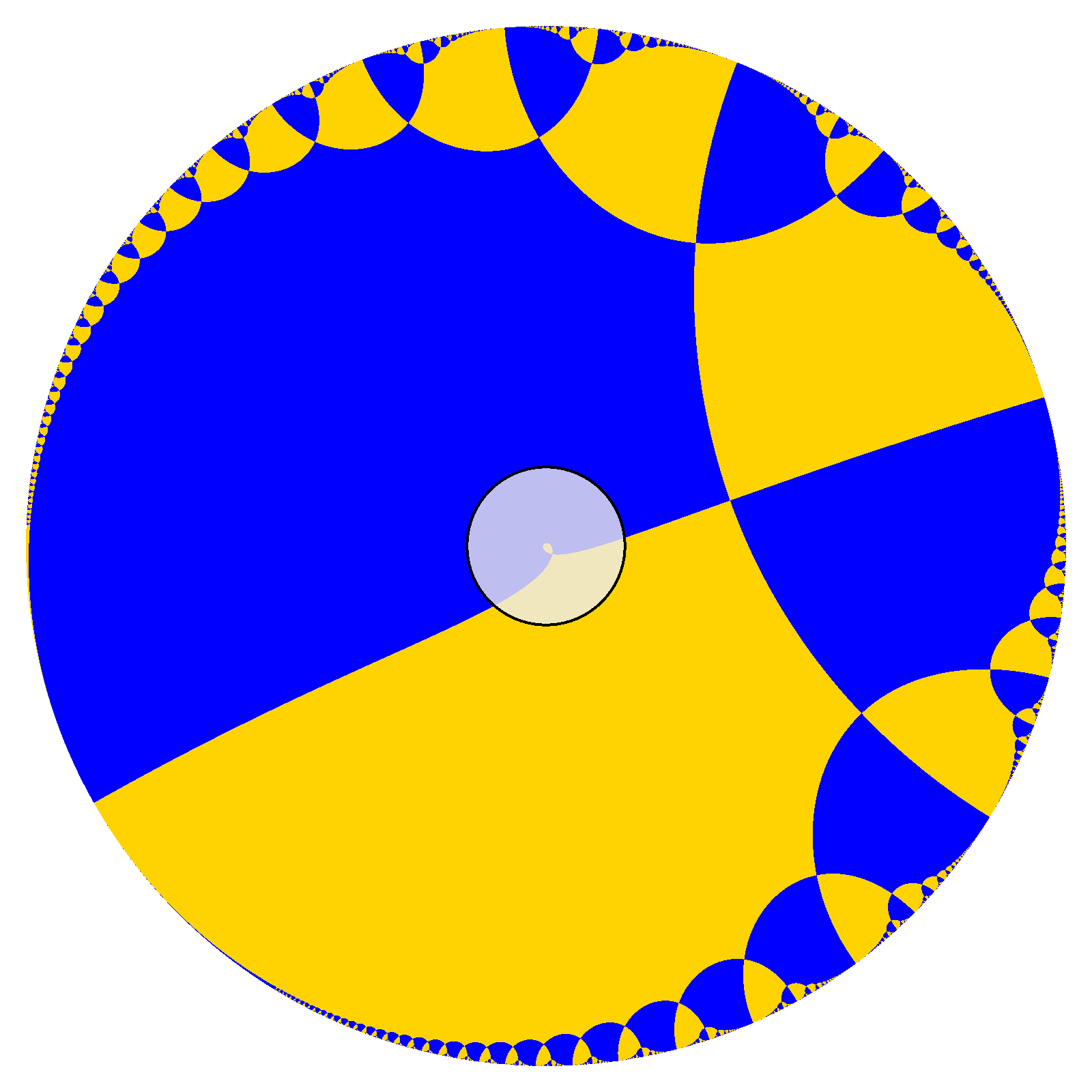}};
\node at (6,10) {\includegraphics[width=5cm]{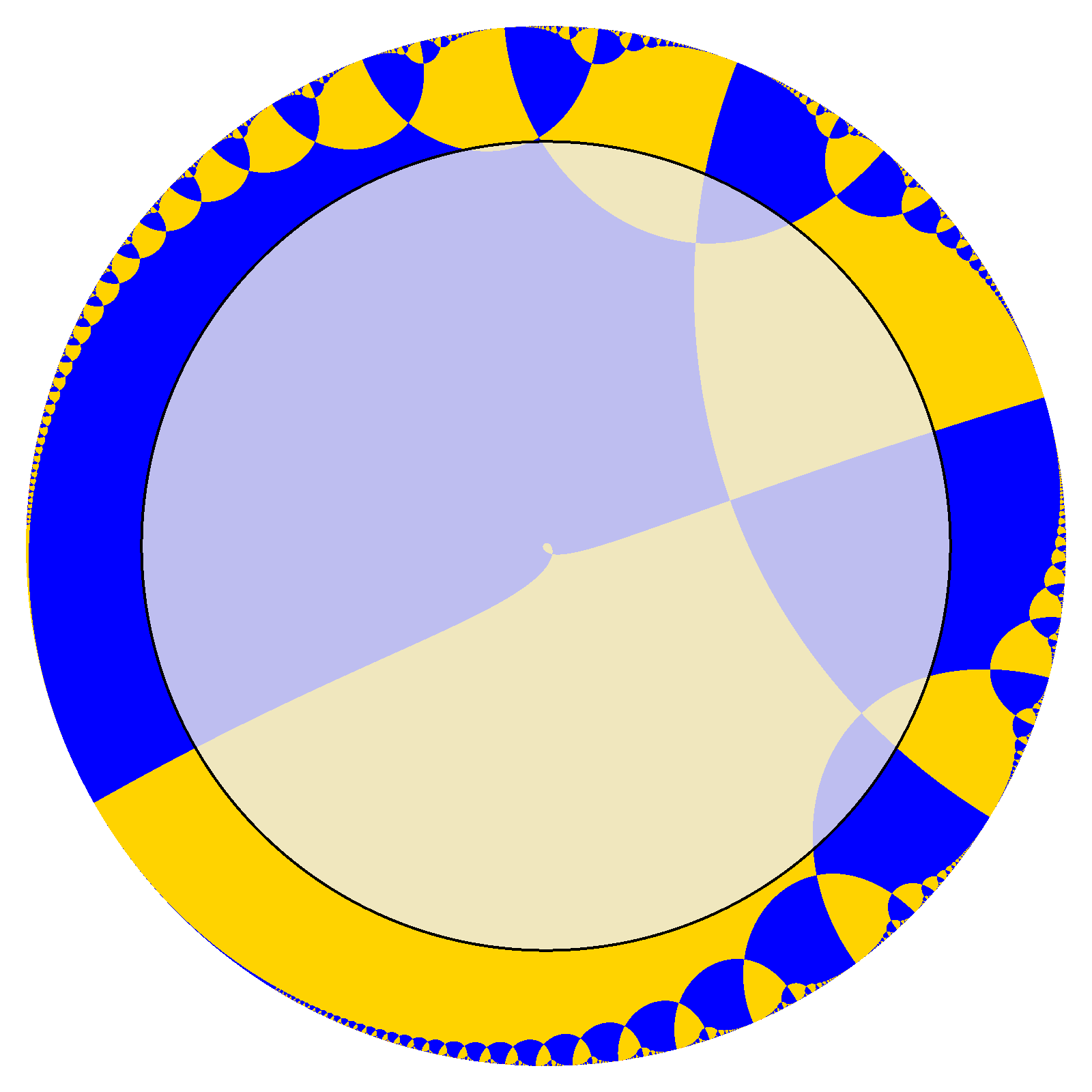}};
\node at (0,5) {\includegraphics[width=5cm]{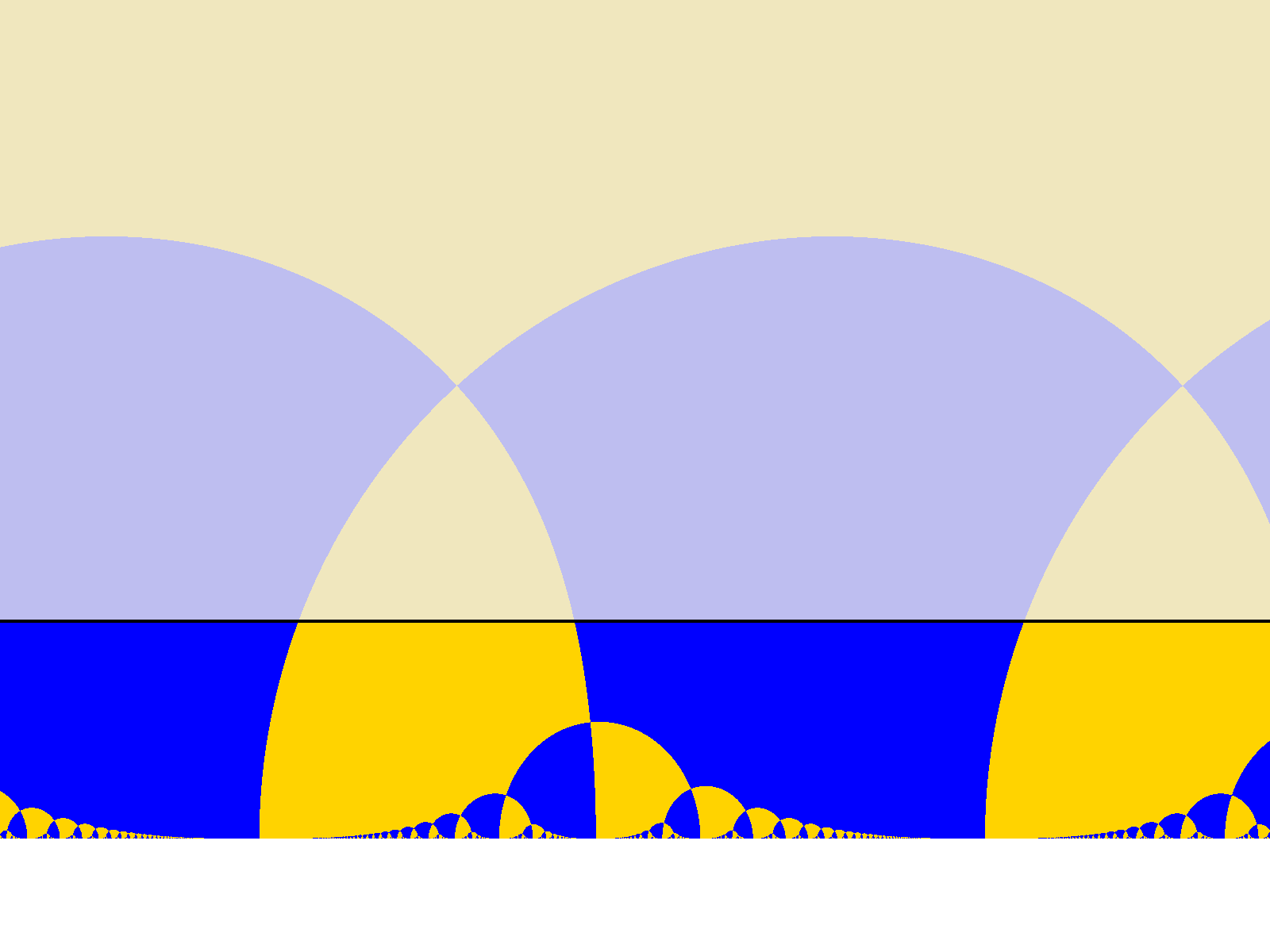}};
\node at (6,5) {\includegraphics[width=5cm]{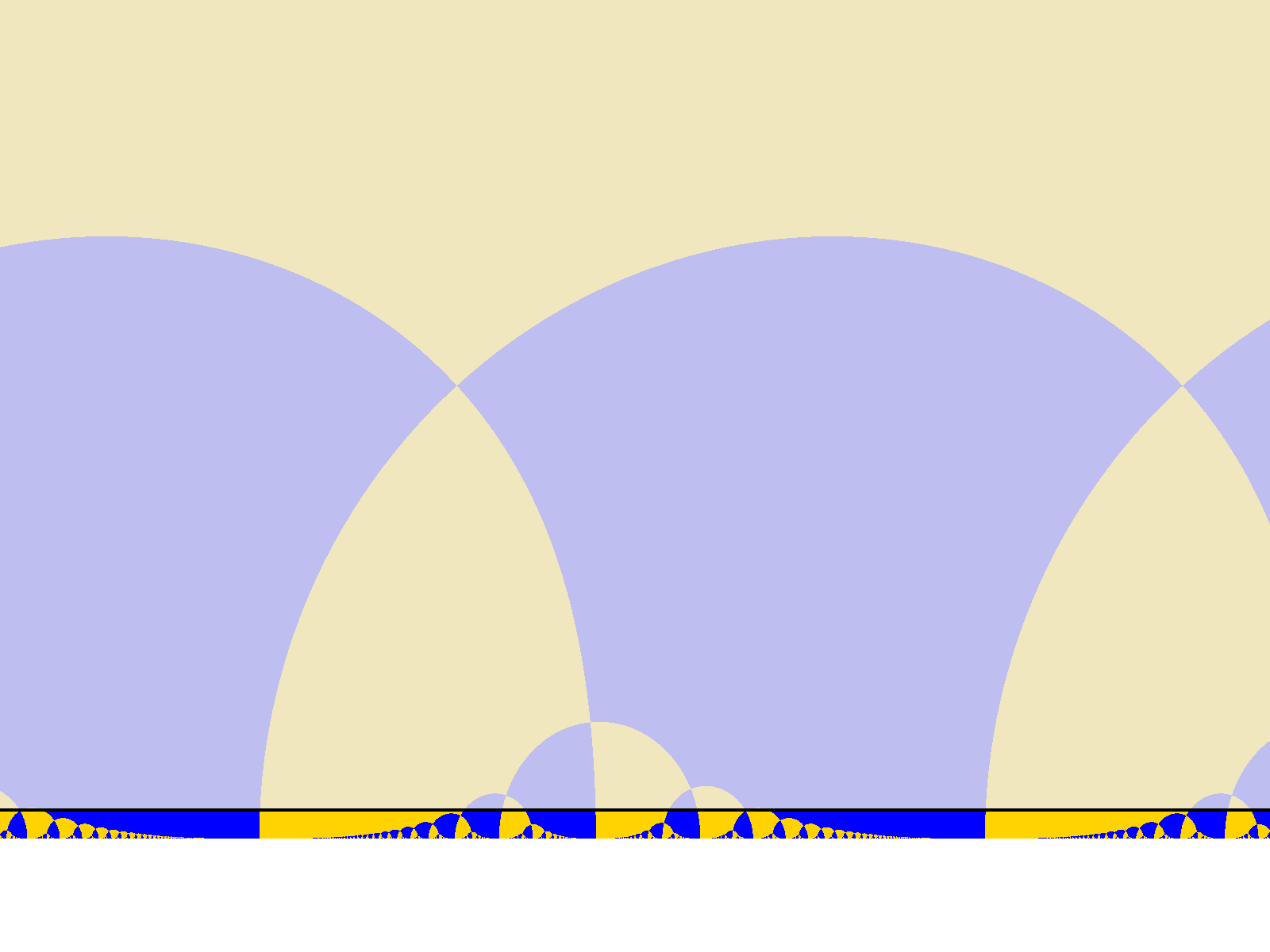}};
\node at (0,0) {\includegraphics[width=5cm]{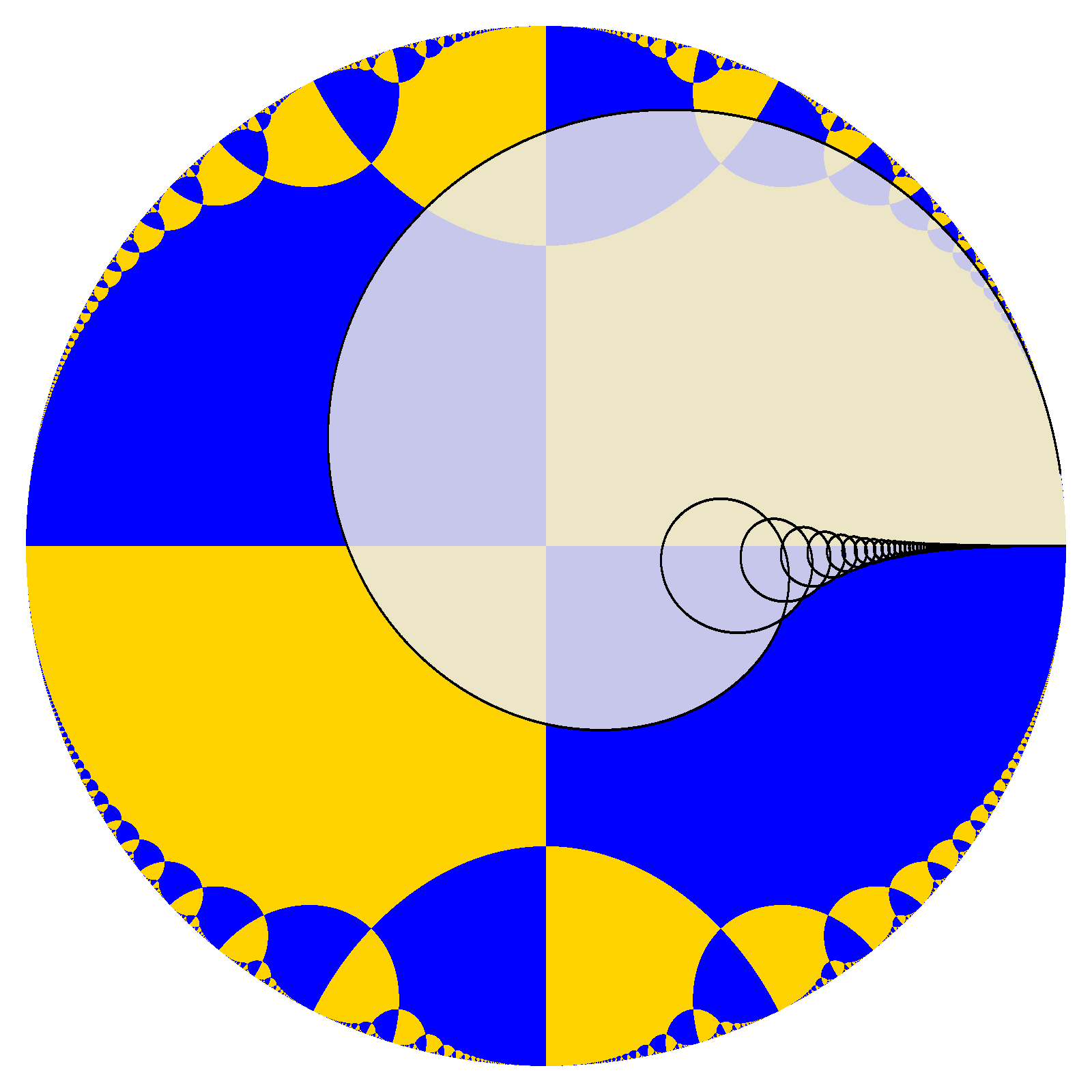}};
\node at (6,0) {\includegraphics[width=5cm]{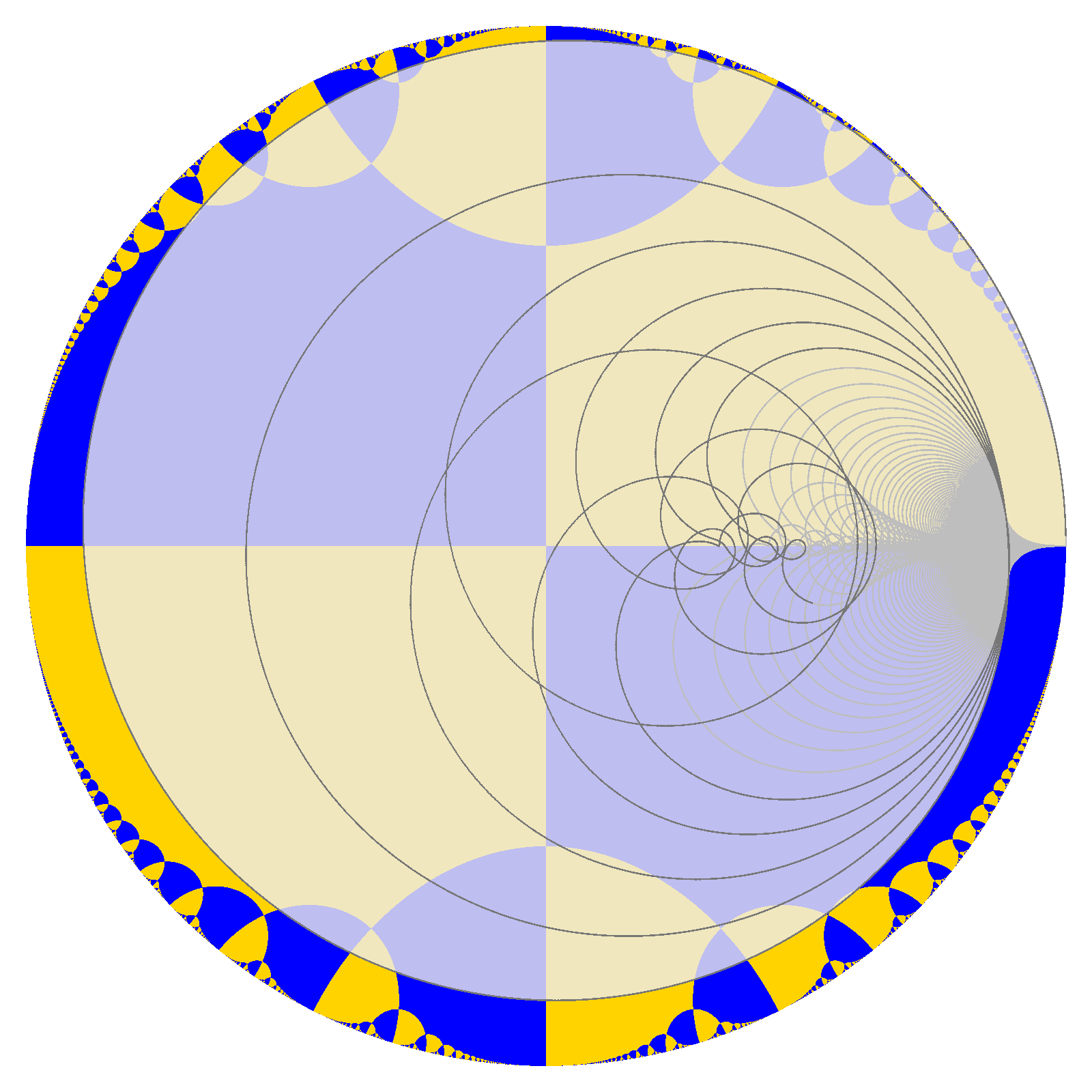}};
\path[->] (0,3.5) edge node[left] {$\Psi_\rep[B_d]$} (0,2.5);
\path[->] (6,3.5) edge node[left] {$\Psi_\rep[B_d]$} (6,2.5);
\path[->] (0,6.8) edge node[left] {$E$} (0,8);
\path[->] (6,6.8) edge node[left] {$E$} (6,8);
\end{tikzpicture}
\caption{The set $C[B_d]$ (bottom row, light tones) for $d=2$ and (left) $\epsilon \approx 0.85$ (this is quite high a value for an $\epsilon$) and (right) $\epsilon\approx 0.22$.}
\label{fig:D1}
\end{figure}

Let $f\in\cal F$: 
\[f=\cal R[B_d]\circ \phi_1^{-1}\]
where $\phi_1\in\sch$.
Denote\nomenclature[U1]{$U_1$}{domain of $f\in\cal F$\nomrefpage}
\[U_1 = \phi_1(\D) = \dom(f).\]
Let $L(\epsilon)$ be the hyperbolic radius in $\D$ of the Euclidean ball $B(0,1-\epsilon)$: 
\begin{equation}\label{eq:L}
L(\epsilon)=\tanh^{-1}(1-\epsilon) = \frac12\log\frac{2-\epsilon}{\epsilon}
.
\end{equation}
In particular
\[\frac{1}{2}\log\frac{1}{\epsilon} \leq L(\epsilon) \leq \frac{\log 2}2+\frac{1}{2}\log\frac{1}{\epsilon}
. \]
Since $\cal R[f]$ belongs to $\cal F$ (\Cref{thm:s1}), there exists $\phi_2\in\sch$ such that:
\[\cal R[f] = \cal R[B_d] \circ \phi_2^{-1}.\]
The map $\phi_2$ is an isomorphism from $\D$ to the domain of definition of $\cal R[f]$.

Denote by $A\subset U_1$ the immediate basin of the parabolic fixed point $0$ of $f$.
\nomenclature[Ab]{$A$}{immediate basin of the parabolic point of $f\in \cal F$\nomrefpage}
Let $U_u$ denote the connected component of $\dom(h[f])$ that contains an upper half plane. It is also equal to the connected component of $\Psi_\rep^{-1}(A)$ that contains an upper half plane.\nomenclature[Uu]{$U_u$}{upper component of $\Psi_\rep^{-1}(A)$, also of $\dom(h[f])$\nomrefpage}
Denote by $C\subset A$ the following set, which is the object under study in the present section:\phantomsection\label{here:C}
\[C=C[f]=\Psi_\rep\Big(\esub{U_u}{(1-\epsilon)}\Big)
\]
(The notation $\esub{}{}$ has been introduced in \Cref{subsec:morenot}).
We claim it can be rewritten as 
\[ C=\phi_3(C[B_d])\]
where $\phi_3:\D\to A$ is the conformal isomorphism conjugating $B_d$ to $f\big|_A$.
Indeed, according to the complement after \Cref{thm:shi2b}, $\phi_3 \circ \Psi_\rep[B_d] = \Psi_\rep[f] \circ \phi_4$ for some conformal map $\phi_4$ from $\H = U_u[B_d]$ to $U_u[f]$, commuting with $T_1$, thus $\phi_4(\esub{U_u[B_d]}{(1-\epsilon)}) = \esub{U_u[f]}{(1-\epsilon)}$.\nomenclature[C2]{$C$}{main object of study of \Cref{subsec:part1}\nomrefpage}

The set $C[B_d]$, which is equal to $\Psi_\rep[B_d](H(\epsilon))$ where $H(\epsilon)=E^{-1}(B(0,1-\epsilon))$ is the half plane defined by ``$\Im z>\frac{1}{2\pi}\log\left(\frac1{1-\epsilon}\right)$'', depends only on $d$ and $\epsilon$, not on $f$, and is forward invariant under $B_d$. 
\Cref{fig:D1} shows examples of sets $C[B_d]$.

In this section we will prove:
\begin{proposition}\label{prop:part:1}
 There exists $c, c'$ and $\xi>0$ (these constants depend on $d$) such that for all $\epsilon<\xi$, there exists $\epsilon'>0$ satisfying
\[\log \frac{1}{\epsilon'} \leq c' + c \log\left(1+\log\frac{1}{\epsilon}\right)\]
such that for all $f\in\cal F$,
\[ C\subset \sub{\dom(f)}{(1-\epsilon')}.
\]
\end{proposition}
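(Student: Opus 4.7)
By the complement to Theorem~\ref{thm:shi2}, there is a conformal isomorphism $\phi_0:U_u[f]\to U_u[B_d]$ commuting with $T_1$ such that $\zeta\circ\Psi_\rep[f]=\Psi_\rep[B_d]\circ\phi_0$, where $\zeta:A\to\D$ is the conjugacy of immediate basins. Since $\phi_0$ descends to a conformal isomorphism of $E(U_u[f])$ with $E(U_u[B_d])=\D$ fixing $0$, it carries $\esub{U_u[f]}{(1-\epsilon)}$ onto $\esub{U_u[B_d]}{(1-\epsilon)}=H(\epsilon):=\{\Im\zeta>h_0\}$ with $h_0=\frac{1}{2\pi}\log\frac{1}{1-\epsilon}$. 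Hence $C=\zeta^{-1}(C_d)$ with $C_d:=\Psi_\rep[B_d](H(\epsilon))\subset\D$ depending only on $d$ and $\epsilon$. Writing $f=\cal R[B_d]\circ\phi_1^{-1}$ with $\phi_1\in\cal S$, and setting $g=\phi_1^{-1}\circ\zeta^{-1}:\D\to\D$, the target inclusion is equivalent to $\sup_{w\in C_d}|g(w)|\leq 1-\epsilon'$. The problem is thus reduced to a statement about a universal set $C_d$ and a holomorphic self-map $g$ of the disk.

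The key property of $g$ is that it extends continuously at $1\in\partial\D$ (the parabolic point of $B_d$) with $g(1)=0$ (the parabolic point of $f$, an interior point of $\dom(f)$), and this correspondence is quantitatively controlled. I would combine three ingredients to make it so: the inverse Fatou coordinate asymptotic $\Psi(Z)\sim -1/(a_2 Z)$ of Proposition~\ref{prop:cf2} applied to both $B_d$ near $1$ and $f$ near $0$; the universality identity $\Phi_\at[B_d]\circ\zeta=\tau+\Phi_\at[f]$ from Corollary~\ref{cor:univ}, with $|\tau|$ bounded over $\cal F$ by Proposition~\ref{prop:cor}; and Koebe's distortion applied to $\phi_1^{-1}$ on $\dom(f)\supset B(0,1/4)$. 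Together these yield
\[|g(w)|\leq C_1|1-w| \qquad \text{for } w\in\D \text{ with } |1-w|\leq\delta_0,\]
with universal constants $C_1,\delta_0>0$.

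I would then analyze $C_d$ by splitting $H(\epsilon)$ into two regimes. For $\zeta$ with $\Im\zeta$ above some universal threshold $M$, Proposition~\ref{prop:butterfly} places $w=\Psi_\rep[B_d](\zeta)$ inside $\{|1-w|<\delta_0\}$ and Proposition~\ref{prop:cf2} gives $|1-w|\leq C_2/|\zeta|$; combined with the estimate above this provides a uniformly small bound on $|g(w)|$. For $\zeta$ in the remaining horizontal strip $\{h_0<\Im\zeta\leq M\}$, I would use the functional equation $\Psi_\rep(\zeta)=B_d^n(\Psi_\rep(\zeta-n))$ to move $\Re\zeta$ into a unit fundamental interval and bound $|g(\Psi_\rep(\zeta))|$ via a finite number of iterates of $B_d$, controlling the Euclidean distance of the corresponding point to the Julia set $\partial\D$ through the explicit parabolic expansion of $B_d$ at $1$, and then transferring the estimate to $\dom(f)$ by Koebe distortion along the orbit (whose bounded length is ensured by the structural chessboard and Lemma~\ref{lem:PSdist}).

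The main obstacle will be the second regime, and it is also where the double logarithm of the stated bound originates. One needs to convert the Fatou-coordinate constraint $\Im\zeta>h_0\sim\epsilon/(2\pi)$ into a Euclidean bound on $|1-\Psi_\rep[B_d](\zeta)|$, and the slow parabolic approach of orbits of $B_d$ at $1$ means that a natural bound has the form $|1-\Psi_\rep(\zeta)|\gtrsim 1/N$ where $N\sim 1/h_0\sim 1/\epsilon$ is the iterate count required to bring $\Psi_\rep(\zeta-n)$ into the attracting petal. Running this through the universal estimate of the second paragraph and the Schlicht transfer via $\phi_1^{-1}$ yields a lower bound for $1-|g(w)|$ whose logarithm is of order $\log(1+\log(1/\epsilon))$, giving the announced relation $\log(1/\epsilon')\leq c'+c\log(1+\log(1/\epsilon))$ with $c,c'$ and the threshold $\xi$ depending on $d$ but not on $f\in\cal F$.
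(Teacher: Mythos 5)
Your reduction of the inclusion to the statement $\sup_{w\in C_d}|g(w)|\leq 1-\epsilon'$ for the holomorphic self-map $g=\phi_1^{-1}\circ\zeta^{-1}:\D\to\D$ is correct, and the Lipschitz estimate $|g(w)|\leq C_1|1-w|$ near the parabolic point $1\in\partial\D$ can indeed be extracted from the Fatou-coordinate asymptotics together with compactness of $\cal F$. But the second regime is where the real difficulty lies, and your sketch there contains a genuine gap.

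Two problems. First, the orbit you propose to control does not have bounded length. For $\zeta$ in the strip $h_0<\Im\zeta\leq M$ with $\Im\zeta$ close to $h_0\sim\epsilon$, the iterate count $n$ needed to bring $\Psi_\rep[B_d](\zeta-n)$ into a fundamental crescent is of order $1/\Im\zeta\sim 1/\epsilon$, not bounded over $\cal F$ or over $\epsilon$. Neither Lemma~\ref{lem:PSdist} nor the chessboard ensures such a bound; Lemma~\ref{lem:PSdist} bounds only the post-critical orbit. Second, and more fundamentally, the transfer from $B_d$ to $f$ via ``Koebe distortion along the orbit'' goes the wrong way: forward iteration of $f$ \emph{expands} the $\dom(f)$-hyperbolic metric (since $f$ is a cover from $f^{-1}(\dom f)\subsetneq\dom f$ to $\dom f$), so knowing the position of $\zeta^{-1}(\Psi_\rep(\zeta_0))$ in $\dom(f)$ for $\zeta_0$ in a fundamental interval gives no upper bound on the position of $f^n(\zeta^{-1}(\Psi_\rep(\zeta_0)))$. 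Koebe distortion of $\phi_1^{-1}$ on $B(0,1/4)$ is likewise of no help: the obstruction is precisely the behavior of $\phi_1^{-1}$ near $\partial\D$, where the basin $A$ accumulates on $\partial\dom(f)$ and the distortion is uncontrolled.

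What your sketch is missing is the content of Lemma~\ref{lem:bddchainlenght} and Lemma~\ref{lem:am}. The crucial geometric fact is that the immediate basin $A$ visits only cuboxes at bounded combinatorial distance from the cubox $b_*$ containing the origin. Because of this, any path in $A$ of $A$-hyperbolic length $\leq c_2+L(\epsilon)$ (equivalently of box-Euclidean length $\cal O(L(\epsilon))$) can be cut into a bounded number of pieces, each inside a single cubox, and Lemma~\ref{lem:arcs} gives a \emph{logarithmic} relation between box-Euclidean length and $U_1^*$-hyperbolic length inside a single cubox. This is where the $\log(1+\log(1/\epsilon))$ comes from; it is a consequence of the combinatorial boundedness of the basin inside the chessboard, not of the parabolic expansion of $B_d$ at $1$. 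Your route through the model map $B_d$ loses exactly this information, because Euclidean distance to $\partial\D$ in the model says nothing about the position of the corresponding point relative to $\partial\dom(f)$ once it leaves a neighborhood of the parabolic point.
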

Above, the set $C=C[f]$ depends also on $\epsilon$ but we did not figure it in the notation, to avoid clutter. The notation $\sub{U}{r}$ has been introduced in \Cref{subsec:morenot}.

We begin with an easy lemma (recall $L$ was defined near \Cref{eq:L}):
\begin{lemma}\label{lem:d1}
The set $C[B_d]$ is contained within hyperbolic $\D$-distance $\leq c_2+L(\epsilon)$ of the upper main chessboard box of $B_d$.
\end{lemma}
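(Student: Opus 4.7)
The plan is to reduce this to a Schwarz–Pick argument in the upper half plane. First I will identify the relevant objects for $B_d$ specifically. Since $B_d$ commutes with $z\mapsto \bar z$, its normalized extended horn map $h_\nor[B_d]$ is defined on $\C\setminus\R$; hence the upper component $U_u$ of $\dom(h_\nor[B_d])$ equals the full upper half plane $\H$, and $\Psi_\rep[B_d]\colon\H\to\D$ is holomorphic (since the immediate basin $A$ for $B_d$ is $\D$). By item~(3) of Proposition~\ref{prop:cor}, there is a constant $M=M_d>0$ such that the main upper chessboard box of $h_\nor[B_d]$ contains the half plane $\{\Im z>M\}$, so the main upper chessboard box of $B_d$ contains $\Psi_\rep[B_d]\bigl(\{\Im z>M\}\bigr)$.

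Next, I will take an arbitrary $z\in C[B_d]$ and write $z=\Psi_\rep[B_d](\zeta)$ with $\zeta\in H(\epsilon)$, i.e.\ $\Im\zeta>y_\epsilon:=\tfrac{1}{2\pi}\log\tfrac{1}{1-\epsilon}$. I define a comparison point $\zeta'=\zeta+it$ with $t=\max(0,M+1-\Im\zeta)\geq 0$, so that $\Im\zeta'\geq M+1$ and $\Psi_\rep[B_d](\zeta')$ lies in the main upper chessboard box of $B_d$. Integrating the hyperbolic metric $|dz|/(2\Im z)$ along the vertical segment from $\zeta$ to $\zeta'$ gives
\[
d_\H(\zeta,\zeta')\;\leq\;\tfrac12\log\frac{M+1}{\min(\Im\zeta,\,M+1)}\;\leq\;\tfrac12\log\frac{M+1}{\min(y_\epsilon,\,M+1)}.
\]
Schwarz--Pick applied to $\Psi_\rep[B_d]\colon\H\to\D$ then yields $d_\D(z,\Psi_\rep[B_d](\zeta'))\leq d_\H(\zeta,\zeta')$.

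It remains to convert the bound into the announced form $c_2+L(\epsilon)$. For $\epsilon\geq 1/2$, $y_\epsilon\geq (\log 2)/(2\pi)$ is bounded below by an absolute constant, so $d_\H(\zeta,\zeta')$ is bounded by a constant $c_2$ depending only on $d$. For $\epsilon<1/2$ I use the elementary inequality $\log\tfrac{1}{1-\epsilon}\geq \epsilon$, giving $y_\epsilon\geq \epsilon/(2\pi)$, whence
\[
d_\H(\zeta,\zeta')\;\leq\;\tfrac12\log\bigl(2\pi(M+1)\bigr)+\tfrac12\log\tfrac1\epsilon\;\leq\;c_2+L(\epsilon),
\]
using $L(\epsilon)\geq \tfrac12\log(1/\epsilon)$. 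Combined with the trivial case $\Im\zeta\geq M+1$ (where $z$ already lies in the main upper box), this proves the lemma. The main (minor) obstacle is just the bookkeeping of constants to absorb the case $\Im\zeta\leq M+1$ and the two regimes of $\epsilon$ into a single additive constant $c_2$; no genuine difficulty arises since $H(\epsilon)\subset \H=U_u$ automatically for $B_d$.
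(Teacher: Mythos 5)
Your argument is essentially the same as the paper's: identify the upper main chessboard box of $B_d$ as the image under $\Psi_\rep[B_d]$ of a set containing a half plane $\{\Im z>M_d\}$, push a point of $H(\epsilon)$ vertically up into that half plane, bound the $\H$-hyperbolic length of the vertical segment, and apply Schwarz--Pick along $\Psi_\rep[B_d]\colon\H\to\D$. One small inaccuracy: Proposition~\ref{prop:cor}(3) is stated for $f\in\cal F$, and $B_d\notin\cal F$ (it is a Blaschke product, not a map $\cal R[B_d]\circ\phi^{-1}$), so the citation is not literally applicable; however the needed fact for $h_\nor[B_d]$ follows directly from Proposition~\ref{prop:butterfly} (or from the explicit description of $h_\nor[B_d]$ on $\C\setminus\R$), and the paper's own proof simply asserts it without citation, so the argument is unaffected.
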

\begin{proof} The upper chessboard box of $B_d$ is the image by $\Psi_\rep[B_d]$ of an open set that contains a half plane $``\Im(z)>M_d"$ and is contained in another half plane strictly smaller that $\H$. 
Recall that $C[B_d]=\Psi_\rep[B_d](H(\epsilon))$ with $H(\epsilon) = ``\Im(z)>\frac{1}{2\pi}\log\left(\frac{1}{1-\epsilon}\right)"$.
For $\epsilon$ big, $H(\epsilon)\subset``\Im(z)>M_d"$. For other values of $\epsilon$, every point in $H(\epsilon)$ can be joined to $``\Im(z)>M_d"$ by a vertical segment of hyperbolic length in $\H$ at most $\frac{1}{2}\left(\log M_d-\log \frac{\log\frac{1}{1-\epsilon}}{2\pi}\right)$.
Since $\Psi_\rep[B_d]:\H\to\D$ contracts hyperbolic metrics and $\frac{1}{2}\log\frac{1}{\log\frac{1}{1-\epsilon}} \leq \frac{1}{2}\log \frac{1}{\epsilon}\leq L(\epsilon)$, the lemma follows.
\end{proof}

\begin{figure}
\begin{tikzpicture}
 \node at (0,11) {\includegraphics[width=5cm]{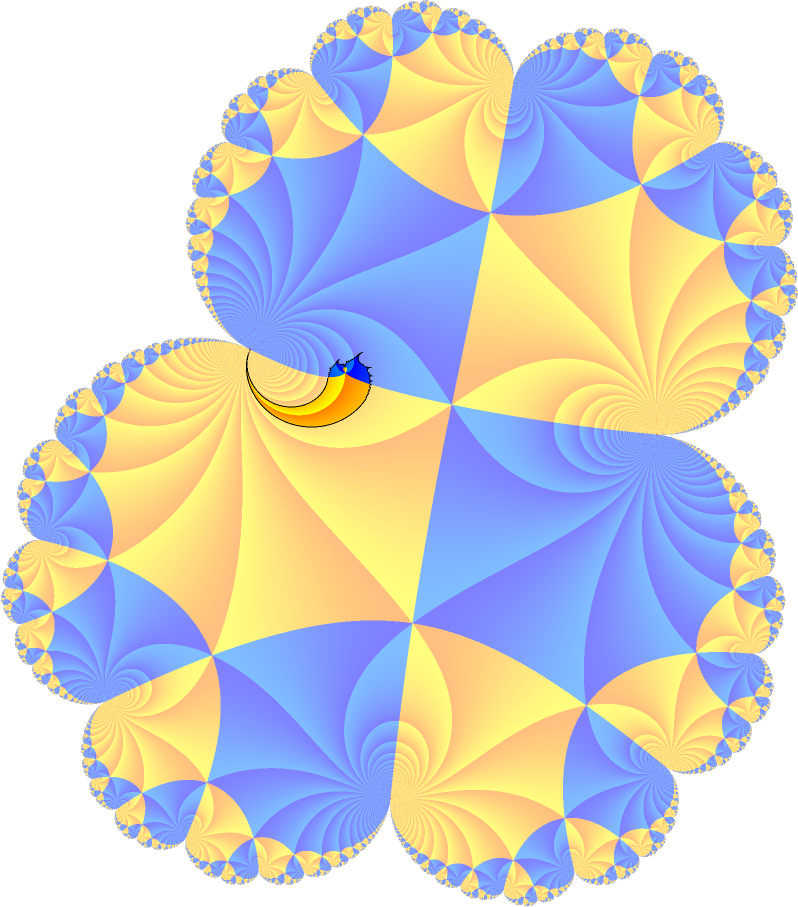}};
\node at (5.5,11) {\includegraphics[width=5cm]{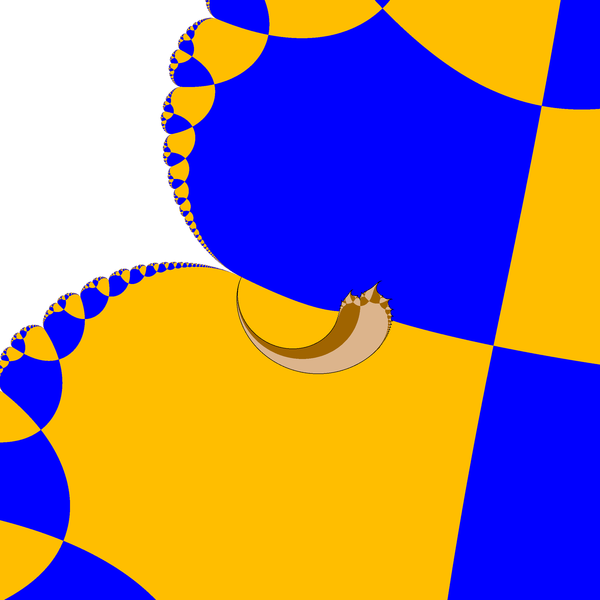}};
\node at (0,5.5) {\includegraphics[width=5cm]{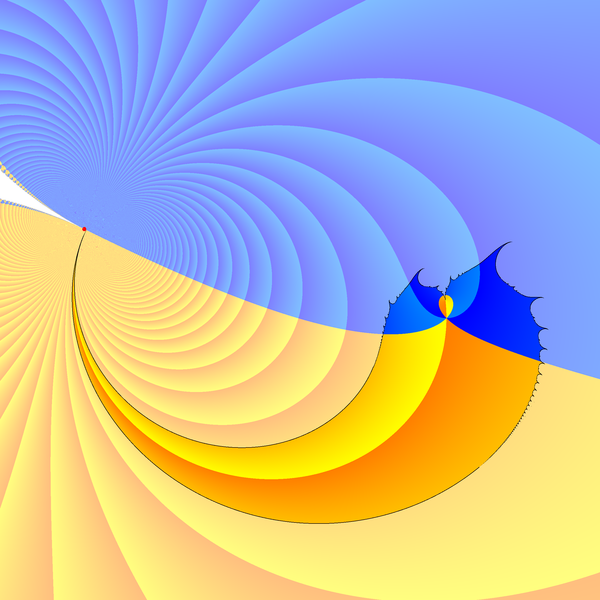}};
\node at (5.5,5.5) {\includegraphics[width=5cm]{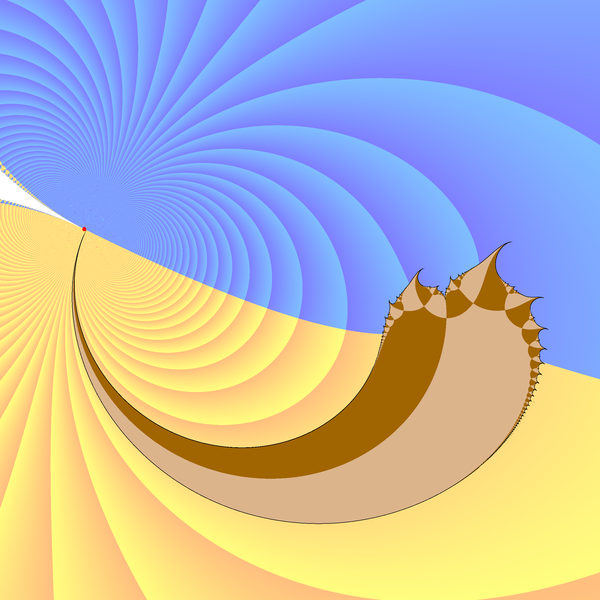}};
\node at (0,0) {\includegraphics[width=5cm]{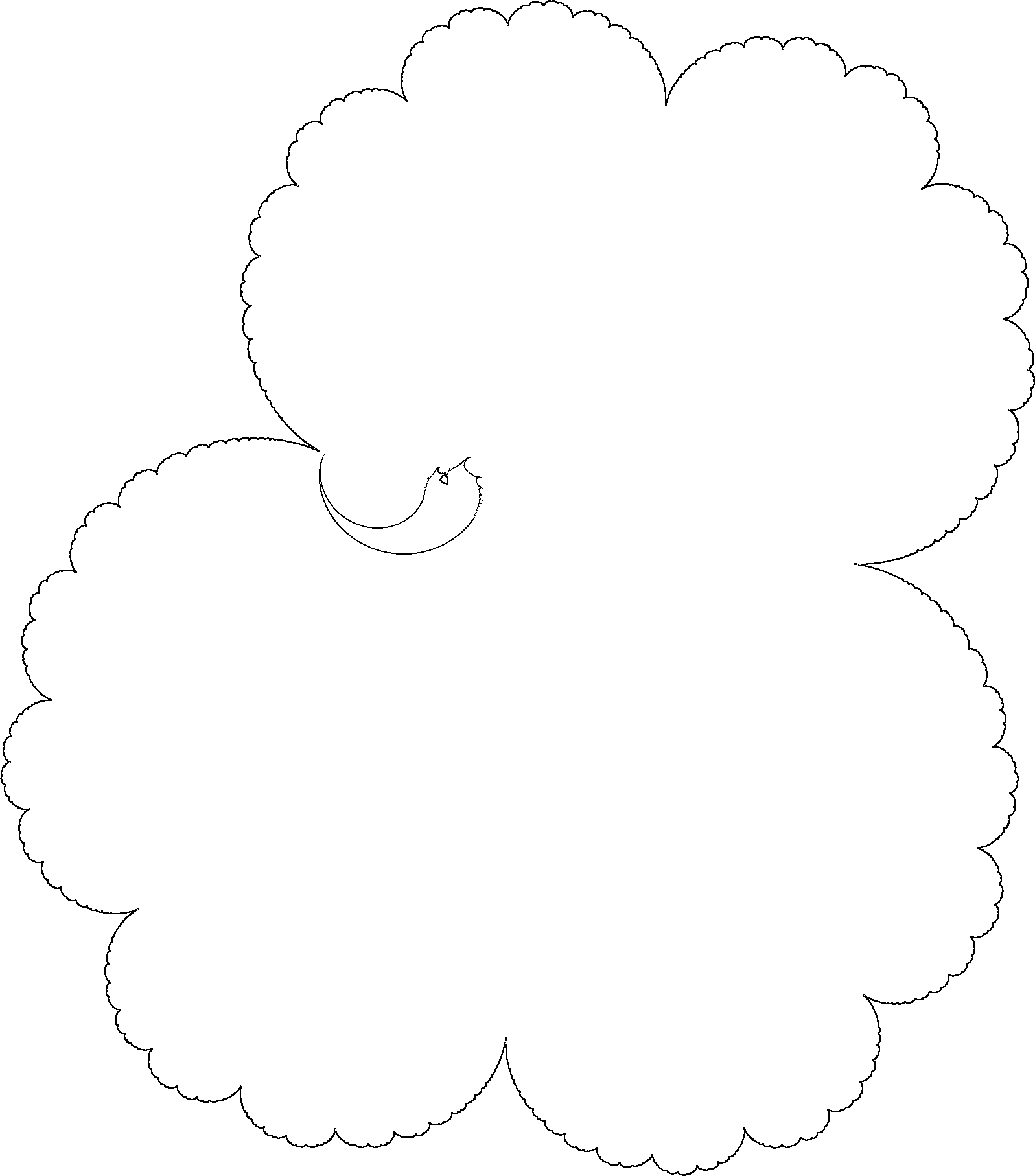}};
\node at (5.5,0) {\includegraphics[width=5cm]{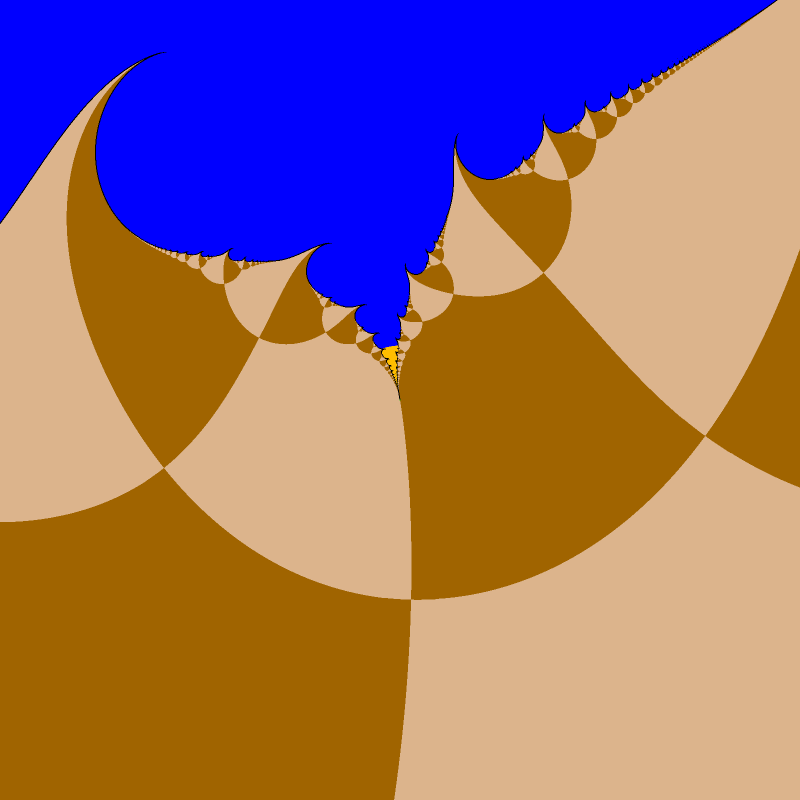}};
\path[draw] (-.32,0.52) -- (0.2,1.3) node[anchor=south] {Mudba};
\path[draw] (-0.19,0.37) -- (0.3,0.37) node[anchor=west] {$A$};
\node at (-1,-1.8) {$U_1$};
\node at (4.9,-.4) {Mudba};
\draw[fill=black] (5.5,0) circle (1.3pt);
\path[draw] (5.5,0) node[anchor=east] {} -- (6.2,0) node[anchor=west] {$0$};
\end{tikzpicture}
\caption{Some open sets associated to $\cal R[P]$ with $P:z\mapsto z+z^2$: its domain $U_1$, its parabolic immediate basin $A$, and the latter's main upper dynamical box Mudba. The rightmost column features the dynamical chessboard of $A$ in shades of brown. The blue and yellow shades depict the structural chessboard of $U_1$.}
\label{fig:choux}
\end{figure}

Note that $\phi_3:\D\to A$ is an isometry for the respective hyperbolic metrics,
and that the upper main chessboard box of $B_d$ is mapped by $\phi_3$ to the main upper dynamical chessboard box of $A$, call it Mudba:
\[\text{Mudba}=\phi_3(\text{Mudba}[B_d]).\]
See \Cref{fig:choux}. From the lemma above, it follows that the set $C=C[f]$ under study is contained within $A$-hyperbolic distance $c_2+L(\epsilon)$ of Mudba.
In order to prove an estimate concerning the latter set, we first need the following easy consequence of the compactness of $\cal F$:
\begin{lemma}\label{lem:alemma}
For all $M>0$ there exists $c'>0$ such that for all $f\in\cal F$,
the upper main and the lower main chessboard boxes of $h[f]$ are both at hyperbolic $\dom(h[f])$-distance $\leq c'$ from respectively the half planes $\Im(z)>M$ and $\Im(z)<-M$ (intersected with $\dom(h[f])$ if necessary).
\end{lemma}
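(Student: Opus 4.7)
My plan is to exploit the sequential compactness of $\cal F$ established in Section~\ref{subsub:transff}, together with the continuous dependence of all the relevant objects on $f$. By the symmetry between the upper and lower statements (which follows from the $z\mapsto \bar z$ symmetry in the class $\cal F$), I will focus on the upper case only.

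First I would reduce to the case $M \leq M_0$, where $M_0$ is the uniform constant from Proposition~\ref{prop:cor}(3): for $M \geq M_0$ the half plane $\{\Im z > M\}$ is already contained in the main upper box, so the conclusion is trivial with $c' = 0$. Second, for a fixed $f \in \cal F$, I would argue that the relevant $\dom(h_\nor[f])$-hyperbolic distance, call it $\delta(f)$, is finite. The key point is that both the main upper box $\text{MUB}(f)$ and the half plane $\{\Im z > M\}$ are $T_1$-invariant, and $T_1$ acts as a hyperbolic isometry on the $T_1$-invariant set $\dom(h_\nor[f])$; therefore $\delta(f)$ coincides with the corresponding distance in the quotient cylinder $\dom(h_\nor[f])/T_1$. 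Passing through $E\colon z\mapsto e^{2\pi i z}$, the main upper box becomes a punctured neighborhood of $0$ in $\C^*$ and the half plane becomes the punctured disk $\{0<|z|<e^{-2\pi M}\}$. Both sets contain, by Proposition~\ref{prop:cor}(3), the punctured disk coming from $\{\Im z > M_0\}$, so finiteness of the distance in the quotient is immediate.

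Finally I would upgrade this pointwise finiteness to a uniform bound by a compactness argument. Proposition~\ref{prop:conti} furnishes the continuous dependence (in the $\st$ sense) of $h_\nor[f]$ on $f$; this in turn yields the continuous dependence of $\dom(h_\nor[f])$, of the singular value $v_h[f]$, of the main upper box (as the component of $\dom(h_\nor[f])$ minus $h_\nor^{-1}(v_h[f]+\R)$ containing an upper half plane), and of the hyperbolic metric on $\dom(h_\nor[f])$. From this I would derive upper semi-continuity of $\delta$ on $\cal F$, equipped with the compact metrizable topology recalled at the end of Section~\ref{subsub:transff}. Sequential compactness of $\cal F$ then forces $\sup_{f\in\cal F}\delta(f)<\infty$, delivering the desired $c'$.

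The main technical subtlety will be the continuous-dependence step: one has to check that the main upper box does not jump as $f$ varies (i.e.\ that its boundary, made of preimages of a horizontal line, varies continuously), and that the hyperbolic metric on $\dom(h_\nor[f])$ remains controlled uniformly on the moderate-height region $\{M<\Im z\leq M_0+1\}\cap \dom(h_\nor[f])$ up to, but not right at, the boundary of $\text{MUB}(f)$. Provided this is carefully done, no explicit estimates are needed: the entire argument is ``soft''.
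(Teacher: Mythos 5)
The notion of ``$X$ is at hyperbolic $\dom(h_\nor)$-distance $\le c'$ from $Y$'' in this lemma is the one-sided distance $\sup_{x\in X}\inf_{y\in Y}d(x,y)\le c'$, as becomes clear from the way the lemma is used in Lemma~\ref{lem:mudba} (one pushes forward by the contraction $\Psi_\rep$ and concludes that $\Psi_\rep(\text{box})$ lies in a bounded hyperbolic neighbourhood of $B(0,r)$). With that reading, your Step~1 is reversed: the half-plane $\{\Im z>M\}$ being contained in the box does \emph{not} force the distance to be zero, and the assertion in fact gets \emph{harder} as $M$ grows, since $\{\Im z>M\}$ shrinks; the free implication goes from $M_0$ down to smaller $M$, not the other way. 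The serious gap, however, is in Step~2: noting that both sets contain the punctured cusp neighbourhood coming from $\{\Im z>M_0\}$ only shows that they overlap, not that one is uniformly close to the other. A point of the box that is hyperbolically close to $\partial\dom(h_\nor)$ but far from $\{\Im z>M\}$ would make $\delta(f)=+\infty$, and nothing in Proposition~\ref{prop:cor}(3) excludes that.

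What makes the lemma true, and what your sketch omits, is the universality identification with $B_d$. The paper's proof uses the conformal isomorphism $\phi:\H\to U_u[f]$ (from the complement after Theorem~\ref{thm:shi2}), which commutes with $T_1$ and carries the upper main box of $h_\nor[B_d]$ onto that of $h_\nor[f]$, together with the fact that the upper main box of $h_\nor[B_d]$ lies at positive Euclidean distance from $\R=\partial\dom(h_\nor[B_d])$. Since $\phi$ is a hyperbolic isometry onto $U_u[f]$ and the inclusion $U_u[f]\hookrightarrow\dom(h_\nor[f])$ is a contraction, this already gives a finite bound for each fixed $f$ --- which your argument does not. The uniformity over $\cal F$ is then obtained by a direct Koebe estimate showing $\phi^{-1}(\{\Im z>M\})\supset\{\Im z>M'\}$ with $M'$ bounded, rather than by semi-continuity of $\delta$ plus compactness; the latter route is in any case delicate here, since $\st$-convergence allows the domain of the limiting map (and hence its box) to be strictly smaller than the limit of the domains.
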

\begin{proof} The extended normalized horn map of $B_d$ is defined on $\C\setminus\R$.
The upper/lower main chessboard boxes of $h[B_d]$ are at positive Euclidean distance from $\R$.
Recall (see \Cref{subsec:nor}, in particular \Cref{lem:phiatu}) that we have the following: $h[f] \circ \phi= T_{w[f]} \circ h[B_d]$ where $w[f]=v_{h[f]}-v_{h[b_d]}$ 
and $\phi$ is an isomorphism commuting with $T_1$ from $\H$, which is the upper connected component of $\dom h[B_d]$, to the upper connected component of $\dom h[f]$, and that $\phi$ maps the chessboard graph of $h[B_d]$ to that of $h[f]$.
Therefore, it is enough to prove that $\phi^{-1}(``\Im(z)>M")$ contains an upper half plane independent of $f$, and a similar statement for the lower part.
Let us write, as $\Im(z)\tend+\infty$:
\[\phi(z)  = z+\tau_f+o(1).\]
From the first point of \Cref{prop:cor} if follows that $|\im (w[f])|$ is bounded over $\cal F$.
From this and the second point, it follows that $|\im(\tau_f)|$ is bounded over $\cal F$. 
Now one of Koebe's inequalities states that $\forall f \in\sch$, $|f(z)|\leq \frac{|z|}{(1-|z|)^2}$. Equivalently, $\forall r\in\,]0,1[$, $f^{-1}\big(B\big(0,r/(1-r)^2\big)\big) \supset B(0,r)$. The map $T_{-\tau_f}\circ \phi$ is semi-conjugate by $E$ to a Schlicht map thus: $\phi^{-1}(``\Im(z)>M")$ contains the half plane ``$\Im(z)>M'$'' where $M'=M'[f]>0$ is related to $M\in\R$ by $e^{2\pi (M+\im \tau_f)}=e^{2\pi M'}+e^{-2\pi M'}-2=2(\cosh(2\pi M')-1)$. Since $\tau_f$ is bounded, the constant $M'[f]$ is bounded too.
The proof for the lower box is similar.
\end{proof}

Recall $U_1$ denotes the domain of $f$.

\begin{lemma}\label{lem:mudba}
 Mudba is contained in a hyperbolic $U_1$-ball of uniform diameter $c_7$.
\end{lemma}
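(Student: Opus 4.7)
The plan is to write $\text{Mudba} = \Psi_\rep(\mathrm{Mub})$ where $\mathrm{Mub}$ is the main upper chessboard box of $h_\nor$, and combine Schwarz-Pick contraction of $\Psi_\rep$ with the uniform controls already established in Proposition~\ref{prop:butterfly} and Lemma~\ref{lem:alemma}. The conceptual heart of the argument is choosing the right ambient metric: the $A$-hyperbolic metric degenerates at the parabolic fixed point $0 \in \partial A$ and $\text{Mudba}$ accumulates at $0$, so its $A$-diameter is infinite; however $0$ lies in the interior of $U_1$, and the tail of $\text{Mudba}$ approaching $0$ will turn out to have uniformly bounded $U_1$-diameter.

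First I would record the identification $\text{Mudba} = \Psi_\rep[f](\mathrm{Mub})$, which is immediate from the complement to Theorem~\ref{thm:shi2}: it provides a $T_1$-equivariant conformal isomorphism $\phi_0 : U_u[f] \to \H$ satisfying $\phi_3 \circ \Psi_\rep[B_d] \circ \phi_0 = \Psi_\rep[f]$, and this intertwining transfers the chessboard of $h_\nor[B_d]$ to that of $h_\nor[f]$, hence $\text{Mudba}[B_d]$ to $\text{Mudba}$. Next I would fix a small $r$, say $r = 1/8$, and use Proposition~\ref{prop:butterfly} to obtain a uniform $M > 0$ with $\Psi_\rep(\{\Im z > M\}) \subset B(0, r)$ for every $f \in \cal F$. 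Applying Lemma~\ref{lem:alemma} with this value of $M$ yields a uniform $c' > 0$ such that every point of $\mathrm{Mub}$ lies within $\dom(h_\nor)$-hyperbolic distance $c'$ of $\{\Im z > M\} \cap \dom(h_\nor)$.

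Two easy geometric facts then finish the argument. First, Koebe's $1/4$-theorem applied to $\phi_1 \in \cal S$ gives $B(0, 1/4) \subset U_1$, and comparison of hyperbolic metrics via this inclusion $B(0,1/4) \subset U_1$ shows that $B(0, r)$ has $U_1$-hyperbolic diameter at most some explicit $c'' < \infty$ depending only on $r$. Second, $U_1$ is a proper simply connected subset of $\C$, hence hyperbolic, and $\Psi_\rep$ is holomorphic from $\dom(h_\nor)$ into $U_1$ since its image lies in the parabolic basin of $f$, itself a subset of $U_1$; by Schwarz-Pick, $\Psi_\rep$ contracts hyperbolic distances. A triangle inequality concludes: for any $\zeta_1, \zeta_2 \in \text{Mudba}$ one writes $\zeta_i = \Psi_\rep(w_i)$ with $w_i \in \mathrm{Mub}$, picks $u_i \in \{\Im z > M\}$ at $\dom(h_\nor)$-distance $\leq c'$ from $w_i$, gets $d_{U_1}(\zeta_i, \Psi_\rep(u_i)) \leq c'$ from Schwarz-Pick and $d_{U_1}(\Psi_\rep(u_1), \Psi_\rep(u_2)) \leq c''$ since both points lie in $B(0, r)$, whence $d_{U_1}(\zeta_1, \zeta_2) \leq 2c' + c''$; one then sets $c_7 := 2c' + c''$.

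The only non-routine step is the very first one — recognizing the correct anchor $B(0, r) \subset U_1$ near the parabolic fixed point, rather than trying to work directly in $A$. After this choice of viewpoint, everything reduces to chaining Schwarz-Pick with the uniform estimates already proved in the toolkit section.
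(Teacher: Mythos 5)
Your proof is correct and follows essentially the same route as the paper's: Proposition~\ref{prop:butterfly} to push an upper half plane into a small euclidean disk $B(0,r)$, Lemma~\ref{lem:alemma} to bound the $\dom(h_\nor)$-hyperbolic distance from the upper main box of $h_\nor$ to that half plane, then Schwarz--Pick contraction of $\Psi_\rep$. The only cosmetic difference is that the paper routes Schwarz--Pick through $A$ and then uses the inclusion $A \subset U_1$, whereas you go directly into $U_1$; both are fine, and you make explicit the final point (that $B(0,r) \subset B(0,1/4)\subset U_1$ has uniformly bounded $U_1$-diameter by Koebe) which the paper leaves implicit.
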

\begin{proof} Choose $r$ small enough so that $B(0,2r)\subset U_1$ for all $f\in\cal F$. By \Cref{prop:butterfly}, there is some $h>0$ such that for all $f\in\cal F$, the half planes $\Im(z)>h$ and $\im(z)<-h$ are mapped by $\Psi_\rep[f]$ inside $B(0,r)$.
From \Cref{lem:alemma} the upper box is at distance $\leq c_7$ from ``$\Im(z)>h$'' for the hyperbolic metric of $\dom(h[f])$.
The map $\Psi_\rep: \dom(h[f]) \to A$ is holomorphic thus a contraction for hyperbolic metrics, thus the image by $\Psi_\rep$ of the upper chessboard box is at bounded $A$-hyperbolic distance of $B(0,r)$ (the latter is not contained in $A$ but it does not matter) and thus at $U_1$-hyperbolic distance even smaller, since the inclusion of $A$ in $U_1$ is a contraction too.
\end{proof}

By \Cref{lem:d1,lem:mudba}, to fulfill the objectives of Step 1, it is enough to prove that a path starting from Mudba, contained in $A$ and of $A$-hyperbolic length $\leq c_2+L(\epsilon)$ has a $U_1$-hyperbolic length much smaller than $c_2+L(\epsilon)$. The precise bound obtained will yield \Cref{prop:part:1}. Note that we will in fact bound the $U_1^*$-hyperbolic length, which is bigger that the $U_1$-hyperbolic length, where
\[ U_1^*=U_1\setminus\{0\}
.\]

Let us make the following change of coordinates: $w=\log(z)/2i\pi$. Let $\wt A$ be a lift of $A$: it is a connected and simply connected subset of $\C$ that does not intersect its translates $\wt A+k$ when $k\in\Z$ is non-zero. As a consequence, each horizontal intersects this open set along a union of open segments of length at most $1$ (in fact the sum of lengths is at most $1$). Thus the Euclidean distance from any $z\in \wt A$ to the boundary of $\wt A$ is $\leq 1/2$. This implies by Koebe's $1/4$ theorem:
\[\rho_{\wt A}(z)\geq 1/2\]
(a better bound holds but we do not need it; recall $\rho_U(z)|dz|$ designates the infinitesimal element of hyperbolic metrics on $U$).

\begin{figure}
\begin{tikzpicture}
\node at (0,7) {\includegraphics[width=8cm]{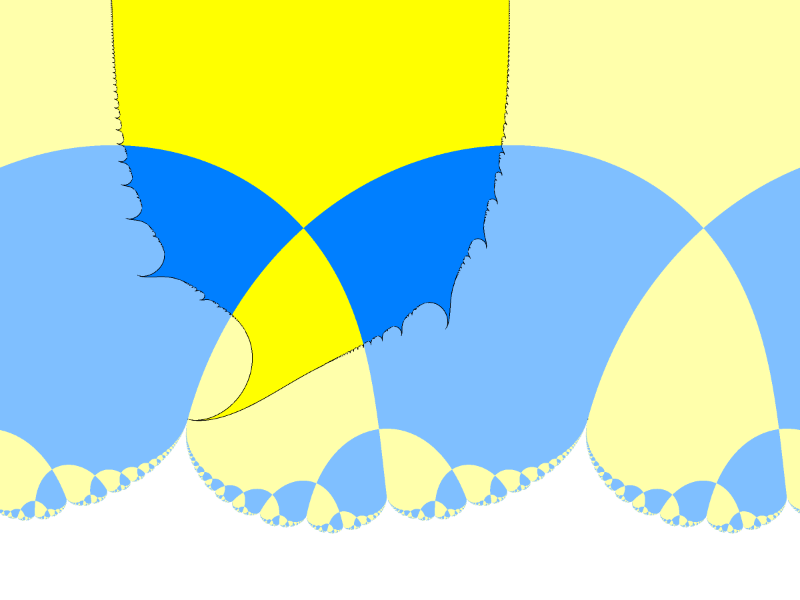}};
\node at (0,0) {\includegraphics[width=8cm]{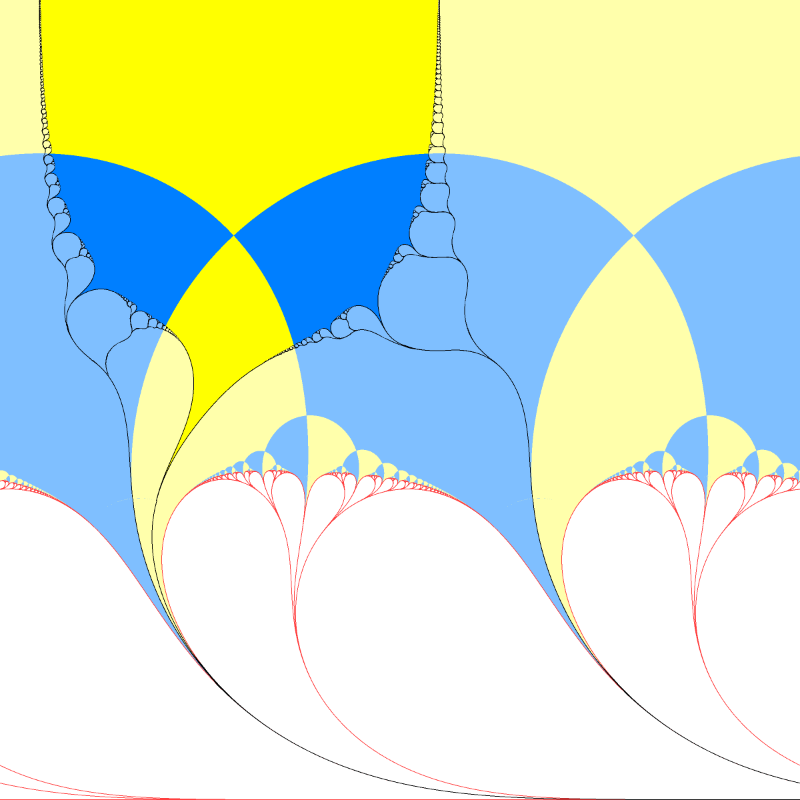}};
\end{tikzpicture}
\caption{Two examples of lifted immediate parabolic basins $\wt A$ for maps $f\in\cal F$. Left: $f=\cal R(z\mapsto z+z^2)$, Right: $f=\cal R(z\mapsto ze^z)$.}
\label{fig:Atildes}
\end{figure}

\remark The set $\wt A$ is unbounded upwards, since the image in $\wt A$ of an attracting petal in $A$ is an infinite finger-shaped domain extending upwards. See \Cref{fig:Atildes} for examples.
One should not expect $\wt A$ to be bounded in the other directions either.
Recall that $f\in\cal F$ is characterized by the choice of its domain $U_1$, which can be any simply connected domain containing the origin with conformal radius $1$ w.r.t.\ the origin.
For well chosen unbounded $U_1$, the set $\wt A$ is unbounded downwards.
One could object that since in the applications, the renormalization operator is iterated, we could restrict to maps in $\cal R[\cal F]$ instead of $\cal F$, and that maps in $\cal R[\cal F]$ all have a uniformly bounded domain of definition, as follows for instance from \Cref{prop:cor}.
But this will not prevent unboundedness in the horizontal direction: even for bounded $U_1$, provided its boundary swirls infinitely many times around $0$, carefully chosen $U_1$ will yield a set $\wt A$ whose projection on the real line is unbounded.
The latter case is not just a curiosity but does happen for $f=\cal R[z\mapsto ze^z]$, i.e.\ the first renormalization of the map $g(z)=ze^z$ which has a non-linearizable parabolic point at the origin, and whose set of singular values are the two asymptotic values $\infty$, $0$ and the image $g(-1)$ of the unique critical point $-1$.
Its immediate basin must contains a singular value, and the only possible one is $g(-1)$.
Hence the map $g$ satisfies the hypotheses of \Cref{thm:s1}, thus $f=\cal R[g] \in \cal F$.
A careful study shows that the domain of definition of $\cal R[g]$ swirls like above, more precisely that its lifted immediate basin $\wt A$ has infinitely many accesses to infinity by curves asymptotic to some common horizontal line.
The map $g$ does not belong to $\cal F$ but we believe that for all $n>0$, $\cal R^n[g]$, that belongs to $\cal F$, will have a set $\wt A$ with the same properties.
To prove this, one may try and see if there is invariance by $\cal R$ of the following property for $f\in \cal F$: let $c$ be the main critical point of $f$ (the one on the boundary of the main upper structural box); let $\wt f$ be a lift of $f$ and let $\gamma$ be the lift by $\wt f$ starting from $c$, of the horizontal half line $\wt f(c)+[0,+\infty[$, such that $\gamma$ intersects the boundary of the upper box only at $c$; then $\Re(\gamma)$ tends to infinity.
\endremark

Now consider a point $z_0\in C=\phi_3(C[B_d])$ and consider a path $\gamma$ of $A$-length at most $c_2+L(\epsilon)$ from Mudba to $z_0$.
Let us apply $f$ once.
Then $A$ is mapped to itself and so are $C$ and Mudba. The path $\gamma$ is mapped to a path $f(\gamma)$ contained in $A$, from Mudba to $z_1=f(z_0)$, and by the Schwarz-Pick inequality, the $A$ hyperbolic length of $f(\gamma)$ is $\leq$ that of $\gamma$. Consider a lift $\gamma_2$ of $f\circ \gamma$ by $E$ (the path $f(\gamma)$ is contained in $A$, thus does not meet the origin). The Euclidean length of $\gamma_2$ is equal to
\begin{equation}\label{eq:elg2}
\int_{\gamma_2} |dz| = \int_{\gamma_2}\frac{\rho_{\wt A}(z)|dz|}{\rho_{\wt A}(z)} \leq 2 \int_{\gamma_2} \rho_{\wt A}(z)|dz| \leq 2(c_2+L(\epsilon)).
\end{equation}

Let us now relate the element of length $\rho_{U_1^*}(z)|dz|$ to $|d\log f(z)/2\pi|$.
Let $\wt f$ be the continuous lift of $f$ by $E$ that fixes $\wt A$: $\wt f: \wt U_1 \overset{\text{def}}= E^{-1}(U_1) \to \C$ and $E\circ \wt f = f \circ E$. The inverse of $E$ is the multivalued function $E^{-1} (z)=\frac{1}{2\pi i}\log z$. Let $\wt v+\Z$ be the set of critical values of $\wt f$. The map $\wt f$ has no asymptotic value over $\C$.
Denote by $\C^{\pm}$ the upper half plane and the lower half plane delimited by the horizontal line through these critical values.
For every point $z$ mapped to $\C^\pm$ by any branch of $\frac{1}{2\pi i}\log f$, the latter map has inverse branches defined in $\C^\pm$, with image the $f$-structural chessboard box containing $z$. This inverse branch is univalent, except for $z$ in the little loop around $0$ where it is infinite-to one. In all cases, these inverse branches map in $U_1^*$ and are non-expanding for the respective hyperbolic metrics as follows:
\begin{equation}\label{eq:euc}
\rho_{U_1^*}(z)|dz| \leq \rho_{\C^{\pm}}(\zeta)|d\log f(z)/2\pi|
\end{equation}
where $\zeta$ is the image of $z$ by the considered branch of $\frac{1}{2\pi i}\log f$.

Near the boundary of $\C^\pm$, better estimates hold.
For instance:
\begin{lemma}\label{lem:diam}
There exists $c_3>0$ such that for all $f\in\cal F$, the following holds. Let $\wt v$ be a critical value of $\wt f$ and $V$ be any connected component of the pre-image by $\wt f$ of the square $\wt v+I+iI$ where $I=[-1/2,1/2]$. Then the hyperbolic diameter in $U_1^*$ of $E(V)$ is $\leq c_3$.
\end{lemma}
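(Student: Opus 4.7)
The plan is to reduce the problem via the Schlicht factorization to a fixed estimate on the universal model map $\cal R[B_d]$, and then bound the diameter using the asymptotic structure of $\cal R[B_d]$ on $\H$ together with compactness.

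First, observe that $E$ is injective on $V$. Since $f\in\cal F$ is tangent to the identity at its parabolic fixed point $0$ (both $\cal R[B_d]$ and $\phi^{-1}$ being so), any holomorphic lift $\wt f:\wt U_1\to\C$ satisfies $\wt f(z+1)=\wt f(z)+1$. Therefore $\wt f^{-1}(Q)$ and its $T_1$-translates $\wt f^{-1}(Q+k)=\wt f^{-1}(Q)+k$ are pairwise disjoint (as $Q\cap(Q+k)=\emptyset$ for $k\neq 0$), so $V\cap(V+k)=\emptyset$ for every nonzero integer $k$. Hence $E:V\to E(V)$ is a biholomorphism, and since $E:\wt U_1\to U_1^*$ is the universal covering of $U_1^*$, it is a local isometry for the hyperbolic metrics: the $U_1^*$-hyperbolic diameter of $E(V)$ equals the $\wt U_1$-hyperbolic diameter of $V$.

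Next, writing $f=\cal R[B_d]\circ\phi^{-1}$ with $\phi\in\cal S$, the lift $\wt\phi:\H\to\wt U_1$ of $\phi:\D\setminus\{0\}\to U_1^*$ (where $\H=E^{-1}(\D\setminus\{0\})$ is the usual upper half plane) is a biholomorphism conjugating $\wt{\cal R[B_d]}$ to $\wt f$, and is therefore a hyperbolic isometry. Under this conjugation, $V$ corresponds to a component $V_0$ of $\wt{\cal R[B_d]}^{-1}(Q)$ in $\H$. Using $T_1$-equivariance, we may translate $\wt v$ by an integer without changing $E(V)$, and thereby assume that $\wt v=\wt v_0$ is a fixed principal lift of the critical value of $\cal R[B_d]$. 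The problem is thus reduced to bounding the $\H$-hyperbolic diameter of the components of $\wt{\cal R[B_d]}^{-1}(\wt v_0+I+iI)$, a fixed problem depending only on $d$.

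Finally, for this fixed model one has $\wt{\cal R[B_d]}(z)\to+i\infty$ as $\Im z\to+\infty$ (coming from the parabolic fixed point of $\cal R[B_d]$ at $0$) and $\Im\wt{\cal R[B_d]}(z)\to-\infty$ as $\Im z\to 0^+$ (coming from the asymptotic value $\infty$ at $\partial\D$). Since $Q$ has bounded imaginary part, every component $V_0$ is confined to a horizontal strip $\alpha<\Im z<\beta$ in $\H$ with $\alpha,\beta>0$ depending only on $d$. Combined with the $T_1$-disjointness, $V_0$ lies in a vertical strip of width $1$ and in a horizontal strip of bounded height, so it has bounded Euclidean diameter and is relatively compact in $\H$; its $\H$-hyperbolic diameter is therefore finite. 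The main obstacle is obtaining a uniform bound across the countably many components $V_0$ arising from parabolic wrapping: this can be handled by relating each one to one of finitely many ``fundamental'' configurations via iterated dynamics of $\cal R[B_d]$ together with Schwarz--Pick contraction, or alternatively by a compactness-continuity argument in the topology on $\cal F$ introduced earlier in the text.
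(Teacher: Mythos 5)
Your reduction via $\wt\phi$ is essentially sound in spirit (though the wording is off: $\wt\phi$ does not \emph{conjugate} $\wt{\cal R[B_d]}$ to $\wt f$, it \emph{pre-composes} with it, since $\wt f=\wt{\cal R[B_d]}\circ\wt\phi^{-1}$; nevertheless the conclusion you draw — that $\wt\phi$ is an isometry from $(\H,\rho_\H)$ to $(\wt U_1,\rho_{\wt U_1})$ carrying components of $\wt{\cal R[B_d]}^{-1}(Q)$ to components of $\wt f^{-1}(Q)$ — is correct). The serious problem is the final step.

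Your claim that every component $V_0$ of $\wt{\cal R[B_d]}^{-1}(\wt v_0+I+iI)$ is confined to a horizontal strip $\alpha<\Im z<\beta$ with $\alpha>0$ depending only on $d$ is \emph{false}. The critical points of $\cal R[B_d]$ accumulate on $\partial\D$ (this is visible in Figures~\ref{fig:struct-RB2}--\ref{fig:struct-RBinf}: the structural chessboard has boxes accumulating on the boundary circle), so the critical preimages of the critical value $v$ — and hence the components of $\cal R[B_d]^{-1}(v+\text{small square})$ — get arbitrarily close to $\partial\D$. In the half-plane coordinate this means components of $\wt{\cal R[B_d]}^{-1}(Q)$ accumulate on $\Im z=0$. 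In particular, ``bounded Euclidean diameter plus confined to a vertical strip of width $1$'' does not yield bounded hyperbolic diameter, because the conversion factor blows up near the boundary. (It is not even uniformly true that $\Im\wt{\cal R[B_d]}(z)\to-\infty$ as $\Im z\to 0^+$: $\infty$ is only \emph{one} of the asymptotic values; near $\partial\D$ the map also approaches $0$ and $v$ along other accesses, which is exactly why the critical preimages pile up there.) You acknowledge the gap (``the main obstacle is obtaining a uniform bound across the countably many components''), but the two remedies you sketch do not close it: iterating $\cal R[B_d]$ is irrelevant to fibers of a single application of $\wt{\cal R[B_d]}$, and compactness of $\cal F$ gives nothing new once you have already reduced to the fixed map $\cal R[B_d]$.

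The paper's proof avoids all of this and gets a uniform bound across components for free by looking at the right object: the component $U$ of $\wt f^{-1}(\wt v+\D)$ containing $V$. Since the critical values of $\wt f$ are exactly $\wt v+\Z$ and its only asymptotic value over $\wh\C$ is $\infty$, the open disk $\wt v+\D$ contains exactly one critical value, and $\wt f|_U:U\to\wt v+\D$ is a proper map on a simply connected domain, branched over at most one point. It therefore factors as $a\circ\on{pow}\circ b$ with $b:U\to\D$ a conformal isomorphism, $\on{pow}(z)=z^d$ or $\on{id}$, $a(z)=\wt v+z$. Then $a^{-1}(V)=I+iI\subset B(0,1/\sqrt2)$, so $(a\circ\on{pow})^{-1}(V)\subset B\big(0,(1/\sqrt2)^{1/d}\big)$, and Schwarz–Pick applied to $b^{-1}$ and then to $E$ gives a bound on the $U_1^*$-hyperbolic diameter of $E(V)$ depending only on $d$, valid for \emph{every} component simultaneously because each component gets its own Riemann map $b$ but the factorization and the ball $B(0,(1/\sqrt2)^{1/d})$ are the same. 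This factorization through the Riemann map is the key idea your proposal lacks; if you want to keep your reduction to the universal map $\cal R[B_d]$, you would still need to run exactly this argument on the components of $\wt{\cal R[B_d]}^{-1}(\wt v_0+\D)$, so the reduction buys nothing.
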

\begin{proof} Recall the critical values of $\wt f$, are the elements of $\wt v+\Z$ and that its only asymptotic value over $\wh \C$ is $\infty$. Consider the disk $\wt v+\D$ and the component $U$ of $\wt f^{-1}$ that contains $V$. Then $\wt f$ factors on $U$ as $a\circ\on{pow}\circ b$ where $\on{pow} : \D\to \D$ is either the identity or the map $z\mapsto z^d$, where $a(z)=\wt v+z$ and where $b$ is an isomorphism from $U$ to $\D$. Then $a^{-1}(V)=I+iI \subset B(0,1/\sqrt{2})$ thus $(a\circ \on{pow})^{-1}(V)$ is contained in the Euclidean ball $B(0,\left(\frac{1}{\sqrt 2}\right)^{1/d})$. The map $b^{-1}:\D\to E^{-1}(U_1)$ is non-expanding for the respective hyperbolic metrics, and $E: E^{-1}(U_1)\to U_1^*$ also is, thus the lemma holds with $c_3=$ the hyperbolic distance in $\D$ from $0$ to the $d$-th root of $1/\sqrt{2}$.
\end{proof}

Another easy lemma: 

\begin{lemma}\label{lem:hypeuc}
Let $a$, $b$ be two points in the hyperbolic plane $\H$:
\[\Im(a)\geq \frac{1}{2}\text{ and }\Im(b)\geq \frac{1}{2} \implies d_\H(a,b)\leq \log (1+2|a-b|).\]
\end{lemma}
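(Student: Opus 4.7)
The plan is to use the explicit formula for the hyperbolic distance in $\H$. With the convention $|dz|/2\Im z$ adopted in the paper, the hyperbolic distance in the upper half plane is
\[ d_\H(a,b) = \tfrac{1}{2}\log\frac{|a-\bar b|+|a-b|}{|a-\bar b|-|a-b|}, \]
as one gets by applying a Möbius transformation to reduce to the case where $a$ and $b$ lie on the imaginary axis and then using the fact that the paper's metric is half the standard one. I would first recall (or re-derive) this formula, and then substitute $r := |a-b|$ and $s := |a-\bar b|$ to rewrite the target inequality in the compact form
\[ \frac{s+r}{s-r} \leq (1+2r)^2 .\]

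The second step is the one place where the hypothesis $\Im(a),\Im(b) \geq 1/2$ enters. From the identity $|a-\bar b|^2 = |a-b|^2 + 4\,\Im(a)\,\Im(b)$ one gets $s^2 \geq r^2 + 1$, so $s \geq \sqrt{r^2+1}$. Rearranging the target inequality as $\frac{2r}{s-r} \leq 4r(1+r)$, i.e.\ $s-r \geq \frac{1}{2(1+r)}$, it suffices to show $\sqrt{r^2+1}-r \geq \frac{1}{2(1+r)}$.

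For the final step I would rationalize using $\sqrt{r^2+1} - r = 1/(\sqrt{r^2+1}+r)$, reducing everything to the elementary inequality $\sqrt{r^2+1} + r \leq 2(1+r)$, equivalently $\sqrt{r^2+1} \leq r+2$, which after squaring becomes $4r+3 \geq 0$, valid for all $r \geq 0$.

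There is no serious obstacle: the whole argument is a two-line manipulation of the distance formula together with the bound $s^2 \geq r^2+1$. The only mild point requiring care is keeping the factor of $1/2$ correct when passing between the two common normalizations of the hyperbolic metric on $\H$; once that is fixed the estimate is automatic.
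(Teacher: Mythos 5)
Your proof is correct, but it takes a different route from the paper. The paper invokes the closed form $d_\H(a,b)=\operatorname{argsh}\bigl(|a-b|/2\sqrt{\Im a\,\Im b}\bigr)$ (which follows from the standard $\cosh$ formula after halving for the paper's normalization), then simply observes $2\sqrt{\Im a\,\Im b}\ge 1$ and applies the one-line inequality $\operatorname{argsh} t\le \log(1+2t)$. You instead work with the cross-ratio formula $d_\H=\tfrac12\log\frac{s+r}{s-r}$ with $r=|a-b|$, $s=|a-\bar b|$, use the identity $s^2=r^2+4\,\Im a\,\Im b\ge r^2+1$, and push through some algebra ending with $\sqrt{r^2+1}\le r+2$. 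Both arguments hinge on the same geometric input, namely $\Im a\,\Im b\ge 1/4$, but the paper's choice of formula makes the reduction to an elementary inequality in one variable immediate, whereas yours requires a few extra (correct) manipulations and an implicit monotonicity of $s\mapsto\frac{s+r}{s-r}$. The paper's is thus a bit slicker, yours a bit more self-contained if one only remembers the cross-ratio form of the metric.
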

\begin{proof}Use the following formula for the hyperbolic distance in $\H$:
\[d_\H(a,b) = 
   \on{argsh}\frac{|b-a|}{2\sqrt{\Im a\,\Im b}},\]
and the inequality $\on{argsh} t\leq \log(1+2t)$.
\end{proof}
So for instance, the hyperbolic distance from $i$ to $i+x$ is a $\cal O(\log x)$ when $x\tend+\infty$, thus much smaller than $x$. Recall that the geodesic between $a$ and $b$ in $\H$ is an arc of Euclidean circle. For the hyperbolic metric, this arc turns out to be much shorter than the straight euclidean line.

Let $\beta_0$ be the structural $U_1$ chessboard box that is a punctured neighborhood of the origin. Recall that we denote $U_1^* = U_1\setminus\{0\}$.
Consider any structural $U_1$ chessboard box $\beta$. Let us call \keyw{cubox} the set $\ov{\beta}\cap U_1^*$.
Let us endow $U_1^*\setminus f^{-1}(v)$ with the infinitesimal metric induced by pulling back the Euclidean metric by $\frac{1}{2\pi i}\log f$.
We call this the \keyw{flat metric}.
It has a regular and locally flat extension to a neighborhood of the non-critical preimages of $v$ and is singular precisely at the critical preimages of $v$, where it has a conical point of angle $2\pi d$.
Let us call \keyw{box-Euclidean distance} the distance induced on $U_1^*$ by this flat metric.
Recall that if $\beta\neq \beta_0$, then $\frac{1}{2i\pi}\log f$ is well defined on $\beta$ and maps it to a half plane $\C^\pm$. It also maps the cubox $\ov{\beta}\cap U_1^*$ to the closure of this half plane.

In the sequel, we call $b_*$\nomenclature[bs]{$b_*$}{the cubox that contains a punctured neighborhood of the origin\nomrefpage} the cubox that contains a punctured neighborhood of the origin: $b_*=\ov{\beta_0}\cap U_1^*$.

\begin{corollary}\label{lem:arcs}
Consider two points in a cubox $b$. Denote $d_e$ the distance between these two points for the metric induced by the flat metric restricted to $b$ and $d_h$ the distance between these two points for hyperbolic metric on $U_1^*$. Then
\[d_h \leq c'_5+\log (1+c_5d_e).\]
\end{corollary}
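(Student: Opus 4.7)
The plan is to work with the lift $\wt f : \wt U_1 \to \C$ (where $\wt U_1 = E^{-1}(U_1)$), whose set of critical values is $\wt v + \Z$ by $T_1$-periodicity. In $\wt f$-coordinates, the box-Euclidean metric becomes the Euclidean metric (so any pair of lifts $\wt z_i$ of $z_i \in b$ can be chosen with $|\wt f(\wt z_1) - \wt f(\wt z_2)| \leq d_e$ by taking a flat geodesic), and by \eqref{eq:euc}, the $\C^\pm$-hyperbolic metric pulls back to a majorant of the $U_1^*$-hyperbolic metric. The strategy is then: use Lemma~\ref{lem:hypeuc} to get a logarithmic bound on hyperbolic distance when both points are far from the line $\Im z = \Im\wt v$, and use Lemma~\ref{lem:diam} to displace points that are close to that line at bounded cost.

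First, I would show that each $z \in b$ can be displaced to a $z^+$ with $\Im(\wt f(\wt z^+) - \wt v) \geq 2$, $|\wt f(\wt z^+) - \wt f(\wt z)| \leq 3$, and $d_{U_1^*}(z, z^+) \leq C$, for a constant $C$ depending only on $c_3$. Write $w = \wt f(\wt z) - \wt v$ and pick $k \in \Z$ with $|\Re w - k| \leq 1/2$. If $\Im w \leq 1/2$, then $\wt f(\wt z) \in \wt v + k + I + iI$, so $\wt z$ belongs to a connected component $V$ of $\wt f^{-1}(\wt v + k + I + iI)$; by Lemma~\ref{lem:diam}, $E(V)$ has $U_1^*$-hyperbolic diameter $\leq c_3$, allowing us to move $\wt z$ inside $V$ to $\wt z^*$ with $\wt f(\wt z^*) = \wt v + k + i/2$ at $U_1^*$-cost $\leq c_3$. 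Then the vertical segment in $\wt f$-coordinates from $\wt v + k + i/2$ to $\wt v + k + 2i$ has $\C^+$-hyperbolic length $\tfrac12\log 4$, so by \eqref{eq:euc} its lift has $U_1^*$-hyperbolic length $\leq \tfrac12\log 4$. If $\Im w > 1/2$ initially, a single vertical shift of bounded $\C^+$-hyperbolic length is enough. Either way, $C := c_3 + \tfrac12\log 4$ works.

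Applying this step to both endpoints $z_1, z_2$ produces $z_1^+, z_2^+$ with $|\wt f(\wt z_1^+) - \wt f(\wt z_2^+)| \leq d_e + 6$ and $\Im(\wt f(\wt z_i^+) - \wt v) \geq 2$. Translating $\C^+$ vertically to identify it with $\H$ and invoking Lemma~\ref{lem:hypeuc} (with the paper's convention that the $\H$-hyperbolic element is $|dz|/(2\Im z)$), we obtain $d_{\C^+}(\wt f(\wt z_1^+), \wt f(\wt z_2^+)) \leq \log(1 + 2(d_e + 6))$, which by \eqref{eq:euc} dominates $d_{U_1^*}(z_1^+, z_2^+)$. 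The triangle inequality and $\log(13 + 2 d_e) \leq \log 13 + \log(1 + 2 d_e)$ then give $d_h \leq 2C + \log 13 + \log(1 + 2 d_e)$, of the required form.

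The main subtlety, which I expect to be the trickiest bookkeeping point, is that the preimage component $V$ provided by Lemma~\ref{lem:diam} may straddle several structural boxes of $U_1$, and in particular the auxiliary point $z^+$ need not lie in $b$ itself. This is not a problem since the claim bounds hyperbolic distance in $U_1^*$ and we are free to take paths that exit $b$; but it means the argument applies uniformly to every structural box, including the "droplet" cubox $b_*$ (where $\wt f$ restricted to a lift of $\beta_0$ is a biholomorphism onto a half plane with $\wt v + \Z$ on its boundary), with no separate treatment needed.
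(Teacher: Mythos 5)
Your strategy is essentially the paper's (pass to $\C^\pm$ via $\wt f$, displace the two points away from the boundary at bounded $U_1^*$-cost using Lemma~\ref{lem:diam}, then combine Lemma~\ref{lem:hypeuc} with~\eqref{eq:euc}), but the final step has a genuine gap, located exactly at the remark you offer to reassure yourself. Inequality~\eqref{eq:euc} compares metrics \emph{through a single inverse branch} of $\frac{1}{2\pi i}\log f$, i.e.\ inside one chessboard box: to conclude $d_{U_1^*}(z_1^+,z_2^+)\le d_{\C^+}\big(\wt f(\wt z_1^+),\wt f(\wt z_2^+)\big)$ one lifts the $\C^+$-geodesic joining $\wt f(\wt z_1^+)$ to $\wt f(\wt z_2^+)$ through that branch, and the lift, started at $\wt z_1^+$, ends at $\wt z_2^+$ \emph{only if both lie in the same $\wt f$-cubox}. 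Otherwise it ends at a different preimage of $\wt f(\wt z_2^+)$ and no bound on $d_{U_1^*}(z_1^+,z_2^+)$ follows --- two points with the same image under $\wt f$ have $\C^+$-distance $0$ but may be arbitrarily far in $U_1^*$. Your construction takes $\wt z_i^*$ to be an unspecified preimage, in $V_i$, of the top-centre of the $k_i$-th square; this does not force $\wt z_i^*\in\wt b$ (when $V_i$ contains a critical point there are $d$ such preimages and only one is in $\wt b$), so $\wt z_1^+$ and $\wt z_2^+$ may end up in different cuboxes, which is precisely the situation you declare harmless in your last paragraph.

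The gap is fillable but must be filled: $\wt f|_{\wt b}$ is a bijection onto a closed half plane, so there is a unique preimage in $\wt b$ of the top-centre of the square; it lies in $V_i$ because $\wt b\cap\wt f^{-1}(\text{square})$ is connected (homeomorphic to the intersection of the square with the closed half plane) and hence contained in a single component; choosing that preimage pins $\wt z_i^+$ into $\wt b$ and restores the last step. The paper avoids the issue from the outset by pushing each point \emph{purely vertically}: a vertical path lifted into $\wt b$ stays in $\wt b$, and Lemma~\ref{lem:diam} still controls the displacement since it remains inside one unit square. Your detour via the top-centre of the square is what creates the ambiguity in the choice of preimage; it buys nothing and is where the argument leaks.
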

\begin{proof}Let us apply $\frac{1}{2\pi i}\log f$ so as to work in a half plane, and to fix ideas, let us assume it is the half plane $\C^+$.
If the cubox $b$ is $b_*$ then when we lift the two points we choose these lifts so that their euclidean distance is minimal, so as to coincides with $d_e$.
If any of the two points is at distance $\leq 1/2$ from the boundary of $\C^+$ then move it up so that it is at distance $1/2$: we get a new pair of points in $\C^+$ that corresponds to a new pair of points in $b$. By \Cref{lem:diam}, each new point is at $U_1^*$-hyperbolic distance $\leq c_3$ from the former so the $U_1^*$-hyperbolic distance between the the points in the pair has changed by at most $c_3$, and by at most $2c_3$ if we needed to move both points.
Similarly, the Euclidean distance between the points in $\C^+$ has changed by at most $1$. By Equation~\eqref{eq:euc} the $U_1^*$-hyperbolic distance between the two (possibly) new points will be at most their $\C^+$-hyperbolic distance. Using \Cref{lem:hypeuc}, on the latter we get
$d_h\leq 2c_3+\log(1+2(d_e+1)) = (2c_3+\log 3) + \log(1+\frac{2}{3}d_e)$.
\end{proof}

Let \keyw{$\wt f$-cuboxes} be defined similarly: these are sets of the form $\ov{b}\cap \wt U _1$ where $b$ is a structural chessboard box of $\wt f$. The map $\wt f$ is a bijection from such a set to the closed upper or lower half plane.
We can endow $\wt U_1$ with an infinitesimal box-Euclidean metric, by pulling-back by $\wt f$ the canonical Euclidean metric element $|dz|$ on the complex plane.
Recall that $f\circ E= E\circ \wt f$, thus we get the following compatibility statements. The projection by $E$ of an $\wt f$-cubox is a cubox.\footnote{The connected components of the preimage of a cubox by $\wt f$ are $\wt f$-cuboxes with one notable exception where we get a chain of cuboxes that meat at corners.}
 The box-Euclidean metric element on $\wt U_1$ is the pull-back by $E$ of the box-Euclidean metric element on $U_1^*$.

The following result is not used here, but we find it interesting:
\begin{lemma}\label{lem:sccubox}
A connected union of cuboxes that includes $b_*$ is simply connected if we add $\{0\}$ to the union.
\end{lemma}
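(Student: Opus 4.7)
The plan is to apply the following topological fact to $\Omega = U_1$ and $K = X\cup\{0\}$: a closed connected subset $K$ of a simply connected planar domain $\Omega$ is simply connected if and only if no connected component of $\Omega\setminus K$ is relatively compact in $\Omega$. The hypotheses are quickly checked: $U_1=\phi_1(\D)$ is a topological disk by definition of $\cal F$; $X$ is closed in $U_1^*$ as a union of cuboxes; and since $b_*\subset X$ we have $0\in\overline{b_*}\subset\overline{X}$, so $X\cup\{0\}$ is the closure of $X$ in $U_1$, hence closed and connected there.

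The heart of the proof is to show that every connected component $Y$ of $U_1\setminus(X\cup\{0\}) = U_1^*\setminus X$ fails to be relatively compact in $U_1$. Since $U_1^*\setminus X$ is open, $Y$ is an open set in $U_1$ and cannot be contained in the one-dimensional structural chessboard graph; it therefore contains some open chessboard box $\beta_j$ whose associated cubox is not one of those that make up $X$. The hypothesis $b_*\subset X$ forces $j\ne 0$, and since $\beta_0$ is itself an open neighborhood of $0$, we have $0\notin\overline{\beta_j}$; consequently the closure of $\beta_j$ in $U_1$ coincides with the cubox $b_j=\overline{\beta_j}\cap U_1^*$.

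To finish I would invoke the chessboard description recalled just before the lemma: a suitable branch of $F:=\frac{1}{2i\pi}\log f$ maps $\beta_j$ conformally onto one of the two half planes $\C^\pm$ delimited by the horizontal line through $\frac{1}{2i\pi}\log v_f$, and this extends to a continuous bijection from $b_j$ onto the corresponding closed half plane. As a closed half plane is not compact, neither is $b_j$, so $\beta_j$ is not relatively compact in $U_1$, and \emph{a fortiori} neither is $Y$. The delicate point is verifying that $F|_{b_j}$ really is a homeomorphism near the critical points of $f$ lying on $\partial\beta_j$, where $F$ is only locally a $d$-th power: one has to check that the restriction of this local model to the single sector corresponding to $\beta_j$ provides a continuous inverse, so that the non-compactness of the closed half plane genuinely transfers to $b_j$.
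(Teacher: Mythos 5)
You take a genuinely different route from the paper. The paper's own argument is a retraction: removing the loop $\partial b_*$ from the structural chessboard graph of $U_1$ leaves a tree, and since each cubox other than $b_*$ is conformally a closed half plane and hence deformation retracts onto its boundary arc in the graph, the union $X\cup\{0\}$ deformation retracts onto a contractible subtree, which gives simple connectivity directly. Your route instead goes through the planar criterion that a closed connected subset $K$ of a simply connected domain $\Omega$ is simply connected iff no component of $\Omega\setminus K$ is relatively compact, combined with the observation that each complementary component must swallow an entire cubox that is conformally a closed half plane.

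Both approaches reach the conclusion, but yours carries some hidden weight worth flagging. The ``topological fact'' you invoke is not a named theorem, and the direction you need (no relatively compact complementary component implies simply connected) yields directly only that $H_1(K;\Z)=0$, via a winding-number argument; upgrading this to $\pi_1(K)=0$ requires some tameness of $K$ (e.g., $\pi_1(K)$ free, or $K$ locally contractible). This does hold here — $K$ is a locally finite union of closed $2$-cells glued along arcs and corners, plus the single point $0$ at which $b_*\cup\{0\}$ is a full topological disk — but it should be stated, since the criterion is delicate in general (the ``only if'' direction even fails for the Warsaw circle). By contrast, the retraction argument is self-contained. Finally, the ``delicate point'' you raise at the end about the homeomorphism near critical points is not needed: to see that $b_j$ is non-compact it suffices that $F|_{b_j}$ is a continuous \emph{surjection} onto the closed half plane, which the paper already asserts; a continuous surjection from a compact set onto a non-compact one is impossible, and no continuous inverse is required.
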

\begin{proof} Remove the loop from the parabolic structural chessboard graph of $U_1$. Then we get a tree (an infinite tree), on which the union retracts to a connected subset, which is thus simply connected and homotopically equivalent to the union.
\end{proof}

\begin{figure}
\begin{tikzpicture}
\node at (0,0) {\includegraphics[width=10cm]{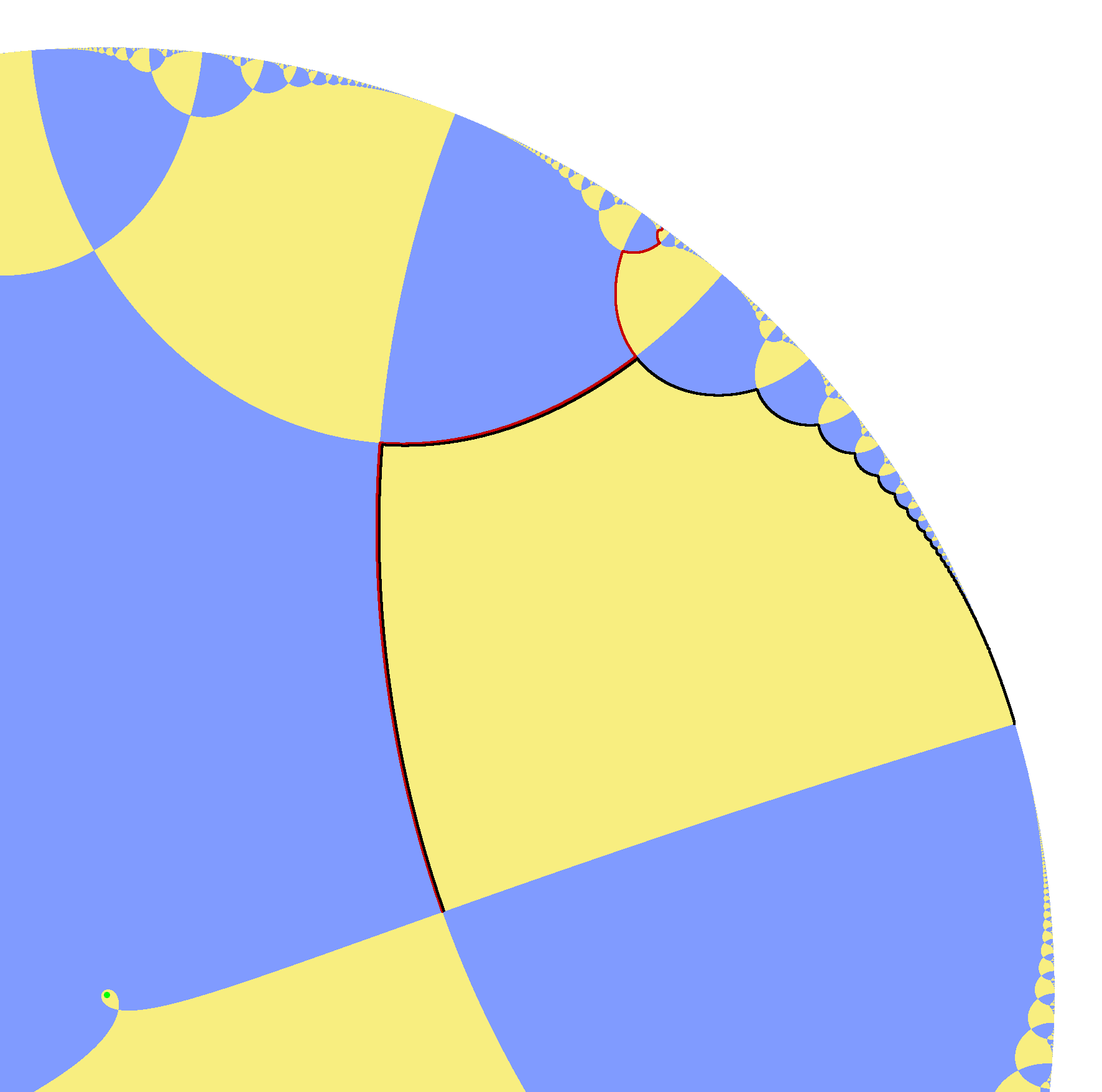}};
\end{tikzpicture}
\caption{A slow path in black, a quick path in red. The first one stays on the boundary of a single cubox. The other one turns alternately left and right at every corner. Here speed is to be understood as the order of magnitude of the hyperbolic distance from the origin, when the curve is followed at constant box-Euclidean speed (on this picture, it takes the same time to get from a corner to the next one): in the first case it is logarithmic, in the second case linear.}
\label{fig:path-path}
\end{figure}
Note that there are paths in $U_1$ reaching the boundary, and whose compact subsets are of hyperbolic diameter comparable to their box euclidean length: see \Cref{fig:path-path}. An important task is thus to formulate and prove a combinatorial statement (\Cref{lem:bddchainlenght}) about the cuboxes that the immediate basin $A$ may cross, and that prevents this kind of behaviour for paths contained in $A$.

Define a chain of boxes to be a finite sequence $b_0$, $b_1$, \ldots, $b_n$ of cuboxes such that two consecutive elements have non empty intersection, i.e.\ consecutive boxes are equal or share a side or a corner within $U_1$. The integer $n$ is called the length of the chain. With our convention there are $n+1$ cuboxes in a chain of length $n$. Define the \keyw{combinatorial distance} between cuboxes as the minimal length of chains from one to the other. With our convention, this is a distance.

\begin{lemma}\label{lem:Lc} Let $b$, $b'$ be cuboxes and consider points $x\in b$ and $x'\in b'$. Then the box-Euclidean distance $L$ between $x$ and $x'$ and the combinatorial distance $n$ between $b$ and $b'$ satisfy:
\[ n\leq \lfloor L\rfloor+1
.\]
\end{lemma}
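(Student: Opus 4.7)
The plan is to reduce the lemma to a short-path base case and then to iterate it along a near-geodesic.

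First I would fix $\epsilon>0$, pick a rectifiable path $\gamma:[0,L']\to U_1^*$ from $x$ to $x'$ of box-Euclidean length $L'\leq L+\epsilon$, parametrized by arc length, and set $m=\lfloor L\rfloor+1$ (so $m\geq L'$ for $\epsilon$ small). I would subdivide $[0,L']$ into $m$ consecutive subarcs of length $\leq 1$ with sampling points $y_0=x,y_1,\ldots,y_m=x'$. Since $\gamma$ meets the $1$-dimensional chessboard graph only at finitely many times, a small perturbation of the subdivision times ensures each intermediate $y_j$ ($1\leq j\leq m-1$) lies in the \emph{interior} of a unique cubox, which I call $b_j$.

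The heart of the argument would be the following base case: for any path $\eta$ in $U_1^*$ of box-Euclidean length $\leq 1$ from $y$ to $y'$, and any cubox $\mathfrak b\ni y$, there exists a cubox $\mathfrak b'\ni y'$ combinatorially adjacent to $\mathfrak b$ (equal, or sharing a side or a corner). To prove it, I would lift $\eta$ to $\wt\eta$ in $\wt U_1=E^{-1}(U_1)$ starting in a lift of $\mathfrak b$, and examine its $\wt f$-image $\delta=\wt f\circ\wt\eta$, a Euclidean curve of length $\leq 1$ in $\C$. Then $\delta$ lies in a closed disk $D$ of radius $\leq 1$, which meets at most two consecutive critical values $\wt v+k,\wt v+k+1$ of $\wt f$ (they are spaced by $1$ on the horizontal line through critical values). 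The connected component $\Omega$ of $\wt f^{-1}(D)$ containing the starting lift of $y$ is simply connected (as $D$ is) and is a branched cover of $D$ with at most two branch points; the chessboard graph inside $\Omega$ is $\wt f^{-1}(D\cap L_v)$, a planar graph whose vertices are those branch points and whose edges are $\wt f$-preimages of a diameter of $D$. A short case analysis over the three configurations (zero, one, or two branch points inside $\Omega$) shows that the cuboxes containing the endpoint lifts of $\wt\eta$ either coincide, share a side, or share a corner at one of the branch points; projecting by $E$ back to $U_1^*$ preserves this adjacency.

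Given the base case, I would assemble the chain $b=b_0,b_1,\ldots,b_m=b'$ as follows. For $j=1,\ldots,m-1$ in order, apply the base case forward to the subarc from $y_{j-1}$ to $y_j$ starting from $b_{j-1}$ already chosen: it returns a cubox containing $y_j$ adjacent to $b_{j-1}$, which by uniqueness (as $y_j$ is interior) is $b_j$. For the last subarc, from $y_{m-1}$ to $x'=y_m$, apply the base case in \emph{reverse}, starting from the given cubox $b'\ni y_m$: it produces a cubox containing $y_{m-1}$ adjacent to $b'$, which by the same uniqueness is forced to be $b_{m-1}$. Hence $b=b_0,b_1,\ldots,b_{m-1},b_m=b'$ is a valid chain of length $m=\lfloor L\rfloor+1$, completing the argument.

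The main obstacle I expect is the case analysis in the base case when $D$ contains two consecutive critical values: the chessboard graph inside $\Omega$ then has a slightly more elaborate planar structure (two branch points joined by arcs of a diameter-preimage), and one needs to verify adjacency of the endpoint cuboxes by invoking the key geometric fact that at each critical point of multiplicity $d$ the $2d$ cuboxes meeting there are pairwise combinatorially adjacent via the shared corner.
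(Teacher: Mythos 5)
Your overall architecture (a base case for paths of box-Euclidean length less than one, assembled by subdivision into $\lfloor L\rfloor+1$ pieces) is the same as the paper's, and your chain-assembly step is correct -- in fact a bit more careful than the paper's in how it matches the prescribed cuboxes $b$ and $b'$ at the two ends. The place where you diverge, and where there is a genuine gap, is the base case.

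The paper's base case uses a slit plane: since the image curve $\delta=\wt f\circ\wt\eta$ has Euclidean length $<1$, its intersection with the horizontal line through the critical values spans an interval of length $<1$, hence fits in $(\wt v+k-1,\wt v+k+1)$ for a single $k$. Removing the two horizontal half-lines produces a simply connected region whose \emph{only} critical value is $\wt v+k$; each preimage component is then contained in $2$ or $2d$ $\wt f$-cuboxes sharing one point, and adjacency of the endpoint cuboxes is immediate. You instead enclose $\delta$ in a disk $D$, which can contain \emph{two} consecutive critical values, and then appeal to a ``short case analysis'' over the number of branch points in the preimage component $\Omega$. That analysis, as you describe it, is purely topological, and it does not suffice: when $\Omega$ has branch points over both critical values, there genuinely are pairs of cuboxes in $\Omega$ at combinatorial distance $2$ (take a cubox touching only $p_1$ on the far side from $p_2$, and a cubox touching only $p_2$ on the far side from $p_1$; e.g.\ for $\wt f(z)=(z^3-3z+2)/4$ with critical values $0,1$ and critical points $\pm1$, the upper-right and upper-left regions are not adjacent). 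What rules these out as endpoints of $\wt\eta$ is not the topology of $\Omega$ but the length constraint on $\delta$ -- the crossings of $\delta$ with the critical line are confined to an interval of length $<1$, so $\wt\eta$ cannot travel from the $p_1$-side of the graph to the $p_2$-side. This metric observation is exactly what the slit-plane picture encodes, and your disk framing loses it. To close the gap you would have to reintroduce it, at which point the disk is a detour and you recover the paper's proof. Two smaller slips: a curve of length $\leq 1$ lies in a disk of radius $\leq 1$, but a radius-$1$ disk can contain \emph{three} consecutive integer-spaced critical values; you are saved only because your subarcs actually have length strictly $<1$. And the graph inside $\Omega$ is the preimage of a \emph{chord} of $D$, not a diameter, unless $D$ happens to be centred on the critical line.
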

\begin{proof}
First case: $L<1$. Recall that the set of critical values of $\wt f$ is of the form $\wt v+\Z$ for some $\wt v$ and that $\wt f$ has no asymptotic value over $\C$.
Consider a path $\gamma$ from $x$ to $x'$ and of box-Euclidean length $<1$.
Let $\wt \gamma$ be a lift of $\gamma$ by $E$. 
The image of $\wt \gamma$ by $\wt f$ has Euclidean length $<1$ in the plane. There will therefore exist $k\in \Z$ such that $\wt \gamma$ is completely contained in the plane minus the translate of $]-\infty,-1]\cup[1,+\infty[$ by $\wt v+k$.
The connected components of the pre-image by $\wt f$ of such a slit plane are contained in unions of $2$ or $2d$ of $\wt f$-cuboxes that touch at a common point: this is because there is at most one critical value (and no asymptotic value) of $\wt f$ in the slit plane.
Now $\wt\gamma$ is contained in such a component hence $n\leq 1$.

In the general case, we could use that there is a shortest path from $x$ to $x'$ (see \Cref{lem:am}) but we can do here without that information: consider a path $\gamma$ from $x$ to $x'$ of length close enough to $L$ so as to have the same integer part as $L$.
Let $\epsilon>0$ and cut the path into pieces of length $1-\epsilon$, except maybe for the last piece for which we require length $\leq 1-\epsilon$.
Let $k$ be the number of pieces obtained: if $\epsilon$ small enough, $k=\lfloor L\rfloor+1$.
Let $x_0$, \ldots, $x_k$ denote the sequence of starting and end points of these pieces.
Let $b_0=b$, $b_k=b'$ and for $0<n<k$ let $b_n$ be a cubox containing $x_n$.
From the first case we get that the combinatorial distance between $b_n$ and $b_{n+1}$ is $\leq 1$ for $0\leq n<k$.
The combinatorial distance between $b$ and $b'$ is thus $\leq k$.
\end{proof}

For $n\geq 0$, consider the set $\cal B_n$ of cuboxes at combinatorial distance $\leq n$ of the cubox $b_*$. 
Note that for $n\geq 2$, the set $\cal B_n$ is a infinite union of cuboxes.
The next lemma is illustrated by \Cref{fig:situ}.

\begin{figure}
\includegraphics[width=11.9cm]{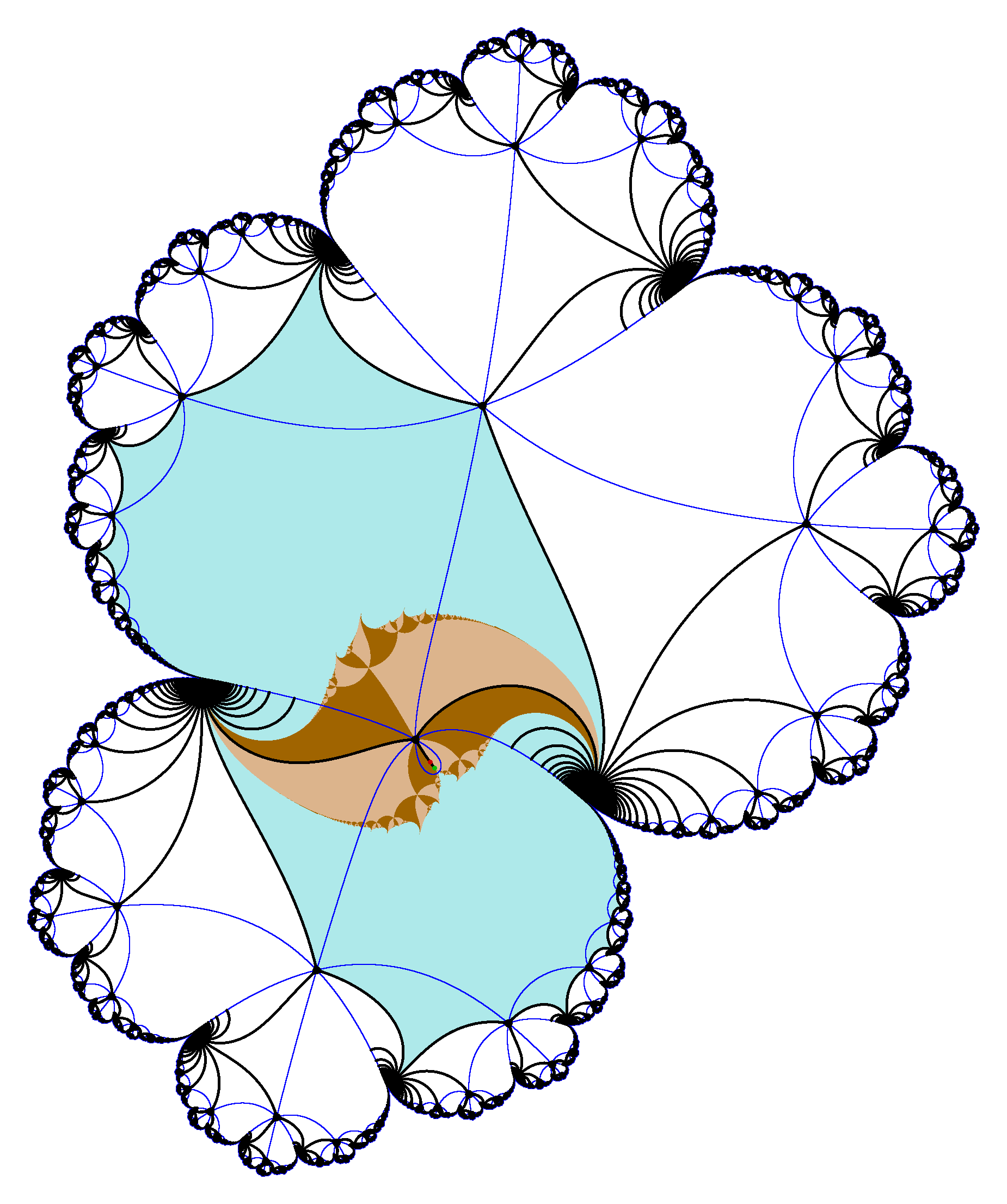}
\caption{Example for $d=3$.
We chose some $f\in \cal F$ (more precisely we took the first renormalization of $z\mapsto z^3+c$ with $c$ so that there is a fixed point tangent to the identity).
The blue graph is the structural chessboard of $f$.
The origin is marked by a tiny green dot and the critical value of $f$ by a red one.
It is a non-linearizable parabolic point of $f$.
We drew in brown shades the dynamical chessboard of $f$ in the immediate basin $A$ of this point. The dark lines are the set $f^{-1}(\cal C)$ where $\cal C$ is the principal curve (see the text).
The light blue set is the component containing $A$ of $\dom(f)$ minus the all the dark lines that are not contained in $A$.
The picture has been accurately drawn, the curve $\cal C$ is a small edge part in the black graph, from the green dot to the red one.
It is very close to be a segment. As a consequence, $f^{-1}(\cal C)$ is formed of curves that are very close to intrinsic verticals of cuboxes.
It seems therefore that the light blue is completely contained in $\cal B_2$. This is probably the case for all maps in $\cal F$ for $d=3$ because the loop is very small.
It may still hold when $d$ gets close to $\infty$, but that would require a more detailed specific analysis as in~\cite{IS}, starting from the fact that $v_f$ is close to $0$ ($|v_f|\in [\sim\, 1/140,\sim\, 1/20]$, see page~\pageref{here:gamma} in \Cref{subsec:viz}).
We decided instead to resort to general arguments instead: in the proof of \Cref{lem:bddchainlenght} we consider cases where $\cal C$ may be very far from a segment.
}
\label{fig:situ}
\end{figure}

\begin{lemma}\label{lem:bddchainlenght}
There exists $c_4\in\N$ such that $\forall f\in \cal F$, $A\subset \cal B_{c_4}$.
\end{lemma}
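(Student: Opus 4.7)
\emph{Proof plan (for Lemma~\ref{lem:bddchainlenght}).} The approach is a compactness argument combined with Lemma~\ref{lem:Lc}. The goal is to bound, uniformly over $f\in\cal F$, how far (in combinatorial distance) a cubox meeting $A[f]$ can lie from $b_*[f]$.

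I would first fix $f\in\cal F$ and show that $A[f]$ is contained in finitely many cuboxes, all located in some $\cal B_{n(f)}[f]$. The structural chessboard graph of $U_1$ restricted to $A$ is the set $f|_A^{-1}(\{|w|=|v_f|\})\cap A$, and since $f|_A:A\to A$ is a branched covering of degree $d$ with only one critical point (namely the unique critical point of $f$ lying in $A$), the conjugacy $\zeta:A\to\D$ from $f|_A$ to $B_d|_\D$ provided by Theorem~\ref{thm:s1} identifies this graph with $B_d^{-1}(\{|w|=|v_{B_d}|\})\cap\D$, which is a locally finite and explicitly describable graph in $\D$. Combined with Lemma~\ref{lem:traptime} (the forward orbit of $v_f$ enters $b_*$ in uniformly bounded time) and Lemma~\ref{lem:PSdist} (the orbit stays at uniformly bounded Euclidean and $U_1^*$-hyperbolic distance), the principal curve $\cal C[f]$ lies in finitely many cuboxes and the remaining components of $A\setminus\cal C$ arrange around $\cal C$ with finite combinatorial spread, so $n(f)<\infty$.

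For the uniform bound, I would argue by contradiction using the sequential compactness of $\cal F$ (Section~\ref{subsub:transff}). Suppose $f_k\in\cal F$ satisfies $n(f_k)\to\infty$. Extract a convergent subsequence $f_k\to f$ in $\cal F$. The structural chessboard of $U_1[f]$, being the preimage under $f$ of the explicit circle $\{|w|=|v_f|\}$, varies continuously with $f$; similarly, $A[f]$ varies continuously thanks to Proposition~\ref{prop:conti} on the continuous dependence of $\Phi_\at$ and $\Psi_\rep$. Consequently, the finite combinatorial description of the cuboxes meeting $A[f]$ (as determined by the finite graph $\zeta^{-1}(B_d^{-1}(\{|w|=|v_{B_d}|\}))$) is stable under small perturbations of $f$: cuboxes of $f_k$ that meet $A[f_k]$ must, for $k$ large, lie within a cubox-neighborhood of $\cal B_{n(f)+1}[f]$, giving $n(f_k)\leq n(f)+O(1)$ and contradicting $n(f_k)\to\infty$. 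Taking $c_4:=\sup_{f\in\cal F}n(f)$ yields the lemma.

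\emph{The main obstacle} is the verification that $n(f)<\infty$ for each individual $f$: while the universality statement (Theorem~\ref{thm:s1}) transports the chessboard combinatorics from $B_d|_\D$ to $f|_A$, one must still check that these finitely many topological pieces of $A$ do not get spread across infinitely many cuboxes of the ambient $U_1$ --- i.e., that crossing a single edge of the chessboard of $A$ only takes us into an adjacent cubox of $U_1$. This follows because edges of the chessboard of $A$ are arcs of the structural chessboard graph of $U_1$, and thus two cuboxes of $U_1$ separated by such an arc are combinatorially adjacent by the very definition of the adjacency relation.
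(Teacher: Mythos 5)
Your proposal contains a genuine gap, and I would flag two distinct problems. First, you conflate the dynamical and structural chessboards. The conjugacy $\zeta:A\to\D$ of Theorem~\ref{thm:s1} transports the \emph{dynamical} chessboard of $A$ (the preimage by $\Phi_\at$ of the horizontal line through $\Phi_\at(v_f)$, via Corollary~\ref{cor:univ}), but it does \emph{not} send the Euclidean circle $\{|w|=|v_f|\}\cap A$ to the circle $\{|w|=|v_{B_d}|\}\cap\D$: $\zeta$ is an analytic conjugacy of the dynamics, not an ambient rigid motion. The cuboxes and the combinatorial distance defining $\cal B_n$ use the \emph{structural} chessboard of $U_1$, the $f$-preimage of that Euclidean circle, which is not a dynamical invariant, so your assertion that $\zeta$ identifies $f|_A^{-1}(\{|w|=|v_f|\})\cap A$ with $B_d^{-1}(\{|w|=|v_{B_d}|\})\cap\D$ is false and your finiteness argument for $n(f)$ collapses there. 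Even intrinsically, the claim that ``the remaining components of $A\setminus\cal C$ arrange around $\cal C$ with finite combinatorial spread'' \emph{is} the content of the lemma and is left unproved; there is no a priori obstruction to $A$ intersecting infinitely many cuboxes, since cuboxes accumulate along all of $\partial U_1$.

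Second, the compactness step does not go through. Even if $n(f)<\infty$ for each $f$, passing to a uniform bound by extracting $f_k\to f$ in $\cal F$ requires some upper semicontinuity of $n(\cdot)$, and none is available: convergence in $\cal F$ is locally uniform on the domain and carries essentially no control on the behaviour of $\partial U_1[f_k]$, which is exactly where the cubox count could diverge, so ``$n(f_k)\le n(f)+O(1)$ for large $k$'' is unjustified. The paper's proof avoids both issues: it shows by a normal-family argument that the $E$-lift of the principal curve $\cal C$, away from the attracting petal, has Euclidean length $\le L_1$ uniformly over $\cal F$; by Lemma~\ref{lem:Lc} this bounds the combinatorial diameter of each component of $f^{-1}(\cal C)$; it then considers the component $G_2\supset A$ of $U_1^*$ minus the other components of $f^{-1}(\cal C)$, and proves via an isotopy of $\C$ fixing the singular values $\{0,v,\infty\}$, lifted through $f$, that the starting points of $\partial G_2$ all lie in $\cal B_1$. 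This isotopy-lifting device, which is what actually prevents $A$ from spreading across unboundedly many cuboxes, is the central idea absent from your proposal.
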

\begin{proof}
Let us consider the principal curve $\cal C=\cal C[f]$ defined in \Cref{prop:ppalcurv}, starting from $v=v_f$ and ending at $0$.
We recall it is a connected component of the preimage by $\Phi_\at=\Phi_\at[f]$ of $\Phi_\at(v)+[0,+\infty[$, that starts from $v$ and ends on the parabolic point (
and is contained in the common boundary in $A$ of the two principal dynamical chessboard boxes of $f$), that $f(\cal C)\subset \cal C$ and hence $\cal C$ contains the orbit of the critical value.
Let us use \Cref{prop:cf}, that provides a disk $D_\at=D_\at[f]$ of uniform diameter $r_0$ contained in the basin of $f$ and which eventually traps any orbit in the parabolic basin.
By \Cref{lem:traptime}, the number of iterates needed for the critical value $v$ to enter $D_\at$ is bounded over $\cal F$: $\exists n_0\geq 0$ such that $\forall f\in\cal F$, $f^{n_0}(v)\in D_\at[f]$.
The second point of \Cref{prop:cf} implies that the subset $\cal C'$ of $\cal C$ corresponding to $\Phi_\at(v)+[n_0,+\infty[$ satisfies $\cal C'\subset D_\at$.\footnote{The restriction of $\Phi_\at$ to the union $W$ of the principal chessboard boxes of $A$ and their common boundary (minus endpoints) maps $W$ univalently to the complement $V$ of $\Phi_\at(v) + ]-\infty,-1]$. The normalized attracting Fatou coordinate $\Phi$ of \Cref{prop:cf} necessarily coincides with $\Phi_\at$ on some smaller petal. Because of the shape of $U=\Phi(D_\at)$, we necessarily have $U\subset V$, for otherwise one can prove there would be a critical point of $f$ in $U$, leading to a contradiction with injectivity of $\Phi$ claimed in the second point of \Cref{prop:cf}. It follows that $D_\at\subset W$, and then that $\Phi = \Phi_\at$ on $D_\at$.}


We will also require $r_0<|v|$.
Then the set $\cal C'\subset D_\at$ does not cross the circle of equation $|z|=|v|$.
Now $\cal C$ is the union of $\cal C'$ and of a connected component of the preimage by $\Phi_\at$ of the segment $\Phi_\at(v)+[0,n_0]$.
As $f$ varies in $\cal F$, the maps $\Phi_\at-\Phi_\at(v)$ all have an inverse branch defined on a common open connected neighborhood $V$ of the segment $S=[0,n_0]$, mapping $0=\Phi_\at(v)-\Phi_\at(v)$ back to $v$.
This family is normal.\footnote{There are many reasons for this;  for instance one can use continuity of $f\mapsto \Phi_\at[f]$ together with compactness of $\cal F$; or remark that it is $1$-Lipschitz, hence equicontinuous, from the hyperbolic norm on the chosen neighborhood $V$ of the segment $S$ to the metric $|dz|/4|z|$, because it maps in the simply connected set $A$ that avoids $0$ so one can use the Schwarz-Pick inequality and Koebe's one quarter theorem.}
It also avoids $0$. Take a lift $\wt{\cal C}$ of $\cal C$ by $E:z\mapsto e^{2\pi i z}$.
This is a curve starting from a preimage $\wt{v}$  of $v$ and ending at $\infty$ tangentially to a vertical line.
The part corresponding to $\cal C'$ lives in the upper half plane ``$\Im(z)>\Im(\wt v)$'' because we took $r_0<|v|$.
The rest is the image of $S$ by a normal family defined in $V$.
In particular it has bounded Euclidean length. Let $L_1$ be a bound, independent of $f\in \cal F$.

There are infinitely many connected component of $f^{-1}(\cal C)$. Consider any. It consists either in a single curve or in a union of $d$ curves starting from a common critical point of $f$. Each of these curves has a part mapped in $\cal C\setminus \cal C'$ by $f$ that has box-Euclidean length $\leq L_1$, and a part mapped to $\cal C'$ by $f$ that is completely contained in one box.
By \Cref{lem:Lc}, the union of cuboxes visited by the full curve has  
\begin{equation}\label{eq:comb_diam}
\text{combinatorial diameter}\leq \lfloor L_1\rfloor+1.
\end{equation}

The lifted immediate basin $\wt A$ contains exactly one component of $E^{-1}f^{-1}(\cal C)$ and is disjoint from all other components.
We claim that $A$ is contained in $\cal B_{\lfloor L_1\rfloor+2}$: indeed consider the union $G_1$ of the $2d-1$ cuboxes which contain the critical point in $A$. It is contained in $\cal B_1$. The immediate basin $A$ contains exactly one component of $f^{-1}(\cal C)$.
Let $G_2$ be the component containing $A$ of the complement in $U_1^*$ of the union of all other components of $f^{-1}(\cal C)$ (\Cref{fig:situ} may help).
It is enough to prove that $G_2$ is contained in $\cal B_{\lfloor L_1\rfloor+2}$.

The boundary of $G_2$ in $U_1^*$ consists in curves all of whose starting points $s$ are preimages of $v$. We claim that they all belong to $\cal B_1$. Indeed the curve $\cal C$ is isotopic in $\C$ to the straight segment from $v$ to $0$ by an isotopy that does not move its endpoints. This isotopy extends to the whole Riemann sphere into an isotopy fixing $\infty$. The singular values of $f$ are $\{0,v,\infty\}$ and thus the isotopy does not move the singular values of $f$. Hence the extended isotopy lifts by $f$ to an isotopy of $U_1$. This lifted isotopy does not move the points in $f^{-1}(v)$. Now a starting point $s$ as above can be linked to the unique critical point $c_0\in b_*$ by a path within $G_2$ (except at its starting point $s\in\partial G_2$). The lifted isotopy deforms this path into a path with the same endpoints and that is completely contained in the complement of $f^{-1}([0,v])$. The image by $f$ of the new path is contained in $\C\setminus [0,v]$ and goes from $v$ to $v$. It is homotopic to a path completely contained $|z|>1$. The homotopy lifts by $f$. Hence $s$ and $c_0$ are linked by a path contained in a cubox. Whence the claim.

Consider any point $z\in G_2$. If $z$ belongs to $f^{-1}(\cal C)$ then it belongs to the unique component of $f^{-1}(\cal C)$ in $G_2$, which is the one attached to the critical point in $A$, which belongs to $b_*$. Hence $z\in \cal B_{\lfloor L_1\rfloor+1}$ by the bound~\eqref{eq:comb_diam} above. Otherwise, $f(z)\notin \cal C$. Then $f(z)\in H$ for $H=\ov{\D}\setminus\{0\}$ or $H=\C\setminus \D$ (if $|f(z)|=1$ then either can be chosen). 
There is a path $\gamma\subset H$ from $f(z)$ to a point of $\cal C\setminus\{0\}$ (which may be its endpoint $v$). Let $b$ be the (unique) cubox containing $z$ and such that $f(b)=H$.
The path $\gamma$ lifts by $f$ to a path within $b$ from $z$ to a point $w$ in $f^{-1}(\cal C)$, and $w$ is either in $G_2$ or in $\partial G_2$.
We saw that the  component of $f^{-1}(\cal C)$ that $w$ belong to is attached to a point in $f^{-1}(v)$ that belongs to $\cal B_1$. By the bound~\eqref{eq:comb_diam}, we get that $b\in \cal B_{\lfloor L_1\rfloor+2}$. This ends the proof that $G_2\subset\cal B_{\lfloor L_1\rfloor+2}$.
\end{proof}

Let the combinatorial distance between two points of $\dom \wt f$ be the smallest combinatorial distance of boxes containing them. Two important facts used in the proof of the following lemma are that the chessboard graph of $\dom \wt f$ is a tree and that the boundary in $\dom\wt f$ of a $\wt f$-cubox is a connected subset of this graph.
\begin{lemma}\label{lem:am}
For any two points $w,z\in \dom \wt f$ then there is a unique shortest path $\gamma'$ from $w$ to $z$ for the box-Euclidean distance.
If $m$ denotes the combinatorial distance from $w$ to $z$ then $\gamma'$ can be cut into $\leq m+1$ connected pieces, each of which stays in some cubox.
\end{lemma}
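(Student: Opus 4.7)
The box-Euclidean metric on $\dom\wt f$ is a singular flat metric: the restriction of $\wt f$ to each cubox $b$ is an isometry onto a closed Euclidean half plane, and any two cuboxes are glued along a subset of the horizontal line $\wt v+\R$. Within a single cubox the half plane is convex, so two points admit a unique length-minimiser, namely the $\wt f$-preimage of the Euclidean segment between their images. This handles the case $m=0$ and serves as the building block for the general case.

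For general $w,z$ at combinatorial distance $m$, the claim is that a shortest path must visit the cuboxes of a minimal combinatorial chain $b_0,b_1,\ldots,b_m$ from the cubox of $w$ to that of $z$, and must pass through each exactly once. The crucial input is the hypothesis that the chessboard graph is a tree with cubox-boundaries as connected subtrees. If a candidate path $\gamma$ visits some cubox $b$, exits it into a neighbouring $b'$ through a boundary arc $e\subset b\cap b'$, and later returns to $b$ through the same arc, then one can shorten $\gamma$ by replacing the excursion with the $\wt f$-preimage of the Euclidean segment in $\wt f(b)$ joining the exit point to the re-entry point; this segment stays in the closed half plane $\wt f(b)$ and hence pulls back into $b$, and it has length at most that of the excursion. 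Iterating this reduction removes every backtrack, and by the tree property the resulting non-backtracking walk is unique and coincides with the minimal chain.

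Once the chain is fixed, $\gamma$ crosses each intersection $e_i=b_i\cap b_{i+1}$ at exactly one point $p_i$, and in each $b_i$ the subpath from $p_{i-1}$ to $p_i$ (with $p_{-1}=w$, $p_m=z$) must itself be length-minimising, hence equal to the $\wt f$-preimage of the Euclidean segment between $\wt f(p_{i-1})$ and $\wt f(p_i)$. This gives the decomposition into $\leq m+1$ pieces. The total length is $\sum_{i=0}^{m}|\wt f(p_i)-\wt f(p_{i-1})|$, minimised over $\wt f(p_i)$ in the closed connected subsets $\wt f(e_i)\subset\wt v+\R$; after ``unfolding'' by alternately reflecting the half planes $\wt f(b_i)$ across the line $\wt v+\R$, this becomes a standard shortest-broken-line problem in one Euclidean plane, solved uniquely by strict convexity of the Euclidean norm.

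The main obstacle is the backtrack-straightening step, because the shared arc $e$ may contain a critical point of $\wt f$, giving a cone singularity of angle $2\pi d\geq 4\pi$ in the flat metric. The argument survives because $2\pi d\geq 2\pi$: the ambient flat surface has non-positive curvature concentrated at cone points, so a straight Euclidean segment in the half plane $\wt f(b)$ never exceeds, and generically strictly shortens, any competing detour through the cone point region. Rigorous treatment of this step — essentially the observation that at cone angles $\geq 2\pi$ every local straightening is length-non-increasing — is where most of the care in writing up the lemma will go.
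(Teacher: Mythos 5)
The key gap is in the combinatorial step, and it is not the one you flag at the end of your write-up.

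You argue that, after iterated backtrack-straightening, the sequence of cuboxes visited by a shortest path is a non-backtracking walk and that ``by the tree property the resulting non-backtracking walk is unique and coincides with the minimal chain.'' The tree property in the paper concerns the \emph{chessboard graph} of $\dom\wt f$, together with the fact that each cubox boundary is a connected subtree. It does \emph{not} say anything about the \emph{adjacency graph of cuboxes}, and that graph is very far from a tree: around a critical point of local degree $d$ there are $2d$ cuboxes, every pair of which meets at that single branch point, giving a complete graph $K_{2d}$ as a subgraph of the adjacency graph. Consequently there are many non-backtracking walks between two given cuboxes, of many different lengths, so removing backtracks does not bound the length of the chain of cuboxes visited by the path; in particular it does not give the ``$\leq m+1$ pieces'' conclusion. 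To close this you would need an additional geometric argument (in the CAT(0) spirit you invoke at the end) showing that a geodesic that crosses a branch point does not detour through an extra wedge; your proposal does not contain that argument, and straightening excursions that ``exit and re-enter through the same arc'' does not cover it, since a detour around a cone point enters and leaves $b$ through different boundary arcs.

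The paper sidesteps this issue entirely. It fixes an arbitrary chain $b_0,\dots,b_m$ of combinatorial length $m$ joining a cubox of $w$ to a cubox of $z$, and proves (via uniqueness of injective paths in the tree, applied to the chessboard graph itself) that \emph{any} such chain must cover the shortest path $\gamma'$. It then greedily cuts $\gamma'$ into pieces, each contained in one of the $m+1$ cuboxes of that fixed chain, and uses the tree property of the graph to show no cubox recurs; this immediately gives at most $m+1$ pieces. Your approach, by contrast, needs to understand \emph{which} cuboxes the geodesic actually visits, which is a harder and unresolved question in your write-up.

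Two secondary points. First, the base case $m=0$ is not ``immediate from convexity of the half plane'': convexity of $\wt f(b)\subset\C$ does not automatically make $b$ geodesically convex in the ambient singular flat surface $\dom\wt f$; one must rule out shortcuts through the complement of $b$. The paper does this via a global $1$-Lipschitz retraction onto the chessboard graph (whose image of an excursion lands in $\partial b$ because $\partial b$ is a connected subtree); your CAT(0) angle condition ($\pi$-angle wedges inside cone angle $\ge 2\pi d\ge 4\pi$) can also do it, but that is an argument you need to make, not a triviality. Second, your ``unfolding'' argument for uniqueness is sound once the cubox chain traversed is fixed, but uniqueness of the overall minimiser depends on having first pinned down that chain, which loops back to the gap above; the paper instead proves uniqueness directly by case analysis on $I_w\cap I_z$.
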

\begin{proof}\footnote{Special thanks to Arnaud Mortier for a great help in this proof.}
Let us define a projection from $\dom \wt f$ to the chessboard graph of $\wt f$ as follows.
Recall that each $\wt f$-cubox $b$ is homeomorphically mapped by $\wt f$ to a closed half plane and that the box-Euclidean metric element is sent to the canonical Euclidean element $|dz|$ of $\C$.
The vertical projection on this half plane to its boundary is $1$-Lipschitz and can be conjugated back to a projection from $b$ to its boundary in $\dom \wt f$.
The union of all these projections for all cuboxes $b$ is easily seen to match at the boundary points and corners, and yields a projection function from $\dom\wt f$ to the chessboard graph, which is locally $1$-Lipschitz for the box-Euclidean metric (the only place where checking this claim is not trivial is at corners). In particular it is $1$-Lipschitz for path-length. 

Given any path $\gamma$ from $w$ to $z$, if the path meets the chessboard graph then the part from its first intersection with the graph to its last can be projected as above.
The new path is strictly shorter unless the part was already contained in the graph.
This part can be further simplified into an injective path within the graph, strictly shorter unless it was already injective.

If moreover both $w$ and $z$ belong to a given cubox $c$, the first and last point in the graph are also in $c$, and since the graph is a tree and the boundary of a cubox is a connected subset of this tree, the simplified part is necessarily contained in this boundary, hence the simplified path is contained in $c$. 
We have thus in particular proved that for any path that is not completely contained in $c$ there is a strictly shorter path contained in $c$. 
Hence the straight segment $\gamma''$ from $w$ to $z$ for the Euclidean structure on $c$ is the unique shortest box-Euclidean path from $w$ to $z$ within $\dom \wt f$. 
The other conclusions of the lemma are trivial in this case: $m=0$ and $\gamma''$ does not need to be cut.

In the rest of the proof of the lemma, we assume that there is no cubox containing both $w$ and $z$.

Then, given the simplification of path constructed above, it follows that the infimum of box-Euclidean lengths of paths between $w$ and $z$ is the same as the infimum over the set $\cal A$ of paths defined below, and that a path that is not in $\cal A$ cannot be minimal.
The set $\cal A$ consist in paths that are a straight box-Euclidean line from $w$ to the boundary of its box if $w$ is in the interior of a box, then an injective path within the graph, then similarly a straight box-Euclidean line to $z$ if $z$ is in the interior of a box.
From the form of $\cal A$ and the fact that the distance along the graph between two points $a$ and $b$ of the graph is a continuous function of the pair $(a,b)$, the fact that a minimal distance is reached on $\cal A$ easily follows. 
Let us sum up what we have proved so far: there is at least one shortest path, all shortest paths are in $\cal A$.

Let $I_w$ be defined as follows: if $w$ is in the graph we let $I_w=\{w\}$; otherwise we let $I_w$ be the boundary in $\dom \wt f$ of the unique cubox containing $w$. 
In the latter case, $I_w$ is an infinite curve in the graph.
The set $I_w \cap I_z$ is either empty or a point or a connected curve, of finite or infinite box-Euclidean length.

First case: $I_w \cap I_z$ is empty or a point. Then there is a unique shortest path $\gamma''$ within the graph from the set $I_w$ to the set $I_z$; we allow $\gamma''$ to be reduced to a single point to include the case when $I_w \cap I_z$ is a single point. We call $w'$ the initial point of $\gamma''$ and $z'$ the endpoint; as we explained, $z'$ may be equal to $w'$. It is also possible that $w=w'$, similarly $z=z'$ is possible. If $w\neq w'$ then there is a unique cubox containing both. Similarly for $z$ and $z'$.
In all cases, the box-straight path from $w$ to $w'$, followed by $\gamma''$, followed by the box-straight path from $z'$ to $z$ is the unique shortest path in $\cal A$, and thus the unique shortest path within $\dom \wt f$. Call it $\gamma'$.

Consider now any cubox chain $b_0$, \ldots, $b_m$ with $w\in b_0$ and $z\in b_m$.
We claim that this chain necessarily covers $\gamma'$.
Let us prove this claim.
Note that the part of $\gamma'$ from $w$ to $w'$ is contained in $b_0$ and the part from $z'$ to $z$ in $b_m$.
Each $b_m$ is path connected, so by definition of a chain, the union of the $b_m$ is path connected.
Consider path from $w'$ to $z'$ within this union. Project it on the graph and simplify it as above.
This leads to an injective path from $w'$ to $z'$ contained in the graph.
By uniqueness of injective paths in a tree, this path is equal to $\gamma''$. 
This proves the claim

The intersection of a cubox with the graph is a connected subset of this tree (it is a curve, infinite in both directions). It follows that the intersection of $\gamma'$ with a cubox is necessarily a connected portion of $\gamma'$.
Let us now split $\gamma'$ as follows: choose any cubox $b_{i}$ containing $w$, define $i_1=i$ and cut $\gamma'$ at the last point where it is contained in $b_{i_1}$. Note that the part before the cut is entirely contained in $b_{i_1}$.
If this cutpoint is not the endpoint of $\gamma'$, then a non-trivial sub-part of the path starting from $b_{i_1}$ belongs to another cubox $b_{i'}$. 
Define $i_2=i'$ and cut the remaining part of the path at the last point where it is contained in $b_{i_2}$. 
And so on. 
This process necessarily ends (because, for instance, the cut points are contained in a discrete set, because they are either branch points of the graph or the first or the last intersection of $\gamma'$ with the graph). 
So we get a finite sequence of cuboxes $b_{i_1}$, $b_{i_2}$, \ldots, $b_{i_{m'}}$ for some $m'\in \N^*$ and a splitting $\gamma'_1$, \ldots, $\gamma'_{m'}$ of $\gamma$ into connected pieces with $\gamma'_j\subset b_{i_j}$ for all $j\leq m'$. 
By construction $b_{i_{j+1}}\neq b_{i_j}$.
Now by the property that the intersection of a cubox with $\gamma'$ is necessarily connected, and the it follows that no two cuboxes $b_{i_j}$ and $b_{i_k}$ can be equal for $j\geq k+2$, for otherwise the whole part of the path between $\gamma'_{j}$ and $\gamma'_{k}$ (included) would be contained in $b_{i_k}$, contradicting the way we built the splitting.
Hence $m'\leq m+1$.


Second case: $I_w \cap I_z$ is a connected curve in the graph. Let $b$ be the unique cubox containing $w$ and $b'$ be the same for $z$. Note that $b$ and $b'$ are adjacent, and $m=1$.
The union $b\cup b'$ is connected. It consists in the interior of $b$, the interior of $b'$, the common curve, and at most four disjoint pieces of curves in the boundaries of $b$ or $b'$, attached to an end point of the common curve. Because the graph is a tree, all paths in $\cal A$ are contained in $b\cup b'$ and all paths in $\cal A$ must meet the common curve, possibly at an end thereof. It follows that the shortest path in $\cal A$ from $w$ to $z$ is a straight segment to a point in the common curve, followed by a straight segment. We have thus cut the shortest path in two pieces satisfying the conclusion of the lemma, since $m+1=2$.
\end{proof}

Let us now go back to the situation we were studying: recall $L(\epsilon)$ was defined at the beginning of \Cref{subsec:part1}; for convenience we denote $L=L(\epsilon)$; we had a path $\gamma$ of $A$-length at most $c_2+L$, starting from Mudba and going to some point $z_0$.
We are ready to prove that:
\begin{equation}\label{eq:log}
 d_{U_1^*}(\gamma(0),\gamma(1)) \leq c'_6+c_6\log (1+L).
\end{equation}

Recall that there is a special cubox $b_*$ that is a punctured neighborhood of the origin. Note that $b_*$ is the only cubox that has a unique lift by $E$, which we denote $\wt b_*$.
Let $\wt \gamma$ be a lift of $\gamma$ by $E$ and $\gamma_2 = \wt f\circ \wt \gamma$. Then $\gamma_2$ is also a lift of $f\circ \gamma$ by $E$.
The $U_1^*$-hyperbolic length of $\gamma$ is equal to the $\dom(\wt f)$-hyperbolic length of $\wt\gamma$.
The box-Euclidean length of $\wt \gamma$ is equal to Euclidean length of $\gamma_2$ and is thus $\leq 2(c_2+L)$ by Equation~\eqref{eq:elg2}.
By \Cref{lem:bddchainlenght}, the path $\gamma$ is contained in $\cal B_{c_4}$.
There is thus for any $t\in[0,1]$ a chain of cuboxes of length at most $c_4$ from some cubox containing $\gamma(t)$ to $b_*$. 
This chain lifts by $E$ into a chain of cuboxes from $\wt \gamma(t)$ to $\wt b_*$.
Applying this to $t=0$ and $t=1$, we get that the combinatorial distance\footnote{notion defined just before \Cref{lem:am}} from $\wt \gamma(0)$ to $\wt \gamma(1)$ is $\leq 2c_4$.
Consider the path $\gamma'$ provided by \Cref{lem:am}, from $\wt \gamma(0)$ to $\wt \gamma(1)$, 
of box-Euclidean length at most that of $\wt \gamma$, and consisting in $p\leq 2c_4+1$ parts $\gamma'_i$ each contained in some cubox. Denote $L_i$ the box-Euclidean length of $\gamma'_i$. Then $\sum_{i=1}^{p} L_i \leq 2(c_2+L)$ and in particular $L_i\leq 2(c_2+L)$.
By \Cref{lem:arcs}, the endpoints of $\gamma'_i$ sit at $U_1^*$-hyperbolic distance $\leq c'_5+\log(1+c_5 L_i)$ from each other. Thus, putting it all together:
\bEA
  d_{U_1^*}(\gamma(0),\gamma(1)) & \leq & d_{\dom\wt f}(\wt \gamma(0),\wt \gamma(1))\\
  & \leq & \sum_{i=1}^{p} c'_5 + \log (1+c_5 L_i)\\
  & \leq & p c'_5 +  p\log ( 1+ 2 c_5 (L+c_2)) \\
  & \leq & (2c_4+1) c'_5 + (2c_4+1) \log \left( 1+2 c_5 L+2c_5c_2\right) \\
  & \leq & c'_6 + c_6 \log \left( 1+L \right)
\eEA
for some constants $c_6$, $c_6'$ that depend only on $c_2$, $c_4$, $c_5$ and $c_5'$,
which proves \eqref{eq:log}.

Because of the inclusion $U_1^*\subset U_1$, the $U_1$-hyperbolic distance between $\gamma(0)$ and $\gamma(1)$ will be even shorter. Using \Cref{lem:mudba}, we get that $z_0$ belongs to the $U_1$-hyperbolic ball of center $0$ and radius \[L'=c_7+c'_6+c_6\log(1+L).\]
Hence the set $C$ object of \Cref{prop:part:1}, which we are proving, is contained in this hyperbolic ball (see the discussion after \Cref{lem:mudba}).
Recall that $L=L(\epsilon)=\tanh^{-1}(1-\epsilon)$. Introduce $\epsilon'\in\,]0,1[$ such that $\tanh^{-1}
(1-\epsilon') = L'$. Then
\[ C \subset\phi_1(B(0,1-\epsilon')).
\]
Now $\epsilon'=2/(e^{2L'}+1) \geq e^{-2L'}$ and $L'=c_7+c'_6+c_6\log(1+L)$ and $L \leq c_1+ \frac12\log\frac1\epsilon$ so $L'\leq c'_8+c_8\log(1+\log(1/\epsilon))$, thus
\begin{equation}\label{eq:ee}
\framebox{$\ds \log \frac{1}{\epsilon'} \leq c'_9 + c_9 \log\left(1+\log\frac{1}{\epsilon}\right)$}
\end{equation}
In particular, as $\epsilon\tend 0$, $\epsilon'$ also tends to $0$ but remains much bigger than $\epsilon$. 
This proves \Cref{prop:part:1}.

\subsection{Step 2, I: Perturbation argument.}\label{subsec:part2:1}

Let us recall the notations introduced in \Cref{subsec:outline}:
\[\cal F = \setof{\cal R[B_d] \circ \phi^{-1}}{\phi:\D\to \C \text{ is univalent and } \phi(z)=z+\cal O(z^2)}
\]
and
\[\cal F_\epsilon = \setof{\cal R[B_d] \circ \phi^{-1}}{\phi:B(0,1-\epsilon)\to \C \text{ is univalent and } \phi(z)=z+\cal O(z^2)}
\]
where $\cal R[B_d]$ is the (upper) parabolic renormalization of the Blaschke product, normalized to be defined on the unit disk. In particular,
\[\cal F_0 = \cal F
.\]
Last, for $X\subset [0,1]$, we will denote 
\[ \cal F_X =\bigcup_{x\in X} \cal F_x
.\]

\subsubsection{An interpolation}\label{subsub:outline}

Let $\epsilon_1>0$ and $f\in\cal F_{\epsilon_1}$:
\[ f=\cal R[B_d]\circ\wt\phi^{-1}\]
For convenience, we will denote
\[r'=1-\epsilon_1
\]
and $\phi(z) = \frac{1}{r'}\wt\phi(r' z)$. Then $\phi\in\sch$ (the class of Schlicht maps) and 
\[f(z) = \cal R[B_d](r'\phi^{-1}(z/r'))
.\]
Let $U=\phi(\D)$.
We will interpolate smoothly between $f$, which belongs to $\cal F_{\epsilon_1}$, and an element of $\cal F$ as follows:
for $t\in[0,1[$, let
\[\phi_t(z)=r_t\phi(z/r_t)\,\text{ with }\ r_t = 1-t.\]
Then the map $\phi_t$ is an isomorphism from $B(0,r_t)$ to $r_t U$. Let
\[ f_t(z) = \cal R[B_d] \circ \phi_t^{-1}: r_t U \to \C.
\]
Then
\[f_{\epsilon_1} = f,\]
and\nomenclature[f0]{$f_0$}{an element of $\cal F$\nomrefpage}
\[f_t\in\cal F_{t} \text{ thus }f_0 \in \cal F.\]
\nomenclature[ft]{$f_t$}{a deformation of $f_0$, element of $\cal F_t$\nomrefpage}%
In the sequel, we will start from knowledge about $f_0$ and transfer it to $f_{\epsilon_1}$, by continuously increasing $t$ from $0$ to $\epsilon_1$.

Using the language of structures that we introduced in \Cref{subsec:structeq}, let us stress that maps in $\cal F_t$ are all $(I,\wh{\C})$-structurally equivalent ($I$ being a singleton and the origin being the marked point). For $t'>t$, the structure of maps in $\cal F_{t'}$ is a sub-structure of that of maps in $\cal F_{t}$.

\begin{remark}Though, for $t'>t$, $f_{t'}$ is a sub-structure of $f_t$, it is very unlikely that the map $f_{t'}$ would be conjugate to a restriction of $f_{t}$.
\end{remark}

Let us show a non-commuting diagram that the reader may find useful in order to follow the arguments.
\[\xymatrix@R=30pt@C=30pt{\ar[d]_{\ds \phi_0^{-1}} & \ar[l]_{\ds \cdot / r_t} \\ \ar[r]_{\ds r_t \times \cdot} & \ar[u]_{\ds \cal R[B_d]}}\]
The map $f_t$ consists in turning once around this diagram, starting from the upper right corner.\footnote{It may at first seem to be better to start from the upper left corner, since the corresponding composition has a domain $U$ that does not depend on $t$. However, when we iterate these maps, we basically go in round circles along a non-commuting diagram again and again, and the author thinks that it would not simplify the proof that much.}

\subsubsection{About the critical value}\label{subsub:harcrit}

Let $T_0$\nomenclature[T0]{$T_0$}{for $f\in\cal F_{[0,T_0[}$ have a (unique) critical value\nomrefpage} be one minus the absolute value of the critical point of $\cal R[B_d]$ that is closest to $0$. Then for all $t \in [0,T_0[$, maps in $\cal F_t$ have a unique critical value.

\begin{lemma}\label{lem:keep}
There exists\nomenclature[T1p]{$T_1'$}{for $f\in\cal F_{[0,T_1']}$, the critical value is attracted to $0$\nomrefpage} $T_1'\in\,]0,T_0[$ such for all maps $f\in\cal F_{[0,T_1']}$,  the critical value is attracted to $0$.
\end{lemma}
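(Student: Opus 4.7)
A crucial first observation is that every $f \in \cal F_{[0,T_0)}$ has the \emph{same} critical value $v \in \C$, namely the unique critical value of $\cal R[B_d]$. Indeed if $f = \cal R[B_d]\circ \phi^{-1}$ with $\phi$ univalent on $B(0,1-t)$, then $\phi^{-1}: \dom f \to B(0,1-t)$ is a bijection, so the critical values of $f$ are exactly those critical values of $\cal R[B_d]$ whose critical points lie in $B(0,1-t)$. By definition of $T_0$ at least one such critical point exists for $t<T_0$, and all critical points of $\cal R[B_d]$ share the single image $v$ (Theorem~\ref{thm:shi2}). Moreover, since $|v|=e^{-2\pi^2\Re\gamma[C_d]}$ is small (see page~\pageref{here:gamma}), Koebe's one-quarter theorem $B(0,(1-t)/4)\subset\dom f$ ensures $v\in\dom f$ uniformly for $t$ bounded away from $1-4|v|$.

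At $t=0$ the conclusion is already known and uniform. For $f\in\cal F$ the only singular values over $\wh{\C}$ are $0$, $v$, and $\infty$; since $\infty\notin\dom f$ and $0$ is the parabolic fixed point, Theorem~\ref{thm:fat} forces $v$ to lie in the immediate parabolic basin, hence $f^n(v)\to 0$. Lemma~\ref{lem:traptime} applied to the compact family $\cal F$ then yields an integer $n_0$ (depending only on $d$) such that for every $f\in\cal F$, $f^{n_0}(v)\in D_\at[f]$, the forward-invariant trapping petal provided by Proposition~\ref{prop:cf}.

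The plan is to propagate this property to $\cal F_{[0,T_1']}$ by a continuity-and-compactness argument. Fix any $\rho\in\,]0,(1-T_0)/4[$: by Koebe, $B(0,\rho)\subset \dom f$ for every $f\in \cal F_{[0,T_0)}$, and after rescaling $z\mapsto \rho z$ the restrictions to $B(0,\rho)$ form a class of holomorphic maps on $\D$ with $g(z)=z+c_g z^2+\cal O(z^3)$ and $c_g\neq 0$. Koebe compactness of the normalized family $\{\phi\}$ of univalent maps, combined with a Montel/extraction argument on shrinking disks $B(0,1-t_n)$, shows that $\cal F_{[0,T_0)}$ is sequentially compact for local uniform convergence. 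Proposition~\ref{prop:cf} therefore yields a common $r_0$ and a trapping disk $D_\at[f]$ depending continuously on $f\in\cal F_{[0,T_0)}$. For each fixed $k\leq n_0$, the iterate $f^k(v)$ likewise depends continuously on $f$, as long as it remains in $\dom f$.

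To conclude, the open condition $f^{n_0}(v)\in D_\at[f]$ holds on the whole compact set $\cal F_0$. By continuity, at each $f_0\in\cal F_0$ it persists on a full neighborhood of $f_0$ in $\cal F_{[0,T_0)}$, on which the intermediate iterates $f^k(v)$ for $k\leq n_0$ also remain in $\dom f$ since at $t=0$ they form the finite set $\{f_0^k(v):k\leq n_0\}$ lying at positive distance from $\partial\dom f_0$. A covering argument on $\cal F_0$ then extracts a uniform $T_1'>0$ for which the condition holds on all of $\cal F_{[0,T_1']}$, and forward invariance of $D_\at[f]$ in the parabolic basin of $f$ completes the proof. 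The only delicate point is ensuring that the intermediate iterates stay inside the shrinking domain $r_tU$ during the perturbation, which is precisely what the compactness of the finite orbit at $t=0$ and continuity of $f\mapsto f^k(v)$ deliver.
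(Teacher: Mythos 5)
Your proof is correct and takes essentially the same route as the paper's brief argument: Fatou's theorem (Theorem~\ref{thm:fat}) handles $t=0$, and then compactness of $\cal F_0$ together with continuous dependence of the parabolic basin on $f$ yields the uniform $T_1'$. You simply unpack the compactness-and-continuity step, making explicit the common critical value $v$ shared across $\cal F_{[0,T_0[}$, the uniform trapping time $n_0$ from Lemma~\ref{lem:traptime}, and the tube-lemma covering argument, all of which are consistent with the paper's intent.
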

\begin{proof}By Fatou's theorem (\Cref{thm:fat}), this is the case for all maps in $\cal F_0$. The existence of $T'_1$ then follows from compactness of $\cal F_0$ and the fact that for a parabolic map with one petal attracting a given point, nearby parabolic maps will attract nearby points.
\end{proof}

A consequence of the uniqueness of the critical value is that the extended attracting Fatou coordinate $\Phi_\at[f_t]$ has a set of critical values contained in $\setof{v'-n}{n>0}$
where $v'=\Phi_\at[f_t](v)$ and $v$ is the critical value of $f_t$.
Unlike the case $t=0$, when $t>0$ the map $\Phi_\at[f_t]$ probably has a big set of asymptotic values (it is likely that it contains curves).

\subsection{Step 2, II: Following fibers.}\label{subsec:part2:2}

\subsubsection{A motion of the fibers of the Fatou coordinates and of the renormalized map}\label{subsub:motion}

The point of view outlined in \Cref{subsub:outline} can be reversed and we may start from any map $f_0 = \cal R[B_d]\circ \phi_0^{-1} \in\cal F$, which has the full structure of $\cal R[B_d]$ and perturb it into the map $f_t \in \cal F_t$ as before, which has less and less structure as $t\in[0,1[$ increases.
Let us recall how $f_t$ is defined:
\[f_t(z)=\cal R[B_d]\circ \phi_t^{-1}\ \text{ with }\ \phi_t(z)=r_t\phi_0(z/r_t)\ \text{ and }\ r_t = 1-t
.\]
Studying the survival of (part of) the structure of the parabolic renormalization $\cal R[f_t]$ as $t$ increases means following fibers of $\cal R[f_t]$.

Recall that $\cal R[f_t]$ is defined by
\[\cal (a^{-1} \circ \cal R[f_t] \circ b) \circ E = E \circ (\Phi_\at[f_t]\circ\Psi_\rep[f_t])\big|_{W_t}
\]
with $E(z)=e^{2\pi iz}$, $W_t$ is some domain, and $a$ and $b$ are linear maps that depend on $f_t$ and on normalization conventions. Recall that we chose to normalize Fatou coordinates by their expansion at infinity, and to normalize $\cal R[f_t]$ by fixing its critical value. See \Cref{subsec:nor} for more details.

To lighten the expressions, let us abbreviate $R_t=\cal R[f_t]$ and introduce extended Fatou coordinates $\Phi_t$ and $\Psi_t$ of $f_t$, normalized differently from $\Phi_\at[f_t]$ and $\Psi_\rep[f_t]$, and so that
\[ R_t \circ E = E \circ \Phi_t \circ \Psi_t \big|_{W_t}.
\]
We defined in  \Cref{subsub:harcrit} two constants $T_0$ and $T_1'< T_0$ such that:
\begin{itemize}
\item For $t\leq T_0$, for all $f\in \cal F$, $f_t$ has a unique critical value.
Let us denote it by $v_t$.
\item For $t\leq T_1'$, this point $v_t$ is in the domain of definition of $\Phi_\at[f_t]$.
\end{itemize}
Let
\[\Phi_t(z)=\Phi_\at[f_t](z)+\beta_t
\]
where $\beta_t = \sigma_d -\Phi_\at[f_t](v_t)$, so that $\Phi_t(v_t)$ does not depend on $t$ and where $\sigma_d$ is a constant that depends only on $d$ and is chosen so that $E(\sigma_d)$ is the critical value of $\cal R[b_d]$.\footnote{One can for instance take $\sigma_d = i\pi\gamma[B_d]$ but we will not use this fact.}
\nomenclature[Wz3]{$\Phi_t$}{$\Phi_t=\Phi_\at[f_t]+\beta_t$ with $\beta_t$ a constant so that the critical value of $\Phi_t$ is independent of $t$\nomrefpage}%
\nomenclature[Wz4]{$\Psi_t$}{$\Psi_t(z)=\Psi_\rep[f_t](z-\beta'_t)$ for $\beta'_t=\beta_t- i\pi \gamma[f_t]$\nomrefpage\ with $\gamma$ the iterative residue}%
\nomenclature[bt]{$\beta_t$}{constant so that $\Phi_t(z)=\Phi_\at[f_t](z)+\beta_t$ has a critical value independent of $t$; $\beta_t = \sigma_d-\Phi_\at[f_t](v_t)$\nomrefpage}%
\nomenclature[sigmad]{$\sigma_d$}{complex number chosen so that $E(\sigma_d)$ is the critical value of $\cal R[B_d]$\nomrefpage}%
For the repelling inverse Fatou coordinate (whose normalization is less important) we let
\[\Psi_t(z)=\Psi_\rep[f_t](z-\beta'_t),
\]
for $\beta'_t=\beta_t- i\pi \gamma[f_t]$ (recall $\gamma$ is the iterative residue, see \Cref{subsec:parabopt}).
Let $\Phi : (t,z)\mapsto \Phi_t(z)$, that we define on
\[\dom \Phi =\setof{(t,z)\in[0,T'_1[\,\times\C}{z\in\dom(\Phi_t)}.\]
It is an open subset of $[0,T'_1[\times\C$, and $\Phi$ is a continuous function of $(t,z)$ by \Cref{prop:conti} (in fact, it is analytic, see~\cite{T}).
Similarly, let
\[R: \left\{ \begin{array}{rcl} \dom R &\to& \C \\ (t,z)  &\mapsto& R_t(z) \end{array}\right.
\]
The domain of $R$ is an open subset of $[0,T'_1[\times\C$ and $R$ is continuous, analytic w.r.t.\ $z$ for fixed values of $t$. (It is also analytic w.r.t.\ $(t,z)$ but we will not use this fact.)

The critical values of $\Phi_t$ and $R_t$ do not move when $t$ varies (even when some critical points vanish). It has the following consequence:

\begin{proposition}[following part of the structure]\label{prop:fibers}
Let $F=\Phi$ or $F=R$. 
Then
\begin{itemize}
\item (\Cref{lem:loc_triv}) fibers of $\Phi$ form a foliation that is locally parallelizable over the first coordinate.
\end{itemize}
It follows that there exists a function $\tau : \dom F_0 \to \,]0,T'_1]$ (survival time) and function $\zeta(t,z)$ (fiber follower)
such that:
\begin{itemize}
\item $\dom \zeta = \setof{(t,z)\in [0,T'_1[\,\times \dom(F_0) }{t\in[0,\tau(z)[\,}$
\item the map $\tau$ is lower semi continuous, i.e.\ for all $t\in[0,T'_1[\,$, the set $U_t = \tau^{-1}(\,]t,T'_1]) \subset \C$ is open
\item the above two points imply that $\dom \zeta$ is an open subset of $[0,T'_1[\,\times\C$ and $\dom \zeta = \setof{(t,z)\in[0,T'_1[\,\times\C}{z\in U_t}$
\item the map $\zeta$ is continuous
\item for each fixed $t\in [0,T'_1[\,$, the map $z\in U_t \mapsto \zeta(t,z)$ is holomorphic and injective
\item $\forall (t,z) \in \dom \zeta$, $F_0(z) = F_t(\zeta(t,z))$, i.e.\ the map $t\in [0,\tau(z)[\mapsto (t,\zeta(t,z))$ follows a fiber of $F$
\item (maximality and uniqueness) consider any continuous map following a fiber of $F$ as $t$ varies from $0$ to some $t_0$, starting from $(0,z)\in\dom F$; then $\tau(z)> t_0$ and the continuous map must coincide with $t\in[0,t_0]\mapsto \zeta(t,z)$.
\end{itemize}
\end{proposition}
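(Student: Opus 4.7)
The plan is to establish the referenced Lemma~\ref{lem:loc_triv} on local parallelizability first, then glue local lifts into the global object $\zeta$ by a standard open–closed continuation argument. The hypothesis that critical values of $F_t$ do not move with $t$ is concentrated in the lemma; everything else is bootstrapping.

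For the lemma, fix $(t_0,z_0)\in\dom F$. If $\partial_z F(t_0,z_0)\neq 0$, the holomorphic implicit function theorem applied to $(t,\zeta)\mapsto F_t(\zeta)-F_{t_0}(z)$ with $t$ as a continuous parameter (using that $\partial_z F_t$ depends continuously on $t$ by Cauchy's formula applied to the continuous family $F_t$) yields a local parallelization $\eta(t,z)$, continuous in $(t,z)$, holomorphic in $z$, with $\eta(t_0,z)=z$ and $F_t(\eta(t,z))=F_{t_0}(z)$. If instead $z_0$ is a critical point of $F_{t_0}$ of local degree $d\geq 2$ with critical value $w_0$, we use that in our setting (for both $F=\Phi$ and $F=R$) the critical points of $F_t$ are images under $\phi_t$, respectively iterated pull-backs by $f_t$, of the fixed critical points of $\cal R[B_d]$; hence each critical point has constant local degree and depends continuously on $t$. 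Let $z_t$ denote the critical point of $F_t$ close to $z_0$. Since $F_t(z_t)$ is a critical value of $F_t$ converging to $w_0$, and since the critical values of $F_t$ form a discrete set independent of $t$, one has $F_t(z_t)=w_0$ for $t$ near $t_0$. Thus $F_t(z)-w_0$ has a zero of order exactly $d$ at $z_t$, and one may extract a local $d$-th root $g_t(z)=(F_t(z)-w_0)^{1/d}$, the branch chosen so that $g_t$ varies continuously in $t$. Each $g_t$ is a local biholomorphism, and $\eta(t,z):=g_t^{-1}(g_{t_0}(z))$ is the desired local parallelization.

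With the lemma in hand, for $z\in\dom F_0$ define $\tau(z)$ as the supremum of those $T\leq T_1'$ for which there exists a continuous path $s\mapsto\zeta(s,z)$ on $[0,T[$ with $\zeta(0,z)=z$ and $F_s(\zeta(s,z))=F_0(z)$; local parallelizability gives $\tau(z)>0$. Uniqueness: the set of times at which two such paths agree is closed by continuity and open by the local uniqueness in the lemma, hence fills the entire interval where both are defined; this legitimizes the pointwise definition of $\zeta$ by continuation. Lower semi-continuity of $\tau$: if $\tau(z_0)>t_0$, the compact arc $\{(s,\zeta(s,z_0)):0\leq s\leq t_0\}$ is covered by finitely many local parallelization neighborhoods whose union is a tube, and every $z$ close enough to $z_0$ can be continued up to time $t_0$ inside this tube. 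Continuity of $\zeta$ on $\dom\zeta$ and holomorphy in $z$ for each fixed $t$ then follow because, locally, $\zeta$ coincides with one of the $\eta$'s from the lemma.

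It remains to check injectivity of $\zeta(t,\cdot)$ on $U_t$. Suppose $\zeta(t_0,z_1)=\zeta(t_0,z_2)=:w_0$ with $z_1\neq z_2$ in $U_{t_0}$; then necessarily $F_0(z_1)=F_0(z_2)=:w$. Consider $S=\{t\in[0,\min(\tau(z_1),\tau(z_2))[\ :\ \zeta(t,z_1)=\zeta(t,z_2)\}$, which is closed by continuity. To see $S$ is open near any $t_0\in S$, note that both $\zeta(t,z_i)$ cluster at $w_0$: if $w_0$ is regular for $F_{t_0}$, then $F_t^{-1}(w)$ is a single point near $w_0$ by the implicit function theorem, forcing $\zeta(t,z_1)=\zeta(t,z_2)$; if $w_0$ is critical for $F_{t_0}$, then as in the lemma the moving critical point $w_t$ satisfies $F_t(w_t)=w$, so $F_t^{-1}(w)$ is the singleton $\{w_t\}$ near $w_t$, and again $\zeta(t,z_1)=\zeta(t,z_2)$. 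Hence $S$ is clopen in its interval; since $0\notin S$, one has $S=\emptyset$, contradicting $t_0\in S$. The main obstacle throughout is the critical-point case of the lemma: the hypothesis that critical values of $F_t$ do not move is precisely what prevents the fiber $F_t^{-1}(w_0)$ from splitting off new sheets at a critical point as $t$ varies, and it is also the key ingredient used to close the injectivity argument.
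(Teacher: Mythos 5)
Your proposal is correct and takes essentially the same route as the paper: the heart of both arguments is a local trivialization lemma whose critical-point case hinges on the observation that critical values of $F_t$ are frozen as $t$ varies, followed by a standard open–closed continuation argument and a compact-tube argument for lower semi-continuity of $\tau$. Two small differences are worth pointing out. First, for the continuity of critical points with constant multiplicity (what the paper isolates as Lemma~\ref{lem:cmv}), the paper's main argument is a two-fold application of Hurwitz's theorem (to $\Phi_t'$ and $\Phi_t-u$), which is entirely self-contained and works identically for $F=\Phi$ and $F=R$; your argument instead appeals to the dynamical description of the critical points (as pull-backs of the critical points of $\cal R[B_d]$). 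The paper actually offers your version as an alternative in a footnote, but for $F=R$ the description of the critical set is not so simply ``iterated pull-backs by $f_t$'' — the critical points of $R_t$ arise through the composition $\Phi_t\circ\Psi_t$ and include points coming from both factors — so the Hurwitz route is the safer one if you want a single argument covering both cases without tracing the critical set through repelling Fatou coordinates. Second, you spell out the injectivity of $z\mapsto\zeta(t,z)$ via an open–closed argument on $\{t:\zeta(t,z_1)=\zeta(t,z_2)\}$; the paper leaves this implicit, and your argument is a legitimate and clean way to close that gap.
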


The rest of the present section (\Cref{subsub:motion}) is devoted to the proof of the above proposition.
The proof is written for $\Phi$ but is the same, word for word, for $R$.

\begin{lemma}\label{lem:cmv}
Let $(t_0,z_0)\in\dom\Phi$ and assume that $z_0$ is a critical point of $\Phi_{t_0}$. Then there exists a connected neighborhood $I$ of $t_0$ in $[0,T'_1[$, and $r_0>0$ such that for all $t\in I$, $\Phi_t$ has a unique critical point in $B(z_0,r_0)$, it moves continuously with $t$ and its multiplicity does not change.
\end{lemma}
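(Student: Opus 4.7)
The plan is to exploit the description of $\Phi_t$ via iteration of $f_t$ onto an attracting petal, combined with the explicit $t$-dependence of the critical points of $f_t$ itself. Since $\Phi_t = \Phi_\at[f_t] + \beta_t$, and $\Phi_\at[f_t](z) = \phi(f_t^n(z)) - n$ for any $n$ large enough that $f_t^n(z)$ enters an attracting petal $D_\at[f_t]$ on which the local Fatou coordinate $\phi$ is univalent, the chain rule and univalence of $\phi$ show that the critical points of $\Phi_t$ are exactly the points whose forward $f_t$-orbit eventually meets a critical point of $f_t$. This shifts the problem from analyzing critical points of $\Phi_t$ directly to controlling iterated $f_t$-preimages of critical points of $f_t$.

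Next I would identify the critical orbit. Let $k \geq 0$ be the smallest index with $c := f_{t_0}^k(z_0)$ a critical point of $f_{t_0}$; by minimality of $k$, $f_{t_0}^k$ is a local biholomorphism at $z_0$. In the class $\cal F_t$ the unique critical value $v_t$ lies in the attracting basin (Lemma~\ref{lem:keep}) and its further forward orbit is disjoint from the critical set, so $k$ is the only time the orbit of $z_0$ meets a critical point. Using the decomposition $f_t = \cal R[B_d] \circ \phi_t^{-1}$ with $\phi_t$ biholomorphic, the critical points of $f_t$ are in bijection with the critical points of $\cal R[B_d]$ lying in $B(0, r_t)$, with multiplicities preserved; hence there is a continuous branch $t \mapsto c_t$ of critical points of $f_t$ through $c$, defined on some neighborhood $I$ of $t_0$, of constant multiplicity. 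Combining continuity of $(t,z) \mapsto f_t^k(z)$ with local biholomorphy of $f_{t_0}^k$ at $z_0$, after shrinking $I$ I can find a neighborhood $V$ of $z_0$ on which $f_t^k$ is a biholomorphism onto a neighborhood of $c$ containing $c_t$, and set $z_t := (f_t^k|_V)^{-1}(c_t)$. Then $z_t$ depends continuously on $t$ with $z_{t_0} = z_0$, is a critical point of $\Phi_t$ by construction, and the chain rule yields that its multiplicity equals that of $c_t$ as critical point of $f_t$, which in turn equals the multiplicity of $z_0$ as critical point of $\Phi_{t_0}$; this gives continuity and constancy of multiplicity.

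The main obstacle is uniqueness in a fixed disk $B(z_0, r_0)$, since a direct Rouché argument applied to $\Phi_t'$ would only give preservation of total multiplicity, not that it is concentrated at a single point. To handle this, I would fix $n$ with $f_{t_0}^n(z_0)$ well inside $D_\at[f_{t_0}]$, then choose $r_0 > 0$ small enough (with $B(z_0, r_0) \subset V$) that $z_0$ is the only critical point of $\Phi_{t_0}$ in $B(z_0, r_0)$ and that the orbit of $z_0$ under $f_{t_0}$ meets a critical point only at step $k$; continuous dependence of $f_t^j$ and of the petal $D_\at[f_t]$ then lets me shrink $I$ so that for every $(t, z) \in I \times B(z_0, r_0)$ the finite orbit $z, f_t(z), \dots, f_t^{n-1}(z)$ stays close to the corresponding orbit of $z_0$ under $f_{t_0}$, the only critical point of $f_t$ this orbit can approach is $c_t$ (occurring at step $k$), and $f_t^n(z) \in D_\at[f_t]$. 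Any critical point $z$ of $\Phi_t$ in $B(z_0, r_0)$ must then satisfy $f_t^k(z) = c_t$, and injectivity of $f_t^k|_V$ forces $z = z_t$. This establishes uniqueness and completes the sketch.
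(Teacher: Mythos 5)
Your proposal is correct, and it is precisely the alternative dynamical proof that the paper itself sketches in a footnote appended to its published proof of this lemma. The paper's main proof stays entirely at the level of $\Phi_t$: it applies Hurwitz's theorem to both $\Phi_t'$ and $\Phi_t - u$ (where $u=\Phi_{t_0}(z_0)$) to obtain $d-1$ zeros of $\Phi_t'$ and $d$ roots of $\Phi_t - u$ in a small disk, then exploits the chosen normalization — all critical values of $\Phi_t$ lie in $u+\Z$ — together with $\Phi_t(\ov{B})\subset B(u,1/2)$ to force every critical point of $\Phi_t$ in $\ov{B}$ to map to $u$; a two-line counting argument then collapses the $d$ preimages of $u$ into a single one of local degree $d$, delivering uniqueness and constancy of the multiplicity simultaneously. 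Your route descends to the dynamics of $f_t$: you locate the unique index $k$ at which the orbit of $z_0$ hits the critical set (using Lemma~\ref{lem:keep} and the fact that the critical value is not periodic to exclude later hits), use the decomposition $f_t=\cal R[B_d]\circ\phi_t^{-1}$ to produce a continuous branch $c_t=\phi_t(c')$ of critical points of $f_t$ of constant multiplicity (a cleaner device than the Hurwitz step the footnote itself suggests for continuity), pull $c_t$ back through the local biholomorphism $f_t^k$, and derive uniqueness from injectivity of that inverse branch after shrinking $I$ and $r_0$ so that length-$n$ orbit segments are trapped near the orbit of $z_0$ and can meet the critical set only at $c_t$ at step $k$. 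Both approaches are sound. The paper's is shorter and carries over word for word to $R$ in place of $\Phi$; yours is more explicit about where the critical point comes from dynamically, which buys intuition at the cost of some bookkeeping about orbit segments, petals, and the continuous dependence of the critical set.
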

\begin{proof}
We apply Hurwitz's theorem\footnote{There seems to be several statements called Hurwitz's theorem. We are referring to the following: for a sequence of holomorphic functions $f_n$ converging uniformly on compact subsets of an open subset $U$ of $\C$, call its limit $f$. If $D$ is a disk compactly contained in $U$ and $f$ does not vanish on the boundary of $D$ then for all $n$ big enough, $f$ and $f_n$ have the same number of zeroes in $D$, counted with multiplicity.} to $\Phi_t'$ and to $\Phi_t$ (note that $\Phi'_t$ also depends continuously on $t$, by Cauchy's estimates): let $u=\Phi_{t_0}(z_0)$. Take $r_0>0$ small enough so that $z_0$ is the only critical point of $\Phi_{t_0}$ in $\ov B:=\ov{B}(z_0,r_0)$, the only solution of $\Phi_{t_0}(z) = u$ in $\ov B$, and such that $\Phi_{t_0}$ maps this disk in $B(u,1/2)$; there exists $\epsilon_0$ such that for all $t\in[0,T'_1[$ with $|t-t_0|<\epsilon_0$, $\Phi_t$ is defined on $\ov B$ and maps it in $B(u,1/2)$; then by Hurwitz's theorem, there exists $0<\epsilon<\epsilon_0$ such that for $|t-t_0|<\epsilon$, $\Phi_t-u$ has $d-1$ critical points counted with multiplicity in $B$ and $d$ roots in $B$.
Now recall we normalized the maps $\Phi_t$ so that all critical values belong to $\Z+u$ and $u$ does not depend on $t$. Since $\Phi_t(\ov B) \subset B(u,1/2)$, this implies that all critical points of $\Phi_t$ in $\ov B$ map to $u$. 
Thus the sum of local degrees of $\Phi_t$ at preimages of $u$ in $\ov{B}$ equals $d$, and the sum of local degrees minus one equals $d-1$: there is exactly one preimage of $u$, thus exactly one critical point. Moreover, its local degree is $d$, thus its multiplicity is constant.
Continuous dependence is a classical application of Hurwitz's theorem and is left to the reader.\footnote{There is a more direct proof, with Hurwitz's theorem used only at the end to deduce continuity.
From the fact that $z_0$ is in a parabolic basin and that all critical points of $f_t$ map to the same point, it follows that the orbit of $z_0$ hits the set of critical points only once. Then one uses that $\Phi_\at = -n+\Phi_\at\circ f^n$, and that $\Phi_\at$ is injective in the petal $D_\at[f_t]$ and that the latter moves continuously with $t$.
Similar arguments can be carried out for $R$ in place of $\Phi$.}
\end{proof}

Now consider the fibers of $\Phi$: $X_c = \setof{(t,z)\in\dom\Phi}{\Phi(t,z)=c}$. They form a collection of disjoint closed subsets of $\dom\Phi$. We will prove that this collection is a locally trivial foliation, in the following precise sense:

\begin{lemma}[local trivialization]\label{lem:loc_triv}
 All $(t_0,z_0)\in\dom\Phi$ has an open neighborhood $V$ in $\dom\Phi$ on which a change of variable $U: V\to V'\underset{\text{open}}{\subset} [0,T'_1[\times \C$ of the form
\[U: (t,z) \mapsto (t,u(t,z))\]
is defined, 
\begin{enumerate}
\item $U$ is a homeomorphism to $V'$,
\item for all $t$, $z\mapsto u(t,z)$ is holomorphic,
\item $\forall c\in\C$, $U(X_c)$ is the intersection of a horizontal with $V'$: it is of the form $V' \cap ([0,T'_1[\times \{w\})$ for some $w\in \C$.
\end{enumerate}
\end{lemma}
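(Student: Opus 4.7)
The plan is to split into two cases at $(t_0,z_0)\in\dom\Phi$ depending on whether $z_0$ is a regular or a critical point of $\Phi_{t_0}$, using the joint continuity of $\Phi$ in $(t,z)$ already granted by Proposition~\ref{prop:conti} (combined with Cauchy's integral formula to promote it to joint continuity of $\partial_z\Phi$).

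\textbf{Regular case.} When $\partial_z\Phi_{t_0}(z_0)\neq 0$, the natural choice is $u(t,z):=\Phi(t,z)$ itself, so that properties (2) and (3) are built in and only the homeomorphism statement remains. I would fix $r>0$ small enough that $\Phi_{t_0}$ is injective on $\overline{D(z_0,r)}$ and that $\Phi_{t_0}\bigl(\partial D(z_0,r)\bigr)$ is separated from $c_0:=\Phi_{t_0}(z_0)$ by some distance $2\delta>0$. Joint continuity of $\Phi$ and compactness of $\partial D(z_0,r)$ preserve this separation for $t$ in some interval $I$ around $t_0$, and a Hurwitz/argument-principle bootstrap (using nonvanishing of $\partial_z\Phi_t$ on a neighborhood of $z_0$) keeps $\Phi_t$ injective on $\overline{D(z_0,r)}$ throughout $I$. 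Then $U$ restricts to a bijection from $V := U^{-1}\bigl(I\times D(c_0,\delta)\bigr)\cap\bigl(I\times D(z_0,r)\bigr)$ onto $V':=I\times D(c_0,\delta)$; continuity of $U^{-1}$ is the standard continuous dependence of inverse branches of a $t$-family of holomorphic maps.

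\textbf{Critical case.} When $z_0$ is a critical point of local degree $d\geq 2$ for $\Phi_{t_0}$, Lemma~\ref{lem:cmv} supplies, for $t$ in a neighborhood $I$ of $t_0$, a continuously moving critical point $z_c(t)$ of $\Phi_t$ of constant local degree $d$, and the normalization argument in that proof forces $\Phi_t(z_c(t))=c_0$ throughout $I$. I would then factor $\Phi_t(z)-c_0=(z-z_c(t))^d\,g_t(z)$ with $g_{t_0}(z_0)\neq 0$, pick the principal $d$-th root of $g_t$ on a sufficiently small neighborhood, and set $\psi_t(z):=(z-z_c(t))\,g_t(z)^{1/d}$. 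This $\psi_t$ is continuous in $(t,z)$, holomorphic in $z$, satisfies $\psi_t'(z_c(t))\neq 0$, and obeys $\Phi_t(z)-c_0=\psi_t(z)^d$. Applying the regular case to $\psi$ in place of $\Phi$ yields the required homeomorphism $U(t,z)=(t,\psi_t(z))$; the fiber $X_c$ reads locally as $\{\psi_t(z)^d=c-c_0\}$, a finite union of horizontals $\{u=w\}$ with $w^d=c-c_0$, so condition (3) holds per local component of the fiber (this is the natural reading of the statement at a branch point, since each sheet of $X_c$ meeting $V$ lands on a single horizontal).

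\textbf{Main obstacle.} The whole difficulty is packaged into obtaining jointly continuous dependence of the auxiliary objects—the critical point $z_c(t)$, the local $d$-th root $g_t^{1/d}$, and the inverse branch $\Phi_t^{-1}$ (or $\psi_t^{-1}$)—on the real parameter $t$ together with $z$. Proposition~\ref{prop:conti}, Cauchy's estimates, Hurwitz's theorem and a uniform compactness argument on a fixed boundary circle are exactly what is needed; after that the implicit-function-style local inversion is routine.
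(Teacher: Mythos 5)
Your proposal is correct and follows the same two-case structure as the paper's proof, which is only sketched there (regular case via Hurwitz, critical case via Lemma~\ref{lem:cmv} followed by factoring out the local degree and extracting a $d$-th root). You simply supply the details the paper leaves to the reader. One small remark you make deserves to stand: as literally stated, property (3) fails at a branch point, because with $U(t,z)=(t,\psi_t(z))$ and $\Phi_t(z)-c_0=\psi_t(z)^d$ the image $U(X_c\cap V)$ is a union of up to $d$ horizontals, not one; the correct (and intended, given the way the lemma is subsequently used) reading is the one you give, that each connected component of $X_c\cap V$ lands on a single horizontal, which is all that is needed to conclude that fiber components are graphs over $t$. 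Only a minor caution: ``the principal $d$-th root of $g_t$'' should be taken to mean a continuous determination of $g^{1/d}$ on a small simply connected neighborhood of $(t_0,z_0)$ (one may need to rotate to avoid the cut); this is harmless since $g_{t_0}(z_0)\neq 0$ and $g$ is jointly continuous.
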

\begin{proof}
Case 1: $z_0$ is not a critical point of $\Phi_{t_0}$. It is an application of Hurwitz's theorem. Since the family $\Phi_t$ depends continuously on $t$ and $\Phi_{t_0}$ is not locally constant near $z_0$, one can deduce from Hurwitz's theorem that the map $U=\Phi$ itself, restricted to an appropriate neighborhood $V$, will be a local trivialization. Details are left to the reader.
\\ Case 2: $z_0$ is a critical point of $\Phi_{t_0}$. A consequence of \Cref{lem:cmv}, is that we can factor $\Phi_t(z) = (z-c_t)^d h_t(z)$ where $h_t(z)$ is a holomorphic function in $z$, continuous in $(t,z)$, defined locally and non-vanishing. The map $g(t,z)= \sqrt[d]{h_t(z)}$ is defined locally, and we leave to the reader to check that the map $(t,z)\mapsto(t,(z-c_t)g(t,z))$ is a local trivialization.
\end{proof}

Hence connected components of fibers are graphs of continuous functions $t\mapsto z(t)$ defined on connected open subsets of $[0,T'_1[$.
Now, given any $z\in\dom(\Phi_0)$, we follow its fiber as $t$ increases from $0$ as long as possible: this gives a maximal continuous function $t\in[0,\tau(z)[\,\mapsto \zeta_z(t)$ such that $\zeta_z(0)=z$ and $\Phi_t(\zeta_z(t))$ is constant.
The real number $\tau(z)$ belongs to $]0,T'_1]$.
Uniqueness and maximality (last point of \Cref{prop:fibers}) follow easily.
In the lemma below, the point $\zeta_z(t)$ is denoted
\[ z\ag{t}
.\]

\begin{lemma}\label{lem:sch}
The following holds:
\begin{enumerate}
\item The function $\tau$ is lower semi-continuous, i.e.\ for all $t\in[0,T'_1[$, the set $U_t =\tau^{-1}(]t,T'_1]) \subset \C$ is open.
\item On $U_t$, the function $z\mapsto z\ag{t}$ is holomorphic.
\end{enumerate}
\end{lemma}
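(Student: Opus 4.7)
The plan is to propagate the local trivialization of Lemma~\ref{lem:loc_triv} along the compact path traced by $z_0$ under fiber-following. Fix $z_0 \in U_t$, so that $\tau(z_0) > t$, and consider the compact arc
\[
\Gamma = \setof{(s, z_0\ag{s})}{s\in[0,t]} \subset \dom\Phi.
\]
Applying Lemma~\ref{lem:loc_triv} at each point of $\Gamma$, pulling the trivialization neighborhoods back through the continuous parametrization $s\mapsto (s,z_0\ag{s})$, and using a Lebesgue number argument on $[0,t]$, I would obtain a subdivision $0=s_0<s_1<\cdots<s_n=t$ together with open sets $V_0,\ldots,V_{n-1}\subset\dom\Phi$ carrying local trivializations $(s,z)\mapsto(s,u_i(s,z))$, such that every piece $\Gamma\cap([s_i,s_{i+1}]\times\C)$ lies in $V_i$. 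In particular each overlap point $(s_{i+1},z_0\ag{s_{i+1}})$ lies in $V_i\cap V_{i+1}$.

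I would then build inductively a nested sequence of open neighborhoods $W_0\supset W_1\supset\cdots\supset W_n$ of $z_0$ in $\C$ and holomorphic maps $T_i:W_i\to\C$ with $T_0=\on{id}$, $T_i(z_0)=z_0\ag{s_i}$, $(s_i,T_i(z))\in V_i\cap V_{i-1}$ for $i\geq 1$, and $\Phi_{s_i}\circ T_i=\Phi_0$ on $W_i$. The inductive step exploits the fact that $z_0\ag{s_i}$ and $z_0\ag{s_{i+1}}$ lie on the same horizontal of the chart $V_i$, so there is a local inverse branch of $u_i(s_{i+1},\cdot)$ near $z_0\ag{s_{i+1}}$, holomorphic and single-valued there, and one sets
\[
T_{i+1}(z) = u_i(s_{i+1},\cdot)^{-1}\!\big(u_i(s_i, T_i(z))\big),
\]
shrinking $W_i$ as needed so that $(s_{i+1},T_{i+1}(z))$ stays inside $V_i\cap V_{i+1}$. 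Holomorphy of $T_{i+1}$ follows from that of each factor, and the fiber-preservation relation persists because $u_i$ is constant along fibers in $V_i$.

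With $T_n$ in hand, for every $z\in W_n$ the maps $T_0,\ldots,T_n$ can be spliced, using the $u_i$-parametrization of the fiber inside each $V_i$, into a continuous curve $[0,t]\to\dom\Phi$ from $(0,z)$ to $(t,T_n(z))$ that stays in the fiber of $\Phi$. A final application of Lemma~\ref{lem:loc_triv} at the point $(t,T_n(z))\in V_{n-1}$ extends this curve a bit past $t$, so $\tau(z)>t$ and $W_n\subset U_t$; this gives the open neighborhood of $z_0$ required for (1). For (2), the uniqueness clause of Proposition~\ref{prop:fibers} forces the spliced curve to coincide with $s\mapsto(s,z\ag{s})$ on $[0,t]$, hence $z\ag{t}=T_n(z)$ is holomorphic on $W_n$; since $z_0\in U_t$ was arbitrary, $z\mapsto z\ag{t}$ is holomorphic on $U_t$.

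The only delicate point is the coherent shrinking of the $W_i$'s so that the composition defining $T_{i+1}$ is legal on an open set still containing $z_0$; with only finitely many steps this is a routine matter of continuity and the open mapping theorem, and no essential analytic difficulty arises beyond what is already packaged in Lemma~\ref{lem:loc_triv}.
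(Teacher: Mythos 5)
Your proposal is correct and follows essentially the same route as the paper: cover the compact arc $\{(s,z_0\ag{s}) : s\in[0,t]\}$ by finitely many local-trivialization neighborhoods from Lemma~\ref{lem:loc_triv}, compose the transition maps to obtain a holomorphic holonomy on a shrunken neighborhood of $z_0$, and invoke the uniqueness clause of Proposition~\ref{prop:fibers} to identify it with $z\mapsto z\ag{t}$. The paper states this more tersely (build a single trivialization on a whole tube around $z\ag{[0,t']}$ relative to $[0,t']\times\C$); your inductive construction of the maps $T_i$ is a more explicit unpacking of the same idea.
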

\begin{proof} For a given $z\in U_t$, since $t<\tau(z)$, cover the compact set $[0,t]$ by open subsets on which there is a local trivialization of the fiber $z$ belongs to. Extract a finite cover. From it, one can build a trivialization like in the previous lemma, but in a whole neighborhood of $z\ag{[0,t]}$ relative to $[0,t]\times\C$. The lemma follows.
\end{proof}

This ends the proof of the \Cref{prop:fibers}.

\subsubsection{Objectives}

Let $f\in\cal F$ and denote by $\tau_R[f]$ the $\tau$ function corresponding to $R$ in \Cref{prop:fibers}: i.e.\ $\tau_R[f](z)$ is the time up to which the fiber of $(t,z)\mapsto R_t(z)$ that contains $(0,z)$ can be followed.
Recall that $\cal R[f_t]$ denotes the parabolic renormalization of $f_t$, normalized so that the critical value does not move as $t$ varies, and recall that $f_t$ is a specific perturbation of $f_0=f$.
Consider the parabolic renormalization $\cal R[f_0]$ of $f_0$. 
\begin{lemma}
If $\forall z\in\sub{\dom(\cal R[f_0])}{(1-\epsilon_1)}$, $\tau_R[f_0](z)>\epsilon_0$  then $\cal R[f_{\epsilon_0}]$ has a restriction that belongs to $\cal F_{\epsilon_1}$.
\end{lemma}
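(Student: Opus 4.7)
\bigskip

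\noindent\textbf{Proof plan.} The plan is to exhibit an explicit Schlicht map $\psi$ defined on $B(0,1-\epsilon_1)$ such that $\cal R[B_d]\circ\psi^{-1}$ is a restriction of $\cal R[f_{\epsilon_0}]$.

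First, since $f_0=f\in\cal F$, Theorem~\ref{thm:shi2} ensures that $R_0=\cal R[f_0]\in\cal F$ as well, so we may write $R_0=\cal R[B_d]\circ\phi_2^{-1}$ for a unique $\phi_2\in\cal S$; by the very definition of $\sub{\cdot}{\cdot}$, the set $U:=\sub{\dom R_0}{(1-\epsilon_1)}$ is precisely $\phi_2(B(0,1-\epsilon_1))$. Next, I apply Proposition~\ref{prop:fibers} with $F=R$: this supplies the time function $\tau_R[f_0]$ and the fiber-tracking map $\zeta(t,z)$ defined on $\{t<\tau_R[f_0](z)\}$. The hypothesis of the lemma reads exactly as $U\subset U_{\epsilon_0}$, so $\zeta(\epsilon_0,\cdot)$ is defined, holomorphic and injective on $U$, and satisfies $R_{\epsilon_0}(\zeta(\epsilon_0,z))=R_0(z)$ for all $z\in U$.

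Now set
\[\psi(w)\;=\;\zeta\bigl(\epsilon_0,\,\phi_2(w)\bigr),\qquad w\in B(0,1-\epsilon_1).\]
Then $\psi$ is holomorphic and injective as a composition of two such maps, and for every $w\in B(0,1-\epsilon_1)$,
\[R_{\epsilon_0}(\psi(w))\;=\;R_{\epsilon_0}\bigl(\zeta(\epsilon_0,\phi_2(w))\bigr)\;=\;R_0(\phi_2(w))\;=\;\cal R[B_d](w),\]
so that $\cal R[B_d]\circ\psi^{-1}$ coincides with the restriction of $R_{\epsilon_0}=\cal R[f_{\epsilon_0}]$ to $V:=\psi(B(0,1-\epsilon_1))$. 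It only remains to verify the Schlicht normalization $\psi(0)=0$, $\psi'(0)=1$.

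For this, observe that for every $t\in[0,T_1'[$, the origin is a parabolic fixed point of $R_t$ with multiplier one, so $R_t(0)=0=R_0(0)$ and the horizontal curve $t\mapsto(t,0)$ is entirely contained in the fiber $X_0=\{R=0\}$; the uniqueness clause of Proposition~\ref{prop:fibers} therefore forces $\zeta(t,0)=0$ for all admissible $t$. Differentiating the fiber identity $R_t(\zeta(t,z))=R_0(z)$ in $z$ at $z=0$ gives $R_t'(0)\cdot\partial_z\zeta(t,0)=R_0'(0)$, and both multipliers equal $1$, hence $\partial_z\zeta(t,0)=1$. Combined with $\phi_2(0)=0$ and $\phi_2'(0)=1$, the chain rule yields $\psi(0)=0$ and $\psi'(0)=1$, so $\psi$ (after the trivial rescaling identifying $B(0,1-\epsilon_1)$ with its image) places $\cal R[B_d]\circ\psi^{-1}$ in $\cal F_{\epsilon_1}$, completing the proof.

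No step looks like a serious obstacle: the heavy lifting has already been done in Proposition~\ref{prop:fibers}, and the verification of the normalization of $\psi$ at the origin reduces to the universal fact that parabolic fixed points of multiplier one are preserved along the deformation.
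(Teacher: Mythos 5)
Your proof is correct and follows essentially the same route as the paper's: write $R_0=\cal R[B_d]\circ\phi_2^{-1}$, identify $\sub{\dom R_0}{(1-\epsilon_1)}=\phi_2(B(0,1-\epsilon_1))$, and push forward by the fiber-tracking map $\zeta(\epsilon_0,\cdot)$ from Proposition~\ref{prop:fibers}. One thing you do in addition to the paper's own proof is verify explicitly that $\psi=\zeta(\epsilon_0,\cdot)\circ\phi_2$ is a Schlicht map (the paper's proof stops at declaring $\zeta^{\epsilon_0}\circ\phi_2$ a ``structural equivalence with $0$ as a marked point,'' which per se only gives $\psi(0)=0$). Your check that $\zeta(t,0)\equiv 0$ via the uniqueness clause and that $\partial_z\zeta(t,0)=1$ via differentiating the fiber identity is correct, and the claim $R_t'(0)=1$ for all $t$ is indeed guaranteed by the normalization of $\Phi_t,\Psi_t$ chosen in the paper (one checks that $\Phi_t\circ\Psi_t(z)=z+o(1)$ as $\Im z\to+\infty$, since the shifts $\beta_t$ and $\beta_t'=\beta_t-i\pi\gamma[f_t]$ cancel the $a_{\on{up}}=-i\pi\gamma[f_t]$ term). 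Since membership in $\cal F_{\epsilon_1}$, as defined, requires a map of the exact form $\cal R[B_d]\circ\phi^{-1}$ with $\phi(z)=z+\cal O(z^2)$, this normalization check is genuinely needed to match the lemma's statement.
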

\begin{proof}
The map $\cal R[f_0]$ belongs to $\cal F$, thus it can be written as
$\cal R[f_0]=\cal R[B_d]\circ \phi_2^{-1}$ where $\phi_2:\D\to\C$ is univalent and $\phi_2(z)=z+\cal O(z^2)$.
By hypothesis, the set $U_{\epsilon_0}$ contains $\sub{\dom(\cal R[f_0])}{(1-\epsilon_1)} = \phi_2(B(0,1-\epsilon_1))$ (the sets $U_t$ were defined in \Cref{prop:fibers,lem:sch}). According to \Cref{prop:fibers}, the map $\zeta^t : z\in U_t\mapsto \zeta(t,z)$ is a holomorphic bijection to its image, and $\cal R[f_t](\zeta^t(z))=\cal R[f_0](z)$ holds on $U_t$.
Apply this to $t=\epsilon_0$: let $V = \zeta^{\epsilon_0}(\phi_2(B(0,1-\epsilon_1)))$, then $\zeta^{\epsilon_0}\circ\phi_2$ is a structural equivalence, with $0$ as a marked point, between the restriction of $\cal R[f_{\epsilon_0}]$ to $V$ and the restriction of $\cal R[B_d]$ to $B(0,1-\epsilon_1)$.
\end{proof}

So the Main theorem will be proved if we can prove the following claim:

\begin{assertion}[survival of fibers of $R$]\label{ass:1}
There exists a pair $\epsilon_1<\epsilon_0$ with $\epsilon_0<T_1'$
such that for all $f_0\in\cal F$, 
for all $z\in \sub{\dom(\cal R[f_0])}{(1-\epsilon_1)}$,
\[\tau_R[f_0](z)>\epsilon_0
.\]
\end{assertion}

The constant $T'_1$ was defined in \Cref{subsub:harcrit}. We will in fact prove more: for all $\epsilon_0$ small enough, there exists $\epsilon_1<\epsilon_0$ such that the conclusion of the assertion holds. Better: we can take $\epsilon_1 \ll \epsilon_0$ (see details in \Cref{subsec:ccl}).

\subsubsection{Restatement of the objectives}\label{subsub:restate}

Let $\epsilon>0$ and consider some 
\[z\in \sub{\dom(\cal R[f_0])}{(1-\epsilon)}
.\]
The map $R_t = \cal R[f_t]$ is the semi-conjugate by $E$ of the composition $\Phi_t \circ \Psi_t$, but it can also be viewed differently: recall that the extended Fatou coordinates $\Phi_t$ and extended inverse $\Psi_t$ are defined via iteration of $f_t$, using bijective Fatou coordinates in petals as a starting point. Let $\cal P_\rep$ be a repelling petal and $\Phi_\rep$ be a repelling Fatou coordinate such that $\Psi_t = \Phi_\rep^{-1}$ holds on $\Phi_\rep(\cal P_\rep)$.
The value $R_t(z)$ thus decomposes as follows (see \Cref{fig:decomp} in \Cref{subsec:parabopt}):
\[ R_t(z)=E(\Phi_t(f_t^{m_0}(\Psi_t(u))))\]
where $E(z)=e^{2\pi i z}$, $u\in E^{-1}(z) \cap \Phi_\rep(\cal P_\rep)$ and $m_0=m_0(z)\in\N$ is chosen so that $f_t^{m_0}(\Psi_t(u))$ belongs to the attracting petal. 
Let us now focus on the initial situation, at $t=0$: consider the $f_0$ bilateral orbit
\[(n\in\Z)\quad \omega_n:=\Psi_0(u+n).\]
It depends on $z$ and on the choice of $u\in E^{-1}(z) \cap \Phi_\rep(\cal P_\rep)$.
Interestingly, if one chooses another $u\in E^{-1}(z) \cap \Phi_\rep(\cal P_\rep)$, we get the same orbit, but with the index $n$ shifted.
According to the first step, if $z\in \sub{\dom(\cal R[f_0])}{(1-\epsilon)}
$ then the orbit $\omega_n$ is contained in $\sub{\dom(f_0)}{(1-\epsilon')}=\phi_0(B(0,1-\epsilon'))$ with $\epsilon' \gg \epsilon$:
\[\forall n\in\Z,\ \omega_n \in \sub{\dom(f_0)}{(1-\epsilon')}
.\ \footnote{Let us again insist on our interpretation of this fact, that is the central idea of the whole machinery: given $f_0\in\cal F$, the restriction of its renormalized map $R_0$ to a map with substructure $\cal F_\epsilon$, can be defined using a restriction of the map $f_0$ that has structure $\cal F_{\epsilon'}$, i.e.\ much less structure. If all maps with structure $\cal F_{\epsilon'}$ were restrictions of maps in $\cal F$ we would be done (the main theorem would follow at once), but this is of course not the case, and this is the reason why we introduced the interpolation $f_t$. The idea is then the following: since $\epsilon\ll\epsilon'$, for $t$ at most $\epsilon$ or just slightly bigger, the map $f_t$ will be extremely close to $f_0$ on a set slightly bigger than $\sub{\dom(f_0)}{(1-\epsilon')}$. The task is then to check that this is close enough so that the fibers attached to the orbits $\omega_n$ survive and thus the $\cal F_\epsilon$-structure of the parabolic renormalization survives.}
\]
Let us now denote $\tau_\Phi[f]$ the $\tau$ function corresponding to $\Phi$ in \Cref{prop:fibers}. This proposition also provides a map $(t,z)\mapsto \zeta(t,z)$, to be interpreted as a motion of $z$ as $t$ varies. For convenience, in the sequel we will use the notation
\[z\ag t =\zeta(t,z)
.\]
\begin{lemma}[the motion is compatible with the dynamics]\label{lem:compat}
$\forall z\in U_0$, $\tau_\Phi(f_0(z))\geq \tau_\Phi(z)$ and $\forall t<\tau_\Phi(z)$, $f_t(z\ag t) = f_0(z)\ag t$.
\end{lemma}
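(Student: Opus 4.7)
The plan is to reduce the statement to the uniqueness clause of Proposition~\ref{prop:fibers} via Abel's functional equation. Recall from Section~\ref{subsec:parabopt} that on its whole domain (the parabolic basin of $f_t$), the extended attracting Fatou coordinate satisfies the semi-conjugacy
\[ \Phi_t \circ f_t = T_1 \circ \Phi_t, \]
and up to the constant $\beta_t$ that only depends on $t$, this is inherited by $\Phi_t$ defined in Section~\ref{subsub:motion}. In particular, $\dom(\Phi_t)$ is $f_t$-invariant.

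First I would argue that the curve $t \mapsto f_t(z\langle t\rangle)$ is well-defined and continuous for $t\in[0,\tau_\Phi(z)[$. Indeed, $z\langle t\rangle\in\dom(\Phi_t)$ by the very construction of $\zeta$ in Proposition~\ref{prop:fibers}, hence $z\langle t\rangle$ lies in the parabolic basin of $f_t$, which is contained in $\dom(f_t)$; continuity of $\zeta$ in $(t,z)$ together with joint continuity of $(t,w)\mapsto f_t(w)$ (clear from the explicit expression $f_t=\cal R[B_d]\circ\phi_t^{-1}$ and the continuous dependence of $\phi_t$ on $t$) yields continuity of the composed curve.

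Next, I would apply Abel's equation at the point $z\langle t\rangle$:
\[ \Phi_t\bigl(f_t(z\langle t\rangle)\bigr) \;=\; \Phi_t(z\langle t\rangle)+1 \;=\; \Phi_0(z)+1 \;=\; \Phi_0(f_0(z)). \]
Therefore the continuous curve $t\mapsto (t, f_t(z\langle t\rangle))$ lives inside $\dom\Phi$ and is contained in the fiber of $\Phi$ over the complex number $\Phi_0(f_0(z))$, and it starts at $(0,f_0(z))$. By the uniqueness clause of Proposition~\ref{prop:fibers}, this curve must coincide with $t\mapsto (t, f_0(z)\langle t\rangle)$ on their common interval of definition. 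Since our curve is defined on all of $[0,\tau_\Phi(z)[$, this forces $\tau_\Phi(f_0(z))\geq \tau_\Phi(z)$, and gives the identity $f_t(z\langle t\rangle)=f_0(z)\langle t\rangle$ on that interval.

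There is no real obstacle here; the only nontrivial points are (i) making sure Abel's equation applies at the moving point (which is guaranteed by the fact that $\dom(\Phi_t)$ equals the full parabolic basin of $f_t$ and is $f_t$-invariant), and (ii) checking that $(t,w)\mapsto f_t(w)$ has enough joint continuity for the composed curve to stay in $\dom\Phi$ and to qualify as a candidate fiber-following path for the uniqueness clause. Both are straightforward from the explicit formula defining $f_t$ and from Proposition~\ref{prop:conti}.
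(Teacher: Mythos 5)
Your proof is correct and follows essentially the same route as the paper's: apply Abel's equation $\Phi_t\circ f_t=T_1\circ\Phi_t$ together with $f_t$-invariance of $\dom(\Phi_t)$ to show that $t\mapsto f_t(z\ag t)$ stays in a single fiber of $\Phi$ starting at $(0,f_0(z))$, then invoke the uniqueness clause of Proposition~\ref{prop:fibers}. The only cosmetic difference is that you make the continuity check explicit, which the paper leaves implicit; the underlying argument is identical.
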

\begin{proof}

By construction of the extended Fatou coordinates, if $(t,z)\in\dom\Phi$ then $(t,f_t(z)) \in \dom \Phi$ and $\Phi(t,f_t(z))=1+\Phi(t,z)$. By hypothesis, the graph of $t\in[0,\tau_\Phi(z))\mapsto z\ag t$ is contained in $\dom\Phi$ hence so is the graph of $t\in[0,\tau_\Phi(z))\mapsto f_t(z\ag t)$ and $\Phi(t,f_t(z\ag t)) = 1+\Phi(t,z\ag t)$, and thus remains constant as $t$ varies, by construction of the motion $z\ag t$. This means that $t\in[0,\tau_\Phi(z))\mapsto f_t(z\ag t)$ is in the unique fiber of $\Phi$ containing $f_0(z)$: hence
$f_t(z\ag t) = f_0(z)\ag t$.
\end{proof}
Now for a given $t$ consider the sequence
\[\omega_n\ag{t}
.\]
It is an orbit of $f_t$, though, depending on $t$, it may not be defined for all $n$:
\begin{lemma}\label{lem:compat2}
For all $t\in[0,T'_1[$:
\begin{itemize}
\item if $\omega_n\ag t$ is defined (i.e.\ $\tau_\Phi(\omega_n)>t$) then $\omega_{n+1}\ag t$ is defined and $\omega_{n+1}\ag t=f_t(\omega_n\ag{t})$,
\item $\omega_n\ag t$ is defined when $n$ is big enough.
\end{itemize}
\end{lemma}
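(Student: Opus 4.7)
The first item is essentially a restatement of the previous lemma. Since $\omega_n$ lies in the parabolic basin of $f_0$, it belongs to $\dom(\Phi_0) = U_0$, so the previous lemma applies with $z = \omega_n$. Assuming $\tau_\Phi(\omega_n) > t$, this gives both $\tau_\Phi(\omega_{n+1}) = \tau_\Phi(f_0(\omega_n)) \geq \tau_\Phi(\omega_n) > t$ (so that $\omega_{n+1}\ag t$ is defined) and the identity $f_t(\omega_n\ag t) = f_0(\omega_n)\ag t = \omega_{n+1}\ag t$.

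For the second item, the plan is to build the motion $\omega_n\ag s$ explicitly for large $n$ and all $s \in [0, t]$ using the bijective attracting Fatou coordinates on a uniform petal, and then identify it with the canonical motion by uniqueness (last clause of Proposition~\ref{prop:fibers}). Set $c = \Phi_0(\omega_n)$; since the extended Fatou coordinate satisfies $\Phi_0(\omega_{n+1}) = \Phi_0(\omega_n) + 1$ and the $\omega_n$ eventually enter $D_\at[f_0]$ where $\Phi_\at[f_0]$ is bijective, $\Re(c) \to +\infty$ as $n \to +\infty$. Apply Proposition~\ref{prop:cf} to the compact family $\{f_s : s \in [0, t]\}$ (after rescaling to a uniform domain, which is legitimate because all $f_s$ are defined on at least $B(0, (1-T'_1)/4)$): this provides, uniformly in $s$, a petal $D_\at[f_s]$ on which $\Phi_\at[f_s]$ is a holomorphic bijection onto a region containing a half-plane $\{\Re w > M\}$.

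Because $t < T'_1$, Lemma~\ref{lem:keep} ensures $v_s$ is in the basin and hence in the domain of $\Phi_\at[f_s]$, so $\beta_s = \Phi_\at[f_0](v_0) - \Phi_\at[f_s](v_s)$ depends continuously on $s \in [0, t]$ (Proposition~\ref{prop:conti}) and stays bounded. Choose $n_0$ so large that for all $n \geq n_0$ and all $s \in [0, t]$ one has $\Re(c - \beta_s) > M$; this is possible by the uniform bound on $\beta_s$. Then the point $q(s) := \Phi_\at[f_s]^{-1}(c - \beta_s) \in D_\at[f_s]$ is well-defined, depends continuously on $s$ (again by Proposition~\ref{prop:conti}, since $\Phi_\at[f_s]$ is a biholomorphism on its petal with image varying continuously), and satisfies $\Phi(s, q(s)) = \Phi_\at[f_s](q(s)) + \beta_s = c$ together with $q(0) = \omega_n$. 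Thus $(s, q(s))_{s \in [0, t]}$ is a continuous curve in the fiber of $\Phi$ through $(0, \omega_n)$. By the uniqueness statement in Proposition~\ref{prop:fibers}, $q(s) = \omega_n\ag s$ on $[0, t]$, and in particular $\tau_\Phi(\omega_n) > t$, as required.

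No serious obstacle is anticipated: the first item is a one-line consequence of the preceding lemma, and the second reduces to a direct Fatou-coordinate construction made possible by the uniform petal estimates of Proposition~\ref{prop:cf} and the continuous dependence of Proposition~\ref{prop:conti}. The only point demanding care is to verify uniformity in $s \in [0, t]$ when applying Proposition~\ref{prop:cf}, which is handled by the compactness of this $s$-interval together with the fact that $t < T'_1$ keeps $v_s$ safely inside the parabolic basin.
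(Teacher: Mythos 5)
Your argument is correct and follows the paper's proof almost line for line: the first item is the previous lemma applied to $z=\omega_n$, and for the second you build the motion $q(s)=\Phi_\at[f_s]^{-1}(c-\beta_s)$ using Propositions~\ref{prop:cf}/\ref{prop:cf2} applied to $\cal G=\{f_s : s\in[0,t]\}$, boundedness of $\beta_s$, and the uniqueness clause of Proposition~\ref{prop:fibers} — exactly as in the text. One small inaccuracy: the image $\Phi_\at[f_s](D_\at[f_s])$ need not contain a half-plane $\{\Re w>M\}$; Proposition~\ref{prop:cf2} only guarantees a region $\Re z>\xi(\Im z)$ with $\xi(y)=\mathcal O(\log|y|)$ uniform in $s$, but since $\Im(c-\beta_s)$ is uniformly bounded and $\Re(c)\to+\infty$, the points you need still land there, so the argument survives as written.
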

\begin{proof}
Since $\omega_n\ag 0=\omega_n$ is an orbit for $f_0$: $\omega_{n+1}\ag 0 = f_0(\omega_n\ag 0)$. The first point follows from the previous lemma. Informally, the second point states that points deep enough in the attracting petal can be followed for a long time. 
Let us apply \Cref{prop:cf2} and its companion \Cref{prop:cf} to the family of maps $\cal G=\setof{f_s}{s\in[0,t]}$.
The Fatou coordinates in this proposition are normalized by the expansion. They thus differ from $\Phi_s$ by the constant $\beta_s = \sigma_d-\Phi_\at[f_t](v_t)$ of \Cref{subsub:motion}, which is bounded for $s\in[0,t]$.
Hence there is a map $\xi$, independent of $s\in[0,t]$, such that the domain of equation $\Re(z)>\xi(\Im(z))$ is contained in the image by $\Phi_s$ of the attracting petal $D_\at[f_s]$ (defined in \Cref{prop:cf}).
Choose $N_1$ so that $\omega_{N_1}\ag 0\in D_\at[f_0]$. For $N=N_1+k \geq N_1$, we have $\omega_{N}\ag 0 \in D_\at[f_0]$ and $\Phi_0(\omega_N\ag 0) = \Phi_0(\omega_{N_1})+k$ hence there is some $N_2\geq N_1$ such that for all $n\geq N_2$, $\omega_n\ag 0$ is in the domain of equation $\Re(z)>\xi(\Im(z))$. Let us call $\Psi_{\at,s}$ the inverse of the restriction of $\Phi_s$ to the petal.
The function $s\mapsto \Psi_{\at,s}(\Phi_0(\omega_n\ag 0))$ then defines a motion of $\omega_n\ag 0$ within a fiber of $\Phi$, whence the conclusion by the uniqueness point of \Cref{prop:fibers}.
\end{proof}

The sequence $\omega_n\ag t$ is thus defined either for all $n\in\Z$ or for all $n \geq N\in\Z$, where $N$ depends both on $t$ and on the orbit $\omega_n=\omega_n\ag 0$.
\Cref{prop:cfr} provides a repelling petal $D_\rep[f_t]$ of diameter $r_0$ that varies continuously with $f_t$. Here $r_0$ can be any small enough constant independent of $f_t$.
Assertion~\ref{ass:1}, and thus the main theorem, will follow from:

\newcommand{\ass}{%
There exists $r_0'<r_0$ and a pair $\epsilon_1<\epsilon_0$ with $\epsilon_0<T_1'$ such that for all $f_0\in\cal F$,
for all $z\in\sub{\dom(\cal R[f_0])}{(1-\epsilon_1)}$, if we consider the orbit $\omega_n$ associated to $z$, then
\begin{itemize}
\item for all $n\in\Z$
\[\tau_\Phi[f_0](\omega_n)>\epsilon_0
,\]
\item there exists $M\in\Z$ such that $(t\leq \epsilon_0$ and $n\leq M) \implies \omega_n\ag{t}\in D_\rep[f_t](r_0')$.
\end{itemize}%
}
\begin{assertion}[survival of orbits as fibers of $\Phi$, and control]\label{ass:2}
\ass
\end{assertion}

Indeed, 
let $\Phi_{+,t}$ be the repelling Fatou coordinates on $D_\rep[f_t]$ such that $\Psi_t\circ\Phi_{+,t}(z)=z$ holds on $D_\rep[f_t]$.
It depends continuously on $t$. 
Consider a point $z\in \sub{\dom(R_0)}{1-\epsilon_1}$ and the $f_0$-orbit $\omega_n$ associated to $z$.
Let then $z(t)=E(\Phi_{+,t}(\omega_M\ag t))$.
Then $z(t)\in \dom R_t$ and $\forall n\geq M$, $R_t(z(t))=E(\Phi_{+,t}(\omega_M\ag t)) = E(\Phi_t(\omega_n \ag t) + M-n) = E(\Phi_t(\omega_n \ag t)) = E(\Phi_0(\omega_n\ag 0))$ (the last equality because we follow a fiber of $\Phi$), i.e.\ $R_t(z(t))$ is constant as $t$ varies.
Since $z(0)=z\ag 0$, we have followed the $R$-fiber associated to $z$: $z(t)=z\ag t$.
In particular $\tau_R(z)>\epsilon_0$.

Again, we will get slightly stronger information on the valid pairs $(\epsilon_0,\epsilon_1)$ for Assertion~\ref{ass:2}, see \Cref{subsec:ccl}.

\subsection{Step 2, III: Survival of fibers.}\label{subsec:part2:3}

In this section, we will prove the following proposition (see the paragraph just before Assertion~\ref{ass:2} for information about the constant $r_0$):
\begin{proposition}\label{prop:surv}
There exists $K>0$, $r_0'<r_0$ and $\epsilon'_0$ such that
for all $\epsilon'<\epsilon'_0$, for all $f_0\in\cal F_0$, for all $f_0$-orbit $\omega_n$ indexed by $I=\Z$ that tends to $0$ in the future (in an attracting petal) and in the past (in a repelling petal), if the orbit $(\omega_n)$ is completely contained in $\sub{\dom(f)}{(1-\epsilon')}$ then its survival time is at least $\epsilon'/K$:
\[ \ds \forall n\in\Z,\ \tau_\Phi(\omega_n)> \epsilon'/K
.\]
Moreover\footnote{This constant $M$ will of course \emph{not} be independent of the orbit $(\omega_n)$.} there is some $M\in\Z$ such that $\forall n\in \Z$ with $n\leq M$ and $\forall t\leq \epsilon'/K$, $\omega_n\ag{t}\in D_\rep[f_t](r_0')$.
\end{proposition}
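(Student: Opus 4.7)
Fix $f_0\in\cal F$ with perturbation family $f_t$ and a bi-infinite orbit $(\omega_n)_{n\in\Z}$ satisfying the hypotheses. My plan follows the outline in Section~\ref{subsec:outline}: first control the motion near the attracting end using the injective attracting Fatou coordinates directly, and then propagate backward along the orbit, measuring drift in the hyperbolic metric of the complement of the post-singular set of $f_0$.

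Step 1 is to quantify the perturbation. Expanding $\phi_t(z)=r_t\phi_0(z/r_t)$ with $r_t=1-t$ and using the uniform Schlicht bounds on $\phi_0$ and $\phi_0^{-1}$ on $\ov{B}(0,1-\epsilon'/2)$ (via Koebe), one obtains a bound $\|f_t-f_0\|_{L^\infty(\sub{\dom f_0}{1-\epsilon'/2})}\le C\,t$ uniformly in $f_0\in\cal F$ (for $t$ not too large compared to $\epsilon'$). Set $\Omega_0:=\dom f_0\setminus PS(f_0)$, where $PS(f_0)$ is the closure of the forward orbit of the critical value together with $\{0\}$; by Lemma~\ref{lem:PSdist}, $PS(f_0)$ is compact, lies at bounded hyperbolic $\dom f_0$-distance from $0$, and is bounded away from $\partial\dom f_0$, so $d_{\Omega_0}$ is a well-behaved hyperbolic metric. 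Since $f_0$ has a unique critical value and $PS(f_0)$ is forward-invariant, every continuous inverse branch of $f_0$ lands in $\Omega_0$ and is a Schwarz--Pick contraction of $d_{\Omega_0}$.

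Step 2 handles the attracting tail. By Proposition~\ref{prop:cf} applied to the compact family $\{f_t\}_{t\in[0,T]}$ (with $T<T'_1$), there is a uniform petal $D_\at[f_t]$ on which $\Phi_t$ is injective, and for $n\ge N_+$ large enough (depending on the orbit) $\omega_n$ lies deep in $D_\at[f_0]$. The candidate motion $\tilde\omega_n(t):=(\Phi_t|_{D_\at[f_t]})^{-1}(\Phi_0(\omega_n))$ is then well-defined for $t$ in a full interval $[0,\epsilon_*]$, with $|\tilde\omega_n(t)-\omega_n|=O(t|\omega_n|)$ by Proposition~\ref{prop:dep2} combined with Proposition~\ref{prop:mvtphirep}. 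By the uniqueness clause of Proposition~\ref{prop:fibers}, $\tilde\omega_n(t)=\omega_n\ag t$, which gives $\tau_\Phi(\omega_n)>\epsilon_*$ and a small displacement at the attracting end.

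Step 3 is the backward propagation and is the crux of the proof. For $n<N_+$ define $\omega_n\ag t$ inductively as the $f_t$-preimage of $\omega_{n+1}\ag t$ chosen continuously near $\omega_n$. Set $\delta_n(t):=d_{\Omega_0}(\omega_n\ag t,\omega_n)$. Splitting $f_t^{-1}(\omega_{n+1}\ag t)$ vs.\ $f_0^{-1}(\omega_{n+1})$ through the intermediate point $f_0^{-1}(\omega_{n+1}\ag t)$, and using the hyperbolic contraction of $f_0^{-1}$ on $\Omega_0$, one gets
\[
\delta_n(t)\;\le\;\delta_{n+1}(t)\;+\;d_{\Omega_0}\!\big(f_t^{-1}(\omega_{n+1}\ag t),\,f_0^{-1}(\omega_{n+1}\ag t)\big),
\]
with the last term bounded by $Ct\cdot\rho_{\Omega_0}(\omega_{n-1})/|f_0'(\omega_{n-1})|$ by Step~1. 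Iterating from $N_+$ down to $n$, and using that the full orbit is confined to a hyperbolic ball of $\dom f_0$-radius $L(\epsilon')=O(\log(1/\epsilon'))$ (Proposition~\ref{prop:part:1}) so that the number of ``transit'' steps and the sum of local weights $\rho_{\Omega_0}/|f_0'|$ along the orbit are controlled by $\epsilon'$, the total drift is bounded by a constant multiple of $tK'/\epsilon'$ for some universal $K'$. Choosing $K$ large enough, $t\le\epsilon'/K$ forces $\delta_n(t)$ to remain smaller than a uniform constant, hence the motion persists for every $n$, giving $\tau_\Phi(\omega_n)>\epsilon'/K$. For $n$ sufficiently negative, $\omega_n\in D_\rep[f_0](r_0/2)$; continuity of $D_\rep[f_t]$ in $t$ (Proposition~\ref{prop:cfr}) combined with the displacement bound puts $\omega_n\ag t$ inside $D_\rep[f_t](r_0')$ for some $r_0'<r_0$, yielding the second conclusion with a suitable $M$.

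The \textbf{main obstacle} is the inductive bookkeeping in Step~3: the number of backward iterates from $N_+$ to the deep repelling end can be arbitrarily large, so a naive $O(t)$ error per step would blow up. The saving is that these backward steps are not free: they travel along an orbit confined to a hyperbolic ball of radius $\asymp\log(1/\epsilon')$, and the inverse branches of $f_0$ are genuine contractions for $d_{\Omega_0}$ thanks to the unique-critical-value structure of $\cal F$. Calibrating these two facts to obtain a clean inequality $\delta_n(t)\lesssim t/\epsilon'$ — rather than something exponential in $|N_+-n|$ — is the delicate point and dictates the eventual value of $K$.
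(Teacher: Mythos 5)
Your Step~3 is where the argument breaks down, precisely at what you yourself flag as the ``main obstacle.'' You identify the difficulty (unboundedly many backward iterates) but your proposed resolution does not work, and it is genuinely different from what the paper does.

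Your recurrence $\delta_n(t)\le\delta_{n+1}(t)+\text{err}_n$ has no contraction factor multiplying $\delta_{n+1}$, so summing it requires $\sum_n\text{err}_n<\infty$. You claim the sum of local weights $\rho_{\Omega_0}/|f_0'|$ along the orbit is controlled by $\epsilon'$, citing the confinement to a hyperbolic ball of radius $L(\epsilon')$. That is false: near the parabolic point $|f_0'|\asymp 1$, $\rho_{\Omega_0}(\omega_n)\asymp 1/|\omega_n|\asymp n$, and the orbit spends infinitely many iterates there, so $\sum_n\rho_{\Omega_0}(\omega_n)/|f_0'(\omega_n)|$ diverges regardless of $\epsilon'$. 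Confinement to a hyperbolic ball bounds neither the number of iterates nor the weighted sum. The paper's induction avoids this because the recurrence has a \emph{uniform} contraction factor $\Lambda(\delta/3)<1$ (Lemmas~\ref{lem:definite} and~\ref{lem:hl:1}): $\extent{W_0}{\omega_{n-1}}\le\Lambda\cdot\extent{W_0}{\omega_n}+K_0t/T_5+K_5t$, whose fixed point $(K_0t/T_5+K_5t)/(1-\Lambda)$ is independent of the number of iterates. But this $\Lambda<1$ is only available at Euclidean distance $\ge\delta$ from $\ov{PC}(f_0)$, which is exactly what forces the paper to split the orbit at $n_-$ and $n_+$: the attracting tail and the neighborhood of the critical orbit are absorbed into $\wt V_{\eta_1}$ and handled by Lemma~\ref{lem:t7}; the repelling passages are handled by Proposition~\ref{prop:homotopicrep} and Lemma~\ref{lem:rpb}; and the induction with $\Lambda<1$ runs only over the transit indices in $J$, where $\omega_n\notin V_\delta[f_0]$ by Lemma~\ref{lem:etadelta}. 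Your proposal has neither the strict contraction nor the separate treatment of the near-$PC$ passages, and it cannot be repaired by just noting that $\lambda<1$ everywhere, since $\lambda\to1$ along the orbit near $0$.

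There is a secondary but also real issue: you measure drift by $d_{\Omega_0}(\omega_n\ag t,\omega_n)$, a straight hyperbolic distance, whereas the paper measures the \emph{extent}, the homotopic length of the motion path $s\mapsto\omega_n\ag s$ in $W_0$. This is not cosmetic. The motion $\omega_{n-1}\ag\cdot$ is defined by lifting $\omega_n\ag\cdot$ through the multivalued $f_\cdot^{-1}$, and to apply the contraction of $f_0^{-1}$ you must pull back the same homotopy class; the straight distance does not keep track of which inverse branch you are following when the path $\omega_n\ag\cdot$ winds near a post-critical point. This is precisely why Proposition~\ref{prop:hl} produces a homotopy $h$ on a triangle and why Lemma~\ref{lem:hl:1} speaks of a \emph{geodesic representative} of a homotopy class staying off $V_{\delta/3}$. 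Your telescoping inequality quietly assumes compatible branches and a path-independent distance, which is not justified.

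Your Steps~1 and~2 are fine and essentially agree with the paper's preliminaries (Propositions~\ref{prop:cf}, \ref{prop:dep2}, \ref{prop:mvtphirep} for the attracting tail). The gap is entirely in Step~3: you need the strict contraction $\Lambda<1$ off $\ov{PC}$, the separate control of the $\wt V_{\eta_1}$ and $D_\rep$ passages, and the homotopic-length framework to make branch-tracking legitimate.
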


Here we do \emph{not} need to assume that $\epsilon'$ is related to some $\epsilon>0$ like in \Cref{prop:part:1}.

\subsubsection{Local orbits}

We first consider those orbits that stay near the parabolic point, and prove their survival for some uniform time.

\begin{lemma}[Survival of local orbits]\label{lem:survlocorb}
 For all $T_3<T'_1$ there exists $r_1>0$ such that for all $f_0\in\cal F_0$ and for all $f_0$-orbit $\omega_n$ indexed by $I=\Z$ or $I=\N$, if the sequence $(\omega_n)$ is contained in $B(0,r_1)$, then
\begin{itemize}
\item for all $n\in I$, $\tau_\Phi[f_0](\omega_n)> T_3$,
\item if $I=\Z$, then there exists $N\in\Z$ such that $\forall n\in\Z$ with $n\leq N$ and $\forall t\in[0,T_3]\,$, $\omega_n\ag{t}\in D_\rep[f_t](r_0)$.
\end{itemize}
\end{lemma}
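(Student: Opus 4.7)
My plan is a compactness argument on the family $\cal G = \setof{f_t}{f_0 \in \cal F_0,\ t \in [0,T_3]}$, which is compact since $\cal F_0$ is compact and $f_t$ depends continuously on $(f_0,t)$. First I would fix $\theta \in\,]\pi/2,\pi[$ close enough to $\pi$ and apply Propositions~\ref{prop:cf}, \ref{prop:cfr}, \ref{prop:bigger} and~\ref{prop:cf2} to $\cal G$, obtaining extended attracting and repelling petals $D_{\theta,\at}[g]$, $D_{\theta,\rep}[g]$ of common size $r_0$ with uniformly controlled normalized Fatou coordinates. Since for $\theta$ close enough to $\pi$ the images $W_\theta(R_0[g])$ and $-W_\theta(R_0[g])$ in the coordinate $u = -1/c_g z$ together cover the complement of a bounded set, a single $r_1>0$ suffices to force $B(0,r_1)\setminus\{0\} \subset D_{\theta,\at}[g] \cup D_{\theta,\rep}[g]$ for every $g\in\cal G$.

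Next I would build the motion $\omega_n\ag t$ by hand. Any orbit $(\omega_n) \subset B(0,r_1)$ of $f_0$ satisfies $\omega_n \in D_\at[f_0]$ for $n$ sufficiently large, since in the $u$-coordinate $u_n = -1/c_{f_0}\omega_n$ we have $u_{n+1} = u_n + 1 + o(1)$ while $|u_n|$ stays large, forcing $\Re u_n \to +\infty$; by Lemma~\ref{lem:traptime} and a compactness argument the trapping time is uniform in $\cal F_0$. In the bi-infinite case, similarly $\omega_n \in D_\rep[f_0]$ for $n \ll 0$. Pick $n_0$ large enough that Proposition~\ref{prop:cf2} gives $\Phi_0(\omega_{n_0}) \in \Phi_t(D_{\theta,\at}[f_t])$ for every $t \in [0,T_3]$ (possible because the image of each wide petal contains a uniform region of the form $\Re(w) > \xi(\Im w)$), and set $\omega_{n_0}\ag t$ to be the unique preimage there. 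Propagate forward by $\omega_{n+1}\ag t = f_t(\omega_n\ag t)$ and backward by the local branch of $f_t^{-1}$ near $\omega_{n-1}$; at indices where $\omega_{n-1}$ is a critical point of $f_0$, Lemma~\ref{lem:cmv} supplies a continuously varying critical point of $f_t$ mapping to $v_t$, which is the natural candidate for $\omega_{n-1}\ag t$ given the normalization $\Phi_t(v_t) = \Phi_0(v_0)$.

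By construction $\Phi_t(\omega_n\ag t) = \Phi_0(\omega_n)$ and $t \mapsto \omega_n\ag t$ is continuous, so the uniqueness clause of Proposition~\ref{prop:fibers} identifies this motion with the one there, proving $\tau_\Phi[f_0](\omega_n) > T_3$. For the second bullet, take $N \in \Z$ with $\omega_n \in D_\rep[f_0](r_0/2)$ for all $n \leq N$; combining Proposition~\ref{prop:conti} (continuity in $t$ of the Fatou coordinates) with the uniform bounds on the petals gives $\omega_n\ag t \in D_\rep[f_t](r_0)$ throughout $[0,T_3]$ for $n \leq N$, yielding the conclusion (with any $r_0'<r_0$ of one's choice, up to possibly shrinking $N$).

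The main difficulty is the backward pullback step: at every index one needs to pick a consistent branch of $f_t^{-1}$, and in particular must understand what happens at the finitely many indices where $\omega_{n-1}$ is a critical point of $f_0$. Lemma~\ref{lem:cmv} is tailored precisely for this, and combined with the uniform estimates on the wide petals it ensures the construction runs without ambiguity throughout $[0,T_3]$.
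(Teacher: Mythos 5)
Your covering idea — take $\theta$ close to $\pi$ so that $D_{\theta,\at}[g]\cup D_{\theta,\rep}[g]$ contains $B(0,r_1)\setminus\{0\}$ uniformly over $g\in\cal G$ — is a sensible start, but the sketch contains a red herring and a genuine gap. The red herring is the critical points: for $r_1$ small, the critical value $v$ of any $f\in\cal F$ and all critical points of $f$ lie at a uniform positive distance from $0$, so an orbit confined to $B(0,r_1)$ never meets a critical point or critical value; Lemma~\ref{lem:cmv} never comes into play (the paper's proof does not invoke it). The gap is that you never justify that the pullback construction can be continued on all of $[0,T_3]$. Writing ``backward by the local branch of $f_t^{-1}$ near $\omega_{n-1}$'' only makes sense while $\omega_n\ag t$ stays in a region where that branch exists (in the coordinate $u=-1/c_{f_t}z$, roughly $|u|>R_0$), and you offer no quantitative control of the drift of $\omega_n\ag t$ away from $\omega_n$. ``By construction $\Phi_t(\omega_n\ag t)=\Phi_0(\omega_n)$ and $t\mapsto\omega_n\ag t$ is continuous'' presupposes that the construction survives up to $T_3$, which is precisely what needs proving, and ``combining Proposition~\ref{prop:conti} with the uniform bounds on the petals'' is too weak to give the second bullet.

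This is exactly where the paper's proof does the real work, and it does so with a \emph{narrow} sector rather than your wide petals. On the sector $S=\{u:\arg(u-a)<\pi/3\}$ inside $H_\at$, the paper derives the explicit drift bound $|u(t)-u(0)|\le M_4\log(M_4'+|u(0)|)$: the motion is only \emph{logarithmic} in $|u(0)|$. Narrowness then pays off geometrically. At the first index $n_0$ where $u_n$ enters $S$, the inequality $|u'-(u+1)|\le1/4$ forces $u_{n_0}$ into a strip $\Lambda$ of width $5/4$ along $\partial S$, and the logarithmic bound keeps $u_{n_0}(t)$ in a slightly thickened $\Lambda'$ still hugging $\partial S$. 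Since $\partial S$ meets the real axis at angle $\pi/3$ and the backward dynamics is $u\mapsto u-1\pm1/4$, the backward orbit of $u_{n_0}(t)$ drifts westward and stays uniformly outside $|u|\le R_0$, eventually being trapped by the repelling petal; this is what fixes $r_1$ (through $R_1$ being large compared to the thickening of $\Lambda$) and delivers both bullet points at once. Without a version of the logarithmic drift bound and this geometric use of the narrow sector, the claim $\tau_\Phi(\omega_n)>T_3$ is not established.
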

\begin{proof} Recall the statements and notations of \Cref{prop:cf,prop:cf2} and apply them to the compact set of maps $\cal F_{[0,T_3]}$, which yields a value $r_0$. In their proofs, we introduced the right half plane $H_\at[f]$, image of the disk $D_\at[f]$ by
$z\mapsto s(z)=-1/c_f z$.
The boundary of $H_\at$ is a vertical line of abscissa $1/r_0|c_f|$. Call $R_0$ the supremum of $1/r_0|c_f|$ when $f$ varies over $\cal F_0$. The function $\Psi_\at$ was the inverse of $\Phi_\at : D_\at \to \Psi_\at(D_\at)$.
It is important to note a difference: the Fatou coordinates were normalized by they asymptotic expansion in these propositions, whereas here they are normalized using the critical value $v[f_t]$: $\Phi_t(z)=\Phi_\at[f_t](z)+\beta_t$ where $\beta_t=\beta[f_t]=\sigma_d-\Phi_\at[f_t](v[f_t])$.
Let
\[ z\mapsto s_t(z)=-1/c[f_t] z
.\]
Let $\wt\Psi_{t}=\Phi_t^{-1}$ defined on $\Phi_t(D_\at[f_t])$.
Choose any $T_3'\in\,]T_3,T_1'[$. The following three bounds are finite: 
\[ B=\ds\sup_{f\in\cal F_{[0,T_3']}} \big|c_f\big|
,\quad
B'=\sup_{f\in\cal F_{[0,T_3']}}|\beta_t| 
\quad\text{and}\quad
\Gamma = \ds\sup_{f\in\cal F_{[0,T_3']}} \big|\gamma[f]\big|
. \]
Since $B'<+\infty$, one can translate the estimates given in \Cref{prop:cf,prop:cf2} into estimates on $\wt\Psi_{t}$ and $\Phi_t$ as follows: 
\bEA
|s_t(f_t(z))-(s_t(z)+1)| & \leq & 1/4 \qquad (\forall z\in B(0,r_0))
\\
|\Phi_t(s_t^{-1}(u))-(u-\gamma\log_p u) | &\leq& M_1
\\
|s_t\circ\wt\Psi_{t}(Z)-(Z+\gamma\log_p Z) | &\leq& M_2
\\
\dom(\wt\Psi_{t}) & \supset & \setof{Z\in\C}{\Re Z>\xi(\Im Z)}
\\
\xi(y) &\underset{y\to \pm\infty}{=}& \cal O(\log|y|)
\eEA
where $s_t$, $\gamma=\gamma[f_t]$, $\Phi_t$ and $\wt\Psi_{t}$ all depend on $f_t$, but the function $\xi$ and the constants $M_1$, $M_2$ are independent of $f_0$ and of $t$.
Consider now a real number $a>R_0$ and the sector $S\subset H_\at$ defined by $\on{arg}(z-a)<\pi/3$.
By the first estimate above, $s_t^{-1}(S)$ is stable by $f_t$.
By the other estimates, if $a$ is big enough, for all $f_t\in \cal F_{[0,T_3']}$, for all $z\neq 0$, if $s_0( z) \in S$, then 
$\wt\Psi_t(\Phi_0(z))$ is defined. It follows a fiber of $\Phi$ hence by uniqueness in \Cref{prop:fibers},
$\tau_\Phi(z) \geq T_3'$ and
\[z\ag t = \wt\Psi_t(\Phi_0(z))
.\]
Using the estimate above on $\wt\Psi_t$, we get
$\forall t\in[0,T_3'[\,$, $s_t(z\ag{t}) \in H_\at[f_t]$ provided $a\geq A'$ for some $A'$ independent of $f_0$, $t$ and $z$.
Let
\[u(t) := s_t(z\ag{t})=s_t\circ \wt\Psi_t(\Phi_0(z))
.\]
In particular $u(0)=s_0(z)$.
We then get the following bound on the motion:
\[\big| u(t)-u(0)\big| \leq M_4\log (M_4'+|u(0)|)\]
where $M_4$ and $M_4'$ are independent of $t$, $f_0$ and $z$.
Indeed, we start from $|\log_p(x)|\leq \pi+\log |x|$ when $\log |x|>0$. We then use the estimates above to first get $|\Phi_0(z)|\leq M_1 + |u(0)| +\Gamma \pi+\Gamma \log|u(0)| $ (we can ensure $\log|u(0)|>0$ by taking $a>1$) and $|\Phi_0(z)|>1$ (take $a$ big enough).
Then $|u(t)|\leq M_2 + |\Phi_0(z)| + \Gamma \pi + \Gamma \log |\Phi_0(z)| \leq M+M'|u(0)|$ for a pair $(M,M')$ independent of $t$, $f_0$, $z$.
Then $|u(t)-u(0)|\leq |u(t)-\Phi_0(z)|+|\Phi_0(z)-u(0)|$. Last we use for $t'=t$ and $t'=0$ that $|u(t')-\Phi_0(z)| \leq M_1+\Gamma \log|u(t')|$.

So far, we have proved survival of points $z$ with in $s_0(z)\in S$, i.e.\ $\tau_\Phi(z)\geq T_3'>T_3$. \Cref{fig:cfp} illustrates the next step of the proof. 
\begin{figure}
\begin{tikzpicture}
\node at (0,0) {\includegraphics[height=7cm]{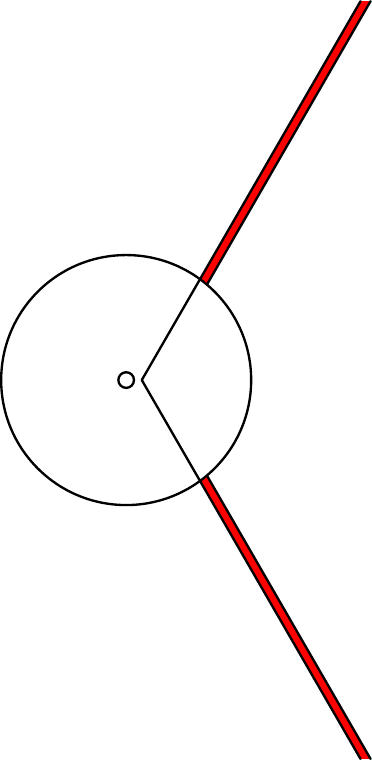}};
\node at (5,0) {\includegraphics[height=7cm]{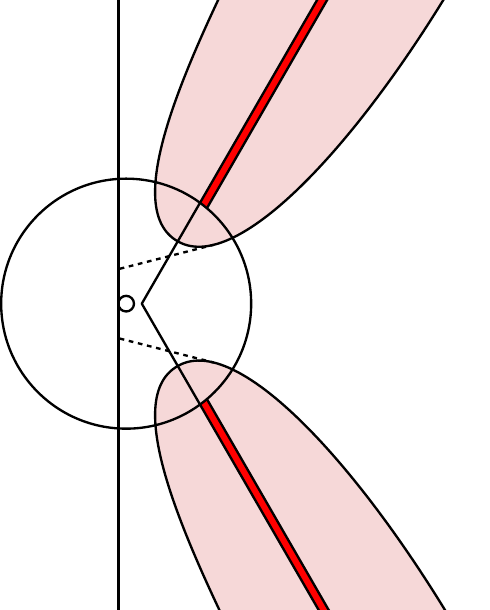}};
\end{tikzpicture}
\caption{Illustration of the proof of \Cref{lem:survlocorb}. Both pictures live in the $u$-plane. The small circle has radius $R_0$, the big circle radius $R_1$, both are centered on the origin. The sector $S$ has apex having some real affix $a>R_0$, which we depicted closer to $R_0$ than to $R_1$. See the text for further description.}
\label{fig:cfp}
\end{figure}
Let $r_1$ to be chosen later, with $r_1<r_0$. Let $R_1=\inf(1/|c_{f} r_1|) = 1/(r_1\sup|c_f|)$ where the extrema are taken over $f\in\cal F_{[0,T_3']}$. 
Assume $z_n$ is an orbit of $f_0$ indexed by $\N$ that is contained in $B(0,r_1)$. Then the sequence $u_n = s_0(z_n)$ is contained in ``$|u|>R_1$''.
If $u_0\in S$ then $\forall n\geq 0$,
$\tau_\Phi(z)\geq T_3'$.
If $u_0\notin S$, let $n_0$ be the smallest positive integer such that $u_{n_0} \in S$ (there is one, by the first estimate in the list).
Since $u_{n_0-1}\in ``|u|>R_1" \setminus S$ and $u_{n_0}\in ``|u|>R_1" \cap S$, the first estimate in the list gives, again, that $u_{n_0}$ must belong to the set $\Lambda$, depicted in red in \Cref{fig:cfp}, intersection of $``|u|>R_1"$ with the set of points in $S$ at distance $\leq 5/4$ from $\partial S$.
By the bound on the motion, $\forall t\in[0,T_3'[$, $u_{n_0}(t)$ belongs to the set $\Lambda'$, depicted in light red, union of balls of center $u\in\Lambda$ and of radius $M_4\log(M'_4+|u|)$.
The map $f_t$ still satisfies the first inequality in the list, hence, provided $R_1$ is big enough then for all $f_0$ and for all $t\in[0,T_3'[$, and for all sequence $u_n$ as above, there is an inverse orbit of the conjugate of $f_t$ by $s_t$, starting from $u_{n_0}(t)$ and remaining in ``$|u|>R_0$'', in fact remaining above or below a domain delimited by the dotted line on the figure (on which we interrupted the dotted line when it reaches the repelling petal, delimited by the vertical plain line).
By continuity, this orbit is equal to $s_t(z_n\ag{t})$ and $\tau_\Phi(z_n)\geq T_3'>T_3$, for all $n\in I=\Z$ or $\N$. If $I=\Z$, let as above $n_0$ be the smallest relative integer such that $u_{n_0}\in S$. By the first inequality it exists, and moreover the inverse orbit $u_n(t)$, $n$ negative, must enter the repelling petal (and stay there) as soon as $|u_{n_0}|+M_4\log(M'_4+|u_{n_0}|)+\frac34 (n-n_0) < -R_0$.
\end{proof}

We can in fact bound their motion.

\begin{lemma}[Bound on the motion of local orbits]\label{lem:bmslo}
The following can be added to the conclusions of \Cref{lem:survlocorb}:
\begin{itemize}
\item $\forall t\in[0,T_3]$, $\forall n\in I$, let $z=\omega_n$: $|z\ag{t}-z|\leq K_1 |z|t$.
\end{itemize}
The constant $K_1$ is independent of $f_0$, $t$ and $z$ but may depend on $T_3$.
\end{lemma}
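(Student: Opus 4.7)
The strategy is to refine the estimate on the motion already obtained in the proof of Lemma~\ref{lem:survlocorb}. That proof gave only the logarithmic bound $|u(t)-u(0)|\leq M_4\log(M_4'+|u(0)|)$, which did not exploit the smallness of $t$. The plan is to extract a Lipschitz-in-$t$ bound by invoking Proposition~\ref{prop:dep} (variation of $\Xi$ with the map), and then to convert back from the $u$-coordinate to the $z$-coordinate with care, using the Lipschitz dependence of the quadratic coefficient $c_t$ and of $\beta_t$ on $t$.

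\textbf{Setup.} Work in the $u$-coordinate $u=s_t(z)=-1/(c[f_t]z)$ and set $u(t)=s_t(z\ag t)$, so $u(0)=s_0(z)=-1/(c_0 z)$. Let $\tilde\Xi_t(u)=\Phi_\at[f_t](s_t^{-1}(u))$. The fiber-following identity $\Phi_t(z\ag t)=\Phi_0(z)$, with the normalization $\Phi_t=\Phi_\at[f_t]+\beta_t$, reads
\[
\tilde\Xi_t(u(t))+\beta_t=\tilde\Xi_0(u(0)).
\]
Two Lipschitz estimates feed into this. First, since the deformation $t\mapsto f_t$ is real-analytic with $f_t=\cal R[B_d]\circ\phi_t^{-1}$ and $\phi_t(z)=r_t\phi_0(z/r_t)$, one has $\|f_t-f_0\|_{B(0,r')}\leq Ct$ on any fixed disk $B(0,r')$ compactly contained in the common domain, for $t\in[0,T_3]$, uniformly over $f_0\in\cal F_0$ (by compactness of $\cal F_0$). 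Proposition~\ref{prop:dep} applied to the compact class $\cal F_{[0,T_3']}$ (for some $T_3'\in\,]T_3,T_1'[$) then yields $|\tilde\Xi_t(w)-\tilde\Xi_0(w)|\leq MCt$ on a set of the form $W_{\theta'}(R_1)$. Second, since $v$ is the critical value of $\cal R[B_d]$ and hence independent of $f_0\in\cal F$ and of $t$, we have $\beta_t=\Phi_\at[f_0](v)-\Phi_\at[f_t](v)$, and Proposition~\ref{prop:mvtphirep} applied at the single fixed point $v$ (which sits at bounded positive distance from the origin) gives $|\beta_t|\leq K_2 t$.

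\textbf{From $u$ to $z$.} Combining both estimates,
\[
|\tilde\Xi_t(u(t))-\tilde\Xi_t(u(0))|\;\leq\;|\beta_t|+|\tilde\Xi_t(u(0))-\tilde\Xi_0(u(0))|\;\leq\;K_3 t.
\]
By Propositions~\ref{prop:cf} and~\ref{prop:cf2}, on the region $|u|\geq R_1$, the map $\tilde\Xi_t$ is a uniformly bounded perturbation of $u\mapsto u-\gamma_t\log_p u$, so its derivative is uniformly close to $1$ provided $R_1$ (equivalently, $r_1$ in Lemma~\ref{lem:survlocorb}) has been taken small enough. Since the preceding lemma already guarantees that both $u(0)$ and $u(t)$ stay in this region, we can invert locally to obtain
\[
|u(t)-u(0)|\leq K_4 t.
\]
Now translate back: writing
\[
z\ag t-z=\frac{-1}{c_t u(t)}+\frac{1}{c_0 u(0)}=\frac{c_t\bigl(u(t)-u(0)\bigr)+(c_t-c_0)\,u(0)}{c_0c_t\,u(0)\,u(t)},
\]
and using that $t\mapsto c_t$ is Lipschitz on $[0,T_3]$ (since the coefficient $c_t$ of $z^2$ in $f_t$ depends real-analytically on $t$), the numerator is bounded by $K_5 t(1+|u(0)|)\leq K_6 t|u(0)|$ (for $r_1$ small, hence $|u(0)|$ large). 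For the denominator, $|u(t)|\geq |u(0)|/2$ by the bound above, so $|c_0 c_t u(0) u(t)|\geq K_7|u(0)|^2$. Hence
\[
|z\ag t-z|\;\leq\;\frac{K_6 t}{K_7|u(0)|}\;=\;\frac{K_6}{K_7}\,|c_0|\cdot t\cdot |z|\;\leq\;K_1\,|z|\,t,
\]
giving the desired estimate with a constant independent of $f_0$, $z$ and $t$ (but depending on $T_3$).

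\textbf{Main obstacle.} The only subtlety is ensuring that the right-hand side of the combined bound on $|\tilde\Xi_t(u(t))-\tilde\Xi_t(u(0))|$ is genuinely $\cal O(t)$ rather than merely $\cal O(1)$: the $O(1)$ approximation of $\tilde\Xi_t$ by $u-\gamma_t\log_p u$ is not uniform enough in $t$, so one must pass through Proposition~\ref{prop:dep} (which compares $\tilde\Xi_t$ to $\tilde\Xi_0$ directly) and handle the normalization constant $\beta_t$ separately via Proposition~\ref{prop:mvtphirep}. Once that is done, the delicate cancellation $c_tu(t)-c_0u(0)=c_t(u(t)-u(0))+(c_t-c_0)u(0)$ is what prevents an extra factor of $|u(0)|$ from appearing and ultimately yields the multiplicative factor $|z|$ in the final bound.
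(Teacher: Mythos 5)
Your approach is the alternative that the paper itself flags in a footnote to its proof of this lemma: avoid holomorphic motions and instead use the quantitative dependence estimates (Propositions~\ref{prop:dep}, \ref{prop:dep2}, \ref{prop:mvtphirep}). The paper's own proof complexifies $t$, notes that $u_n(t)=-1/(c[f_t]\omega_n\ag t)$ stays in a cone of angle $3\pi$ around the repelling direction, and reads off $|u_n(t)-u_n(0)|\leq Kt|u_n(0)|$ from the hyperbolic metric of the cone (whose coefficient is comparable to $1/|u|$) together with Schwarz's lemma. That route is shorter because it never has to untangle the $t$-dependence of the change of variable $s_t$, which is exactly where your argument has a gap.

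The gap is the claim $|u(t)-u(0)|\leq K_4 t$. This is not what your tools give: the best you can extract is $|u(t)-u(0)|\leq K_4 t\log|u(0)|$, and even the paper's own proof only gets $Kt|u(0)|$. The error occurs because you pass from $|\Xi[f_t](w)-\Xi[f_0](w)|=\cal O(t)$ (Proposition~\ref{prop:dep}, a statement about the Fatou coordinate expressed as a function of $w=u-\gamma_t\log_p u$) to $|\tilde\Xi_t(u(0))-\tilde\Xi_0(u(0))|=\cal O(t)$ (a statement about the same quantity as a function of $u$). Since $\tilde\Xi_t=\Xi[f_t]\circ W_t$ with $W_t(u)=u-\gamma_t\log_p u$, the change of coordinates $W_t$ depends on $t$ and carries an unbounded drift $|W_t(u)-W_0(u)|=|(\gamma_0-\gamma_t)\log_p u|=\cal O(t\log|u|)$. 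This is precisely the point of the Remark after Proposition~\ref{prop:mvtphirep}: the factor $1/|z|$ there cannot be removed, because $c_f$ and $\gamma_f$ vary with $f$. As a sanity check: differentiating the fiber relation $\Phi_\at[f_t](z\ag t)+\beta_t=\Phi_\at[f_0](z)$ at $t=0$ and using $\Phi_\at(z)\approx -1/(cz)-\gamma\log_p(-1/(cz))$ shows that the $\cal O(1/z)$ terms cancel in $\dot u$, leaving $\dot u\approx\dot\gamma_0\log_p(-1/(c_0z))+\cal O(1)$, i.e.\ $\cal O(\log|u|)$, not $\cal O(1)$.

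Fortuitously, your final conclusion survives the corrected intermediate bound: in the numerator $c_t\bigl(u(t)-u(0)\bigr)+(c_t-c_0)u(0)$, the corrected $|u(t)-u(0)|=\cal O(t\log|u(0)|)$ is still dominated by $|(c_t-c_0)u(0)|=\cal O(t|u(0)|)$, so the numerator remains $\cal O(t|u(0)|)$ and $|z\ag t-z|\leq K_1|z|t$ follows. But the chain of reasoning as written contains a false step at $|u(t)-u(0)|\leq K_4 t$. A minor further point: $|\beta_t|\leq K_2 t$ is not cleanly obtained from Proposition~\ref{prop:mvtphirep} evaluated at $v$, since $v$ sits at bounded but not small distance from $0$, so $-1/(c[f_t]v)$ need not lie in the region $W_{\theta''}(R_3)$ the proposition requires; a robust justification is real-analytic dependence of $t\mapsto\Phi_\at[f_t](v)$ together with compactness of $\cal F\times[0,T_3]$, or else apply the proposition at $f_t^{n_0}(v)$ for the uniform $n_0$ of Lemma~\ref{lem:traptime} and unwind via $\Phi_\at\circ f^{n_0}=\Phi_\at+n_0$.
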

\begin{proof}
The lazy way uses holomorphic motions:\footnote{It is possible to avoid holomorphic motions completely, by using \Cref{prop:dep,prop:dep2} and the remark that follows, which can themselves be proved without holomorphic motions. However, that is much longer.}
let us extend the deformations $f_t$ to complex values of $t$ in an open neighborhood $V$ of $[0,T_3]$ that does not depend on $f_0\in \cal F$.
The hyperbolic length of $[0,T_3]$ in $V$ is thus independent of $f_0$.
For those values of $t$ such that $|r_t|>1$, where $r_t=1-t$, the map $f_t$ is only defined on $r_t\phi_0(r_t^{-1}\D)$ instead of $r_t\phi_0(\D)$ when $|r_t|\leq 1$. Those sets contain a common ball $B(0,r)$ for some $r$ independent of $f$.
By compactness, an analog of \Cref{lem:survlocorb} still holds. The function $t\mapsto z\ag t$ is defined on $V$ and holomorphic.\footnote{Hence we have a holomorphic motion, because it is injective w.r.t.\ $z$, but we will not use that fact.} Consider the cone of vertex $0$, axis $\R_+$ and angle $3\pi$: this is a Riemann surface over $\C^*$ that is bijectively parameterized in polar coordinates $(r,\theta)$ by $]0,+\infty[\,\times\,]-3\pi/2,3\pi/2[$.
The study made in the previous lemma shows that, for $r_1$ small enough, the points $\omega_n$ satisfying the assumptions of the theorem have a motion $\omega_n\ag t$ such that $u_n(t) := -1/c[f_t] \omega_n\ag{t}$ stays in this cone when $t$ varies.
The element of hyperbolic metric on the cone has expression $c(\theta)|du|/r$ where $c(\theta)\geq c(0)>0$.
The movement of $u$ is holomorphic, hence bounded in this metric by the hyperbolic length of $[0,T_3]$ in $V$.
In Euclidean terms, $u_n(t)$ has moved by at most $K t|u_n(0)|$ for some $K$ independent of $f_0$.
Moreover, $|u_n(t)|$ and $|u_n(0)|$ are of comparable size.
Going back to $\omega_n\ag t =-1/c[f_t] u_n(t)$, we get 
$|\omega_n\ag t -\omega_n\ag 0| \leq |1/c[f_t] u_n(t) - 1/c[f_t] u_n(0)| + |1/c[f_t] u_n(0) - 1/c[f_0] u_n(0)| \leq |u_n(0)-u_n(t)|/|c[f_t]u_n(0)u_n(t)| + |1/c[f_t]-1/c[f_0]|/|u_n(0)|$.
One concludes recalling $c[f_t]$ is not too close to $0$ and depends holomorphically on $t$.
\end{proof}

\subsubsection{Contraction}

Arguments in this section are standard in holomorphic dynamics in complex dimension one.

Let $PC(f_0)$\nomenclature[PC]{$PC(f)$}{the post critical set of $f$} denote the post critical set of $f_0$, i.e.\ the orbit of the (unique) critical value. Since this orbit tends to $0$, the closure $\ov{PC}(f_0)$ equals $PC(f_0)\cup\{0\}$. Let\nomenclature[W0]{$W_0$}{the complement in $\C$ of the closure of the post critical set of $f_0$\nomrefpage}
\[W_0=\C\setminus\ov{PC}(f_0)
.\]
It is well known that inverse branches of $f_0$ are locally contracting for the hyperbolic metric of $W_0$. Let us recall the argument: $f_0$ is a cover from $W_0':=f_0^{-1}(W_0)$ to $W_0$.\nomenclature[W0p]{$W_0'$}{$W_0'=f_0^{-1}(W_0)$\nomrefpage} As such, it is an isometry, at the infinitesimal level, from the hyperbolic metric of $W_0'$ to that of $W_0$. Now $W_0'\subset W_0$, and strict inclusion maps are locally contracting. Recall that for a hyperbolic domain $U$ of $\C$ we denote $\rho_U(z)|dz|$ the element of hyperbolic metric of $U$. For $z\in W_0'$, let us denote $\lambda(z)$
\nomenclature[lambda]{$\lambda[f_0](z)$}{some contraction factor in $W_0$\nomrefpage}
the contraction factor of $f_0^{-1}$ from $f_0(z)$ to $z$, measured with the hyperbolic metric element of $W_0$:
\[\lambda(z) = \frac{\rho_{W_0}(z)}{\rho_{W_0}(f_0(z))}\left|\frac{dz}{df_0(z)}\right|;\]
it is also equal to the contraction factor at $z$ of the inclusion map from $W_0'$ to $W_0$:
\[\lambda(z) = \frac{\rho_{W_0}(z)}{\rho_{W'_0}(z)}.\]
The function $\lambda$ is continuous and takes values in $]0,1[$.

Let us recall that a hyperbolic open subset of the Riemann sphere with an isolated point $a$ in its complement has a hyperbolic metric coefficient $\rho(z) \sim \frac{1}{2|z-a|\log\frac{1}{|z-a|}}$ as $z\tend a\neq\infty$, or  $\rho(z)\sim \frac{1}{2|z|\log|z|}$ as $z\tend a=\infty$.

\begin{lemma}\label{lem:ls}
Let $z_n\in W'_0$ be a sequence.
\begin{enumerate}
\item\label{item:l1} If $z_n$ leaves every compact subset of the open set $W'_0\cup\ov{PC}(f_0)$, then $\lambda(z_n) \tend 0$.
\item\label{item:l2} If $\lambda(z_n) \tend 1$ then $z_n \tend \ov{PC}(f_0)$.
\end{enumerate}
\end{lemma}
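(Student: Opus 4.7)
The plan is to analyze the ratio $\lambda$ via the universal cover $\pi\colon \D \to W_0$, for which one can write $\lambda(z) = \rho_\D(\tilde z)/\rho_{\tilde W_0'}(\tilde z)$, where $\tilde W_0' := \pi^{-1}(W_0') \subsetneq \D$ and $\tilde z \in \D$ is any $\pi$-lift of $z$. The two statements are then complementary: \ref{item:l1} is about the contraction becoming extreme at the ``bad'' part of the boundary, and \ref{item:l2} is a compactness consequence of strict contraction.

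For part~\ref{item:l1} I argue by contradiction: suppose $\lambda(z_n) \geq \epsilon > 0$ along a subsequence, and, up to further extraction, $z_n \to z_\infty$ in $\wh\C$. The hypothesis that $(z_n)$ leaves every compact of $W_0' \cup \ov{PC}(f_0)$ forces $z_\infty \in (\wh\C \setminus \ov{PC}(f_0)) \setminus W_0'$, so either $z_\infty \in W_0 \setminus W_0'$ or $z_\infty = \infty$. In the first case I lift to $\tilde z_n \to \tilde z_\infty \in \pi^{-1}(z_\infty)\subset\D$; then $\tilde z_\infty \in \partial \tilde W_0' \cap \D$, and the standard blow-up of the hyperbolic density at boundary points interior to the ambient space gives $\rho_{\tilde W_0'}(\tilde z_n) \to +\infty$ while $\rho_\D(\tilde z_n)$ remains bounded, whence $\lambda(z_n) \to 0$, a contradiction. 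For the case $z_\infty = \infty$ I first apply a Möbius change of variables $M$ sending $\infty$ to a finite point and use that $\lambda$ is conformally invariant; the crucial geometric point is that $M(\infty)$ is a genuinely \emph{non-removable} boundary point of $M(W_0')$, because $\dom(f_0) = \phi(\D)$ with $\phi \in \cal S$ is a simply connected proper subset of $\C$, so its complement in $\wh\C$ is a non-trivial continuum containing $\infty$; consequently $\partial W_0'$ approaches $\infty$ along a curve, which after $M$ becomes a piece of boundary of $M(W_0')$ accumulating at $M(\infty)$, forcing $\rho_{M(W_0')}$ to dominate $\rho_{M(W_0)}$ and yielding $\lambda(z_n) \to 0$ again.

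Part~\ref{item:l2} follows from the contrapositive of~\ref{item:l1} combined with the Schwarz--Pick lemma applied to the strict inclusion $W_0' \subsetneq W_0$: the continuous function $\lambda$ takes values in $(0,1)$, and on any compact subset of $W_0'$ it is bounded away from $1$. If $\lambda(z_n) \to 1$ then, in particular, $\lambda(z_n) \not\to 0$, so by~\ref{item:l1} the sequence $(z_n)$ stays, up to extraction, in some compact of $W_0' \cup \ov{PC}(f_0)$; but it cannot stay in any compact of $W_0'$ alone, hence every accumulation point of $(z_n)$ belongs to $\ov{PC}(f_0)$, which is the statement that $d(z_n,\ov{PC}(f_0)) \to 0$.

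The main obstacle I expect is the case $z_\infty = \infty$ in part~\ref{item:l1}: one has to verify, using the Schlicht property of $\phi$, that near $\infty$ the boundary of $W_0'$ is a genuine one-dimensional set rather than an isolated puncture, since otherwise $\rho_{W_0'}$ and $\rho_{W_0}$ would be comparable near $\infty$ and $\lambda$ would not tend to $0$. The finite-case argument, by contrast, is an entirely routine application of the hyperbolic-density blow-up at boundary points.
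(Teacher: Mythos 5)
Your subsequence extraction and the split into finite $z_\infty$ versus $z_\infty=\infty$ reproduces the paper's own route; the finite case, via your universal-cover lift or the paper's direct comparison, is fine, and so is your treatment of part~(2). The gap is in the case $z_\infty=\infty$. You correctly identify the key input --- that $\wh\C\setminus\dom(f_0)$ is a non-degenerate continuum through $\infty$ --- but the decisive step, that this ``forces $\rho_{M(W_0')}$ to dominate $\rho_{M(W_0)}$'', is only asserted. What is actually needed is a quantitative estimate $\rho_{W_0}(z)/\rho_{W_0'}(z)\to 0$ as $z\to\infty$, and this does not follow from the qualitative fact that $\infty$ is a non-removable boundary point of $W_0'$: a priori the two densities could have comparable orders of decay near $\infty$ (compare $\C^*$ and $\C\setminus\{0,1\}$). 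The paper supplies the estimate through Koebe's one-quarter theorem applied to the simply connected domain $\dom f_0\supset W_0'$. Since $\phi\in\cal S$, Schwarz's lemma applied to $\phi^{-1}$ shows $\partial\dom f_0$ meets $\ov B(0,1)$, so
\[
\rho_{W_0'}(z)\ \geq\ \rho_{\dom f_0}(z)\ \geq\ \frac{1}{4\,d_\C(z,\partial\dom f_0)}\ \geq\ \frac{1}{4(1+|z|)},
\]
whereas the isolated puncture of $\wh\C\setminus W_0$ at $\infty$ gives $\rho_{W_0}(z)\sim\frac{1}{2|z|\log|z|}$, hence $\lambda(z_n)\tend 0$. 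Inserting this estimate closes your argument; as written the crucial inequality is missing.
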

\begin{proof} We may extract a subsequence and assume $z_n$ convergent in the Riemann sphere.
\\Point \eqref{item:l1}: If $z_n \tend \infty$ then $\rho_{W_0}(z)\sim \frac{1}{2|z|\log|z|}$ whereas $\rho_{W'_0}(z) \geq \rho_{\dom(f_0)}(z)$ and the latter is $\geq \frac{1}{4d_{\C}(z,\partial \dom(f_0))}$ by Koebe's one quarter theorem. Now since the domain of $f_0$ is the image of $\D$ by a Schlicht map, there is at least one point in its complement that is at distance at most $1$ from $0$. Hence $d_{\C}(z,\partial \dom(f_0)) \leq 1+|z|$. Putting it all together, we get that $\rho_{W_0}(z)/\rho_{W'_0}(z) \tend 0$ as $|z|\tend+\infty$.
In the remaining case: $\lim z_n\neq \infty$ so $\rho_{W_0}(z)$ converges to a constant whereas $\rho_{W'_0}(z) \tend +\infty$.
\\Point \eqref{item:l2}: The function $\lambda$ is continuous and $\lambda(z)<1$ thus if $\lambda(z_n)$ tends to $1$ then $z_n$ leaves every compact subset of $W'_0$, and we conclude by the previous point.
\end{proof}

\begin{lemma}[Definite contraction factor at definite distance of $PC$]\label{lem:definite}
For all $\delta>0$, there exists $\Lambda(\delta)<1$ such that $\forall f\in \cal F$, $\forall z\in W'_0$, if $d_\C(z,\ov{PC}(f))\geq \delta$ then $\lambda(z)\leq \Lambda(\delta)$.
\end{lemma}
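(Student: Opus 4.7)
The plan is a compactness-plus-continuity argument by contradiction. Suppose that the claim fails for some $\delta>0$. Then there exist sequences $f_n\in\cal F$ and $z_n\in W'_0[f_n]$ with $d_\C(z_n,\ov{PC}(f_n))\geq\delta$ and $\lambda[f_n](z_n)\to 1$. By compactness of $\cal F$ (see Section~\ref{subsub:transff}), extract a subsequence so that $f_n\to f_\infty\in\cal F$. By point~\ref{item:psd:2} of Lemma~\ref{lem:PSdist}, the sets $\ov{PC}(f_n)$ are uniformly contained in a fixed disk $\ov B(0,R)$, and by point~\ref{item:psd:1} they converge in Hausdorff topology to $\ov{PC}(f_\infty)$.

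First I would rule out escape to $\infty$. Since $\ov{PC}(f_n)\subset\ov B(0,R)$, the inclusion $\wh\C\setminus\ov B(0,R)\subset W_0[f_n]$ gives $\rho_{W_0[f_n]}(z)\lesssim R/(|z|^2-R^2)$ for $|z|>R$. On the other hand, $\dom(f_n)$ is the image of $\D$ by a Schlicht map, so (by the argument already used in Lemma~\ref{lem:ls}\eqref{item:l1}) there is a boundary point at Euclidean distance $\leq 1$ from $0$, whence $d_\C(z,\partial\dom(f_n))\leq 1+|z|$ and Koebe's $1/4$ theorem gives $\rho_{W'_0[f_n]}(z)\geq\rho_{\dom(f_n)}(z)\geq\frac{1}{4(1+|z|)}$. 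Combining, $\lambda[f_n](z)\to 0$ uniformly as $|z|\to\infty$, which contradicts $\lambda[f_n](z_n)\to 1$. Hence $(z_n)$ is bounded and, up to subextracting, $z_n\to z_\infty\in\C$ with $d_\C(z_\infty,\ov{PC}(f_\infty))\geq\delta$, so $z_\infty\in W_0[f_\infty]$.

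Next I would show that $z_\infty\in W'_0[f_\infty]$, i.e.\ that $f_\infty(z_\infty)\notin\ov{PC}(f_\infty)$. Suppose instead that $f_\infty(z_\infty)\in\ov{PC}(f_\infty)$. Hausdorff convergence of the post-critical sets gives points $p_n\in\ov{PC}(f_n)$ with $p_n\to f_\infty(z_\infty)$; together with $f_n(z_n)\to f_\infty(z_\infty)$ this yields $d_\C(f_n(z_n),\ov{PC}(f_n))\to 0$, so $\rho_{W_0[f_n]}(f_n(z_n))\to+\infty$ by the standard growth of the hyperbolic density near a boundary point. Now use that $f_n:W'_0[f_n]\to W_0[f_n]$ is a cover, hence a local hyperbolic isometry, giving
\[
\lambda[f_n](z_n)\;=\;\frac{\rho_{W_0[f_n]}(z_n)}{\rho_{W_0[f_n]}(f_n(z_n))\,|f_n'(z_n)|}.
\]
The numerator stays bounded since $z_n\to z_\infty$ stays at Hausdorff distance $\geq\delta$ from the post-critical set, and $|f_n'(z_n)|\to|f_\infty'(z_\infty)|<\infty$, so $\lambda[f_n](z_n)\to 0$, a contradiction.

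Finally, knowing $z_\infty\in W'_0[f_\infty]$, I would invoke Carathéodory convergence of the hyperbolic metrics: Hausdorff convergence $\ov{PC}(f_n)\to\ov{PC}(f_\infty)$ forces $W_0[f_n]\to W_0[f_\infty]$ in the Carathéodory sense, and uniform convergence $f_n\st f_\infty$ on a neighborhood of $z_\infty$ gives the analogous Carathéodory convergence of $W'_0[f_n]$ around $z_\infty$. Hence $\rho_{W_0[f_n]}(z_n)\to\rho_{W_0[f_\infty]}(z_\infty)$ and $\rho_{W'_0[f_n]}(z_n)\to\rho_{W'_0[f_\infty]}(z_\infty)$, so $\lambda[f_n](z_n)\to\lambda[f_\infty](z_\infty)<1$, contradicting $\lambda[f_n](z_n)\to 1$. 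The main technical point is the second step (ruling out $f_\infty(z_\infty)\in\ov{PC}(f_\infty)$): it is what forces the use of the cover identity rather than a naive continuity argument, since the hyperbolic metric on $W_0[f_n]$ blows up at the moving post-critical set.
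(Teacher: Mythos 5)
Your argument follows the paper's contradiction-by-compactness route (same escape-to-infinity estimate, same Carathéodory closing step), but the middle part, where you try to establish $z_\infty\in W'_0[f_\infty]$, has two genuine gaps, both created by the detour through $f_n(z_n)$ and the derivative factor.

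First, writing ``$z_\infty\in W'_0[f_\infty]$, i.e.\ $f_\infty(z_\infty)\notin\ov{PC}(f_\infty)$'' silently assumes $z_\infty\in\dom(f_\infty)$, so that $f_\infty(z_\infty)$ is even defined. That is not automatic: the domains $\dom(f_n)=\phi_n(\D)$ only converge in the Carathéodory sense, and a sequence $z_n\in\dom(f_n)$ can converge to a point that $\dom(f_\infty)$ has shed in the limit, even while staying at Euclidean distance $\geq\delta$ from the post-critical sets. You never address this possibility. The paper avoids it entirely by splitting merely into $z'\in W'_0[f]$ versus $z'\notin W'_0[f]$ and, in the second case, bounding $\rho_{W'_0[f_n]}(z_n)$ from below by a two-point estimate (one excluded point $x_n$ near $z'$, one $y_n$ of controlled modulus in $\C\setminus\dom(f_n)$) — no value of $f_\infty$ at $z'$ is ever needed.

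Second, even granting $z_\infty\in\dom(f_\infty)$, the chain ``$\rho_{W_0[f_n]}(f_n(z_n))\to+\infty$, $|f_n'(z_n)|\to|f_\infty'(z_\infty)|<\infty$, numerator bounded, hence $\lambda\to 0$'' fails if $z_\infty$ is a critical point of $f_\infty$: then $|f_\infty'(z_\infty)|=0$ and the denominator $\rho_{W_0[f_n]}(f_n(z_n))\,|f_n'(z_n)|$ is an indeterminate form $\infty\cdot 0$. This case can certainly occur — a critical point of $f_\infty$ may well sit at distance $\geq\delta$ from $\ov{PC}(f_\infty)$ while mapping onto the critical value $v\in PC(f_\infty)$. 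The product does in fact blow up, because it equals $\rho_{W'_0[f_n]}(z_n)$, but once you say that you are simply estimating the quantity the paper estimates directly, and the detour through $f_n(z_n)$ has bought nothing. Reverting to the paper's direct two-case analysis of $\rho_{W'_0[f_n]}(z_n)$ removes both edge cases at once.
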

\begin{proof}If not, there would be sequences $f_n=\cal R[B_d] \circ \phi_n^{-1} \in\cal F$ and $z_n\in W'_0[f_n]$ such that $d_\C(z_n,\ov{PC}(f_n))\geq \delta$ but $\lambda[f_n](z_n) \tend 1$. Let us extract convergent subsequences and assume that $z_n\tend z'\in\wh \C$ and $\phi_n\tend \phi$, thus $f_n\tend f=\cal R[B_d]\circ\phi^{-1}$.
Since $\ov{PC}(f)$ is contained in a ball $B(0,R)$ with $R$ independent of $f\in\cal F$ (Point~\ref{item:psd:2} of \Cref{lem:PSdist}),
$W_0(f)$ contains $V:=\C\setminus\ov B(0,R)$, hence $\rho_{W_0}(z)\leq \rho_V(z)\sim 1/2|z|\log|z|$ as $z\tend\infty$. This gives an upper bound like in Point~\eqref{item:l1} of \Cref{lem:ls}, but moreover independent of $f\in\cal F$.
It follows that $z'\neq\infty$.
By \Cref{lem:PSdist}, $\ov{PC}(f)$ depends continuously on $\phi$ thus $d_\C(z',\ov{PC}(f))\geq \delta$. Hence $z'\in W_0[f]$.
Now the marked domains $(W_0[f_n],z_n)$ converge for the Caratheodory topology on marked domains.
Hence their universal cover from $(\D,0)$ with real positive derivative at the origin converge, and the coefficient of the hyperbolic metric converges locally uniformly: $\rho_{W_0[f_n]}(z_n)\tend \rho_{W_0[f]}(z')$.
Concerning the marked domains $(W'_0[f_n],z_n)$, there are two cases: either $z'\in W_0'[f]$ in which case there is Caratheodory convergence to $(W'_0[f],z')$ and thus $\rho_{W_0'[f_n]}(z_n)\tend \rho_{W_0'[f]}(z')$; or $z'\notin W_0'[f]$ in which
case\footnote{Indeed, there exists then a point $x_n\in\C\setminus W_0'[f_n]$ such that $x_n\tend z'$.
Let $r'=|z'|$ and let $r''\geq 1$ be any real such that $r''\neq r'$, for instance $r''=r'+1$.
Since the conformal radius w.r.t.\ $0$ of the simply connected set $\dom f_n$ is $1$, there exists a point in $\C\setminus \dom f_n$ of any modulus $\geq 1$, in particular a point $y_n$ of modulus $r''$.
Let $V_n=\C\setminus\{x_n,y_n\}$. Then $\rho_{W'_0[f_n]}(z_n)\geq \rho_{V_n}(z_n)$.
Let $\phi_n$ be the unique $\C$-affine map sending $0$ to $x_n$ and $1$ to $y_n$ and let $u_n=\phi_n^{-1}(z_n)$.
Then $\phi'_n=y_n-x_n$ and $\rho_{\C\setminus\{0,1\}} = \phi_n^* (\rho_{V_n}) = |\phi'_n|\times\rho_{V_n}\circ \phi_n$.
For $n$ big enough, the sequence $x_n-y_n$ is bounded away from $0$ (and $\infty$) thus $u_n\tend 0$ thus $\rho_{\C\setminus\{0,1\}}(u_n)\tend +\infty$ and also $\rho_{V_n}(z_n)=\rho_{\C\setminus\{0,1\}}(u_n)/|y_n-x_n|\tend +\infty$.} $\rho_{W_0'[f_n]}(z_n)\tend+\infty$.
In the first case $\lambda[f_n](z_n)\tend \lambda[f](z')<1$.
In the second case $\lambda[f_n](z_n)\tend 0$.
Both cases lead to a contradiction.
\end{proof}

\subsubsection{Putting back the post critical set}\label{subsub:putback}

The following easy lemma will be useful in several places.

\begin{lemma}\label{lem:back}
There exists a function $\delta>0\mapsto M(\delta)>0$ such that the following holds.
For all $f\in\cal F$, for all $z\in \dom(f)$, if $d_\C(z,PC(f))\geq \delta$ then
\[\frac{\rho_{W_0(f)}(z)}{\rho_{\dom(f)} (z)} \leq M (\delta).\]
\end{lemma}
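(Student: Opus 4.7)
The plan is to argue by contradiction using the compactness of $\cal F$, exactly in the spirit of Lemma~\ref{lem:definite}. Suppose the statement fails for some $\delta>0$: then there exist sequences $f_n\in\cal F$ and $z_n\in\dom(f_n)$ with $d_\C(z_n,PC(f_n))\geq\delta$ while $\rho_{W_0(f_n)}(z_n)/\rho_{\dom(f_n)}(z_n)\to+\infty$. By compactness of $\cal F$ we may extract $f_n\to f\in\cal F$. By Lemma~\ref{lem:PSdist}, the sets $\ov{PC}(f_n)$ are uniformly bounded (say contained in $B(0,R)$) and they converge to $\ov{PC}(f)$ for the Hausdorff topology.

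First we rule out the case $|z_n|\to+\infty$. Since $W_0(f_n)\subset \C\setminus\{0\}$ is contained in $\C\setminus\ov{B}(0,R)$ for all $n$, we have $\rho_{W_0(f_n)}(z_n)\leq \rho_{\C\setminus\ov B(0,R)}(z_n) = \cal O(1/|z_n|\log|z_n|)$. On the other hand, $\dom(f_n)=\phi_n(\D)$ for some Schlicht map $\phi_n$, and by Koebe's $1/4$ theorem combined with the fact that the complement of $\dom(f_n)$ meets $\ov B(0,1)$, we have $\rho_{\dom(f_n)}(z_n)\geq 1/(4(1+|z_n|))$. The ratio is therefore a $\cal O\bigl((1+|z_n|)/|z_n|\log|z_n|\bigr)\to 0$, a contradiction.

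Hence $z_n$ is bounded; after another extraction assume $z_n\to z^*\in\C$. Continuous dependence of $\ov{PC}$ in Hausdorff topology gives $d_\C(z^*,\ov{PC}(f))\geq\delta$, so $z^*\in W_0(f)$. We now split into two cases. If $z^*\in\dom(f)$, then the marked domains $(\dom(f_n),z_n)$ converge to $(\dom(f),z^*)$ and $(W_0(f_n),z_n)$ to $(W_0(f),z^*)$ in the Carathéodory sense; standard continuity of the hyperbolic metric under Carathéodory convergence gives $\rho_{W_0(f_n)}(z_n)\to \rho_{W_0(f)}(z^*)$ and $\rho_{\dom(f_n)}(z_n)\to \rho_{\dom(f)}(z^*)$, both finite and positive, so the ratio converges to a finite limit — contradiction. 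If instead $z^*\in\partial\dom(f)$, then by Carathéodory convergence of $(\dom(f_n),0)\to(\dom(f),0)$ there exist $x_n\in\partial\dom(f_n)$ with $x_n\to z^*$, whence $d_\C(z_n,\partial\dom(f_n))\to 0$ and Koebe's $1/4$ theorem forces $\rho_{\dom(f_n)}(z_n)\to +\infty$; meanwhile $z^*\in W_0(f)$ keeps $\rho_{W_0(f_n)}(z_n)$ bounded, so the ratio tends to $0$ — contradiction.

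The only non-routine step is the Carathéodory convergence needed in the last case: it uses that the maps $\phi_n\in\cal S$ converge locally uniformly on $\D$ (the topology on $\cal F$ discussed in Section~\ref{subsub:transff}), which gives kernel convergence of $\dom(f_n)=\phi_n(\D)$ to $\dom(f)=\phi(\D)$ with marked point $0$, together with the fact that $\ov{PC}(f_n)\to\ov{PC}(f)$ in Hausdorff topology from Lemma~\ref{lem:PSdist}, which yields the corresponding statement for $W_0(f_n)$.
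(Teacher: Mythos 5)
Your argument is correct, but it takes a compactness/Carathéodory-convergence route — modelled closely on the paper's proof of the neighboring Lemma~\ref{lem:definite} — whereas the paper's own proof of this particular lemma is direct and elementary, with no limiting argument at all. The paper simply writes down three explicit pointwise inequalities and combines them: $\rho_{W_0(f)}(z)\leq\rho_{\C\setminus\ov B(0,R)}(z)=\frac{1}{2|z|\log(|z|/R)}$ for $|z|>R$ (where $R$ bounds $PC(f)$ uniformly by Lemma~\ref{lem:PSdist}); $\rho_{W_0(f)}(z)\leq 1/d_\C(z,PC(f))\leq 1/\delta$, since $B(z,\delta)\subset W_0(f)$; and $\rho_{\dom(f)}(z)\geq\frac{1}{4(1+|z|)}$, by Koebe applied to the Schlicht map. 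The second bound controls the numerator for $|z|$ bounded and the first for $|z|$ large, while the third bounds the denominator below everywhere; this yields an explicit $M(\delta)$. What the direct route buys is brevity and an explicit constant; what your route buys is uniformity with the argument already needed for Lemma~\ref{lem:definite}. Note, though, that your proof already contains all three of these elementary bounds (the first and third explicitly in your unboundedness step, the second implicitly when you assert $\rho_{W_0(f_n)}(z_n)$ stays bounded), so the Carathéodory-convergence machinery for the multiply-connected domains $W_0(f_n)$ is unnecessary here: once $z_n$ is bounded you have $\rho_{W_0(f_n)}(z_n)\leq 1/\delta$ and $\rho_{\dom(f_n)}(z_n)\geq\frac{1}{4(1+|z_n|)}$, so the ratio is bounded with no need to distinguish $z^*\in\dom(f)$ from $z^*\in\partial\dom(f)$ or to pass to a limit domain at all.
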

\begin{proof} In this proof, the notation $B(z,r)$ denotes the euclidean ball and $PC=PC(f)$.
By \Cref{lem:PSdist}, there is $R>0$ such that for all $f\in\cal F$, $PC\subset \ov B(0,R)$. Let $U=\C\setminus \ov B(0,R)$. Then for $|z|>R$:
\[ \rho_{W_0}(z) \leq \rho_U(z) = \frac{1}{2|z| \log \frac {|z|}R}
.\]
For any $z\in W_0$, since the disk $D$ of center $z$ and radius $d_\C(z,PC)$ is contained in $W_0$, we get
\[\rho_{W_0}(z) \leq \rho_{D}(z) = \frac{1}{d_\C(z,PC)}
.\]
By the theory of univalent functions,
\[\rho_{\dom(f)} (z) \geq \frac{1}{4(1+|z|)}.\]
The lemma follows.
\end{proof}

\subsubsection{Homotopic length and decomposition}\label{subsub:homotopic}

I was introduced to the notion of \keyw{homotopic length} by reading \cite{CPT}.

For $\gamma$ a path defined on an interval $I$ containing $[a,b]$, let us denote its restriction to $[a,b]$ by
\[ \gamma\big|[a,b] 
.\]
Let us similarly denote
\[ \omega_n\ag{[0,t]}: s\in[0,t]\mapsto \omega_n\ag s
\]
where $\omega_n$ is an orbit of $f_0$ as in \Cref{subsub:restate}.

To bound the motion of $\omega_n\ag t$ we will look at the \keyw{homotopic length} of the path $\omega_n\ag{[0,t]}$ for the hyperbolic metric on $W_0 = \C\setminus \ov{PC}(f_0)$. Homotopic length of a path $\gamma$ refers to the infimum of $W_0$-hyperbolic lengths of paths homotopic to $\gamma$ in $W_0$, where the ends of the path are fixed. It will be denoted
\[ \hl{W_0}{\gamma}
.\]
By contrast, we denote as follows the usual length of a rectifiable path for the hyperbolic metric of $W_0$:
\[ \len{W_0}{\gamma}
.\]
Last, we will call \keyw{extent} of a path $\gamma$ defined on $[0,t]$ the quantity\phantomsection\label{here:extent}
\[ \extent{W_0}{\gamma}=\sup_{t'\in[0,t]} \hl{W_0}{\gamma\big|[0,t']}
.\]

\begin{remark} Homotopic length is also the hyperbolic distance between the starting point and the end point of a lift of the curve to the universal cover. There are in particular shortest homotopic paths.
The extent of a curve is the smallest radius of a ball in the universal cover containing a lift of the curve and centered on the initial point of this lift.
If $U$ is connected and $\gamma\subset U\subsetneq V$ then the $V$-homotopic length of $\gamma$ is strictly smaller than its $U$-homotopic length: consider for instance the shortest homotopic path for $V$; its $U$-length is strictly shorter. If $U$ and $V$ are hyperbolic Riemann surfaces and $f:U\to V$ is a cover then $\hl{U}{\gamma}=\hl{V}{f\circ\gamma}$. 
\end{remark}

\begin{remark} The sequence $(\omega_n\ag{t})_{n\in\N}$ is an orbit of $f_t$, not $f_0$. It may therefore seem unnatural to measure the motion of $t\mapsto \omega_n\ag{t}$ using the hyperbolic metric on the complement of $\ov{PC}(f_0)$. 
However, we found the proof simpler to write that way. Note that the motion will be evaluated only at some distance from the post critical points, and in the end it will be small.
\end{remark}

Recall that $f_0\in\cal F$ decomposes as
\[ f_0=\cal R[B_d]\circ \phi_0^{-1}
\]
with $\phi_0:\D\to U_0$ a Schlicht map.
Let us decompose the map $f_t$ as follows:
\[ f_t = f_0 \circ \sigma_t
\]
where\nomenclature[sigmat]{$\sigma_t$}{a motion appearing in the decomposition $f_t = f_0 \circ \sigma_t$\nomrefpage}
\[ \sigma_t(z) = \phi_0 \circ r_t \circ \phi_0^{-1} \circ r_t^{-1}
\]
with the notations of \Cref{subsub:outline} and letting $r_t$ denote the multiplication by \[r_t = 1-t
.\]
The map $\sigma_0$ is the identity restricted to $\dom f_0$.
If we interpret $\sigma_t(z)$ as a motion of $z$ as $t$ varies, then it can be viewed as the composition of two motions:
$(t,z)\mapsto (t,r_t^{-1} z)$ followed by the conjugate by $\phi_0$ of the radial motion $(t,z)\mapsto (t,r_t z)$ on the unit disk:
\[\sigma_t=\mu_t\circ r_t^{-1}\]
with
\[\mu_t=\phi_0 \circ r_t\circ \phi_0^{-1}.\]
The domain of definition of the reciprocal $\sigma_t^{-1}$ equals $\phi_0(B(0,r_t))=\sub{\dom(f_0)}{r_t}$ and thus
as $t$ varies away from $0$, it shrinks.

One way to get a control $\omega_{n-1} \ag s$ is to do it inductively from a control on $\omega_n\ag s$, using the relation $f_s (\omega_{n-1}\ag s) = \omega_n\ag s$ of \Cref{lem:compat2}.
Consider the case where $\omega_n\ag 0$ is not equal to $0$ nor to the singular value $v$ of $f_0$.
Then $\omega_n\ag s \notin\{0,v\}$, because $0$ and $v$ do not move under the fiberwise motion, and $\Phi$-fibers are disjoint. Recall that the singular values of $f_0$ are precisely $0$, $\infty$ and $v$.
We claim that, under some condition stated below, the path $s\in[0,t]\mapsto \omega_{n-1}\ag{s}$ is homotopic (with endpoints fixed) in $W_0$ to the concatenation of the following two paths (see \Cref{fig:homotopy}):\phantomsection\label{here:gamma12}
\begin{itemize}
\item The first path, denoted $\gamma_1=f_0^*\omega_n$ by a slight abuse of notation, is parameterised by $s\in[0,t]$ and is defined by continuity by $\gamma_1(0)=\omega_{n-1}\ag 0$ and $f_0 (\gamma_1(s)) = \omega_n\ag s$, i.e.\ we replaced $f_s$ by $f_0$ in $f_s(\omega_{n-1}\ag s)=\omega_n\ag s$. Existence of this path follows from $\omega_n\ag s$ never hitting the singular values of $f_0$. It ends at some point $w'$ (which depends on $t$);
\item The second path is $\gamma_2 : s\in[0,t]\mapsto \sigma_s^{-1}(w')$. For it to be defined up to $s=t$, we need to assume that $w'\in \sub{\dom(f_0)}{(1-t)} = \phi_0(B(0,1-t))$.
\end{itemize}
The homotopy will be defined by means of a map $h$ defined on the set of $(x,y)\in [0,t]^2$ such that $y\leq x$ by
\[h(x,y) = \sigma_y^{-1}(\gamma_1(x)).\]
For it to be well defined, we will make assumptions on $t$, on the length of $\omega_n$ and on the $\epsilon$ such that $\omega_{n-1}\ag 0\in \sub{\dom(f_0)}{(1-\epsilon)}$. For it to be a homotopy in $W_0$, we need to prove that its support does not intersect $\ov{PC}(f_0)$ and for this we will make further assumptions on $t$, on the length of $\omega_n$ and on the Euclidean distance from $\omega_{n-1}\ag 0$ to $\ov{PC}(f_0)$.

\begin{figure}
\begin{tikzpicture}
\node at (-6,0) {\includegraphics[scale=1.5]{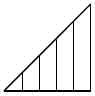}};
\draw[->] (-4.5,0) -- node[above]{$(x,y)\mapsto h(x,y)$} (-1.5,0);
\node at (0,0) {\includegraphics[scale=1.5]{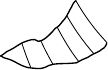}};
\node at (-1.7,-0.9) {$\omega_{n-1}\ag 0$};
\node at (.7,1.2) {$\omega_{n-1}\ag t$};
\draw (-.5,.5) node[above left] {$\omega_{n-1}$} -- (0,0.15);
\draw (.6,-1.2) node[below] {$f_0^*\omega_{n}$} -- (0.2,-0.75);
\node at (1.6,-0.2) {$w'$};
\draw (1.4,.5) node[right] {$\sigma_s^{-1}(w')$} -- (1,.3);
\node at (-7.3,-1.1) {$0$};
\node at (-4.4,-1.1) {$(0,t)$};
\node at (-4.85,1.3) {$(t,t)$};
\end{tikzpicture}
\caption{The map $h(x,y)=\sigma_y^{-1}(f_0^*\omega_n(x))$ defined on the triangle ``$x\in[0,t]$, $y\in[0,t]$, $y\leq x$'' induces a homotopy between $\omega_{n-1}$ on $[0,t]$ and the concatenation of $f_0^*\omega_{n}$ and $s\in[0,t]\mapsto \sigma_s^{-1}(w')$.}
\label{fig:homotopy}
\end{figure}

To state these sufficient conditions, we will introduce the following objects and quantities.
For $\delta>0$ let $V_\delta[f]$ \nomenclature[V]{$V_\delta[f]$}{the $\delta$-neighborhood of $PC(f)$}
denote the $\delta$-neighborhood of $PC(f)$, i.e.\ the set of points whose Euclidean distance to $PC(f)$ is $<\delta$ (see \Cref{fig:Vdelta}).
According to \Cref{lem:PSdist}, the following quantity is positive:
\[\delta_1 := \inf_{f_0\in\cal F_0} d_{\C}(PC(f_0),\C\setminus \dom f_0)
\]
where $d_{\C}$ refers to the Euclidean distance, and the following are finite:
\[R_1 := \sup\setof{|z|}{z\in PC(f_0),\ f_0\in\cal F_0},
\]
\[R_2 := \sup\setof{d_{\dom f_0}(0,z)}{z\in PC(f_0),\ f_0\in\cal F_0}.
\]

\begin{figure}
\begin{tikzpicture}
\node at (0,0) {\includegraphics[width=10cm]{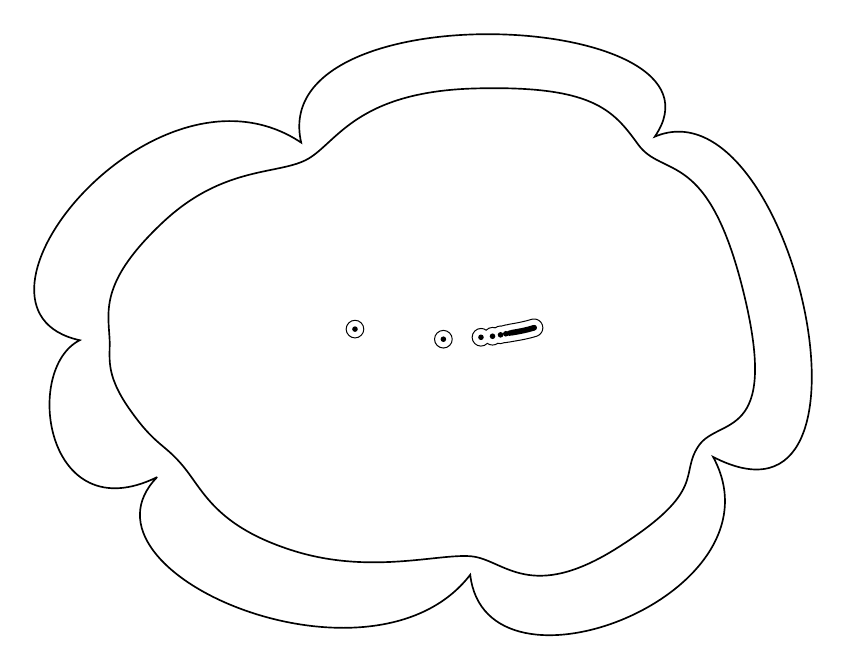}};
\node at (-0.77,0.38) {$v_f$};
\node at (1.39,0.35) {$0$};
\node at (0.5,-0.65) {$V_\delta[f]$};
\end{tikzpicture}
\caption{A schematic illustration of $\dom(f)$, $\sub{\dom(f)}{r}$ and $V_\delta[f]$. Scales are not respected. The outer curve represents the boundary of the domain of some $f\in \cal F$, the nearby smooth curve the boundary of the sub-domain $\sub{\dom(f)}{1-\epsilon}$.
The post-critical set is indicated by dots, its $\delta$-neighborhood for the Euclidean metric is $V_\delta[f]$ and its boundary is indicated by thin curves.}
\label{fig:Vdelta}
\end{figure}

\begin{lemma}\label{lem:stayfar}
For all $(\delta,\delta')$ with $\delta'<\delta<\delta_1$, there exists $T=T(\delta,\delta')>0$ such that $\forall f_0\in\cal F_0$, $\forall t<T$:
\begin{itemize}
\item $\mu_t^{-1}\big(\C \setminus V_{\delta}[f_0]\big) \cap V_{\delta'}[f_0] = \emptyset$, 
\item $r_t \big(\C \setminus V_{\delta}[f_0]\big) \cap V_{\delta'}[f_0]  = \emptyset$,
\item $\sigma_t^{-1} \big(\C \setminus V_{\delta}[f_0]\big) \cap V_{\delta'}[f_0]  = \emptyset$.
\end{itemize}
\end{lemma}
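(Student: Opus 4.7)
Each of the three conditions is equivalent to an inclusion of the form $\psi_t(V_{\delta'}[f_0])\subset V_\delta[f_0]$: respectively with $\psi_t=\mu_t$, $\psi_t=r_t^{-1}$, and $\psi_t=\sigma_t$. So the plan is to prove that, for $t$ small and uniformly over $f_0\in\cal F_0$, each of $\mu_t^{\pm 1}$, $r_t^{\pm 1}$, $\sigma_t$ displaces points of $V_{\delta'}[f_0]$ by an arbitrarily small Euclidean amount.

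For $r_t^{\pm 1}$ this is immediate: by Point~\eqref{item:psd:2} of Lemma~\ref{lem:PSdist}, $V_{\delta'}[f_0]\subset B(0,R_1+\delta')$ independently of $f_0$, so $|r_t(z)-z|\leq t(R_1+\delta')$ and $|r_t^{-1}(z)-z|\leq \frac{t}{1-t}(R_1+\delta')$ on this set, which is smaller than any prescribed quantity once $t$ is less than some explicit $T_r(\delta,\delta')>0$.

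The main step is to establish the same kind of bound for $\mu_t^{\pm 1}=\phi_0\circ r_t^{\pm 1}\circ\phi_0^{-1}$. Fix $\eta:=(\delta+\delta')/2$; I claim that $\phi_0^{-1}(V_\eta[f_0])\subset\overline{B(0,r_0)}$ for some $r_0<1$ independent of $f_0$. Indeed, for $w\in V_\eta[f_0]$ pick $p\in PC(f_0)$ with $|w-p|<\eta$; the segment from $p$ to $w$ has Euclidean length $<\eta$ and, because $d_\C(p,\partial\dom(f_0))\geq\delta_1>\eta$ by definition of $\delta_1$, lies at Euclidean distance $\geq\delta_1-\eta>0$ from $\partial\dom(f_0)$. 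Integrating $\rho_{\dom(f_0)}\leq 1/d_\C(\cdot,\partial\dom(f_0))$ along it yields $d_{\dom(f_0)}(p,w)\leq \eta/(\delta_1-\eta)$, which combined with $d_{\dom(f_0)}(0,p)\leq R_2$ from Point~\eqref{item:psd:3} of Lemma~\ref{lem:PSdist} gives $d_\D(0,\phi_0^{-1}(w))=d_{\dom(f_0)}(0,w)\leq R_2+\eta/(\delta_1-\eta)$, whence the uniform bound on $|\phi_0^{-1}(w)|$. Choose any $r^*\in(r_0,1)$: for $t$ less than some explicit $T_\mu>0$ (depending only on $r_0,r^*$) one has $r_t^{\pm 1}(\overline{B(0,r_0)})\subset\overline{B(0,r^*)}$. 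By Koebe's distortion theorem, $|\phi_0'|$ is bounded on $\overline{B(0,r^*)}$ by a universal constant $M=M(r^*)$, for every Schlicht $\phi_0$. Consequently, for every $w\in V_\eta[f_0]$,
\[
|\mu_t^{\pm 1}(w)-w|\leq M\cdot\bigl|r_t^{\pm 1}(\phi_0^{-1}(w))-\phi_0^{-1}(w)\bigr|\leq C\,t
\]
for a constant $C=C(\delta,\delta')$ independent of $f_0$ and of $w\in V_\eta[f_0]$.

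Setting $T:=\min\bigl(T_r,T_\mu,(\delta-\delta')/(2C)\bigr)$ then finishes the proof: for $t<T$ and $w\in V_{\delta'}[f_0]\subset V_\eta[f_0]$ the bounds above keep us inside $V_{\delta'+Ct}[f_0]\subset V_\eta[f_0]\subset V_\delta[f_0]$, which yields the first two items directly; and via $\sigma_t=\mu_t\circ r_t^{-1}$ it yields the third one by two successive displacements of size at most $Ct$, landing in $V_{\delta'+2Ct}[f_0]\subset V_\delta[f_0]$. The hard part is the uniform control of $\phi_0^{-1}$ on $V_\eta[f_0]$ in the second step, where the three uniform quantities $\delta_1,R_1,R_2$ of Lemma~\ref{lem:PSdist} together with Koebe's distortion theorem are precisely what removes all dependence on $f_0\in\cal F_0$.
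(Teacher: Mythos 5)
Your proof is correct, and it takes a genuinely different route from the paper's for the hardest item. The paper proves the $r_t$-item (second bullet) by the same explicit estimate you use ($V_{\delta'}[f_0]\subset B(0,R_1+\delta')$, so multiplying by $r_t^{-1}$ moves points by at most $(R_1+\delta')(r_t^{-1}-1)$), but it proves the $\mu_t$-item (first bullet) by a compactness/contradiction argument: extract convergent subsequences $f_n\to f$, $a_n\to a$, $b_n\to b$ with $|a-b|\geq\delta-\delta'$, use $R_2<\infty$ and $\delta'<\delta_1$ to show $\phi_n^{-1}(b_n)$ stays in a compact subset of $\D$ so that $b\in\dom(f)$ and $a=\mu_0(b)=b$, a contradiction. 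You instead make the $\mu_t$-item explicit: the same uniform quantities ($R_1,R_2,\delta_1$) give $\phi_0^{-1}(V_\eta[f_0])\subset\overline{B(0,r_0)}$ with $r_0=\tanh(R_2+\eta/(\delta_1-\eta))<1$ uniform in $f_0$, and Koebe's distortion theorem then gives a uniform Lipschitz bound on $\phi_0$ there, hence $|\mu_t^{\pm1}(w)-w|\leq Ct$ on $V_\eta$. What this buys you is a computable $T(\delta,\delta')$ for all three items, whereas the paper's compactness argument only yields existence; the cost is a slightly longer (though still elementary) argument. For the third item, both you and the paper reduce to the first two via an intermediate radius (your $\eta$, the paper's $\delta''$) and the decomposition $\sigma_t=\mu_t\circ r_t^{-1}$.
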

\begin{proof} We can deduce the third point from the first two, using an intermediary value $\delta''$. This may not be optimal\footnote{Near $z=0$, the Euclidean motion of $\sigma_t$ is of order $|z|^2$, thus smaller than the sum of the motions of $\mu_t$ and of $r_t$, which are both of order $|z|$.} but it is not the point here.
For the second point, an explicit valid value of $T$ can easily be computed using \Cref{lem:PSdist}: assume $z\in r_t \big(\C \setminus V_{\delta}[f_0]\big) \cap V_{\delta'}[f_0]$. Then there exists $z'\in PC(f_0)$ such that $|z-z'|< \delta'$, thus $|z|<R_1+\delta'$. Then $|z-r_t^{-1}z| \leq (R_1+\delta')(r_T^{-1}-1)$. If $T$ is chosen so that $(R_1+\delta')(r_T^{-1}-1)<\delta-\delta'$ then $r_t^{-1}z$ cannot belong to $\C\setminus V_{\delta}[f_0]$.
For the first point, let us work by contradiction and assume there is $f_n\in \cal F_0$, $a_n\in\C\setminus V_\delta[f_n]$, $b_n\in V_{\delta'}[f_n]$ and $t_n>0$ such that $t_n\tend 0$ and $a_n=\mu_{t_n}(b_n)$. We may assume that $f_n\tend f\in \cal F_0$, $a_n\tend a\in\wh\C$ and $b_n\tend b\in\C$. From $|a_n-b_n|>\delta-\delta'$ we get $|a-b|\geq \delta-\delta'$. Write $f_n=\cal R[B_d]\circ \phi_n^{-1}$ and $f=\cal R[B_d]\circ \phi^{-1}$ .
From $\delta'<\delta_1$ and $R_2<+\infty$ we deduce that $\phi_n^{-1}(b_n)$ remains in a compact subset of $\D$ thus $b$ belongs to $\dom(f)$, but then $a=\mu_0(b)=b$, a contradiction.
\end{proof}

We will later choose some
\[\delta<\delta_1.\]
Let then\nomenclature[d1]{$d_1$}{infimum over $\cal F$ of some hyperbolic distance\nomrefpage}
\[d_1 = d_1(\delta) = \inf_{f_0\in\cal F_0} d_{W_0'}\big(V_{\delta/3}[f_0]\ ,\, \C\setminus V_{\delta/2}[f_0]\big)
\]
with $W_0':=f_0^{-1}(W_0)$.
Let also
\[d_1'' = d_1''(\delta) = \inf_{f_0\in\cal F_0} d_{f^{-1}(\C\setminus\{0,v\})}\big(V_{\delta/3}[f_0]\ ,\, \C\setminus V_{\delta/2}[f_0]\big),
\]
$v$ being the critical value of $f_0$,
and note that $d_1''<d_1$.
Using the notation of \Cref{lem:stayfar} let
\[T_4(\delta) = T\big(\delta/3\ ,\, \delta/4\big)
\]
so that $\forall f_0\in\cal F_0$, $\forall t<T_4(\delta)$, $\sigma_t^{-1}\big(\C \setminus V_{\delta/3}[f_0]\big) \cap V_{\delta/4}[f_0] = \emptyset$.
Let $\ell(x)$ denote the hyperbolic distance from $0$ to $x$ in $\D$:\nomenclature[l]{$\ell(x)$}{the hyperbolic distance from $0$ to $x$ in $\D$\nomrefpage}
\[\ell(x)=d_\D(0,x) = \on{argth}(x)
.\]
It is a bijection from $[0,1[$ to $[0,+\infty[$.
For a given $\epsilon'>0$, let $T_5=T_5(\delta,\epsilon')\in\,]0,1[$ be the unique solution to\newcommand{\excl}{!}\nomenclature[T5]{$T_5$}{$\exists\excl\, T_5\in\,]0,1[$ s.t.\ $\ell(1-T_5) = d_1(\delta) +\ell(1-\epsilon')$\nomrefpage}
\[\ell(1-T_5) = d_1(\delta) +\ell(1-\epsilon')
.\]
Note that the solution $T_5''$ of $\ell(1-T_5'') = d_1''(\delta) +\ell(1-\epsilon')$ satisfies $T_5''>T_5$.
We will later look at how $T_5(\delta,\epsilon')$ varies as $\epsilon'\tend 0$ for a fixed $\delta$. Recall the definition of extent given at the beginning of the present section on page~\pageref{here:extent}.

\begin{proposition}\label{prop:hl}
Let $t>0$. If we assume that 
\begin{enumerate}
\item\label{item:hl:1} $\tau_\Phi(\omega_n\ag 0)>t$,
\item\label{item:hl:2} the path $s\in[0,t]\mapsto \omega_n\ag s$ is contained in $W_0$, 
\item\label{item:hl:3} $\extent{W_0}{\omega_n\ag{[0,t]}}\leq d_1(\delta)$,
\item\label{item:hl:4} $\omega_{n-1}\ag 0 \in \sub{\dom(f_0)}{(1-\epsilon')}$,
\item\label{item:hl:5} $\omega_{n-1}\ag 0 \notin V_{\delta/2}[f_0]$,
\item\label{item:hl:6} $t\leq T_4(\delta)$,
\item\label{item:hl:7} $t\leq T_5(\delta,\epsilon')$,
\end{enumerate}
then $\tau_\Phi(\omega_{n-1}\ag 0) > t$ and the function $h$ mentioned above is well defined and has support in $W_0$ (even better: it avoids $V_{\delta/4}[f]$). In particular $s\in[0,t]\mapsto \omega_{n-1}(s)$ is homotopic in $W_0$ to the concatenation $\gamma_1\cdot\gamma_2$, of the two paths defined earlier, page~\pageref{here:gamma12}.
We also have $\gamma_1\subset\sub{\dom(f_0)}{(1-T_5)}.$
\end{proposition}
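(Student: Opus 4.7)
The plan is to construct $\gamma_1$ as a lift of $\omega_n\ag{[0,t]}$ under $f_0$, then use the metric bounds in \eqref{item:hl:3}--\eqref{item:hl:5} to keep $\gamma_1$ outside $V_{\delta/3}[f_0]$ and its endpoint $w'$ inside $\sub{\dom(f_0)}{(1-t)}$ via \eqref{item:hl:4} and \eqref{item:hl:7}, and finally check that $h(x,y)=\sigma_y^{-1}(\gamma_1(x))$ is a well-defined homotopy in $W_0$. By \eqref{item:hl:2} the path $s\mapsto\omega_n\ag s$ lies in $W_0$ and thus avoids the singular values $\{0,v,\infty\}$ of $f_0$; since $f_0$ restricted to $\dom(f_0)\setminus f_0^{-1}(v)$ is an unramified cover onto $\C\setminus\{v\}$ and $\omega_{n-1}\ag 0\in f_0^{-1}(\omega_n\ag 0)\cap W_0'$, one lifts by continuity to obtain $\gamma_1\colon[0,t]\to W_0'$.

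Because $f_0\colon W_0'\to W_0$ is a cover between hyperbolic Riemann surfaces, homotopic lengths are preserved, so for every $s\in[0,t]$
\[ \hl{W_0'}{\gamma_1\big|[0,s]} \;=\; \hl{W_0}{\omega_n\ag{[0,s]}} \;\le\; \extent{W_0}{\omega_n\ag{[0,t]}} \;\le\; d_1(\delta), \]
the last inequality being \eqref{item:hl:3}. Combined with \eqref{item:hl:5}, i.e.\ $\gamma_1(0)=\omega_{n-1}\ag 0\notin V_{\delta/2}[f_0]$, the definition of $d_1(\delta)$ as an infimum of $W_0'$-distances forces $\gamma_1(s)\in\C\setminus V_{\delta/3}[f_0]$ for every $s\in[0,t]$.

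From $W_0'\subset\dom(f_0)$ we have $\rho_{\dom(f_0)}\le\rho_{W_0'}$, hence for every $x\in[0,t]$
\[ d_{\dom(f_0)}(0,\gamma_1(x)) \le d_{\dom(f_0)}(0,\omega_{n-1}\ag 0)+d_{W_0'}(\omega_{n-1}\ag 0,\gamma_1(x)) \le \ell(1-\epsilon')+d_1(\delta) = \ell(1-T_5(\delta,\epsilon')) \]
using \eqref{item:hl:4} and the length bound above. For $0\le y\le x\le t$, assumption \eqref{item:hl:7} gives $\ell(1-y)\ge\ell(1-T_5)\ge d_{\dom(f_0)}(0,\gamma_1(x))$, so $\gamma_1(x)\in\sub{\dom(f_0)}{(1-y)}=\dom\sigma_y^{-1}$; thus $h(x,y)=\sigma_y^{-1}(\gamma_1(x))$ is defined on the closed triangle, and in particular $w'=\gamma_1(t)\in\sub{\dom(f_0)}{(1-t)}$ so that $\gamma_2(s)=\sigma_s^{-1}(w')$ is defined on $[0,t]$.

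Finally, since $\gamma_1(x)\in\C\setminus V_{\delta/3}[f_0]$, assumption \eqref{item:hl:6} together with the third bullet of Lemma~\ref{lem:stayfar} gives $\sigma_y^{-1}(\gamma_1(x))\notin V_{\delta/4}[f_0]\supset\ov{PC}(f_0)$, so $h$ takes values in $W_0$. The boundary identification is direct: $h(x,0)=\gamma_1(x)$ (since $\sigma_0=\on{id}$), $h(t,y)=\gamma_2(y)$, and on the diagonal, $f_s=f_0\circ\sigma_s$ together with $f_s(\omega_{n-1}\ag s)=\omega_n\ag s=f_0(\gamma_1(s))$ and continuity of the lift force $\sigma_s(\omega_{n-1}\ag s)=\gamma_1(s)$, hence $h(s,s)=\omega_{n-1}\ag s$. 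Thus $h$ realises the asserted homotopy in $W_0$; moreover the very existence of the continuous curve $s\mapsto h(s,s)$ with $\Phi_s(h(s,s))=\Phi_0(\omega_{n-1}\ag 0)$, combined with the uniqueness clause of Proposition~\ref{prop:fibers}, yields $\tau_\Phi(\omega_{n-1}\ag 0)>t$. The main obstacle is the simultaneous calibration of the three scales: \eqref{item:hl:3} and \eqref{item:hl:5} together give just enough intrinsic $W_0'$-distance control to keep $\gamma_1$ outside $V_{\delta/3}[f_0]$, \eqref{item:hl:6} ensures the Euclidean wobble produced by $\sigma_y^{-1}$ does not cross the thin buffer $V_{\delta/3}[f_0]\setminus V_{\delta/4}[f_0]$, and \eqref{item:hl:4} together with \eqref{item:hl:7} guarantees that $\gamma_1$ stays inside the shrinking sub-disk $\sub{\dom(f_0)}{(1-t)}$ where $\sigma_t^{-1}$ is still defined.
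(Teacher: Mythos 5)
Your proof is correct and follows essentially the same route as the paper's: construct $\gamma_1$ by lifting, use the cover $f_0\colon W_0'\to W_0$ to transfer the homotopic-length bound from hypothesis~\eqref{item:hl:3}, invoke \eqref{item:hl:5} together with the definition of $d_1$ to keep $\gamma_1$ out of $V_{\delta/3}$, use \eqref{item:hl:4}, \eqref{item:hl:7} and the weaker inclusion $W_0'\subset\dom f_0$ to keep $\gamma_1$ inside the domain of $\sigma_y^{-1}$, and invoke $T_4$ and Lemma~\ref{lem:stayfar} to place the triangular homotopy $h$ inside $W_0$. Your exposition is slightly more explicit in a couple of places (spelling out the triangle inequality and the $y\le x$ domain condition), but the argument and all the key lemma invocations match the paper's.
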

\begin{proof} By~\eqref{item:hl:2} the path $\omega_n$ is contained in $\C\setminus\{0,v\}$ thus the path $\gamma_1$, defined as the pull-back by $f_0$ of $\omega_n\ag\cdot$ starting from $\omega_{n-1}\ag{0}$, is well defined. Let $t'\in[0,t]$: 
\[ \hl{\dom f_0}{\gamma_1\big|_{[0,t']}}<\hl{W_0'}{\gamma_1\big|_{[0,t']}} = \hl{W_0}{\omega_n\ag\cdot\big|_{[0,t']}} \leq d_1
\]
(the first inequality comes from the strict inclusion $W_0'\subset\dom f_0$, the equality follows from $f_0$ being a cover from $W_0'$ to $W_0$, the second inequality comes from point~\eqref{item:hl:3}).
In particular the $\dom f_0$-hyperbolic distance from $\gamma_1(0)$ to $\gamma_1(t')$ is $\leq d_1$.
Since moreover by~\eqref{item:hl:4}, $d_{\dom f_0}(0, \omega_{n-1}\ag 0)\leq\ell(1-\epsilon')$ we get that $\gamma_1$ is contained in the $\dom f_0$ hyperbolic ball of center $0$ and radius $d_1+\ell(1-\epsilon') = \ell(1-T_5)$.
Hence $\gamma_1\subset\sub{\dom(f_0)}{(1-T_5)}$.
Hence by~\eqref{item:hl:7}, $\gamma_2$ and the map $h$ defined at the same place are well defined. Let us check that $h$ takes values in $W_0$, i.e.\ that it avoids $\ov{PC}(f_0)$. Note that we have already proved that 
$\hl{W_0'}{\gamma_1\big|_{[0,t']}} \leq d_1$.
In particular the $W_0'$-hyperbolic distance from $\gamma_1(0)$ to $\gamma_1(t')$ is $\leq d_1$.
Together with point~\eqref{item:hl:5} and the definition of $d_1$, it implies that $\gamma_1$ is contained in $\C\setminus V_{\delta/3}[f_0]$. Point~\eqref{item:hl:6} then implies that $\gamma_2$ and $h$ take value in $\C\setminus V_{\delta/4}[f_0]$, which is contained in $W_0$. 
The points $h(s,s)$ and $\omega_{n-1} \ag s$ are both mapped by $f_s$ to the same point: $\omega_n\ag s$, for which we recall that $\Phi_s(\omega_n\ag s)$ stays constant when $s$ varies. The uniqueness statement in \Cref{prop:fibers} applied to $\Phi$ then implies that $\tau_\Phi(\omega_{n-1})>t$ and that the functions defined on $[0,t]$, $s\mapsto h(s,s)$ and $s\mapsto \omega_{n-1} \ag s$, are in fact equal.
\end{proof}

We have the following variation with $W_0$ replaced by $\C\setminus\{0,v\}$ in the hypotheses, but not in the conclusion:

\begin{proposition}\label{prop:hl2}
Let $t>0$. If we assume that 
\begin{enumerate}
\item\label{item:hl2:1} $\tau_\Phi(\omega_n\ag 0)>t$,
\item\label{item:hl2:2} the path $s\in[0,t]\mapsto \omega_n\ag s$ is contained in $\C\setminus\{0,v\}$, 
\item\label{item:hl2:3} $\extent{\C\setminus\{0,v\}}{\omega_n\ag{[0,t]}}\leq d_1''(\delta)$,
\item\label{item:hl2:4} $\omega_{n-1}\ag 0 \in \sub{\dom(f_0)}{(1-\epsilon')}$,
\item\label{item:hl2:5} $\omega_{n-1}\ag 0 \notin V_{\delta/2}[f_0]$,
\item\label{item:hl2:6} $t\leq T_4(\delta)$,
\item\label{item:hl2:7} $t\leq T_5''(\delta,\epsilon')$,
\end{enumerate}
then $\tau_\Phi(\omega_{n-1}\ag 0) > t$ and the function $h$ is well defined and avoids $V_{\delta/4}[f]$. In particular it has support in $W_0$ and the path $s\in[0,t]\mapsto \omega_{n-1}(s)$ is homotopic in $W_0$ to $\gamma_1\cdot\gamma_2$.
We also have $\gamma_1\subset\sub{\dom(f_0)}{(1-T''_5)}.$
\end{proposition}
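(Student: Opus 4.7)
The proof will closely parallel that of Proposition~\ref{prop:hl}, with the role of $W_0$ (resp.\ $W_0' = f_0^{-1}(W_0)$) played in the intermediate steps by the larger sets $\C\setminus\{0,v\}$ (resp.\ $f_0^{-1}(\C\setminus\{0,v\})$). The main point is that the hypothesis \eqref{item:hl2:2} guarantees $\omega_n\ag\cdot$ avoids the singular values of $f_0$ in $\C$, which is exactly what is needed to construct the pull-back path $\gamma_1$.

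First I would define $\gamma_1$ and $\gamma_2$ and the homotopy $h(x,y) = \sigma_y^{-1}(\gamma_1(x))$ exactly as on page~\pageref{here:gamma12}. Since $\omega_n\ag\cdot$ never hits $0$ or $v$ by~\eqref{item:hl2:2}, the pull-back $\gamma_1$ starting from $\omega_{n-1}\ag 0$ is well defined on $[0,t]$. Using that $f_0$ is a cover from $f_0^{-1}(\C\setminus\{0,v\})$ to $\C\setminus\{0,v\}$, and the strict inclusion $f_0^{-1}(\C\setminus\{0,v\}) \subsetneq \dom(f_0)$, we get for each $t'\in[0,t]$,
\[
\hl{\dom f_0}{\gamma_1\big|_{[0,t']}} < \hl{f_0^{-1}(\C\setminus\{0,v\})}{\gamma_1\big|_{[0,t']}} = \hl{\C\setminus\{0,v\}}{\omega_n\ag\cdot\big|_{[0,t']}} \leq d_1''(\delta),
\]
where the last inequality is~\eqref{item:hl2:3}. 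Combined with~\eqref{item:hl2:4}, which says $d_{\dom f_0}(0,\omega_{n-1}\ag 0)\le \ell(1-\epsilon')$, the curve $\gamma_1$ stays inside the $\dom f_0$-hyperbolic ball of center $0$ and radius $d_1''(\delta)+\ell(1-\epsilon') = \ell(1-T_5''(\delta,\epsilon'))$. By~\eqref{item:hl2:7} we have $t\le T_5''$, hence $\gamma_1\subset \sub{\dom(f_0)}{(1-t)}$ and therefore $\gamma_2(s) = \sigma_s^{-1}(\gamma_1(t))$ as well as $h(x,y)=\sigma_y^{-1}(\gamma_1(x))$ are defined on the full triangle $\{0\le y\le x\le t\}$.

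Next I would check that $h$ avoids $V_{\delta/4}[f_0]$, so that in particular it takes values in $W_0$ (since $\ov{PC}(f_0)\subset V_{\delta/4}[f_0]$). The inequality $\hl{f_0^{-1}(\C\setminus\{0,v\})}{\gamma_1\big|_{[0,t']}}\le d_1''(\delta)$ bounds the corresponding hyperbolic distance from $\gamma_1(0)$ to $\gamma_1(t')$; combined with~\eqref{item:hl2:5} and the very definition of $d_1''(\delta)$, it forces $\gamma_1 \subset \C\setminus V_{\delta/3}[f_0]$. Hypothesis~\eqref{item:hl2:6} together with Lemma~\ref{lem:stayfar} applied to $(\delta/3,\delta/4)$ then yields $\sigma_s^{-1}(\C\setminus V_{\delta/3}[f_0])\cap V_{\delta/4}[f_0]=\emptyset$ for all $s\le t\le T_4(\delta)$, and consequently $h$ avoids $V_{\delta/4}[f_0]$, hence maps into $W_0$.

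Finally, by construction $f_s(h(s,s)) = \omega_n\ag s$, and $h(0,0)=\omega_{n-1}\ag 0$. By uniqueness of the motion along $\Phi$-fibers (last bullet of Proposition~\ref{prop:fibers}) and a continuity-in-$s$ argument, $h(s,s)=\omega_{n-1}\ag s$ for all $s\in[0,t]$, which gives $\tau_\Phi(\omega_{n-1}\ag 0)>t$. The map $h$ is then a homotopy in $W_0$, with endpoints fixed, between $s\mapsto \omega_{n-1}\ag s$ (the diagonal $x=y$) and the concatenation $\gamma_1\cdot\gamma_2$ (the other two sides of the triangle, $y=0$ followed by $x=t$), proving the last assertion. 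The only subtlety in the whole argument is the use of the strict inclusion $f_0^{-1}(\C\setminus\{0,v\}) \subsetneq \dom(f_0)$ to pass from a homotopic length bound that is $\le d_1''$ to one that is strictly $<d_1''$, which is what allows $\gamma_1$ to reach the hyperbolic boundary of the smaller ball of radius $\ell(1-T_5'')$ without exiting the domain.
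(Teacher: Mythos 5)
Your proof is correct and carries out exactly what the paper means by its one‑line proof ``As in the previous proof'': you replace $W_0$ and $W_0'$ by $\C\setminus\{0,v\}$ and $f_0^{-1}(\C\setminus\{0,v\})$, $d_1$ by $d_1''$, and $T_5$ by $T_5''$, and then apply Lemma~\ref{lem:stayfar} and the uniqueness in Proposition~\ref{prop:fibers} in the same way. This is a faithful and complete expansion of the paper's intended argument.
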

\begin{proof}As in the previous proof.
\end{proof}

\begin{lemma}\label{lem:hl:1}
Under the conditions of \Cref{prop:hl}, the $W_0$-homotopic length of $\gamma_1$ is at most $\Lambda(\delta/3)$ times the $W_0$-homotopic length of $\omega_n$, where $\Lambda(\delta/3)<1$ is given by \Cref{lem:definite}.
\end{lemma}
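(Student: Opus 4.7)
The plan is to lift a geodesic representative of $\omega_n\ag{[0,t]}$ through the covering $f_0\colon W_0'\to W_0$, and exploit the strict inclusion $W_0'\subsetneq W_0$ via Lemma~\ref{lem:definite}. Let $\alpha$ be a shortest path in $W_0$ homotopic (with fixed endpoints) to $\omega_n\ag{[0,t]}$, so that $\len{W_0}{\alpha}=\hl{W_0}{\omega_n\ag{[0,t]}}$. By homotopy lifting, there is a unique lift $\tilde\alpha\subset W_0'$ of $\alpha$ starting at $\omega_{n-1}\ag 0$; applying the same lifting argument to a homotopy in $W_0$ from $\omega_n\ag{[0,t]}$ to $\alpha$ (with $\gamma_1$ as initial lift) shows that $\tilde\alpha$ is homotopic rel endpoints to $\gamma_1$ in $W_0'$, hence in $W_0$.

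Next I would argue that $\tilde\alpha$ remains in $\C\setminus V_{\delta/3}[f_0]$, re-running the containment argument from the proof of Proposition~\ref{prop:hl}. Since $f_0\colon W_0'\to W_0$ is a local isometry for the respective hyperbolic metrics, one has $\len{W_0'}{\tilde\alpha\big|_{[0,s]}}=\len{W_0}{\alpha\big|_{[0,s]}}\leq\hl{W_0}{\omega_n\ag{[0,s]}}\leq\extent{W_0}{\omega_n\ag{[0,t]}}\leq d_1(\delta)$ for every $s\in[0,t]$, by hypothesis~\eqref{item:hl:3}. The starting point satisfies $\omega_{n-1}\ag 0\in\C\setminus V_{\delta/2}[f_0]$ by hypothesis~\eqref{item:hl:5}, and the definition of $d_1(\delta)$ as the infimum of $W_0'$-hyperbolic distances from $V_{\delta/3}[f_0]$ to $\C\setminus V_{\delta/2}[f_0]$ then forbids $\tilde\alpha$ from entering $V_{\delta/3}[f_0]$.

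With $\tilde\alpha\subset\C\setminus V_{\delta/3}[f_0]$, Lemma~\ref{lem:definite} yields $\lambda(z)\leq\Lambda(\delta/3)$, i.e.\ $\rho_{W_0}(z)\leq\Lambda(\delta/3)\,\rho_{W_0'}(z)$, at every point $z$ of $\tilde\alpha$. Integrating along $\tilde\alpha$ gives $\len{W_0}{\tilde\alpha}\leq\Lambda(\delta/3)\,\len{W_0'}{\tilde\alpha}=\Lambda(\delta/3)\,\hl{W_0}{\omega_n\ag{[0,t]}}$; since $\tilde\alpha$ is homotopic to $\gamma_1$ in $W_0$, one concludes $\hl{W_0}{\gamma_1}\leq\len{W_0}{\tilde\alpha}\leq\Lambda(\delta/3)\,\hl{W_0}{\omega_n\ag{[0,t]}}$, which is the claimed inequality. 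The only delicate step is the containment of $\tilde\alpha$ in $\C\setminus V_{\delta/3}[f_0]$, but this is merely a repackaging of the containment already established for $\gamma_1$ in Proposition~\ref{prop:hl}, now applied to the geodesic lift rather than to $\gamma_1$ itself.
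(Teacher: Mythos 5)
Your proof is correct and amounts to the same argument as the paper's: the paper takes $\gamma$ to be the shortest $W_0'$-path homotopic to $\gamma_1$, which — since $f_0\colon W_0'\to W_0$ is a covering, hence a local isometry for the respective hyperbolic metrics — coincides by uniqueness of geodesics in a fixed-endpoint homotopy class with your lift $\tilde\alpha$ of the $W_0$-geodesic $\alpha$, and the containment-and-contraction step is then identical. One small slip to flag: the intermediate inequality $\len{W_0}{\alpha\big|_{[0,s]}}\leq\hl{W_0}{\omega_n\ag{[0,s]}}$ is not valid in general (if $\omega_n\ag{\cdot}$ is nearly constant on $[0,s]$ while $\alpha$ is parametrized roughly by arclength, the left side exceeds the right). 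It is also unnecessary: since $\tilde\alpha$ is a $W_0'$-geodesic of total length $\len{W_0'}{\tilde\alpha}=\len{W_0}{\alpha}=\hl{W_0}{\omega_n\ag{[0,t]}}\leq d_1(\delta)$, every point of $\tilde\alpha$ is automatically within $W_0'$-distance $d_1(\delta)$ of its initial point, which is all the containment step needs.
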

\begin{proof}
We have seen that $\hl{W_0'}{\gamma_1}\leq d_1$.
Consider a shortest path $\gamma$ homotopic to $\gamma_1$ in $W_0'$: $\len{W_0'}{\gamma}=\hl{W_0'}{\gamma_1}$. It is a geodesic for the hyperbolic metric, in particular all its points are at $W_0'$-hyperbolic distance $\leq d_1$ from its starting point.
By the definition of $d_1$ this implies that $\gamma$ is disjoint from $V_{\delta/3}[f_0]$. By \Cref{lem:definite}, we have $\lambda(z) \leq \Lambda(\delta/3)$ for $z$ in the support of $\gamma$, with $\lambda(z)=\rho_{W_0}(z)/\rho_{W_0'}(z)$. Thus $\hl{W_0}{\gamma_1}\leq\len{W_0}{\gamma}\leq \Lambda(\delta/3) \len{W_0'}{\gamma}= \Lambda(\delta/3)\hl{W_0} {\omega_n}$.
\end{proof}

\begin{lemma}\label{lem:hl2:1}
Under the conditions of \Cref{prop:hl2}, the $W_0$-homotopic length of $\gamma_1$ is at most $M(\delta/3)$ times the $\C\setminus\{0,v\}$-homotopic length of $\omega_n$, where $M(\cdots)$ is given in \Cref{lem:back}.
\end{lemma}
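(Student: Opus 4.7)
The plan is to adapt the proof of Lemma~\ref{lem:hl:1}, substituting the ``free'' contraction used there (which came from the strict inclusion $W_0'\subset W_0$ and Lemma~\ref{lem:definite}) by the estimate from Lemma~\ref{lem:back}. Set $V_0=\C\setminus\{0,v\}$ and $V_0'=f_0^{-1}(V_0)$; the map $f_0\colon V_0'\to V_0$ is a covering, so lifting $\omega_n$ to $\gamma_1$ gives $\hl{V_0'}{\gamma_1}=\hl{V_0}{\omega_n}$.

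I would then pick a shortest $V_0'$-geodesic $\gamma$ homotopic (with endpoints fixed) to $\gamma_1$ in $V_0'$. By hypothesis~\eqref{item:hl2:3} its length is at most $d_1''(\delta)$, and since $\gamma$ starts at $\omega_{n-1}\ag 0$, which hypothesis~\eqref{item:hl2:5} places outside $V_{\delta/2}[f_0]$, the very definition of $d_1''(\delta)$ precludes $\gamma$ from entering $V_{\delta/3}[f_0]$.

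The key subtlety --- what makes the statement non-trivial compared to Lemma~\ref{lem:hl:1} --- is that $V_0'$ is not contained in $W_0$, since a preimage of $v$ can itself lie on the post-critical set; one cannot therefore invoke non-expansion of an inclusion between hyperbolic domains. What saves us is the previous step, which confines $\gamma$ to the complement of $V_{\delta/3}[f_0]$, hence to $W_0\cap\dom f_0$. On this set Lemma~\ref{lem:back} yields $\rho_{W_0}\leq M(\delta/3)\,\rho_{\dom f_0}$, while the trivial inclusion $V_0'\subset\dom f_0$ gives $\rho_{\dom f_0}\leq \rho_{V_0'}$.

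Stringing these inequalities together will produce
\[\hl{W_0}{\gamma_1}\leq\len{W_0}{\gamma}\leq M(\delta/3)\,\len{V_0'}{\gamma}=M(\delta/3)\,\hl{V_0'}{\gamma_1}=M(\delta/3)\,\hl{V_0}{\omega_n},\]
which is the claim. The only real work is the geometric confinement of $\gamma$ outlined above; everything else is bookkeeping between the three metrics $\rho_{W_0}$, $\rho_{\dom f_0}$ and $\rho_{V_0'}$.
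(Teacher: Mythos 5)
Your proposal is essentially the paper's own one-sentence proof, which instructs the reader to redo Lemma~\ref{lem:hl:1} after substituting $W_0'\to f_0^{-1}(\C\setminus\{0,v\})$, $d_1\to d_1''$ and $\lambda(z)\to\rho_{W_0}(z)/\rho_{f_0^{-1}(\C\setminus\{0,v\})}(z)$, then bounding the latter by $\rho_{W_0}/\rho_{\dom f_0}\leq M(\delta/3)$ on the support of the geodesic. You are more explicit than the paper in flagging the one real subtlety, namely that $V_0'=f_0^{-1}(\C\setminus\{0,v\})$ is not contained in $W_0$.

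One remark on that subtlety, though: confining $\gamma$ to $\C\setminus V_{\delta/3}$ places both $\gamma$ and $\gamma_1$ inside $W_0$, but by itself it does not place the \emph{$V_0'$-homotopy} between them inside $W_0$, and this is what the inequality $\hl{W_0}{\gamma_1}\leq\len{W_0}{\gamma}$ actually requires (in Lemma~\ref{lem:hl:1} it was automatic from $W_0'\subset W_0$). The gap is benign but deserves a word: lifting to the universal cover of $V_0'$, both $\wt\gamma_1$ and the geodesic $\wt\gamma$ lie in the closed hyperbolic ball of radius $d_1''$ about $\wt\gamma_1(0)$ (for $\wt\gamma_1$ this is exactly the extent bound \eqref{item:hl2:3} transported by the cover, for $\wt\gamma$ it is because a geodesic of length $\leq d_1''$ stays in that ball), and hyperbolic balls are geodesically convex, so the geodesic homotopy from $\wt\gamma_1$ to $\wt\gamma$ also lies in that ball. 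Its projection, the $V_0'$-ball of radius $d_1''$ about $\gamma_1(0)$, avoids $V_{\delta/3}$ by the definition of $d_1''$ together with hypothesis~\eqref{item:hl2:5}, hence lies in $W_0$. With that observation supplied, the chain of inequalities you wrote is correct and matches the paper's intent.
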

\begin{proof}
This done as in the previous lemma, with $W_0'$ replaced by $f^{-1}(\C\setminus\{0,v\})$, $d_1$ by $d_1''$ and $\lambda(z)$ by $\rho_{W_0}(z)/\rho_{f^{-1}(\C\setminus\{0,v\})}(z)$. By inclusion, the latter quantity is $\leq \rho_{W_0}(z)/\rho_{\dom f}(z)$ thus $\leq M(\delta/3)$.
\end{proof}

The $W_0$-homotopic length of $\gamma_2$ will be controlled using \Cref{lem:t5p} below. To state it we need to introduce another quantity.
By \Cref{lem:stayfar} there exists $T_6=T_6(\delta)$ such that $\forall f_0\in\cal F_0$, $\forall t<T_6$, $\mu_t^{-1}\big(\C \setminus V_{\delta/4}[f_0]\big) \cap V_{\delta/5}[f_0] = \emptyset$ and
$r_t \big(\C \setminus V_{\delta/5}[f_0]\big) \cap V_{\delta/6}[f_0]  = \emptyset$.

\begin{lemma}\label{lem:t5p}
For all $\delta<\delta_1$,  
there exists $K_0=K_0(\delta)$ such that under the conditions of \Cref{prop:hl} or~\ref{prop:hl2}, and assuming moreover 
\begin{itemize}
\item $t\leq T_6(\delta)$ and $t\leq T_5(\delta,\epsilon')/2$
\end{itemize}
then the $W_0$-homotopic length of $\gamma_2$ is $\leq K_0 t/T_5(\delta,\epsilon')$.
\end{lemma}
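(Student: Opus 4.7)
The plan is to bound the $W_0$-length of $\gamma_2$ by passing through the $\dom(f_0)$-hyperbolic metric, which is tractable via the Schlicht parametrization $\phi_0:\D\to\dom(f_0)$. First, I will combine the extra hypothesis $t\leq T_6(\delta)$ with Lemma~\ref{lem:stayfar} to refine the conclusion of Propositions~\ref{prop:hl}/\ref{prop:hl2}, showing that the support of $\gamma_2$ stays outside $V_{\delta/5}[f_0]$ rather than merely outside $V_{\delta/4}$. Lemma~\ref{lem:back} then bounds $\rho_{W_0}\leq M(\delta/5)\rho_{\dom(f_0)}$ along $\gamma_2$, reducing the task to bounding $\len{\dom(f_0)}{\gamma_2}$ by $Ct/T_5$, at the cost of a multiplicative factor $M(\delta/5)$. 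Pulling back by the hyperbolic isometry $\phi_0^{-1}$, and writing $u' = \phi_0^{-1}(w')$ with $|u'|\leq 1-T_5$ (inherited from $w'\in\sub{\dom(f_0)}{(1-T_5)}$), this reduces to bounding the $\D$-length of $\Psi(s) := \phi_0^{-1}((1-s)\phi_0(u'/(1-s)))$.

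The crucial trick is a reparametrization $y := u'/(1-s)$: one has $\gamma_2(s) = \Theta(y)$ for the holomorphic function $\Theta(y) := u'\phi_0(y)/y = u'(1+a_2 y + a_3 y^2 + \cdots)$ defined on all of $\D$ (with $\Theta(0)=u'$ by L'H\^opital). As $s$ runs over $[0,t]$, the variable $y$ traces the radial $\D$-geodesic segment $L$ from $u'$ to $u'/(1-t)$, whose $\D$-hyperbolic length equals $\ell(|u'|/(1-t))-\ell(|u'|)$ and is bounded by $4t/T_5$ via a direct computation from $\ell'(x)=1/(1-x^2)$, using the hypotheses $|u'|\leq 1-T_5$ and $t\leq T_5/2$. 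Since Propositions~\ref{prop:hl}/\ref{prop:hl2} ensure $\Theta(L)\subset\dom(f_0)$, the composition $\Xi := \phi_0^{-1}\circ\Theta$ is a well-defined holomorphic map from some open neighborhood $U\subset\D$ of $L$ into $\D$ which reparametrizes $\Psi$ along $L$. The Schwarz--Pick inequality then yields
\[
\len{\D}{\Psi|_{[0,t]}} = \len{\D}{\Xi|_L} \leq \len{U}{L}.
\]

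The main obstacle will be estimating $\len{U}{L}$ in terms of $\len{\D}{L}$: a priori $\rho_U\geq\rho_\D$, so genuine work is needed to control the ratio along $L$. My plan is to exhibit a concrete tube $U\supset L$ in $\D$ of uniformly (in $f_0\in\cal F$) non-degenerate hyperbolic width $\eta(\delta)>0$. The ingredients will be the Koebe distortion estimates of Section~\ref{subsec:toolkit}, the compactness of $\cal F$ (which via Proposition~\ref{prop:cor} pins down the position of the critical value of $\phi_0$), and the definite Euclidean buffer around $V_{\delta/5}[f_0]$ secured by $t\leq T_6(\delta)$; together these will show that $\Theta$ stays at definite $\dom(f_0)$-hyperbolic distance from $\partial\dom(f_0)$ on a $\D$-hyperbolic thickening of $L$ of width $\eta(\delta)$. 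This forces $\rho_U\leq C_1(\delta)\rho_\D$ along $L$, whence $\len{U}{L}\leq 4C_1(\delta)t/T_5$, and the lemma follows with $K_0 = 4 M(\delta/5)C_1(\delta)$.
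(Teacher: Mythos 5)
The key issue is that your entire computation rests on the claim that the support of $\gamma_2$ is contained in $\dom(f_0)$, which is false in general. Recall $\gamma_2(s) = \sigma_s^{-1}(w')$ and $\sigma_s^{-1} = r_s \circ \mu_s^{-1}$ maps $\sub{\dom(f_0)}{r_s}$ to $r_s\,\dom(f_0) = \dom(f_s)$, \emph{not} to $\dom(f_0)$. Explicitly $\gamma_2(s) = (1-s)\phi_0(u'/(1-s))$: the point $\phi_0(u'/(1-s))$ lies in $\dom(f_0)$, but multiplying it by $r_s = 1-s < 1$ may carry it outside $\dom(f_0)$, since $\dom(f_0)=\phi_0(\D)$ for a Schlicht map $\phi_0$ and is generally not star-shaped about the origin. (Propositions~\ref{prop:hl} and~\ref{prop:hl2} only guarantee that $h$, and hence $\gamma_2$, avoids $V_{\delta/4}[f_0]$, i.e.\ lands in the much larger set $W_0$.) Consequently $\Psi(s)=\phi_0^{-1}(\gamma_2(s))$ is not well-defined for all $s$, the map $\Xi=\phi_0^{-1}\circ\Theta$ is not defined on any neighbourhood of $L$, and the Schwarz--Pick step collapses. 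Lemma~\ref{lem:back} is likewise unavailable along $\gamma_2$: the comparison $\rho_{W_0}\le M\,\rho_{\dom(f_0)}$ is meaningless at points where $\rho_{\dom(f_0)}$ is undefined.

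This is precisely the difficulty the paper addresses by \emph{homotoping} $\gamma_2$ in $W_0$ to a concatenation $\gamma_3\cdot\gamma_4$, with $\gamma_3(s)=\mu_s^{-1}(w')$ and $\gamma_4(s)=r_s w''$. The path $\gamma_3$ \emph{does} stay in $\dom(f_0)$ (since $\mu_s^{-1}=\phi_0\circ r_s^{-1}\circ\phi_0^{-1}$ maps $\sub{\dom(f_0)}{r_s}$ back into $\dom(f_0)$), so the argument you run — Schlicht pull-back, hyperbolic-length computation with $|u'|\le 1-T_5$ and $t\le T_5/2$, then Lemma~\ref{lem:back} — applies cleanly to it. The problematic rescaling factor is isolated into $\gamma_4$, a Euclidean radial segment that can (and typically does) leave $\dom(f_0)$, and its $W_0$-length is then bounded by direct Euclidean estimates: $\rho_{W_0}\le 6/\delta$ on $\C\setminus V_{\delta/6}$ together with $\rho_{W_0}(z)\le 1/(|z|-R_1)$ far from the post-critical set (split into $|w''|\gtrless 4R_1$). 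Your reparametrisation $y=u'/(1-s)$ and the bound $\len{\D}{L}\le Ct/T_5$ are correct and in fact match the paper's computation for $\gamma_3$; what is missing is the homotopy that peels off the $r_s$-factor before applying them, together with the separate elementary estimate for that factor.
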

\begin{proof}
Similarly to the proof of the propositions, the condition $t<T_6$ ensures that there is a homotopy in $W_0$ between $\gamma_2$ and $\gamma_3.\gamma_4$ where $\gamma_3(s)=\mu_s^{-1}(\gamma_2(0))$ and $\gamma_4(s)=r_s w''$ where $w''$ is the endpoint of $\gamma_3$.
The motion $\mu_s$ is the conjugate by $\phi_0$ of the radial motion and we have seen that $x:=|\phi_0^{-1}(\gamma_2(0))| \leq 1-T_5$ (in the case of \Cref{prop:hl2} we have $x\leq 1-T_5'' <1-T_5$) thus the length of $\gamma_3$ for the hyperbolic metric of $\phi_0(\D)=\dom f_0$ is $\leq d_{\D}(x,\frac{x}{1-t}) \leq d_\D(1-T_5,\frac{1-T_5}{1-t})= {\frac12\log\left(\frac{1-\frac{t}{2-T_5}}{1-\frac{t}{T_5}}\right)} \leq \frac12\times - \log(1-\frac{t}{T_5}) \leq t\log(2)/T_5$, the latter because $t/T_5\leq 1/2$.
\Cref{lem:back} implies that its $W_0$-length is at most $M(\delta/5)$ times this quantity.
To bound the $W_0$-length of $\gamma_4$, note that it is contained in the complement of $V_{\delta/6}$, thus $\forall z\in \gamma_4$, $\rho_{W_0}(z)\leq 6/\delta$.
Also, $\rho_{W_0}(z)\leq 1/(|z|-R_1)$ where $R_1=\sup\setof{|z|}{f\in\cal F,\ z\in PC(f)}$. If $|w''|>4R_1$ then since $t\leq 1/2$, the whole path $\gamma_4$ is contained in the complement of $B(0,2R_1)$ and thus $\rho_{W_0} \leq 2/|z|$ whence a $W_0$-length of $\gamma_4$ that is $\leq \int_{|z|}^{\frac{|z|}{1-t}}\frac2xdx=2\log(1/(1-t))\leq 4t\log 2$ because $t\leq 1/2$.
If $|w''|\leq 4R_1$ then the whole euclidean length of $\gamma_4$ is $\leq 4R_1 t$ hence the $W_0$-length is $\leq (6/\delta)4R_1 t$. Recall that $T_5< 1$.
\end{proof}

\begin{remark}
The linearity of the bound w.r.t.\ $t$ is not crucial for this article: weaker orders of convergence to $0$ would work for our purpose, thanks to the fact that in \Cref{prop:part:1}, $\epsilon'$ is much bigger than $\epsilon$. What will be important is that values of $t$ for which the bound is a given small constant are much bigger than $\epsilon$.
So how $T_5$ depends on $\epsilon$ will be important too (recall $\delta$ will be fixed).
\end{remark}

\begin{remark}%
\Cref{lem:hl:1,lem:hl2:1,lem:t5p} give an upper bound on the $W_0$-homotopic length of the curve $s\in[0,t]\mapsto \omega_{n-1}\ag{s}$ on each of its subsegments $s\in[0,t']$ for $t'<t$, by applying \Cref{prop:hl} to $t'$ instead of $t$. So we get in fact bounds on the extent. This allows for induction.
\end{remark}

\subsubsection{Visits in the repelling petal}\label{subsub:reppass}

\begin{lemma}\label{lem:cmpmet}
There exists $K_4>0$ such that $\forall f\in\cal F$, $\forall z\in W_0$, if $|z|\leq 1$ then $\rho_{W_0}(z)\geq 1/K_4|z|$.
\end{lemma}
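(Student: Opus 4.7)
The plan is to split the estimate into two regimes according to the modulus of $z$: a ``far'' regime where $|z|$ is bounded away from $0$ and the estimate follows from compactness, and a ``close'' regime where $|z|$ is small and we exploit that the post-critical orbit of $f$ accumulates on $0$ with essentially geometric spacing.

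In the far regime, fix some $r_* > 0$ (to be chosen later) and consider the compact set $K = \{r_* \leq |z| \leq 1\}$. By Lemma~\ref{lem:PSdist}~\eqref{item:psd:1}, the map $f \in \cal F \mapsto \ov{PC}(f) \subset \C$ is continuous for the Hausdorff topology on compact sets, so the marked family $(W_0(f), z)$ with $z$ varying in $K \cap W_0(f)$ depends continuously on $(f,z)$ in the Carathéodory sense. Since $\cal F$ is sequentially compact and $K$ is compact, $\rho_{W_0(f)}(z)$ attains a positive infimum $c_1 > 0$ over this set. Hence $\rho_{W_0}(z) \geq c_1 \geq c_1 r_*/|z|$ on $K$.

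In the close regime $|z| < r_*$, the idea is to find a post-critical point $p$ of modulus comparable to $|z|$ and then use the inclusion $W_0 \subset \C \setminus \{0,p\}$ together with the scaling identity $\rho_{\C\setminus\{0,p\}}(z) = \rho_{\C\setminus\{0,1\}}(z/p)/|p|$. Writing $p_n = f^n(v_{f})$, the asymptotic $\Phi_\at(z) \sim -1/(c_f z)$ from Proposition~\ref{prop:cf} gives $|p_n| \sim 1/(|c_f| n)$ as $n \to \infty$; in particular $|p_n|/|p_{n+1}| \to 1$ and the sequence is eventually strictly decreasing in modulus. By the uniform bounds $|c_f| \geq \epsilon > 0$ and the uniform control provided by Proposition~\ref{prop:cf} applied to the compact family $\cal F$ (after rescaling as in Section~\ref{subsub:transff}), one can choose $n_0 \in \N$ and $r_* > 0$ both independent of $f$ such that, for all $f \in \cal F$, the sequence $(|p_n|)_{n \geq n_0}$ is strictly decreasing, satisfies $|p_n|/|p_{n+1}| \leq 2$, and satisfies $|p_{n_0}| \geq r_*$. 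Then for any $|z| < r_*$ there is a unique $n \geq n_0$ with $|p_{n+1}| < |z| \leq |p_n|$, and $|z/p_n| \in [1/2, 1]$.

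It remains to bound $\rho_{\C \setminus \{0,1\}}$ from below on the closed annulus $A = \{1/2 \leq |w| \leq 1\}$. The set $A$ meets $\C \setminus \{0,1\}$ in $A \setminus \{1\}$; on $A \setminus B(1, \eta)$ the density is bounded below by compactness for each $\eta > 0$, while on $B(1,\eta) \setminus \{1\}$ the standard puncture asymptotic $\rho(w) \sim 1/(2|w-1|\log(1/|w-1|))$ gives a positive lower bound (even tending to $\infty$). Hence there is an absolute constant $c_0 > 0$ with $\rho_{\C\setminus\{0,1\}}(w) \geq c_0$ for all $w \in A$. Combining: $\rho_{W_0}(z) \geq \rho_{\C\setminus\{0,p_n\}}(z) = \rho_{\C\setminus\{0,1\}}(z/p_n)/|p_n| \geq c_0/|p_n| \geq c_0/(2|z|)$. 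Taking $K_4 = \max(1/c_1 r_*,\ 2/c_0)$ concludes the proof. The main technical obstacle is arranging the constants $n_0$, $r_*$ to be uniform in $f \in \cal F$, which is precisely where the compactness of $\cal F$ and the uniform Fatou estimates of Section~\ref{subsec:toolkit} are essential.
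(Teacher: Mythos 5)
Your proof is correct, and its essential input is the same as the paper's: the uniform over $\cal F$ estimate $|f^n(v_f)|\asymp 1/n$ for the post-critical orbit, which both arguments extract from the Fatou-coordinate estimates of Proposition~\ref{prop:cf} together with Lemma~\ref{lem:traptime} and compactness of $\cal F$. The difference is in the packaging. The paper argues by contradiction: if $\rho_{W_0[f_n]}(z_n)\,|z_n|\to 0$, then after rescaling by $1/z_n$ one obtains a round annulus $r_n<|w|<R_n$ contained in $W_0[f_n]$ around $z_n$ whose modulus blows up, and the $1/n$ spacing of $PC(f_n)$ forbids such a gap. You instead give a direct two-regime estimate: on $r_*\le|z|\le 1$ you invoke compactness and Carath\'eodory continuity, and for $|z|<r_*$ you locate a post-critical point $p_n$ with $|z/p_n|\in[1/2,1]$, use the inclusion $W_0\subset\C\setminus\{0,p_n\}$ and the scaling law $\rho_{\C\setminus\{0,p\}}(z)=\rho_{\C\setminus\{0,1\}}(z/p)/|p|$, and then a uniform lower bound for $\rho_{\C\setminus\{0,1\}}$ on the annulus $\{1/2\le|w|\le1\}$. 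These two routes are near-equivalent (no wide annulus in $W_0$ around $z$ is precisely the existence of a post-critical point at comparable modulus), but your version avoids the subsequence extraction and reads as a constructive bound, which is arguably cleaner. One small technical note: strict monotonicity of $(|p_n|)_{n\ge n_0}$ and uniqueness of the index $n$ are not needed; it suffices that $|p_n|\to 0$ and $|p_n|/|p_{n+1}|\le 2$ for $n\ge n_0$ uniformly over $\cal F$ (take $n$ to be the last index with $|p_n|\ge|z|$), and the uniformity of that ratio bound is exactly what the paper's displayed estimate $A'/n\le|f^n(v_f)|\le A''/n$ encodes, so you are invoking the same compactness content even if you phrase it a bit loosely.
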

\begin{proof}
Let us work by contradiction and assume that there exist $f_n\in \cal F$ and $z_n\in \D$ such that $z_n\in W_0[f_n]$ and $\rho_{W_0[f_n]}(z_n)|z_n|\tend 0$.
Consider the dilatation by $1/z_n$: the set $z_n^{-1}W_0[f_n]\subset\C$ does not contain $0$, but it contains the point $1$ and has a hyperbolic metric coefficient at this point tending to $0$ as $n\to+\infty$.
There would thus exist $R_n,r_n>0$ such that $R_n>|z_n|>r_n$, such that $W_0[f_n]$ contains the annulus ``$r_n<|z|<R_n$'' and such that $R_n/|z_n|\tend+\infty$ and $r_n/|z_n|\tend 0$ (this can be proved by contradiction, using that $\C$ minus two points is hyperbolic, and that inclusion is non-expanding for the hyperbolic metric).

Let us apply \Cref{lem:traptime} to $r=r_0$ where $r_0$ is provided by \Cref{prop:cf}.
The point $f^{n_0}(v_f)$ belongs to $D_{r_0}[f]$. It depends continuously on $f$ and thus it remains in a compact subset of $\C\setminus \{0\}$.
Let $a_n[f] = -1/c_f f^{n}(v_f)$ where $c_f$ is defined in \Cref{prop:cf} and is bounded away from $0$ and $\infty$ as $f$ varies in $\cal F$.
Then $a_{n_0}[f]$ also belongs to a compact set.
By \Cref{lem:upt1}, $\forall n\geq 0$, $3n/4-A\leq |a_{n_0+n}[f]| \leq A+5n/4$ for a constant $A>0$ that is independent of $f$. It follows that there is $A', A''>0$ and $n_1\geq n_0$ such that for all $f\in\cal F$ and for all $n\geq n_1$, $\frac{A'}{n}\leq |f^{n}(v_f)|\leq \frac{A''}{n}$.

Hence the aforementioned sequence of annuli cannot exist, which yields a contradiction.
\end{proof}

In coordinates $u=-1/c_f z$ this reads $\rho_{-1/(c_fW_0)}(u)\geq 1/K_4|u|$.

\begin{proposition}\label{prop:homotopicrep}
There exists $r_2$, $T_8$ and $d_1'$, positive reals, such that for all $f_0\in\cal F$, for all $n_0,n_1\in\Z$ with $n_0<n_1$, for all $f_0$-orbit $\omega_n$ indexed by $\Z\cap[n_0,+\infty[$, and for all $t>0$, if
\nomenclature[r2]{$r_2$}{defined in \Cref{prop:homotopicrep}}
\begin{enumerate}
\item $\omega_{n_0}\ag{0},\ldots,\omega_{n_1}\ag{0} \in D_\rep[f_0](r_2)$,
\item $\tau (\omega_{n_1}) > t$,
\item $t\leq T_8$,
\item $\extent{W_0}{\omega_{n_1}\ag{[0,t]}}<d_1'$,
\end{enumerate}
then $\tau (\omega_{n_0})>t$, 
the paths $\gamma_1$ and $\gamma_2$ defined below are well defined, and $s\in[0,t]\mapsto \omega_{n_0}\ag s$ is homotopic in $W_0[f_0]$ to their concatenation $\gamma_1\cdot \gamma_2$.
The path $\gamma_1$ is the pull-back of $s\in[0,t]\mapsto \omega_{n_1}\ag s$ by $f_0^{n_1-n_0}$ that starts from $\omega_{n_0}\ag{0}$; the path $\gamma_2: [0,t]\to \C$ is the continuous solution, starting from $\gamma_1(t)$, of $f_s^{n_1-n_0}(\gamma_2(s)) = \on{const} = f_0^{n_1-n_0}(\gamma_1(t))=\omega_{n_1}\ag{t}$.
\end{proposition}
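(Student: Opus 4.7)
I would prove this by adapting the strategy of Propositions~\ref{prop:hl} and~\ref{prop:hl2}: define a homotopy $h$ on the triangle $\Delta_t=\{(x,y)\in[0,t]^2:y\leq x\}$ whose boundary comprises $\omega_{n_0}\ag{[0,t]}$, $\gamma_1$ and $\gamma_2$. The natural definition here is to let $h(x,y)$ be the inverse image of $\omega_{n_1}\ag x$ under $f_y^{n_1-n_0}$, chosen continuously with $h(0,0)=\omega_{n_0}\ag 0$. Existence of this inverse branch will rely on the fact that, deep in the repelling petal, the local inverse branch $g_s$ of $f_s$ fixing $0$ is univalently defined and leaves the petal invariant; iterating it $n_1-n_0$ times yields the desired inverse branch of $f_s^{n_1-n_0}$.

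To set up the constants: first, by applying the repelling versions of Propositions~\ref{prop:cf}, \ref{prop:cfr} and~\ref{prop:butterfly} to the compact family $\cal F_{[0,T'_1]}$, I choose $r_2>0$ small enough that for every $f_0\in\cal F$ and every $s\in[0,T'_1]$, the petal $D_\rep[f_s](2r_2)$ lies in the domain of the univalent inverse branch $g_s$, is stable under $g_s$, and is at uniform positive Euclidean distance from $PC(f_0)$ (this last point uses that $PC(f_0)$ accumulates at $0$ along the attracting sector, disjoint from the repelling sector where the petal sits). Next, I choose $T_8<T'_1$ using the analog for the repelling petal of Lemmas~\ref{lem:survlocorb} and~\ref{lem:bmslo}, so that $\tau_\Phi[f_0](\omega)>T_8$ for any $\omega\in D_\rep[f_0](r_2)$ and so that $\omega\ag s\in D_\rep[f_s](2r_2)$ throughout $s\in[0,T_8]$. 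Finally, I choose $d_1'>0$ small enough that any path in $W_0[f_0]$ with extent at most $d_1'$ and starting in $D_\rep[f_0](r_2)$, when pulled back by $g_0^{n_1-n_0}$, remains in $D_\rep[f_0](2r_2)$; this uses the inequality $\rho_{W_0}\leq\rho_{W_0'}$ iteratedly (so $g_0^k$ does not expand $W_0$-hyperbolic lengths) together with Lemma~\ref{lem:back} to compare with Euclidean scales.

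With these constants fixed, the function $h$ defined by the implicit equation
\[f_y^{n_1-n_0}(h(x,y))=\omega_{n_1}\ag x\]
is well-defined and continuous on $\Delta_t$, and recovers $\gamma_1$ on the edge $y=0$ and $\gamma_2$ on the edge $x=t$ by construction. On the diagonal $y=x$, the point $h(x,x)$ satisfies $f_x^{n_1-n_0}(h(x,x))=\omega_{n_1}\ag x$, which is also satisfied by $\omega_{n_0}\ag x$; uniqueness of the local inverse branch together with continuity from $(0,0)$ and Proposition~\ref{prop:fibers} forces $h(x,x)=\omega_{n_0}\ag x$, and in particular $\tau_\Phi[f_0](\omega_{n_0})>t$. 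The image of $h$ is contained in $D_\rep[f_0](2r_2)\subset W_0[f_0]$ by the choices above, providing the desired homotopy. The main obstacle is precisely the uniformity of the inverse branch and of the displacement estimates over all $n_1-n_0\in\N^*$; this is resolved by working in repelling Fatou coordinates, where $g_s$ becomes translation by $-1$ and iteration drives points deep into a half-plane, so that the cumulative displacement under the perturbation $\sigma_s$ telescopes and is dominated by the single-step motion, a mechanism already encapsulated in the use of holomorphic motions in Lemma~\ref{lem:bmslo}.
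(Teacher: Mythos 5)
Your overall plan is the same as the paper's: set up a triangular homotopy $h(x,y)$ on $\{y\leq x\leq t\}$ by pulling back $\omega_{n_1}\ag x$ through the inverse branch of $f_y^{n_1-n_0}$, and show $h$ never meets $\ov{PC}(f_0)$. But there is a genuine geometric gap in the way you control the image of $h$.

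You try to contain everything in $D_\rep[f_0](2r_2)$, a disk tangent to $0$ (in the coordinate $u=-1/c_f z$, a half plane $\Re u<-R$). That set is \emph{not} stable under the relative motions that appear here. Concretely: the $W_0$-hyperbolic metric satisfies $\rho_{W_0}(z)\geq 1/K_4|z|$ (Lemma~\ref{lem:cmpmet}), so a $W_0$-ball of radius $d_1'$ around a point with $u$-coordinate $u_0$ has Euclidean $u$-radius of order $d_1'|u_0|$. If $u_0$ lies near the boundary of the half plane with $|\Im u_0|$ large (which is allowed — the hypothesis only puts $\omega_{n_0}\ag 0,\ldots,\omega_{n_1}\ag 0$ in $D_\rep(r_2)$, not deep inside it), then a displacement of size $\cal O(d_1'|u_0|)$ in the $\Re u$ direction is of size $\cal O(d_1'|\Im u_0|)$, which exceeds any fixed margin $R_2-R_2/2$ as soon as $|\Im u_0|$ is large enough. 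Passing from $D_\rep(r_2)$ to $D_\rep(2r_2)$ only shifts the bounding vertical line; it does \emph{not} give angular room. The same problem affects your claim that $\omega\ag s\in D_\rep[f_s](2r_2)$ for $s\leq T_8$: the motion in time is also of order $s|u_0|$, and this can push a boundary point of the half plane across any translated vertical line. So the invariance and the containment of $h$ in $W_0[f_0]$ both fail with your choice of domain.

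The paper's proof fixes exactly this by replacing the half-plane petals with the sectors $-D_\theta(r_0)[g]$, $\theta>\pi/2$, introduced via Proposition~\ref{prop:bigger}. In $u$-coordinates $-W_\theta(R_0)$ opens at angle $\theta>\pi/2$ around the negative real axis, so near $|u|=\infty$ it has angular half-width $\theta>\pi/2$; a relative displacement of order $d_1'$ in angle stays inside once $d_1'<\theta-\pi/2$. The proof then needs a nested chain $\theta''<\theta'<\theta$: $\gamma_0$ (and hence its pullback $\gamma_1$) is trapped in $-D_{\theta''}(r_0)[f_0]$; for $T_8$ small enough one has $-D_{\theta''}(r_0)[f_0]\subset -D_{\theta'}(r_0)[f_y]\subset -D_\theta(r_0)[f_0]$ for all $y\leq T_8$, which absorbs the small $t$-dependent rotation and rescaling of $c_{f_y}$; and finally $-D_\theta(r_0)[f_0]$ is disjoint from $PC[f_0]$. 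You do not invoke Proposition~\ref{prop:bigger} at all, and nothing in Propositions~\ref{prop:cf}, \ref{prop:cfr}, \ref{prop:butterfly} or Lemmas~\ref{lem:survlocorb}, \ref{lem:bmslo} supplies the wider-sector invariance that is needed. Once you replace $D_\rep(2r_2)$ by the chain $-D_{\theta''}\subset -D_{\theta'}\subset -D_\theta$ and cite Proposition~\ref{prop:bigger} for their invariance under the relevant inverse branches, the rest of your argument (homotopy $h$, diagonal identification $h(x,x)=\omega_{n_0}\ag x$ via continuity and Proposition~\ref{prop:fibers}, uniformity in $n_1-n_0$ from working in the repelling cylinder) goes through and matches the paper's proof.
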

\begin{proof}
Consider the domains $\Omega_\theta(R)$ and $D_\theta(r_0)[g]$ introduced in \Cref{lem:2ndstep}, with $-1/c_g D_\theta(r_0)=\Omega_\theta(1/|c_g|r_0)$.
We will take some $T_8\leq 1/2$. 
The class of maps $\cal F_{[0,1/2]}$ is compact and the domain of its members all contain $B(0,1/8)$, so we can apply \Cref{prop:bigger,prop:mvtphirep} to the restriction to $\D$ of the conjugates of maps in this class by $z\mapsto 8z$.
Choose $\theta=3\pi/4$, $\theta'=(\theta+\frac\pi2)/2$, $\theta''=(\theta'+\frac\pi2)/2$, so that $\pi/2<\theta''<\theta'<\theta$.
It was proved in \Cref{prop:bigger} that for $r_0>0$ small enough, the (invertible) repelling Fatou coordinates of $g\in \cal F_{[0,1/2]}$ extend to $-D_\theta(r_0)[g]$ 
for some $r_0>0$, and that $-D_\theta(r_0)[g]$, $-D_{\theta'}(r_0)[g]$ and $-D_{\theta''}(r_0)[g]$ are all invariant by a branch of $g^{-1}$. 
By compactness, for $r_0$ small enough, there is only one such branch.
Also, provided $r_0$ has been chosen small enough, it can be checked using \Cref{prop:cf} and \Cref{lem:traptime} that $PC[g]$ does not intersect $-D_\theta(r_0)[g]$.

Now choose any $r_1<r_0$, for instance $r_1=r_0/2$ and impose $r_2\leq r_1$.
Let
\[\gamma_0:[0,t]\to\C,\ s\mapsto \omega_{n_1}\ag s\]
By assumption its initial point is contained in $D_{\pi/2}[f_0](r_2)$.
By \Cref{lem:cmpmet}, for $d_1'$ small enough, we are ensured that $\gamma_0$ is contained in $-D_{\theta''}(r_0)[f_0]$ (this is more easily seen in coordinates $u=-1/c_{f_0} z$: the path stays in a ball of center its initial point $u_0$ and radius $\cal O(d'_1|u_0|)$).
Since $-D_{\theta''}(r_0)[f_0]$ is stable by a branch of $f_0^{-1}$, the path $\gamma_1$ is well defined and contained in $-D_{\theta''}(r_0)[f_0]$.
Now, as in the proof of \Cref{prop:hl}, we set up a triangular homotopy $h(x,y)$ for $y\leq x \leq t$ with $f_y^{n_1-n_0}(h(x,y))=f_0^{n_1-n_0}(\gamma_1(x))=\gamma_0(x)$ and $h(x,0)=\gamma_1(x)$. 
Taking $T_8$ small enough, we get $-D_{\theta''}(r_0)[f_0]\subset-D_{\theta'}(r_0)[f_y]$ for all $y\leq T_8$ and all $f_0\in\cal F$. In particular $\gamma_0\subset -D_{\theta'}(r_0)[f_y]$.
Since the latter is invariant by a branch of $f_y^{-1}$, unique and continuously depending on $y$, it follows that $h$ is well-defined, continuous, and has support in $-D_{\theta'}(r_0)[f_y]$. For $T_8$ small enough, $-D_{\theta'}(r_0)[f_y]\subset -D_{\theta}(r_0)[f_0]$, hence $h$ takes values in $W_0[f_0]$.
\end{proof}

This proof yields more:

\begin{complem} There is some $r_4>0$ and $\theta'>\pi/2$ such that under the conditions of the proposition above, and $\forall s\in[0,t]$,  $-D_{\theta'}[f_s](r_4)$ is a repelling petal for $f_s$ and for all $k$ with $n_0\leq k\leq n_1$, $\omega_{k}\ag s\in -D_{\theta'}[f_s](r_4)$.
\end{complem}
\begin{proof}
Change the value of $n_0$ to that of $k$ in the previous proposition. Its
proof provided some quantities called $r_0$ and $\theta'$, and proved the claim of the complement for $r_4=r_0$ and the same value of $\theta'$.
\end{proof}

Note that by infinitesimal contraction of $f_0^{-1}$ for the hyperbolic metric of $W_0$,
\[ \hl{W_0}{\gamma_1}<\hl{W_0}{s\in[0,t]\mapsto \omega_{n_0}\ag s}
.\]
Since $\gamma_2$ stays far from the boundary of $W_0$, the control we get on its homotopic length is better than in \Cref{lem:t5p}:

\begin{lemma}\label{lem:rpb}
We can add the following conclusion to the previous lemma
\[ \hl{W_0}{\gamma_2}\leq K_5 t
.\]
\end{lemma}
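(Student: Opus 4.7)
The plan is to bound $\hl{W_0}{\gamma_2}$ by first estimating $|\gamma_2'(s)|$ via repelling Fatou coordinates, and then estimating the hyperbolic density $\rho_{W_0}$ along $\gamma_2$. Recall from the complement following Proposition~\ref{prop:homotopicrep} that $\gamma_2(s)$ stays in $-D_{\theta'}[f_s](r_4)$ for all $s\in[0,t]$, which is a repelling petal, contained in $B(0,r_4)$ and disjoint from $PC[f_s]$ (by the choice of $r_0$ in the proof of Proposition~\ref{prop:homotopicrep}, using Proposition~\ref{prop:cf} and Lemma~\ref{lem:traptime}).

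First I would estimate $|\gamma_2'(s)|$. Writing $\Phi_{\rep,s}$ for the repelling Fatou coordinate of $f_s$ (univalent on the slightly bigger petal $-D_{\theta}[f_s](r_0)$ by Proposition~\ref{prop:bigger}), the semi-conjugacy $\Phi_{\rep,s}\circ f_s=T_1\circ\Phi_{\rep,s}$ iterated $n_1-n_0$ times gives $\Phi_{\rep,s}(\gamma_2(s))+(n_1-n_0)=\Phi_{\rep,s}(c)$ with $c=\omega_{n_1}\ag t$ constant. Differentiating in $s$ yields
\[
\gamma_2'(s)=\frac{(\partial_s\Phi_{\rep,s})(c)-(\partial_s\Phi_{\rep,s})(\gamma_2(s))}{\Phi_{\rep,s}'(\gamma_2(s))}.
\]
By Proposition~\ref{prop:mvtphirep} (applied in the repelling version, with the compact class $\cal F_{[0,T_8]}$ and the fact that $s\mapsto f_s$ is Lipschitz in sup norm on a fixed ball), we have $|(\partial_s\Phi_{\rep,s})(z)|\leq C/|z|$ wherever this is defined away from the boundary; in particular at $z=\gamma_2(s)$, and $|(\partial_s\Phi_{\rep,s})(c)|\leq C'$ since $c$ stays in a compact subset of the domain. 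From Proposition~\ref{prop:cfr} (more precisely its $u$-coordinate estimate), $|\Phi_{\rep,s}'(\gamma_2(s))|\geq C''/|\gamma_2(s)|^2$. Combining these bounds:
\[
|\gamma_2'(s)|\leq \bigl(C'+C/|\gamma_2(s)|\bigr)\cdot \frac{|\gamma_2(s)|^2}{C''}=O(|\gamma_2(s)|),
\]
where the last inequality uses that $|\gamma_2(s)|\leq r_4$ is bounded.

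Next I would bound $\rho_{W_0}(\gamma_2(s))$. The difficulty is that $\gamma_2(s)$ can be arbitrarily close to $0\in\ov{PC}(f_0)$, so Lemma~\ref{lem:back} does not apply directly. However, the post-critical points $PC(f_0)\setminus\{0\}$ accumulate at $0$ only along the attracting direction (as shown in the proof of Lemma~\ref{lem:cmpmet}: $|f_0^n(v)|\asymp 1/n$ with argument converging to the attracting axis), whereas $\gamma_2(s)$ lies in the opposite (repelling) sector. Consequently, for some fixed $\alpha>0$ independent of $f_0$ and $s$, the whole disk $B(\gamma_2(s),\alpha|\gamma_2(s)|)$ is contained in $W_0$. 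By the Schwarz--Pick inequality applied to the inclusion of this disk, $\rho_{W_0}(\gamma_2(s))\leq 1/(\alpha|\gamma_2(s)|)$.

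Finally I would put the two estimates together:
\[
\hl{W_0}{\gamma_2}\leq \len{W_0}{\gamma_2}=\int_0^t\rho_{W_0}(\gamma_2(s))\,|\gamma_2'(s)|\,ds \leq \int_0^t \frac{1}{\alpha|\gamma_2(s)|}\cdot K''|\gamma_2(s)|\,ds = K_5 t,
\]
which yields the claimed bound. The main obstacle is the hyperbolic density estimate in the third paragraph: the petal abuts the puncture at $0$, so the naive bound via Lemma~\ref{lem:back} fails, and one really needs to exploit the geometric fact that $PC(f_0)\setminus\{0\}$ stays in the opposite half-plane from the repelling petal—ensuring a Euclidean ball of radius comparable to $|\gamma_2(s)|$ fits inside $W_0$.
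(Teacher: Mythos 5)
Your overall strategy is the same as the paper's: exploit the sector-inclusion fact (that a Euclidean ball $B(z,\alpha|z|)$ around each point $z$ of the repelling petal sits inside $W_0$, since $PC(f_0)\setminus\{0\}$ clusters near $0$ only along the attracting axis), and control the motion of $\gamma_2$ via the variation of the repelling Fatou coordinate from Proposition~\ref{prop:mvtphirep}. The paper bounds the endpoint displacement $|\gamma_2(y)-\gamma_2(0)|$ and then takes the straight segment, whereas you differentiate the conjugacy relation and integrate the hyperbolic density; this is a cosmetic difference.

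However there is one genuine gap. You assert $|(\partial_s\Phi_{\rep,s})(c)|\leq C'$ with $C'$ a uniform constant ``since $c$ stays in a compact subset of the domain.'' This is false: $c=\omega_{n_1}\ag t$ lies in the repelling petal $-D_{\theta'}[f_s](r_4)$, whose closure contains $0$, so $c$ is \emph{not} uniformly bounded away from the origin over all $f_0\in\cal F$, all orbits and all $n_0<n_1$. The correct bound from Proposition~\ref{prop:mvtphirep} is $|(\partial_s\Phi_{\rep,s})(c)|\leq C/|c|$, which blows up as $c\to 0$. Your final estimate $|\gamma_2'(s)|=O(|\gamma_2(s)|)$ then reads $O(|\gamma_2(s)|^2/|c|)$, and to recover $O(|\gamma_2(s)|)$ you must additionally prove $|\gamma_2(s)|\lesssim |c|$ uniformly, even when $n_1-n_0$ is arbitrarily large. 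This is precisely what the paper's careful comparison $|Z_t-(n_1-n_0)|/|Z_t|\geq\text{const}>0$ provides (equivalently, in the coordinate $u=-1/c_f z$: since $\Re u_c<0$, subtracting a positive integer only increases $|u|$, hence $|u_{\gamma_2}|\geq|u_c|$, i.e.\ $|\gamma_2(s)|\leq C|c|$). Once you insert this inequality your proof goes through; without it, the step ``$|(\partial_s\Phi_{\rep,s})(c)|\leq C'$'' is unjustified.
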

\begin{proof}
In this proof we will say that a constant is independent if it is independent of $f$, of $t$, of the chosen orbit $\omega_n$ and of the length $n_1-n_0$.
We will use $=\cal O(\text{expression})$ to express a quantity that is at most the expression times a constant that is independent.
We will write that two quantities are comparable when their quotient is bounded away from $0$ and $\infty$ independently of $f$, of $t$, of the chosen orbit $\omega_n$ and of the length $n_1-n_0$.
Let us continue with the notations of the previous proof. In particular, $\theta = 3\pi/4$. Note that $\gamma_2(y)=h(t,y)$ and $\gamma_2(t) \in -D_\theta(r_0)[f_0]$.
Since there are sectors $-D_{\theta_3}(r_3)[f_0]$ contained in $W_0[f_0]$ for $\theta_3 =(\theta+\pi)/2>\theta$ with $r_3$ independent of $f_0$, by imposing $r_0<r_3$, we have $\forall z\in -D_\theta(r_0)[f_0]$, $B(z,|z|/K)\subset W_0[f_0]$ for some $K>1$.
Hence it is enough to prove that for $y\leq t$,
\[|\gamma_2(y)-\gamma_2(0)|= \cal O(K' t |\gamma_2(0)|),
\]
in which case, for $t<T_8$, $T_8$ is small enough, the euclidean ball $B(\gamma_2(0), K' t |\gamma_2(0)|)$ is contained in $W_0[f_0]$ and contains $\gamma_2$ thus $\gamma_2$ is homotopic in $W_0[f_0]$ to the straight segment from $\gamma_2(0)$ to $\gamma_2(t)$
and the latter has a $W_0[f_0]$-hyperbolic length at most its $B(\gamma_2(0),|\gamma_2(0)|/K)$-hyperbolic length thus at most $K_5t$ for $T_8$ small enough.
As in the proof of \Cref{prop:homotopicrep}, let
\[\gamma_0(s) = \omega_{n_1}\ag s\]
and let $\Phi_\rep[f_y]$ be a repelling Fatou coordinate on $-D_\theta(r_0)[f_y]$, normalized by the expansion.. We have seen in this former proof that $\gamma_1$ and $\gamma_2$ and $\gamma_0$ are contained in $-D_{\theta'}(r_0)[f_y]$ for all $y\leq T_8$, for some constant $\theta'=5\pi/8<\theta=3\pi/4$.
Then
\[ \Phi_\rep[f_y](\gamma_2(y))=\Phi_\rep[f_y](\gamma_0(t))-(n_1-n_0)
. \]
Let us denote $\on{Rep}_yz=\Phi_\rep[f_y](z)$. By taking $r_0$ small enough we can ensure that for all $z\in -D_{\theta}(r_0)[f_y]$, the quantity $\on{Rep}_yz$ is comparable to $1/z$ and the quantity $\on{Rep}'_y(z)$ is comparable to $1/z^2$ (use the bound on $\wt\Phi$ given in \Cref{prop:cf} that extends to $\Omega_\theta$ according to \Cref{prop:bigger}).
For $y\leq 1/2$, we have $\sup_{|z|<1/16}|f_0(z)-f_y(z)|\leq K y$ for some $K$ independent of $f$.
We have $\big|\on{Rep}_y\gamma_2(y)-\on{Rep}_0\gamma_2(0)\big|
= \big|\on{Rep}_y\gamma_0(t)-\on{Rep}_0\gamma_0(t)\big|$.
Provided $r_2$ has been chosen small enough, \Cref{prop:mvtphirep} gives
$\big|\on{Rep}_y\gamma_0(t)-\on{Rep}_0\gamma_0(t)\big| = \cal O(y/|\gamma_0(t)|)$, as $\Phi_\rep$ is normalized by the expansion.
Let $u_x=-1/(c[f_0]\gamma_0(x))$ and $Z_x=\on{Rep}_0\gamma_0(x)$.
The size of the quantities $Z_x$, $Z_0$, $u_x$, $u_0$, $1/\gamma_0(x)$ and $1/\gamma_0(0)$ are all comparable.
Similarly, $|1/\gamma_2(0)|$ is comparable to $|\on{Rep}_0\gamma_2(0)|=|Z_t-(n_1-n_0)|$.
Note that the positive integer $n_1-n_0$ can be arbitrarily large. However since $Z_t$ is contained in $-\Omega_{3\pi/4}(10)$ (provided $r_2$ is small enough), there is an independent lower bound on $|Z_t-(n_1-n_0)|/|Z_t|$ thus
$y/|\gamma_0(t)| = \cal O(y|Z_0|) = \cal O(y|Z_t|)
\leq \cal O(y |Z_t-(n_1-n_0)|) = \cal O( y |\on{Rep}_0\gamma_2(0)|)
= \cal O( y/|\gamma_2(0)|)$: for some $M>0$
\[ \big|\on{Rep}_y\gamma_2(y)-\on{Rep}_0\gamma_2(0)\big| \leq M y / |\gamma_2(0)|
.\]
Then by \Cref{prop:mvtphirep} again we get $\big|\on{Rep}_y\gamma_2(0)-\on{Rep}_0\gamma_2(0)\big| \leq M'y/|\gamma_2(0)|$
thus $\big|\on{Rep}_y\gamma_2(y)-\on{Rep}_y\gamma_2(0)\big|
\leq \big|\on{Rep}_y\gamma_2(y)-\on{Rep}_0\gamma_2(0)\big|
+ \big|\on{Rep}_0\gamma_2(0)-\on{Rep}_y\gamma_2(0)\big| \leq (M+M') y/|\gamma_2(0)|$.
The straight segment from $\on{Rep}_y\gamma_2(y)$ to $\on{Rep}_y\gamma_2(0)$ is contained in the subset $-\Omega_{\theta'}(R_2)$ of the domain of $\on{Rep}_y^{-1}$ and $|(\on{Rep}_y^{-1})'(Z)|$ is comparable to $1/|Z|^{2}$ for $Z\in -\Omega_{\theta'}(R_2)$.
Using moreover that $\on{Rep}_y(Z)$ is comparable to $1/Z$, we get: provided $T_8$ was chosen small enough, for all $y\leq t$, $|\gamma_2(y)-\gamma_2(0)|\leq K' y |\gamma_2(0)|$.
\end{proof}

\begin{lemma}\label{lem:replimsec}
If in \Cref{prop:homotopicrep} we take $n_0=-\infty$, i.e.\ start from an orbit indexed by $\Z$ such that $\omega_n\ag 0\in D_\rep[f_0](r_2)$ for all $n\leq n_1$, and leave the other three assumptions unchanged, then for all $\alpha>0$ and all $r>0$, $\exists n'\in \Z$ such that $\forall n\leq n'$, $\forall s\in[0,t]$, $\omega_{n_0}\ag s$ belongs to the sector of apex $0$, radius $r$, and angle $\alpha$ around the repelling axis of $f_s$. 
\end{lemma}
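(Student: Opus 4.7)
The statement concerns the behaviour of $\omega_n\ag s$ as $n\to-\infty$ (the notation $\omega_{n_0}\ag s$ should be read as $\omega_n\ag s$), so the plan is to transport the problem into repelling Fatou coordinates where going backwards under $f_s$ becomes the translation $T_{-1}$, then translate back.

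First, I would apply Proposition~\ref{prop:homotopicrep} repeatedly with $n_0$ replaced by arbitrary $m\leq n_1$ (the hypotheses for the original $n_0=-\infty$ imply them for every finite $m\leq n_1$), and invoke the Complement that follows it. This yields a sector $-D_{\theta'}[f_s](r_4)$, with $\theta'>\pi/2$ and $r_4$ independent of $f_0$ and of $s\in[0,t]$, such that for every $m\leq n_1$ and every $s\in[0,t]$,
\[
\omega_m\ag s \in -D_{\theta'}[f_s](r_4).
\]
In particular each $\omega_m\ag s$ lies in the repelling petal where the normalized inverse Fatou coordinate $\Psi_\rep[f_s]$ and its branch of inverse $\Phi_\rep[f_s]$ are defined and injective, and the orbit relation gives
\[
\Phi_\rep[f_s](\omega_m\ag s) \;=\; \Phi_\rep[f_s](\omega_{n_1}\ag s) \;+\; (m-n_1).
\]

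Next I would obtain a uniform bound for $\Phi_\rep[f_s](\omega_{n_1}\ag s)$ as $s$ and $f_0$ vary. Since $\omega_{n_1}\ag s$ stays in the compactly located piece $-D_{\theta''}(r_0)[f_s]$ used in the proof of Proposition~\ref{prop:homotopicrep}, and the whole family $\cal F_{[0,t]}$ is compact, the estimates on $\Phi_\rep$ from Proposition~\ref{prop:cfr} give a constant $M$ (depending only on the class $\cal F$) such that $|\Phi_\rep[f_s](\omega_{n_1}\ag s)|\leq M$ for all $s\in[0,t]$ and all $f_0\in\cal F$. Consequently, as $n\to-\infty$,
\[
\Re\bigl(\Phi_\rep[f_s](\omega_n\ag s)\bigr) \leq M + (n-n_1) \;\xrightarrow[n\to-\infty]{}\; -\infty,
\]
uniformly in $s\in[0,t]$ and in $f_0\in\cal F$.

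The last step is to transfer this ``going to $-\infty$ on the left'' in the Fatou chart back to ``going to $0$ tangentially to the repelling axis'' in the $z$-chart. Working in the coordinate $u=-1/c[f_s]z$ and using the bound from Proposition~\ref{prop:cf2} adapted to $\Phi_\rep$, we have, on the image of the repelling petal,
\[
\bigl| u_n\ag s - \Phi_\rep[f_s](\omega_n\ag s) \bigr| \;\leq\; C_1 + C_2\log\bigl(1+|\Phi_\rep[f_s](\omega_n\ag s)|\bigr),
\]
where $u_n\ag s=-1/c[f_s]\omega_n\ag s$ and the constants $C_1,C_2$ are independent of $f_0$ and $s$. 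Combined with the previous step, $u_n\ag s$ tends uniformly to $-\infty$ along a thin horizontal neighbourhood of the negative real axis; hence, given any $\alpha>0$ and $r>0$, for all $n$ less than some $n'$ the point $u_n\ag s$ sits in the sector $\{u : \arg(-u)<\alpha/2,\ |u|>R\}$ for a large $R$, and this translates, via $z=-1/c[f_s]u$ and continuity in $s$ of the repelling axis of $f_s$, into the statement that $\omega_n\ag s$ lies in the required sector around the repelling axis of $f_s$. The main obstacle is keeping all these bounds uniform in $s\in[0,t]$ simultaneously with $n\to-\infty$, but this is exactly what the compactness of $\cal F_{[0,t]}$ and the uniform estimates of Section~\ref{subsec:toolkit} deliver.
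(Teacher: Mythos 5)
Your argument is essentially the same as the paper's. The paper's proof also passes to the coordinate $u=-1/c_{f_s}z$, uses the complement of Proposition~\ref{prop:homotopicrep} to put the motion $s\mapsto\omega_{n_1}\ag s$ into the region $-W_{\theta'}$ on which the conjugated dynamics differs from the translation $T_1$ by at most $1/4$, and then concludes from compactness of that path's image that backward iterates are pushed uniformly far to the left in the $u$-plane, hence into any sector of small opening around the repelling axis. You insert the Fatou coordinate $\Phi_\rep$ as an intermediary, but since $\Phi_\rep$ differs from $u$ only by the bounded correction $-\gamma_g\log_p u$ this is the same mechanism, just phrased one layer of coordinates away.

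One minor over-claim worth flagging: you assert a bound $|\Phi_\rep[f_s](\omega_{n_1}\ag s)|\leq M$ with $M$ ``depending only on the class $\cal F$''. That uniformity is false --- the point $\omega_{n_1}\ag 0$ can sit arbitrarily close to the parabolic point, driving $\Phi_\rep$ to infinity --- but it is also not needed. The lemma fixes $f_0$ and the orbit, and $n'$ is allowed to depend on them. What is actually needed, and what compactness of the image of $s\mapsto\omega_{n_1}\ag s$ in $[0,t]$ genuinely gives, is finiteness of $\sup_{s\in[0,t]}|\Phi_\rep[f_s](\omega_{n_1}\ag s)|$ for the particular orbit under consideration. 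With that small correction your proof is sound.
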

\begin{proof}
In the course of the proof of \Cref{prop:homotopicrep} we proved that $\gamma_0: s\in[0,t]\mapsto\omega_n\ag s$ has a support contained in $-D_{\theta'}[f_y](r_0)$ for all $y\leq t$. In particular the function $\chi: s\mapsto -1/c_{f_s}\gamma_0(s)$ takes values in $-\Omega_{\theta'}(1/|c_{f_s}r_0|)$. Recall that on this set, the dynamics differs from the translation by $1$ by at most $1/4$. The path $\chi$ has compact image. The lemma follows.
\end{proof}

\subsubsection{Bounding the motion of orbits (putting it all together).}\label{subsub:bddm}

We now have the tools to prove \Cref{prop:surv}.

Recall that we are considering an orbit $\omega_n$ indexed by $\Z$ of a map $f_0\in\cal F$, eventually captured by an attracting petal in the future, by a repelling petal in the past, and defined a movement $\omega_n\ag t$ of this sequence, for which it remains an orbit of $f_t$ and so that its attracting Fatou coordinate, normalized by immobilizing the image of the critical value, remains constant.
The starting hypothesis is that $\omega_n$ is entirely contained in $\sub{\dom(f_0)}{(1-\epsilon')}=\phi_0(B(0,1-\epsilon'))$. In particular condition~\eqref{item:hl:4} of \Cref{prop:hl} and its analog in \Cref{prop:hl2} are satisfied for all $n\in \Z$ by the assumption.

We will now compute a lower bound for the survival time $\tau(\omega_n)$, that depends only on $\epsilon'$.

This will be done by decreasing induction on $n$, using \Cref{prop:hl,prop:hl2,prop:homotopicrep} and their complements \Cref{lem:hl:1,lem:hl2:1,lem:t5p,lem:rpb}. The induction hypothesis will be that
the motion of $t\mapsto \omega_{n}\ag t$, measured with the hyperbolic metric of the set $W_0[f_0]$, more precisely what we called the extent at the beginning of \Cref{subsub:homotopic}, is smaller than the constants $d_1$, $d_1'$ and $d_1''$ appearing in the propositions.
The complements 
then give a upper bound on the motion of $t\mapsto \omega_{n-1}\ag t$.
We will show that for $t$ small enough, this bound is also less than $d_1$, $d_1'$ and $d_1''$, so that the induction can go on, and we will give a lower bound on how small $t$ needs to be.

\medskip

Recall $r_0$ is a small enough constant provided by \Cref{prop:cf} to~\ref{prop:butterfly}, and~\ref{prop:bigger}.

By \Cref{lem:survlocorb}, we know the survival of local orbits. More precisely
let us choose $T_3=T_1'/2$.\nomenclature[T3]{$T_3$}{some parameter in \Cref{lem:survlocorb}, later chosen to be $=T_1'/2$\nomrefpage}
\Cref{lem:survlocorb} yields a value $r_1$. 
If the whole orbit $(\omega_n\ag 0)_{n\in\Z}$ is contained in $B(0,r_1)$ then we get the lower bound $\tau(\omega_n)\geq T_3$. In this simple case, the lower bound is independent of $\epsilon'$, so it is even better.
In the sequel, we assume that we are not in this case, i.e.\ that the orbit $(\omega_n\ag 0)_{n\in\Z}$ leaves $B(0,r_1)$ at least once.

Recall that maps $ f\in \cal F$ all have the same critical value $v$.
We have already remarked that by compactness of $\cal F$ and \Cref{prop:conti} (see also \Cref{lem:traptime:2}), there exists $\eta_0$ such that $\forall f\in\cal F$, $B(v,\eta_0)$ is contained in the basin of the parabolic point. Recall $D_\rep(r)=D_\rep[f](r)$ denotes the disk of diameter $[0,re^{i\theta}]$ where $e^{i\theta}$ points in the direction of the repelling axis of $f$.
Let $f^\N(B(v,r))$ denote the union of $B(v,r)$ and of all its images by iteration of $f$.

\begin{lemma}\label{lem:disj}
There exists $r_3>0$ and $\eta'_0<\eta_0$ such that $\forall r\leq r_3$, $\forall f\in \cal F$, the set $f^\N(B(v,\eta'_0))$ is disjoint from $f(B(0,r))\setminus B(0,r)$ and  from $f(D_\rep(r))$.
\end{lemma}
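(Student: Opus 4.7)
The strategy is to split the orbit $f^{\mathbb N}(B(v,\eta'_0))$ into a ``short'' initial segment (which stays uniformly away from $0$) and a ``long'' tail (which is trapped in a small attracting petal), and to show that both the sets $f(B(0,r))\setminus B(0,r)$ and $f(D_\rep(r))$ shrink to the origin in the repelling direction as $r\to 0$, so that they avoid both pieces of the orbit. All bounds will be uniform in $f\in\cal F$ thanks to the sequential compactness of $\cal F$ (Section~\ref{subsub:transff}) and Lemma~\ref{lem:PSdist}.

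First apply Lemma~\ref{lem:traptime:2} to obtain an integer $n_0$, a radius $r'\le r_0$ and an $\eta_1\le\eta_0$ such that for every $f\in\cal F$ one has $f^{n_0}(B(v,\eta_1))\subset D_\at[f](r')$; by the forward invariance of $D_\at[f](r')$ under $f$, this handles all iterates $k\ge n_0$. For the iterates $0\le k<n_0$, Lemma~\ref{lem:PSdist}(\ref{item:psd:4}) together with continuity of $f\mapsto f^k(v)$ gives a uniform $\delta_0>0$ with $|f^k(v)|\ge\delta_0$ for all $f\in\cal F$ and $k\le n_0$. By continuity and compactness, choosing $\eta'_0<\eta_1$ small enough then ensures $\bigcup_{k\le n_0}f^k(B(v,\eta'_0))\subset\{|z|\ge\delta_0/2\}$ uniformly in $f$.

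Next, pass to the coordinate $u=-1/c_f z$ of Proposition~\ref{prop:cf}, in which $f$ acts as $u\mapsto u+1+O(1/u)$, $D_\at[f](r')$ is the half-plane $H^+=\{\Re u>1/|c_f|r'\}$ and $D_\rep[f](r)$ the half-plane $H^-_r=\{\Re u<-1/|c_f|r\}$. The set $B(0,r)$ corresponds to $\{|u|>R\}$ with $R=1/|c_f|r$, so $f(B(0,r))\setminus B(0,r)$ is a thin crescent inside $\{|u|\le R\}$ clustered near $u\approx -R$. Consequently, in the $z$-variable both $f(B(0,r))\setminus B(0,r)\subset B(0,r+Cr^2)$ and $f(D_\rep(r))\subset\{|z|<r+Cr^2\}$ for a constant $C$ independent of $f\in\cal F$ (using that $|c_f|$ is bounded away from $0$ and $\infty$). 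Thus, for $r_3$ small enough, $r\le r_3$ forces these two sets into $\{|z|<\delta_0/2\}$, which by Step 1 is disjoint from the short segment $\bigcup_{k\le n_0}f^k(B(v,\eta'_0))$.

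It remains to check disjointness with the tail, i.e.\ with $D_\at[f](r')$. Working again in the $u$-coordinate: the image of $f(B(0,r))\setminus B(0,r)$ lies in $\{\Re u\lesssim -R+1\}$, and $f(D_\rep[f](r))\subset\{\Re u<-R+2\}$, both for $R=1/|c_f|r$ large; since $r\le r_3\le r'$, $R\ge 1/|c_f|r'$, and shrinking $r_3$ further we can make $-R+2<1/|c_f|r'$, so both sets are disjoint from $H^+=D_\at[f](r')$ uniformly in $f$. Putting this together with the previous paragraph, $r_3$ and $\eta'_0$ obtained in this way satisfy the conclusion of the lemma. No single step is difficult; the only care required is to verify at each stage that the constants extracted are uniform over $\cal F$, which follows from compactness and continuous dependence (Proposition~\ref{prop:conti}).
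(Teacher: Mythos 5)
Your overall strategy mirrors the paper's: trap the tail of $f^\N(B(v,\eta'_0))$ in an attracting petal $D_\at[f](r')$ via Lemma~\ref{lem:traptime:2}, bound the finitely many initial iterates away from $0$ by compactness, and localize the two target sets near the origin in the coordinate $u=-1/(c_f z)$. However, your description of $f(B(0,r))\setminus B(0,r)$ in the $u$-coordinate is a genuine geometric error. This set is \emph{not} clustered near $u\approx -R$ and does \emph{not} lie in $\{\Re u< -R+1\}$. Already for the unperturbed translation $u\mapsto u+1$ the relevant lune $\{|u-1|>R\}\cap\{|u|\le R\}$ hugs the whole left half of the circle $|u|=R$: its two bounding circles meet where $\Re u=1/2$, so the lune reaches from $\Re u\approx -R$ all the way up to $\Re u\approx 1/2$. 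Your inequality ``$-R+2<1/|c_f|r'$'' is therefore trivially satisfied for all $R>0$ and establishes nothing about disjointness of \emph{this} set from $H^+=\{\Re u>1/(|c_f|r')\}$.

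The fix requires the decay estimate $|u'-(u+1)|\le M_1/|u|$ from the proof of Proposition~\ref{prop:cf}; the coarse uniform bound $1/4$ alone only gives $\Re u$ bounded by roughly $R/4$, which is useless. Writing $u'$ for the conjugated dynamics and assuming $|u|>R$, $|u'|\le R$, expand $|u'|^2<|u|^2$: the cross-term $\Re(\bar u\,\epsilon(u))$ is bounded in modulus by $|u|\cdot M_1/|u|=M_1$, which yields $\Re u'\le C'$ for a constant $C'$ independent of $r$. The lune is then disjoint from $H^+$ exactly when $1/(|c_f|r')>C'$ --- which forces $r'$, not $r$, to be small. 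So you must choose $r'$ \emph{first}, small enough to clear the constant $C'$, and only then invoke Lemma~\ref{lem:traptime:2} to obtain $n_0,\eta_1$; this is precisely the quantifier order ``for some $r'$ small enough, then for all $r$ small enough'' in the paper's proof. Your treatment of $f(D_\rep(r))$ --- which genuinely is contained in $\{\Re u<-R+5/4\}$ --- and of the finite initial orbit segment are both correct.
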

\begin{proof} 
Let $r_0$ be provided by \Cref{prop:cf}: for all $f\in\cal F$, and all $r\leq r_0$, $D_\at(r)$ is stable by $f$ and contained in the parabolic basin.
Note that for some $r'$ small enough, then for all $r$ small enough, then for all $f\in \cal F$, $f(B(0,r))\setminus B(0,r)$ and $f(D_\rep(r))$ are disjoint from $D_\at(r')$, as easily follows from \Cref{lem:upt1} and the fact that in the change of variable $u=-1/c_f z$, the factor $c_f$ is bounded away from $0$ and from $\infty$.
By \Cref{lem:traptime:2} there is some $n_0$ and some $\eta_0'>0$ such that $\forall f\in \cal F$, $f^{n_0}(B(v,\eta_0'))\subset D_\at(r')$, and hence $\forall n\geq n_0$, $f^{n}(B(v,\eta_0'))\subset D_\at(r')$.
By compactness of $\cal F$ again, there is a uniform lower bound on the distance from $0$ to $f^{n}(B(v,\eta_0'))$ as $n$ varies between $0$ and $n_0-1$ and $f$ varies in $\cal F$.
So the lemma will hold for $r$ small enough.
\end{proof}

Let $T_8$, $d_1'$ and $r_2$ be provided by \Cref{prop:homotopicrep}. Let
\[ r_0'=\min(r_0,r_1,r_2,r_3) \]
and denote 
\[D_\rep = D_\rep[f] = D_\rep[f](r_0')
.\]

We introduced earlier the $\delta$-neighborhood $V_\delta[f]$ of $PC(f)$.
Let $\wt B(r)=\wt B(r)[f]$ be the set of points in $B(0,r)\setminus\{0\}$ whose forward orbit by $f$ is contained in $B(0,r)$.
Let\nomenclature[Veta]{$\wt V_\eta[f]$}{some domain used in the proofs\nomrefpage}
\[ \wt V_\eta=\wt V_\eta[f]= \wt B(\eta) \cup f^\N(B(v,\eta))
.\]
By construction, $f(\wt V_\eta)\subset \wt V_\eta$ (do not forget that there is no other preimage of the origin than itself\footnote{And even if there were, it would be sufficient to assume $\eta$ small enough.}).

\begin{lemma}\label{lem:etadelta}
The following holds, where $D=D_\rep[f](\eta)$:
\begin{enumerate}
\item\label{item:ed:2} $\forall \eta>0$, $\exists \delta>0$ s.t.\ $\forall f\in\cal F$, $V_\delta[f]   \subset \wt V_\eta[f] \cup (D \cap f^{-1}(D))$, 
\item\label{item:ed:3} $\exists \eta_2>0$, $\forall \eta\leq\eta_2$, $\exists \delta>0$ s.t.\ $\forall f\in\cal F$, $V_\delta[f] \cap f^{-1}(\wt V_\eta[f]) \subset \wt V_\eta[f]$, 
\end{enumerate}
\end{lemma}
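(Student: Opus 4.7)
The plan is to prove both statements by splitting the post-critical set $PC(f)$ into an \emph{initial segment} $\{f^k(v_f) : 0 \leq k < n_0\}$ and a \emph{tail} $\{f^k(v_f) : k \geq n_0\}$. Applying Lemma~\ref{lem:traptime} with some $r < \eta$ yields a uniform $n_0 = n_0(\eta) \in \N$ such that $f^{n_0}(v_f) \in D_\at[f](r) \subset B(0,\eta)$ for every $f \in \cal F$, so the tail sits in $D_\at[f](r)$ and tends to $0$ along the attracting axis, while the initial segment is a finite set lying, by Lemma~\ref{lem:PSdist} and compactness, in a fixed compact subset of $\wh\C \setminus \{0\}$. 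The orbit of $v_f$ is disjoint from the critical set of $f$, which equals $f^{-1}(v_f)$: otherwise the orbit would be eventually periodic, contradicting $f^n(v_f) \to 0 \neq v_f$. Hence each $f^k$ with $k < n_0$ is a local homeomorphism at $v_f$, and compactness of $\cal F$ together with Proposition~\ref{prop:conti} yields a uniform $\delta_1 > 0$ with $B(f^k(v_f), \delta_1) \subset f^k(B(v_f, \eta)) \subset f^{\N}(B(v_f,\eta)) \subset \wt V_\eta[f]$ for all $f \in \cal F$ and $k < n_0$. This handles the portion of $V_\delta$ near the initial segment in both points.

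For the tail in point~(\ref{item:ed:2}), I would work in the coordinate $u = -1/c_f z$ of Proposition~\ref{prop:cf}, in which $f$ differs from $u \mapsto u + 1$ by $O(1/|u|)$ at large $|u|$. For $\delta$ small enough and $k$ with $|f^k(v_f)| \leq \delta$, the ball $B(f^k(v_f), \delta)$ lies in $B(0, 2\delta) \subset B(0, \eta)$, corresponding to the region $|u| > 1/(2|c_f|\delta)$. I would split this into three parts: the attracting half-plane $\Re(u) > 1/|c_f|r$ yields $z \in D_\at[f](r) \subset \wt B(\eta) \subset \wt V_\eta$; the repelling half-plane $\Re(u) < -1/|c_f|\eta$ yields $z \in D$ and, since $u \mapsto u+1$ keeps $\Re(u)$ negative for $\eta$ small, also $f(z) \in D$, so $z \in D \cap f^{-1}(D)$; and in the intermediate vertical strip $|\Re(u)| \leq C$, largeness of $|u|$ forces $|\Im(u)|$ to be large, a property preserved under the translation-like dynamics throughout the bounded number of steps needed to reach the attracting petal, keeping the full orbit in $B(0,\eta)$ and placing $z \in \wt B(\eta)$.

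For the tail in point~(\ref{item:ed:3}), I would choose $\eta_2$ small enough that (a) $|v_f| > \eta_2$ for all $f \in \cal F$ (possible since $|v_f|$ is bounded below over the compact class $\cal F$) and (b) $f$ is injective on $B(0, \eta_2)$ (other preimages of points near $0$ lie at distance $\gtrsim 1/|c_f|$, bounded below by compactness). Fix $\eta \leq \eta_2$ and consider $z$ within $\delta$ of some $f^k(v_f)$ with $k \geq n_0$, so $z \in B(0,\eta)$ for $\delta$ small. If $f(z) \in \wt B(\eta)$, the full orbit of $z$ stays in $B(0,\eta)$ and $z \in \wt B(\eta) \subset \wt V_\eta$. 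If instead $f(z) \in f^j(B(v_f, \eta))$ for some $j$, then $f(z) \in B(0,\eta)$ combined with the fact that $f^j(B(v_f,\eta))$ is bounded away from $0$ uniformly for $j < n_0$ forces $j \geq n_0 \geq 1$; writing $f(z) = f(f^{j-1}(w))$ with $w \in B(v_f,\eta)$, and using that both $z$ and $f^{j-1}(w)$ are preimages of $f(z)$ lying in $B(0,\eta_2)$ (the latter by the same $u$-coordinate analysis applied at the appropriate iterate), injectivity of $f$ on $B(0,\eta_2)$ forces $z = f^{j-1}(w) \in f^\N(B(v_f,\eta)) \subset \wt V_\eta$.

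The main obstacle is uniformity in $k$ for the tail estimates, since $(f^k)'(v_f)$ decays polynomially in $k$ along the parabolic orbit and naive use of inverse branches of $f^k$ therefore degrades with $k$. This is bypassed in point~(\ref{item:ed:2}) via the $u$-coordinate analysis, which converts the dynamics to a translation with uniformly bounded error (thanks to Proposition~\ref{prop:cf} applied to the compact class $\cal F$), and in point~(\ref{item:ed:3}) by using injectivity of $f$ on a small fixed ball around the parabolic point, which lets the lifting proceed one step at a time rather than $k$ steps at once.
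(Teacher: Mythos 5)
Your proof of point~(\ref{item:ed:2}) follows essentially the same route as the paper: split $PC(f)$ into a finite initial segment, handled by compactness and continuity of the iterates near $v_f$, and a tail handled via the linearizing coordinate $u=-1/(c_f z)$. The paper compresses the tail step into the single observation that $\wt B(\eta)\cup(D\cap f^{-1}(D))$ is a neighbourhood of $0$ of uniform inner radius, but this is the same three-region dissection of the $u$-plane you write out explicitly.

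For point~(\ref{item:ed:3}) the paper argues by contradiction with sequences ($\delta_n\to 0$, $f_n\to f_0$, $z_n\to z_0$), while you attempt a direct argument. That route can be made to work, but your version has a genuine gap at the injectivity step. You set $f(z)=f^j(w)$, $w\in B(v_f,\eta)$, derive $j\geq n_0$ from the lower bound on $d_\C(0,f^j(B(v_f,\eta)))$ for $j<n_0$, and then assert $f^{j-1}(w)\in B(0,\eta_2)$ ``by the same $u$-coordinate analysis.'' For that to hold for \emph{every} $w\in B(v_f,\eta)$ you need $j-1$ to exceed the trapping time of the \emph{ball}, not merely of the point $v_f$; an orbit starting near the rim of $B(v_f,\eta)$ may take noticeably longer than your $n_0$ steps to fall into the petal, and your $n_0$, coming from Lemma~\ref{lem:traptime}, only controls the orbit of $v_f$ itself. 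The ingredient you never invoke is Lemma~\ref{lem:traptime:2}, which provides a uniform $n_0$ and $\eta_0$ with $f^{n_0}(B(v_f,\eta_0))\subset D_\at[f](r)$. The paper sidesteps the bookkeeping by observing that $z_n\to 0$ forces the indices $k_n\to+\infty$, so that $k_n\geq n_0+1$ eventually, with $n_0$ the \emph{ball}-trapping time. A direct argument has to engineer this by hand: fix $n_0,\eta_0$ from Lemma~\ref{lem:traptime:2} first, set $\eta_2=\eta_0/2$, let $\rho$ be the uniform lower bound on $d_\C(0,f^j(B(v_f,\eta_0/2)))$ over $f\in\cal F$ and $j\leq n_0$, and then shrink $\delta$ so that $|f(z)|<\rho$ is guaranteed, which forces $j\geq n_0+1$ and hence $f^{j-1}(w)\in D_\at[f](r)\subset B(0,\eta_2)$. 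As written, the step $z=f^{j-1}(w)$ is not justified. (Two minor slips, both easily repaired: the critical set is contained in, not equal to, $f^{-1}(v_f)$; and $z\in B(0,\eta)$ does not quite give $f(z)\in B(0,\eta)$, since the parabolic dynamics can push a point slightly outward.)
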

\begin{proof} 
These are again proved by compactness arguments.
Let $r_0$ be provided by \Cref{prop:cf} applied to $\cal F$. Then $D_\at[f](r)$ is an attracting petal for all $r\leq r_0$.
\begin{enumerate}
\item
The set $\wt B(\eta) \cup (D_\rep[f](\eta) \cap f^{-1}(D_\rep[f](\eta)))\subset \wt V_\eta[f] \cup (D_\rep[f](\eta) \cap f^{-1}(D_\rep[f](\eta)))$ is a neighborhood of $0$ thus contains a ball $B(0,r)$. We can take a uniform value of $r$ for maps $f\in \cal F$ (this can be seen in coordinates $u=-1/c_f z$ as in the proof of \Cref{prop:cf}: the constant $c_f$ is bounded away from $0$ and $\infty$ and the map $f$ is conjugated to a map $u\mapsto u'$ defined on a uniform neighborhood of $\infty$ and with $|u'-(u+1)|<1/4$). We impose $\delta\leq r/2$. By \Cref{lem:traptime} for some $n_0$ we have $\forall f\in \cal F$, $f^{n_0}(v_f)\in D_\at[f](r)$ and thus $\forall n\geq n_0$, $B(f^n(v_f),\delta) \subset \wt V_\eta[f] \cup (D_\rep[f](\eta) \cap f^{-1}(D_\rep[f](\eta)))$. Finally by compactness there is a lower bound on $\inf \setof{\delta>0}{\forall f\in\cal F,\ \forall k<n_0,\ B(f^k(v_f),\delta) \subset f^k(B(v_f,\eta))}$.
\item 
Let $\eta_0>0$ to be determined below and set $\eta_2=\eta_0/2$. Let us assume by contradiction that for some $\eta\leq\eta_0/2$ there exists sequences $\delta_n\tend 0$, $f_n \in \cal F$, $z_n$ such that $z_n \in V_{\delta_n}[f_n]$, $f_n(z_n)\in \wt V_\eta[f_n]$, $z_n\notin \wt V_\eta[f_n]$.
We may extract a subsequence so that $f_n\tend f_0$, and $z_n\tend z_0$. If $z_0\neq 0$ then $z_0\in PC(f_0)$ (see point~\eqref{item:psd:1} of \Cref{lem:PSdist}), a fortiori $z_0\in f_0^\N(B(v_{f_0},\eta))$ and thus for $n$ big enough $z_n\in f_0^\N(B(v_{f_n},\eta))$ by Hurwitz's theorem, thus $z_n\in\wt V_\eta[f_n]$, leading to a contradiction.
If $z_n\tend 0$ then for $n$ big enough, let us prove the statement $f_n(z_n)\in \wt V_\eta[f_n] \implies z_n\in \wt V_\eta[f_n]$, which leads to a contradiction.
Indeed either $f_n(z_n)\in \wt B(\eta)[f_n]$ but then as soon as $|z_n|<\eta$, the whole orbit of $z_n$ by $f_n$ is in $B(0,\eta)$ and thus $z_n\in \wt B(\eta)[f_n]$ thus $z_n\in\wt V_{\eta}[f_n]$.
Or $f_n(z_n)\in f_n^{\N}(B(v_{f_n},\eta))$, say $f_n(z_n)\in f^{k_n}(B(v_{f_n},\eta))$.
For a fixed $k$, by compactness there is a lower bound on the distance from $0$ to $f^{k}(B(v_f,\eta_0/2))$ for $k'<k$ and $f\in \cal F$.
So $k_n\to+\infty$.
Now $f_n$ is injective on $B(0,r)$ for some uniform $r\leq r_0$.
By \Cref{lem:traptime:2} there is some $n_0$ and $\eta_0>0$ such that $\forall f\in \cal F$, we have $f^{n_0}(B(v_f,\eta_0)) \subset D_\at[f](r)$. As soon as $k_n\geq n_0+1$, both $f_n^{k_n-1}(B(v_{f_n},\eta))$ and $f_n^{k_n}(B(v_{f_n},\eta))$ are contained in $D_\at[f](r)\subset B(0,r)$, and $z_n$ also belongs to $B(0,r)$ for $n$ big enough. Hence $f(z_n)\in f_n^{k_{n}}(B(v_{f_n},\eta))$ $\implies$ $z_n\in f_n^{k_n-1}(B(v_{f_n},\eta))$.
\end{enumerate}
\end{proof}

Let
\[\eta_1 = \min(\eta_0/2,r_0,r_1,r_2,r_3,\delta_1/2,\eta'_0,\eta_2)
\]
where $\delta_1$ was defined just before \Cref{lem:stayfar}, $r_2$ in \Cref{prop:homotopicrep}, $\eta_0$, $r_0$ and $r_0'=\min(r_0,r_1,r_2,r_3)$ at the beginning of the current section (\Cref{subsub:bddm}), $\eta_0'$ and $r_3$ in \Cref{lem:disj}, $\eta_2$ in \Cref{lem:etadelta}.

Let $\delta$ be the smallest of the two values associated to $\eta=\eta_1$ by points~\eqref{item:ed:2} and~\eqref{item:ed:3} of \Cref{lem:etadelta}. Since $\eta_1\leq r_0'$ we get $D_\rep[f](\eta_1)\subset D_\rep[f](r_0')=D_\rep[f]$ and thus: $\forall f\in\cal F$,
\bEAn
\label{eq:etadelta:1} V_\delta[f]  & \subset & \wt V_{\eta_1}[f] \cup (D_\rep[f] \cap f^{-1}(D_\rep[f])),
\\
\label{eq:etadelta:2} f^{-1}(\wt V_{\eta_1}[f]) \cap V_\delta[f]  & \subset & \wt V_{\eta_1}[f].
\eEAn

Let $d_1=d_1(\delta)$, $d_1''=d_1''(\delta)$ and $T_4=T_4(\delta)$ be the values associated to $\delta$ just before \Cref{prop:hl}, and $T_6=T_6(\delta)$ defined just before \Cref{lem:t5p}.

Just before \Cref{prop:hl} we also defined $T_5(\delta,\epsilon')$, by $\ell(1-T_5(\delta,\epsilon'))=d_1(\delta)+\ell(1-\epsilon')$ where $\ell(x)=d_\D(0,x)$. Since we just have fixed $\delta$, let us denote $T_5(\epsilon')=T_5(\delta,\epsilon')$. Then 
\[ T_5 (\epsilon') \underset{\epsilon'\to 0}{\sim} K_3\,\epsilon'
\]
with $K_3 = e^{-2d_1(\delta)}$ (the value of this constant is not important, nor is its dependence on $\delta$). 

\begin{lemma}\label{lem:t7}
There exists $K_2>0$ 
and $T_7>0$
such that for all $f_0\in\cal F$, for all $z\in \wt V_{\eta_1}[f_0]$, $\tau(z)>T_7$ and for all $t\leq T_7$, the length of the curve $x\in[0,t]\mapsto z\ag x$ is $\leq K_2 t$ when measured with the hyperbolic metric of $\C\setminus\{v,0\}$.
\end{lemma}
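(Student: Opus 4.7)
The plan is to split the analysis according to the two constituents of $\wt V_{\eta_1}[f_0] = \wt B(\eta_1) \cup f_0^\N(B(v,\eta_1))$ and handle each case with compactness arguments over $\cal F$.

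For $z \in \wt B(\eta_1)$, the forward orbit $(f_0^n(z))_{n\geq 0}$ stays inside $B(0,\eta_1)\subset B(0,r_1)$, so Lemma~\ref{lem:survlocorb} applied with $T_3 = T_1'/2$ already yields $\tau_\Phi(z)>T_3$. Lemma~\ref{lem:bmslo} gives the Euclidean bound $|z\ag x-z|\leq K_1|z|x$; the same holomorphic-motion argument in its proof (Schwarz lemma applied to the holomorphic extension of $x\mapsto z\ag x$ to a neighborhood of $[0,T_3]$) bounds $|\partial_x z\ag x|$ by a multiple of $|z|$, so the Euclidean path length is $O(|z|\,t)$. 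Since the critical value $v$ of maps in $\cal F$ is bounded away from $0$ uniformly by compactness, for $\eta_1$ small enough the disk $B(z,2|z|)$ lies in $\C\setminus\{0,v\}$; its hyperbolic metric, comparable to $|dw|/|w-z|\sim 1/|z|$, dominates that of $\C\setminus\{0,v\}$, so the hyperbolic length of the curve is $O(t)$.

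For $z \in f_0^\N(B(v,\eta_1))$, write $z=f_0^k(w)$ with $w\in B(v,\eta_1)$ and $k\geq 0$. By Lemma~\ref{lem:traptime:2} applied to $r\leq \eta_1$, there is a uniform $n_0$ such that $f_0^{n_0}(B(v,\eta_1))\subset D_\at[f_0](\eta_1)\subset B(0,\eta_1)$; so if $k\geq n_0$ then $f_0^k(w)\in\wt B(\eta_1)$ and Case~1 applies. Otherwise $0\leq k<n_0$ is bounded. Near $v$, since the critical values of $\Phi_0$ are $\{v'-n:n\geq 1\}$ with $v'=\Phi_0(v)$, the point $v$ itself is a regular point of $\Phi_t$ for all $t$, so a local inverse $g_t := \Phi_t^{-1}$ is well defined at $v'$ and depends continuously on $t$. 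The crucial normalization is that $g_t(v')=v$ for all $t$, hence $\partial_t g_t(v')=0$. Expanding $g_x(v'+a)-g_0(v'+a)=\int_0^x \partial_t g_t(v'+a)\,dt$ with $\partial_t g_t(v'+a) = O(|a|)$ and $|a|=O(|w-v|)$, one obtains $|w\ag x-w|\leq C\,x\,|w-v|$ uniformly over $\cal F$, and in particular the survival time for $w$ is bounded below. Propagating via $z\ag x = f_x^k(w\ag x)$ and using a uniform Lipschitz bound for $(x,w)\mapsto f_x^k(w)$ on compacts (which holds by compactness of $\cal F_{[0,T_1'/2]}$ and the boundedness of $k$) yields $|z\ag x-z|\leq K'x$.

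It remains to control the hyperbolic length in Case~2. For $k=0$ we have $z=w$ near $v$ with Euclidean motion $O(x|w-v|)$, and the disk $B(w,|w-v|/2)$ avoids both $0$ (uniformly) and $v$, so comparison with its hyperbolic metric gives a length bound of order $x$. For $1\leq k<n_0$, the iterate $f_0^k(v)$ differs from both $0$ and $v$ (it cannot reach $0$ since $0$ is a fixed point with $f(z)=z+O(z^2)$, and it cannot equal $v$ since $v$ is attracted to $0\neq v$); by compactness of $\cal F$ and continuity in $f$, $|f^k(v)|$ and $|f^k(v)-v|$ are uniformly bounded below for $1\leq k<n_0$, so $z$ lies in a uniform compact subset of $\C\setminus\{0,v\}$ on which the hyperbolic metric is bounded, and the Euclidean length bound $O(x)$ converts directly to a hyperbolic length bound $O(x)$.

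The main obstacle is the subcase $k=0$, where $w$ may be arbitrarily close to $v$: the hyperbolic metric of $\C\setminus\{0,v\}$ blows up near $v$ like $1/|w-v|$, so a bound of shape $K_2 t$ is only possible because the Euclidean motion scales proportionally in $|w-v|$. This linear scaling is precisely what the normalization $\Phi_t(v)=v'$ (independent of $t$) buys, by making $v$ a fixed point of the fiberwise motion and forcing $\partial_t g_t(v')=0$ in the Taylor expansion of $g_x-g_0$. All other estimates are straightforward applications of compactness and of the earlier lemmas in Section~\ref{subsec:toolkit}.
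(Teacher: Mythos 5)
Your decomposition of $\wt V_{\eta_1}[f_0]$ into the trapped part $\wt B(\eta_1)$ and the finitely many non-trapped iterates $f_0^k(B(v,\eta_1))$, with special handling of $k=0$ because the normalization forces $v\ag t=v$, is precisely the paper's plan. The paper encodes the crucial estimate $|z\ag t-z|\leq K\,t\,|z-v|$ by observing that $(z,t)\mapsto z\ag t-z$ is $\mathrm{C}^2$ and vanishes on $\{t=0\}\cup\{z=v\}$; your route via $g_t=\Phi_t^{-1}$ and $\partial_t g_t(v')=0$ is the same Taylor expansion phrased explicitly. For $1\leq k<n_0$ the paper pulls back through $f_t^{-(m-k)}\circ\Phi_t^{-1}\circ\Phi_0\circ f_0^{m-k}$ while you push forward through $f_x^k$; these are equivalent by uniqueness of the fiberwise motion.

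There is, however, a concrete error in your treatment of the $\wt B(\eta_1)$ case: $B(z,2|z|)$ always contains $0$ (indeed $|0-z|=|z|<2|z|$), so it never lies in $\C\setminus\{0,v\}$, and the Schwarz--Pick comparison you invoke does not apply as written. This is repairable by taking a disk of radius a fixed fraction strictly less than one of $|z|$, say $B(z,|z|/2)$: it avoids $0$, it avoids $v$ for $\eta_1$ small (recall $\eta_1<\eta_0<|v|$), and after shrinking $T_7$ so that $K_1T_7<1/4$ the curve $x\mapsto z\ag x$ stays in $B(z,|z|/4)$ where the density of $B(z,|z|/2)$ is comparable to $1/|z|$. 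Alternatively — and this is the paper's shortcut — use the puncture asymptotics $\rho_{\C\setminus\{0,v\}}(z)\sim \frac{1}{2|z|\log(1/|z|)}=o(1/|z|)$ near $0$, so that $\rho_{\C\setminus\{0,v\}}(z)\,|z|$ is bounded on $B(0,\eta_1)$ and multiplies directly against the Euclidean speed bound $K_1|z|$ from Lemma~\ref{lem:bmslo}. A smaller slip in the same sentence: the hyperbolic density of $B(z,r)$ is $r/(r^2-|w-z|^2)$, not $|dw|/|w-z|$ (the latter blows up at $w=z$), though the order of magnitude $\sim 1/|z|$ at the center is what you need. With these local fixes the argument is sound and matches the paper's.
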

\begin{proof} If the starting point $z\ag 0$ belongs to the part $\wt B(\eta_1)$ of $\wt V_{\eta_1}$ of points whose orbit never leaves $B(0,\eta_1)$, this follows from \Cref{lem:bmslo} since $\eta_1\leq r_1$ and since $\rho(z):=\rho_{\C\setminus\{0,v\}}(z)=o(1/|z|)$ near $0$ thus $\rho(z)|z|$ is bounded on $B(0,\eta_1)$ (note that $\eta_1<\eta_0<|v|$).
Otherwise the starting point $z\ag 0$ belongs to $f_0^\N(B(v,\eta_1))$. Note first that only a finite number of iterates of $B(v,\eta_1)$, bounded independently of $f_0$, are not already contained in the first part.
Moreover, let $m-1$ be such a bound.
Then for all $k\leq m$, for all $z\in f_0^k(B(v,\eta_1))$, $z\ag t = f_t^{-(m-k)}\circ \Phi_t^{-1} \circ \Phi_0 \circ f_0^{m-k}(z)$ for some inverse branch of $f_t^{(m-k)}$.
Since we do not hit a critical point, everything moves differentiably w.r.t.\ the pair $(t,z)$. We thus get\footnote{Here we are \emph{not} using complex values of $t$.} the claimed bound on the hyperbolic length of the curve $z\ag t$ away from $v$, i.e.\ if $z\ag 0\notin B(v,\eta_1)$.
Last, for starting points $z\ag 0$ near $v$, i.e.\ in $B(v,\eta_1)$, note first that $v$ does not move at all: $v\ag t = v$. 
Then $|z\ag{t}-z| 
\leq K |z-v| t$ since the function $(z,t)\mapsto z\ag t-z$ is at least ${\mathrm C}^2$ and vanishes whenever $t=0$ or $z=v$.
Since $\rho(z) = o(1/|z-v|)$ near $v$, the lemma follows.
\end{proof}

Recall that we are dealing with the case where the sequence $n\in\Z\mapsto \omega_n\ag 0$ is not completely contained in $B(0,r_1)$. Together with \Cref{lem:disj} and $\eta_1\leq r_1$, this implies that the first point in this orbit that does not belong to $B(0,r_1)$ also does not belong to $\wt V_{\eta_1}[f_0]$. On the other hand the orbit tends to $0$ thus eventually stays in $B(0,\eta_1)$ hence in $\wt B(\eta_1)[f_0]\subset\wt V_{\eta_1}[f_0]$. The set $\wt V_{\eta_1}[f_0]$ is mapped in itself by $f_0$.
Therefore there is a unique $n_+\in\Z$ such that
\[\omega_{n}\ag 0 \in \wt V_{\eta_1}[f_0] \iff  n\geq n_+\
.\]

If we follow the orbit in the past, it eventually stays in $D_\rep=D_\rep[f_0](r_0')$ in the past.
There is thus a maximal $n_-\in\Z$ such that $\forall n\leq n_-$,
$\omega_n\ag 0 \in D_\rep$. Moreover, $n_-+1<n_+$ because by \Cref{lem:disj}, $\omega_{n_-+1}\ag 0$ cannot belong to $f^\N(B(v,\eta_1))$ and if $\omega_{n_-+1}\ag 0$ were in $B(0,\eta_1)$ then the whole orbit would be contained in $B(0,r_1)$.

Between $n_-$ and $n_+$, the orbit may visit and leave the repelling petal  several times. Let $J$ denote the set of $n\in\Z$ with $n_-<n<n_+$ and $\omega_n\ag 0\notin D_\rep$. This set is non-empty and its extreme values are $n_-+1$ and $n_+-1$ (these two values may be equal).

Denote as follows the constant provided by \Cref{lem:definite} and used in \Cref{lem:hl:1}:
\[\Lambda:=\Lambda(\delta/3)<1 .\]

\newcommand{\tm}{t_{\max}}
Let now $\tm\leq \min(T_3,T_4,T_5/2,T_6,T_7,T_8)$ to be determined later.
Let us work with $t\in[0,\tm]$ and let us do a finite decreasing induction on $J$. In the process, more conditions will be imposed on $\tm$.

\textbf{Initialization:} By \Cref{lem:t7}, $\tau(\omega_{n_+})> \tm$ and for all $t\leq \tm$, the length of $\gamma: s\in[0,t]\mapsto \omega_{n_+} \ag s$ is $\leq K_2 t$ when measured with the hyperbolic metric on $\C\setminus\{0,v\}$.
Provided $K_2 \tm\leq d_1''$, we can apply \Cref{prop:hl2}
(in particular condition~\eqref{item:hl2:5} of this proposition follows from Equation~\eqref{eq:etadelta:2}), thus $\tau(\omega_{n_+-1}\ag 0)>\tm$ and $\forall t\in[0,\tm]$, the path $s\in[0,t]\mapsto \omega_{n_+-1} \ag s$ is homotopic in $W_0$ to $\gamma_1\cdot\gamma_2$ where $\gamma_1$ and $\gamma_2$ are defined in \Cref{prop:hl2}.
By \Cref{lem:hl2:1}, $\hl{W_0}{\gamma_1}\leq M(\delta/3)\hl{\C\setminus\{0,v\}}{\gamma}$ thus $\leq M_0K_2t$ with $M_0=M(\delta/3)$. 
And by \Cref{lem:t5p}, $\hl{W_0}{\gamma_2}\leq K_0 t/T_5$.
Let us sum up: we assumed $K_2 \tm\leq d_1''$ and got $\forall t\in[0,\tm]$,
$\hl{W_0}{\omega_{n_+-1}\big|_{[0,t]}}\leq M_0 K_2 t+K_0 t/T_5$. In particular 
\[ \extent{W_0}{\omega_{n_+-1}\ag{[0,\tm]}}\leq M_0 K_2 \tm+K_0 \tm/T_5
.\]
Let us assume moreover that
\[ M_0 K_2 \tm+K_0 \tm/T_5 \leq \min(d_1,d_1')
\]
so that $ \extent{W_0}{\omega_{n_+-1}\ag{[0,\tm]}}\leq \min(d_1,d_1')$.

\textbf{Induction:} Let $n\in \Z$ satisfying $n_-+1<n\leq n_+-1$ and either $n\in J$ or $n-1\in J$ and assume that we have proved $\tau(\omega_n\ag0)>\tm$ and $\extent{W_0}{\omega_n\ag{[0,\tm]}} \leq \min(d_1,d_1')$.

By Equation~\eqref{eq:etadelta:1}, $\omega_{n-1}\ag 0\notin V_\delta[f]$ thus condition~\eqref{item:hl2:5} of \Cref{prop:hl} is satisfied. Hence we can apply it, and its complements \Cref{lem:hl:1,lem:t5p} and we get $\hl{W_0}{\omega_{n-1}\big|_{[0,t]}}\leq \Lambda \min(d_1,d_1') + K_0 t /T_5$. Let us impose on $\tm$ that $\Lambda \min(d_1,d_1') + K_0 \tm /T_5 \leq \min(d_1,d_1')$, so that we get
$\extent{W_0}{\omega_{n-1}\ag{[0,\tm]}}\leq \min(d_1,d_1')$.
If $n-1\in J$ we can carry on the induction with $n-1$.
If $n-1\notin J$, let $n'$ be the first element of $J$ below $n$ and let $n_1=n-1$ and $n_0=n'+1$: $n_0\leq n_1$.
If $n_0=n_1$ we can also carry on the induction with $n-1$, because $(n-1)-1\in J$.
If $n_0<n_1$ we can apply \Cref{prop:homotopicrep} and its complement \Cref{lem:rpb}: 
$\hl{W_0}{\omega_{n_0}\big|_{[0,t]}} \leq \hl{W_0}{\omega_{n_1}\big|_{[0,t]}} + K_5 t$. Then we can carry on the induction with $n'$, provided we require on $\tm$ that $\Lambda \min(d_1,d_1') + K_0 \tm /T_5 + K_5 \tm \leq \min(d_1,d_1')$.

In all cases, for the induction to carry on it is enough to assume that
\[ \Lambda \min(d_1,d_1') + K_0 \tm /T_5 + K_5 \tm \leq \min(d_1,d_1')
. \]

\textbf{Post induction:} we now know that 
$\extent{W_0}{\omega_{n}\ag{[0,\tm]}} \leq \min(d_1,d_1')$
holds for $n=n_-+1$. We can apply once more \Cref{prop:homotopicrep} and we get that the rest of the orbit (for all $n\in\Z$ with $n\leq n_-$) is defined at least up to time $\tm$.
Moreover, by \Cref{lem:replimsec}, we get that for all $n$ below some relative integer, possibly much smaller\footnote{\Cref{prop:surv} claims uniformity w.r.t.\ $t$, but not w.r.t.\ $f$.} than $n_-$, the full motion takes place in the petal: one of the conclusions of \Cref{prop:surv}.

\medskip

Taking everything into account, we get that the full orbit $\omega_n$ survives for any time $t$ satisfying $t\leq \tm$ for any $\tm$ satisfying $\tm\leq \min(T_3,T_4,T_5/2,T_6,T_7,T_8)$, $\tm\leq d_1''/K_2$, $\tm\leq\min(d_1,d_1')/(M_0K_2+K_0/T_5)$ and $\tm\leq \min(d_1,d_1')(1-\Lambda)/(K_0/T_5 + K_5)$.

Recall that $\delta$ is fixed but not $\epsilon'$. All constants depend only on $\delta$ thus are fixed, except, as we saw earlier, $T_5\sim K_3\epsilon'$ ($K_3$ also depends on $\delta$ thus is fixed). As $\epsilon'\tend 0$, the biggest $\tm$ we can take is equivalent to $K_6 T_5$ where $K_6= \min(1/2,\min(d_1,d_1')(1-\Lambda)/K_0) $.

Hence, for $\epsilon'$ small enough, the survival time of the full orbit is $> K_6 \epsilon'$:

\centerline{\fbox{$\ds \forall n\in\Z,\ \tau_\Phi(\omega_n)> K_6\epsilon'$.}}
\smallskip

This completes the proof of \Cref{prop:surv} with $K=1/K_6$.

\subsection{Step 2, Conclusion}\label{subsec:ccl}

Here we will prove Assertion~\ref{ass:2} (which is what is left to prove the main theorem), whose statement we recall:

\begin{assertion*}
\ass
\end{assertion*}

Consider $\epsilon_1\in\,]0,1[$ to be determined later.
Let $f_0\in\cal F$, and $z\in\sub{\dom(\cal R[f])}{(1-\epsilon_1)}$ and
apply \Cref{prop:part:1} to $\epsilon=\epsilon_1$. For this we have to assume $\epsilon_1<\xi$ for some $\xi>0$ given by the proposition. We obtain some $\epsilon'=\epsilon'(\epsilon_1)>0$ such that the associated orbit $\omega_n \ag 0$ of $f_0$ is contained in $\sub{\dom(f_0)}{(1-\epsilon')}$. By the previous section (\Cref{prop:surv}), $\forall n\in\Z$, $\tau_\Phi(\omega_n) > \epsilon'(\epsilon_1)/K$. 
We can take $\epsilon_0=\min(T''_1,\epsilon'(\epsilon_1)/K)$ where $T_1''<T_1'$ is any chosen constant. 
Since $\epsilon'(\epsilon)\gg\epsilon$, for small enough values of $\epsilon_1$ we have $\epsilon_0>\epsilon_1$.
\Cref{prop:surv} also provides the second claim in Assertion~\ref{ass:2}.

Q.E.D.

\medskip

Now comes a final set of remarks. Let us call $(\epsilon_0,\epsilon_1)$ a valid pair whenever $\epsilon_1<\epsilon_0<T_1'$ and the assertion holds with these values.
Given $\epsilon_1$ small enough, the set of valid values for $\epsilon_0$ includes the interval $]\epsilon_1,\epsilon'(\epsilon_1)/K[$. As the right bound is $\gg \epsilon_1$, it is easy to deduce that: $\forall \epsilon_0$ there exists $\epsilon_1$ such that $(\epsilon_0,\epsilon_1)$ is a valid pair. Moreover we can take $\epsilon_1 = o(\epsilon_0)$.

This implies that if one iterates renormalization starting from a map in $\cal F_\epsilon$ with $\epsilon$ small enough, the map $\cal R^n[f]$ will have at least structure $\cal F_{\epsilon_n}$ with $1/\epsilon_n$ increasing faster than any exponential: the structure tends rapidly to the full structure $\cal F$.

Now, given the specific formula in \Cref{prop:part:1}:
\[\log \frac{1}{\epsilon'(\epsilon_1)} \leq c' + c \log\left(1+\log\frac{1}{\epsilon_1}\right)\]
and the computations above, we get that we can take $\epsilon_1 \leq \exp(\beta-\alpha/\epsilon_0)$ for some constants $\alpha,\beta>0$, i.e.\ $1/\epsilon_n$ increases at least like an \emph{iterated} exponential.

\section{Acknowledgements}

The idea for the general setup came to the author in 2010. The article was written and submitted to arXiv in 2014, \cite{Che4}, and revised in 2015, \cite{Che5}. The author would like to thank Misha Lyubich for insisting that I should submit this article to a peer reviewed journal, for otherwise it would have stayed dormant forever.
As already mentionned, the author would like to thank Arnaud Mortier for his help in \Cref{lem:am}.

\printnomenclature[1.5cm]

\bibliographystyle{alpha}
\bibliography{Parabo}

\def\cprime{$'$}
\begin{thebibliography}{McM98}

\bibitem[AC18]{AC}
Artur Avila and Davoud Cheraghi.
\newblock Statistical properties of quadratic polynomials with a neutral fixed
  point.
\newblock {\em J. Eur. Math. Soc. (JEMS)}, 20(8):2005--2062, 2018.

\bibitem[BC12]{mespos}
Xavier Buff and Arnaud Ch{\'e}ritat.
\newblock Quadratic {J}ulia sets with positive area.
\newblock {\em Ann. of Math. (2)}, 176(2):673--746, 2012.

\bibitem[BE02]{BE}
Xavier Buff and Adam~L. Epstein.
\newblock A parabolic {P}ommerenke-{L}evin-{Y}occoz inequality.
\newblock {\em Fund. Math.}, 172(3):249--289, 2002.

\bibitem[BW91]{BW}
I.~N. Baker and J.~Weinreich.
\newblock Boundaries which arise in the dynamics of entire functions.
\newblock {\em Rev. Roumaine Math. Pures Appl.}, 36(7-8):413--420, 1991.
\newblock Analyse complexe (Bucharest, 1989).

\bibitem[CC15]{CC}
Davoud Cheraghi and Arnaud Ch{\'e}ritat.
\newblock {A} {P}roof of the {M}armi-{M}oussa-{Y}occoz conjecture for rotation
  numbers of high type.
\newblock {\em Inventiones Mathematicae}, 2015.
\newblock First online: January 2015.

\bibitem[Ch{\'e}01]{Che1}
Arnaud Ch{\'e}ritat.
\newblock {\em {R}echerche d'ensembles de {J}ulia de mesure de {L}ebesgue
  strictement positive}.
\newblock PhD thesis, Universit{\'e} Paris XI, 2001.
\newblock PhD thesis.

\bibitem[Ch{\'e}06]{Che3}
Arnaud Ch{\'e}ritat.
\newblock Ghys-like models for {L}avaurs and simple entire maps.
\newblock {\em Conform. Geom. Dyn.}, 10:227--256, 2006.

\bibitem[Ch{\'e}08]{Che2}
Arnaud Ch{\'e}ritat.
\newblock {S}ur l'implosion parabolique, la taille des disques de {S}iegel et
  une conjecture de {M}armi, {M}oussa et {Y}occoz, 2008.
\newblock Habiliation thesis.

\bibitem[Che13]{Chera2}
Davoud Cheraghi.
\newblock Typical orbits of quadratic polynomials with a neutral fixed point:
  {B}rjuno type.
\newblock {\em Comm. Math. Phys.}, 322(3):999--1035, 2013.

\bibitem[Ch{\'e}14]{Che4}
Arnaud Ch{\'e}ritat.
\newblock Near parabolic renormalization for unisingular holomorphic maps.
\newblock preprint, arXiv:1608.04106v1, 2014.

\bibitem[Ch{\'e}15]{Che5}
Arnaud Ch{\'e}ritat.
\newblock Near parabolic renormalization for unicritical holomorphic maps.
\newblock preprint, arXiv:1608.04106v2, 2015.

\bibitem[Che17]{Chera4}
Davoud Cheraghi.
\newblock Topology of irrationally indifferent attractors.
\newblock Preprint, arXiv:1706.02678, 2017.

\bibitem[Che19]{Chera3}
Davoud Cheraghi.
\newblock Typical orbits of quadratic polynomials with a neutral fixed point:
  non-{B}rjuno type.
\newblock {\em Ann. Sci. \'{E}c. Norm. Sup\'{e}r. (4)}, 52(1):59--138, 2019.

\bibitem[CPT12]{CPT}
Guizhen Cui, Wenjuan Peng, and Lei Tan.
\newblock On a theorem of {R}ees-{S}hishikura.
\newblock {\em Ann. Fac. Sci. Toulouse Math. (6)}, 21(5):981--993, 2012.

\bibitem[CS15]{CS}
Davoud Cheraghi and Mitsuhiro Shishikura.
\newblock Satellite renormalization of quadratic polynomials.
\newblock Preprint, arXiv:1509.07843, 2015.

\bibitem[Dev99]{Bob2}
Robert~L. Devaney.
\newblock Cantor bouquets, explosions, and {K}naster continua: dynamics of
  complex exponentials.
\newblock {\em Publ. Mat.}, 43(1):27--54, 1999.

\bibitem[DH85]{DH}
Adrien Douady and John~H. Hubbard.
\newblock {\'E}tude dynamique des polyn{\^o}mes complexes.
\newblock Publications math{\'e},atiques d'Orsay, 1984-85.

\bibitem[Dou94]{D}
Adrien Douady.
\newblock Does a {J}ulia set depend continuously on the polynomial?
\newblock In {\em Complex dynamical systems ({C}incinnati, {OH}, 1994)},
  volume~49 of {\em Proc. Sympos. Appl. Math.}, pages 91--138. Amer. Math.
  Soc., Providence, RI, 1994.

\bibitem[Ere13]{Er}
Alexandre Eremenko.
\newblock Singularities of inverse functions.
\newblock Manuscript, 2013.

\bibitem[Her85]{H}
Michael-R. Herman.
\newblock Are there critical points on the boundaries of singular domains?
\newblock {\em Comm. Math. Phys.}, 99(4):593--612, 1985.

\bibitem[IS04]{IS}
Hiroyuki Inou and Mitsuhiro Shishikura.
\newblock Near parabolic renormalization.
\newblock preprint, 2004.

\bibitem[Lav89]{L}
Pierre Lavaurs.
\newblock {\em {S}ystemes dynamiques holomorphes : explosion de points
  periodiques paraboliques}.
\newblock PhD thesis, Universit{\'e} Paris XI, 1989.
\newblock PhD thesis.

\bibitem[LY14]{LY}
Oscar~E. Lanford, III and Michael Yampolsky.
\newblock {\em Fixed point of the parabolic renormalization operator}.
\newblock Springer Briefs in Mathematics. Springer, Cham, 2014.

\bibitem[McM98]{McMu}
Curtis~T. McMullen.
\newblock Self-similarity of {S}iegel disks and {H}ausdorff dimension of
  {J}ulia sets.
\newblock {\em Acta Math.}, 180(2):247--292, 1998.

\bibitem[MR83]{MR}
Jean Martinet and Jean-Pierre Ramis.
\newblock Classification analytique des \'equations diff\'erentielles non
  lin\'eaires r\'esonnantes du premier ordre.
\newblock {\em Ann. Sci. \'Ecole Norm. Sup. (4)}, 16(4):571--621 (1984), 1983.

\bibitem[RGS17]{RS}
Lasse Rempe-Gillen and Dave Sixsmith.
\newblock Hyperbolic entire functions and the {E}remenko-{L}yubich class: class
  {$\mathcal {B}$} or not class {$\mathcal {B}$}?
\newblock {\em Math. Z.}, 286(3-4):783--800, 2017.

\bibitem[Shi94]{S2}
Mitsuhiro Shishikura.
\newblock The boundary of the {M}andelbrot set has {H}ausdorff dimension two.
\newblock {\em Ast\'erisque}, 222:7, 389--405, 1994.
\newblock Complex analytic methods in dynamical systems (Rio de Janeiro, 1992).

\bibitem[Shi00]{S}
Mitsuhiro Shishikura.
\newblock Bifurcation of parabolic fixed points.
\newblock In {\em The {M}andelbrot set, theme and variations}, volume 274 of
  {\em London Math. Soc. Lecture Note Ser.}, pages 325--363. Cambridge Univ.
  Press, Cambridge, 2000.

\bibitem[SY16]{SY}
Mitsuhiro Shishikura and Fei Yang.
\newblock The high type quadratic {S}iegel disks are {J}ordan domains.
\newblock preprint, arXiv:1608.04106, 2016.

\bibitem[Tan00]{T}
Lei Tan.
\newblock {H}olomorphic dependence of the {F}atou coordinates. {A}ppendix to:
  {S}hishikura, {Mitsuhiro}. {B}ifurcation of parabolic fixed points.
\newblock In {\em The {M}andelbrot set, theme and variations}, volume 274 of
  {\em London Math. Soc. Lecture Note Ser.}, pages 360--363. Cambridge Univ.
  Press, Cambridge, 2000.

\bibitem[Vor81]{V}
S.~M. Voronin.
\newblock Analytic classification of germs of conformal mappings {$({\bf
  C},\,0)\rightarrow ({\bf C},\,0)$}.
\newblock {\em Funktsional. Anal. i Prilozhen.}, 15(1):1--17, 96, 1981.

\bibitem[Zin97]{Z}
Michel Zinsmeister.
\newblock Basic parabolic implosion in five days (after {A}. {D}ouady).
\newblock preprint, 1997.

\end{thebibliography}

\end{document}